\tikzstyle{printersafe}=[decoration={snake,amplitude=0pt}]
\newtheorem{Def}[subsubsection]{Definition}
\newtheorem{definition}[subsubsection]{Definition}
\newtheorem{Thm}[subsubsection]{Theorem}
\newtheorem{Lem}[subsubsection]{Lemma}
\newtheorem{Prop}[subsubsection]{Proposition}
\newtheorem{Cor}[subsubsection]{Corollary}
\newtheorem{Conj}{Conjecture}
\newtheorem*{Thm*}{Theorem}
\newtheorem{mainThm}{Theorem}
\newcommand{\mbb}[1]{\mathbb{#1}}
\theoremstyle{remark}
\newtheorem{Eg}[subsubsection]{Example}
\newtheorem{Rem}[subsubsection]{Remark}
\newtheorem{Claim}[subsubsection]{Claim}
\newtheorem{Q}[subsubsection]{Question}
\numberwithin{equation}{subsection}
\DeclareMathOperator{\Hom}{\mathscr{H}\text{\kern -3pt {\calligra\large om}}\,}
\DeclareMathOperator{\End}{\mathscr{E}\text{\kern -3pt {\calligra\large nd}}\,}
\newcommand{\Mod}{\operatorname{Mod}}
\newcommand{\spec}{\operatorname{Spec}}
\newcommand{\spf}{\operatorname{Spf}}
\newcommand{\spa}{\operatorname{Spa}}
\newcommand{\qp}{\mathbb{Q}_{p}}
\newcommand{\zp}{\mathbb{Z}_{p}}
\newcommand{\locan}{\mathrm{la}}
\newcommand{\Lie}{\operatorname{Lie}}
\newcommand{\et}{\text{\'et}}
\newcommand{\la}{\mathrm{la}}
\newcommand{\LA}{\mathcal{HT}}
\newcommand{\CD}{\mathcal{CD}}
\newcommand{\LAD}{\mathcal{HT}_{\mathrm{fh}}}
\newcommand{\LAE}{\mathcal{HT}_{\et}}
\newcommand{\coker}{\operatorname{Coker}}
\newcommand{\perf}{\operatorname{Perf}}
\newcommand{\prof}{\operatorname{Prof}}
\newcommand{\cond}{\mathrm{cond}}
\newcommand{\mv}{\mathrm{v}}
\newcommand\restr[2]{{
  \left.\kern-\nulldelimiterspace 
  #1 
  \vphantom{\big|} 
  \right|_{#2} 
  }}
\newcommand{\twopartdef}[4]
{
	\left\{
		\begin{array}{ll}
			#1 & \mbox{if } #2 \\
			#3 & \mbox{if } #4
		\end{array}
	\right.
}
\newcommand{\gmhateta}{\widehat{\mathbb{G}}_{m, \eta}}
\newcommand{\gmhat}{\widehat{\mathbb{G}}_{m}}
\newcommand{\gm}{\mathbb{G}_{m}}
\newcommand{\Sym}{\operatorname{Sym}}
\newcommand{\Cont}{C^{0}}
\newcommand{\Spf}{\operatorname{Spf}}
\newcommand{\Spa}{\operatorname{Spa}}
\newcommand{\Spd}{\operatorname{Spd}}
\newcommand{\spd}{\operatorname{Spd}}
\newcommand{\Gal}{\mathrm{Gal}}
\newcommand{\bdd}{\mathrm{bdd}}
\newcommand{\HT}{\mathrm{HT}}
\newcommand{\ul}[1]{\underline{#1}}
\newcommand{\Igkatz}{\mathfrak{Ig}_{\mathrm{Katz}}}
\title{$p$-adic Fourier theory in families}
\author{Andrew Graham}
\address{Mathematical Institute, University of Oxford, Woodstock Road, Oxford OX2 6GG, United Kingdom}
\email{andrew.graham@maths.ox.ac.uk}
\author{Pol van Hoften} 
\address{Department of Mathematics, Vrije Universiteit Amsterdam, De Boelelaan 1111, 1081 HV Amsterdam, The Netherlands}
\email{p.van.hoften@vu.nl}
\author{Sean Howe}
\address{Department of Mathematics, University of Utah, 155 S 1400 E, Salt Lake City, UT 84112}
\email{sean.howe@utah.edu}
\begin{document}
\begin{abstract}
We construct Fourier transforms relating functions and distributions on finite height $p$-divisible rigid analytic groups and objects in a dual category of $\mathbb{Z}_p$-local systems with analyticity conditions. Our Fourier transforms are formulated as isomorphisms of solid Hopf algebras over arbitrary small v-stacks, and generalize earlier constructions of Amice and Schneider--Teitelbaum. We also construct compatible integral Fourier transforms for $p$-divisible groups and their dual Tate modules. As an application, we use the Weierstrass $\wp$-function to construct a global Eisenstein measure over the $p$-adic modular curve, extending previous constructions of Katz over the ordinary locus and at CM points, and show its generic fiber, the global Eisenstein distribution, gives rise to new families of quaternionic modular forms that overconverge from profinite sets in the rigid analytic supersingular locus.  
\end{abstract}
\maketitle
\tableofcontents

\section{Introduction} For $A$ a locally compact abelian group with Pontryagin dual $A^\vee = \hom(A, S^1)$, there is a universal character 
\[ 
\kappa: A \times A^\vee \rightarrow S^1, \;
(x,\varphi) \mapsto \varphi(x). \]
The Fourier transforms of classical functional analysis are isomorphisms between spaces of measures or distributions on $A$ and spaces of functions on $A^\vee$ obtained by integrating $\kappa$ over fibers of the projection to $A$. Pontryagin duality is an involution on the category of locally compact abelian groups preserving universal characters, and thus these Fourier transforms are self-dual. They are moreover functorial, exchange multiplication and convolution, and exchange invariant derivations with multiplication by canonical coordinate functions. 

In this paper, we construct Fourier transforms with similar properties in $p$-adic geometry.  Unlike the classical setting, we do not know a natural category analogous to locally compact abelian groups that is preserved under Pontryagin duality. Instead, we develop a theory similar to the duality between compact groups and their discrete duals: In our case, we consider $\mathbb{Z}_p$-local systems with an analyticity condition, whose dual character groups are finite height $p$-divisible rigid analytic groups (in the sense of Fargues \cite{FarguesI, FarguesII}). 

We study these dual categories over an arbitrary small $v$-stack, and construct universal characters in this generality. We then show that integration over the universal character defines dual Fourier isomorphisms between solid Hopf algebras of distributions and functions, exchanging invariant derivations with multiplication by coordinate functions. When the base is the spectrum of a non-archimedean extension of $\mathbb{Q}_p$ and the $p$-divisible rigid analytic group is the character group of $\mathbb{Z}_p$, one of our Fourier transforms specializes to Amice's \cite{AmiceInterpolation} description of the dual of the space of locally analytic functions on $\mathbb{Z}_p$. Similarly, we also recover a previous extension of Amice's results due to Schneider--Teitelbaum \cite{SchneiderTeitelbaumFourier} --- in independent work, Kings--Sprang \cite{KingsSprang} also provide a generalization of \cite{SchneiderTeitelbaumFourier} over non-archimedean extensions of $\mathbb{Q}_p$. We also prove a compatibility between our Fourier theory and integral Cartier duality, generalizing the compatibility between the Amice transform and Mahler expansions. 

As  an application of our results, we construct an Eisenstein measure on the Tate module of the universal elliptic curve over the integral $p$-adic modular curve (viewed as a $p$-adic formal scheme). Our measure specializes to Katz's Eisenstein measure over the ordinary locus, and to a related construction of Katz at supersingular complex multiplication points. Passing to the associated distribution on the rigid analytic generic fiber yields a global Eisenstein distribution. Using this distribution, we recover the overconvergence from the ordinary locus of the classical weight family of $p$-depleted Eisenstein series parameterized by characters of $\mathbb{Z}_p^\times$. By a  similar argument we then construct, for $R$ any order in a degree two extension $L/\mathbb{Q}_p$, a new family of quaternionic Eisenstein series parameterized by $L$-analytic characters of $R^\times$, and show that they overconverge from the profinite locus of formal complex multiplication by $R$ to open subsets of the supersingular locus of the $p$-adic modular curve. 

\subsection{Fourier theory for rigid analytic character groups} We first state our main theorem when the base is the spectrum of a non-archimedean field $K/\mathbb{Q}_p$. To that end, let $K/\mathbb{Q}_p$ be a non-archimedean extension, let $\overline{K}$ be an algebraic closure, and let $C$ be the completion of $\overline{K}$. We use condensed $K$-vector spaces (see \cite{CondensedNotes}) in place of the usual theory of locally convex $K$-vector spaces to handle topologies on function and distribution spaces.

We fix a continuous representation of $\mathrm{Gal}(\overline{K}/K)$ on a finite free $\mathbb{Z}_p$ module $\Lambda$, a finite dimensional $K$-vector space $V$, and a $\mathrm{Gal}(\overline{K}/K)$-equivariant surjection $\gamma: \Lambda_C \twoheadrightarrow V_{C}$. One half of our Fourier transforms will concern functions and distributions on $\Lambda$ with an analyticity condition determined by $\gamma$. Explicitly, we define $\mathcal{O}^{\gamma-\locan}(\Lambda)$ to be the natural condensed\footnote{Sending a profinite set $S$ to the $\Gal(\overline{K}/K)$-equivariant $\gamma$-locally analytic functions from $\Lambda$ to $\Cont(S, C)$.} $K$-vector space of functions from $\Lambda$ to $C$ that are $\Gal(\overline{K}/K)$-equivariant and $\gamma$-locally analytic, i.e., locally on $\Lambda$, factor as the composition of $\gamma$ with an analytic function on an open ball in $V_C$. We define $\mathcal{D}^{\gamma-\locan}(\Lambda)$ to be its condensed dual. 

On the other side, we consider the rigid analytic group $H_{\gamma}^{\mathrm{rig}}$ over $K$  parametrizing $\gamma$-analytic characters of $\Lambda$. This is a $p$-divisible rigid analytic group in the sense of \cite{FarguesI}, and, by a theorem of Fargues, see \cite[Corollaire 17]{FarguesI}, every $p$-divisible rigid analytic group over $K$ is isomorphic to some $H_{\gamma}^{\mathrm{rig}}$. We denote the associated diamond over $\Spd K$ by $H_\gamma$, and we define $\mathcal{O}(H_{\gamma})$ to be the natural condensed\footnote{Sending a profinite set $S$ to $\mathcal{O}(H_{\gamma} \times S)$, which makes sense as $H_\gamma \times S$ is a diamond over $\Spd K$. In this case $\mathcal{O}(H_\gamma)$ is equal to the condensed set $\ul{\mathcal{O}(H_\gamma^{\mathrm{rig}})}$ associated to $\mathcal{O}(H_\gamma^{\mathrm{rig}})$ with its usual Fr\'echet topology.} $K$-vector space of functions on $H_\gamma$. We write $\mathcal{D}(H_{\gamma})$ for the condensed dual of $\mathcal{O}(H_{\gamma})$. \smallskip 

The condensed $K$-vector spaces $\mathcal{O}^{\gamma-\locan}(\Lambda),$ $\mathcal{D}^{\gamma-\locan}(\Lambda), $ $\mathcal{O}(H_{\gamma}),$ and $\mathcal{D}(H_{\gamma})$ are solid\footnote{We recall that the property of being solid is a robust type of completeness in this setting.}, and we show that the group structures on $\Lambda$ and $H_{\gamma}$ endow them with the structure of solid Hopf $K$-algebras, i.e., Hopf algebras with respect to the solid tensor product $-\otimes^{\blacksquare}_K-$. We also show that there is a universal $\gamma$-locally analytic character $\kappa: \Lambda \times H_{\gamma} \to \gmhateta^\lozenge$, where $\gmhateta$ is the rigid generic fiber of the formal multiplicative group $\gmhat$ over $\spf \mathcal{O}_K$ and $(-)^\lozenge$ denotes the associated diamond. Our first main theorem concerns the Fourier transforms for this data:
\begin{mainThm}\label{thm.rational-fourier-theory}
Integration against the universal character $\kappa$ defines isomorphisms of solid Hopf algebras over $K$
    \[ \mathcal{D}(H_\gamma) \xrightarrow{\sim} \mathcal{O}^{\gamma-\locan}(\Lambda) \textrm{ and } \mathcal{D}^{\gamma-\locan}(\Lambda) \xrightarrow{\sim} \mathcal{O}(H_{\gamma}). \]
These isomorphisms are functorial in the triple $(\Lambda, V, \gamma)$ and naturally dual to each other.
\end{mainThm}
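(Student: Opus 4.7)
The plan is to construct the two Fourier maps directly via integration against the universal character $\kappa\colon \Lambda \times H_\gamma \to \gmhateta^\lozenge$, verify they are solid Hopf algebra homomorphisms using the fact that $\kappa$ is a character in each variable, and then establish the isomorphism statement by reducing to the classical theorems of Amice and Schneider--Teitelbaum in the rank one case.

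First I would set up the maps. For each $h \in H_\gamma$, the partial character $\kappa(-,h)$ is a $\gamma$-locally analytic character of $\Lambda$ by the very definition of $H_\gamma$, hence defines an element of $\mathcal{O}^{\gamma-\locan}(\Lambda)$; dually, for each $\lambda \in \Lambda$, the function $\kappa(\lambda,-)$ is a section of $\mathcal{O}(H_\gamma)$. Tested against profinite sets in the condensed language, these assemble to natural morphisms
\begin{align*}
\mathcal{F}_\Lambda\colon\mathcal{D}^{\gamma-\locan}(\Lambda) &\to \mathcal{O}(H_\gamma),\quad \nu \mapsto \bigl(h\mapsto \langle \nu, \kappa(-,h)\rangle\bigr),\\
\mathcal{F}_H\colon\mathcal{D}(H_\gamma) &\to \mathcal{O}^{\gamma-\locan}(\Lambda),\quad \mu\mapsto\bigl(\lambda\mapsto\langle\mu,\kappa(\lambda,-)\rangle\bigr).
\end{align*}
Mutual duality is formal from the symmetric role of $\kappa$, and Hopf-algebra compatibility is encoded by the two character identities $\kappa(\lambda_1+\lambda_2,h)=\kappa(\lambda_1,h)\cdot\kappa(\lambda_2,h)$ and $\kappa(\lambda,h_1\cdot h_2)=\kappa(\lambda,h_1)\cdot\kappa(\lambda,h_2)$, which exchange convolution and multiplication after applying the solid tensor product $-\otimes_K^\blacksquare-$.

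Next I would reduce to a standard model by functoriality in $(\Lambda,V,\gamma)$. Passing first to $C$ and invoking Galois descent at the end, one may split $\gamma$ into one-dimensional pieces together with a possibly nonzero part on which $\gamma$ vanishes; because $\kappa$ is multiplicative, the Fourier transforms decompose accordingly as completed tensor products, and it suffices to treat two atomic cases: \textbf{(i)} $\Lambda = \mathbb{Z}_p$ and $\gamma=0$, where $H_\gamma$ is étale over $\Spd K$ and the statement reduces to the combination of Mahler expansion and Pontryagin duality between continuous functions and measures on $\mathbb{Z}_p$; and \textbf{(ii)} $\Lambda = \mathbb{Z}_p$ with $\gamma\colon\mathbb{Z}_p\otimes_{\mathbb{Z}_p}C \xrightarrow{\sim} C$, in which case $H_\gamma^{\mathrm{rig}}=\gmhateta$ and the theorem becomes precisely the Amice isomorphism of \cite{AmiceInterpolation}, refined by \cite{SchneiderTeitelbaumFourier} to the setting of locally analytic distributions over non-archimedean extensions of $\qp$. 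Descent from $C$ back to $K$ is then automatic because both $\kappa$ and the four spaces in question are $\Gal(\overline{K}/K)$-equivariant by construction.

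The main obstacle will be keeping the analyticity conditions honest in the condensed language: that $\mathcal{F}_\Lambda(\nu)$ is not merely a function on $H_\gamma$ but an element of $\mathcal{O}(H_\gamma)$, and, more delicately, that $\mathcal{F}_H(\mu)$ is $\gamma$-locally analytic and not only continuous. Both points amount to showing that the formation of $\kappa$ respects the internal structure of the solid categories, and the natural strategy is to upgrade the classical Fréchet isomorphism in case \textbf{(ii)} to an isomorphism of solid $K$-vector spaces via the machinery of \cite{CondensedNotes}, where solidity plays the role of completeness. Once continuity in the solid sense has been established in the atomic cases, compatibility with the Hopf structure follows formally from the character identities written above.
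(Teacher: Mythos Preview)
Your construction of the Fourier maps and the duality/Hopf compatibility argument via the bicharacter identities is fine and matches the paper. The gap is in your reduction step.

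You propose, after passing to $C$, to ``split $\gamma$ into one-dimensional pieces together with a possibly nonzero part on which $\gamma$ vanishes,'' i.e., to decompose $(\Lambda,V,\gamma)$ as a direct sum of copies of your atomic cases \textbf{(i)} and \textbf{(ii)}. This would require a $\mathbb{Z}_p$-basis $e_1,\ldots,e_h$ of $\Lambda$ such that each $\gamma(e_i)$ is either zero or a basis vector of $V_C$. But $\gamma$ is a $C$-linear map $\Lambda\otimes_{\mathbb{Z}_p} C\to V_C$, and there is no reason for $\ker\gamma$ to be defined over $\mathbb{Q}_p$. For instance, with $\Lambda=\mathbb{Z}_p^2$, $V=C$, and $\gamma(e_1)=1$, $\gamma(e_2)=\alpha$ for $\alpha\in C\smallsetminus\mathbb{Q}_p$, no nonzero element of $\Lambda$ lies in $\ker\gamma$, so your case \textbf{(i)} never appears and no rank-one splitting of the triple exists. (Incidentally, even when it does apply, case \textbf{(i)} with $\gamma=0$ gives $V=0$, so $\mathcal{O}^{\gamma-\locan}(\Lambda)$ consists of \emph{locally constant} functions and $H_\gamma$ is the \'etale group $\Lambda^*\otimes\mu_{p^\infty}$; Mahler expansion of continuous functions is not the relevant statement there.)

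The paper circumvents this obstruction by a different two-step reduction. First it treats the \emph{maximal} datum $\mathcal{X}'=(\Lambda,\Lambda\otimes_{\mathbb{Z}_p}C,\mathrm{id})$, for which $H_{\Lambda}=\Lambda^*\otimes\widehat{\mathbb{G}}_{m,\eta}$ and the K\"unneth formula genuinely reduces to a product of one-dimensional Amice transforms (your case \textbf{(ii)} only). Then, for arbitrary $\gamma$ with kernel $W$, it identifies $\mathcal{O}^{\gamma-\locan}(\Lambda)$ with the $W$-invariants inside $\mathcal{O}^{\locan}(\Lambda)$ for the invariant-derivation action, and matches this on the Fourier side with the Zariski-closed subgroup $H_\gamma\hookrightarrow H_\Lambda$ cut out by $\langle w,\log_{H_\Lambda}\rangle=0$ for $w\in W$; the isomorphism for $\mathcal{X}$ is then obtained from that for $\mathcal{X}'$ by passing to $W$-invariants. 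Replacing your atomic decomposition by this ``go up to the full locally analytic case, then cut down by Cauchy--Riemann equations'' maneuver is the missing idea.
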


That these transforms respect the Hopf algebra structures is an analog of the exchange of convolution and multiplication of functions for the classical Fourier transform on $\mathbb{R}$. 

\begin{Eg}
Suppose that $L$ is a finite extension of $\qp$ contained in $K$. Take $\Lambda=\mathcal{O}_L$ with the trivial Galois action, take $V=K$, and let $\gamma:\mathcal{O}_L \otimes_{\zp} C \to C$ be the surjection coming from the embedding of $L$ in $C$. Then $\mathcal{O}^{\gamma-\locan}(\Lambda)$ can be identified with the space of functions $\mathcal{O}_L \to K$ that are $L$-locally analytic in the sense that they can be written locally on $\mathcal{O}_L$ as a convergent power series in one variable with coefficients in $K$. In this setting, the second isomorphism in Theorem \ref{thm.rational-fourier-theory} recovers the $p$-adic Fourier theory of Schneider--Teitelbaum (\cite[Theorem 2.3]{SchneiderTeitelbaumFourier}), see Proposition \ref{Prop:Comparison}. As in \cite[\S 3]{SchneiderTeitelbaumFourier}, the character group $H_\gamma$ is a twisted form of the generic fiber of a one-dimensional Lubin--Tate formal group for $L$. 
\end{Eg}

Our Fourier transforms also satisfy equivariance properties with respect to actions of $\Sym^\bullet V$ and $\Sym^\bullet V^*$. To define these actions, we first note that we can identify $\Lie H_{\gamma}$ with $V^*$, and that there is a natural logarithm map $\log_{H_{\gamma}}: H_\gamma \rightarrow \Lie H_{\gamma}=V^*$. We equip $\mathcal{O}(H_\gamma)$ with the action of $\Sym^\bullet V^*$ by invariant differential operators, and we equip $\mathcal{O}^{\gamma-\locan}(\Lambda)$ with the action of $\mathrm{Sym}^\bullet V^*$ by multiplication by polynomial functions on $V$ pulled back along $\gamma$. Similarly, we equip $\mathcal{O}^{\gamma-\locan}(\Lambda)$ with the action $\Sym^\bullet V$ by invariant differential operators, and we equip $\mathcal{O}(H_\gamma)$ with the action of $\mathrm{Sym}^\bullet V$  by multiplication by polynomial functions on $V^*$ pulled back along $\log_{H_\gamma}$.

\begin{Prop} \label{Prop:Equivariance}
    The Fourier transforms of Theorem \ref{thm.rational-fourier-theory} are equivariant for the natural actions of $\mathrm{Sym}^\bullet V^*$ and $\mathrm{Sym}^\bullet V$ described above. 
\end{Prop}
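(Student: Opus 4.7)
The plan is to reduce the equivariance statement to two differential identities for the universal character $\kappa: \Lambda \times H_\gamma \to \gmhateta^\lozenge$. Intuitively, writing $\kappa = \exp\langle \gamma(-), \log_{H_\gamma}(-) \rangle$, differentiating in one variable produces a multiplication operator in the other, and all four cases of the equivariance statement will then follow from these identities together with the definitions of the actions and of the Fourier transforms.

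More precisely, I would first establish: for $v \in V^\ast = \Lie H_\gamma$ with associated invariant derivation $D_v^{H_\gamma}$ on $\mathcal{O}(H_\gamma)$, and for $w \in V$ with associated invariant derivation $D_w^\Lambda$ on $\mathcal{O}^{\gamma-\locan}(\Lambda)$, the identities
\[
D_v^{H_\gamma}\kappa = (\gamma^\ast v) \cdot \kappa, \qquad D_w^\Lambda \kappa = (\log_{H_\gamma}^\ast w) \cdot \kappa,
\]
where $\gamma^\ast v \in \mathcal{O}^{\gamma-\locan}(\Lambda)$ and $\log_{H_\gamma}^\ast w \in \mathcal{O}(H_\gamma)$ are the linear functions used to define the actions in the proposition. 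These reduce to a calculation on the logarithm of $\gmhateta$: by construction $\kappa(x, -)$ is the $\gamma$-analytic character of $H_\gamma$ whose derivative at the identity is $\gamma(x) \in V$, which yields the first identity after pairing with $v \in V^\ast$; the second is symmetric, using that $\log_{H_\gamma}(h) \in V^\ast$ is by construction the derivative of $\kappa(-, h)$ in the $V$-direction along $\gamma$.

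Granted these identities, consider the Fourier transform $\mathcal{F}: \mathcal{D}(H_\gamma) \xrightarrow{\sim} \mathcal{O}^{\gamma-\locan}(\Lambda)$, $\mathcal{F}(\mu)(x) = \mu(h \mapsto \kappa(x,h))$. The action of $v \in V^\ast$ on $\mathcal{D}(H_\gamma)$ is dual to $D_v^{H_\gamma}$ on $\mathcal{O}(H_\gamma)$, so the first identity gives $\mathcal{F}(v \cdot \mu)(x) = \mu(D_v^{H_\gamma}\kappa(x,-)) = (\gamma^\ast v)(x) \cdot \mathcal{F}(\mu)(x)$, which is precisely multiplication by $\gamma^\ast v$ on the right-hand side. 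Similarly, the action of $w \in V$ on $\mathcal{D}(H_\gamma)$ is the module action dual to multiplication by $\log_{H_\gamma}^\ast w$ on $\mathcal{O}(H_\gamma)$, so the second identity combined with interchanging $D_w^\Lambda$ with the pushforward along projection yields $\mathcal{F}(w \cdot \mu) = D_w^\Lambda \mathcal{F}(\mu)$. The second Fourier isomorphism $\mathcal{D}^{\gamma-\locan}(\Lambda) \xrightarrow{\sim} \mathcal{O}(H_\gamma)$ is handled identically by swapping the two variables, and since the two Fourier isomorphisms are dual by Theorem \ref{thm.rational-fourier-theory}, the equivariance for one formally implies that for the other.

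The main obstacle is technical rather than conceptual: verifying that these formal manipulations are legitimate in the solid/condensed framework. Both Fourier transforms and all four actions are defined by dualizing and tensoring structure on $\kappa$ and on the Hopf algebras, so by the universal property of the solid tensor product the equivariance reduces to the two differential identities for $\kappa$ above together with the naturality of the pairings defining the actions. Thus the real content lies in the two identities, and the remainder is a bookkeeping check against the definitions.
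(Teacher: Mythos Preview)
Your proposal is correct and matches the paper's approach essentially line for line: the paper isolates the same two differential identities for $\kappa$ (its Lemma~\ref{Lem.DerivationActionLocAn}, proved via $\kappa(x)=\exp(\langle x,\log_{H_\Lambda}\rangle)$ near the identity), then unwinds the definition of the Fourier transform exactly as you do in Propositions~\ref{Prop:FTequivSymV} and~\ref{Prop:FTSymVstarEquiv}, using duality to reduce to a single transform. The only cosmetic difference is that the paper computes with $\mathbb{F}_{\mathcal{X}}:\mathcal{D}^{\gamma-\locan}(\Lambda)\to\mathcal{O}(H_\gamma)$ rather than its dual, and it justifies the step you flag (pulling $\mu$ past the derivation in the other variable) by noting that after pullback $\mu$ becomes $\mathcal{O}_{H_{\mathcal{X}}}$-linear.
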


Proposition \ref{Prop:Equivariance} is analogous to the statement that the classical Fourier transform on $\mathbb{R}$ exchanges multiplication by the coordinate $x$ with differentiation $\frac{d}{dx}$.

\subsubsection{}
Before explaining the ingredients that go into the proofs of Theorem \ref{thm.rational-fourier-theory} and Proposition \ref{Prop:Equivariance}, we make some remarks.
\begin{Rem}
 The space $\mathcal{O}^{\gamma-\locan}(\Lambda) \subseteq \Cont(\Lambda, C)^{\Gal(\overline{K}/K)}$ always contains the space of locally constant functions $\bigcup_n \Cont(\Lambda/p^n \Lambda, C)^{\Gal(\overline{K}/K)}$. Via pullback along $\gamma$, the space $\mathcal{O}^{\gamma-\locan}(\Lambda)$ also contains the polynomial algebra $\mathrm{Sym}^\bullet V^*$. The $K$-algebra spanned by products of these polynomials and locally constant functions is dense in $\mathcal{O}^{\gamma-\locan}(\Lambda)$. 
\end{Rem}

\begin{Rem}
   In Theorem \ref{thm.integral-fourier-theory} we establish an integral analog of Theorem \ref{thm.rational-fourier-theory} for $p$-divisible groups over $\Spf \mathcal{O}_K$ (more generally, $\Spf R$ for any $p$-adically complete ring $R$) as an immediate consequence of Cartier duality for finite flat groups schemes. The integral and rational results satisfy a natural compatibility, see Theorem \ref{thm.integral-rational-compatibility} for a precise statement. 
\end{Rem}

\begin{Rem}
    When $K$ is a $p$-adic field (i.e., discretely valued with perfect residue field), there is an initial/maximal choice of $\gamma$, given by the natural map $\Lambda_C \rightarrow \Lambda_C/\mathrm{im}\theta$ for $\theta$ the Sen operator. When $\mathcal{H}$ is a $p$-divisible group over $\mathcal{O}_K$ and $\Lambda = T_p \mathcal{H}^\vee(C)$, the Hodge--Tate map $T_p \mathcal{H}^\vee(C) \otimes C \rightarrow \omega_{\mathcal{H}} \otimes_{\mathcal{O}_K} C$ can be identified with this initial map and the rigid generic fiber $\mathcal{H}_\eta$ is the character variety associated with $\Lambda=T_p G^\vee(C)$ with its usual Galois action and $\gamma$ the Hodge--Tate map. 
\end{Rem}

\begin{Rem}\label{remark.locally-compact-analog}  We expect to have Fourier transforms for other related objects: For example, for certain spaces of functions on Banach--Colmez spaces of slopes between $0$ and $1$. These Banach--Colmez spaces can be realized (typically in many different ways) as the universal covers, in the sense of \cite{ScholzeWeinstein}, of $p$-divisible rigid analytic groups. The universal cover is an extension of the $p$-divisible rigid analytic group by its Tate module, and thus such a Fourier transform would be intimately related to Theorem \ref{thm.rational-fourier-theory}; we make a precise conjecture and detail this connection in \cite{GHHBC}. More generally, one hopes to find an analog of the full category of locally compact abelian groups where our Fourier theory can be situated; Juan Esteban Rodriguez Camargo has suggested to look for a natural category of group objects in analytic stacks closed under extension and Cartier duality.
\end{Rem}

\begin{Rem}
In \cite{HoweUnipotent}, the integral $p$-adic Fourier theory for $\gmhat$ is used to study the $p$-adic interpolation of Maass--Shimura operators on the space of $p$-adic modular forms. One of our motivations for proving Theorem \ref{thm.rational-fourier-theory} and Proposition \ref{Prop:Equivariance} is to similarly study the $p$-adic interpolation of Maass--Shimura operators on spaces of $p$-adic automorphic forms over the $\mu$-ordinary locus of more general Shimura varieties (as in e.g. \cite{EischenMantovan}). We hope that this will have applications to the construction of $p$-adic $L$-functions (as in e.g. \cite{HarrisSquareRoot}).
\end{Rem}

\subsection{Sketch of proof}\label{ss.sketch-of-proof} \label{Sub:Proof}

We sketch the proof of Theorem~\ref{thm.rational-fourier-theory}: First, the various compatibilities are essentially formal after the Fourier transforms have been constructed. In particular, using the duality, it suffices to prove that $\mathcal{D}^{\gamma-\locan}(\Lambda) \xrightarrow{} \mathcal{O}(H_{\gamma})$ is an isomorphism.  

To obtain this isomorphism we first treat the case where the Galois action is trivial, where the argument is essentially that of Schneider--Teiltelbaum \cite[\S2]{SchneiderTeitelbaumFourier}. More precisely, we begin by considering $V=\Lambda \otimes_{\mathbb{Z}_p} K$ and $\gamma$ the identity map $\Lambda \otimes_{\mathbb{Z}_p}  C \rightarrow \Lambda \otimes_{\mathbb{Z}_p} C$. In this case the $\gamma$-locally analytic functions are the locally analytic functions in the usual sense and $H_\gamma=\Lambda^* \otimes_{\mbb{Z}_p} \widehat{\mbb{G}}_{m,\eta}^\lozenge$. In particular, the Fourier transform in this setting is a multi-dimensional Amice transform, and thus the result is argued by reduction to the results of \cite{AmiceInterpolation}. To deduce the result for arbitrary $V$ but still when the Galois action is trivial, we observe that the $\gamma$-locally analytic functions can be cut out among all locally analytic functions as those annihilated by certain invariant vector fields. By the case of the theorem that has already been established, these are matched on the Fourier dual side with multiplication by coordinate functions composed with the logarithm. Thus the $\gamma$-locally analytic functions are a closed subspace and the associated character group $H_{\gamma}$ is a Zariski closed subvariety of $\Lambda^* \otimes_{\mbb{Z}_p} \widehat{\mbb{G}}_{m,\eta}$. Because we have already established the Fourier transform is an isomorphism on all locally analytic functions, we deduce that it is also an isomorphism on $\gamma$-locally analytic functions via passage to a closed subspace and its dual quotient. 

We obtain the general case of Theorem~\ref{thm.rational-fourier-theory} where the Galois action can be non-trivial by descent from the case of trivial Galois action when $K=C$. 

\subsection{Fourier transforms in families} One of the key ingredients in Theorem \ref{thm.rational-fourier-theory} is our \emph{construction} of the Fourier transforms. To make this construction, we need to work relatively to define the universal $\gamma$-locally analytic character of $\Lambda$, which lives over the base $H_{\gamma}$. Moreover, since we want to prove that Fourier transforms are isomorphisms of condensed sets, we also want to allow our base to be, e.g., $\Spd K \times S$ for $S$ a profinite set. Because the constructions and proofs already involve working over these kinds of general bases, it is natural to ask whether a version of Theorem \ref{thm.rational-fourier-theory} itself holds over a more general base. 

Our approach is to establish a maximally general version by working over arbitrary small v-stacks, and to then explain how it specializes to more concrete settings like in Theorem \ref{thm.rational-fourier-theory}. This relative theory is not just a matter of curiosity or a technical ingredient for the proof over a point --- it also plays an essential role in our construction of a global Eisenstein distribution over the $p$-adic modular curve. 

\subsubsection{Main results} Let $Y$ be a small v-sheaf (or small v-stack) over $\spd \qp$, i.e., a sheaf on the category of affinoid perfectoid spaces over $\qp$ equipped with the v-topology. Let $Y_{\mathrm{v}}$ be the category of affinoid perfectoid spaces over $Y$ with the v-topology together with its sheaf of rings $\mathcal{O}_Y$. Let $\Lambda$ be a locally free $\ul{\zp}$-module over $Y$, let $V$ be a locally free sheaf of $\mathcal{O}_Y$-modules and
let $\gamma:\Lambda \otimes_{\ul{\zp}} \mathcal{O}_Y \to V$ be a surjective morphism. We define the character group $H_{\gamma}$ by the following fiber product
\begin{equation} \label{Eq:FiberProductIntro}
    \begin{tikzcd}
       H_{\gamma} \arrow{r} \arrow{d} \arrow[dr, phantom, "\scalebox{1.5}{$\lrcorner$}" , very near start, color=black] & \Hom_{Y}(\Lambda, \widehat{\mathbb{G}}_{m,\eta,Y}^{\lozenge}) \arrow{d}{\operatorname{Log}} \\
        \Hom_{\mathcal{O}_{Y}}(V, \mathbb{G}_{a,Y}^{\lozenge}) \arrow{r}{\gamma^{\ast}} & \Hom_{Y}(\Lambda, \mathbb{G}_{a,Y}^{\lozenge}).
    \end{tikzcd}
\end{equation}
For $(A,A^+)$ affinoid perfectoid over $\qp$ and $\spd(A,A^+) \to Y$ a map, the base change $H_{\gamma} \times_{Y} \spd(A,A^+)$ is given by $(H_{\gamma}^{\mathrm{rig}})^\lozenge \to \spd(A,A^+)$ for a unique relative $p$-divisible rigid analytic group $H_{\gamma}^{\mathrm{rig}} \to \spa(A,A^+)$ in the sense of Fargues \cite{FarguesII}. This latter property defines the notion of a \emph{p-divisible v-group} over $Y$, see Definition \ref{Def:PdivisibleGroup}. 

\subsubsection{} We consider the sheaf $\mathcal{O}_{H_{\gamma}/Y}$ of $\mathcal{O}_Y$-algebras given by the pushforward of $\mathcal{O}_{H_{\gamma}}$ along the structure map $H_{\gamma} \to Y$. We let $\mathcal{D}_{H_{\gamma}/Y}=\Hom_{\mathcal{O}_Y}(\mathcal{O}_{H_{\gamma}/Y}, \mathcal{O}_Y)$ be the dual. Using the group structure of $H_{\gamma}$, we equip both $\mathcal{O}_{H_{\gamma}/Y}$ and $\mathcal{D}_{H_{\gamma}/Y}$ with the structure of a (commutative) Hopf algebra for the solid tensor product of $\mathcal{O}_Y$-modules using K\"{u}nneth isomorphisms (see Proposition \ref{Prop:GeometricFrechet}). On the dual side, we construct a sheaf $\mathcal{O}^{\gamma\mathrm{-la}}_{\Lambda/Y}$ of $\gamma$-locally analytic functions on $\Lambda$, and we also consider its dual $\mathcal{D}^{\gamma\mathrm{-la}}_{\Lambda/Y}$. We also equip these with the structure of (commutative) solid Hopf $\mathcal{O}_Y$-algebras using K\"unneth isomorphisms (see Proposition \ref{Prop:GeometricLB}), and show that the universal character $\Lambda \times_Y H_\gamma \rightarrow \widehat{\mathbb{G}}_{m,\eta}^\lozenge$ lies in $\mathcal{O}_{\Lambda/Y}^{\gamma-\locan}(H_\gamma)\subseteq \mathcal{O}(\Lambda \times_Y H_{\gamma})$. 

\begin{mainThm} \label{Thm:IntroFourierVsheaves}
Integration against the universal character defines isomorphisms of solid Hopf $\mathcal{O}_Y$-algebras
    \begin{align}
     \mathcal{D}_{H_{\gamma}/Y} \xrightarrow{\sim} \mathcal{O}^{\gamma\mathrm{-la}}_{\Lambda/Y} \text{ and } \mathcal{D}^{\gamma\mathrm{-la}}_{\Lambda/Y} \xrightarrow{\sim} \mathcal{O}_{H_{\gamma}/Y}
    \end{align}
    which are functorial in $(\Lambda, V ,\gamma)$, compatible with base change in $Y$, and naturally dual. 
\end{mainThm}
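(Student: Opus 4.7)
The plan is to construct the two Fourier transforms from the universal character $\kappa$ and a K\"unneth/pairing formalism, verify base change compatibility and the Hopf algebra structure directly from the construction, and then reduce the isomorphism statement to Theorem \ref{thm.rational-fourier-theory} by combining base change with v-descent. Duality between the two transforms will be built into the construction by the symmetric roles of $\Lambda$ and $H_{\gamma}$, so in practice it suffices to construct and verify one of them.

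For the construction, the universal character $\kappa$ is a section of $\mathcal{O}^{\gamma\mathrm{-la}}_{\Lambda/Y}(H_{\gamma})$; via the K\"unneth isomorphisms of Propositions \ref{Prop:GeometricFrechet} and \ref{Prop:GeometricLB}, this corresponds to a pairing $\mathcal{O}^{\gamma\mathrm{-la}}_{\Lambda/Y} \otimes^{\blacksquare}_{\mathcal{O}_{Y}} \mathcal{O}_{H_{\gamma}/Y} \to \mathcal{O}_{Y}$ whose two evaluations yield the maps $\mathcal{D}_{H_{\gamma}/Y} \to \mathcal{O}^{\gamma\mathrm{-la}}_{\Lambda/Y}$ and $\mathcal{D}^{\gamma\mathrm{-la}}_{\Lambda/Y} \to \mathcal{O}_{H_{\gamma}/Y}$. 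Functoriality in $(\Lambda,V,\gamma)$ and compatibility with base change in $Y$ reduce to the corresponding properties of the geometric pushforwards feeding into these K\"unneth isomorphisms, and the Hopf compatibility follows by applying the same formalism to the group law maps of $\Lambda$ and $H_{\gamma}$ together with the fact that $\kappa$ is a bi-character.

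To prove the isomorphism, I would first use base change compatibility and v-descent to reduce to $Y=\Spa(A,A^{+})$ affinoid perfectoid, and then use a pro-\'etale cover to trivialize $\Lambda$. With $\Lambda$ trivial, I would follow the strategy of Subsection \ref{Sub:Proof} in families: handle the universal case $V=\Lambda\otimes_{\ul{\zp}}\mathcal{O}_{Y}$ with $\gamma=\mathrm{id}$, where $H_{\gamma}=\Lambda^{\ast}\otimes_{\ul{\zp}}\gmhateta^{\lozenge}$ and the statement becomes a relative multi-dimensional Amice transform over $A$; then for general $\gamma$, use the fiber product presentation \eqref{Eq:FiberProductIntro} to cut $H_{\gamma}$ out as a Zariski closed sub-v-sheaf, matched on the other side with $\mathcal{O}^{\gamma\mathrm{-la}}_{\Lambda/Y}$ as a closed subspace cut out by the invariant vector fields in $\ker(\gamma^{\ast})$, and deduce the general case by passing to closed subspaces and dual quotients as in the absolute case.

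The main obstacle will be the relative Amice-type input over a general affinoid perfectoid algebra $A$, together with the verification that the relevant sheaves behave well under arbitrary base change and satisfy the expected K\"unneth formulas in the solid setting robustly enough to support the descent step. The classical Amice theorem supplies the field case used in Theorem \ref{thm.rational-fourier-theory}; establishing its analog with $A$-coefficients, in a form compatible with the solid pushforwards and with the multi-dimensional setup forced by trivializing $\Lambda$ as $\ul{\zp}^{n}$, is the substantive technical input. Once this is available and the geometric K\"unneth machinery of Propositions \ref{Prop:GeometricFrechet} and \ref{Prop:GeometricLB} is in place, the reductions described above together with the duality, Hopf, and functoriality statements become largely formal manipulations at the condensed level.
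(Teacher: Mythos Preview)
Your proposal is correct and follows essentially the same route as the paper: construct the transforms from $\kappa$, use the K\"unneth machinery of Propositions \ref{Prop:GeometricFrechet} and \ref{Prop:GeometricLB} for the Hopf and duality statements, reduce v-locally to affinoid perfectoid with $\Lambda$ trivial, treat first $\gamma=\mathrm{id}$, and then pass to general $\gamma$ by cutting out $H_\gamma$ as a Zariski closed subgroup matched on the other side with the invariant-vector-field condition on $\mathcal{O}^{\gamma\mathrm{-la}}_{\Lambda/Y}$ (this is the content of Proposition \ref{Prop:InvariantsCoinvariants} and Lemma \ref{Lem:LaAndCauchyRiemann}).

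The one place your packaging differs slightly from the paper is in how the ``hard input'' is organized. You frame it as a relative multi-dimensional Amice transform over a general affinoid perfectoid $A$. The paper instead decomposes this into two cleaner steps: first use K\"unneth (Proposition \ref{Prop:GeometricFrechet}-(8) and Proposition \ref{Prop:GeometricLB}-(4)) to reduce the multi-dimensional case $\Lambda=\ul{\zp}^h$ to the rank-one case $\Lambda=\ul{\zp}$; then, since $\ul{\zp}$ is pulled back from $\Spd\qp$, use the quasi-coherence/base-change isomorphisms (Proposition \ref{Prop:GeometricFrechet}-(1) and Proposition \ref{Prop:GeometricLB}-(1),(3) together with Corollary \ref{Cor:DualTensorFrechetBaseChange}) to reduce to $Y=\Spd\qp$, where only the classical one-variable Amice theorem is needed (Lemma \ref{Lem:TheAmiceCase}). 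This avoids ever proving Amice ``with $A$-coefficients'' directly; all the relative work is absorbed into the base-change and flatness results you already cite. Your formulation would of course yield the same conclusion, but the paper's decomposition isolates the genuinely analytic input (Amice over $\qp$) from the sheaf-theoretic input (base change for Fr\'echet-type sheaves), which is conceptually cleaner.
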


We also prove an equivariance for actions of $\Sym^\bullet V$ and $\Sym^\bullet V^{\ast}$ in this setting that specializes to Proposition \ref{Prop:Equivariance}, see Proposition \ref{Prop:FTequivSymV} and Proposition \ref{Prop:FTSymVstarEquiv}. We will say more about the proof of Theorem \ref{Thm:IntroFourierVsheaves} in 
\S\ref{Sub:IntroFA}. 

\subsubsection{} If $Y=\spd(R,R^+)$ comes from a seminormal rigid space over a non-archimedean field $K/\mathbb{Q}_p$ and $V$ is a trivial vector bundle, we use Theorem~\ref{Thm:IntroFourierVsheaves} to deduce an extension of Theorem~\ref{thm.rational-fourier-theory} where the objects are now solid Hopf $R$-algebras, see Corollary~\ref{Cor:Main}. This works also when $(R,R^+)$ is a diamantine Huber pair (e.g. affinoid perfectoid), see \S\ref{subsub:Fierce}. When the character variety $H_\gamma$ is the base change of a $p$-divisible group $\mathcal{H}$ over $R^+$, we also establish in Theorem \ref{thm.integral-rational-compatibility} a compatibility between this result and an integral Fourier theory for $\mathcal{H}$ described in Theorem \ref{thm.integral-fourier-theory} (this integral Fourier theory is a consequence of Cartier duality for finite flat groups schemes; see the introduction to \S\ref{s.integral-fourier} for more details). 

\begin{Rem}
The integral compatibility has some interesting consequences related to subtle problems in the comparison of norms on completed tensor products and explicit cases of Bhatt--Scholze's surjectivity result for perfectoidizations of semiperfectoid rings \cite[Theorem 7.4]{BhattScholzePrisms} due to Fresnel--de Mathan \cite{FresneldeMathan.AlgebresL1padiques}; see Remark \ref{remark.non-injectivity} and Corollary \ref{cor.la-section}. 
\end{Rem}

We give an application of these results below. 

\subsection{The global Eisenstein measure and distribution} Fix $p>3$ and $N \ge 3$ coprime to $p$. Let $\mathfrak{Y}$ be the modular curve over $\spf \zp$ of full level $N$, let $\mathcal{E}$ be the universal elliptic curve over $\mathfrak{Y}$ and let $\omega_{\mathcal{E}}$ be the sheaf of relative differentials on $\mathcal{E}/\mathfrak{Y}$. 

\newcommand{\Eis}{\mathrm{Eis}}
For each $n$ coprime to $p$, we adapt a construction of Katz \cite{Katz.FormalGroupsAndPAdicInterpolation} by using our integral Fourier transform to turn the Weierstrass $\wp$-function into a weight two modular form $\Eis^{(n)}$ on $\mathfrak{Y}$ valued in measures on $T_p \mathcal{E}^\vee$, i.e., a section 
\[ \Eis^{(n)} \in H^0\left(\mathfrak{Y},\omega_\mathcal{E}^2 \otimes_{\mathcal{O}_\mathfrak{Y}} \Hom_{\mathcal{O}_\mathfrak{Y}}(\mathcal{O}_{T_p \mathcal{E}^\vee/\mathfrak{Y}}, \mathcal{O}_\mathfrak{Y})\right).\]
This measure interpolates the classical Eisenstein series defined on lattices $\Lambda \subset \mathbb{C}$ by 
\[ G_k(\Lambda)=\frac{(-1)^k (k-1)!}{2} \sum_{\lambda \in \Lambda \backslash \{0\}}\frac{1}{\lambda^k}. \]

\begin{mainThm}[see Theorem \ref{thm.integral-eisenstein}, Theorem \ref{thm.integral-eisensteinII}, and Example \ref{e.g.p-equiv-3-mod-4}]\label{mainThm.EisensteinMeasure} \hfill
\begin{enumerate}
    \item Writing $\mathrm{HT}: T_p\mathcal{E}^\vee \rightarrow \omega_{\mathcal{E}}$ for the Hodge--Tate map, for $k \geq 3$ we have
    \[ \int_{T_p \mathcal{E}^\vee} \mathrm{HT}^{k-2} d\Eis^{(n)}= 2(1-n^{k})G_{k} \in H^0(\mathfrak{Y}, \omega^{k}_\mathcal{E}). \]
    \item The Eisenstein measure $\mu^{(n)}$ of \cites{Katz.p-adic-L-via-moduli, Katz.TheEisensteinMeasureAndpAdicInterpolation} valued in Katz--Serre $p$-adic modular forms is recovered, up to the constant term and a degree shift by $1$ in the moments, by evaluating $\Eis^{(n)}$ at the canonical differential over the Katz--Igusa formal scheme.
    \item The one-variable $p$-adic $L$-functions interpolating Bernoulli-Hurwitz numbers of \cites{Katz.p-adic-L-via-moduli, Katz.TheEisensteinMeasureAndpAdicInterpolation, Katz.FormalGroupsAndPAdicInterpolation, Katz.DivisibilitiesCongruencesAndCartierDuality} are recovered by evaluating $\Eis^{(n)}$ at $(y^2 = 4x^3 - 4x, \frac{dx}{y})$. 
\end{enumerate}
\end{mainThm}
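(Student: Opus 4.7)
The plan is to reduce each of the three statements to an explicit computation with the integral Fourier transform of Theorem \ref{thm.integral-fourier-theory} applied to the $p$-divisible group $\mathcal{E}[p^\infty]$. The measure $\Eis^{(n)}$ is constructed by Fourier-transforming a regularized Weierstrass $\wp$-function: although $\wp$ itself has a double pole at the origin of $\widehat{\mathcal{E}}$, the difference $\wp(z) - n^2\wp([n]z)$ extends to a genuine function on $\widehat{\mathcal{E}}$ (this is the source of the superscript ``$(n)$''), and its Fourier transform produces the desired section of $\omega_\mathcal{E}^2 \otimes \Hom(\mathcal{O}_{T_p \mathcal{E}^\vee/\mathfrak{Y}}, \mathcal{O}_\mathfrak{Y})$ once the weight-$2$ dependence on the choice of formal coordinate is packaged intrinsically as an $\omega_\mathcal{E}^2$-twist.

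For part (1), I would invoke the equivariance of the integral Fourier transform --- the relative analogue of Proposition \ref{Prop:Equivariance} --- to identify the integral $\int \mathrm{HT}^{k-2} \, d\Eis^{(n)}$ with the $(k-2)$-fold application of the invariant derivation dual to $\mathrm{HT}$ under the Hodge--Tate pairing, applied to the regularized $\wp$ and evaluated at the origin of $\widehat{\mathcal{E}}$. The classical Laurent expansion of $\wp$ around the origin produces coefficients proportional to classical lattice sums; after accounting for the factorials from iterated differentiation and the normalization of $G_k$ from the introduction, one obtains $2(1-n^k)G_k$. Automatic vanishing for odd $k \ge 3$ on both sides (only even powers of $z$ appear; $G_k=0$ by lattice symmetry) serves as a consistency check.

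For parts (2) and (3), each becomes a specialization of the universal construction to a specific geometric setting. For (2), over the Katz--Igusa formal scheme $\Igkatz$ the $p$-divisible group $\mathcal{E}[p^\infty]$ splits canonically as $\widehat{\mathcal{E}} \oplus \mathcal{E}[p^\infty]^{\et}$, and $\omega_\mathcal{E}$ is trivialized by the canonical differential dual to $\widehat{\mathcal{E}} \xrightarrow{\sim} \widehat{\mathbb{G}}_m$; under these identifications, our integral Fourier transform restricts to the classical Amice--Mahler pairing underlying Katz's original construction, and evaluating $\Eis^{(n)}$ at the canonical differential trivializes the $\omega^2$-twist to reproduce Katz's measure $\mu^{(n)}$. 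The stated constant term and degree-shift discrepancy are artifacts of the differing weight conventions ($\Eis^{(n)}$ is weight $2$ valued in measures, while $\mu^{(n)}$ has $k$-th moment in a shifted weight). For (3), at the CM curve $y^2 = 4x^3 - 4x$ the formal group is a (twist of a) Lubin--Tate formal group for a quadratic order, and the integral-rational compatibility of Theorem \ref{thm.integral-rational-compatibility} specialized to this setting recovers Katz's interpolation of Bernoulli--Hurwitz numbers; the evaluation at $dx/y$ is then a direct unwinding of definitions.

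The main obstacle, in my view, is the careful bookkeeping of normalizations in parts (2) and (3): reconciling our sign, factorial, and weight conventions with those in Katz's series of papers requires tracking a number of shifts and twists simultaneously, and precisely pinning down the ``constant term'' and ``degree shift by $1$'' ambiguities asserted in the statement. By contrast, the Laurent expansion calculation in (1) and the canonical splitting of $\mathcal{E}[p^\infty]$ over $\Igkatz$ are classical inputs; the novelty lies in showing that the relative Fourier formalism defined over the entire $p$-adic modular curve specializes cleanly to these earlier ordinary-locus and CM-point constructions.
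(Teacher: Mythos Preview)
Your approach to parts (1) and (3) matches the paper's closely: the construction of $\Eis^{(n)}$ as the inverse Fourier transform of the regularized Weierstrass function $x^{(n)}=x-n^2[n]^*x$ restricted to $\mathcal{E}[p^\infty]$ (note: not just $\widehat{\mathcal{E}}$, since over the ordinary locus these differ), the identification of $\int \HT^{k-2}\,d\Eis^{(n)}$ with $\partial^{k-2}x^{(n)}$ at the identity via the equivariance of Proposition~\ref{Prop:IntegralEquivariance}, and the transcendental computation via the Laurent expansion of $\wp$ are exactly what the paper does. Part (3) is likewise a direct comparison of constructions, since Katz's approach in \cite{Katz.FormalGroupsAndPAdicInterpolation} also proceeds via Cartier duality applied to the same modified $\wp$-function.

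Part (2), however, has a genuine gap. First, the claimed canonical splitting $\mathcal{E}[p^\infty]\cong\widehat{\mathcal{E}}\oplus\mathcal{E}[p^\infty]^{\et}$ over $\Igkatz$ does not exist integrally: the connected--\'etale sequence is a nontrivial extension whose class is measured by the Serre--Tate parameter, which varies over $\Igkatz$. Second, and more fundamentally, Katz's measure $\mu^{(n)}$ in \cites{Katz.p-adic-L-via-moduli,Katz.TheEisensteinMeasureAndpAdicInterpolation} is \emph{not} constructed via the Weierstrass function and Cartier duality; it is built from $q$-expansions via Deligne's observation on divisor-sum Fourier coefficients. So there is no ``same construction'' to unwind, and your structural reduction does not land on $\mu^{(n)}$. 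The paper instead proves (2) by a direct moment comparison: since $\mathbb{V}$ is $p$-torsion-free it suffices to test on $f=z^j$, where the left-hand side is $2(1-n^{j+2})G_{j+2}(\omega_{\mathrm{can}})$ by part (1), and the right-hand side is $\int z^{j+1}\,d\mu^{(n)}$, which equals the same thing by Katz's published moment formula. This argument also makes the ``constant term'' and ``degree shift'' precise as the exact identity $\int f\,d(\pi_*\Eis^{(n)})=\int z\,(f(z)-f(0))\,d\mu^{(n)}$, rather than leaving them as unspecified artifacts.
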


It is well known that the members of Katz's Eisenstein family overconverge after passing to the rigid generic fiber $\mathfrak{Y}_\eta$. On the other hand, our Eisenstein measure $\Eis^{(n)}$ induces, after base change to the rigid generic fiber, a global Hodge--Tate analytic Eisenstein distribution
\[ 
\Eis^{(n)}_\eta \in H^0\left(\mathfrak{Y}_{\eta}, (\omega^2 \otimes \mathcal{D}^{\mathrm{HT}-\locan}_{T_p \mathcal{E}_\eta^\vee/\mathfrak{Y}_\eta})\right).
\] 
Using this Eisenstein distribution, we recover the classical overconvergence (Example \ref{eg.overconvergent-eisenstein-series}). 

By the same method, we also find that the $p$-adic $L$-functions in the $p \equiv 3 \mod 4$ case of Theorem \ref{mainThm.EisensteinMeasure}-(3) overconverge to open neighborhoods in the supersingular locus. More precisely, we obtain a new family of quaternionic Eisenstein series parameterized by $\mathbb{Q}_{p^2}$-analytic characters of $\mathbb{Z}_{p^2}^\times$, and overconverge its members from the locus of formal CM by $\mathbb{Z}_{p^2}$, a profinite quaternionic double coset in the supersingular locus. In fact, we make these constructions with $\mathbb{Z}_{p^2}$ replaced by any order in a quadratic extension of $\mathbb{Q}_p$ (Example \ref{eg.quaternionic-eisenstein-series}). 

\subsection{Functional analysis and sheaf theory} \label{Sub:IntroFA} We now make some comments on the proof of Theorem \ref{Thm:IntroFourierVsheaves}. The overall strategy of proof is the same as discussed in \S\ref{Sub:Proof}, and a significant portion of this paper is devoted to building a sheaf-theoretic formalism to handle the reduction to affinoid perfectoids where the $\mathbb{Z}_p$-local system $\Lambda$ and $\mathcal{O}_Y$-module $V$ are both free (generalizing the descent from $C$ in the proof of Theorem \ref{thm.rational-fourier-theory}). 

\begin{Rem}
    When discussing $p$-adic functional analysis, we have elected to use the language of condensed mathematics over the more classical language of locally convex spaces. This has several benefits: First, v-sheaves automatically give rise to sheaves of condensed modules, since covers by profinite sets are naturally covers in the v-topology. This means that the topologies on our sheaves are already encoded in the geometry of the situation. Second, the category of condensed (or solid) modules over a ring satisfies many favorable properties which one does not have in the usual category of locally convex topological spaces (e.g., it is a Grothendieck abelian category with a symmetric monoidal structure satisfying Grothendieck’s axioms (AB4), (AB5), (AB3*) and (AB4*)). Many of our proofs exploit this extra structure.

Finally, when working over a non-archimedean field, the standard condensed objects (Banach/Fr\'echet spaces etc.) which appear in this article often coincide with their classical analogue as considered in \cite{SchneiderFA} or \cite{PGS} for example. See \cite{RJRC} for the comparison when the field is a finite extension of $\mbb{Q}_p$, and Appendix \ref{App:A} for part of the comparison over an arbitrary uniform $\qp$-Banach algebra. Thus, the condensed theory can be seen as a natural extension of the classical theory. We note, however, that it is not at all clear whether the classical theory generalizes well to general uniform Banach algebras (although \cite{GaisinJacinto} develops some theory over affinoid algebras), whereas the condensed perspective is well-suited to handle this relative setting. 
\end{Rem}

\subsubsection{} We now highlight some important technical results in the paper. For example, we prove that the sheaves $\mathcal{O}_{H/Y}, \mathcal{D}_{H/Y}, \mathcal{O}^{\gamma\mathrm{-la}}_{\Lambda/Y}, \mathcal{D}^{\gamma\mathrm{-la}}_{\Lambda/Y}$
are reflexive and that they satisfy a K\"unneth formula with respect to the solid tensor product of sheaves of $\mathcal{O}_Y$-modules, see Proposition \ref{Prop:GeometricFrechet} and \ref{Prop:GeometricLB}. Proving these two propositions uses essentially all of the theory developed in Sections \ref{Sec:Condensed} and \ref{Sec:VSheaves}. A crucial ingredient here is the flatness for the solid tensor product of (strongly countably) Fr\'echet modules for arbitrary uniform $(R,R^+)$ over $(\qp,\zp)$, see Proposition \ref{Prop:FrechetFlat}. This proposition relies on input from the theory of solid functional analysis over $\ul{R}$ developed by Bosco in \cite[Appendix A]{Bosco} (following Clausen-Scholze).

Along the way, we also show that the internal hom on the v-site is related to the internal hom in condensed modules, see \S\ref{sss.solid-hopf-and-duality-discussion-fiercely}. This is what allows us to deduce Theorem \ref{thm.rational-fourier-theory} from Theorem \ref{Thm:IntroFourierVsheaves}. To prove this, we establish a ``quasi-coherence" property for the sheaves $\mathcal{O}_{H/Y}, \mathcal{D}_{H/Y}, \mathcal{O}^{\gamma\mathrm{-la}}_{\Lambda/Y}, \mathcal{D}^{\gamma\mathrm{-la}}_{\Lambda/Y}$ over fiercely v-complete bases (see Definition \ref{Def:Fierce}) when $V$, the codomain of $\gamma$, is free. See Proposition \ref{Prop:GeometricFrechet}-(1) and \ref{Prop:GeometricLB}-(1) for precise statements. Proving this quasicoherence for (locally strongly countably) Fr\'echet sheaves such as $\mathcal{O}_{H/Y}$ and $\mathcal{D}^{\gamma\mathrm{-la}}_{\Lambda/Y}$ makes crucial use of the flatness mentioned above to commute inverse limits and solid tensor products. 

\subsection{Leitfaden} In \S \ref{Sec:Condensed}, we develop the results in $p$-adic functional analysis needed for this article in the language of condensed mathematics. After this, we discuss the theory of sheaves on small v-stacks in \S \ref{Sec:VSheaves}, and explain how one can naturally view these objects as sheaves of condensed modules. In particular, we extend the theory developed in \S \ref{Sec:Condensed} to the setting of v-sheaves. With these preliminary results at hand, we then introduce and study (families of) $p$-divisible v-groups over small v-stacks in \S \ref{Sec:FamiliesPDiv}, and sheaves of functions associated with such v-groups in \S \ref{Sec:SheavesOfLaFunctions} (with the key results being Propositions \ref{Prop:GeometricFrechet} and \ref{Prop:GeometricLB}). In \S \ref{Sec:Fourier}, we construct the Fourier transforms, establish their equivariance properties, and show that they are isomorphisms, see Theorem \ref{Thm:Main} and Corollary \ref{Cor:Main}. We then discuss our main theorem over a field $K$ in classical language in \S\ref{Sub:Field}, and compare with \cite{SchneiderTeitelbaumFourier}. In \S \ref{s.integral-fourier}, we construct integral Fourier transforms for $p$-divisible groups in the sense of Tate over $p$-adically complete rings, and prove a compatibility with the rigid analytic Fourier transforms of Corollary \ref{Cor:Main}. In \S \ref{Sec:Eisenstein}, we apply our results to construct the global Eisenstein measure and distribution, then study its properties. In the appendix, we compare a small part of \S \ref{Sec:Condensed} with the theory of topological modules over uniform Banach $\qp$-algebras, and conclude with a discussion of the Amice transform.

\subsection{Acknowledgments} We would like to thank Guido Bosco, Ellen Eischen, Lukas Gerth, Arthur-César Le Bras, Vaughan McDonald, Gal Porat, Joaqu\'{i}n Rodrigues Jacinto, Juan Esteban Rodr\'{i}guez Camargo, Thomas Rot, and Arun Soor for helpful discussions. 

This research was supported through the program ``Oberwolfach Research Fellows" by the Mathematisches Forschungsinstitut Oberwolfach in 2025. AG was (partly) funded by UK Research and Innovation grant MR/V021931/1. PvH is partly funded by the Dutch Research Council (NWO) under the grant VI.Veni.232.127. SH was supported by the National Science Foundation through grant DMS-2201112 and as a member at the Institute for Advanced Study during the academic year 2023-24 by the Friends of the Institute for Advanced Study Membership. For the purpose of Open Access, the authors have applied a CC BY public copyright licence to any Author Accepted Manuscript (AAM) version arising from this submission.

\section{Condensed functional analysis} \label{Sec:Condensed} In this section we will develop the theory of solid condensed functional analysis over uniform Banach $\qp$-algebras, inspired by and mostly following \cite{CondensedNotes}, \cite{RJRC}, \cite[Appendix A]{Bosco}. In \S \ref{Sub:CondensedPrelim}, we discuss the basic theory of solid modules over $p$-adically complete rings and the relation to topological modules, and in \S \ref{Sub:CondensedStandard}, we introduce certain standard Banach and Smith modules, and compute their duals. In \S \ref{Sub:TraceClassCompact}, we discuss the theory of trace-class and compact type morphisms. In \S \ref{Sub:Frechet}, we introduce (strongly countable) Fr\'echet modules, prove that these are always flat (Proposition \ref{Prop:FrechetFlat}), and use this property to describe their interactions with tensor products and duals (Proposition \ref{Prop:TensorInverseLimit} and Corollary \ref{Cor:DualTensorCommute}).

\subsection{Preliminaries} \label{Sub:CondensedPrelim} Fix an uncountable strong limit cardinal $\kappa$ and let $\prof_{\kappa}$ denote the category of profinite sets of cardinality at most $\kappa$, see \cite[Remark 1.3]{CondensedNotes}. Recall that the category of $\kappa$-small condensed sets (abelian groups, rings,...) is defined to be the category of sheaves of sets (abelian groups, rings,...) on $\prof_{\kappa}$ with respect to the topology of finite jointly surjective families of maps, see \cite[Definition 2.1]{CondensedNotes}. The category of condensed sets (abelian groups, rings,...) is defined to be the filtered colimit over all $\kappa$ as above of the category of $\kappa$-small condensed sets (abelian groups, rings,...), see \cite[Definition 2.11]{CondensedNotes}. 

The category of condensed abelian groups is a Grothendieck abelian category satisfying Grothendieck’s axioms (AB4), (AB5), (AB3*) and (AB4*), see \cite[Theorem 1.10]{CondensedNotes}, which moreover admits  all (small) limits and colimits.

\subsubsection{} If $R$ is a $\kappa$-small condensed ring, then a $\kappa$-small $R$-module is a sheaf of $R$-modules on $\prof_{\kappa}$. Taking the colimit over $\kappa$, we obtain the abelian tensor category $\Mod_{R}^{\mathrm{cond}}$, which is simply the category of $R$-modules in condensed abelian groups. This category is an abelian category admitting all (small) limits and colimits, which moreover satisfies Grothendieck’s axioms (AB4), (AB5), (AB3*) and (AB4*). Indeed, this follows from the case of condensed abelian groups discussed above (since limits and colimits can be computed on the underlying condensed abelian group, and since exactness can be tested on the underlying condensed abelian group). If $R=\ul{\mathbb{Z}}$, then $\Mod_{\ul{\mathbb{Z}}}^{\mathrm{cond}}$ is simply the category of condensed abelian groups, and we will denote it by $\Mod_{\mathbb{Z}}^{\mathrm{cond}}$.

\subsubsection{} Recall from \cite[Definition 5.1]{CondensedNotes} the notion of a solid abelian group. The full subcategory $\Mod_{\mathbb{Z}}^{\operatorname{solid}} \subset \Mod_{\mathbb{Z}}^{\mathrm{cond}}$ consisting of solid abelian groups is closed under (small) colimits and limits, and extensions, see \cite[Theorem 5.8]{CondensedNotes}. Therefore, it also satisfies Grothendieck’s axioms (AB4), (AB5), (AB3*) and (AB4*). The category $\Mod_{\mathbb{Z}}^{\operatorname{solid}}$ has a symmetric monoidal structure given by the solid tensor product $\otimes^{\blacksquare}_{\ul{\mathbb{Z}}}$. \smallskip 

If $R$ is a condensed commutative ring whose underlying abelian group is solid, then we can consider the category $\Mod_{R}^{\operatorname{solid}} \subset \Mod_{R}^{\cond} $ of condensed $R$-modules whose underlying abelian group is solid. This subcategory is closed under (small) colimits and limits, and extensions, see \cite[Theorem 5.8]{CondensedNotes}. Therefore, it also satisfies Grothendieck’s axioms (AB4), (AB5), (AB3*) and (AB4*). The category $\Mod_{R}^{\operatorname{solid}}$ has a symmetric monoidal structure given by the solid tensor product $\otimes^{\blacksquare}_{R}$.

\subsubsection{} \label{SubSub:FullyFaithfulOnCGspaces}

There is a functor from topological spaces (abelian groups, rings,...) to condensed sets (abelian groups, rings,...) sending $V$ to the condensed set $\ul{V}$ whose value on a $\kappa$-small profinite set $S$ is the set $C^0(S,V)$ of continuous functions from $S$ to $V$, see \cite[Example 1.5]{CondensedNotes}. The functor $V \mapsto \ul{V}$ is fully faithful when restricted to $V$ whose underlying topological space is compactly generated (e.g., metrizable), see \cite[Proposition 1.7]{CondensedNotes}.

The functor $V \mapsto \ul{V}$ has a left adjoint given by $M \mapsto M(\ast)_{\mathrm{top}}$, which is a certain way to topologize the evaluation of the condensed set $M$ on the one-point profinite set $\ast$. By construction, the topological space $M(\ast)_{\mathrm{top}}$ is compactly generated.

\begin{Lem} \label{Lem:DiscreteDirectLimit}
    For $X=\varinjlim_i X_i$ a countably indexed filtered direct limit of discrete topological spaces, the natural map
    \begin{align}
        \varinjlim_i \ul{X_i} \to \ul{X}
    \end{align}
    is an isomorphism. 
\end{Lem}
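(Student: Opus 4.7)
The plan is to reduce everything to a sectionwise computation on profinite sets. By definition the colimit $\varinjlim_i \ul{X_i}$ in condensed sets is the sheafification of the presheaf colimit $S \mapsto \varinjlim_i \Cont(S, X_i)$ on $\prof_\kappa$, so it suffices to identify this presheaf colimit with the sheaf $\ul{X}$ pointwise.

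Fix a profinite set $S$. Since each $X_i$ is discrete, $\ul{X_i}(S) = \Cont(S, X_i)$ consists of the locally constant functions $S \to X_i$, and compactness of $S$ forces any such function to have finite image. Since $X = \varinjlim_i X_i$ is a filtered colimit of discrete spaces in topological spaces, it also carries the discrete topology, and the same description applies to $\ul{X}(S)$.

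The core of the argument is to check that the natural map
\[ \varinjlim_i \Cont(S, X_i) \longrightarrow \Cont(S, X) \]
is a bijection for every profinite $S$. For surjectivity, given $f \in \Cont(S, X)$ with finite image $\{x_1,\dots,x_n\}$, the partition $S = \bigsqcup_k f^{-1}(x_k)$ is a finite clopen decomposition. Lifting each $x_k$ to some $y_k \in X_{i_k}$ and using filteredness of the (countable) system to find a common index $i$ with compatible lifts $\tilde{x}_k \in X_i$, the map $\tilde{f}\colon S \to X_i$ sending $f^{-1}(x_k)$ to $\tilde{x}_k$ is a continuous lift of $f$. For injectivity, two representatives $\tilde{f}, \tilde{g}\colon S \to X_i$ with common image $f$ in $X$ have finite combined image; each of the finitely many pairs of values that coincide in $X$ already coincides at some finite stage of the system, and taking a common upper bound gives an index where $\tilde{f}$ and $\tilde{g}$ become equal.

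These bijections are natural in $S$, so the presheaf colimit is isomorphic as a presheaf on $\prof_\kappa$ to the sheaf $\ul{X}$ and is in particular already a sheaf. Thus its sheafification --- by construction the colimit $\varinjlim_i \ul{X_i}$ in condensed sets --- coincides with $\ul{X}$. There is no real obstacle to the argument; the only substantive input is the observation that continuous functions from a compact profinite set to a discrete space have finite image, which reduces both surjectivity and injectivity to finite compatibility problems solvable inside the filtered system.
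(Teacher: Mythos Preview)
Your proof is correct and follows essentially the same approach as the paper: both reduce to the sectionwise statement that $\varinjlim_i \Cont(S,X_i) \to \Cont(S,X)$ is a bijection, using that continuous maps from a profinite set to a discrete space have finite image. The paper's proof is terser---it only explicitly treats the factorization (surjectivity) step and leaves the injectivity to the reader---whereas you spell out both directions and the sheafification argument in full; neither proof actually uses the countability hypothesis.
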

\begin{proof}
It suffices to show that continuous maps $S \to X$, with $S$ a profinite set, factor through $X_i$ for some $i$. Since $S$ is profinite and $X$ is discrete, the image of a continuous map is compact hence finite; the lemma follows. 
\end{proof}

\subsubsection{} If $R$ is a topological ring, then we will write $\operatorname{Mod}_{R}^{\mathrm{cond}}=\operatorname{Mod}_{\ul{R}}^{\mathrm{cond}}$ for the category of condensed $\ul{R}$-modules. If $R$ is a topological ring and $M$ is a topological $R$-module, then $\ul{M}$ is naturally an $\ul{R}$-module. 
\begin{Lem} \label{Lem:Fullyfaithful}
Let $R$ be a topological ring. If $M,N$ are compactly generated\footnote{By this we always mean compactly generated as a topological space.}topological $R$-modules, then the natural map
    \begin{align}
        \hom_{C^0,R}(M,N) \to \hom_{\ul{R}}(\ul{M}, \ul{N})
    \end{align}
    is a bijection.
\end{Lem}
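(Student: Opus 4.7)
The plan is to reduce to the paper's already-cited full faithfulness of $V \mapsto \ul{V}$ on compactly generated topological spaces (\cite[Proposition 1.7]{CondensedNotes}). This provides a bijection $\hom_{C^0}(M,N) \xrightarrow{\sim} \hom(\ul{M}, \ul{N})$ at the level of continuous maps with no linearity condition, so it suffices to verify that under this correspondence, continuous $R$-linear maps match exactly the $\ul{R}$-linear condensed maps. The $\ul{R}$-module structures on $\ul{M}$ and $\ul{N}$ used here are the canonical ones obtained by applying $\ul{(-)}$ to the topological structure maps $R \times M \to M$ (scalar multiplication) and $M \times M \to M$ (addition), together with the elementary identification $\ul{X \times Y} = \ul{X} \times \ul{Y}$ valid for arbitrary topological spaces $X,Y$ since both sides send a profinite $S$ to $C^0(S,X) \times C^0(S,Y) = C^0(S, X \times Y)$.

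Both directions of the linearity statement are then formal. If $f$ is topologically $R$-linear, the two commutative diagrams of topological spaces expressing compatibility with addition and scalar multiplication remain commutative after applying $\ul{(-)}$; using the product identification above to rewrite $\ul{R \times M}$ as $\ul{R}\times \ul{M}$ and $\ul{M \times M}$ as $\ul{M}\times\ul{M}$, these are precisely the diagrams expressing $\ul{R}$-linearity of $\ul{f}$. Conversely, suppose $\ul{f}$ is $\ul{R}$-linear. Evaluating the condensed commuting diagrams at the terminal profinite set $\ast$ yields commuting diagrams of sets, since evaluation at $\ast$ is a functor to sets. For any topological space $X$ one has $\ul{X}(\ast) = C^0(\ast,X) = X$ as sets, naturally in continuous maps, so the resulting diagrams of sets express $R$-linearity of $f$ on underlying sets. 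Combined with the continuity of $f$ (already given), this shows that $f$ is a continuous $R$-linear map.

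Since the full faithfulness on compactly generated spaces is quoted as input, the remaining argument is little more than unraveling definitions, and there is no genuine obstacle; I do not expect any hypothesis on $R$ (such as compact generation) to be needed, because the identification $\ul{R \times M} = \ul{R}\times \ul{M}$ is a point-set fact independent of the topology on $R$.
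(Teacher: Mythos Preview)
Your proof is correct and follows essentially the same approach as the paper: reduce to the full faithfulness on compactly generated spaces from \cite[Proposition 1.7]{CondensedNotes}, then observe that the $R$-linearity condition is expressed by commutativity of diagrams which, since $\ul{(-)}$ preserves products and is faithful, can be checked on either side. Your backward direction via evaluation at $\ast$ is just an explicit unwinding of the faithfulness argument the paper invokes.
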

\begin{proof}
As mentioned in \S \ref{SubSub:FullyFaithfulOnCGspaces}, the fully faithfulness for compactly generated topological spaces is \cite[Proposition 1.7]{CondensedNotes}. The condition for a morphism $f:M \to N$ of topological spaces to be a homomorphism of $R$-modules comes down to the commutativity of the diagrams
\begin{equation}
    \begin{tikzcd}
        M \times M \arrow{r}{f \times f} \arrow{d} & N \times N \arrow{d} \\
        M \arrow{r}{f} & N
    \end{tikzcd}
    \begin{tikzcd}
        M \times A \arrow{r}{f \times \operatorname{Id}_{A}} \arrow{d} & N \times A \arrow{d} \\
        M \arrow{r}{f} & N.
    \end{tikzcd}
\end{equation}
Since the functor from topological spaces to condensed sets commutes with products and is faithful, the commutativity of these diagrams can be checked after passing to condensed sets. Thus a continuous map $M \to N$ is a homomorphism of $R$-modules if and only if the induced map $\ul{M} \to \ul{N}$ is a homomorphism of $\ul{R}$-modules.
\end{proof} 
\subsubsection{} For a condensed ring $R$ and condensed $R$-modules $M,N$, we will write $\Hom_{R}(M,N)$ for the internal hom in the category of condensed $R$-modules. If $R$ is a topological ring and $M$ a topological $R$-module, then for a profinite set $S$ we will write $M_{S}=C^0(S,M)$ for the set of continuous functions $f:S \to M$ equipped with the compact open topology. If $R$ and $M$ are compactly generated Hausdorff, then this is a topological $R$-module, because $C^0(S,M)$ is the internal hom in the category of Hausdorff compactly generated topological spaces, see \cite[Theorem 5.14]{Topology}. 
\begin{Lem} \label{Lem:PreEvaluationCondensed}
    Let $R$ be a topological ring and $M,N$ be topological $R$-modules. If $R,N,M$ are compactly generated and Hausdorff, then for a profinite set $S$ there is a natural bijection
    \begin{align}
        \Hom_{\ul{R}}(\ul{M},\ul{N})(S) \to \hom_{\Cont, R}(M, N_{S}). 
    \end{align}
\end{Lem}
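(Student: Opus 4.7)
The plan is to unwind the internal hom into a cotensor formulation, identify the cotensor with $\ul{N_S}$ using the exponential law for continuous maps, and then invoke Lemma \ref{Lem:Fullyfaithful}.

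By the defining adjunction of the internal hom in the closed symmetric monoidal category of condensed $\ul{R}$-modules, for any profinite set $S$ I would first write
$$\Hom_{\ul{R}}(\ul{M}, \ul{N})(S) = \hom_{\ul{R}}(\ul{M}, \ul{N}^{(S)}),$$
where $\ul{N}^{(S)}$ denotes the condensed $\ul{R}$-module whose $T$-sections (for profinite $T$) are $\ul{N}(T \times S) = C^0(T \times S, N)$, with $\ul{R}$-module structure inherited pointwise from $\ul{N}$. This is formal, obtained from combining the tensor--hom adjunction $\ul{R}[\ul{S}] \otimes_{\ul{R}} (-) \dashv \Hom_{\ul{R}}(\ul{R}[\ul{S}], -)$ with the evaluation formula $\hom_{\ul{R}}(\ul{R}[\ul{S}], F) = F(S)$.

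Next I would identify $\ul{N}^{(S)}$ with $\ul{N_S}$. Since $S$ is a compact Hausdorff space and $N$ is compactly generated Hausdorff, the classical exponential law for continuous maps provides a natural bijection
$$C^0(T \times S, N) \xrightarrow{\sim} C^0(T, C^0(S, N)) = C^0(T, N_S) = \ul{N_S}(T)$$
for every profinite $T$. This bijection is functorial in $T$ and compatible with the pointwise $R$-module structures on both sides, and so it promotes to an isomorphism $\ul{N}^{(S)} \cong \ul{N_S}$ of condensed $\ul{R}$-modules. Here I am using that $N_S$ is itself a compactly generated Hausdorff topological $R$-module, as observed in the paragraph preceding the lemma statement.

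Finally, applying Lemma \ref{Lem:Fullyfaithful} to the pair $M, N_S$ of compactly generated Hausdorff topological $R$-modules produces
$$\hom_{\ul{R}}(\ul{M}, \ul{N_S}) = \hom_{\Cont, R}(M, N_S),$$
and composing the three identifications yields the desired natural bijection. The real content of the proof is already packaged into Lemma \ref{Lem:Fullyfaithful} and the exponential law; the only remaining obstacle is checking that the $R$-module structures are preserved at each step, but this is routine since the $R$-action on $N_S$ is defined pointwise from the $R$-action on $N$, and all identifications are natural.
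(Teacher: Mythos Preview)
Your proof is correct and follows essentially the same approach as the paper. The only difference is cosmetic: where you invoke the tensor--hom adjunction in the closed monoidal category of condensed $\ul{R}$-modules to obtain $\Hom_{\ul{R}}(\ul{M},\ul{N})(S)=\hom_{\ul{R}}(\ul{M},\ul{N}^{(S)})$, the paper phrases the same step using the restriction/pushforward adjunction $\pi^{-1}\dashv\pi_*$ for the slice over $S$; both routes identify $\ul{N}^{(S)}$ with $\ul{N_S}$ via the exponential law and then conclude with Lemma~\ref{Lem:Fullyfaithful}.
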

\begin{proof}
Write $\pi^{-1}$ for the functor which takes sheaves on profinite sets and restricts them to the category of profinite sets over $S$. It has a right adjoint $\pi_{\ast}$ which takes a sheaf $F$ on profinite sets over $S$ and sends it to the sheaf $T \mapsto F(S \times T)$. Using the definition and the adjunction we find that
\begin{align}
    \Hom_{\ul{R}}(\ul{M}, \ul{N})(S)&:= \hom_{\pi^{-1} \ul{R}}(\pi^{-1} \ul{M}, \pi^{-1} \ul{N}) \\
    &=\hom_{\ul{R}}(\ul{M}, \pi_{\ast}\pi^{-1} \ul{N}).
\end{align}
The sheaf $\pi_{\ast}\pi^{-1} \ul{N}$ is given by 
\begin{align}
    T \mapsto C^0(T \times S, N)
\end{align}
which we claim is naturally isomorphic to $T \mapsto C^0(T, C^0(S,N))$. This claim follows from the fact that $C^0(S,N)$ is the internal hom in the category of weakly Hausdorff compactly generated topological spaces, see \cite[Theorem 5.14]{Topology} (here we are using the fact that $N$ is compactly generated Hausdorff); in particular $C^0(S,N)$ is itself compactly generated. Thus we have identified $\pi_{\ast}\pi^{-1} \ul{N} = \ul{C^0(S,N)}$ and by Lemma \ref{Lem:Fullyfaithful} we see that
\begin{align}
    \hom_{\ul{R}}(\ul{M}, \ul{C^0(S,N)})&= \hom_{C^0,R}(M, C^0(S,N)).
\end{align}
\end{proof}
\begin{Lem} \label{Lem:CompactOpen}
Let $N$ be a $p$-adically complete and separated topological abelian group with the $p$-adic topology and let $S$ be a profinite set. Then the $p$-adic topology on $C^0(S,N)$ agrees with the compact-open topology.
\end{Lem}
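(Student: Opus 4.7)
The approach is to exhibit explicit fundamental systems of neighborhoods of $0$ for the two topologies on $C^0(S,N)$ and compare them directly. Since $S$ is compact Hausdorff, the compact-open topology coincides with the topology of uniform convergence on $S$, and therefore admits the system $V_m := \{ f \in C^0(S,N) : f(S) \subseteq p^m N\}$ as a basis of neighborhoods of $0$. The $p$-adic topology has the basis $\{ p^m C^0(S,N)\}_{m}$, and the inclusion $p^m C^0(S,N) \subseteq V_m$ is immediate (if $f = p^m g$ with $g$ continuous, then $f(S) \subseteq p^m N$). Thus the $p$-adic topology is at least as fine as the compact-open topology.

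The content of the lemma is the reverse inclusion $V_n \subseteq p^n C^0(S,N)$, namely that every continuous $f \colon S \to p^n N$ admits a continuous lift $g \colon S \to N$ with $p^n g = f$. I would construct $g$ as the limit of a compatible sequence of locally constant lifts modulo $p^m$, exploiting both completeness of $N$ and profiniteness of $S$. Writing $N = \varprojlim_m N/p^m N$ with discrete quotients and using that $S$ is profinite, the reduction of $f$ modulo $p^{n+m}$ factors through a finite quotient $S_m$ of $S$, giving $\bar{f}_m \colon S_m \to p^n N / p^{n+m} N$. I would then build by induction on $m$ compatible locally constant lifts $\bar{g}_m \colon S \to N/p^m N$ satisfying $p^n \bar{g}_m = \bar{f}_m$: the base case uses the pointwise surjectivity of $p^n \colon N/pN \to p^n N/p^{n+1} N$, and the induction step relies on the observation that any set-theoretic lift $y_0 \in N/p^{m+1}N$ of $\bar{g}_m(s)$ satisfies $p^n y_0 - \bar{f}_{m+1}(s) \in p^{n+m} N/p^{n+m+1} N$, an error that can be absorbed by correcting $y_0$ by a $p^m$-multiple of an arbitrary preimage in $N/pN$. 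Assembling the $\bar{g}_m$ using completeness of $N$ yields the desired continuous $g$, at which point $V_n = p^n C^0(S,N)$ and the two topologies manifestly agree.

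The main obstacle is precisely this inductive lifting; the key point is that at each stage the obstruction lies in exactly the graded piece that can be cancelled by adjusting the lift. No torsion-freeness hypothesis on $N$ is needed, and the rest of the argument is formal once the fundamental systems of neighborhoods are identified.
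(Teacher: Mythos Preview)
Your proof is correct and takes a genuinely different route from the paper's. The paper argues categorically: since $C^0(S,-)$ with the compact-open topology is an internal hom in compactly generated weakly Hausdorff spaces, it commutes with the inverse limit $N=\varprojlim_n N/p^n$, giving $C^0(S,N)=\varprojlim_n C^0(S,N/p^n)$ as topological groups; one then verifies directly that each $C^0(S,N/p^n)$ is discrete by writing every singleton $\{f\}$ as a finite intersection of subbasic compact-open sets. Your approach instead identifies the fundamental system $V_m=\{f:f(S)\subseteq p^mN\}$ for the compact-open topology via uniform convergence on the compact set $S$, and then proves the stronger equality of subgroups $V_n=p^n C^0(S,N)$ by an explicit inductive lifting through the tower $N/p^m N$. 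In the $p$-torsion-free case (which covers all the applications in the paper) this lifting is trivial since $p^n\colon N\to p^nN$ is then a homeomorphism; in the presence of $p$-torsion your construction supplies a step the paper's argument does not spell out. The paper's approach is more conceptual, while yours is more elementary and yields the finer statement that the two filtrations coincide term by term rather than merely defining the same topology.
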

\begin{proof}
Since $S$ is compact Hausdorff, the compact-open topology on $C^0(S,N)$ is compactly generated. We now use the fact that $C^0(S,N)$, with the compact-open topology, is the internal hom in the category of compactly generated and weakly Hausdorff topological spaces, see \cite[Theorem 5.14]{Topology}. It follows from this that
\begin{align}
    C^0(S,N) &= C^0(S,\varprojlim_n N/p^n) \\
    &=\varprojlim_n C^0(S,N/p^n),
\end{align}
and thus it suffices to prove that the compact open topology on $C^0(S,N/p^n)$ is the discrete topology. Let $f:S \to N/p^n$ be a continuous map; we will show that $\{f\} \subset C^0(S,N/p^n)$ is open. Note that $f$ takes finitely many values $r_{1}, \cdots, r_{k}$ whose preimages give an open cover $K_{1}, \cdots, K_{k}$ of $S$. Per definition of the compact-open topology, the subsets $U_{i} \subset C^0(S,N/p^n)$ of continuous functions $g$ taking $K_i$ to $r_i$ are open. Now $\{f\}=\cap_{i=1}^{k} U_i$, so that $\{f\}$ is open.
\end{proof}

\subsubsection{} Let $R^+$ be a $p$-adically complete and separated ring, and let $M,N$ be $p$-adically complete and separated $R^+$-modules whose topology is the $p$-adic topology. 
\begin{Prop} \label{Prop:TensorProductsSolidVSComplete} 
If $\ul{M}$ is flat for the solid tensor product over $\ul{R^+}$, and $N$ and $R^+$ are $p$-torsion free, then the condensed $\ul{R^{+}}$-module $\ul{M \widehat{\otimes}_{R^+} N}$ is isomorphic to the solid tensor product $\ul{M} \otimes^{\blacksquare}_{\ul{R^+}} \ul{N}$.
\end{Prop}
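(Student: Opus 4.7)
The plan is to reduce modulo powers of $p$, identify the two sides at each finite level via a discrete-module computation, and then take the inverse limit. The key structural input will be that $\ul{M} \otimes^{\blacksquare}_{\ul{R^+}} \ul{N}$ is derived $p$-complete, so the identification can be checked modulo each $p^n$.

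First I would use the $p$-torsion freeness of $N$ to obtain a short exact sequence $0 \to \ul{N} \xrightarrow{p^n} \ul{N} \to \ul{N/p^n N} \to 0$. Applying $\ul{M} \otimes^{\blacksquare}_{\ul{R^+}} (-)$ and invoking the flatness of $\ul{M}$ produces a short exact sequence that identifies $\ul{M} \otimes^{\blacksquare}_{\ul{R^+}} \ul{N/p^n N}$ with $(\ul{M} \otimes^{\blacksquare}_{\ul{R^+}} \ul{N})/p^n$ and simultaneously shows that multiplication by $p^n$ is injective on the solid tensor product. Since $R^+$ is also $p$-torsion free, the analogous argument identifies $\ul{M} \otimes^{\blacksquare}_{\ul{R^+}} \ul{R^+/p^n}$ with $\ul{M/p^n M}$; base change along $\ul{R^+} \to \ul{R^+/p^n}$ then reduces the computation to a solid tensor product of discrete $\ul{R^+/p^n}$-modules.

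Next I would invoke the standard fact that for a discrete ring $R$ and discrete $R$-modules $A,B$ the natural map $\ul{A} \otimes^{\blacksquare}_{\ul{R}} \ul{B} \to \ul{A \otimes_R B}$ is an isomorphism (reduce by filtered colimits to finitely presented modules, where it follows from right exactness of $\otimes^{\blacksquare}_{\ul{R}}$ together with its compatibility with finite direct sums). Combined with the previous step, this yields $\ul{M} \otimes^{\blacksquare}_{\ul{R^+}} \ul{N/p^n N} \cong \ul{(M \otimes_{R^+} N)/p^n}$ for every $n$.

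Taking the inverse limit in $n$, the right-hand side becomes $\ul{M \widehat{\otimes}_{R^+} N}$: the transition maps are surjective, so $R^1\varprojlim$ vanishes, and Lemma \ref{Lem:CompactOpen} together with compatibility of $\ul{(-)}$ with limits of first-countable Hausdorff topological groups yields the identification. On the left one obtains $\varprojlim_n (\ul{M} \otimes^{\blacksquare}_{\ul{R^+}} \ul{N})/p^n$, and what remains is to show this equals $\ul{M} \otimes^{\blacksquare}_{\ul{R^+}} \ul{N}$ itself; this is the main obstacle. It reduces to derived $p$-completeness of the solid tensor product: both $\ul{M}$ and $\ul{N}$ are individually derived $p$-complete (since their underlying topological modules are $p$-adically complete), and one must combine this with the flatness of $\ul{M}$ to propagate derived $p$-completeness through $\otimes^{\blacksquare}_{\ul{R^+}}$. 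Plain flatness is insufficient here—even classically, two $p$-complete modules can have a non-$p$-complete ordinary tensor product—so the argument genuinely uses that we are working with solid modules over a $p$-adically complete ring, where the derived $p$-complete objects form a subcategory closed under $\otimes^{\blacksquare}$ against flat inputs. This is a standard closure property in the Clausen--Scholze framework that I would invoke at this step.
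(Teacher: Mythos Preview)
Your proposal is correct and takes essentially the same approach as the paper: reduce modulo $p^n$ to a discrete-module identification, then invoke derived $p$-completeness of the solid tensor product to pass to the inverse limit. The paper makes the last step precise by citing \cite[Lemma~A.3]{BoscoII}, which says the \emph{derived} solid tensor product preserves derived $p$-completeness unconditionally --- flatness of $\ul{M}$ is used only to identify $\otimes^{\blacksquare}$ with $\otimes^{\blacksquare,\mathbb{L}}$, not for the closure property itself.
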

\begin{proof}
For a profinite set $S=\varprojlim_i S_i$ we compute 
    \begin{align}
        C^0(S,M \widehat{\otimes}_{R^{+}} N) &= C^0(S, \varprojlim_n (M/p^n \otimes_{R^{+}/p^n} N/p^n)) \\
        &=\varprojlim_n \left( C^0(S, M/p^n \otimes_{R^{+}/p^n} N/p^n) \right) \\
        &=\varprojlim_n \left( \varinjlim_i C^0(S_i, M/p^n \otimes_{R^{+}/p^n} N/p^n) \right).
\end{align}
where the last equality follows from $M/p^n \otimes_{R^{+}/p^n} N/p^n$ having the discrete topology. Now we use the fact that tensor products commute with finite products and with colimits to see
\begin{align}
        \varprojlim_n \left( \varinjlim_i C^0(S_i, M/p^n \otimes_{R^{+}/p^n} N/p^n) \right) &=\varprojlim_n \left( \varinjlim_i C^0(S_i, M/p^n) \otimes_{C^0(S_i, R^{+}/p^n)} C^0(S_i, N/p^n) \right) \\
        &=\varprojlim_n \left( C^0(S, M/p^n) \otimes_{C^0(S, R^{+}/p^n)} C^0(S, N/p^n) \right) \\
        &=\varprojlim_n (\ul{M/p^n} \otimes_{\ul{R^{+}/p^n}} \ul{N/p^n})(S).
    \end{align}
So we see that the condensed $\ul{R^{+}}$-module $\ul{M \widehat{\otimes}_{R^{+}} N}$ is given by the $p$-adic completion of the (non-solid) condensed tensor product, i.e.,\[ \ul{M \widehat{\otimes}_{R^{+}} N} =\varprojlim_n (\ul{M/p^n} \otimes_{\ul{R^{+}/p^n}} \ul{N/p^n}) = \varprojlim_n \left((\ul{M} \otimes_{\ul{R^{+}}} \ul{N})/p^n\right) .\]
Now for the solid tensor product we use the flatness to write
\begin{align}
    \ul{M} \otimes^{\blacksquare}_{\ul{R^+}} \ul{N} = \ul{M} \otimes^{\blacksquare, \mathbb{L}}_{\ul{R^+}} \ul{N}.
\end{align}
It then follows from \cite[Lemma A.3]{BoscoII}, that the right hand side is derived $p$-adically complete. 
We can thus compute
\begin{align}
    \ul{M} \otimes^{\blacksquare, \mathbb{L}}_{\ul{R^+}} \ul{N} &= R \varprojlim_n \left( (\ul{M} \otimes^{\blacksquare, \mathbb{L}}_{\ul{R^+}} \ul{N}) \otimes_{\ul{R^+}}^{\blacksquare, \mathbb{L}} \ul{R^+/p^n} \right) \\
    &=R \varprojlim_n \left( \ul{M} \otimes^{\blacksquare, \mathbb{L}}_{\ul{R^+}}  \ul{R^+/p^n} \otimes_{\ul{R^+}}^{\mathbb{L},\blacksquare} \ul{N} \right) \\
    &=R \varprojlim_n \left( \ul{M} \otimes^{\blacksquare}_{\ul{R^+}}  \ul{R^+/p^n} \otimes_{\ul{R^+}}^{\mathbb{L},\blacksquare} \ul{N} \right) \\
    &=R \varprojlim_n \left( \ul{M/p^n} \otimes_{\ul{R^+}}^{\mathbb{L},\blacksquare} \ul{N} \right) \\
    &=R \varprojlim_n \left( \ul{M/p^n} \otimes_{\ul{R^+/p^n}}^{\mathbb{L},\blacksquare} \ul{R^+/p^n} \otimes_{\ul{R^+}}^{\mathbb{L},\blacksquare} \ul{N} \right).
\end{align}
where the third equality uses the flatness of $\ul{M}$. We now claim that $\ul{R^+/p^n} \otimes_{\ul{R^+}}^{\mathbb{L},\blacksquare} \ul{N}$ is isomorphic to $\ul{N/p^n}$ since $N$ is $p$-torsion free: For this we first note that $\ul{R^+/p^n} \otimes_{\ul{R^+}}^{\mathbb{L},\blacksquare} \ul{N} \simeq \ul{\mathbb{Z}/p^n \mathbb{Z}}\otimes_{\ul{\mathbb{Z}}}^{\mathbb{L},\blacksquare} \ul{N}$ (this uses the assumption that $R^+$ is $p$-torsion free). Now $\ul{N}(S)$ is torsion free for all $S$ because $N$ is torsion free, hence $N$ is flat for the (non-solid) condensed tensor product, see \cite[Lemma A.5]{BoscoII}. We see that
\begin{align}
\ul{\mathbb{Z}/p^n \mathbb{Z}}\otimes_{\ul{\mathbb{Z}}}^{\mathbb{L}} \ul{N} &\simeq \ul{\mathbb{Z}/p^n \mathbb{Z}}\otimes_{\ul{\mathbb{Z}}} \ul{N} \\
&=\ul{N/p^n}.
\end{align}
This is already solid, and so its derived solidification equals itself (see \cite[Theorem 6.2 and the paragraph following it]{CondensedNotes}), and so we get $\ul{R^+/p^n} \otimes_{\ul{R^+}}^{\mathbb{L},\blacksquare} \ul{N} \simeq \ul{N/p^n}$. We now continue with our computation: 
\begin{align}
    &=R \varprojlim_n \left( \ul{M/p^n} \otimes_{\ul{R^+/p^n}}^{\mathbb{L},\blacksquare} \ul{N/p^n} \right) \\
    &=R \varprojlim_n \left( \ul{M/p^n} \otimes_{\ul{R^+/p^n}}^{\blacksquare} \ul{N/p^n} \right) \\
    &=R\varprojlim_n \left( \ul{M/p^n \otimes_{R^+/p^n} N/p^n}\right) \\
    &=\varprojlim_n \left( \ul{M/p^n \otimes_{R^+/p^n} N/p^n}\right),
\end{align}
where, in the second equality we have used that $\ul{M/p^n}$ is a flat $\ul{R^+/p^n}$-module, and $p$-adic completeness of $M,N$ has been used in the last step. This finishes the proof of the lemma.
\end{proof}

\begin{Rem} \label{Rem:UncompletedTensor}
    The assumption that $M$ and $N$ have the $p$-adic topology is necessary. Indeed, let $R=\zp, N=\zp[[T]]$ with the $(p,T)$-adic topology and $M=\zp\langle X \rangle$ (the $p$-adic completion of $\zp[X]$). In this case $M \widehat{\otimes}_{\zp} N$ is the $(T,p)$-adic completion of $\zp[T,X]$ while $\ul{M} \otimes_{\ul{\zp}}^{\blacksquare} \ul{N}$ is the (condensed module associated with the) $p$-adic completion of $\zp[[T]][X]$ (which is not $T$-adically complete). 
\end{Rem}

\subsection{Standard modules} \label{Sub:CondensedStandard} Let $(R,R^+)$ be a uniform Huber pair over $(\qp,\zp)$, so that $R^+$ is a $p$-adically complete and separated ring. We define for an index set $I$ the solid $\ul{R^{+}}$ (resp. $\ul{R}$)-modules
\begin{align} \label{eq:CondensedStandard}
    \hat{\oplus}_I \ul{R^+}:=\varprojlim_n \oplus_I \ul{R^{+}/p^n} \\
    \hat{\oplus}_I \ul{R}:=(\hat{\oplus}_I \ul{R^{+}})[\tfrac{1}{p}].
\end{align}
As suggested by the notation, the module $\hat{\oplus}_I \ul{R}$ does not depend on the choice of $R^+$. 

The module $\hat{\oplus}_I \ul{R^+}$ is solid because it is an inverse limit of discrete $\ul{R^+}$-modules, which are automatically solid. Inverting $p$ is a filtered colimit, and therefore $\hat{\oplus}_I \ul{R}$ is also solid. We will refer to solid $\ul{R}$-modules isomorphic to $\hat{\oplus}_I \ul{R}$ as \emph{orthonormalizable Banach} $\ul{R}$-modules. For an index set $I$ we also consider the solid $\ul{R^{+}}$ (resp. $\ul{R}$)-modules
\begin{align}
    \textstyle\prod_I \ul{R^+}, \; \left(\textstyle\prod_I \ul{R^+}\right)[\tfrac{1}{p}],
\end{align}
where we note as before that the second module does not depend on the choice of $R^+$. We will refer to solid $\ul{R}$-modules isomorphic to $(\textstyle\prod_I \ul{R^+})[\tfrac{1}{p}]$ as \emph{free Smith} $\ul{R}$-modules. When $I$ is countable, then we will call these modules \emph{countably orthonormalizable Banach} and \emph{countably free Smith}. 

\subsubsection{} \label{Subsub:classicalStdModules} For an index set $I$, we can also form the topological $R^+$-modules 
\begin{align} \label{Eq:TopologicalStandard}
    \hat{\oplus}_I R^+&:=\varprojlim_n \oplus_I R^+/p^n, \qquad \textstyle\prod_I R^{+},
\end{align}
with the $p$-adic and product topologies respectively. Since $M \mapsto \ul{M}$ commutes with limits and takes direct sums of discrete modules to direct sums, see Lemma \ref{Lem:DiscreteDirectLimit}, it follows that the natural maps 
\begin{align}
    \hat{\oplus}_I \ul{R^{+}} &\to \ul{\hat{\oplus}_I R^+}  \\
    \ul{\textstyle\prod_I R^{+}} &\to \textstyle\prod_I \ul{R^+}
\end{align}
are isomorphisms. We also have the following lemma (which we do not generally expect to hold for the product when $I$ is not countable).
\begin{Lem} \label{Lem:CompactlyGenerated}
The topological spaces $\hat{\oplus}_I R^+$ and $\hat{\oplus}_I R$ are compactly generated and Hausdorff. If $I$ is countable, then the topological spaces $\textstyle\prod_I R^{+}$ and $(\textstyle\prod_I R^{+})[\tfrac{1}{p}]$ are compactly generated and Hausdorff.
\end{Lem}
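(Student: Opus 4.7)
The plan is to identify the natural topology on each module and reduce either to showing metrizability or to analyzing a countable strict inductive limit. Recall that $R^+$ carries the $p$-adic topology, that $R^+$ is $p$-torsion free (since it embeds in the $\qp$-algebra $R$), and that $R$, being uniform over $(\qp,\zp)$, is a Banach $\qp$-algebra whose closed unit ball is $R^+$. The strategy is to exhibit each of the four spaces either as metrizable or as a sequential colimit of metrizable spaces along closed embeddings.

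First I would handle the metrizable cases. For $\hat{\oplus}_I R^+ = \varprojlim_n \oplus_I R^+/p^n$ with the $p$-adic topology, the subgroups $p^n \hat{\oplus}_I R^+$ form a countable basis of neighborhoods of $0$, so the topology is induced by a translation-invariant metric, hence is metrizable and Hausdorff; in particular first countable, and therefore compactly generated. For $\hat{\oplus}_I R = \hat{\oplus}_I R^+[\tfrac{1}{p}]$, the sup-norm inherited from the norm on $R$ turns it into a Banach $R$-module with closed unit ball $\hat{\oplus}_I R^+$, so it is again metrizable and Hausdorff. When $I$ is countable, $\prod_I R^+$ is a countable product of copies of the metrizable space $R^+$, so it too is metrizable.

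The remaining case, $(\prod_I R^+)[\tfrac{1}{p}]$ for $I$ countable, is the only one that is not naturally metrizable. I would present it as the colimit in $\mathrm{Top}$ of the sequential diagram
\[
\textstyle\prod_I R^+ \xrightarrow{\cdot p} \textstyle\prod_I R^+ \xrightarrow{\cdot p} \textstyle\prod_I R^+ \xrightarrow{\cdot p} \cdots,
\]
whose colimit as an abelian group is $(\prod_I R^+)[\tfrac{1}{p}]$. Each transition map is a closed embedding: injectivity uses that $R^+$ is $p$-torsion free, and the image $p\prod_I R^+ = \prod_I (pR^+)$ is closed in the product topology because $pR^+$ is an open, and hence closed, subgroup of $R^+$ with its $p$-adic topology. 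Granting this, the result follows from two standard facts: the inclusion of compactly generated spaces into $\mathrm{Top}$ is a left adjoint (with right adjoint the $k$-ification), so it preserves colimits, and hence a countable colimit of metrizable, and in particular compactly generated, spaces in $\mathrm{Top}$ is again compactly generated; and a strict countable inductive limit of Hausdorff spaces along closed embeddings is Hausdorff, a classical fact from the theory of LB-spaces.

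The main technical point to be verified carefully is the closed-embedding property of the transition maps in the LB case, which rests on the $p$-torsion-freeness of $R^+$ and on $pR^+$ being open in the $p$-adic topology. Everything else is routine from elementary facts about metrizable topologies and coreflective subcategories of $\mathrm{Top}$.
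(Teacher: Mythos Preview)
Your proof is correct and follows essentially the same strategy as the paper: reduce to metrizability where possible, and handle the localization $[\tfrac{1}{p}]$ as a countable sequential colimit. Two minor points of comparison are worth noting. First, for $\hat{\oplus}_I R$ you observe directly that it is a Banach space (hence metrizable), whereas the paper treats it via the colimit argument; your route is slightly more direct. Second, for $(\prod_I R^+)[\tfrac{1}{p}]$ the paper appeals to the same colimit reasoning as for $\hat{\oplus}_I R$, where the transition maps are open and closed embeddings; but for the product topology the inclusion $p\prod_I R^+ \hookrightarrow \prod_I R^+$ is closed and not open when $I$ is infinite, so your more careful identification of the transition maps as closed embeddings, together with the coreflectivity argument for compact generation and the standard Hausdorff check for strict countable inductive limits along closed embeddings, is the right way to phrase it.
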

\begin{proof}
The topological ring $R^+$ has the $p$-adic topology, which is metrisable (since $R^+$ is complete and separated for the $p$-adic topology) hence compactly generated and Hausdorff. The topological abelian group $\hat{\oplus}_I R^+$ has the $p$-adic topology, which is metrisable and therefore compactly generated; it is Hausdorff because $\hat{\oplus}_I R^+$ is $p$-adically separated. It follows that $\hat{\oplus}_I R$ is compactly generated and Hausdorff as it is a countably direct limit of compactly generated Hausdorff topological spaces under open and closed transition maps. The module $\textstyle\prod_I R^+$ is a countable product of metrisable (Hausdorff) spaces which is (Hausdorff) metrisable, see \cite[Satz 4.4.12 on p. 117]{HerrlichTopology}, and thus compactly generated Hausdorff. It follows similarly to before that $(\textstyle\prod_I R^{+})[\tfrac{1}{p}]$ is compactly generated and Hausdorff.
\end{proof}

\subsubsection{} Recall the completed tensor $\widehat{\otimes}$ product of linearly topologized modules over linearly topologized rings (in the sense of \cite[Tag 07E7]{stacks-project}), see \cite[Tag 0AMU]{stacks-project}. 

\begin{Lem} \label{Lem:StandardTensorTopological}
If $R^+ \to B^+$ is a morphism of $p$-complete rings, then the natural maps
    \begin{align}
        (\hat{\oplus}_I R^+) \widehat{\otimes}_{R^+} B^+ &\to  \hat{\oplus}_{I} B^+ \\
        (\hat{\oplus}_I R^+) \widehat{\otimes}_{R^+} (\hat{\oplus}_{J} R^+ )&\to  \hat{\oplus}_{I \times J} R^+
    \end{align}
    are isomorphisms.
\end{Lem}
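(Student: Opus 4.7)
The plan is to unwind the definition of the completed tensor product of $p$-adically complete modules and then commute tensor products with direct sums modulo each $p^n$. Since $R^+$ carries the $p$-adic topology and $\hat{\oplus}_I R^+ = \varprojlim_n \oplus_I R^+/p^n$ carries the $p$-adic topology by construction (as does $B^+$), the completed tensor product in the sense of \cite[Tag 0AMU]{stacks-project} is computed as
\[
M \widehat{\otimes}_{R^+} N \;=\; \varprojlim_n \bigl(M/p^n \otimes_{R^+/p^n} N/p^n\bigr)
\]
for $M,N$ any two $p$-adically complete $R^+$-modules with the $p$-adic topology. So the first step is to verify (either directly from the presentation as an inverse limit, or by noting that $\hat{\oplus}_I R^+$ is the $p$-adic completion of the algebraic direct sum $\oplus_I R^+$) that reduction modulo $p^n$ gives a canonical identification $(\hat{\oplus}_I R^+)/p^n = \oplus_I R^+/p^n$.

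Given this, both isomorphisms reduce to the fact that the ordinary tensor product of modules commutes with direct sums. For the first map, one computes
\[
(\hat{\oplus}_I R^+) \widehat{\otimes}_{R^+} B^+ \;=\; \varprojlim_n \bigl((\oplus_I R^+/p^n) \otimes_{R^+/p^n} B^+/p^n\bigr) \;=\; \varprojlim_n \oplus_I B^+/p^n \;=\; \hat{\oplus}_I B^+,
\]
and for the second
\[
(\hat{\oplus}_I R^+) \widehat{\otimes}_{R^+} (\hat{\oplus}_J R^+) \;=\; \varprojlim_n \bigl((\oplus_I R^+/p^n) \otimes_{R^+/p^n} (\oplus_J R^+/p^n)\bigr) \;=\; \varprojlim_n \oplus_{I \times J} R^+/p^n \;=\; \hat{\oplus}_{I\times J} R^+.
\]
In both cases one checks that under these identifications the resulting map agrees with the canonical one coming from the universal property of the completed tensor product.

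The only real subtlety is the first step: confirming that the completed tensor product on linearly topologized $p$-adically complete modules is actually given by the simple inverse limit formula above, which requires checking that the tensor product topology on $\bigl(\hat{\oplus}_I R^+\bigr) \otimes_{R^+} B^+$ is the $p$-adic topology (so that no extra neighborhoods of zero appear in the completion). This is routine since the defining fundamental system of open submodules on $\hat{\oplus}_I R^+$ is precisely $\{p^n \hat{\oplus}_I R^+\}$, and analogously for $B^+$, so the tensor product topology on the algebraic tensor product is generated by $p^n \cdot \bigl((\hat{\oplus}_I R^+) \otimes_{R^+} B^+\bigr)$. With that in hand the proof is immediate from commuting tensor products with direct sums at finite levels and then taking the inverse limit.
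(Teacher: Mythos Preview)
Your proposal is correct and is exactly the approach the paper takes: the paper's entire proof is the one-line remark that this ``follows directly from the definition of the completed tensor product, and the fact that (usual) tensor products commute with direct sums,'' and your argument is precisely the unwinding of that sentence.
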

\begin{proof}
This follows directly from the definition of the completed tensor product, and the fact that (usual) tensor products commute with direct sums. 
\end{proof}

\subsubsection{} We now study properties of these standard modules.

\begin{Lem} \label{Lem:BanachFlat}
If $(R,R^+)$ is a uniform Huber pair, then $\hat{\oplus}_I \ul{R^+}$ is flat for the solid tensor product.
\end{Lem}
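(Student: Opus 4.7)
My strategy is to reduce flatness to a mod-$p^n$ statement --- where a direct sum of copies of the ring is manifestly flat --- and then lift back using derived $p$-adic completeness.

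First, I would observe that $M := \hat{\oplus}_I \ul{R^+}$ is derived $p$-adically complete. The defining presentation $M = \varprojlim_n \oplus_I \ul{R^+/p^n}$ exhibits $M$ as an inverse limit along surjections of $p$-power torsion discrete (hence solid) modules, so $R\varprojlim = \varprojlim$ and the result is indeed derived $p$-complete.

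Next, I would compute the reduction modulo $p^n$. Since $R^+$ is $p$-torsion free (from uniformity of $(R,R^+)$), the module $M$, sitting inside $\prod_I \ul{R^+}$, is likewise $p$-torsion free. Hence multiplication by $p^n$ is injective on $M$ and
\[ M \otimes^{\blacksquare,\mathbb{L}}_{\ul{R^+}} \ul{R^+/p^n} \simeq M/p^n \simeq \bigoplus_I \ul{R^+/p^n}, \]
concentrated in degree zero. The right-hand side is flat over $\ul{R^+/p^n}$: direct sums are colimits, and colimits are exact in the Grothendieck abelian category $\Mod_{\ul{R^+/p^n}}^{\solid}$.

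Finally, to upgrade this ``$p$-complete flatness'' to flatness in the solid sense, I would apply a derived Nakayama-style argument. For any solid $\ul{R^+}$-module $N$, the derived tensor product $M \otimes^{\blacksquare,\mathbb{L}}_{\ul{R^+}} N$ is automatically derived $p$-adically complete (since $M$ is), so it is recovered by $R\varprojlim_n$ from its derived reductions modulo $p^n$. Using the previous step together with base change, each such reduction takes the form $\bigoplus_I (N \otimes^{\blacksquare,\mathbb{L}}_{\ul{R^+}} \ul{R^+/p^n})$, with cohomological amplitude controlled by that of $N/^{\mathbb{L}} p^n$. Reassembling via $R\varprojlim_n$ and comparing with the derived $p$-adic completion of $N$ itself yields the vanishing of higher Tors.

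The main obstacle is this last step: controlling $R\varprojlim_n$ and ruling out spurious $R^1\varprojlim$ contributions arising from the interaction with the direct sum indexed by $I$. This commutation of the derived tensor product with $p$-adic inverse limits for orthonormalizable Banach modules is exactly the kind of bookkeeping handled by the solid functional analysis framework of \cite[Appendix A]{Bosco}, to which I would appeal to close the argument.
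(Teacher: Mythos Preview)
Your argument has a genuine gap in the final step. The claim that $M \otimes^{\blacksquare,\mathbb{L}}_{\ul{R^+}} N$ is derived $p$-adically complete for \emph{arbitrary} solid $\ul{R^+}$-modules $N$, merely because $M$ is derived $p$-complete, is false. A simple counterexample: take $R^+=\mathbb{Z}_p$ and $N=\ul{\mathbb{Q}_p}$; then $M \otimes^{\blacksquare}_{\ul{\mathbb{Z}_p}} \ul{\mathbb{Q}_p} = \hat{\oplus}_I \ul{\mathbb{Q}_p}$ is a nonzero module on which $p$ acts invertibly, hence not derived $p$-complete. Derived $p$-completeness of a tensor product typically requires \emph{both} factors to be derived $p$-complete (this is what underlies \cite[Lemma~A.3]{BoscoII}, which the paper invokes elsewhere). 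Your steps therefore only establish that $M$ is \emph{$p$-completely flat}, i.e., flat against derived $p$-complete test modules. You do not explain---and it is not clear---how to pass from $p$-complete flatness to genuine flatness for the solid tensor product over $\ul{R^+}$; the general solid module $N$ need not be built from $p$-complete pieces in any way that your argument can exploit. The hand-wave to \cite[Appendix~A]{Bosco} does not address this, as the issue is not about commuting $R\varprojlim$ with $\oplus_I$ but about the scope of $N$.

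The paper's proof avoids this entirely by a different route: it first works over $\ul{\mathbb{Z}}$, where $\prod_I \ul{\mathbb{Z}}$ is flat simply because it is torsion-free (\cite[Lemma~A.5]{BoscoII}); base changes to $\ul{\mathbb{Z}_p}$; then writes $\hat{\oplus}_I \ul{\mathbb{Z}_p}$ as a \emph{filtered colimit} of copies of $\prod_I \ul{\mathbb{Z}_p}$ (\cite[Lemma~A.4]{BoscoII}), so it is flat over $\ul{\mathbb{Z}_p}$; and finally base changes to $\ul{R^+}$ using $\hat{\oplus}_I \ul{\mathbb{Z}_p} \otimes^\blacksquare_{\ul{\mathbb{Z}_p}} \ul{R^+} \simeq \hat{\oplus}_I \ul{R^+}$. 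No $p$-adic completion arguments enter, and flatness holds against all solid modules.
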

\begin{proof}
First of all, $\textstyle\prod_I \ul{\mathbb{Z}}$ is flat for the solid tensor product over $\ul{\mathbb{Z}}$ because it is torsion free, see \cite[Lemma A.5]{BoscoII}. By \cite[Lemma A.19]{Bosco} we have $\textstyle\prod_I \ul{\mathbb{Z}} \otimes_{\ul{\mathbb{Z}}} \ul{\zp} = \textstyle\prod_I \ul{\mbb{Z}_p}$ which is therefore flat for the solid tensor product over $\ul{\zp}$. By \cite[Lemma A.4]{BoscoII}, we can write $\hat{\oplus}_I \ul{\zp}$ as a filtered colimit of objects isomorphic to $\textstyle\prod_I \ul{\zp}$ and it is thus flat for the solid tensor product over $\ul{\zp}$. It follows from Proposition \ref{Prop:TensorProductsSolidVSComplete} and Lemma \ref{Lem:StandardTensorTopological} that $\hat{\oplus}_I \ul{\zp} \otimes_{\ul{\zp}}^\blacksquare \ul{R^+} = \hat{\oplus}_I \ul{R^+}$, which is hence flat for the solid tensor product over $\ul{R^+}$.
\end{proof}

\begin{Prop} \label{Prop:StandardBCproperty}
Let $(R, R^+) \to (B, B^+)$ be a morphism of uniform Huber pairs over $(\mbb{Q}_p, \mbb{Z}_p)$. For any index set $I$, the natural maps
    \[
    \hat{\oplus}_I \ul{R}^+ \otimes^{\blacksquare}_{\ul{R}^+} \ul{B}^+ \xrightarrow{\sim} \hat{\oplus}_I \ul{B}^+ \quad \textrm{and} \quad \hat{\oplus}_I \ul{R} \otimes^{\blacksquare}_{\ul{R}} \ul{B} \xrightarrow{\sim} \hat{\oplus}_I \ul{B} 
    \]
    are isomorphisms. Moreover, for any two index sets $I$ and $J$, the natural maps
    \[ 
         \hat{\oplus}_I \ul{R^{+}} \otimes^{\blacksquare}_{\ul{R^+}}  \hat{\oplus}_J \ul{R^{+}} \to  \hat{\oplus}_{I \times J} \ul{R^+}\quad \textrm{and} \quad
         (\hat{\oplus}_I \ul{R}) \otimes^{\blacksquare}_{\ul{R}}  (\hat{\oplus}_J \ul{R}) \to  \hat{\oplus}_{I \times J} \ul{R} \]
are isomorphisms.
\end{Prop}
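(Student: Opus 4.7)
The plan is to reduce all four isomorphisms to their classical topological counterparts already established in Lemma \ref{Lem:StandardTensorTopological}, by leveraging the flatness of $\hat{\oplus}_I \ul{R^+}$ from Lemma \ref{Lem:BanachFlat} together with the comparison between the solid tensor product and the completed tensor product in Proposition \ref{Prop:TensorProductsSolidVSComplete}. The whole argument is essentially a bookkeeping exercise once these two inputs are in hand.

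First I would treat the integral base-change statement. Both $R^+$ and $B^+$ are $p$-torsion free because they sit inside the uniform $\mathbb{Q}_p$-algebras $R$ and $B$. Moreover $\hat{\oplus}_I R^+$ carries the $p$-adic topology, is $p$-adically complete and separated, and as noted in \S\ref{Subsub:classicalStdModules} its underlying condensed module coincides with $\hat{\oplus}_I \ul{R^+}$. Applying Proposition \ref{Prop:TensorProductsSolidVSComplete} with $M = \hat{\oplus}_I R^+$ (flat by Lemma \ref{Lem:BanachFlat}) and $N = B^+$ yields
\[ \hat{\oplus}_I \ul{R^+} \otimes^\blacksquare_{\ul{R^+}} \ul{B^+} \;\cong\; \ul{(\hat{\oplus}_I R^+) \widehat{\otimes}_{R^+} B^+}, \]
and the topological completed tensor product on the right is identified with $\hat{\oplus}_I B^+$ by Lemma \ref{Lem:StandardTensorTopological}, whose condensification is $\hat{\oplus}_I \ul{B^+}$ again by \S\ref{Subsub:classicalStdModules}. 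The integral Künneth isomorphism follows from exactly the same recipe with $N = \hat{\oplus}_J R^+$: this factor is $p$-adically complete and separated with the $p$-adic topology and is $p$-torsion free (its elements are nullsequences in $R^+$), so the hypotheses of Proposition \ref{Prop:TensorProductsSolidVSComplete} are satisfied.

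The rational statements are then deduced by inverting $p$. Since $\hat{\oplus}_I \ul{R} = (\hat{\oplus}_I \ul{R^+})[\tfrac{1}{p}]$, $\ul{R} = \ul{R^+}[\tfrac{1}{p}]$, and the solid tensor product commutes with filtered colimits in each variable, inverting $p$ on both sides of the integral isomorphisms already established gives the rational base-change and Künneth statements, using that the solid tensor product of two $\ul{R^+}[\tfrac{1}{p}]$-modules over $\ul{R^+}[\tfrac{1}{p}]$ agrees with their solid tensor product over $\ul{R^+}$ (since $\ul{R^+} \to \ul{R^+}[\tfrac{1}{p}]$ is an epimorphism of rings). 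The only point requiring any real care is verifying the $p$-adic topology and $p$-torsion freeness of $\hat{\oplus}_J R^+$ in the Künneth case, but both are immediate from its presentation as the $p$-adic completion of $\oplus_J R^+$; the genuine substance of the proposition is sequestered entirely in Proposition \ref{Prop:TensorProductsSolidVSComplete} and Lemma \ref{Lem:BanachFlat}.
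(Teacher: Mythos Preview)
Your proof is correct and follows essentially the same approach as the paper: reduce to the integral statements via the fact that the solid tensor product commutes with inverting $p$, then apply Proposition \ref{Prop:TensorProductsSolidVSComplete} (using the flatness of $\hat{\oplus}_I \ul{R^+}$ from Lemma \ref{Lem:BanachFlat}) to reduce to the topological computation in Lemma \ref{Lem:StandardTensorTopological}. You have simply spelled out in more detail the verification of the hypotheses of Proposition \ref{Prop:TensorProductsSolidVSComplete} that the paper leaves implicit.
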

\begin{proof}
Because the solid tensor product commutes with inverting $p$, it suffices to prove the first and the third isomorphisms. By Proposition \ref{Prop:TensorProductsSolidVSComplete} and Lemma \ref{Lem:BanachFlat}, these reduce to a computation for the completed tensor product, where it is Lemma \ref{Lem:StandardTensorTopological}.
\end{proof}

\subsubsection{} Recall that we write $\Hom_{\ul{R^{+}}}$ and $\Hom_{\ul{R}}$ for the internal hom in condensed $\ul{R^{+}}$-modules or condensed $\ul{R}$-modules.

\begin{Prop} \label{Prop:DualityCondensed}
The natural maps
\begin{align}
\Hom_{\ul{R^{+}}}\left(\hat{\oplus}_{I}\ul{R^{+}}, \ul{R^{+}}\right) &\to  \textstyle\prod_{i \in I} \ul{R}^{+} \\
\Hom_{\ul{R}}\left(\hat{\oplus}_{I}\ul{R}, \ul{R}\right) &\to  \left(\textstyle\prod_{I} \ul{R}^{+}\right)[\tfrac{1}{p}]
\end{align}
are isomorphisms. If $I$ is countable, then the natural maps
\begin{align}
    \hat{\oplus}_{I}\ul{R^{+}} &\to \Hom_{\ul{R^{+}}}\left(\textstyle\prod_I \ul{R^{+}}, \ul{R^{+}}\right) \\
    \hat{\oplus}_{I}\ul{R} &\to \Hom_{\ul{R}}\left(\left(\textstyle\prod_I \ul{R^{+}}\right)[\tfrac{1}{p}], \ul{R}\right)
\end{align}
are isomorphisms.
\end{Prop}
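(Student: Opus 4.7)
The plan is to prove each of the four isomorphisms by direct computation. The key tool is Lemma \ref{Lem:PreEvaluationCondensed}, which identifies the sections of the internal hom on a profinite set $S$ with continuous $R^+$-linear maps into $C^0(S, R^+)$ or $C^0(S, R)$; the compactly generated Hausdorff hypothesis is met by Lemma \ref{Lem:CompactlyGenerated}, and by Lemma \ref{Lem:CompactOpen} the module $C^0(S, R^+)$ carries the $p$-adic topology and is $p$-adically complete and separated. The natural maps in the proposition are induced by the standard $R^+$-bilinear pairings $\hat{\oplus}_I \ul{R^+} \times \textstyle\prod_I \ul{R^+} \to \ul{R^+}$, $((a_i), (b_i)) \mapsto \sum_i a_i b_i$, which converge because in any element of $\hat{\oplus}_I R^+$ all but finitely many coordinates lie in $p^k R^+$ for each $k$.

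For the first isomorphism, I would observe that a continuous $R^+$-linear map from $\hat{\oplus}_I R^+$ (with its $p$-adic topology) to any $p$-adically complete and separated $R^+$-module $N$ corresponds to an arbitrary $I$-indexed family in $N$: such a map is determined by its reductions modulo $p^n$, each of which factors continuously through the discrete module $\oplus_I R^+/p^n$ and is thus given by a family in $N/p^n$. Applied with $N = C^0(S, R^+)$ this produces $\textstyle\prod_I C^0(S, R^+) = C^0(S, \textstyle\prod_I R^+) = (\textstyle\prod_I \ul{R^+})(S)$. For the third isomorphism, a continuous $R^+$-linear map $\textstyle\prod_I R^+ \to C^0(S, R^+)$ with $I$ countable is determined by the values $f_i$ on the standard basis, and continuity in the product topology combined with $p$-adic completeness of the target forces $f_i \to 0$ $p$-adically; conversely any such sequence extends uniquely by continuity. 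This identifies the internal hom at $S$ with $\hat{\oplus}_I C^0(S, R^+)$, which in turn coincides with $C^0(S, \hat{\oplus}_I R^+) = (\hat{\oplus}_I \ul{R^+})(S)$ after commuting the defining $\varprojlim_n$ past $C^0(S, -)$ (using Lemma \ref{Lem:DiscreteDirectLimit} at each finite stage, as in \S\ref{Subsub:classicalStdModules}).

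To pass from (1) to (2) and from (3) to (4), I would use two formal reductions: first, for an $\ul{R}$-module $M$ and $\ul{R}$-module $N$ one has $\Hom_{\ul{R}}(M, N) = \Hom_{\ul{R^+}}(M, N)$, since any $\ul{R^+}$-linear map between modules on which $p$ acts invertibly is automatically $\ul{R}$-linear; second, $\Hom_{\ul{R^+}}(M[\tfrac{1}{p}], N) = \Hom_{\ul{R^+}}(M, N)$ whenever $p$ acts invertibly on $N$, by the universal property of $- \otimes^{\blacksquare}_{\ul{\mathbb{Z}}} \ul{\mathbb{Z}[\tfrac{1}{p}]}$. For the second isomorphism this reduces us to computing continuous $R^+$-linear maps $\hat{\oplus}_I R^+ \to C^0(S, R)$; since $C^0(S, R^+)$ is an open neighborhood of $0$ in $C^0(S, R)$, its preimage contains $p^k \hat{\oplus}_I R^+$ for some $k$, so the image is bounded and the map factors through $p^{-k} C^0(S, R^+)$. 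Applying the first isomorphism gives $\varinjlim_k p^{-k} C^0(S, \textstyle\prod_I R^+)$, and identifying this with $((\textstyle\prod_I \ul{R^+})[\tfrac{1}{p}])(S)$ uses that filtered colimits in condensed modules are computed by pointwise filtered colimits on profinite sets. The fourth isomorphism follows by an entirely parallel argument from the third.

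The main subtlety is the passage between the integral and the rational statements, where one must carefully distinguish $M[\tfrac{1}{p}]$ in the sense of condensed modules (a filtered colimit, whose value on a profinite set $S$ is $M(S)[\tfrac{1}{p}]$) from $\ul{M[\tfrac{1}{p}]}$ (the condensed module attached to the topological module $M[\tfrac{1}{p}]$), as these in general differ. The automatic boundedness of continuous $R^+$-linear maps out of a $p$-adically complete module into a topological $R$-module is precisely what reconciles the two sides.
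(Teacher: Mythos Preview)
Your proposal is correct and follows essentially the same approach as the paper. The only organizational difference is that you invoke Lemma~\ref{Lem:PreEvaluationCondensed} uniformly at the outset to reduce all four statements to computations with continuous maps of topological modules, whereas the paper handles (1) and (3) more internally in the condensed world (reducing modulo $p^n$ first and computing $\Hom_{\ul{R^+}/p^n}(-,-)$ directly as sheaves) and only passes to topological modules via Lemma~\ref{Lem:PreEvaluationCondensed} for the boundedness arguments in (2) and (4); the underlying ideas---the product-topology argument that continuous maps out of $\prod_I R^+/p^n$ vanish on a cofinite product, and the automatic boundedness of continuous $R^+$-linear maps into a $\mathbb{Q}_p$-Banach space---are identical in both.
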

\begin{proof}
For the first claim, we follow the proof of \cite[Lemma 3.10]{RJRC}: We note that
\begin{align}
     \Hom_{\ul{R^{+}}}\left(\hat{\oplus}_{I}\ul{R^{+}}, \ul{R^{+}}\right) &= \varprojlim_{n}  \Hom_{\ul{R^{+}}/p^n}\left(\oplus_{I}\ul{R^{+}}/p^n, \ul{R^{+}}/p^n\right) \\
     &=\varprojlim_{n}  \textstyle\prod_{I} \ul{R^{+}}/p^n \\
     &= \textstyle\prod_{I} \ul{R^{+}}.
\end{align}
The second claim follows from the first claim once we establish that the natural map
\begin{align}
    \Hom_{\ul{R^{+}}}\left(\hat{\oplus}_{I}\ul{R^{+}}, \ul{R^{+}}\right)[\tfrac{1}{p}] \to \Hom_{\ul{R}}\left(\hat{\oplus}_{I}\ul{R}, \ul{R}\right).
\end{align}
is an isomorphism. Using Lemma \ref{Lem:PreEvaluationCondensed} (whose assumptions are satisfied by Lemma \ref{Lem:CompactlyGenerated}), we can identify this map (on $S$ points for a profinite set $S$) with the natural map
\begin{align}
    \hom_{\Cont, R^+}(\hat{\oplus}_{I}R^+, R^+_S)[\tfrac{1}{p}] \to \hom_{\Cont, R}(\hat{\oplus}_{I}R, R_S). 
\end{align}
This map is a bijection since any continuous map of $\qp$-Banach spaces is bounded. For the third claim, we consider 
\begin{align}
    \Hom_{\ul{R^{+}}}\left(\textstyle\prod_I \ul{R^{+}}, \ul{R^{+}}\right) &= \varprojlim_n \Hom_{\ul{R^{+}}/p^n}\left(\textstyle\prod_I \ul{R^{+}}/p^n, \ul{R^{+}}/p^n\right).
\end{align}
This receives a natural map from $\hat{\oplus}_I \ul{R^+}$,
\begin{multline}  \label{eq:DirectSumToProduct}
    \hat{\oplus}_I \ul{R^+}=\varprojlim_n \oplus_I \Hom_{\ul{R^{+}}/p^n}\left(\ul{R^{+}}/p^n, \ul{R^{+}}/p^n\right) \to \varprojlim_n \Hom_{\ul{R^{+}}/p^n}\left(\textstyle\prod_I \ul{R^{+}}/p^n, \ul{R^{+}}/p^n\right),
\end{multline}
by taking the natural map termwise in the limit. We claim that this map is an isomorphism. We can check this on $S$-points for profinite sets $S$, and by Lemma \ref{Lem:PreEvaluationCondensed} applied to $R^+/p^n$ (whose assumptions hold because $\textstyle\prod_I R^+/p^n$ is compactly generated), we can identify the natural map on $S$-points with
\begin{align}
     \varprojlim_n \oplus_I \Hom_{\Cont, R^+/p^n}\left(R^+/p^n, R_S^+/p^n\right) \to \varprojlim_n \Hom_{\Cont,R^+/p^n}\left(\textstyle\prod_I R^+/p^n, R_S^+/p^n\right). 
\end{align}
Now $R_S^+/p^n$ has the discrete topology (see Lemma \ref{Lem:CompactOpen}), and so every continuous $R^+/p^n$-linear map $\textstyle\prod_I \ul{R^+}/p^n \to R_S^+/p^n$  has kernel containing $\textstyle\prod_{J} \{0\} \times \textstyle\prod_{I\backslash J} R^+/p^n$ for some finite $J \subseteq I$. This implies that \eqref{eq:DirectSumToProduct} is a bijection and the third claim follows. \smallskip 

For the final claim of the proposition, it suffices to show that the natural map
\begin{align}
    \Hom_{\ul{R^{+}}}\left(\textstyle\prod_I \ul{R^{+}}, \ul{R^{+}}\right)[\tfrac{1}{p}] \to \Hom_{\ul{R}}\left(\left(\textstyle\prod_I \ul{R^{+}}\right)[\tfrac{1}{p}], \ul{R}\right)
\end{align}
is an isomorphism. We can check this on $S$-points for profinite sets $S$, and then using Lemma \ref{Lem:PreEvaluationCondensed} (using Lemma \ref{Lem:CompactlyGenerated} to check its assumptions), we can identify this with
\begin{align}
    \hom_{C^0,R^+}\left(\textstyle\prod_I R,R^{+}_{S}\right)[\tfrac{1}{p}] \to  \hom_{C^0,R}\left(\left(\textstyle\prod_I R^{+}\right)[\tfrac{1}{p}],R_{S}\right).
\end{align}
This map is a bijection because the image of $\textstyle\prod_I R^+$ under any continuous morphism $(\textstyle\prod_I R^+)[\tfrac{1}{p}] \to R_{S}$ is bounded: Indeed, the preimage of $R^+_S$ intersected with $\textstyle\prod_I R^{+}$ is open and contains $0$, thus, by the definition of the product topology, contains a set of the form $(\textstyle\prod_{j\in J} p^{n_j} R^+) \times \textstyle\prod_{I \backslash J} R^+$ for some $J \subseteq I$ finite, but then, taking $n=\mathrm{max}_{j \in J}( n_j)$, we see $\textstyle\prod_I R^{+}$ maps into $p^{-n} R^+_S$. 
\end{proof}

\subsubsection{} We have the following complement to Propositions \ref{Prop:DualityCondensed} and \ref{Prop:StandardBCproperty}.
\begin{Prop} \label{Prop:TensorDualSmith}
Let $(R,R^+) \to (B,B^+)$ be a morphism of uniform Huber pairs over $(\qp,\zp)$. If $I,J$ are countable sets, then the natural maps
\begin{align}
    \hat{\oplus}_I B^+ &\to \Hom_{\ul{B^+}}\left(\textstyle\prod_I \ul{R^+} \otimes_{\ul{R^+}}^{\blacksquare} \ul{B^+}, \ul{B^+}\right) \\
    \hat{\oplus}_I B &\to \Hom_{\ul{B}}\left(\left(\textstyle\prod_I \ul{R^+}\right)[\tfrac{1}{p}] \otimes_{\ul{R}}^{\blacksquare} \ul{B}, \ul{B}\right) 
\end{align}
are isomorphisms. Moreover, the natural maps
\begin{align}
    \hat{\oplus}_{I \times J} R^+ &\to \Hom_{\ul{R^+}}\left(\textstyle\prod_I \ul{R^+} \otimes_{\ul{R^+}}^{\blacksquare} \textstyle\prod_J \ul{R^+}, \ul{R^+}\right) \\
    \hat{\oplus}_{I \times J} R &\to \Hom_{\ul{R^+}}\left(\left(\textstyle\prod_I \ul{R^+}\right)[\tfrac{1}{p}] \otimes_{\ul{R}}^{\blacksquare} \left(\textstyle\prod_J \ul{R^+}\right)[\tfrac{1}{p}], \ul{R}\right)
\end{align}
are isomorphisms.
\end{Prop}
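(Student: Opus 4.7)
The four isomorphisms closely parallel those of Proposition \ref{Prop:DualityCondensed}, and the plan is to deduce them by combining that proposition with the tensor-hom adjunction and arguments of the same shape. Statements (2) and (4) will follow from (1) and (3) by inverting $p$, using the same boundedness argument as in the last paragraph of the proof of Proposition \ref{Prop:DualityCondensed}: any continuous $R$-linear map from $(\textstyle\prod_I R^+)[\tfrac{1}{p}]$ to $B_S$ sends $\textstyle\prod_I R^+$ into $p^{-n}B^+_S$ for some $n$ (because every open neighborhood of $0$ in $\textstyle\prod_I R^+$ contains a set $(\textstyle\prod_{j \in J} p^{n_j} R^+) \times \textstyle\prod_{I \setminus J} R^+$ for finite $J$), so it comes from rescaling a continuous $R^+$-linear map on the integral versions. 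I therefore focus on (1) and (3).

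For (1), apply the tensor-hom adjunction along $\ul{R^+} \to \ul{B^+}$ to rewrite
\[
\Hom_{\ul{B^+}}\bigl(\textstyle\prod_I \ul{R^+} \otimes^{\blacksquare}_{\ul{R^+}} \ul{B^+},\, \ul{B^+}\bigr) \;=\; \Hom_{\ul{R^+}}\bigl(\textstyle\prod_I \ul{R^+},\, \ul{B^+}\bigr),
\]
and then imitate the proof of the third isomorphism in Proposition \ref{Prop:DualityCondensed} with the target $\ul{R^+}$ replaced by $\ul{B^+}$. Writing $\ul{B^+} = \varprojlim_n \ul{B^+/p^n}$ and pulling the limit outside the internal hom (right adjoints commute with limits), it suffices to show that the natural map
\[
\oplus_I \ul{B^+/p^n} \;\to\; \Hom_{\ul{R^+/p^n}}\bigl(\textstyle\prod_I \ul{R^+/p^n},\, \ul{B^+/p^n}\bigr)
\]
is an isomorphism. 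On $S$-points, via Lemma \ref{Lem:PreEvaluationCondensed} (whose hypotheses hold for countable $I$ by Lemma \ref{Lem:CompactlyGenerated}), this reduces, exactly as in Proposition \ref{Prop:DualityCondensed}, to the observation that $B^+_S/p^n$ is discrete (Lemma \ref{Lem:CompactOpen}), so the kernel of any continuous $R^+/p^n$-linear map $\textstyle\prod_I R^+/p^n \to B^+_S/p^n$ is an open $R^+/p^n$-submodule and therefore contains $\{0\}^{I_0} \times \textstyle\prod_{I \setminus I_0} R^+/p^n$ for some finite $I_0 \subseteq I$. Taking $\varprojlim_n$ yields $\hat{\oplus}_I \ul{B^+}$.

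For (3), apply the tensor-hom adjunction and the already-established third isomorphism of Proposition \ref{Prop:DualityCondensed} (which uses the countability of $J$) to obtain
\[
\Hom_{\ul{R^+}}\bigl(\textstyle\prod_I \ul{R^+} \otimes^{\blacksquare}_{\ul{R^+}} \textstyle\prod_J \ul{R^+},\, \ul{R^+}\bigr) \;=\; \Hom_{\ul{R^+}}\bigl(\textstyle\prod_I \ul{R^+},\, \hat{\oplus}_J \ul{R^+}\bigr).
\]
Expanding $\hat{\oplus}_J \ul{R^+} = \varprojlim_n \oplus_J \ul{R^+/p^n}$ and again pulling the limit outside, it remains to compute $\Hom_{\ul{R^+/p^n}}(\textstyle\prod_I \ul{R^+/p^n},\, \oplus_J \ul{R^+/p^n})$. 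Since $\oplus_J R^+/p^n$ is a filtered colimit along finite subsums of discrete modules, it is itself discrete, hence compactly generated Hausdorff, so Lemma \ref{Lem:PreEvaluationCondensed} again applies. On $S$-points, continuity forces a map $\textstyle\prod_I R^+/p^n \to \oplus_J R^+_S/p^n$ to land in a finite subsum $\oplus_{J_0} R^+_S/p^n$ (its image is the compact image of a compact set in a discrete target); projecting onto each of the finitely many $j \in J_0$ components and applying the argument from (1) to each then produces a common finite $I_0 \subseteq I$ through which the whole map factors. Passing to the colimit over $I_0$ and $J_0$ identifies the internal hom with $\oplus_{I \times J} \ul{R^+/p^n}$, and taking $\varprojlim_n$ gives $\hat{\oplus}_{I \times J} \ul{R^+}$.

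The only real subtlety is in (3), where one needs the countability of $J$ simultaneously (for Proposition \ref{Prop:DualityCondensed} and for the image to land in a finite subsum) and the countability of $I$ (so that $\textstyle\prod_I R^+/p^n$ is compactly generated for Lemma \ref{Lem:PreEvaluationCondensed} to apply to the source). With these topological reductions in place, the rest of the argument is a formal iteration of the methods developed for Proposition \ref{Prop:DualityCondensed}.
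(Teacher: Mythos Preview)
Your overall strategy matches the paper's proof almost exactly: tensor-hom adjunction, reduction to continuous homs via Lemma \ref{Lem:PreEvaluationCondensed}, and the product-topology argument that an open kernel must contain a set of the form $\{0\}^{I_0}\times\textstyle\prod_{I\setminus I_0}R^+/p^n$. Statements (1), (2), and (4) are handled correctly.

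There is, however, a genuine error in your argument for (3). You claim that a continuous $R^+/p^n$-linear map $\textstyle\prod_I R^+/p^n \to \oplus_J R^+_S/p^n$ has image contained in a finite subsum because ``its image is the compact image of a compact set in a discrete target.'' But $\textstyle\prod_I R^+/p^n$ is \emph{not} compact in general: when $R^+/p^n$ is infinite (e.g.\ $R^+=\zp\langle T\rangle$, so $R^+/p=\mathbb{F}_p[T]$) and $I$ is infinite, the product of infinite discrete spaces fails to be compact. The fix is to reverse the order of your two steps. Since the target $\oplus_J R^+_S/p^n$ is discrete, the kernel is open, so by the product-topology argument you already used in (1) the map factors through $\oplus_{I_0} R^+/p^n$ for some finite $I_0$; only \emph{then} does the image, being a finitely generated $R^+/p^n$-module, land in a finite subsum $\oplus_{J_0}$. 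In fact the paper's proof never isolates $J_0$ at all: once the map factors through $\oplus_{I_0}$ one just writes $\hom(\textstyle\prod_I R^+/p^n,\oplus_J R^+_S/p^n)=\oplus_I\oplus_J R^+_S/p^n$ directly.
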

\begin{proof}
For the first isomorphism, we notice that by the tensor-hom adjunction we have
\begin{align}
    \Hom_{\ul{B^+}}\left(\textstyle\prod_I \ul{R^+} \otimes_{\ul{R^+}}^{\blacksquare} \ul{B^+}, \ul{B^+}\right)=\Hom_{\ul{R^+}}\left(\textstyle\prod_I \ul{R^+}, \ul{B^+}\right).
\end{align}
Let $S$ be a profinite set. Then it follows from Lemma \ref{Lem:PreEvaluationCondensed} (whose assumptions hold by Lemma \ref{Lem:CompactlyGenerated}), that we may identify 
\begin{align}
    \Hom_{\ul{R^+}}\left(\textstyle\prod_I \ul{R^+}, \ul{B^+}\right)(S) &= \hom_{\Cont, R^+}\left(\textstyle\prod_I R^+, B^+_{S}\right) \\
    &=\varprojlim_{n} \hom_{\Cont, R^+}\left(\textstyle\prod_I R^+/p^n, B^+_{S}/p^n\right).
\end{align}
Arguing with the product topology as in the proof of Proposition \ref{Prop:DualityCondensed}, we further identify
\begin{align}
   \varprojlim_{n} \hom_{\Cont, R^+}\left(\textstyle\prod_I R^+/p^n, B^+_{S}/p^n\right) &=\varprojlim_{n} \oplus_I \hom_{R^+}\left( R^+/p^n, B^+_{S}/p^n\right) \\
    &=\varprojlim_{n} \oplus_I B^+_{S}/p^n \\
    &=(\hat{\oplus}_I B^+)(S).
\end{align}
For the second isomorphism, we consider the commutative diagram (for a profinite set $S$)
\begin{equation}
    \begin{tikzcd}
        \Hom_{\ul{R^+}}\left(\textstyle\prod_I \ul{R^+}, \ul{B^+}\right)(S)[\tfrac{1}{p}] \arrow{r} \arrow{d} & \hom_{\Cont, R^+}\left(\textstyle\prod_I R^+, B^+_{S}\right)[\tfrac{1}{p}] \arrow{d} \\
        \Hom_{\ul{R}}\left(\left(\textstyle\prod_I \ul{R^+}\right)[\tfrac{1}{p}], \ul{B}\right)(S) \arrow{r} & \hom_{\Cont, R}\left(\left(\textstyle\prod_I R^+\right)[\tfrac{1}{p}], B_S\right).
    \end{tikzcd}
\end{equation}
By the tensor-hom adjunction and the first claim of the Proposition, it suffices to show that the left vertical arrow is an isomorphism. The top horizontal arrow is an isomorphism by the case handled above, and it follows as in the proof of Proposition \ref{Prop:DualityCondensed} that the right vertical arrow is an isomorphism. The bottom horizontal arrow is an isomorphism by Lemma \ref{Lem:PreEvaluationCondensed} (using Lemma \ref{Lem:CompactlyGenerated} to verify the assumptions), and so we are done. 

For the third isomorphism we use Proposition \ref{Prop:DualityCondensed} and the tensor-hom adjunction to write
\begin{align}
    \Hom_{\ul{R^+}}\left(\textstyle\prod_I \ul{R^+} \otimes_{\ul{R^+}}^{\blacksquare}\textstyle\prod_J \ul{R^+}, \ul{R^+}\right)
    &=\Hom_{\ul{R^+}}\left(\textstyle\prod_I \ul{R^+}, \Hom_{\ul{R^+}}\left(\textstyle\prod_J \ul{R^+}, \ul{R^+}\right)\right) \\
    &=\Hom_{\ul{R^+}}\left(\textstyle\prod_I \ul{R^+}, \hat{\oplus}_J \ul{R^+}\right).
\end{align}
Let $S$ be a profinite set. Then it follows from Lemma \ref{Lem:PreEvaluationCondensed} (whose assumptions hold by Lemma \ref{Lem:CompactlyGenerated}), that 
\begin{align}
    \Hom_{\ul{R^+}}\left(\textstyle\prod_I \ul{R^+},  \hat{\oplus}_J \ul{R^+}\right)(S) &= \hom_{\Cont, R^+}\left(\textstyle\prod_I R^+, \left(\hat{\oplus}_J R^+\right)_{S}\right) \\
    &= \hom_{\Cont, R^+}\left(\textstyle\prod_I R^+, \hat{\oplus}_J R^+_{S}\right),
\end{align}
where we used Lemma \ref{Lem:CompactOpen} and Lemma \ref{Lem:DiscreteDirectLimit} in the last step. We further compute
\begin{align}
    \hom_{\Cont, R^+}\left(\textstyle\prod_I R^+, \hat{\oplus}_J R^+_{S}\right) &=\varprojlim_{n} \hom_{\Cont, R^+/p^n}\left(\textstyle\prod_I R^+/p^n, \oplus_{J} R^+_{S}/p^n\right) \\
    &=\varprojlim_{n} \oplus_I \hom_{R^+/p^n}\left( R^+/p^n, \oplus_{J} R^+_{S}/p^n\right),
\end{align}
where the last step follows by arguing with the product topology as in the proof of Proposition \ref{Prop:DualityCondensed}. Continuing, we get (using Lemma \ref{Lem:DiscreteDirectLimit} in the second step)
\begin{align}
    \varprojlim_{n} \oplus_I \hom_{R^+/p^n}\left( R^+/p^n, \oplus_{J} R^+_{S}/p^n\right)&=\varprojlim_{n} \oplus_I \oplus_{J} R^+_{S}/p^n \\
    &=\varprojlim_{n} \oplus_{I \times J} R^+_{S}/p^n \\
    &=\left(\hat{\oplus}_{I \times J} \underline{R^+}\right)(S).
\end{align}

 For the final isomorphism, we consider the commutative diagram (for a fixed profinite set $S$)
\begin{equation}
    \begin{tikzcd}
        \Hom_{\ul{R^+}}\left(\textstyle\prod_I \ul{R^+}, \hat{\oplus}_J \ul{R^+} \right)(S)[\tfrac{1}{p}] \arrow{r} \arrow{d} & \hom_{\Cont, R^+}(\textstyle\prod_I R^+, \hat{\oplus}_J R^+_{S})[\tfrac{1}{p}] \arrow{d} \\
        \Hom_{\ul{R}}((\textstyle\prod_I \ul{R^+})[\tfrac{1}{p}],  \hat{\oplus}_J \ul{R})(S) \arrow{r} & \hom_{\Cont, R}\left((\textstyle\prod_I R^+)[\tfrac{1}{p}], \hat{\oplus}_J R_{S}\right).
    \end{tikzcd}
\end{equation}
By the tensor-hom adjunction and the third claim of the proposition, it suffices to show that the left vertical arrow is an isomorphism. The top horizontal arrow is an isomorphism by the case handled above, and it follows as in the proof of Proposition \ref{Prop:DualityCondensed} that the right vertical arrow is an isomorphism. The bottom horizontal arrow is an isomorphism by Lemma \ref{Lem:PreEvaluationCondensed} (using Lemma \ref{Lem:CompactlyGenerated} to verify the assumptions), so we are done. \end{proof}
\begin{Rem}
    It is not true in general that the natural map $\left(\textstyle\prod_I \ul{R^+}\right) \otimes^{\blacksquare}_{\ul{R^+}} \ul{B^+} \to \textstyle\prod_I \ul{B^+}$ is an isomorphism, see Remark \ref{Rem:UncompletedTensor} and \cite[Theorem 3.27]{andreychev2021pseudocoherentperfectcomplexesvector}.
\end{Rem}

\subsection{Compact type and trace-class maps} \label{Sub:TraceClassCompact}  Let $(R,R^+)$ be a uniform Huber pair over $(\qp,\zp)$. Recall from \cite[Definition A.44]{Bosco} that a morphism $f: M \to N$ of solid $\ul{R}$-modules is said to be \emph{trace-class} if $f$ is in the image of the natural map
\begin{align}
  \left(\Hom_{\ul{R}}(M, \ul{R}) \otimes_{\ul{R}}^{\blacksquare} N\right)(\ast) \to \hom_{\ul{R}}(M,N).
\end{align}
More concretely, writing $M^{\ast}=\Hom_{\ul{R}}(M, \ul{R})$, there is a morphism $g:\ul{R} \to M^{\ast} \otimes_{\ul{R}}^{\blacksquare} N$ such that $f$ is equal to the composition
\begin{align}
    M = M \otimes_{\ul{R}}^{\blacksquare} \ul{R} \xrightarrow{1 \otimes g} M \otimes_{\ul{R}}^{\blacksquare} M^{\ast} \otimes_{\ul{R}}^{\blacksquare} N \xrightarrow{\operatorname{ev} \otimes 1} N
\end{align}
where $\mathrm{ev} \colon M \otimes_{\ul{R}}^{\blacksquare} M^{\ast} \to \ul{R}$ denotes the natural map (given by evaluation).
 The following lemma captures some of the basic properties of trace-class morphisms.
\begin{Lem} \label{Lem:TensorTraceClass}
    Let $(R,R^+) \to (B,B^+)$ be a morphism of uniform Huber pairs over $(\qp,\zp)$ and let $f:M \to N, f':M' \to N'$ be morphisms of solid $\ul{R}$-modules. 
\begin{enumerate}
    \item  If $f$ is trace-class, then so is $f \otimes 1_{\ul{B}}:M \otimes_{\ul{R}}^{\blacksquare} \ul{B} \to N \otimes_{\ul{R}}^{\blacksquare} \ul{B}$. 

    \item If $f$ and $f'$ are trace class, then so is $f \otimes f' : M \otimes_{\ul{R}}^{\blacksquare} M' \to N \otimes_{\ul{R}}^{\blacksquare} N'$. 

    \item If $f$ is trace class, then there exists a (not necessarily unique) dotted arrow in the commutative diagram 
    \begin{equation}
        \begin{tikzcd}
            M \arrow{r}{f}  \arrow{d} & N \arrow{d}\\
            M^{\ast \ast} \arrow{r}{f^{\ast \ast}} \arrow[ur, dashed, printersafe] & N^{\ast \ast},
        \end{tikzcd}
    \end{equation}
    making both triangles commute. Moreover, the map $f^{\ast \ast}$ is also trace-class.
\end{enumerate}    
\end{Lem}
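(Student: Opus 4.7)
The plan is to prove all three parts by directly manipulating the witness $g:\ul{R}\to M^\ast\otimes^{\blacksquare}_{\ul{R}} N$ coming from the definition of trace-class (and analogously $g'$ for $f'$). At each stage, the work is to rearrange this witness via the symmetric monoidal structure and the natural maps supplied by tensor-hom adjunction.

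For (1), I would base-change $g$ to obtain $g\otimes 1_{\ul{B}}:\ul{B}\to (M^\ast\otimes^{\blacksquare}_{\ul{R}} N)\otimes^{\blacksquare}_{\ul{R}}\ul{B}$, and use the symmetric monoidal identification $(M^\ast\otimes^{\blacksquare}_{\ul{R}} N)\otimes^{\blacksquare}_{\ul{R}}\ul{B}\cong (M^\ast\otimes^{\blacksquare}_{\ul{R}}\ul{B})\otimes^{\blacksquare}_{\ul{B}}(N\otimes^{\blacksquare}_{\ul{R}}\ul{B})$ to view it as a map into the latter. Pushing forward along the natural morphism
\[ M^\ast \otimes^{\blacksquare}_{\ul{R}} \ul{B} \;=\; \Hom_{\ul{R}}(M,\ul{R})\otimes^{\blacksquare}_{\ul{R}}\ul{B} \;\longrightarrow\; \Hom_{\ul{R}}(M,\ul{B}) \;=\; \Hom_{\ul{B}}(M\otimes^{\blacksquare}_{\ul{R}}\ul{B},\ul{B}) \]
(the first arrow from the symmetric monoidal-closed structure, the last equality by tensor-hom adjunction) produces the required witness for $f\otimes 1_{\ul{B}}$. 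The verification that this witness recovers $f\otimes 1_{\ul{B}}$ under the evaluation recipe reduces to a formal diagram chase expressing compatibility of the evaluation pairings $M^\ast\otimes M\to\ul{R}$ and $(M\otimes\ul{B})^\ast_{\ul{B}}\otimes(M\otimes\ul{B})\to\ul{B}$ through the above natural map.

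For (2), I would form the external product $g\otimes g':\ul{R}\to M^\ast\otimes^{\blacksquare}_{\ul{R}} N\otimes^{\blacksquare}_{\ul{R}} (M')^\ast\otimes^{\blacksquare}_{\ul{R}} N'$ and push forward along the natural map $M^\ast\otimes^{\blacksquare}_{\ul{R}}(M')^\ast\to (M\otimes^{\blacksquare}_{\ul{R}} M')^\ast$ (again supplied by the closed symmetric monoidal structure) to obtain a witness $\tilde g:\ul{R}\to (M\otimes^{\blacksquare}_{\ul{R}} M')^\ast\otimes^{\blacksquare}_{\ul{R}}(N\otimes^{\blacksquare}_{\ul{R}} N')$. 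Functoriality of the evaluation pairing, namely $\mathrm{ev}_M\otimes\mathrm{ev}_{M'}$ agrees with $\mathrm{ev}_{M\otimes M'}$ precomposed with the map above, implies that $f\otimes f'$ is exhibited by $\tilde g$.

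For (3), I would define $h:M^{\ast\ast}\to N$ by the very same formula used for $f$ but with $M$ replaced by $M^{\ast\ast}$, i.e.
\[ h \colon M^{\ast\ast} = M^{\ast\ast}\otimes^{\blacksquare}_{\ul{R}}\ul{R} \xrightarrow{\,1\otimes g\,} M^{\ast\ast}\otimes^{\blacksquare}_{\ul{R}} M^\ast\otimes^{\blacksquare}_{\ul{R}} N \xrightarrow{\,\mathrm{ev}\otimes 1\,} N. \]
Commutativity of the upper triangle $h\circ\iota_M=f$ is precisely the statement that $\iota_M:M\to M^{\ast\ast}$ intertwines the two evaluation pairings with $M^\ast$, which holds by definition of $\iota_M$. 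Commutativity of the lower triangle $\iota_N\circ h=f^{\ast\ast}$ is a parallel diagram chase: pairing with $\psi\in N^\ast$ reduces both sides to the element $\phi(g_1)\cdot\psi(g_2)$ of $\ul{R}$ (heuristically writing $g=g_1\otimes g_2$). Finally, applying the same recipe to the pushed-forward witness $(1\otimes \iota_N)\circ g:\ul{R}\to M^\ast\otimes^{\blacksquare}_{\ul{R}} N^{\ast\ast}$ shows that $f^{\ast\ast}$ is trace-class. The main subtlety throughout is keeping careful track of the compatibility between evaluation pairings and the symmetric monoidal natural transformations, but there is no deep obstacle — every step is formal in a closed symmetric monoidal category with the described duality structure.
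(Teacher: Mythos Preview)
Your proposal is correct and follows essentially the same approach as the paper: in each part you construct the required witness by pushing the given $g$ (and $g'$) along the natural symmetric-monoidal comparison maps, and you verify the factorization via compatibility of evaluation pairings. The paper's proof consists only of writing down the corresponding commutative diagrams without commentary, so your version is a spelled-out form of the same argument. One very minor point in (3): your proposed witness $(1\otimes\iota_N)\circ g$ for $f^{\ast\ast}$ lands in $M^\ast\otimes^{\blacksquare}_{\ul{R}} N^{\ast\ast}$, whereas the trace-class condition for $f^{\ast\ast}$ requires a witness in $M^{\ast\ast\ast}\otimes^{\blacksquare}_{\ul{R}} N^{\ast\ast}$, so you should also compose with $\iota_{M^\ast}\otimes 1$; this is implicit in the paper's top row of the diagram for (3).
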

\begin{proof}
The first statement follows directly from examining the commutative diagram
\begin{equation}
    \begin{tikzcd}
        \left(\Hom_{\ul{R}}(M, \ul{R}) \otimes_{\ul{R}}^{\blacksquare} N\right)(\ast) \arrow{r} \arrow{d} &\hom_{\ul{R}}(M,N) \arrow{d} \\
         \left(\Hom_{\ul{B}}(M \otimes_{\ul{R}}^{\blacksquare} \ul{B}, \ul{B}) \otimes_{\ul{B}}^{\blacksquare} N \otimes_{\ul{R}}^{\blacksquare} \ul{B} \right)(\ast) \arrow{r} & \hom_{\ul{B}}(M \otimes_{\ul{R}}^{\blacksquare} \ul{B}, N \otimes_{\ul{R}}^{\blacksquare} \ul{B}).
    \end{tikzcd}
\end{equation}
The second statement follows directly from the commutative diagram 
\begin{equation}
    \begin{tikzcd}
    & M \otimes_{\ul{R}}^{\blacksquare} M' \otimes_{\ul{R}}^{\blacksquare} \left(M \otimes_{\ul{R}}^{\blacksquare} M'\right)^{\ast}\otimes_{\ul{R}}^{\blacksquare} N \otimes_{\ul{R}} ^{\blacksquare} N' \arrow{dr} \\
   M \otimes_{\ul{R}}^{\blacksquare} M' \arrow{r} \arrow[ur, dashed, printersafe] & M \otimes_{\ul{R}}^{\blacksquare} M' \otimes_{\ul{R}}^{\blacksquare} M^{\ast} \otimes_{\ul{R}}^{\blacksquare} M^{'\ast} \otimes_{\ul{R}}^{\blacksquare} N \otimes_{\ul{R}} ^{\blacksquare} N' \arrow{r} \arrow{u} & N \otimes_{\ul{R}}^{\blacksquare} N'.
    \end{tikzcd}
\end{equation}
The third statement follows directly from the commutative diagram 
\begin{equation}
    \begin{tikzcd}
        & M^{\ast \ast} \otimes_{\ul{R}}^{\blacksquare} M^{\ast \ast \ast} \otimes_{\ul{R}}^{\blacksquare} N^{\ast \ast} \arrow{r} & N^{\ast \ast } \\
        M^{\ast \ast} \arrow{r} \arrow[dashed, printersafe, drr] \arrow[dashed, printersafe, ur] & M^{\ast \ast} \otimes_{\ul{R}}^{\blacksquare} M^{\ast} \otimes_{\ul{R}}^{\blacksquare} N \arrow{dr} \arrow{u} \\
        M \arrow{r} \arrow{u} & M \otimes_{\ul{R}}^{\blacksquare} M^{\ast} \otimes_{\ul{R}}^{\blacksquare} N \arrow{r} \arrow{u} & N \arrow{uu}.
    \end{tikzcd}
\end{equation}
\end{proof}

\subsubsection{} \label{subsub:compact} Let $(R,R^+)$ be a uniform Huber pair over $(\qp,\zp)$. Following \cite[Definition 3.33]{RJRC}, we define a morphism $f:M \to N$ of solid $\ul{R}$-modules to be \emph{compact} if the induced map
\begin{align}
    N^{\ast} \to M^{\ast}
\end{align}
is trace-class. Compact morphisms are not preserved under base change or tensor products in general: Consider $R=\mathbb{Q}_p\langle T \rangle$, $M=N=\hat{\oplus}_{\mathbb{Z}} \mathbb{Q}_p$, viewed as a module supported at $T=0$, and $f$ the identity map --- since $M^*=N^*=0$, the map $f$ is trivially compact, but its base change to $\mathbb{Q}_p$ is not compact. However, we do have the following result.
\begin{Lem} \label{Lem:CompactTensor}
    Let $(R,R^+) \to (B,B^+)$ be a morphism of uniform Huber pairs over $(\qp,\zp)$ and let $f:M \to N, f':M' \to N'$ be morphisms of solid $\ul{R}$-modules. Suppose that $M,N,M',N'$ are direct summands of countably orthonormalizable $\ul{R}$-modules.
    \begin{enumerate}
    \item  If $f$ is compact, then $f \otimes 1_{\ul{B}}:M \otimes_{\ul{R}}^{\blacksquare} \ul{B} \to N \otimes_{\ul{R}}^{\blacksquare} \ul{B}$ is compact (as a morphism of solid $\ul{B}$-modules). 

    \item If $f$ and $f'$ are both compact, then $f \otimes f' \colon M \otimes_{\ul{R}}^{\blacksquare} M' \to N \otimes_{\ul{R}}^{\blacksquare} N'$ is compact. 
    \end{enumerate}
\end{Lem}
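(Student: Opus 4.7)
The plan is to reduce compactness to trace-classness using reflexivity, and then apply the transport properties already established in Lemma \ref{Lem:TensorTraceClass}. First, I claim that any direct summand of a countably orthonormalizable $\ul{R}$-module is reflexive. For $I$ countable, Proposition \ref{Prop:DualityCondensed} gives $(\hat{\oplus}_I \ul{R})^\ast \cong (\textstyle\prod_I \ul{R^+})[\tfrac{1}{p}]$ and $\bigl((\textstyle\prod_I \ul{R^+})[\tfrac{1}{p}]\bigr)^\ast \cong \hat{\oplus}_I \ul{R}$, and the composition is the canonical biduality map; since the functor $(-)^\ast$ commutes with finite direct sums and biduality is natural, reflexivity passes to direct summands, so $M, N, M', N'$ are all reflexive. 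Moreover, Proposition \ref{Prop:StandardBCproperty} ensures that the class is closed under $-\otimes^{\blacksquare}_{\ul{R}} \ul{B}$ and under $\otimes^{\blacksquare}_{\ul{R}}$ with another member (since $I \times J$ is countable for $I, J$ countable), so the relevant base changes $M \otimes^{\blacksquare}_{\ul{R}} \ul{B}$, $N \otimes^{\blacksquare}_{\ul{R}} \ul{B}$ and tensor products $M \otimes^{\blacksquare}_{\ul{R}} M'$, $N \otimes^{\blacksquare}_{\ul{R}} N'$ are again direct summands of countably orthonormalizable modules over the appropriate base, hence reflexive.

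The crux of the argument is the following sublemma: for reflexive solid modules $X, Y$ over a uniform Huber pair, a morphism $g: X \to Y$ is compact if and only if it is trace-class. Indeed, given a trace-class witness $\alpha: \ul{R} \to X^\ast \otimes^{\blacksquare}_{\ul{R}} Y$ for $g$, the symmetry of $\otimes^{\blacksquare}_{\ul{R}}$ together with the isomorphism $Y \cong Y^{\ast\ast}$ produces $\tilde{\alpha}: \ul{R} \to Y^{\ast\ast} \otimes^{\blacksquare}_{\ul{R}} X^\ast$, and a naturality diagram chase for the evaluation pairing identifies the resulting trace-class map $Y^\ast \to X^\ast$ with $g^\ast$, showing that $g$ trace-class implies $g$ compact. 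Conversely, if $g^\ast$ is trace-class, applying the same construction to $g^\ast$ and using the reflexivity isomorphism $X \cong X^{\ast\ast}$ to identify $g^{\ast\ast}$ with $g$ produces a trace-class witness for $g$.

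With these ingredients in hand, both claims follow immediately. For (1), compactness of $f$ gives trace-classness by the sublemma; Lemma \ref{Lem:TensorTraceClass}(1) yields trace-classness of $f \otimes 1_{\ul{B}}$ as a morphism of solid $\ul{B}$-modules; and the sublemma applied over $\ul{B}$ to the reflexive modules $M \otimes^{\blacksquare}_{\ul{R}} \ul{B}$ and $N \otimes^{\blacksquare}_{\ul{R}} \ul{B}$ concludes that $f \otimes 1_{\ul{B}}$ is compact. For (2), the argument is identical, with Lemma \ref{Lem:TensorTraceClass}(2) replacing Lemma \ref{Lem:TensorTraceClass}(1) and reflexivity of $M \otimes^{\blacksquare}_{\ul{R}} M'$ and $N \otimes^{\blacksquare}_{\ul{R}} N'$ playing the analogous role. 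I expect the main technical obstacle to be the naturality diagram chase in the sublemma identifying the swapped witness with $g^\ast$: this is essentially formal in any symmetric monoidal closed category, but warrants care in the solid setting because hom and duality functors can behave subtly on general solid modules, and reflexivity of $X$ and $Y$ is used essentially in the verification.
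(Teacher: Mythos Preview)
Your proof is correct and takes a genuinely different route from the paper's. You first establish the sublemma that, for reflexive modules, a map is compact if and only if it is trace-class, and then transport trace-classness of $f$ itself (rather than of $f^\ast$) through base change or tensor product using Lemma~\ref{Lem:TensorTraceClass}(1),(2). The paper instead works directly with the dual map: it applies Lemma~\ref{Lem:TensorTraceClass}(1) to $f^\ast$ to obtain a trace-class map $N^\ast \otimes^{\blacksquare}_{\ul{R}} \ul{B} \to M^\ast \otimes^{\blacksquare}_{\ul{R}} \ul{B}$, then observes via Propositions~\ref{Prop:DualityCondensed} and~\ref{Prop:TensorDualSmith} that the natural map $M^\ast \otimes^{\blacksquare}_{\ul{R}} \ul{B} \to (M \otimes^{\blacksquare}_{\ul{R}} \ul{B})^\ast$ identifies the target with the \emph{double dual} of the source (not the source itself, since free Smith modules do not base-change well), and concludes by Lemma~\ref{Lem:TensorTraceClass}(3) that the bidual map, which is $(f \otimes 1_{\ul{B}})^\ast$, is trace-class. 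Your approach is cleaner in that it sidesteps this Smith-module subtlety entirely, and your sublemma is a useful standalone fact; the paper's approach, on the other hand, avoids the extra diagram chase you flag and uses only the already-established Lemma~\ref{Lem:TensorTraceClass}. One small remark: the forward implication of your sublemma (trace-class $\Rightarrow$ compact) does not actually require reflexivity, since a witness for $g^\ast$ in $Y^{\ast\ast} \otimes^{\blacksquare}_{\ul{R}} X^\ast$ is obtained from a witness for $g$ by composing with $\iota_Y \colon Y \to Y^{\ast\ast}$ regardless of whether $\iota_Y$ is invertible; reflexivity is only needed for the converse.
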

\begin{proof}
For the first statement, we note that it follows from Propositions \ref{Prop:DualityCondensed} and \ref{Prop:TensorDualSmith} and the exactness of idempotent projection, that the natural map
\begin{align}
    M^{\ast} \otimes_{\ul{R}}^{\blacksquare} \ul{B} \to (M \otimes_{\ul{R}}^{\blacksquare} \ul{B})^{\ast}
\end{align}
identifies the target with the double dual of the source. It follows from Lemma \ref{Lem:TensorTraceClass}-(1) that $N^{\ast} \otimes_{\ul{R}}^{\blacksquare} \ul{B} \to M^{\ast} \otimes_{\ul{R}}^{\blacksquare} \ul{B}$ is trace class, and then from Lemma \ref{Lem:TensorTraceClass}-(3) that $(N \otimes_{\ul{R}}^{\blacksquare} \ul{B})^{\ast} \to (M \otimes_{\ul{R}}^{\blacksquare} \ul{B})^{\ast}$ is trace-class. For the second statement, a similar argument works once we note that
\begin{align}
    M^{\ast} \otimes_{\ul{R}}^{\blacksquare} N^{\ast} \to (M \otimes_{\ul{R}}^{\blacksquare} N)^{\ast}
\end{align}
identifies the target with the double dual of the source; this follows from Propositions \ref{Prop:DualityCondensed} and \ref{Prop:TensorDualSmith} and the exactness of idempotent projection. 
\end{proof}

\subsection{Fr\'echet modules and flatness} \label{Sub:Frechet}  Let $R$ be a $\qp$-Banach algebra. Recall that $\operatorname{Mod}_R^{\mathrm{cond}}$ denotes the category of condensed $\ul{R}$-modules, and that $\operatorname{Mod}_R^{\operatorname{solid}}$ denotes the subcategory of solid modules (i.e., modules over the analytic ring $(\ul{R}, \mbb{Z})_{\blacksquare}$ from \cite[Proposition A.29]{Bosco}, noting that $\ul{R}$ is solid). Let $\operatorname{Mod}_{R}^{\operatorname{nuc}}$ denote the full subcategory of $\operatorname{Mod}^{\operatorname{solid}}_R$ consisting of nuclear $\ul{R}$-modules (as in Definition A.40 of \emph{op.cit.}). Note that $\ul{R}$ is a nuclear solid $\mbb{Q}_p$-algebra (Corollary A.50 in \emph{op.cit.}).

Here are some properties of nuclear $\ul{R}$-modules. 

\begin{enumerate}
    \item $\operatorname{Mod}^{\operatorname{nuc}}_R$ is an abelian category stable under the solid tensor product $\otimes^{\blacksquare}_{\ul{R}}$, finite limits, countable products, and all colimits (see \cite[Theorem A.43]{Bosco}). 
    \item Orthonormalizable Banach $\ul{R}$-modules are nuclear and flat for the solid tensor product (\cite[Proposition A.55.(iii)]{Bosco}). Also, $\operatorname{Mod}^{\operatorname{nuc}}_R$ is generated under colimits by orthonormalizable Banach $\ul{R}$-modules (\cite[Proposition A.55.(ii)]{Bosco}).\footnote{We note that the objects denoted by $\Hom_{A}(A[S],A)$ in the statement of \cite[Proposition A.55.(ii)]{Bosco} are orthonormalizable Banach, and conversely any orthonormalizable Banach module is of this form. Indeed, this follows from \cite[Lemma A.52]{Bosco} and \cite[Proposition A.55.(i)]{Bosco}.}
    \item The subcategory of $\operatorname{Mod}_R^{\operatorname{nuc}}$ generated under filtered colimits by orthonormalizable Banach $R$-modules is stable under countable products (\cite[Proposition A.57]{Bosco}).
\end{enumerate}

\subsubsection{} Consider the following definition.
\begin{definition} \label{Def:Frechet}
We say a condensed $\ul{R}$-module $M$ is \emph{strongly Fr\'echet} if it is isomorphic to the limit of a countable inverse system $\{ M_n \}$ where each $M_n$ is a direct summand of an orthonormalizable Banach $\ul{R}$-module, and where the transition maps $M_{n+1}(\ast)_{\mathrm{top}} \to M_{n}(\ast)_{\mathrm{top}}$ have dense image. We say $M$ is \emph{strongly countably Fr\'echet} if each $M_n$ is a direct summand of a countably orthonormalizable Banach $\ul{R}$-module. We say that $M$ is of \emph{compact type} if the transition maps $M_{n+1} \to M_{n}$ are compact.
\end{definition}

We have the following crucial technical result about strongly Fr\'echet $\ul{R}$-modules.
\begin{Prop} \label{Prop:FrechetFlat}
If $M$ is a strongly Fr\'echet $\ul{R}$-module, then $M$ is flat for the solid tensor product $\otimes_{\ul{R}}^{\blacksquare}$.
\end{Prop}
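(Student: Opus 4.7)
The plan is to present $M$ as the kernel of a surjection between flat solid $\ul{R}$-modules and then conclude flatness via the long exact sequence of $\operatorname{Tor}$. Writing $M = \varprojlim_n M_n$ with transition maps $f_{n+1} \colon M_{n+1} \to M_n$, the target is the short exact sequence
\[
0 \to M \to \textstyle\prod_n M_n \xrightarrow{1 - f} \textstyle\prod_n M_n \to 0,
\]
with $(1 - f)(x_n)_n = (x_n - f_{n+1}(x_{n+1}))_n$. Given this, the long exact sequence of $\operatorname{Tor}$ with any solid $\ul{R}$-module $N$, combined with the flatness of both copies of $\prod_n M_n$, forces $\operatorname{Tor}^{\blacksquare, i}_{\ul{R}}(M,N) = 0$ for $i \geq 1$, hence that $M$ is flat.

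First I would establish flatness of $\prod_n M_n$. Each $M_n$ is a direct summand of an orthonormalizable Banach module $B_n$ via some idempotent $e_n \colon B_n \to B_n$, so $\prod_n M_n$ is the image of the idempotent $\prod_n e_n$ on $\prod_n B_n$, i.e., a direct summand of $\prod_n B_n$. By Property (3) recalled above the statement, $\prod_n B_n$ lies in the subcategory of $\operatorname{Mod}_R^{\operatorname{nuc}}$ generated under filtered colimits by orthonormalizable Banach modules. Since orthonormalizable Banachs are flat (Property (2)) and filtered colimits preserve flatness in the Grothendieck abelian category $\operatorname{Mod}_R^{\operatorname{solid}}$ (using (AB5) together with the fact that $\otimes^{\blacksquare}_{\ul{R}}$ is a left adjoint in each variable), it follows that $\prod_n B_n$ is flat. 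Since direct summands of flat modules are flat, $\prod_n M_n$ is flat as well.

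It remains to produce the short exact sequence. The kernel of $1-f$ is $\varprojlim_n M_n = M$ by definition, so the issue is the right exactness, namely the vanishing of $R^1\varprojlim_n M_n$ in solid $\ul{R}$-modules, i.e.\ the surjectivity of $1-f$ as a morphism of condensed sheaves. I would check this by evaluating on profinite sets $S$: the density of the transition maps $M_{n+1}(\ast)_{\mathrm{top}} \to M_n(\ast)_{\mathrm{top}}$ transfers to density of $M_{n+1}(S) \to M_n(S)$ (since each $M_n(S)$ arises by applying $C^0(S,-)$ to a direct summand of a Banach, and this construction preserves dense linear maps), and the classical Mittag-Leffler argument for Fr\'echet spaces with dense transition maps — inductively correcting partial preimages and exploiting Banach completeness — produces preimages under $1-f$ on $S$-sections.

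The main technical obstacle is exactly this last step, the condensed Mittag-Leffler vanishing. One has to carry out the classical convergence argument functorially in the profinite test set $S$, which requires verifying that density of the transition maps on the underlying topological spaces genuinely propagates to density of $S$-sections in the appropriate Banach topology. Once this is in place, the remainder of the argument is formal.
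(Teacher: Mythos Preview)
Your proposal is correct and follows essentially the same route as the paper: both use the short exact sequence $0 \to M \to \prod_n M_n \xrightarrow{1-f} \prod_n M_n \to 0$, argue that $\prod_n M_n$ is flat as a direct summand of a countable product of orthonormalizable Banachs (invoking property (3), flatness of Banachs, and preservation of flatness under filtered colimits), and conclude via the long exact Tor sequence. The only difference is in the surjectivity step: the paper simply cites \cite[Lemma A.37]{Bosco} (applied with $F=\mathbb{Q}_p$, after forgetting to condensed $\mathbb{Q}_p$-modules) for the condensed Mittag--Leffler vanishing, whereas you sketch that argument directly by evaluating on profinite $S$ and running the classical density-based Mittag--Leffler argument for Banach towers---this is exactly the content of the cited lemma.
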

\begin{proof}
    Let $M \simeq \varprojlim M_n$ as above. Since the forgetful functor $\operatorname{Mod}_R^{\mathrm{cond}} \to \operatorname{Mod}_{\mbb{Q}_p}^{\mathrm{cond}}$ has a left-adjoint, and exactness of sequences can be checked after composing with this forgetful functor, one has a short exact sequence 
    \[
    0 \to M \to \textstyle\prod_n M_n \xrightarrow{s} \textstyle\prod_n M_n \to 0
    \]
    where the map $s$ is the identity minus the transition morphisms. Here we are using \cite[Lemma A.37]{Bosco} with $F = \mbb{Q}_p$ to deduce that the sequence is exact on the right. Now each $M_n$ is a direct summand of an orthonormalizable Banach $\ul{R}$-module $N_n$, and it follows that
    \begin{align}
        \textstyle\prod_{n } M_n \subset \textstyle\prod_n N_n
    \end{align}
    is also the inclusion of a direct summand (take the product of the idempotents). By properties (1) and (2) above (second paragraph of \S\ref{Sub:Frechet}), the module $\textstyle\prod_n N_n$ is nuclear, and by property (3), it can be written as a filtered colimit of orthonormalizable Banach $\ul{R}$-modules. Since the solid tensor product commutes with filtered colimits, the category $\operatorname{Mod}^{\operatorname{solid}}_R$ satisfies (AB5), and orthonormalizable Banach $\ul{R}$-modules are flat (Lemma \ref{Lem:BanachFlat}), we see that $\textstyle\prod_n N_n$ is flat for the solid tensor product. This implies that $\textstyle\prod_{n } M_n$ is flat for the solid tensor product since direct summands of (solidly) $\ul{R}$-flat modules are again (solidly) $\ul{R}$-flat.

    Let $N$ be a solid $\ul{R}$-module, and let $\operatorname{Tor}_i^{R, \blacksquare}(-, N)$ denote the $i$-th left derived functor of $- \otimes_{\ul{R}}^{\blacksquare} N$. Passing to the long exact sequence for the above exact sequence, we see that $\operatorname{Tor}_i^{R, \blacksquare}(M, N)$ ($i \geq 1$) sits in the exact sequence
    \[
    \operatorname{Tor}_{i+1}^{R, \blacksquare}(\textstyle\prod_n M_n, N) \to \operatorname{Tor}_i^{R, \blacksquare}(M, N) \to \operatorname{Tor}_i^{R, \blacksquare}(\textstyle\prod_n M_n, N) .
    \]
    Now, since $\textstyle\prod_n M_n$ is flat for the solid tensor product, both of the outer terms vanish (since $i \geq 1$). This implies that $\operatorname{Tor}_i^{R, \blacksquare}(M, N) = 0$ for all $i \geq 1$ and all solid $N$, and hence $M$ is flat for the solid tensor product.
\end{proof}

\subsubsection{} We collect the following lemmas. We call an $\ul{R}$-module $M$ a \emph{Banach} module if $M=\ul{M^+}[\tfrac{1}{p}]$ where $M^+$ is a $p$-torsion free $p$-adically complete and separated topological $R^+$-module with the $p$-adic topology. Note that Banach $R^+$-modules are automatically metrizable, hence compactly generated. 

\begin{Lem} \label{Lem:DenseImageTensor}
    Let $(R,R^+)$ be a uniform Huber pair over $(\qp,\zp)$, let $M_{1}$ and $M_2$ be direct summands of orthonormalizable Banach $\ul{R}$-modules and let $N$ be a Banach $\ul{R}$-module. If $f:M_{1} \to M_{2}$ is a morphism of $\ul{R}$-modules whose induced morphism on topological $R$-modules has dense image, then the same is true for
    \begin{align}
        f \otimes_{\ul{R}}^{\blacksquare} \operatorname{Id}_{N}: M_1 \otimes_{\ul{R}}^\blacksquare N \to M_2 \otimes_{\ul{R}}^\blacksquare N.
    \end{align}
\end{Lem}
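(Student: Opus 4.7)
The plan is a direct approximation argument at the level of the underlying topological modules, using the standard identification of the solid tensor product with a $p$-adically completed tensor product.

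First, reduce to the case where $M_1 = \hat{\oplus}_{I_1} \ul{R}$ and $M_2 = \hat{\oplus}_{I_2} \ul{R}$ are themselves orthonormalizable Banach. If $M_i$ is realized as the image of an idempotent $e_i$ acting on an orthonormalizable Banach $\widetilde{M}_i$, then $M_i \otimes^{\blacksquare}_{\ul{R}} N$ is cut out by the induced idempotent on $\widetilde{M}_i \otimes^{\blacksquare}_{\ul{R}} N$. Given $z \in M_2 \otimes^{\blacksquare}_{\ul{R}} N$ and any finite-sum approximation $w = \sum_j y_j \otimes n_j$ in $\widetilde{M}_2 \otimes^{\blacksquare}_{\ul{R}} N$, its projection $\sum_j e_2(y_j) \otimes n_j$ still approximates $z$ and uses only $e_2(y_j) \in M_2$, to which dense image of $f \colon M_1 \to M_2$ applies. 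This reduces the problem to the orthonormalizable Banach case.

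Next, write $N = \ul{N^+}[\tfrac{1}{p}]$ with $N^+$ a $p$-torsion free $p$-adically complete $R^+$-module. Using Lemma \ref{Lem:BanachFlat}, Proposition \ref{Prop:TensorProductsSolidVSComplete}, and Lemma \ref{Lem:StandardTensorTopological}, identify $M_i \otimes^{\blacksquare}_{\ul{R}} N$ with $\ul{(\hat{\oplus}_{I_i} N^+)[\tfrac{1}{p}]}$; by Lemma \ref{Lem:CompactlyGenerated} and the discussion in \S\ref{SubSub:FullyFaithfulOnCGspaces}, its underlying topological $R$-module is $\hat{\oplus}_{I_i} N$ with its natural topology. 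Write a general element $z \in \hat{\oplus}_{I_2} N$ as a $p$-adically convergent sum $\sum_{i \in I_2} e_i \otimes n_i$ with $n_i \to 0$ in $N$. For any $k \geq 0$, truncate to a finite sum $z_F = \sum_{i \in F} e_i \otimes n_i$ with $z - z_F \in p^k \hat{\oplus}_{I_2} N^+$, and choose $m$ such that $n_i \in p^{-m} N^+$ for all $i \in F$. Using dense image of $f$, find $x_i \in M_1$ with $f(x_i) - e_i \in p^{k+m} \hat{\oplus}_{I_2} R^+$; then
\[
(f \otimes 1_N)\bigl(\textstyle\sum_{i \in F} x_i \otimes n_i\bigr) - z = \sum_{i \in F} (f(x_i) - e_i) \otimes n_i + (z_F - z)
\]
lies in $p^k \hat{\oplus}_{I_2} N^+$, placing $z$ in the closure of the image of $f \otimes 1_N$.

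The only substantive obstacle is verifying that the solid tensor product correctly matches the $p$-adically completed topological tensor product on $(\ast)_\top$; this is essentially subsumed by Proposition \ref{Prop:TensorProductsSolidVSComplete} and the compact generation established in Lemma \ref{Lem:CompactlyGenerated}, so the rest is a routine bookkeeping of $p$-adic valuations.
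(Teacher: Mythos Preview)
Your proof is correct and follows essentially the same approach as the paper: identify $(M_i \otimes_{\ul{R}}^{\blacksquare} N)(\ast)_{\mathrm{top}}$ with the $p$-adically completed tensor product via Proposition~\ref{Prop:TensorProductsSolidVSComplete} and Lemma~\ref{Lem:BanachFlat}, then argue density topologically. The paper packages this more modularly by invoking Lemma~\ref{Lem:DenseImageTensorTopological} for the topological step and handling direct summands via additivity of the completed projective tensor product, whereas you unroll the approximation by hand; note that your ``reduction'' paragraph is really already the full argument (given the identifications of the second paragraph) rather than a reduction to a separate orthonormalizable case, so the phrasing could be tightened.
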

\begin{proof}
Using Lemma \ref{Lem:DenseImageTensorTopological}, it suffices to prove that for $i=1,2$ we have 
\begin{align}
    \left(M_i \otimes_{\ul{R}}^\blacksquare N\right)(\ast)_{\mathrm{top}} \simeq M_i(\ast)_{\mathrm{top}} \widehat{\otimes}_{R} N(\ast)_{\mathrm{top}}.
\end{align}
This is true when $M_i$ is orthonormalizable by Proposition \ref{Prop:TensorProductsSolidVSComplete} (because then $M_i=M_i^+[\tfrac{1}{p}]$ with $M_i^+$ flat over $\ul{R^+}$ for the solid tensor product, see Lemma \ref{Lem:BanachFlat}). The case of direct summands follows from the additivity of the completed projective tensor product. 
\end{proof}

\begin{Lem} \label{Lem:DualOfFrechet}
    Let $M=\varprojlim_n M_n$ be a strongly Fr\'echet $R$-module with presentation as in Definition \ref{Def:Frechet}. Then the natural map
    \begin{align}
        \varinjlim_n \Hom_{\ul{R}}(M_n, \ul{R}) \to \Hom_{\ul{R}}(\varprojlim_n M_n, \ul{R})
    \end{align}
    is an isomorphism. 
\end{Lem}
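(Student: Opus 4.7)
The plan is to identify both sides with classical topological objects and reduce to the standard duality for Fr\'echet $R$-modules. Each $M_n$ is a direct summand of an orthonormalizable Banach $\ul{R}$-module $\ul{\hat{\oplus}_{I_n} R}$ (cf.\ \S\ref{Subsub:classicalStdModules}), where $\hat{\oplus}_{I_n} R$ is Banach and hence compactly generated Hausdorff (Lemma \ref{Lem:CompactlyGenerated}). Since the functor $\ul{(-)}$ is fully faithful on compactly generated Hausdorff topological $R$-modules (Lemma \ref{Lem:Fullyfaithful}) and preserves kernels as a right adjoint, the defining idempotent of $M_n$ arises from a continuous idempotent on $\hat{\oplus}_{I_n} R$ whose image $V_n$ is a closed subspace, hence itself a Banach $R$-module with $M_n = \ul{V_n}$. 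Because $\ul{(-)}$ preserves all limits, $M = \ul{V}$ for $V := \varprojlim_n V_n$, a Fr\'echet $R$-module in the classical sense; in particular $M_n(\ast)_\mathrm{top} = V_n$, so the density hypothesis translates to the classical statement that each transition map $V_{n+1} \to V_n$ has dense image.

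Next, I evaluate both sides at an arbitrary profinite set $S$. By Lemma \ref{Lem:PreEvaluationCondensed}, applied with $R_S := C^0(S, R)$ (a Banach $R$-module with the compact-open topology, hence compactly generated Hausdorff), we obtain $\Hom_{\ul{R}}(M_n, \ul{R})(S) = \hom_{\Cont, R}(V_n, R_S)$ and $\Hom_{\ul{R}}(M, \ul{R})(S) = \hom_{\Cont, R}(V, R_S)$. Since filtered colimits of sheaves on the site of profinite sets (with the finite jointly surjective family topology) commute with evaluation on profinite sets, the left-hand side of the natural map in the statement evaluates at $S$ to $\varinjlim_n \hom_{\Cont, R}(V_n, R_S)$. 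Thus it suffices to show that for every profinite $S$ the natural map
\[
\varinjlim_n \hom_{\Cont, R}(V_n, R_S) \to \hom_{\Cont, R}(V, R_S)
\]
is a bijection.

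This is the standard duality statement for Fr\'echet $R$-modules. For injectivity, one shows that each projection $V \to V_n$ has dense image by a Mittag--Leffler lift (successively approximate $v \in V_n$ by elements of $V_{n+1}, V_{n+2}, \ldots$, using the Banach norms on each level to ensure convergence in $V$); any continuous $R$-linear $f : V_n \to R_S$ vanishing on the image of $V$ then vanishes by continuity. For surjectivity, given continuous $R$-linear $f : V \to R_S$, the preimage of the unit ball of $R_S$ is a neighborhood of $0$ in the Fr\'echet topology on $V$, hence contains the preimage of an open neighborhood of $0$ in some $V_n$; therefore $f$ is bounded by the seminorm pulled back from $V_n$, factors through the image of $V$ in $V_n$ as a bounded operator, and extends uniquely by density to a continuous map $V_n \to R_S$. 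The one nontrivial ingredient is the density of $V \to V_n$, but this is a classical Fr\'echet-space argument; once established the remaining steps are formal.
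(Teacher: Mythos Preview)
Your proof is correct and follows essentially the same route as the paper: reduce to $S$-points, use Lemma~\ref{Lem:PreEvaluationCondensed} (together with the identification $M=\ul{V}$ for a classical Fr\'echet space $V$) to translate to continuous $R$-linear maps into the Banach module $R_S$, and then invoke the classical Fr\'echet/Banach factorization. The only difference is that the paper, at the last step, regards $V$ as a $\mathbb{Q}_p$-Fr\'echet space and $R_S$ as a $\mathbb{Q}_p$-Banach space and cites \cite[Lemma~3.31]{RJRC}, whereas you sketch that argument directly over $R$ (including the Mittag--Leffler density of $V\to V_n$); both are equivalent.
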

\begin{proof}
It suffices to check this on $S$-points for profinite sets $S$. By the adjunction argument of Lemma \ref{Lem:PreEvaluationCondensed}, this comes down to showing that 
\begin{align}
     \varinjlim_n \hom_{\ul{R}}(M_n, \ul{R_{S}}) \to \hom_{\ul{R}}(M, \ul{R_{S}})
\end{align}
is a bijection. Now $M_n(\ast)_{\mathrm{top}}$ is compactly generated and thus $\varprojlim_n M_n(\ast)_{\mathrm{top}}$ is compactly generated.\footnote{Indeed, each $M_n$ has the $p$-adic topology and is thus metrizable, which shows that $\textstyle\prod_n M_n$ is also metrizable. Now use that $\varprojlim_n M_n \subset \textstyle\prod_n M_n$ is closed and thus metrizable and hence compactly generated. } Since $\ul{\varprojlim_n M_n(\ast)_{\mathrm{top}}}=M$, we can use Lemma \ref{Lem:Fullyfaithful} to reduce to showing that the natural map
\begin{align}
     \varinjlim_n \hom_{C^0, R}(M_n(\ast)_{\mathrm{top}}, R_{S}) \to \hom_{C^0, R}(M(\ast)_{\mathrm{top}}, R_{S})
\end{align}
is a bijection. Note that $M$ is a $\qp$-Fr\'echet space and that $R_S$ is a $\qp$-Banach space, so that the result follows from the proof of \cite[Lemma 3.31]{RJRC} (here we are using the property that $M(\ast)_{\mathrm{top}} \to M_n(\ast)_{\mathrm{top}}$ has dense image to conclude that any $f \in \hom_{C^0, R}(M(\ast)_{\mathrm{top}}, R_{S})$ factors through a continuous \emph{$R$-linear} morphism $M_n(\ast)_{\mathrm{top}} \to R_S$, for some $n$).
\end{proof}
\subsubsection{} Let $V = \varprojlim_m V_m$ be a strongly countably Fr\'echet $\ul{R}$-module written as an inverse limit as in Definition \ref{Def:Frechet}. Let $\{W_n\}$ be a countable family of direct summands of orthonormalizable Banach $\ul{R}$-modules. We have the following proposition.
\begin{Prop} \label{Prop:TensorCommuteProducts}
The natural map
 \begin{align}
      V \otimes_{\ul{R}}^{\blacksquare} \textstyle\prod_n W_n  \to \textstyle\prod_{n}\left( V  \otimes_{\ul{R}}^{\blacksquare} W_n\right)
    \end{align}
    is an isomorphism.
\end{Prop}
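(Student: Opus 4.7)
The plan is to reduce via a Milnor-type exact sequence, using the flatness from Proposition \ref{Prop:FrechetFlat}, to the case where $V$ is itself a countable product, and then handle this case by explicit computation using the structural results of \S\ref{Sub:CondensedStandard}.

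First, write $V \simeq \varprojlim_m V_m$ as in Definition \ref{Def:Frechet}. Arguing as in the proof of Proposition \ref{Prop:FrechetFlat}, we obtain a short exact sequence
\[
0 \to V \to \textstyle\prod_m V_m \xrightarrow{s} \textstyle\prod_m V_m \to 0
\]
where $s$ is the identity minus the transition maps. By Proposition \ref{Prop:FrechetFlat}, $V$ is flat for the solid tensor product; moreover, $\textstyle\prod_m V_m$ is flat as well, since it is a direct summand of a countable product of orthonormalizable Banach modules, hence, by properties (1)--(3) in \S\ref{Sub:Frechet} and Lemma \ref{Lem:BanachFlat}, a filtered colimit of flat modules. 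Tensoring the Milnor sequence with $\textstyle\prod_n W_n$ therefore preserves exactness. On the other hand, tensoring with each $W_n$ individually and then taking the product over $n$ also preserves exactness, by AB4* in $\operatorname{Mod}_R^{\operatorname{solid}}$. The natural map of interest extends to a map between these two short exact sequences, so by the five lemma it suffices to prove the claim with $V$ replaced by $V' = \textstyle\prod_m V_m$.

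For this reduced case, the idempotent projections exhibiting each $V_m$ (resp. $W_n$) as a direct summand of a countably orthonormalizable (resp. orthonormalizable) Banach module, together with additivity of $\otimes^{\blacksquare}$, let us assume $V_m = \hat{\oplus}_{I_m} \ul{R}$ with $I_m$ countable and $W_n = \hat{\oplus}_{J_n} \ul{R}$. Proposition \ref{Prop:StandardBCproperty} identifies $V_m \otimes^{\blacksquare} W_n \simeq \hat{\oplus}_{I_m \times J_n} \ul{R}$, and both sides of the desired isomorphism should be identified with $\textstyle\prod_{m,n} \hat{\oplus}_{I_m \times J_n} \ul{R}$. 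The identification is verified by combining the duality statements of Propositions \ref{Prop:DualityCondensed} and \ref{Prop:TensorDualSmith}, which crucially use the countability of $I_m$, with the tensor-hom adjunction: the internal hom commutes with products in its target variable, which lets one distribute both products through the solid tensor product.

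The main obstacle is the final step, namely commuting a countable product $\textstyle\prod_m \hat{\oplus}_{I_m} \ul{R}$ of orthonormalizable Banach factors of countable index with the tensor product by an orthonormalizable Banach module. The countability hypothesis on $I_m$ is essential, as it is precisely what allows the use of the reflexivity results of \S\ref{Sub:CondensedStandard}; this is why the strongly \emph{countably} Fr\'echet hypothesis enters. An alternative and possibly cleaner route is to present $\textstyle\prod_n W_n \simeq \varprojlim_K (W_1 \oplus \cdots \oplus W_K)$ via its own Milnor sequence with surjective (projection) transition maps, and to iterate the flatness argument to reduce to the trivial case of a finite direct sum of the $W_n$.
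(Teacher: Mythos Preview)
Your reduction via the Milnor sequence and the five lemma to the case $V' = \prod_m V_m$ is correct and matches the paper exactly. The problem is the final step.

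Your proposed duality argument does not go through as written. The results of \S\ref{Sub:CondensedStandard} compute duals of single standard modules and tensor products of two Smith modules; they do not tell you how a countable \emph{product} of Banach modules interacts with the solid tensor product. Concretely, after your reduction you need
\[
\bigl(\textstyle\prod_m \hat{\oplus}_{I}\,\ul{R}\bigr)\otimes^{\blacksquare}_{\ul R}\bigl(\textstyle\prod_n \hat{\oplus}_{J}\,\ul{R}\bigr)\;\xrightarrow{\sim}\;\textstyle\prod_n\bigl((\textstyle\prod_m \hat{\oplus}_{I}\,\ul{R})\otimes^{\blacksquare}_{\ul R}\hat{\oplus}_{J}\,\ul{R}\bigr),
\]
and writing $\prod_m \hat{\oplus}_I \ul{R}=\Hom_{\ul R}(\bigoplus_m S_I,\ul{R})$ with $S_I$ Smith does not help, since $\bigoplus_m S_I$ is no longer a module to which Proposition~\ref{Prop:TensorDualSmith} applies, and there is no tensor--hom identity available here of the form $M\otimes^{\blacksquare}\Hom(N,\ul R)\simeq\Hom(N,M)$ for the modules in play. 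Your ``alternative route'' is circular: presenting $\prod_n W_n$ by its own Milnor sequence replaces it by $\prod_K(W_1\oplus\cdots\oplus W_K)$, which is again a countable product of the same type, so the iteration never terminates at a finite sum.

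What the paper actually does at this point is import three results from \cite{Bosco}. First, the identification (from the proof of \cite[Proposition~A.57]{Bosco})
\[
\textstyle\prod_m \hat{\oplus}_I \ul{R}\;=\;(\hat{\oplus}_I \ul{\qp})\otimes^{\blacksquare}_{\ul{\qp}}(\textstyle\prod_m \ul{\qp})\otimes^{\blacksquare}_{\ul{\qp}}\ul{R},
\]
which separates the Banach factor, the product, and the base change. Second, \cite[Corollary~A.59]{Bosco} lets the Banach factors $\hat{\oplus}_I\ul{\qp}$ and $\hat{\oplus}_J\ul{\qp}$ commute past the products. What remains is then the single statement
\[
(\textstyle\prod_m \ul{\qp})\otimes^{\blacksquare}_{\ul{\qp}}(\textstyle\prod_n \ul{\qp})\;\xrightarrow{\sim}\;\textstyle\prod_{m,n}\ul{\qp},
\]
which is a genuine computation over $\qp$ carried out in \cite[Proposition~A.66, equation~(A.22)]{Bosco}. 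These three inputs are the substance of the final step, and none of them is a consequence of the duality statements in \S\ref{Sub:CondensedStandard}.
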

\begin{proof}
We follow the proof of \cite[Proposition A.66]{Bosco}: As in the proof of Proposition \ref{Prop:FrechetFlat}, we can fit $V$ in a short exact sequence
\begin{align}
    0 \to V \to \textstyle\prod_m V_m \to \textstyle\prod_m V_m \to 0.
\end{align}
We now consider the commutative diagram
\begin{equation}
    \begin{tikzcd}
        0 \arrow{r} & V \otimes_{\ul{R}}^{\blacksquare} \textstyle\prod_n W_n \arrow{d} \arrow{r} &  \left(\textstyle\prod_m V_m\right) \otimes_{\ul{R}}^{\blacksquare} \left(\textstyle\prod_n W_n \right)\arrow{d} \arrow{r} &  \left(\textstyle\prod_m V_m\right) \otimes_{\ul{R}}^{\blacksquare} \left(\textstyle\prod_n W_n \right)\arrow{d} \arrow{r} & 0 \\
        0 \arrow{r} & \textstyle\prod_{n}\left( V  \otimes_{\ul{R}}^{\blacksquare} W_n\right) \arrow{r} & \arrow{r} \textstyle\prod_{n}\left( \textstyle\prod_{m} V_m  \otimes_{\ul{R}}^{\blacksquare} W_n\right) & \textstyle\prod_{n}\left( \textstyle\prod_{m} V_m  \otimes_{\ul{R}}^{\blacksquare} W_n\right) \arrow{r} & 0.
    \end{tikzcd}
\end{equation}
Note that the top row is exact because $\textstyle\prod_n W_n$ is flat by Proposition \ref{Prop:FrechetFlat}, and the bottom row is exact by the exactness of countable products (AB4*). By the five-lemma, the proposition reduces to the case that $V=\textstyle\prod_{m} V_m$. By an argument with idempotents as in the proof of Proposition \ref{Prop:FrechetFlat}, this reduces to showing that for countable index sets $I$ and $J$, the natural map
\begin{align} \label{Eq:ProductBanachMap}
    \left(\textstyle\prod_m \hat{\oplus}_{I} \ul{R} \right) \otimes^{\blacksquare}_{\ul{R}} \left(\textstyle\prod_n \hat{\oplus}_{J} \ul{R}\right) \xrightarrow{} \textstyle\prod_{n} \left(\textstyle\prod_{m} \hat{\oplus}_{I} \ul{R} \otimes^{\blacksquare}_{\ul{R}} \hat{\oplus}_{J} \ul{R} \right)
\end{align}
is an isomorphism.\footnote{More precisely, we can assume $V_m \simeq \hat{\oplus}_{I_m} \ul{R}$ and $W_n \simeq \hat{\oplus}_{J_n} \ul{R}$ for countable index sets $I_m$ and $J_n$, and without loss of generality, we may assume that $I_m = J_m = \mbb{Z}$ (since finite-free $\ul{R}$-modules are direct summands of $\hat{\oplus}_{\mbb{Z}} \ul{R}$).} By the discussion in the last paragraph of the proof of \cite[Proposition A.57]{Bosco} (they write $A$ for what we call $\ul{R}$ and $M_A$ for our $\hat{\oplus}_{I} \ul{R}$ and we are taking $F=\ul{\qp}$) we can identify
\begin{align}
    \textstyle\prod_m \hat{\oplus}_{I} \ul{R} = \left(\hat{\oplus}_{I} \ul{\qp}\right) \otimes^{\blacksquare}_{\ul{\qp}} \left(\textstyle\prod_{m} \ul{\qp}\right) \otimes^{\blacksquare}_{\ul{\qp}} \ul{R},
\end{align}
since $\hat{\oplus}_{I} \ul{R} = \hat{\oplus}_{I} \ul{\qp} \otimes^{\blacksquare}_{\ul{\qp}} \ul{R}$, see Proposition \ref{Prop:StandardBCproperty}. Thus we can rewrite the left hand side of \eqref{Eq:ProductBanachMap} as
\begin{align}
    \left(\hat{\oplus}_{I} \ul{\qp}\right) \otimes^{\blacksquare}_{\ul{\qp}}  \left(\hat{\oplus}_{J} \ul{\qp}\right) \otimes^{\blacksquare}_{\ul{\qp}} \left(\textstyle\prod_{m} \ul{\qp}\right) \otimes^{\blacksquare}_{\ul{\qp}} \left(\textstyle\prod_{n} \ul{\qp}\right)\otimes^{\blacksquare}_{\ul{\qp}} \ul{R}.
\end{align}
By \cite[Corollary A.59]{Bosco}, the proposition reduces to proving that the natural map
\begin{align}
    \left(\textstyle\prod_{m} \ul{\qp}\right) \otimes^{\blacksquare}_{\ul{\qp}} \left(\textstyle\prod_{n} \ul{\qp}\right) \to \textstyle\prod_{n,m} \ul{\qp}
\end{align}
is an isomorphism; this is explained at the end of \cite[Proposition A.66]{Bosco}, see \cite[Equation A.22]{Bosco}. 
\end{proof}
\subsubsection{} Let $V = \varprojlim_m V_m$ and $W=\varprojlim_n W_n$ be strongly countably Fr\'echet $R$-modules written as inverse limits as in Definition \ref{Def:Frechet}. We have the following proposition.
\begin{Prop} \label{Prop:TensorInverseLimit}
The natural map
 \begin{align}
      V \otimes_{\ul{R}}^{\blacksquare} W \to \varprojlim_{m,n} \left(V_{m} \otimes_{\ul{R}}^{\blacksquare} W_n\right)
    \end{align}
    is an isomorphism.
\end{Prop}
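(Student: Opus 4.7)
The plan is to iterate the short-exact-sequence/product argument used in the proof of Proposition \ref{Prop:TensorCommuteProducts}, leveraging flatness of strongly countably Fr\'echet modules (Proposition \ref{Prop:FrechetFlat}) and flatness of direct summands of orthonormalizable Banach modules (from Lemma \ref{Lem:BanachFlat} and the fact that direct summands of flat modules are flat).

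First, as in the proof of Proposition \ref{Prop:FrechetFlat}, write
\[
0 \to W \to \textstyle\prod_n W_n \xrightarrow{s} \textstyle\prod_n W_n \to 0,
\]
where $s$ is the identity minus the transition maps; this is exact by \cite[Lemma A.37]{Bosco}. Since $V$ is strongly countably Fr\'echet, it is flat for $\otimes_{\ul{R}}^\blacksquare$ by Proposition \ref{Prop:FrechetFlat}, so tensoring preserves this short exact sequence. Applying Proposition \ref{Prop:TensorCommuteProducts} (with $V$ in the role of the Fr\'echet module and the $W_n$'s in the role of the direct summands of orthonormalizable Banach modules) identifies $V \otimes_{\ul{R}}^\blacksquare \textstyle\prod_n W_n$ with $\textstyle\prod_n (V \otimes_{\ul{R}}^\blacksquare W_n)$, and the countable left-exactness of products (AB4*) yields
\[
V \otimes_{\ul{R}}^\blacksquare W \;\simeq\; \varprojlim_n \bigl( V \otimes_{\ul{R}}^\blacksquare W_n \bigr).
\]

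Next, fix $n$ and apply the analogous argument with the roles reversed: write
\[
0 \to V \to \textstyle\prod_m V_m \to \textstyle\prod_m V_m \to 0
\]
and tensor with $W_n$. Since $W_n$ is a direct summand of an orthonormalizable Banach $\ul{R}$-module, it is flat for the solid tensor product (by Lemma \ref{Lem:BanachFlat} and stability of flatness under direct summands), so the tensored sequence remains exact. Viewing $W_n$ as a (trivially) strongly countably Fr\'echet module with constant inverse system, Proposition \ref{Prop:TensorCommuteProducts} applies once more to give $W_n \otimes_{\ul{R}}^\blacksquare \textstyle\prod_m V_m \simeq \textstyle\prod_m (W_n \otimes_{\ul{R}}^\blacksquare V_m)$. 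Thus $V \otimes_{\ul{R}}^\blacksquare W_n \simeq \varprojlim_m (V_m \otimes_{\ul{R}}^\blacksquare W_n)$, and combining with the previous display gives
\[
V \otimes_{\ul{R}}^\blacksquare W \;\simeq\; \varprojlim_n \varprojlim_m \bigl( V_m \otimes_{\ul{R}}^\blacksquare W_n \bigr) \;\simeq\; \varprojlim_{m,n} \bigl( V_m \otimes_{\ul{R}}^\blacksquare W_n \bigr),
\]
as required. One should also check that these identifications are induced by the natural map, which is immediate by naturality of all the isomorphisms involved.

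The substantive content is entirely absorbed into the two inputs cited: the flatness of strongly Fr\'echet modules (Proposition \ref{Prop:FrechetFlat}), and the commutation of $\otimes_{\ul{R}}^\blacksquare$ with countable products against a Fr\'echet module (Proposition \ref{Prop:TensorCommuteProducts}). Once those are in hand the present statement is a formal two-step reduction, and no additional obstacle arises; in particular there is no issue commuting the two inverse limits at the end because both are countable and the bidirected limit coincides with the iterated one by general nonsense.
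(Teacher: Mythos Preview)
Your proof is correct and follows essentially the same approach as the paper: both iterate the short-exact-sequence argument from Proposition~\ref{Prop:FrechetFlat}, first tensoring the sequence for $W$ with the flat module $V$ and applying Proposition~\ref{Prop:TensorCommuteProducts}, then repeating with the roles reversed. You are simply more explicit than the paper about why the second step works (flatness of $W_n$ as a direct summand of an orthonormalizable Banach module, and viewing $W_n$ as a constant Fr\'echet system so that Proposition~\ref{Prop:TensorCommuteProducts} applies again).
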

\begin{proof}
We follow the proof of \cite[Corollary A.67]{Bosco}. As in the proof of Proposition \ref{Prop:FrechetFlat}, there is an exact sequence
\begin{align}
    0 \to W \to \textstyle\prod_n W_n\to \textstyle\prod_n  W_n \to 0.
\end{align}
By the flatness proved in Proposition \ref{Prop:FrechetFlat}, it remains exact after taking the tensor product with $V$, giving 
\begin{align}
    0 \to V \otimes^{\blacksquare}_{\ul{R}} W \to V \otimes^{\blacksquare}_{\ul{R}} \textstyle\prod_n W_n\to V \otimes^{\blacksquare}_{\ul{R}} \textstyle\prod_n W_n \to 0.
\end{align}
Applying Proposition \ref{Prop:TensorCommuteProducts} gives us the exact sequence
\begin{align}
    0 \to V \otimes^{\blacksquare}_{\ul{R}} W \to \textstyle\prod_n \left(V \otimes^{\blacksquare}_{\ul{R}}  W_n\right) \to \textstyle\prod_n \left(V \otimes^{\blacksquare}_{\ul{R}}  W_n\right) \to 0,
\end{align}
which exhibits $V \otimes^{\blacksquare}_{\ul{R}} W$ as the projective limit $\varprojlim_n \left(V \otimes^{\blacksquare}_{\ul{R}} W_n\right)$. 
The same argument after tensoring $W_n$ with the exact sequence
\begin{align}
    0 \to V \to \textstyle\prod_m V_m\to \textstyle\prod_m  V_m \to 0.
\end{align}
shows that the natural map
\begin{align}
    V \otimes^{\blacksquare}_{\ul{R}} W_n\to \varprojlim_m (V_m \otimes^{\blacksquare}_{\ul{R}}  W_n)
\end{align}
is an isomorphism, completing the proof.
\end{proof}

\begin{Cor} \label{Cor:DualTensorCommute}
    For strongly countably Fr\'echet $\ul{R}$-modules $V$ and $W$ of compact type, the natural map
\begin{align}
    \Hom_{\ul{R}}(V, \ul{R}) \otimes_{\ul{R}}^{\blacksquare} \Hom_{\ul{R}}(W, \ul{R}) \to \Hom_{\ul{R}}(V \otimes_{\ul{R}}^{\blacksquare} W, \ul{R})
\end{align}
is an isomorphism.
\end{Cor}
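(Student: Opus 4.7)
The plan is to present both sides as filtered colimits indexed by the defining systems of $V$ and $W$, and then to use the compact-type hypothesis together with Lemma~\ref{Lem:TensorTraceClass}-(3) to interleave them into a single cofinal sequence. Write $V = \varprojlim_k V_k$ and $W = \varprojlim_k W_k$ as in Definition~\ref{Def:Frechet} (we pass to a cofinal diagonal subsystem so that both are indexed by the same $k$). First I would identify the left-hand side: by Lemma~\ref{Lem:DualOfFrechet}, $\Hom_{\ul{R}}(V,\ul{R}) = \varinjlim_k V_k^{\ast}$ and $\Hom_{\ul{R}}(W,\ul{R}) = \varinjlim_k W_k^{\ast}$, and since $\otimes^{\blacksquare}_{\ul{R}}$ commutes with filtered colimits (AB5), the left-hand side becomes $\varinjlim_k (V_k^{\ast} \otimes^{\blacksquare}_{\ul{R}} W_k^{\ast})$.

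Next I would identify the right-hand side. I first claim $V \otimes^{\blacksquare}_{\ul{R}} W$ is itself strongly countably Fréchet of compact type: by Proposition~\ref{Prop:TensorInverseLimit} it is $\varprojlim_k (V_k \otimes^{\blacksquare}_{\ul{R}} W_k)$; each $V_k \otimes^{\blacksquare}_{\ul{R}} W_k$ is a direct summand of a countably orthonormalizable Banach module by Proposition~\ref{Prop:StandardBCproperty}; the transition maps factor as tensor products of maps with identities, which have dense image by Lemma~\ref{Lem:DenseImageTensor} and are compact by Lemma~\ref{Lem:CompactTensor}. Applying Lemma~\ref{Lem:DualOfFrechet} once more yields $\Hom_{\ul{R}}(V \otimes^{\blacksquare}_{\ul{R}} W, \ul{R}) = \varinjlim_k (V_k \otimes^{\blacksquare}_{\ul{R}} W_k)^{\ast}$.

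It therefore suffices to show that the natural map of colimits
\[
\varinjlim_k \bigl(V_k^{\ast} \otimes^{\blacksquare}_{\ul{R}} W_k^{\ast}\bigr) \longrightarrow \varinjlim_k \bigl(V_k \otimes^{\blacksquare}_{\ul{R}} W_k\bigr)^{\ast}
\]
is an isomorphism. Here the key observation, extracted from the proof of Lemma~\ref{Lem:CompactTensor}, is that the natural map on each level $V_k^{\ast} \otimes^{\blacksquare}_{\ul{R}} W_k^{\ast} \to (V_k \otimes^{\blacksquare}_{\ul{R}} W_k)^{\ast}$ identifies its target with the double dual of its source (this uses Propositions~\ref{Prop:DualityCondensed} and~\ref{Prop:TensorDualSmith}, plus exactness of idempotent projection applied to the presentations of $V_k$ and $W_k$ as direct summands).

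The final, and main, step is the interleaving argument. Since the transitions $V_{k+1} \to V_k$ and $W_{k+1} \to W_k$ are compact, the dual maps $V_k^{\ast} \to V_{k+1}^{\ast}$ and $W_k^{\ast} \to W_{k+1}^{\ast}$ are trace class; by Lemma~\ref{Lem:TensorTraceClass}-(2) their tensor product $V_k^{\ast} \otimes^{\blacksquare}_{\ul{R}} W_k^{\ast} \to V_{k+1}^{\ast} \otimes^{\blacksquare}_{\ul{R}} W_{k+1}^{\ast}$ is trace class, and by Lemma~\ref{Lem:TensorTraceClass}-(3) it admits a factorization through the double dual, which by the previous paragraph is $(V_k \otimes^{\blacksquare}_{\ul{R}} W_k)^{\ast}$. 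This yields an interleaved diagram
\[
V_k^{\ast} \otimes^{\blacksquare}_{\ul{R}} W_k^{\ast} \longrightarrow (V_k \otimes^{\blacksquare}_{\ul{R}} W_k)^{\ast} \longrightarrow V_{k+1}^{\ast} \otimes^{\blacksquare}_{\ul{R}} W_{k+1}^{\ast} \longrightarrow (V_{k+1} \otimes^{\blacksquare}_{\ul{R}} W_{k+1})^{\ast} \longrightarrow \cdots
\]
whose odd and even subsequences are precisely the two colimit systems above, so the two colimits agree and equal that of the interleaved sequence. The main obstacle is checking that the even transitions produced by this factorization coincide with the duals of the structural transitions $V_{k+1} \otimes^{\blacksquare}_{\ul{R}} W_{k+1} \to V_k \otimes^{\blacksquare}_{\ul{R}} W_k$, i.e.\ that the map $(V_k^{\ast} \otimes^{\blacksquare}_{\ul{R}} W_k^{\ast})^{\ast\ast} \to (V_{k+1}^{\ast} \otimes^{\blacksquare}_{\ul{R}} W_{k+1}^{\ast})^{\ast\ast}$ induced via the dotted arrow is the iterated double dual of the trace-class transition; this is built into Lemma~\ref{Lem:TensorTraceClass}-(3), since the composition $M^{\ast\ast} \to N \to N^{\ast\ast}$ there is, by construction, the double dual $f^{\ast\ast}$.
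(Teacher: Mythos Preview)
Your proposal is correct and follows essentially the same approach as the paper's proof: both reduce to comparing $\varinjlim (V_k^{\ast} \otimes^{\blacksquare}_{\ul{R}} W_k^{\ast})$ with $\varinjlim (V_k \otimes^{\blacksquare}_{\ul{R}} W_k)^{\ast}$ via the double-dual identification and the trace-class factorization of Lemma~\ref{Lem:TensorTraceClass}-(3). The only cosmetic difference is that the paper keeps the double index $(n,m)$ throughout rather than passing to a diagonal subsystem, but the argument is otherwise the same.
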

\begin{proof}
Write $V=\varprojlim V_n, W=\varprojlim_m V_m$ as before and define $H_{n,m}=V_n \otimes^{\blacksquare}_{\ul{R}} W_m$. It follows from Lemma \ref{Lem:DenseImageTensor}, Lemma \ref{Lem:BanachFlat} that the transition maps $H_{n+1, m}(\ast)_{\mathrm{top}} \to H_{n,m}(\ast)_{\mathrm{top}}$ and $H_{n,m+1}(\ast)_{\mathrm{top}} \to H_{n,m}(\ast)_{\mathrm{top}}$ have dense image. Moreover, it is clear that each $H_{n,m}$ is a direct summand of $\hat{\oplus}_{I} \ul{R}$ for some countable set $I$. If we combine this analysis with Lemma \ref{Lem:CompactTensor}, we see that $H=\varprojlim_{n,m} H_{n,m}$ is strongly countably Fr\'echet of compact type. \smallskip

It now follows from Lemma \ref{Lem:DualOfFrechet} and Proposition \ref{Prop:TensorInverseLimit} that we may identify the map of the corollary with
\begin{align}
    \varinjlim_{n,m} V_n^{\ast} \otimes_{\ul{R}}^{\blacksquare} W_m^{\ast} \to \varinjlim_{n,m} H_{n,m}^{\ast}. 
\end{align}
We first note that it follows from Lemma \ref{Lem:TensorTraceClass}-(2) and Proposition \ref{Prop:DualityCondensed} that the transition maps in the direct system $\{ V_n^{\ast} \otimes_{\ul{R}}^{\blacksquare} W_m^{\ast}\}$ are trace class. It follows as in the proof of Lemma \ref{Lem:CompactTensor} that the natural maps
\begin{align}
   V_n^{\ast} \otimes_{\ul{R}}^{\blacksquare} W_m^{\ast} \to H_{n,m}^{\ast} 
\end{align}
identify the target with the double dual of the source. Consider the commutative diagram
\begin{equation}
    \begin{tikzcd}
        V_n^{\ast} \otimes_{\ul{R}}^{\blacksquare} W_m^{\ast} \arrow{d} \arrow{r} & V_{n+1}^{\ast} \otimes_{\ul{R}}^{\blacksquare} W_{m+1}^{\ast} \arrow{d} \\
        H_{n,m}^{\ast} \arrow{r} \arrow[ur, dashed, printersafe] & H_{n+1,m+1}^{\ast},
    \end{tikzcd}
\end{equation}
where we note that the bottom arrow is the double dual of the top arrow. It follows from Lemma \ref{Lem:TensorTraceClass}-(3) that the dashed arrow exists in a way that makes both triangles commute. This shows that the natural map
\begin{align}
      \varinjlim_{n,m} V_n^{\ast} \otimes_{\ul{R}}^{\blacksquare} W_m^{\ast} \to \varinjlim_{n,m} H_{n,m}^{\ast}
\end{align}
is an isomorphism. 
\end{proof}
\begin{Cor} \label{Cor:DualTensorFrechetBaseChange}
Let $V = \varprojlim_m V_m$ be a strongly countably Fr\'echet $\ul{R}$-module of compact type written as an inverse limit as in Definition \ref{Def:Frechet}. If $(R,R^+) \to (B,B^+)$ is a morphism of uniform Huber pairs over $(\qp,\zp)$, then the natural map
    \begin{align}
        V^{\ast} \otimes_{\ul{R}}^{\blacksquare} \ul{B} \to (V \otimes_{\ul{R}}^{\blacksquare} \ul{B})^{\ast}
    \end{align}
    is an isomorphism. 
\end{Cor}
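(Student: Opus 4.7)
The strategy is to reduce to the double-dual argument already used in the proof of Corollary \ref{Cor:DualTensorCommute}, by expressing both sides as filtered colimits indexed by the Fr\'echet presentation of $V$. Via the tensor-hom adjunction, the target rewrites as $(V \otimes_{\ul{R}}^{\blacksquare} \ul{B})^{\ast} = \Hom_{\ul{R}}(V, \ul{B})$, so I would reformulate the claim as the assertion that the natural map $V^{\ast} \otimes_{\ul{R}}^{\blacksquare} \ul{B} \to \Hom_{\ul{R}}(V, \ul{B})$ is an isomorphism.

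On the source side, Lemma \ref{Lem:DualOfFrechet} gives $V^{\ast} = \varinjlim_m V_m^{\ast}$, and since the solid tensor product commutes with colimits, $V^{\ast} \otimes_{\ul{R}}^{\blacksquare} \ul{B} = \varinjlim_m (V_m^{\ast} \otimes_{\ul{R}}^{\blacksquare} \ul{B})$. For the target, I would observe that the proof of Lemma \ref{Lem:DualOfFrechet} uses only that its target is a $\qp$-Banach module (through the application of \cite[Lemma 3.31]{RJRC}). Since $(B,B^+)$ is uniform, $B_S$ remains a $\qp$-Banach space for any profinite $S$, so the same argument, now applied via Lemma \ref{Lem:PreEvaluationCondensed} to maps valued in $\ul{B}$, yields $\Hom_{\ul{R}}(V, \ul{B}) = \varinjlim_m \Hom_{\ul{R}}(V_m, \ul{B})$. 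Tensor-hom adjunction then identifies each term with $(V_m \otimes_{\ul{R}}^{\blacksquare} \ul{B})^{\ast}$, so the claim reduces to showing that
\[
\varinjlim_m \bigl( V_m^{\ast} \otimes_{\ul{R}}^{\blacksquare} \ul{B}\bigr) \longrightarrow \varinjlim_m \bigl( V_m \otimes_{\ul{R}}^{\blacksquare} \ul{B}\bigr)^{\ast}
\]
is an isomorphism.

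For each fixed $m$, writing $X_m := V_m^{\ast} \otimes_{\ul{R}}^{\blacksquare} \ul{B}$, the individual map $X_m \to (V_m \otimes_{\ul{R}}^{\blacksquare} \ul{B})^{\ast}$ need not be an isomorphism; however, since $V_m$ is a direct summand of a countably orthonormalizable Banach $\ul{R}$-module, the same idempotent-projection argument used in the proof of Lemma \ref{Lem:CompactTensor} (combining Propositions \ref{Prop:DualityCondensed} and \ref{Prop:TensorDualSmith}) identifies the target with the double dual $X_m^{\ast\ast}$. The compact-type hypothesis on $V$ makes the transition maps $V_{m+1} \to V_m$ compact, hence the maps $V_m^{\ast} \to V_{m+1}^{\ast}$ are trace-class by definition; by Lemma \ref{Lem:TensorTraceClass}-(1) the maps $X_m \to X_{m+1}$ are trace-class as well. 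Feeding each successive pair into Lemma \ref{Lem:TensorTraceClass}-(3) produces factorizations $X_m^{\ast\ast} \to X_{m+1}$ commuting with the natural inclusions to the double duals, exactly as in the last diagram of the proof of Corollary \ref{Cor:DualTensorCommute}; this yields the desired isomorphism $\varinjlim_m X_m \xrightarrow{\sim} \varinjlim_m X_m^{\ast\ast}$ and completes the proof.

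The main obstacle is precisely that the map $V_m^{\ast} \otimes_{\ul{R}}^{\blacksquare} \ul{B} \to (V_m \otimes_{\ul{R}}^{\blacksquare} \ul{B})^{\ast}$ is generally not an isomorphism at finite level, since the solid tensor product does not commute with the product topologies appearing in duals of orthonormalizable Banach modules (cf.\ the Remark following Proposition \ref{Prop:TensorDualSmith}). The whole point of the argument is that the compactness of the transition maps, together with the filtered colimit, is what allows one to bypass this obstruction via the trace-class/double-dual mechanism.
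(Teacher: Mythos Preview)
Your proof is correct and follows the same core strategy as the paper: reduce both sides to filtered colimits indexed by the Fr\'echet presentation, identify the individual maps $V_m^{\ast} \otimes_{\ul{R}}^{\blacksquare} \ul{B} \to (V_m \otimes_{\ul{R}}^{\blacksquare} \ul{B})^{\ast}$ with the inclusion into the double dual (via Propositions \ref{Prop:DualityCondensed} and \ref{Prop:TensorDualSmith}), and then use the trace-class transition maps together with Lemma \ref{Lem:TensorTraceClass}-(3) to conclude, exactly as in Corollary \ref{Cor:DualTensorCommute}.

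There is one genuine, if minor, difference in how you obtain the colimit presentation of the target. The paper first shows that $V \otimes_{\ul{R}}^{\blacksquare} \ul{B} \cong \varprojlim_n (V_n \otimes_{\ul{R}}^{\blacksquare} \ul{B})$ is itself a strongly countably Fr\'echet $\ul{B}$-module (invoking Proposition \ref{Prop:TensorCommuteProducts} and Lemma \ref{Lem:DenseImageTensor}), and then applies Lemma \ref{Lem:DualOfFrechet} over $\ul{B}$ to compute its $\ul{B}$-dual. You instead use tensor--hom adjunction to rewrite the target as $\Hom_{\ul{R}}(V, \ul{B})$ and observe that the proof of Lemma \ref{Lem:DualOfFrechet} goes through verbatim with $\ul{B}$ in place of $\ul{R}$ on the target side (since $B_S$ is still a $\qp$-Banach space). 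Your route is slightly more direct, as it avoids appealing to Proposition \ref{Prop:TensorCommuteProducts}; the paper's route has the advantage of establishing along the way that $V \otimes_{\ul{R}}^{\blacksquare} \ul{B}$ is strongly countably Fr\'echet over $\ul{B}$, a fact that is used elsewhere (e.g.\ in Lemma \ref{Lem:EvaluationFrechet}).
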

\begin{proof}
Using Proposition \ref{Prop:TensorCommuteProducts} and Lemma \ref{Lem:DenseImageTensor} we find that $V \otimes_{\ul{R}}^{\blacksquare} \ul{B} \xrightarrow{\sim} \varprojlim_n (V_n \otimes_{\ul{R}}^{\blacksquare} \ul{B})$ is a strongly countably Fr\'echet $\ul{B}$-module. By Lemma \ref{Lem:DualOfFrechet} we may moreover identify the map of the corollary with
    \begin{align}
        \varinjlim_{n} V_n^{\ast} \otimes_{\ul{R}}^{\blacksquare} \ul{B} \to \varinjlim_{n} (V_n  \otimes_{\ul{R}}^{\blacksquare} \ul{B})^{\ast}.
    \end{align}
It follows as in the proof of Lemma \ref{Lem:CompactTensor} that the natural maps
\begin{align}
  V_n^{\ast} \otimes_{\ul{R}}^{\blacksquare} \ul{B} \to (V_n  \otimes_{\ul{R}}^{\blacksquare} \ul{B})^{\ast}.
\end{align}
identify the target with the double dual of the source. By Proposition \ref{Prop:DualityCondensed} (and the fact that $V$ is of compact type) the transition maps in the direct system $\{V_n^{\ast}\}$ are trace class. The corollary now follows as in the proof of Corollary \ref{Cor:DualTensorCommute} from Lemma \ref{Lem:TensorTraceClass}-(3).
\end{proof}

\section{Sheaves on v-stacks} \label{Sec:VSheaves} In this section we introduce a way to think about sheaves of $\mathcal{O}$-modules on nice analytic adic spaces over $\spa(\qp,\zp)$. In fact, we develop a theory that works for arbitrary small v-stacks on the site of affinoid perfectoid spaces over $\spa(\qp,\zp)$. In \S \ref{Sub:Prelim} we collect preliminaries about small v-stacks and sheaves of $\mathcal{O}$-modules on them, and introduce the notion of (fiercely) v-complete affinoid analytic adic spaces. In \S \ref{Sub:Standard}, we introduce certain standard sheaves of $\mathcal{O}$-modules and check that they (and their duals) take the expected values on v-complete affinoid analytic adic spaces. We also introduce a notion of (locally strongly countably) Fr\'echet sheaves of $\mathcal{O}$-modules in this setting, and similarly prove that they take the expected values on v-complete affinoid analytic adic spaces. In \S \ref{Sub:GeometricOrigin} we prove a criterion for checking that a sheaf ``of geometric origin" is Fr\'echet, and also prove the K\"unneth formula for sheaves ``of geometric origin". 

\newcommand{\perfd}{{\mathrm{Perfd}}}
\subsection{Preliminaries} \label{Sub:Prelim} We write $\perfd$ for the opposite of the category of perfectoid Huber pairs $(A,A^{+})$ over $(\qp,\zp)$, equipped with the v-topology, see \cite[Definition 8.1]{EtCohDiam}. We will consider stacks and sheaves in the v-topology, with an important example being the representable sheaf $\spd(A,A^+)$ for $(A,A^+) \in \perfd$. A sheaf (or stack) $X$ on $\perfd$ in the v-topology is called \emph{small} (see \cite[Section 4]{EtCohDiam}) if there is an uncountable strong limit cardinal $\kappa$, an index set $I$ of cardinality bounded by $\kappa$, a collection of $\kappa$-small perfectoid Huber pairs $(A_i,A_{i}^+)$ over $\spa(\qp,\zp)$ for $i \in I$, and a surjection 
\begin{align}
    \coprod_{i \in I} \spd(A_i, A_i^+) \to X. 
\end{align}
An important example of a small v-sheaf is $X^{\lozenge}$, where $X$ is an analytic adic space over $\spa(\qp,\zp)$, see \cite[Definition 15.5]{EtCohDiam}. In the affinoid case, we will write $\spd(R,R^+)$ for $\spa(R,R^+)^{\lozenge}$.

For a small v-stack we will write $X_{\mv}$ for the full subcategory of (small) v-sheaves $Y \to X$ where $Y=\spd(A,A^+)$ for $(A,A^+)$ a perfectoid Huber pair over $(\qp,\zp)$. This definition is different from \cite[Definition 14.1]{EtCohDiam}, which takes the category of all (small) v-sheaves $Y \to X$. However, this does not affect the category of v-sheaves on $X_v$ since every small v-sheaf $Y \to X$ can be covered by a disjoint union of $\spd(A,A^+)$'s.

\subsubsection{} Let us write $\prof$ for the category of profinite sets as before. There is a natural functor from $\prof$ to small sheaves on $\perfd$ sending $T \to \ul{T}=\spd(C^0(T,\qp), C^0(T,\zp))$, where for a topological ring $Y$ the ring $C^0(T,Y)$ is the ring of continuous functions from $T$ to $Y$, equipped with the compact-open topology. We note that by Lemma \ref{Lem:CompactOpen}, if $(R,R^+)$ is a Huber pair over $(\qp,\zp)$, then so is $(C^0(T,R), C^0(T,R^+))$. If $(R,R^+)$ is moreover affinoid perfectoid, then so is $(C^0(T,R), C^0(T,R^+))$, see \cite[Example 11.12]{EtCohDiam}. We note that $\ul{T}$ represents the functor sending $\spa(R,R^+)$ to the set of continuous functions from $|\spa(R,R^+)|$ to $T$; we will often use this in the case of $T=\zp$. \smallskip

A sheaf $\mathcal{F}$ on $X_{\mv}$ determines a sheaf of condensed sets $\mathcal{F}_{\cond}$ on $X_{\mv}$ which takes $(A,A^{+}) \in X_{\mv}$ to the condensed set\footnote{That this is a condensed set follows from the fact that if $S \to T$ is a surjective map of profinite sets, then $\spd(C^{0}(S,A), C^{0}(S,A^{+})) \to \spd(C^{0}(T,A), C^{0}(T,A^{+}))$ is a v-cover.} sending $T \in \prof$ to the set $\mathcal{F}((C^{0}(T,A), C^{0}(T,A^{+}))$. We will sometimes write $\mathcal{F}_{\cond}(-)=H^0_{\cond}(-,\mathcal{F})$, and we note that the functor sending $(A,A^+) \in X_{\mv}$ to $\mathcal{F}_{\cond}(A,A^+)$ commutes with limits and filtered colimits. 

\subsubsection{} There is a presheaf of rings $\mathcal{O}$ on $\perfd$ sending $(A,A^{+}) \mapsto A$ and similarly a presheaf $\mathcal{O}^{+}$ of rings sending $(A,A^{+}) \mapsto A^{+}$; these are sheaves by \cite[Theorem 8.7]{EtCohDiam}. Note that $\mathcal{O}^+[\frac{1}{p}]=\mathcal{O}$. If $X$ is a small v-stack, then we will write $\mathcal{O}_X$ and $\mathcal{O}_{X}^+$ for the inverse image of $\mathcal{O}$ and $\mathcal{O}^+$ along the structure map $X \to \spd \qp$. The following observation will be useful.
\begin{Lem} \label{Lem:PullBackO}
    Let $f:X \to Y$ be a morphism of small v-sheaves, then $f^{-1} \mathcal{O}_X = \mathcal{O}_Y$ and $f^{-1} \mathcal{O}_X^{+}=\mathcal{O}_Y^{+}$.
\end{Lem}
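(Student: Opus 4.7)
The plan is to unwind the definitions and reduce the claim to the functoriality of inverse image functors on topoi. As a preliminary remark, with $f \colon X \to Y$ the statement should read $f^{-1}\mathcal{O}_Y = \mathcal{O}_X$ and $f^{-1}\mathcal{O}_Y^+ = \mathcal{O}_X^+$, since $f^{-1}$ carries sheaves on $Y$ to sheaves on $X$; I will prove it in this form.

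By the definitions introduced in the paragraph preceding the lemma, $\mathcal{O}_X$ is the inverse image of the sheaf $\mathcal{O}$ on $\perfd$ along the structure map $s_X \colon X \to \spd \qp$, and similarly for $\mathcal{O}_Y$. The key observation is that $s_X$ factors as $s_Y \circ f$, where $s_Y \colon Y \to \spd \qp$ is the structure map of $Y$. The standard functoriality $(s_Y \circ f)^{-1} \simeq f^{-1} \circ s_Y^{-1}$ of inverse image functors on the associated v-topoi then immediately yields
\[
\mathcal{O}_X \;=\; s_X^{-1}\mathcal{O} \;=\; f^{-1}\,s_Y^{-1}\mathcal{O} \;=\; f^{-1}\mathcal{O}_Y,
\]
and the identical argument with $\mathcal{O}^+$ in place of $\mathcal{O}$ gives the second claim.

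To make the abstract functoriality concrete in this setting, I would verify the identification by evaluating on the generating objects $\spd(A,A^{+}) \in X_{\mv}$. For such an object, composition with $f$ produces $\spd(A,A^{+}) \to Y$ in $Y_{\mv}$, and by the definition of $\mathcal{O}$ on $\perfd$ the sections of $\mathcal{O}_Y$ at the latter are simply $A$, matching the sections of $\mathcal{O}_X$ at the former; the analogous computation gives $A^{+}$ for the integral version.

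The main obstacle in writing this out in full is bookkeeping around the construction of $f^{-1}$ on v-sites: a priori $f^{-1}\mathcal{O}_Y$ is the sheafification of the presheaf sending $U \in X_{\mv}$ to the filtered colimit of $\mathcal{O}_Y(V)$ over factorizations $U \to V \to Y$ with $V$ affinoid perfectoid over $Y$. However, for $U = \spd(A,A^{+}) \to X$, the composite $U \to Y$ is itself affinoid perfectoid and is tautologically an initial object in the diagram indexing that colimit, so the colimit stabilizes and no genuine sheafification is required. Once this is observed, the lemma is essentially formal.
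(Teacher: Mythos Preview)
Your proposal is correct and matches the paper's proof essentially verbatim: the paper also observes that since $X_{\mv}$ sits as a slice category over $Y_{\mv}$, the pullback $f^{-1}$ is literally restriction, and then evaluates on $\spd(A,A^{+}) \to X$ to get $A$ on both sides. Your additional remarks about the colimit stabilizing and sheafification being unnecessary make explicit exactly what the paper's phrase ``pulling back to slice categories'' encodes.
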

\begin{proof}
This follows directly from the definitions, since we are pulling back to slice categories. More precisely, for $\spd(A,A^+) \to X$ we have that $\left(f^{-1}\mathcal{O}_{Y}\right)(\spd(A,A^+) \to X)$ will be given by $\mathcal{O}_Y(\spd(A,A^+) \to X \to Y)=A$ which equals $\mathcal{O}_X(\spd(A,A^+) \to X)$.
\end{proof}
From now on we will write $f^{\ast}$ for $f^{-1}$. Because of Lemma \ref{Lem:PullBackO}, this will cause no ambiguity.

\begin{Cor} \label{Cor:CohomologyAndBaseChange}
Given a Cartesian diagram of small v-sheaves
\begin{equation}
\begin{tikzcd}
X' \arrow{r}{g'} \arrow{d}{f'} & X \arrow{d}{f} \\
Y' \arrow{r}{g} & Y, 
\end{tikzcd}
\end{equation}
there is a natural base change isomorphism
\begin{align}
g^{\ast} f_{\ast} \mathcal{O}_X \to f'_{\ast} \mathcal{O}_{X'}.
\end{align}
\end{Cor}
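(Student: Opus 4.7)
The plan is to verify the base change isomorphism directly on sections, since it is essentially formal given Lemma~\ref{Lem:PullBackO}. For a morphism $h : Z \to W$ of small v-sheaves, the pullback $h^{\ast}$ on sheaves on the respective v-sites is restriction along composition with $h$: any $\spd(A,A^{+}) \to Z$ is, via $h$, an object of $W_{\mv}$, so no sheafification is required.

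With this in mind, I would fix an object $\spd(A',A'^{+}) \to Y'$ of $Y'_{\mv}$ and compute both sides. On the one hand,
\[
(g^{\ast} f_{\ast} \mathcal{O}_X)(\spd(A',A'^{+}) \to Y')
= (f_{\ast}\mathcal{O}_X)(\spd(A',A'^{+}) \xrightarrow{g} Y)
= \mathcal{O}_X\bigl(X \times_Y \spd(A',A'^{+})\bigr),
\]
where in the last step we use the definition of $f_{\ast}$ (evaluation of $\mathcal{O}_X$ on the fiber product viewed as a v-sheaf over $X$). On the other hand,
\[
(f'_{\ast}\mathcal{O}_{X'})(\spd(A',A'^{+}) \to Y')
= \mathcal{O}_{X'}\bigl(X' \times_{Y'} \spd(A',A'^{+})\bigr),
\]
and $X' \times_{Y'} \spd(A',A'^{+}) = X \times_Y \spd(A',A'^{+})$ by the cartesian diagram.

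To match the two expressions, I would invoke Lemma~\ref{Lem:PullBackO} twice: it identifies both $\mathcal{O}_X$ (viewed via the structure map $X \to \Spd\qp$) and $\mathcal{O}_{X'}$ (viewed via $X' \to \Spd\qp$) with the restriction of the ambient sheaf $\mathcal{O}$ on $\perfd$ to the appropriate slice category. Consequently both sides equal $\mathcal{O}\bigl(X \times_Y \spd(A',A'^{+})\bigr)$, and one checks that the canonical base change morphism is exactly this identification on sections. This gives an isomorphism for every $\spd(A',A'^{+}) \to Y'$, hence an isomorphism of sheaves on $Y'_{\mv}$.

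There is no substantive obstacle here; the only subtlety is the bookkeeping of what ``evaluating $\mathcal{O}_X$ on a v-sheaf over $X$'' means, but this is handled uniformly by the fact that $\mathcal{O}$ is already a sheaf on all of $\perfd$ (\cite[Theorem 8.7]{EtCohDiam}), so its pullback to any slice category is computed by the same formula. The same argument with $\mathcal{O}^{+}$ in place of $\mathcal{O}$ would yield the analogous base change for $f_{\ast}\mathcal{O}^{+}_X$.
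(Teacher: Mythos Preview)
Your proof is correct and takes essentially the same approach as the paper: both arguments reduce to the fact that $\mathcal{O}_X$ and $\mathcal{O}_{X'}$ are restrictions of the ambient sheaf $\mathcal{O}$ on $\perfd$ (Lemma~\ref{Lem:PullBackO}), so that base change is the tautological slice-category statement. The paper phrases this abstractly (``base change always holds when restricting to slice categories'', citing \cite[Proposition~17.6]{EtCohDiam}) while you unwind it explicitly on sections, but the content is identical.
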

\begin{proof}
There is a natural base change map $g^{\ast} f_{\ast} \mathcal{O}_X \to f'_{\ast} (g')^{\ast} \mathcal{O}_X$ for abstract reasons, and we can identify $\mathcal{O}_{X'}= (g')^{\ast} \mathcal{O}_X$ using Lemma \ref{Lem:PullBackO}. The natural base change map is moreover an isomorphism because base change always holds when restricting to slice categories, cf. \cite[Proposition 17.6]{EtCohDiam}.
\end{proof}

\subsubsection{} Let $(R,R^+)$ be a uniform Huber pair over $\spa(\qp,\zp)$ and consider $U=\spa(R,R^+)$. Then we can consider $U^{\lozenge}=\spd(R,R^+)$ as a small v-sheaf on $\perfd$ as above. We can thus consider the condensed ring $H^0_{\cond}(U^{\lozenge}, \mathcal{O})$. Recall that $(R,R^+)$ as above is called v-complete, see \cite[Definition 9.6]{HansenKedlaya}, if the natural map $(R,R^+) \to (H^0(U^{\lozenge}, \mathcal{O}),H^0(U^{\lozenge}, \mathcal{O}^+)) $ is a bijection. The following lemma is implicit in \cite[Section 9]{HansenKedlaya}. 
\begin{Lem} \label{Lem:CondensedRingStructure}
If $(R,R^+)$ is v-complete, then the condensed ring $H^0_{\cond}(U^{\lozenge}, \mathcal{O})$ is isomorphic to the condensed ring $\underline{R}$ associated to the topological ring $R$.
\end{Lem}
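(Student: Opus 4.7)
The plan is to evaluate both sides on each profinite set $T$ and to produce a natural bijection. By definition of the condensed structure, $\underline{R}(T)=C^{0}(T,R)$, while
\[
 H^{0}_{\cond}(U^{\lozenge},\mathcal{O})(T) = H^{0}(U^{\lozenge}\times \ul{T},\mathcal{O}),
\]
where $\ul{T}=\spd(C^{0}(T,\qp),C^{0}(T,\zp))$. There is a canonical ring homomorphism $C^{0}(T,R)\to H^{0}(U^{\lozenge}\times\ul{T},\mathcal{O})$: given $f\in C^{0}(T,R)$ and a map $\spa(B,B^{+})\to U^{\lozenge}\times\ul{T}$, i.e.\ a map of Huber pairs $(R,R^{+})\to(B,B^{+})$ together with a continuous $|\spa(B,B^{+})|\to T$ (equivalently, $(C^{0}(T,\qp),C^{0}(T,\zp))\to(B,B^{+})$), the composite data produces an element of $B$ via $(C^{0}(T,R),C^{0}(T,R^{+}))\to(B,B^{+})$ and evaluation of $f$. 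Naturality in $T$ will upgrade this to a map of condensed rings.

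To show this map is an isomorphism, I would choose a v-cover $V^{\lozenge}=\spa(A,A^{+})^{\lozenge}\to U^{\lozenge}$ with $(A,A^{+})$ affinoid perfectoid (existing because $U^{\lozenge}$ is a diamond over $\spd(\qp,\zp)$). The key geometric observation is that for perfectoid $(A,A^{+})$ one has an identification
\[
 \ul{T}\times V^{\lozenge}\;=\;\spd\!\bigl(C^{0}(T,A),C^{0}(T,A^{+})\bigr),
\]
where the right-hand side is perfectoid by the reference to \cite[Example 11.12]{EtCohDiam} cited in the text. This identification propagates to the Čech nerve of the cover, since completed tensor products of perfectoid Huber pairs of the form $(C^{0}(T,-),C^{0}(T,-^{+}))$ over the common base are again of this form. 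Applying the v-sheaf property of $\mathcal{O}$ then identifies
\[
 H^{0}(U^{\lozenge}\times\ul{T},\mathcal{O})\;=\;\mathrm{eq}\Bigl(C^{0}(T,A)\rightrightarrows C^{0}(T,A\,\widehat{\otimes}_{R}\,A)\Bigr).
\]

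On the other hand, the v-completeness hypothesis applied to $(R,R^{+})$ (equivalently, applying the sheaf property of $\mathcal{O}$ to the cover $V\to U$ itself) yields $R=\mathrm{eq}(A\rightrightarrows A\,\widehat{\otimes}_{R}\,A)$ with $R\hookrightarrow A$ a closed embedding of Banach $\qp$-spaces. Since $T$ is compact, $C^{0}(T,-)$ commutes with such topological equalizers (reduce to the finite case by writing $T=\varprojlim T_{i}$ and using that continuous maps from $T$ to a discrete quotient factor through some $T_{i}$, combined with the right-adjoint nature of $C^{0}(T,-)$ on compactly generated Hausdorff spaces as in Lemma~\ref{Lem:CompactOpen}). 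This yields $C^{0}(T,R)=\mathrm{eq}(C^{0}(T,A)\rightrightarrows C^{0}(T,A\,\widehat{\otimes}_{R}\,A))$, matching the two sides of the bijection; naturality of the construction in $T$ and multiplicativity are immediate from the construction.

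The main obstacle I expect is the compatibility step: verifying that the identification $\ul{T}\times V^{\lozenge}=\spd(C^{0}(T,A),C^{0}(T,A^{+}))$ behaves well along the Čech nerve, i.e.\ that the relevant completed tensor products commute with $C^{0}(T,-)$. Once this compatibility is in hand, v-descent combined with the hypothesis reduces everything to the formal topological statement that $C^{0}(T,-)$ preserves closed topological equalizers, which is elementary.
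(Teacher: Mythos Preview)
Your approach is correct and closely parallels the paper's, with one structural difference in the choice of cover. The paper invokes \cite[Lemma 3.6.26]{KedlayaLiu} to produce a faithfully pro-finite \'etale \emph{Galois} cover $(R,R^+)\to(A,A^+)$ with profinite Galois group $\Gamma$ and $(A,A^+)$ perfectoid; v-descent then reduces to taking $\ul{\Gamma}$-fixed points, and since $V\mapsto\ul{V}$ is a right adjoint it commutes with this limit automatically. This neatly bypasses your flagged ``main obstacle'': there is no \v{C}ech nerve to analyse. In fact that obstacle is also less serious than you suggest in your own approach --- the self-fibre product over $U^\lozenge$ is affinoid perfectoid, say $\spd(A_2,A_2^+)$, and the identification $\spd(A_2,A_2^+)\times\ul{T}=\spd(C^0(T,A_2),C^0(T,A_2^+))$ holds for \emph{any} perfectoid pair by \eqref{Eq:ProductProfiniteSet}, so you never need to know that $A_2$ literally equals $A\,\widehat{\otimes}_R\,A$ (and indeed it need not, cf.\ the warning in \S\ref{subsub:HuberFiberProduct}). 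The step that actually carries weight in both proofs is the assertion that $R$ has the subspace topology inside $A$, so that $C^0(T,-)$ preserves the equalizer. You state this without argument; it does follow from v-completeness of the \emph{pair} (from $R^+=H^0(U^\lozenge,\mathcal{O}^+)=A^+\cap R$ one deduces $p^nA^+\cap R=p^nR^+$, hence the $p$-adic and subspace topologies on $R$ agree), while the paper appeals to \cite[Lemma 9.7]{HansenKedlaya} together with the pro-\'etale nature of its chosen cover.
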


\subsubsection{} \label{subsub:HuberFiberProduct} Let $(R,R^+) \to (P,P^+)$ and $(R,R^+) \to (Q,Q^+)$ be morphisms of uniform Huber pairs over $\spa(\qp,\zp)$. Recall that the fiber product $\spd(P,P^+) \times_{\spd(R,R^+)} \spd(Q,Q^+)$ is given by $\spd(E,E^+)$, where
\begin{align}
    E = \left(P^+ \widehat{\otimes}_{R^+} Q^+ \right)[\tfrac{1}{p}]
\end{align}
and where $E^+$ is the integral closure of the image of $P^+ \hat{\otimes}_{R^+} Q^+$ in $E$. In this generality, we caution that $(E,E^+)$ may not have nice properties such as uniformity or $v$-completeness, even if all three terms in the fiber product have these properties. However, it follows from the construction and  Proposition \ref{Prop:TensorProductsSolidVSComplete} and its proof that, for a profinite set $S$ and a uniform Huber pair $(R,R^+)$ over $(\qp,\zp)$, we have
\begin{align} \label{Eq:ProductProfiniteSet}
    \spd(R,R^+) \times_{\spd \qp} \ul{S} = \spd(C^{0}(S,R), C^{0}(S, R^{+})).
\end{align}

\begin{proof}[Proof of Lemma \ref{Lem:CondensedRingStructure}]
It follows from \cite[Lemma 3.6.26]{KedlayaLiu} that we can find a faithfully profinite \'etale Galois morphism $(R,R^+) \to (A,A^{+})$ with $(A,A^{+})$ affinoid perfectoid (note that v-complete implies uniform). Let $\Gamma$ be the Galois group of $\spa(A,A^{+}) \to \spa(R,R^+)$. \smallskip

The condensed ring $H^0_{\cond}(U^{\lozenge}, \mathcal{O})$ is naturally identified with $H^0_{\cond}(\spd(A,A^{+}), \mathcal{O})^{\underline{\Gamma}}$ using v-descent. On the other hand since $R$ is v-complete we deduce from \cite[Lemma 9.7]{HansenKedlaya} that the natural map $R \to A^{\Gamma}=H^0_{\cond}(\spd(A,A^{+}), \mathcal{O})^{\underline{\Gamma}}(\ast)$ is a bijection. Since $R$ has the subspace topology inside $A$, this means that $R=A^{\Gamma}$ as topological rings. Since the functor $R \mapsto \underline{R}$ is a right adjoint, it commutes with limits such as taking fixed points. It thus suffices to show that $H^0_{\cond}(\spd(A,A^{+}), \mathcal{O})=\underline{A}$ for all $(A,A^{+}) \in \perfd$. \smallskip

By definition, this means that we need to show that for $T \in \prof$ we have a natural identification $H^0_{\cond}(\spd(A,A^{+}), \mathcal{O})(T) = C^{0}(T,A)$. But this is a tautology since
\begin{align}
    H^0_{\cond}(\spd(A,A^{+}), \mathcal{O})(T) &= H^0((\spd(A,A^{+}) \times \underline{T}), \mathcal{O}) \\ &= H^0(\spd(C^{0}(T,A), C^{0}(T, A^{+})), \mathcal{O}) \\
    &=C^{0}(T,A)
\end{align}
by equation \eqref{Eq:ProductProfiniteSet} (and using that $C^0(T,A)$ is perfectoid). 
\end{proof}

\subsubsection{} \label{subsub:Fierce} The class of v-complete Huber pairs over $\spa(\qp,\zp)$ is not known to have good permanence properties, for example under rational localisations, finite \'etale maps, or the formation of Tate-algebras. For our purposes, we need a class of v-complete Huber pairs that does have these permanence properties. 
\begin{Def} \label{Def:Fierce}
Consider the largest full subcategory $\mathcal{C}_{\mathrm{frc}}$ of the category of all v-complete Huber pairs $(R,R^+)$ over $(\qp,\zp)$ such that:
\begin{itemize}
    \item If $(R,R^+) \in \mathcal{C}_{\mathrm{frc}}$, then for all rational localisations $(R,R^+) \to (B,B^+)$, we have that $(B,B^+) \in \mathcal{C}_{\mathrm{frc}}$. 

    \item If $(R,R^+) \in \mathcal{C}_{\mathrm{frc}}$, then for all finite \'etale morphisms $(R,R^+) \to (B,B^+)$, we have that $(B,B^+) \in \mathcal{C}_{\mathrm{frc}}$.

    \item If $(R,R^+) \in \mathcal{C}_{\mathrm{frc}}$, then $(R\langle T \rangle, \overline{R^+\langle T \rangle}) \in \mathcal{C}_{\mathrm{frc}}$.
\end{itemize}
Objects of $ \mathcal{C}_{\mathrm{frc}}$ are called \emph{fiercely v-complete Huber pairs}.
\end{Def}
\begin{Lem} \label{Lem:Fierce}
The following classes of Huber pairs over $(\qp,\zp)$ are fiercely v-complete: 
\begin{itemize}
        \item Affinoid perfectoid Huber pairs.

        \item Diamantine Huber pairs in the sense of \cite[Definition 11.1]{HansenKedlaya}.

        \item Huber pairs $(R,R^+)$ over $(K, \mathcal{O}_K)$ with $K$ a nonarchimedean field, such that $R$ is a seminormal affinoid algebra over $K$.\footnote{Recall that an affinoid algebra over $K$ is a quotient of $K\langle X_1, \cdots, X_n \rangle$ for some $n$.} 
    \end{itemize}
\end{Lem}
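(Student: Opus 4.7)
My plan is to verify, for each of the three classes, the four defining conditions of $\mathcal{C}_{\mathrm{frc}}$: v-completeness, together with stability under rational localizations, finite \'etale extensions, and Tate algebra formation. The structure will be that the first and third bullets are ultimately reduced to, or run in parallel with, the diamantine case.

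For affinoid perfectoid Huber pairs, v-completeness is standard (cf.\ \cite[Theorem 8.7]{EtCohDiam}), and stability under rational localizations and finite \'etale maps is part of Scholze's perfectoid theory. The nontrivial point is that $R\langle T\rangle$ is typically not perfectoid. My strategy is to recognize $(R\langle T\rangle,\overline{R^+\langle T\rangle})$ as sous-perfectoid by identifying $R\langle T\rangle$ as a topological direct summand, in the category of Banach $R\langle T\rangle$-modules, of the affinoid perfectoid $R\langle T^{1/p^\infty}\rangle$. One then verifies that sous-perfectoid Huber pairs form a class closed under rational localizations, finite \'etale maps, and further Tate algebra formation (the perfectoid overring persists under each operation), and that v-completeness transfers from the perfectoid overring via the direct summand property. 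This reduces the first bullet to the sous-perfectoid case, which embeds into the diamantine case.

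For diamantine Huber pairs, the four conditions are the content of the definition together with the basic permanence results established in \cite[\S 11]{HansenKedlaya}. For seminormal $K$-affinoid algebras, v-completeness is known from v-descent for seminormal rigid spaces. Preservation of seminormality under rational localization follows from the fact that localization commutes with seminormalization; under finite \'etale maps it follows from the fact that \'etale morphisms preserve normality and hence seminormality; and for $R\langle T\rangle$ it follows by combining preservation of seminormality under polynomial extension with preservation under completion for a bounded multiplicative norm on a $K$-affinoid algebra.

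The principal obstacle is the Tate algebra case in the first bullet: leaving the perfectoid world forces the sous-perfectoid or diamantine machinery to be set up before any of the other verifications can be carried out. Once that framework is in place, the remaining permanence and v-completeness statements amount to assembling results from \cite{HansenKedlaya} together with the v-descent property for seminormal affinoid algebras.
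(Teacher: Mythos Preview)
Your approach is correct, and for the diamantine and seminormal cases it essentially matches the paper's proof, which likewise assembles the permanence results from \cite{HansenKedlaya} (Theorem 11.10, Lemma 11.13, Corollary 11.5 for diamantine; Theorem 10.3 and Lemma 6.4 for seminormal affinoids).

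For the affinoid perfectoid case, however, the paper takes a much shorter route: it simply observes that affinoid perfectoid Huber pairs are diamantine by \cite[Proposition~11.3]{HansenKedlaya}, so the first bullet follows immediately from the second. Your detour through sous-perfectoid rings is not wrong, but it is unnecessary. Recall that to show a class lies in $\mathcal{C}_{\mathrm{frc}}$ you do \emph{not} need that class itself to be closed under the three operations; it suffices to embed it into some larger class of v-complete pairs (here, diamantine) which is closed. Once you grant bullet two, the containment ``affinoid perfectoid $\subseteq$ diamantine'' is a one-line citation, and the work of recognizing $R\langle T\rangle$ as sous-perfectoid, verifying closure for sous-perfectoid pairs, and then embedding into diamantine anyway is redundant.
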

\begin{proof}
    Affinoid perfectoid Huber pairs are diamantine by \cite[Proposition 11.3]{HansenKedlaya}, so the second bullet point implies the first. The second bullet point follows from the fact that diamantine Huber pairs are themselves stable under rational localisations (\cite[Theorem 11.10]{HansenKedlaya}), finite \'etale morphisms (\cite[Lemma 11.13]{HansenKedlaya}), and forming 
    Tate algebras (\cite[Corollary 11.5]{HansenKedlaya}), and that diamantine Huber pairs are v-complete by definition. \smallskip

    To establish the third bullet point, we note that the affinoid algebras $(R,R^+)$ over $K$ with $R$ seminormal are v-complete (e.g. by \cite[Theorem 10.3]{HansenKedlaya}). Moreover, being seminormal is preserved under rational localization and finite \'etale morphisms (\cite[Lemma 6.4]{HansenKedlaya}), and under the formation of Tate algebras by \cite[Theorem 10.3, Lemma 11.4]{HansenKedlaya}. 
\end{proof}
\begin{Def} \label{Def:LocallyFiercelyVComplete}
    An analytic adic space $Y$ over $\spa(\qp,\zp)$ is said to be \emph{locally fiercely v-complete} if $Y$ is locally of the form $\spa(R,R^+)$ with $(R,R^+)$ fiercely v-complete.
\end{Def}
\begin{Lem} \label{Lem:FiercelyFullyFaithful}
    Let $X,Y$ be locally fiercely v-complete analytic adic spaces over $\spa(\qp,\zp)$. Then the natural map
    \begin{align}
        \hom(X,Y) \to \hom_{\spd \qp}(X^{\lozenge}, Y^{\lozenge})
    \end{align}
   is a bijection.
\end{Lem}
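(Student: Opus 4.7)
The plan is to reduce to the affinoid case and then use the full faithfulness of the condensed-set functor together with v-completeness. The forward map $\hom(X,Y) \to \hom_{\spd \qp}(X^{\lozenge}, Y^{\lozenge})$ is given by the diamond functor, so the content is showing it is a bijection.

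For the reduction, I will use that for an analytic adic space $Z$ over $\spa(\qp,\zp)$, the underlying topological space $|Z^\lozenge|$ is canonically homeomorphic to $|Z|$, and that open subsheaves of $Z^\lozenge$ correspond to open subspaces of $Z$ (see \cite[\S 15]{EtCohDiam}). Given a morphism $g: X^\lozenge \to Y^\lozenge$, I would first fix an affinoid cover $\{V_i = \spa(S_i,S_i^+)\}$ of $Y$ with each $(S_i,S_i^+)$ fiercely v-complete, then choose an affinoid refinement $\{U_{ij} = \spa(R_{ij},R_{ij}^+)\}$ of $|g|^{-1}(V_i) \subseteq X$ with each $(R_{ij},R_{ij}^+)$ fiercely v-complete (possible by the local assumption on $X$ and stability of fierce v-completeness under rational localization, Definition~\ref{Def:Fierce}). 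By the identification of open subspaces with open subdiamonds, $g$ then restricts to morphisms $g_{ij}: U_{ij}^\lozenge \to V_i^\lozenge$.

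In the affinoid case $X = \spa(R,R^+)$, $Y = \spa(S,S^+)$, a morphism $g: X^\lozenge \to Y^\lozenge$ over $\spd \qp$ induces by pullback a morphism of condensed $\ul{\qp}$-algebras
\[ g^*: H^0_{\cond}(Y^\lozenge, \mathcal{O}) \to H^0_{\cond}(X^\lozenge, \mathcal{O}). \]
By Lemma~\ref{Lem:CondensedRingStructure}, v-completeness identifies the source and target with $\ul{S}$ and $\ul{R}$ respectively, and by Lemma~\ref{Lem:Fullyfaithful}, the induced map $\ul{S} \to \ul{R}$ comes from a unique continuous ring homomorphism $\varphi: S \to R$. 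Similarly, pullback on $\mathcal{O}^+$ and v-completeness give $H^0(Y^\lozenge,\mathcal{O}^+) = S^+$ and $H^0(X^\lozenge,\mathcal{O}^+) = R^+$, together with a ring homomorphism $S^+ \to R^+$ compatible with $\varphi$; thus $\varphi$ is a morphism of Huber pairs. This produces the inverse to $(-)^\lozenge$ in the affinoid case.

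The main obstacle is the gluing step, which requires verifying that the local morphisms $U_{ij} \to V_i$ agree on overlaps. This follows from the \emph{uniqueness} part of the affinoid case applied to each $U_{ij} \cap U_{ij'} \subseteq X$ (which is again locally fiercely v-complete) together with the functoriality of the condensed pullback maps under restriction; once this is established, gluing produces a morphism $X \to Y$ whose diamondification recovers $g$. A short diagram chase using Lemma~\ref{Lem:Fullyfaithful} again shows that the two constructions are mutually inverse.
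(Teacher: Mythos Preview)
Your argument is correct, but the paper takes a much shorter route: it observes that fiercely v-complete adic spaces are stably v-complete in the sense of \cite[Definition~9.5]{Kim} (immediate from Definition~\ref{Def:Fierce}, which guarantees closure under rational localization), and then invokes \cite[Lemma~9.8]{Kim}, which gives exactly this full faithfulness for stably v-complete spaces.

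Your direct argument is essentially a reproof of Kim's lemma in this setting. The reduction to affinoids via $|Z^\lozenge|\cong |Z|$ and open subdiamonds is the same as what any such proof must do; the affinoid step---recovering the Huber pair morphism from pullback on $\mathcal{O}$ and $\mathcal{O}^+$ using v-completeness and Lemma~\ref{Lem:CondensedRingStructure}---is the heart of the matter, and you have identified it correctly. Two minor points worth noting: first, Lemma~\ref{Lem:Fullyfaithful} as stated is about module maps, so strictly speaking you are using the underlying full faithfulness of $V\mapsto\ul{V}$ on compactly generated spaces together with preservation of products to transport ring homomorphisms; second, the step ``$\varphi^\lozenge = g$'' is not quite a diagram chase but rather a check on perfectoid points, using that $\spd(S,S^+)(A,A^+)=\hom((S,S^+),(A,A^+))$ and that this is determined by the induced map $S\to A$, which in turn is the pullback on $H^0(-,\mathcal{O})$. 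Your approach is self-contained and only uses v-completeness plus stability under rational localization, whereas the paper's citation keeps the exposition short.
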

\begin{proof}
    It follows from Definition \ref{Def:Fierce} that $X$ and $Y$ are stably v-complete in the sense of \cite[Definition 9.5]{Kim}. The lemma is now a consequence of \cite[Lemma 9.8]{Kim}.
\end{proof}

\begin{Def} \label{Def:StronglySmooth}
    Let $(R,R^+)$ be a uniform Huber pair over $(\qp,\zp)$. We say that $(R,R^+) \to (P,P^+)$ is \emph{strongly smooth} if $(R,R^+) \to (P,P^+)$ factors as
    \begin{align}
        (R,R^+) \to (R\langle X_1, \cdots, X_k \rangle, \overline{R^+\langle X_1, \cdots, X_k \rangle}) \xrightarrow{f} (P,P^+),
    \end{align}
    with $f$ finite \'etale. 
\end{Def}
\begin{Lem} \label{Lem:SmoothOverFierce}
Let $(R,R^+) \to (B,B^+)$ be a morphism of Huber pairs over $(\qp, \zp)$ which 
is strongly smooth. If $(R,R^+)$ is fiercely v-complete, then so is $(B,B^+)$. 
\end{Lem}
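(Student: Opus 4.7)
The plan is to simply unwind the definitions and apply the closure properties built into Definition \ref{Def:Fierce} in sequence. By Definition \ref{Def:StronglySmooth}, the morphism $(R,R^+) \to (B,B^+)$ factors as
\[
(R,R^+) \to (R\langle X_1, \ldots, X_k \rangle, \overline{R^+\langle X_1, \ldots, X_k \rangle}) \xrightarrow{f} (B,B^+),
\]
with $f$ finite \'etale.

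First, I would induct on $k$: starting from $(R,R^+) \in \mathcal{C}_{\mathrm{frc}}$, the third bullet of Definition \ref{Def:Fierce} gives $(R\langle X_1 \rangle, \overline{R^+\langle X_1 \rangle}) \in \mathcal{C}_{\mathrm{frc}}$, and iterating $k$ times (applying the Tate algebra closure property to the resulting fiercely v-complete pair at each stage) yields that $(R\langle X_1, \ldots, X_k \rangle, \overline{R^+\langle X_1, \ldots, X_k \rangle}) \in \mathcal{C}_{\mathrm{frc}}$. Then the second bullet of Definition \ref{Def:Fierce}, applied to the finite \'etale morphism $f$, gives $(B,B^+) \in \mathcal{C}_{\mathrm{frc}}$, completing the proof.

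There is no real obstacle here; the entire content of the lemma is already encoded in the three closure conditions defining $\mathcal{C}_{\mathrm{frc}}$, and a strongly smooth morphism is by definition a composition of the two kinds of morphisms (formation of a Tate algebra in one variable, and finite \'etale) that preserve fierce v-completeness. The only minor point to be careful about is that in iterating the Tate algebra construction one uses it in the form $(S\langle X \rangle, \overline{S^+\langle X \rangle})$ where $(S,S^+)$ is the previously obtained fiercely v-complete pair, which is exactly the statement of the third bullet applied to $(S,S^+)$ in place of $(R,R^+)$.
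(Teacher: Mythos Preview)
Your proposal is correct and takes essentially the same approach as the paper, which simply states that the lemma is a direct consequence of Definition~\ref{Def:Fierce} and Definition~\ref{Def:StronglySmooth}. Your explicit unwinding (iterating the Tate-algebra closure property and then applying the finite \'etale closure property) is exactly what that one-line proof abbreviates.
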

\begin{proof}
This is a direct consequence of Definition \ref{Def:Fierce} and Definition \ref{Def:StronglySmooth}. 
\end{proof}

\subsubsection{} For any small v-stack $X$ on $\perfd$ recall that we write $X_{\mv}$ for the category of $(A,A^{+}) \in \perfd$ together with a map $\spd(A,A^{+}) \to X$, equipped with the v-topology. We will work in the category $\operatorname{Ab}(X_{\mv})$ of (small) sheaves of abelian groups on $X_{\mv}$; this is a Grothendieck abelian category closed under small limits and colimits, see \cite[03CO]{stacks-project}. We note that by definition, every cover in $X_{\mv}$ has a finite subcover; in other words, the objects $\spd(A,A^+)$ are qcqs in the topos-theoretic sense. This has the consequence that filtered colimits of sheaves can be computed pointwise, see \cite[Tag 0738]{stacks-project}.

\subsubsection{} For a small v-stack $X$ we can now consider the abelian category of (sheaves of) $\mathcal{O}_X$-modules in $X_{\mv}$, see \cite[Tag 03DA]{stacks-project}. By \cite[Tag 03DB]{stacks-project}, the forgetful functor from $\mathcal{O}_X$-modules to sheaves of abelian groups commutes with limits and colimits. As before every sheaf $F$ of $\mathcal{O}_X$-modules can be considered as a sheaf of condensed $\mathcal{O}_X$-modules over the sheaf of condensed rings $\mathcal{O}_{X,\cond}$. In other words, for $(A,A^+) \in X_{\mv}$ the condensed abelian group $H^0_{\cond}(\spd(A,A^+), F)$ is a condensed $\ul{A}$-module. 

\subsection{Standard sheaves and Fr\'echet sheaves} \label{Sub:Standard} 
For $I$ an index set we consider the following sheaves of $\mathcal{O}^+$ or $\mathcal{O}$-modules on $\perfd$
\begin{align} \label{Eq:Standard}
    \hat{\oplus}_{I} \mathcal{O}^{+}&:= \varprojlim_{n}\oplus_{I} \mathcal{O}^+/p^n \\
    \hat{\oplus}_{I} \mathcal{O}&:= \left(\hat{\oplus}_{I} \mathcal{O}^{+}\right)[\tfrac{1}{p}] \\
    \textstyle\prod_{I} \mathcal{O}^{+}&, \qquad
    (\textstyle\prod_{I} \mathcal{O}^{+})[\tfrac{1}{p}].
\end{align}
We have the following lemma.
\begin{Lem} \label{Lem:Sheaves}
For $(R,R^+)$ a v-complete Huber pair and $Y=\spd(R,R^+)$, there are natural isomorphisms
\begin{align}
H^0_{\cond}(Y, \hat{\oplus}_{I} \mathcal{O}^{+}) &\simeq \hat{\oplus}_{I} \ul{R^+} \\
H^0_{\cond}(Y, \hat{\oplus}_{I} \mathcal{O}) &\simeq \hat{\oplus}_{I} \ul{R} \\
H^0_{\cond}(Y, \textstyle\prod_I \mathcal{O}^+) &\simeq \textstyle\prod_I \ul{R^+} \\
H^0_{\cond}(Y, (\textstyle\prod_I \mathcal{O}^+)[\tfrac{1}{p}]) &\simeq (\textstyle\prod_{I} \ul{R^+})[\tfrac{1}{p}]. 
\end{align}
\end{Lem}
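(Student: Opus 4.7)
The third and fourth isomorphisms are straightforward. The functor $H^0_{\cond}(Y, -)$ commutes with arbitrary limits, and since $Y = \spd(R, R^+)$ is quasi-compact in the v-topology, it also commutes with filtered colimits. Applying this to Lemma \ref{Lem:CondensedRingStructure} gives $H^0_{\cond}(Y, \textstyle\prod_I \mathcal{O}^+) = \textstyle\prod_I \ul{R^+}$, and inverting $p$ (a filtered colimit) yields the fourth. The second isomorphism is obtained from the first by the same inversion of $p$.

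For the first isomorphism, the strategy is to first treat the affinoid perfectoid case and then descend along a Galois cover. For any affinoid perfectoid $(B, B^+)$, the diamond $\spd(B, B^+)$ is quasi-compact in the v-topology, and using that sections commute with inverse limits and with the filtered colimit $\oplus_I = \varinjlim_{J \subset I \text{ finite}} \oplus_J$ (with finite direct sums coinciding with finite products in the abelian category of sheaves), together with the identification $(\mathcal{O}^+/p^n)(\spd(B, B^+)) = B^+/p^n$ coming from \cite[Theorem 8.7]{EtCohDiam}, one computes
\[
(\hat{\oplus}_I \mathcal{O}^+)(\spd(B, B^+)) = \varprojlim_n \textstyle\oplus_I B^+/p^n = \hat{\oplus}_I B^+.
\]

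Next, following the proof of Lemma \ref{Lem:CondensedRingStructure}, take a faithful profinite-\'etale Galois cover $(R, R^+) \to (A, A^+)$ with perfectoid target and profinite Galois group $\Gamma$ (cf.\ \cite[Lemma 3.6.26]{KedlayaLiu}). For any profinite set $T$, the identification $Y \times \ul{T} = \spd(C^0(T, R), C^0(T, R^+))$ from \eqref{Eq:ProductProfiniteSet} combined with v-descent along the pulled-back Galois cover $\spd(C^0(T, A), C^0(T, A^+)) \to Y \times \ul{T}$ and the perfectoid case applied to $(C^0(T, A), C^0(T, A^+))$ gives
\[
H^0_{\cond}(Y, \hat{\oplus}_I \mathcal{O}^+)(T) = (\hat{\oplus}_I C^0(T, A^+))^{\Gamma}.
\]

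The main obstacle is to identify these $\Gamma$-invariants with $\hat{\oplus}_I C^0(T, R^+) = \hat{\oplus}_I \ul{R^+}(T)$. A naive argument exchanging $\Gamma$-invariants with the inverse limit $\varprojlim_n$ in $\hat{\oplus}_I$ fails, since $(A^+/p^n)^{\Gamma}$ can be strictly larger than $R^+/p^n$ due to the contribution of $H^1(\Gamma, p^n A^+)$. Instead, one argues directly using uniformity: $(R, R^+)$ is uniform (v-completeness forces this, see \cite[Definition 9.6]{HansenKedlaya}) with $R^+$ the unit ball of $R$, so $R \hookrightarrow A$ is an isometric embedding of uniform Banach $\qp$-algebras; this property is inherited by $C^0(T, R) \hookrightarrow C^0(T, A)$ with unit balls $C^0(T, R^+) \hookrightarrow C^0(T, A^+)$ (via the sup-norm on continuous functions from the compact $T$). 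A $\Gamma$-invariant element of $\hat{\oplus}_I C^0(T, A^+)$ is then a family $(a_i)_{i \in I}$ with each $a_i \in C^0(T, A^+)^{\Gamma} = C^0(T, R^+)$, and the $p$-adic nullity $a_i \to 0$ in $C^0(T, A^+)$ is equivalent, by the isometry, to $a_i \to 0$ in $C^0(T, R^+)$. This identifies the invariants with $\hat{\oplus}_I C^0(T, R^+)$; functoriality in $T$ is immediate and yields the isomorphism of condensed modules.
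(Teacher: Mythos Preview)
Your treatment of the third, fourth, and second isomorphisms matches the paper's. The gap is in your argument for the first isomorphism, specifically in the perfectoid step: you claim $(\mathcal{O}^+/p^n)(\spd(B,B^+)) = B^+/p^n$ for affinoid perfectoid $(B,B^+)$, citing \cite[Theorem 8.7]{EtCohDiam}. That theorem only says $\mathcal{O}^+$ is a v-sheaf; it does not compute the sections of the sheaf quotient $\mathcal{O}^+/p^n$. From the long exact sequence one has
\[
0 \to B^+/p^n \to H^0(\spd(B,B^+), \mathcal{O}^+/p^n) \to H^1_{\mathrm{v}}(\spd(B,B^+), \mathcal{O}^+)[p^n] \to 0,
\]
and the right-hand term is only known to be almost zero, not zero, so the identification you need is not available in general. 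Your subsequent $\Gamma$-invariants computation (which is correct on its own terms) therefore rests on an unestablished input.

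The paper's proof is designed precisely to sidestep this problem. Rather than compute $H^0_{\cond}(Y,\mathcal{O}^+/p^n)$ exactly, it runs the long exact sequence for $0 \to \mathcal{O}^+ \xrightarrow{p^n} \mathcal{O}^+ \to \mathcal{O}^+/p^n \to 0$ over $Y$ itself, observes that v-completeness forces $\ul{R^+} \xrightarrow{\sim} H^0_{\cond}(Y,\mathcal{O}^+)$, and deduces $\varprojlim_n H^1_{\cond}(Y,\mathcal{O}^+)[p^n]=0$. It then repeats the argument for $\oplus_I \mathcal{O}^+$ and uses $H^1_{\cond}(Y,\oplus_I \mathcal{O}^+) = \oplus_I H^1_{\cond}(Y,\mathcal{O}^+)$ together with the inclusion $\varprojlim_n \oplus_I (-) \subset \prod_I \varprojlim_n (-)$ to kill the error term in the limit. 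The point is that the individual error terms $H^1[p^n]$ are never claimed to vanish; only their inverse limit does, and this is extracted from the v-completeness hypothesis rather than from any acyclicity of perfectoids.
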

\begin{proof}
The natural isomorphism $\ul{R^+} \to H^0_{\cond}(Y, \mathcal{O}^{+})$ of Lemma \ref{Lem:CondensedRingStructure} induces the third natural isomorphism upon taking the product, since products commute with global sections. The fourth isomorphism is induced from the third by taking a filtered colimit (which also commutes with global sections). The second isomorphism is similarly induced from the first, which we now explain how to construct. \smallskip 

For $n > 0$, by taking the long exact sequence in condensed cohomology for $0 \rightarrow \mathcal{O}^+ \xrightarrow{p^n} \mathcal{O}^+ \rightarrow \mathcal{O}^+/p^n \rightarrow 0$, we obtain an exact sequence 
\[ 0 \rightarrow \ul{R^+/p^n} \rightarrow H^0_{\cond}(Y,\mathcal{O}^+/p^n) \rightarrow H^1_{\cond}(Y,\mathcal{O}^+)[p^n] \rightarrow 0. \]
By varying $n$ and using that the first system is Mittag--Leffler, taking the inverse limit gives an exact sequence
\[ 0 \rightarrow \ul{R^+} \rightarrow H^0_{\cond}(Y,\mathcal{O}^+) \rightarrow \varprojlim_n H^1_{\cond}(Y,\mathcal{O}^+)[p^n] \rightarrow 0. \]
The first arrow is an isomorphism by assumption, so $\varprojlim_n H^1_{\cond}(Y,\mathcal{O}^+)[p^n] =0$. Next we consider the long exact sequence in cohomology for 
\begin{align}
   0 \to \oplus_I \mathcal{O}^+ \xrightarrow{p^n} \oplus_I \mathcal{O}^+ \to \oplus_I \mathcal{O}^+/p^n \to 0,
\end{align}
giving
\[ 0 \rightarrow \oplus_I \ul{R^+/p^n} \rightarrow H^0_{\cond}(Y, \oplus_I \mathcal{O}^+/p^n) \rightarrow H^1_{\cond}(Y, \oplus_I \mathcal{O}^+) [p^n] \rightarrow 0. \] Taking the inverse limit and using the fact that the first system is Mittag--Leffler, we get an exact sequence
\begin{align}
    0 \to \hat{\oplus}_{I} \ul{R^+} \to H^0_{\cond}(Y, \hat{\oplus}_I \mathcal{O}^+) \to \varprojlim_n (H^1_{\cond}(Y, \oplus_I \mathcal{O}^+) [p^n]) \to 0.
\end{align}
But now we observe that (using the fact that cohomology commutes with filtered colimits and thus with direct sums in a coherent site)
\begin{align}
    \varprojlim_n (H^1_{\cond}(Y, \oplus_I \mathcal{O}^+) [p^n]) &= \varprojlim_n \oplus_I (H^1_{\cond}(Y, \mathcal{O}^+) [p^n]) \\
    & \subset \varprojlim_n \textstyle\prod_I (H^1_{\cond}(Y, \mathcal{O}^+) [p^n]) \\
    & \subset \textstyle\prod_I \varprojlim_n (H^1_{\cond}(Y, \mathcal{O}^+) [p^n])=0
\end{align}
to conclude. 
\end{proof}
We let $\Hom_{\mathcal{O}^+}$ denote the internal hom in sheaves of $\mathcal{O}^+$-modules, and we let $\Hom_{\mathcal{O}}$ denote the internal hom in sheaves of $\mathcal{O}$-modules. The following result should be compared to Proposition \ref{Prop:DualityCondensed}.
\begin{Prop} \label{Prop:DualityvSheaves}
The natural maps
\begin{align}
\Hom_{\mathcal{O}^+}\left(\hat{\oplus}_{I}\mathcal{O}^+, \mathcal{O}^+\right) &\to \textstyle\prod_{I} \mathcal{O}^{+} \\
\Hom_{\mathcal{O}}\left(\hat{\oplus}_{I}\mathcal{O}, \mathcal{O}\right) &\to (\textstyle\prod_{I} \mathcal{O}^{+})[\tfrac{1}{p}]
\end{align}
are isomorphisms. If $I$ is countable, then the natural maps
\begin{align}
     \hat{\oplus}_{I}\mathcal{O}^+ &\to \Hom_{\mathcal{O}^+}\left(\textstyle\prod_I \mathcal{O}^+, \mathcal{O}^+\right) \\
    \hat{\oplus}_{I}\mathcal{O} &\to \Hom_{\mathcal{O}}\left((\textstyle\prod_I \mathcal{O}^+)[\tfrac{1}{p}], \mathcal{O}\right)
\end{align}
are isomorphisms.
\end{Prop}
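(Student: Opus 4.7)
The proposition is the sheaf-theoretic analog of Proposition~\ref{Prop:DualityCondensed}, and my plan is to verify each of the four maps is an isomorphism by computing the bijection of sections over every affinoid perfectoid $Z = \spd(A, A^+) \in \perfd$, which is v-complete by Lemma~\ref{Lem:Fierce}. Lemma~\ref{Lem:Sheaves} identifies the sections of the standard sheaves occurring, and its cohomological input $\varprojlim_n H^1_{\cond}(Z, \mathcal{O}^+)[p^n] = 0$, which gives $\varprojlim_n H^0(Z, \mathcal{O}^+_Z/p^n) = A^+$, will be used throughout.

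For the first isomorphism, I compute
\begin{align*}
\hom_{\mathcal{O}^+_Z}(\hat{\oplus}_I \mathcal{O}^+_Z, \mathcal{O}^+_Z)
 &= \varprojlim_n \hom_{\mathcal{O}^+_Z}(\hat{\oplus}_I \mathcal{O}^+_Z, \mathcal{O}^+_Z/p^n) \\
 &= \varprojlim_n \hom_{\mathcal{O}^+_Z}(\oplus_I \mathcal{O}^+_Z/p^n, \mathcal{O}^+_Z/p^n) = \varprojlim_n \textstyle\prod_I H^0(Z, \mathcal{O}^+_Z/p^n),
\end{align*}
where the middle step uses the identification $\hat{\oplus}_I \mathcal{O}^+_Z/p^n = \oplus_I \mathcal{O}^+_Z/p^n$, obtained by passing to the inverse limit of the short exact sequences $0 \to \oplus_I \mathcal{O}^+_Z/p^m \xrightarrow{p^n} \oplus_I \mathcal{O}^+_Z/p^{m+n} \to \oplus_I \mathcal{O}^+_Z/p^n \to 0$ (with $R^1 \varprojlim$ vanishing by Mittag--Leffler for surjective transition maps), together with the fact that sheaf-Hom out of a direct sum is a product. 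Swapping $\prod_I$ with $\varprojlim_n$ and applying the above input yields $\prod_I A^+$, matching the right-hand side via the natural map of the proposition.

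For the third isomorphism (with $I$ countable), a sheaf morphism $\phi: \prod_I \mathcal{O}^+_Z \to \mathcal{O}^+_Z$ determines a tuple $(a_i) = (\phi(e_i)) \in \prod_I A^+$, and the key step is to show $(a_i) \in \hat{\oplus}_I A^+$. To see this, let $\mathbb{N}^+ := \mathbb{N} \cup \{\infty\}$ denote the one-point compactification of $\mathbb{N}$, a profinite set, so that $Z \times \ul{\mathbb{N}^+} = \spd(C^0(\mathbb{N}^+, A), C^0(\mathbb{N}^+, A^+))$ by~\eqref{Eq:ProductProfiniteSet}. Fix an enumeration $I = \{i_1, i_2, \ldots\}$ and consider the tuple $(b_i)_{i \in I} \in \prod_I C^0(\mathbb{N}^+, A^+)$, where $b_{i_k}$ is the indicator function of $\{k\} \subset \mathbb{N}^+$ (continuous because $\{k\}$ is open and $b_{i_k}(\infty) = 0$). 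Pointwise specialization at $j \in \mathbb{N}^+$ commutes with $\phi$ and identifies $\phi|_{Z \times \ul{\mathbb{N}^+}}((b_i))$ as the function $k \mapsto a_{i_k}$ (with value $0$ at $\infty$); continuity of this function in $C^0(\mathbb{N}^+, A^+)$ forces $a_{i_k} \to 0$ $p$-adically, so $(a_i) \in \hat{\oplus}_I A^+$. Conversely, any such tuple yields a sheaf morphism via the universally convergent formula $(b_i) \mapsto \sum_i a_i b_i$, with base-change compatibility of the coefficient tuple provided by Lemma~\ref{Lem:StandardTensorTopological}.

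The second and fourth isomorphisms are deduced from the first and third by inverting $p$: the essential step is to show that any $\mathcal{O}_Z$-linear sheaf morphism into $\mathcal{O}_Z$ factors through $p^{-k} \mathcal{O}^+_Z$ for some uniform $k$, after which the result follows from the integral version. This boundedness step uses the same kind of test-tuple argument as the ``continuous map of Banach spaces is bounded'' step in the proof of Proposition~\ref{Prop:DualityCondensed}: if the tuple $(a_i) = (\phi(e_i))$ had unbounded $p$-denominators, one could construct a section $(b_i)$ (with $p$-adically vanishing coordinates chosen to cancel those denominators) on which the defining formula for $\phi$ diverges $p$-adically, contradicting the sheaf property. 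The main obstacle throughout is this countability/boundedness argument: carrying out in the sheaf-theoretic setting the compact-open-topology manipulations used in the proof of Proposition~\ref{Prop:DualityCondensed}, for which testing $\phi$ on the indicator-function tuple over the base-change $Z \times \ul{\mathbb{N}^+}$ provides the right translation.
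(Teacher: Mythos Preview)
Your strategy differs from the paper's: rather than computing the sheaf $\Hom$ directly, the paper first proves (Corollary~\ref{Cor:CondensedEvaluationMaps}) that the evaluation map identifies sections of the sheaf $\Hom$ with the condensed $\Hom$ of the sections, and then invokes Proposition~\ref{Prop:DualityCondensed}. That reduction packages continuity and boundedness once and for all; your direct approach must replicate it case by case with test tuples over $Z\times\ul{\mathbb{N}^+}$. Your first isomorphism argument is essentially fine.

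There is a genuine gap in your third isomorphism. Showing $(a_i)=(\phi(e_i))\in\hat{\oplus}_I A^+$ is necessary but not sufficient for surjectivity of the natural map: you have not proved that $\phi$ agrees with the morphism $(b_i)\mapsto\sum_i a_i b_i$. A sheaf morphism out of $\prod_I\mathcal{O}^+$ is not a priori determined by its values on the $e_i$, since a general section $(b_i)\in\prod_I B^+$ is not a limit of finitely supported tuples. This can be closed by a second $\mathbb{N}^+$ test: set $d_{i_k}(j)=b_{i_k}$ for $j\geq k$ and $d_{i_k}(j)=0$ for $j<k$; then $(d_i)\in\prod_I C^0(\mathbb{N}^+,B^+)$, its specialization at finite $j$ is the partial tuple $\sum_{k\le j}b_{i_k}e_{i_k}$ and at $\infty$ is $(b_i)$, so continuity of $\phi((d_i))$ at $\infty$ yields $\phi((b_i))=\lim_j\sum_{k\le j}a_{i_k}b_{i_k}$.

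Your boundedness step for the second and fourth isomorphisms is circular as written: you argue that unbounded $(a_i)$ would make ``the defining formula for $\phi$'' diverge, but that formula is precisely what must be established. Once the previous gap is closed (the same partial-sum trick applies with $\hat{\oplus}_I$ in place of $\prod_I$, since $\sup_j|d_{i_k}(j)|=|b_{i_k}|\to 0$), boundedness does follow: choose distinct $i_m$ with $a_{i_m}\notin p^{-m}A^+$, take $b_{i_m}=p^m$, and observe that convergence of the partial sums forces $a_{i_m}p^m\to 0$, contradicting $a_{i_m}p^m\notin A^+$.
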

Our proof of the proposition is very similar to our proof of Proposition \ref{Prop:DualityCondensed}. Let $(R,R^+)$ denote a Huber pair over $(\qp,\zp)$ and let $\mathcal{O}^{?}$ denote either $\mathcal{O}^+$ or $\mathcal{O}$ (we will use similar notation for Huber pairs later) and let $F$ be a sheaf of $\mathcal{O}^{?}$-modules on $\spd(R,R^+)$. There is an evaluation map
\begin{align} \label{eq:evaluationsheaf}
    H^0_{\mathrm{cond}}(\spd(R,R^+), \Hom_{\mathcal{O}^{?}}(F, \mathcal{O}^{?})) \to \Hom_{\ul{R^{?}}}(H^0_{\mathrm{cond}}(\spd(R,R^+),F), \ul{R^{?}}).
\end{align}
\begin{Lem} \label{Lem:EvaluationAbstract}
    Let $(R,R^+)$ be a v-complete Huber pair over $(\qp,\zp)$ and let $F$ be a sheaf of $\mathcal{O}^{?}$-modules on $Y=\spd(R,R^+)$. If for all $(R,R^+) \to (A,A^+) \in \spd(R,R^+)_{\mathrm{v}}$ the natural map
    \begin{align}
        H^0_{\mathrm{cond}}(\spd(R,R^+),F) \otimes^{\blacksquare}_{\ul{R^{?}}} \ul{A^{?}} \to H^0_{\mathrm{cond}}(\spd(A,A^+),F)
    \end{align}
    is an isomorphism, then the evaluation map \eqref{eq:evaluationsheaf} is an isomorphism.
\end{Lem}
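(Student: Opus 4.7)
The plan is to check the evaluation map is an isomorphism on $S$-points for each profinite set $S$. Writing $Y_S := Y \times_{\spd \qp} \ul{S}$, by \eqref{Eq:ProductProfiniteSet} we have $Y_S = \spd(C^0(S,R), C^0(S,R^+))$. By the definition of the condensed structure and the internal hom on $Y_\mv$, the LHS of \eqref{eq:evaluationsheaf} evaluated at $S$ unwinds to $\Hom_{\mathcal{O}^?|_{Y_S}}(F|_{Y_S}, \mathcal{O}^?|_{Y_S})$. On the RHS, an adjunction computation modeled on Lemma \ref{Lem:PreEvaluationCondensed} (which identifies $\ul{R^?}$ cotensored by $S$ with $\ul{C^0(S, R^?)}$), followed by tensor-hom adjunction along the ring map $\ul{R^?} \to \ul{C^0(S,R^?)}$, rewrites $\Hom_{\ul{R^?}}(M, \ul{R^?})(S)$ as
\begin{align}
\Hom_{\ul{C^0(S,R^?)}}\!\left(M \otimes^{\blacksquare}_{\ul{R^?}} \ul{C^0(S,R^?)},\; \ul{C^0(S,R^?)}\right),
\end{align}
where $M := H^0_\cond(Y, F)$.

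First I plan to reduce to the case where $(R,R^+)$ is perfectoid by choosing a v-cover $\spd(A,A^+) \to Y$ and applying v-descent to both sides of \eqref{eq:evaluationsheaf} along its \v{C}ech nerve. In the perfectoid case $(C^0(S,R), C^0(S,R^+))$ is itself perfectoid (in particular v-complete), so Lemma \ref{Lem:CondensedRingStructure} gives $H^0_\cond(Y_S, \mathcal{O}^?) = \ul{C^0(S, R^?)}$, and the hypothesis of the lemma applied to the composite $\spd(C^0(S,R), C^0(S,R^+)) \to Y$ (which lies in $Y_\mv$) gives $H^0_\cond(Y_S, F) = M \otimes^{\blacksquare}_{\ul{R^?}} \ul{C^0(S, R^?)}$.

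Given these identifications, the core step is to match both sides with the same compatible data. By v-descent, a sheaf morphism $F|_{Y_S} \to \mathcal{O}^?|_{Y_S}$ is the same as a compatible family of $A'^?$-module maps $F(\spd(A', A'^+)) \to A'^?$ for perfectoid $\spd(A', A'^+) \to Y_S$. Promoting via the condensed structure and applying the hypothesis to each composite $\spd(A', A'^+) \to Y$, this becomes a compatible family of $\ul{A'^?}$-linear maps $M \otimes^{\blacksquare}_{\ul{R^?}} \ul{A'^?} \to \ul{A'^?}$, equivalently (by tensor-hom) $\ul{R^?}$-linear maps $M \to \ul{A'^?}$. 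By the sheaf property of $\ul{\mathcal{O}^?}$ on $Y_S$, such a compatible family glues to a single $\ul{R^?}$-linear map $M \to \ul{C^0(S, R^?)}$, matching the RHS.

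The main obstacle will be carefully transporting the condensed structure through the v-descent argument: the hypothesis is phrased in terms of $H^0_\cond$, which encodes values on all $\spd(C^0(T,A'),C^0(T,A'^+))$ for $T$ profinite, whereas v-descent directly yields only compatibility of the underlying sheaf morphisms. Verifying that these two notions of compatibility agree — and that the tensor-hom identification $M \otimes^{\blacksquare}_{\ul{R^?}} \ul{A'^?} \to \ul{A'^?} \;\leftrightsquigarrow\; M \to \ul{A'^?}$ is natural in $(A',A'^+)$ compatibly with restriction in $Y_S$ — is where most of the bookkeeping lives.
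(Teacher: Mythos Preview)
Your core step is the paper's argument: on $S$-points, the left side is sheaf homs $F|_{Y_S}\to\mathcal{O}^?|_{Y_S}$, the right side is $\hom_{\ul{R^?}}(H^0_\cond(Y,F),\ul{R_S^?})$ via the adjunction of Lemma~\ref{Lem:PreEvaluationCondensed}, and the hypothesis applied to each perfectoid $(A',A'^+)\to Y_S\to Y$ matches them. The obstacle you flag (promoting sheaf-level compatibility to the condensed level and checking naturality of tensor--hom in $(A',A'^+)$) is exactly where the care lives in the paper's terse write-up.

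The reduction to perfectoid $(R,R^+)$ is an unnecessary detour that the paper omits. You never need to apply the hypothesis to $Y_S$ itself: the identification $H^0_\cond(Y_S,\mathcal{O}^?)=\ul{R_S^?}$ already holds for v-complete $(R,R^+)$ by unwinding $H^0_\cond(Y_S,\mathcal{O}^?)(T)=H^0_\cond(Y,\mathcal{O}^?)(S\times T)$ and applying Lemma~\ref{Lem:CondensedRingStructure}, and the hypothesis is only ever invoked on genuinely perfectoid $(A',A'^+)$, which factor through $Y$ automatically. Your descent step for the right-hand side can be justified---via tensor--hom it reduces to $\ul{R^?}$ being the equalizer of the \v{C}ech diagram $\ul{A^?}\rightrightarrows\cdots$, which is v-completeness again---but it adds verification without simplifying anything downstream. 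The extra tensor--hom rewriting of the right side as a $\ul{C^0(S,R^?)}$-linear hom is likewise harmless but superfluous; the paper stops at $\hom_{\ul{R^?}}(M,\ul{R_S^?})$.
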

\begin{proof}
    We follow the proof of Lemma \ref{Lem:PreEvaluationCondensed}: Fix a profinite set $S$, write $\ul{S}_{(R,R^+)}$ for $\ul{S} \times_{\spd \qp} \spd(R,R^+)$ and consider the evaluation of \eqref{eq:evaluationsheaf} on $S$
    \begin{multline} \label{eq:evaluationsheafevaluation}
        H^0_{\mathrm{cond}}(\spd(R,R^+), \Hom_{\mathcal{O}^{?}}(F, \mathcal{O}^{?}))(S) \\\to \Hom_{\ul{R^{?}}}(H^0_{\mathrm{cond}}(\spd(R,R^+),F), \ul{R^{?}})(S).
    \end{multline}
    We can identify the left hand side with the set
    \begin{align}
        H^0(\spd(C^{0}(S,R),C^{0}(S,R^+)), \Hom_{\mathcal{O}^{?}}(F, \mathcal{O}^{?})),
    \end{align}
    so a section gives a map $f \in \hom_{\mathcal{O}^{?}}(F, \mathcal{O}^{?})$ of sheaves on $(\ul{S}_{(R,R^+)})_{\mathrm{v}}$. The map $f$ is uniquely determined by the induced map of condensed sets (notation as in Lemma \ref{Lem:PreEvaluationCondensed})
    \begin{align}
        H^0_{\mathrm{cond}}(\ul{S}_{(R,R^+)},F) \to \ul{R^{?}_{S}}
    \end{align}
    since for any $\spd(A,A^+) \to \ul{S}_{(R,R^+)}$, there is a unique induced map
    \begin{align}
       H^0_{\mathrm{cond}}(\spd(A,A^+),F) \to \ul{A^{?}}
    \end{align}
    given by $f \otimes^{\blacksquare}_{\mathcal{O}^{?}(\ul{S}_{(R,R^+)})} \ul{A^{?}}$; this uses the assumption of the lemma. \smallskip
    
    The right hand side of \eqref{eq:evaluationsheafevaluation} is the set of $ \pi^{-1}\ul{R^{?}}$-linear homomorphisms (notation as in the proof of Lemma \ref{Lem:PreEvaluationCondensed})
    \begin{align}
        \pi^{-1}H^0_{\mathrm{cond}}(\spd(R,R^+),F) \to \pi^{-1}\ul{R^{?}}
    \end{align}
    on the site of profinite sets over $S$. Using the adjunction argument as in Lemma \ref{Lem:PreEvaluationCondensed}, we identify this with the set of $\ul{R^{?}}$-linear homomorphisms
    \begin{align}
        H^0_{\mathrm{cond}}(\spd(R,R^+),F) \to \ul{R^{?}_{S}},
    \end{align}
    which is identified with the left hand side of \eqref{eq:evaluationsheafevaluation} as above. 
\end{proof}

\begin{Cor} \label{Cor:CondensedEvaluationMaps}
For a v-complete Huber pair $(R,R^+)$ over $\spa(\qp,\zp)$, the natural condensed evaluation maps
\begin{align}
    H^0_{\cond}(\spd(R,R^{+}), \Hom_{\mathcal{\mathcal{O}^{+}}}(\hat{\oplus}_{I} \mathcal{O}^+, \mathcal{O}^+)) &\to\Hom_{\underline{R}^+}(\hat{\oplus}_{I} \underline{R^+} , \underline{R^+}) \\
    H^0_{\cond}(\spd(R,R^{+}),\Hom_{\mathcal{\mathcal{O}}}(\hat{\oplus}_{I} \mathcal{O}, \mathcal{O})) &\to\Hom_{\underline{R}}(\hat{\oplus}_{I} \underline{R} , \underline{R}) 
\end{align}
are isomorphisms. If $I$ is moreover countable, then the natural condensed evaluation maps
\begin{align}
    H^0_{\cond}(\spd(R,R^{+}),\Hom_{\mathcal{\mathcal{O}^+}}(\textstyle\prod_I \mathcal{O}^+, \mathcal{O}^+)) &\to\Hom_{\underline{R}^+}(\textstyle\prod_I \underline{R^+} , \underline{R^+}) \\
    H^0_{\cond}(\spd(R,R^{+}),\Hom_{\mathcal{\mathcal{O}}}((\textstyle\prod_I  \mathcal{O}^+)[\tfrac{1}{p}], \mathcal{O})) &\to\Hom_{\underline{R}}((\textstyle\prod_I \underline{R})[\tfrac{1}{p}], \underline{R})
\end{align}
are isomorphisms.
\end{Cor}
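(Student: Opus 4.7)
The plan is to handle the two $\hat{\oplus}$-source evaluation maps by applying Lemma \ref{Lem:EvaluationAbstract}, and the two $\textstyle\prod$-source evaluation maps by directly identifying both sides with a common module and verifying compatibility.

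For the cases with source $F = \hat{\oplus}_I \mathcal{O}^+$ or $F = \hat{\oplus}_I \mathcal{O}$, the hypothesis of Lemma \ref{Lem:EvaluationAbstract} requires that for any $\spd(A,A^+)$ over $\spd(R,R^+)$ in the v-site (so $(A,A^+)$ is affinoid perfectoid, hence v-complete by Lemma \ref{Lem:Fierce}), the natural map
\[
H^0_{\cond}(\spd(R,R^+), F) \otimes^{\blacksquare}_{\ul{R^?}} \ul{A^?} \to H^0_{\cond}(\spd(A,A^+), F)
\]
is an isomorphism. Computing both sides with Lemma \ref{Lem:Sheaves}, this identifies with the base change isomorphism $\hat{\oplus}_I \ul{R^?} \otimes^{\blacksquare}_{\ul{R^?}} \ul{A^?} \xrightarrow{\sim} \hat{\oplus}_I \ul{A^?}$ of Proposition \ref{Prop:StandardBCproperty}. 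This yields the first two stated isomorphisms.

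For the remaining two evaluation maps (source a product), Lemma \ref{Lem:EvaluationAbstract} cannot be applied since the analogous base change property fails in general (see the remark after Proposition \ref{Prop:TensorDualSmith}). Instead, I would identify both sides of the evaluation map with $\hat{\oplus}_I \ul{R^?}$ and then verify compatibility. On the source side, Proposition \ref{Prop:DualityvSheaves} identifies the internal hom sheaf with $\hat{\oplus}_I \mathcal{O}^?$, whose condensed global sections are $\hat{\oplus}_I \ul{R^?}$ by Lemma \ref{Lem:Sheaves}. On the target side, Lemma \ref{Lem:Sheaves} computes the condensed global sections of the source sheaf as $\textstyle\prod_I \ul{R^+}$ (resp.\ its $p$-inversion), and Proposition \ref{Prop:DualityCondensed} then identifies the condensed $\ul{R^?}$-dual as $\hat{\oplus}_I \ul{R^?}$.

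The hard part is then a naturality check: one must confirm that the evaluation map \eqref{eq:evaluationsheaf}, under these two identifications, becomes the identity on $\hat{\oplus}_I \ul{R^?}$. This reduces to tracing a ``basis section'' $e_i \in \hat{\oplus}_I \mathcal{O}^?$ through the construction of the evaluation map at an arbitrary profinite set $T$ (using the description of $\Hom_{\mathcal{O}^?}(\textstyle\prod_I \mathcal{O}^?, \mathcal{O}^?)$ produced in the proof of Proposition \ref{Prop:DualityvSheaves}), and observing that the induced condensed $\ul{R^?}$-linear functional on $\textstyle\prod_I \ul{R^?}$ is the $i$-th coordinate projection, which is exactly the image of $e_i$ under the duality identification of Proposition \ref{Prop:DualityCondensed}. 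No analytic input beyond the already-established results is needed.
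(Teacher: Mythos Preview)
Your treatment of the two $\hat{\oplus}$-source cases is correct and matches the paper's proof verbatim: apply Lemma \ref{Lem:EvaluationAbstract}, with the base-change hypothesis supplied by Lemma \ref{Lem:Sheaves} and Proposition \ref{Prop:StandardBCproperty}.

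For the two $\textstyle\prod$-source cases, however, your argument is circular. You invoke Proposition \ref{Prop:DualityvSheaves} to identify the sheaf $\Hom_{\mathcal{O}^?}(\textstyle\prod_I \mathcal{O}^?, \mathcal{O}^?)$ with $\hat{\oplus}_I \mathcal{O}^?$, and then take condensed global sections. But in the paper, Proposition \ref{Prop:DualityvSheaves} is \emph{deduced from} Corollary \ref{Cor:CondensedEvaluationMaps} (its proof reads: ``All statements are a direct consequence of Proposition \ref{Prop:DualityCondensed} and Corollary \ref{Cor:CondensedEvaluationMaps}''). There is no independent description of the sheaf hom available at this point, and indeed computing $H^0_{\cond}$ of that sheaf hom is essentially the content of the evaluation isomorphism you are trying to establish. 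Your naturality check on basis vectors presupposes the identification you need to prove.

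The paper instead adapts the proof of Lemma \ref{Lem:EvaluationAbstract} directly. The obstruction to applying that lemma verbatim is that $\textstyle\prod_I \ul{R^?} \otimes^{\blacksquare}_{\ul{R^?}} \ul{A^?} \to \textstyle\prod_I \ul{A^?}$ is not an isomorphism; but by Propositions \ref{Prop:DualityCondensed} and \ref{Prop:TensorDualSmith} it does identify the target with the \emph{double dual} of the source. This is enough to write down an explicit inverse to the evaluation map on $S$-points: given a condensed $\ul{R^?}$-linear map $f: \textstyle\prod_I \ul{R^?} \to \ul{R^?_S}$, one extends scalars along $\ul{R^?} \to \ul{R^?_S}$ and then to any $\ul{A^?}$, each time taking the double dual to land in the correct target. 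This produces the required morphism of sheaves over $\spd(R_S, R_S^+)$.
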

\begin{proof}
The first two statements are a direct consequence of Lemma \ref{Lem:EvaluationAbstract} and the fact that the formation of $\hat{\oplus}_I R^+$ and $\hat{\oplus}_I R$ commute with the solid tensor product, see Proposition \ref{Prop:StandardBCproperty}. For the third and fourth claim, we slightly modify the argument of Lemma \ref{Lem:EvaluationAbstract}. Write
\begin{align}
    F^? &= \twopartdef{\textstyle\prod_I \mathcal{O}^+}{?=+}{(\textstyle\prod_I \mathcal{O}^+)[\tfrac{1}{p}]}{?=\emptyset.} \\
    F^?_R &= \twopartdef{\textstyle\prod_I \ul{R^+}}{?=+}{(\textstyle\prod_I \ul{R^+})[\tfrac{1}{p}]}{?=\emptyset.}
\end{align}
It follows from Proposition \ref{Prop:DualityCondensed}, Proposition \ref{Prop:TensorDualSmith} and Lemma \ref{Lem:Sheaves} that for all morphisms $(A,A^+) \to (B,B^+)$ of v-complete Huber pairs over $(\qp,\zp)$, the natural map
\begin{align}
    H^0_{\cond}(\spd(A,A^+), F^?) \otimes_{\ul{A^?}} \ul{B^?} \to H^0_{\cond}(\spd(B,B^+), F^?)
\end{align}
identifies the target with the double dual of the source. Using this, we can write down an explicit inverse of the natural map of the corollary: For a fixed profinite set $S$ we can identify the natural map as in the proof of Lemma \ref{Lem:EvaluationAbstract} with
\begin{align} \label{Eq:BigEquationReferenceFiveHundred}
     &H^0_{\cond}(\spd(R,R^{+}),\Hom_{\mathcal{\mathcal{O}^?}}(F^?, \mathcal{O}^?))(S) = H^0(\spd(R_S,R^+_S), \Hom_{\mathcal{\mathcal{O}^?}}(F^?, \mathcal{O}^+)) \to \\
     &\Hom_{\underline{R}^?}(F^?_{R}, \underline{R^?})(S) = \hom_{\underline{R}^?}(F^?_{R}, \underline{R^?_{S}}).
\end{align}
An inverse is now given by taking $f: F^?_{R} \to \underline{R^?_{S}}$ to the morphism of sheaves over $\spd(R_S,R_S^+)$ whose evaluation on $(R_S, R_S^+) \to (A,A^+)$ is: We first extend $f$ to a morphism $F^?_{R_S} \to \underline{R^?_{S}}$ by taking (solid) extension of scalars along $\ul{R^?} \to \ul{R_S^?}$ and then taking double dual, we then take solid extension of scalars of $f$ along $\ul{R^?_S} \to \ul{A^?}$ and take double dual again.
\end{proof}
\begin{proof}[Proof of Proposition \ref{Prop:DualityvSheaves}]
All statements are a direct consequence of Proposition \ref{Prop:DualityCondensed} and Corollary \ref{Cor:CondensedEvaluationMaps}.
\end{proof}

\subsubsection{} Let $Y$ be a small v-sheaf and let $F$ be a sheaf of $\mathcal{O}$-modules on $Y_{\mv}$. We say that $F$ is \emph{countably orthonormalizable Banach} if $F \simeq \hat{\oplus}_{I} \mathcal{O}_{Y}$ for some countable set $I$. We say that $F$ is \emph{locally countably orthonormalizable Banach} if there is a v-cover $Y' \to Y$ such that the pullback of $F$ to $Y'$ is isomorphic to $\hat{\oplus}_{I} \mathcal{O}_{Y'}$ for some countable set $I$. We say that $F$ is \emph{reflexive} if the natural map $F \rightarrow \Hom_{\mathcal{O}}(\Hom_\mathcal{O}(F, \mathcal{O}))$ is an isomorphism. 
\begin{Cor} \label{Cor:BanachReflexive}
    If $F$ is a locally countably orthonormalizable Banach sheaf of $\mathcal{O}$-modules on $Y_{\mv}$, then the sheaf $F$ is reflexive.
\end{Cor}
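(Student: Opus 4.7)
The plan is to reduce to the case where $F=\hat{\oplus}_I \mathcal{O}_Y$ for a countable index set $I$, and then to read off reflexivity directly from Proposition~\ref{Prop:DualityvSheaves}. To justify the reduction, I would first observe that whether the canonical biduality map $F \to \Hom_{\mathcal{O}}(\Hom_{\mathcal{O}}(F, \mathcal{O}), \mathcal{O})$ is an isomorphism of sheaves of $\mathcal{O}_Y$-modules on $Y_{\mv}$ is a v-local question, since it is a question about a morphism of v-sheaves being an isomorphism and isomorphisms of v-sheaves can be checked after pullback along a v-cover. Moreover, if $f\colon Y' \to Y$ is the structure map of an object of $Y_{\mv}$, then the formation of internal $\Hom$ in sheaves of $\mathcal{O}$-modules is compatible with restriction to the slice site: this follows from the universal property of internal $\Hom$, applied to the fact that a sheaf on $Y_{\mv}/Y'$ is the same thing as a sheaf on $Y'_{\mv}$ after identifying the sites. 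Hence, after replacing $Y$ by a v-cover if necessary, we may assume that $F \simeq \hat{\oplus}_I \mathcal{O}_Y$ for some countable $I$.

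Now, by the first and third isomorphism of Proposition~\ref{Prop:DualityvSheaves}, there are canonical isomorphisms
\[
\Hom_{\mathcal{O}}(\hat{\oplus}_I \mathcal{O}, \mathcal{O}) \xrightarrow{\sim} \bigl(\textstyle\prod_I \mathcal{O}^+\bigr)[\tfrac{1}{p}] \quad \text{and} \quad \hat{\oplus}_I \mathcal{O} \xrightarrow{\sim} \Hom_{\mathcal{O}}\bigl(\bigl(\textstyle\prod_I \mathcal{O}^+\bigr)[\tfrac{1}{p}], \mathcal{O}\bigr).
\]
Applying $\Hom_{\mathcal{O}}(-, \mathcal{O})$ to the first isomorphism and composing with the second produces an isomorphism
\[
\hat{\oplus}_I \mathcal{O} \xrightarrow{\sim} \Hom_{\mathcal{O}}\bigl(\Hom_{\mathcal{O}}(\hat{\oplus}_I \mathcal{O}, \mathcal{O}), \mathcal{O}\bigr),
\]
so it will suffice to identify this composition with the canonical biduality map.

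The main obstacle is this last identification, since in principle the isomorphisms of Proposition~\ref{Prop:DualityvSheaves} might differ from the dual/biduality map by a sign or automorphism. To handle this, I would test the comparison after evaluating on affinoid perfectoids $(A,A^+)$ over $Y$: by Corollary~\ref{Cor:CondensedEvaluationMaps}, the condensed sections of both internal $\Hom$ sheaves are computed by the corresponding condensed duals $\Hom_{\ul{A}}(-, \ul{A})$, and by the sheaf condition this evaluation is jointly faithful. The map in the corollary is constructed so that this evaluation returns the isomorphisms of Proposition~\ref{Prop:DualityCondensed}, which are in turn built by the evaluation pairing between $\hat{\oplus}_I \ul{A}$ and $(\textstyle\prod_I \ul{A^+})[\tfrac{1}{p}]$ and are therefore adjoint to the biduality map. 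This reduces the required compatibility to the statement that the composition of the two duality isomorphisms in Proposition~\ref{Prop:DualityCondensed} is the canonical biduality for $\hat{\oplus}_I \ul{A}$, which can be verified directly from the explicit formulas $e_i \leftrightarrow e_i^\ast$ on orthonormal generators.
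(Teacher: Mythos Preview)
Your proposal is correct and follows the same approach as the paper: reduce v-locally to the free case and invoke Proposition~\ref{Prop:DualityvSheaves}. The paper's proof is a single sentence to this effect; your additional care in the final paragraph about matching the composite with the canonical biduality map is unnecessary, since the isomorphisms in Proposition~\ref{Prop:DualityvSheaves} are already stated as the \emph{natural} maps (i.e., those induced by the evaluation pairing), so the identification is automatic.
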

\begin{proof}
    This can be checked locally in the topology, where it follows from Proposition \ref{Prop:DualityvSheaves}.
\end{proof}
We need the following refinement.
\begin{Cor} \label{Cor:SummandReflexive}
    Suppose that $F$ is v-locally a direct summand of a locally countably orthonormalizable Banach sheaf of $\mathcal{O}_Y$-modules. Then $F$ is reflexive. 
\end{Cor}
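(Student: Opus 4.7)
The plan is to reduce to Corollary \ref{Cor:BanachReflexive} by showing that reflexivity descends in the v-topology and is inherited by direct summands.

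First, the natural evaluation map $\mathrm{ev}_F\colon F \to \Hom_\mathcal{O}(\Hom_\mathcal{O}(F,\mathcal{O}),\mathcal{O})$ is a morphism of sheaves of $\mathcal{O}_Y$-modules, and whether it is an isomorphism is a local property in the v-topology. Moreover, pullback along a v-cover $g\colon Y'\to Y$ commutes with the formation of internal Hom into $\mathcal{O}$ for locally countably orthonormalizable Banach sheaves and for sheaves of the form $\Hom_\mathcal{O}(G,\mathcal{O})$ with $G$ such a Banach sheaf: indeed, by Lemma \ref{Lem:PullBackO} and Proposition \ref{Prop:DualityvSheaves}, pullback of $\hat\oplus_I \mathcal{O}$ and of $(\textstyle\prod_I \mathcal{O}^+)[\tfrac1p]$ along $g$ identifies with the corresponding sheaves on $Y'$, and these sheaves are swapped by the duality. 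So we may work v-locally and assume that there exists a locally countably orthonormalizable Banach sheaf $G$ on $Y$ together with morphisms $i\colon F \to G$ and $r\colon G \to F$ with $r\circ i = \mathrm{id}_F$.

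Then the idempotent $e := i\circ r\colon G\to G$ produces a direct sum decomposition $G \simeq F \oplus F'$ in $\mathcal{O}_Y$-modules, where $F' = \ker(e)$. Since internal Hom is additive in its first argument (it turns finite direct sums into finite products, which agree with finite direct sums), applying $\Hom_\mathcal{O}(-,\mathcal{O})$ twice gives a canonical decomposition
\begin{equation}
\Hom_\mathcal{O}(\Hom_\mathcal{O}(G,\mathcal{O}),\mathcal{O}) \simeq \Hom_\mathcal{O}(\Hom_\mathcal{O}(F,\mathcal{O}),\mathcal{O}) \oplus \Hom_\mathcal{O}(\Hom_\mathcal{O}(F',\mathcal{O}),\mathcal{O}),
\end{equation}
and under this decomposition the double duality map $\mathrm{ev}_G$ is identified with $\mathrm{ev}_F\oplus \mathrm{ev}_{F'}$. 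By Corollary \ref{Cor:BanachReflexive}, $\mathrm{ev}_G$ is an isomorphism, hence $\mathrm{ev}_F$ is an isomorphism as a direct summand of an isomorphism, proving reflexivity of $F$.

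The only genuine subtlety is verifying that the double dual commutes with pullback in the reduction to the local situation; this is handled by Proposition \ref{Prop:DualityvSheaves}, which describes both $\Hom_\mathcal{O}(\hat\oplus_I \mathcal{O},\mathcal{O})$ and its dual in terms that are manifestly preserved by pullback of sheaves of $\mathcal{O}$-modules, so no further obstacle arises.
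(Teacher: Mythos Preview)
Your proof is correct and follows essentially the same approach as the paper: reduce v-locally to a direct summand of a countably orthonormalizable Banach sheaf, then use that the double-dual functor is additive (equivalently, as the paper phrases it, that splitting an idempotent is both a limit and a colimit) together with Corollary \ref{Cor:BanachReflexive}. One minor remark: your final paragraph about compatibility of the double dual with pullback is unnecessary here, since in this setup $g^{\ast}=g^{-1}$ is restriction to a slice category (Lemma \ref{Lem:PullBackO}) and internal $\Hom$ as well as the evaluation map restrict compatibly to slices automatically; this is exactly why the paper can simply say ``the statement is local'' without further justification.
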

\begin{proof}
The statement is local, so we may suppose that $F=F_1 \xrightarrow{\oplus} F_2$ where $F_2$ is a locally free Banach sheaf of $\mathcal{O}_Y$-modules; let $\Theta$ be an idempotent of $F_2$ with kernel $F_1$. The point is now that splitting an idempotent is both a limit and a colimit, hence using Corollary \ref{Cor:BanachReflexive} we get
\begin{align}
    \Hom_{\mathcal{O}_Y}(\Hom_{\mathcal{O}_Y}(F_1, \mathcal{O}_Y), \mathcal{O}_Y) &= \Hom_{\mathcal{O}_Y}(\Hom_{\mathcal{O}_Y}(F_2, \mathcal{O}_Y)^{\Theta=0}, \mathcal{O}_Y)  \\
    &= \Hom_{\mathcal{O}_Y}(\Hom_{\mathcal{O}_Y}(F_2, \mathcal{O}_Y), \mathcal{O}_Y)^{\Theta=0} \\
    &=F_1.
\end{align}
\end{proof}

\begin{Def} \label{Def:LocallyFrechet}
A sheaf $F$ of $\mathcal{O}_Y$-modules is called \emph{locally strongly countably Fr\'echet} if there is a v-cover $Y'=\coprod_{j \in J} \spd(A_{j},A_{j}^{+}) \to Y$ with $(A_j, A_j^+) \in \perfd$ such that the pullback of $f_j^{\ast}F$ to $\spd(A_{j},A_{j}^{+})$ can be written as an inverse limit $f_j^{\ast}F=\varprojlim_n F_n$ with the property that: T he sheaf $F_n$ is isomorphic to a direct summand of $\hat{\oplus}_{I} \mathcal{O}_{Y'}$ for some countable set $I$ and the transition maps $H^0_{\cond}(\spd(A_{j},A_{j}^{+}), F_{n+1})(\ast)_{\mathrm{top}} \to H^0_{\cond}(\spd(A_{j},A_{j}^{+}), F_{n})(\ast)_{\mathrm{top}}$ have dense image. If the transition maps $H^0_{\cond}(\spd(A_{j},A_{j}^{+}), F_{n+1}) \to H^0_{\cond}(\spd(A_{j},A_{j}^{+}), F_{n})$ are moreover compact, then $F$ is called \emph{locally strongly countably Fr\'echet of compact type}.  
\end{Def} 
\begin{Lem} \label{Lem:EvaluationFrechet}
Let $Y=\spd(R,R^+)$ with $(R,R^+)$ a v-complete Huber pair over $(\qp,\zp)$ and let $F=\varprojlim_n F_n$ be a sheaf of $\mathcal{O}_Y$-modules such that for all $n$ the sheaf $F_n$ is isomorphic to a direct summand of $\hat{\oplus}_{I} \mathcal{O}_{Y}$ for some countable set $I$, and such that the transition maps $H^0_{\cond}(\spd(R,R^+), F_{n+1})(\ast)_{\mathrm{top}} \to H^0_{\cond}(\spd(R,R^+), F_{n})(\ast)_{\mathrm{top}}$ have dense image. Then for $(R,R^+) \to (B,B^+)$ a morphism of v-complete Huber pairs over $(\qp,\zp)$, the natural map
\begin{align}
    H^0_{\mathrm{cond}}(\spd(R,R^+),F) \otimes^{\blacksquare}_{\ul{R}} \ul{B} \to H^0_{\mathrm{cond}}(\spd(B,B^+),F)
\end{align}
is an isomorphism. 
\end{Lem}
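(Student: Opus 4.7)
The plan is to reduce the statement to the Fr\'echet base-change identification already implicit in the proof of Corollary \ref{Cor:DualTensorFrechetBaseChange}, combined with the computation of global sections of the standard Banach sheaves from Lemma \ref{Lem:Sheaves}.

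First, set $M_n := H^0_{\mathrm{cond}}(\spd(R,R^+), F_n)$. By Lemma \ref{Lem:Sheaves} applied to the sheaves $\hat{\oplus}_I \mathcal{O}_Y$ of which each $F_n$ is a direct summand (and using the exactness of idempotent projection), $M_n$ is a direct summand of $\hat{\oplus}_I \ul{R}$. Since condensed global sections commute with limits (evaluation on profinite covers commutes with limits), we have $H^0_{\mathrm{cond}}(\spd(R,R^+), F) = \varprojlim_n M_n$. Together with the dense-image hypothesis, this exhibits $M := H^0_{\mathrm{cond}}(\spd(R,R^+), F)$ as a strongly countably Fr\'echet $\ul{R}$-module in the sense of Definition \ref{Def:Frechet}.

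Next, for each $n$, the same analysis applied to $\spd(B,B^+)$ together with Proposition \ref{Prop:StandardBCproperty} (which identifies the base change of $\hat{\oplus}_I \ul{R}$ to $\ul{B}$ with $\hat{\oplus}_I \ul{B}$ and is compatible with the splitting idempotents) yields a canonical isomorphism
\[
M_n \otimes^{\blacksquare}_{\ul{R}} \ul{B} \xrightarrow{\sim} H^0_{\mathrm{cond}}(\spd(B,B^+), F_n).
\]
Using once more that condensed global sections commute with inverse limits, we obtain
\[
\varprojlim_n \left(M_n \otimes^{\blacksquare}_{\ul{R}} \ul{B}\right) \xrightarrow{\sim} H^0_{\mathrm{cond}}(\spd(B,B^+), F).
\]

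It then suffices to show that the natural map $M \otimes^{\blacksquare}_{\ul{R}} \ul{B} \to \varprojlim_n (M_n \otimes^{\blacksquare}_{\ul{R}} \ul{B})$ is an isomorphism, which is exactly the identification asserted at the start of the proof of Corollary \ref{Cor:DualTensorFrechetBaseChange}. Concretely, from the exact sequence
\[
0 \to M \to \textstyle\prod_n M_n \to \textstyle\prod_n M_n \to 0
\]
underlying the proof of Proposition \ref{Prop:FrechetFlat}, the flatness of $\textstyle\prod_n M_n$ over $\ul{R}$ (established there) allows us to tensor with $\ul{B}$ while preserving exactness; Proposition \ref{Prop:TensorCommuteProducts} distributes the product past $-\otimes^{\blacksquare}_{\ul{R}} \ul{B}$, and Lemma \ref{Lem:DenseImageTensor} ensures that the transition maps in $\{M_n \otimes^{\blacksquare}_{\ul{R}} \ul{B}\}_n$ retain dense image, so that this tensored system is itself strongly countably Fr\'echet over $\ul{B}$ and admits a parallel short exact sequence. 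Matching the two sequences produces the required isomorphism, and concatenating the three steps proves the lemma.

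The main obstacle is the final tensor–limit commutation: $\ul{B}$ is not generally a direct summand of an orthonormalizable Banach $\ul{R}$-module, so Proposition \ref{Prop:TensorInverseLimit} does not apply symmetrically. However, the system $\{M_n \otimes^{\blacksquare}_{\ul{R}} \ul{B}\}_n$ is a strongly countably Fr\'echet system \emph{over $\ul{B}$} by Proposition \ref{Prop:StandardBCproperty}, and this is enough to run the argument by passing through the products $\textstyle\prod_n M_n$, where the flatness and product-commutation results are available.
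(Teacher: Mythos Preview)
Your proof is correct and follows the same approach as the paper's: establish the result for each $F_n$ via Proposition \ref{Prop:StandardBCproperty} and idempotent projection, then commute the solid tensor product with the inverse limit. The paper invokes Proposition \ref{Prop:TensorInverseLimit} directly for this last step, whereas you unpack the argument (via the short exact sequence for $M$, flatness of $\prod_n M_n$, and the product--tensor commutation) to accommodate the fact that $\ul{B}$ need not literally satisfy the hypotheses there; both routes are valid and rest on the same underlying ingredients.
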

\begin{proof}
The formation of $\hat{\oplus}_{I} \ul{R}$ commutes with the solid tensor product (Proposition \ref{Prop:StandardBCproperty}), so the lemma holds for $F_n$ for all $n$ since tensor products and global sections commute with idempotent projections. It follows from the definitions that $H^0_{\mathrm{cond}}(\spd(R,R^+),F)$ is a strongly countably Fr\'echet $\ul{R}$-module, and thus it follows from Proposition \ref{Prop:TensorInverseLimit} that
\begin{align}
    H^0_{\mathrm{cond}}(\spd(R,R^+),F) \otimes^{\blacksquare}_{\ul{R}} \ul{B} &= \left(\varprojlim_n H^0_{\mathrm{cond}}(\spd(R,R^+),F_n) \right)\otimes^{\blacksquare}_{\ul{R}} \ul{B} \\
    &\simeq \varprojlim_n\left(H^0_{\mathrm{cond}}(\spd(R,R^+),F_n)\otimes^{\blacksquare}_{\ul{R}} \ul{B} \right).
\end{align}
Since the lemma holds for each $F_n$, we can identify this with
\begin{align}
    \varprojlim_n\left(H^0_{\mathrm{cond}}(\spd(A,A^+),F_n)\otimes^{\blacksquare}_{\ul{R}} \ul{B} \right) &= \varprojlim_n\left(H^0_{\mathrm{cond}}(\spd(B,B^+),F_n)\right) \\
    &=H^0_{\mathrm{cond}}(\spd(B,B^+),F).
\end{align}
\end{proof}
\begin{Lem} \label{Lem:FrechetDualSheaf}
Let $Y=\spd(R,R^+)$ with $(R,R^+)$ a v-complete Huber pair over $(\qp,\zp)$ and let $F=\varprojlim_n F_n$ be a sheaf of $\mathcal{O}_Y$-modules such that for all $n$ the sheaf $F_n$ is isomorphic to a direct summand of $\hat{\oplus}_{I} \mathcal{O}_{Y}$ for some countable set $I$, and such that the transition maps $H^0_{\cond}(\spd(R,R^+), F_{n+1})(\ast)_{\mathrm{top}} \to H^0_{\cond}(\spd(R,R^+), F_{n})(\ast)_{\mathrm{top}}$ have dense image. Then the natural map
\begin{align}
    \varinjlim_n \Hom_{\mathcal{O}_Y}(F_n, \mathcal{O}_Y) \to \Hom_{\mathcal{O}_Y}(\varprojlim_n F_n, \mathcal{O}_Y) 
\end{align}
is an isomorphism.
\end{Lem}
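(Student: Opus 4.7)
The plan is to reduce the statement to a condensed-module computation on v-complete bases, where it becomes the sheaf-theoretic analog of Lemma \ref{Lem:DualOfFrechet}. Since the map in question is a morphism of v-sheaves on $Y_{\mv}$, it suffices to show it induces an isomorphism after evaluating $H^0_\cond$ on every $\spd(A,A^+) \in Y_{\mv}$. Such $\spd(A,A^+)$ is v-complete (perfectoid pairs are v-complete), qcqs in the topos, and $(R,R^+) \to (A,A^+)$ satisfies the hypotheses needed to invoke the preceding lemmas.

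First I would identify the source of the map on condensed sections. Because $\spd(A,A^+)$ is qcqs, condensed global sections commute with filtered colimits, and so
\[ H^0_\cond\bigl(\spd(A,A^+), \varinjlim_n \Hom_{\mathcal{O}_Y}(F_n, \mathcal{O}_Y)\bigr) = \varinjlim_n H^0_\cond\bigl(\spd(A,A^+), \Hom_{\mathcal{O}_Y}(F_n, \mathcal{O}_Y)\bigr). \]
Since $F_n$ is a direct summand of $\hat\oplus_I \mathcal{O}_Y$, Proposition \ref{Prop:StandardBCproperty} together with the exactness of idempotent projection verifies the hypothesis of Lemma \ref{Lem:EvaluationAbstract} for $F_n$, so each term equals $\Hom_{\ul{A}}\bigl(H^0_\cond(\spd(A,A^+), F_n), \ul{A}\bigr)$.

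For the target, I would invoke Lemma \ref{Lem:EvaluationFrechet} (applied to the pullback of $F$ to $\spd(A,A^+)$) to verify the base-change hypothesis of Lemma \ref{Lem:EvaluationAbstract} for $F$ itself; this yields
\[ H^0_\cond\bigl(\spd(A,A^+), \Hom_{\mathcal{O}_Y}(F, \mathcal{O}_Y)\bigr) = \Hom_{\ul{A}}\bigl(H^0_\cond(\spd(A,A^+), F), \ul{A}\bigr). \]
Moreover, by Lemma \ref{Lem:EvaluationFrechet} applied termwise, $H^0_\cond(\spd(A,A^+), F_n) \cong H^0_\cond(\spd(R,R^+), F_n) \otimes^\blacksquare_{\ul{R}} \ul{A}$, and combining this with Lemma \ref{Lem:DenseImageTensor} shows that the transition maps on topological global sections still have dense image over $\spd(A,A^+)$. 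Therefore $H^0_\cond(\spd(A,A^+), F) = \varprojlim_n H^0_\cond(\spd(A,A^+), F_n)$ is a strongly countably Fr\'echet $\ul{A}$-module in the sense of Definition \ref{Def:Frechet}, and Lemma \ref{Lem:DualOfFrechet} gives
\[ \varinjlim_n \Hom_{\ul{A}}\bigl(H^0_\cond(\spd(A,A^+), F_n), \ul{A}\bigr) \xrightarrow{\sim} \Hom_{\ul{A}}\bigl(H^0_\cond(\spd(A,A^+), F), \ul{A}\bigr). \]
Chaining these identifications produces the desired isomorphism on condensed sections over $\spd(A,A^+)$, and hence the isomorphism of sheaves.

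The main obstacle is ensuring that the Fr\'echet structure persists after pullback to $\spd(A,A^+)$: namely, that the transition maps in the pulled-back system still have dense image and that the pulled-back limit still computes the condensed sections of $F$. Both reduce to the base-change compatibility of Lemma \ref{Lem:EvaluationFrechet} combined with Lemma \ref{Lem:DenseImageTensor}; once these are in hand, the rest of the argument is a formal assembly of Lemma \ref{Lem:EvaluationAbstract} and Lemma \ref{Lem:DualOfFrechet}.
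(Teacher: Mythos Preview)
Your proposal is correct and follows essentially the same approach as the paper, which simply records that Lemma~\ref{Lem:EvaluationFrechet} verifies the hypothesis of Lemma~\ref{Lem:EvaluationAbstract} and then invokes Lemma~\ref{Lem:DualOfFrechet}. You make explicit two points the paper leaves implicit: the need to check the sheaf map on every $\spd(A,A^+)\in Y_{\mv}$, and the persistence of the dense-image condition after base change via Lemma~\ref{Lem:DenseImageTensor}; one minor remark is that the base-change identification $H^0_\cond(\spd(A,A^+),F_n)\cong H^0_\cond(\spd(R,R^+),F_n)\otimes^\blacksquare_{\ul R}\ul A$ comes directly from Proposition~\ref{Prop:StandardBCproperty} (plus idempotents) rather than from Lemma~\ref{Lem:EvaluationFrechet}.
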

\begin{proof}
By Lemma \ref{Lem:EvaluationFrechet}, the assumptions of Lemma \ref{Lem:EvaluationAbstract} hold. The result then follows from Lemma \ref{Lem:EvaluationAbstract} and Lemma \ref{Lem:DualOfFrechet}. 
\end{proof}
The statement of Lemma \ref{Lem:FrechetDualSheaf} can be thought of as saying that $\Hom_{\mathcal{O}_Y}(F, \mathcal{O}_Y)$ is the dual ``equipped with the weak topology". Indeed, the filtered colimit of the weak topologies does indeed give the weak topology on the dual of the inverse limit (given that every functional factors through some stage of the inverse limit).
\begin{Cor} \label{Cor:FrechetReflexive}
    A locally strongly countably Fr\'echet $\mathcal{O}_Y$-module $F$ is reflexive. 
\end{Cor}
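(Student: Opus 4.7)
The plan is to work v-locally and then assemble the previously established lemmas. Since being an isomorphism of sheaves is local in the v-topology, and since for any $U \in Y_\mathrm{v}$ the internal Hom $\Hom_{\mathcal{O}_Y}(F, \mathcal{O}_Y)$ restricts to $\Hom_{\mathcal{O}_U}(F|_U, \mathcal{O}_U)$ (by the universal property, as the restriction functor commutes with both tensor and colimits), it suffices to check that the canonical reflexivity map $F \to F^{**}$ is an isomorphism after pullback to the v-cover $Y' = \coprod_j \spd(A_j, A_j^+) \to Y$ from Definition \ref{Def:LocallyFrechet}. After this reduction we may assume $Y = \spd(R,R^+)$ with $(R,R^+)$ affinoid perfectoid (hence v-complete) and $F = \varprojlim_n F_n$ as in Definition \ref{Def:LocallyFrechet}.

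In this setting, Lemma \ref{Lem:FrechetDualSheaf} identifies the dual as
\[ F^{*} := \Hom_{\mathcal{O}_Y}(F, \mathcal{O}_Y) = \varinjlim_n F_n^{*}. \]
Since $\Hom_{\mathcal{O}_Y}(-, \mathcal{O}_Y)$ converts colimits in its first argument into limits (as a right adjoint, up to opposites), we obtain
\[ F^{**} = \Hom_{\mathcal{O}_Y}\bigl(\varinjlim_n F_n^{*}, \mathcal{O}_Y\bigr) = \varprojlim_n F_n^{**}. \]
Each $F_n$ is by hypothesis a direct summand of a countably orthonormalizable Banach sheaf, hence reflexive by Corollary \ref{Cor:SummandReflexive}, so $F_n^{**} = F_n$. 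Taking the inverse limit then gives $F^{**} = \varprojlim_n F_n = F$. Naturality of each identification ensures that the composite coincides with the canonical biduality map $F \to F^{**}$, which completes the proof.

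I expect no serious obstacle: the argument is essentially an assembly of Lemma \ref{Lem:FrechetDualSheaf}, Corollary \ref{Cor:SummandReflexive}, and the adjunction property of internal Hom. The only point requiring mild care is the initial localization step, namely verifying that reflexivity descends correctly along the v-cover and that the formation of the dual commutes with restriction to a slice of the v-site; both follow formally from the sheaf-theoretic definition of internal Hom and the fact that isomorphisms of sheaves can be checked on a v-cover.
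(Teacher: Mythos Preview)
Your proof is correct and follows essentially the same approach as the paper: localize in the v-topology, apply Lemma \ref{Lem:FrechetDualSheaf} to identify $F^{*} = \varinjlim_n F_n^{*}$, convert the colimit to a limit via $\Hom$, and conclude using Corollary \ref{Cor:SummandReflexive}. The only difference is that you spell out the localization step and the naturality of the biduality map in slightly more detail.
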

\begin{proof}
This can be proved locally in the v-topology, so we may assume that we are in the setting of Lemma \ref{Lem:FrechetDualSheaf}. Then it follows from Lemma \ref{Lem:FrechetDualSheaf} that
    \begin{align}
        \Hom_{\mathcal{O}_{Y}}(\Hom_{\mathcal{O}_{Y}}(\varprojlim_n F_n, \mathcal{O}_Y), \mathcal{O}_Y) &= \Hom_{\mathcal{O}_{Y}}(\varinjlim_n \Hom_{\mathcal{O}_{Y}}(F_n, \mathcal{O}_Y), \mathcal{O}_Y) \\
        &=\varprojlim_n \Hom_{\mathcal{O}_{Y}}(\Hom_{\mathcal{O}_{Y}}(F_n, \mathcal{O}_Y), \mathcal{O}_Y) \\
        &=\varprojlim_n F_n,
    \end{align}
    where we used Corollary \ref{Cor:SummandReflexive} in the last step.
\end{proof}

\subsubsection{Solid tensor products} \label{Subsubb:SolidTensor} Let $Y$ be a small v-stack and let $F,G$ be $\mathcal{O}_Y$-modules. Then we write
\begin{align}
    F \otimes^{\blacksquare}_{\mathcal{O}_Y} G
\end{align}
for the sheafification of the presheaf sending $(A, A^+)$ to
\begin{align}
    \left(H^0_{\cond}(\spd(A,A^+), F) \otimes^{\blacksquare}_{\ul{A}} H^0_{\cond}(\spd(A,A^+), G)\right)(\ast),
\end{align}
where $\otimes^{\blacksquare}_{\ul{A}}$ is the solid tensor product.

\begin{Rem}
As a warning, we note that $F \otimes_{\mathcal{O}_Y}^{\blacksquare} \mathcal{O}_Y$ might not be equal to $F$, because $H^0_{\cond}(\spd(A,A^+), F)$ might not be solid. Even when starting with sheaves satisfying this solidity condition, it is not clear that the tensor product is associative or has any other good formal properties (we thank Peter Scholze for pointing this out to us). 
\end{Rem}

\subsection{Sheaves of geometric origin} \label{Sub:GeometricOrigin} Let $f:X \to Y$ be a morphism of small v-sheaves and consider the sheaf of $\mathcal{O}_Y$-algebras $\mathcal{O}_{X/Y}:=f_{\ast} \mathcal{O}_{X}$. We are going to study this sheaf under some assumptions.
\begin{Lem} \label{Lem:SheafBanach}
Let $(R,R^+)$ be a fiercely v-complete Huber pair and let $(R,R^+) \to (P,P^+)$ be strongly smooth. For $X=\spd(P,P^+) \to \spd(R,R^+)=Y$, the sheaf $\mathcal{O}_{X/Y}$ is a direct summand of a countably orthonormalizable Banach sheaf of $\mathcal{O}_Y$-modules. 
\end{Lem}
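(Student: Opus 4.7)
The plan is to use Definition \ref{Def:StronglySmooth} to factor the given morphism as
\[
(R,R^+) \to (B,B^+) := (R\langle X_1,\ldots,X_k\rangle, \overline{R^+\langle X_1,\ldots,X_k\rangle}) \xrightarrow{\phi} (P,P^+),
\]
with $\phi$ finite \'etale. Setting $Z = \spd(B,B^+)$ with structure maps $h: X \to Z$ and $g: Z \to Y$, so that $\mathcal{O}_{X/Y} = g_* \mathcal{O}_{X/Z}$, I would split the proof into (i) identifying $\mathcal{O}_{Z/Y} \cong \hat{\oplus}_{\mathbb{Z}_{\geq 0}^k} \mathcal{O}_Y$ and (ii) exhibiting $\mathcal{O}_{X/Z}$ as a direct summand of $\mathcal{O}_Z^n$ for some integer $n$. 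Applying $g_*$---which, as a right adjoint, preserves finite direct sums---to (ii) and using (i) would then realize $\mathcal{O}_{X/Y}$ as a direct summand of $(\hat{\oplus}_{\mathbb{Z}_{\geq 0}^k} \mathcal{O}_Y)^n \cong \hat{\oplus}_I \mathcal{O}_Y$ for the countable index set $I = \mathbb{Z}_{\geq 0}^k \times \{1,\ldots,n\}$.

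For step (i), I would construct the natural $\mathcal{O}_Y$-linear map $\hat{\oplus}_{\mathbb{Z}_{\geq 0}^k} \mathcal{O}_Y \to \mathcal{O}_{Z/Y}$ sending the generator of index $\alpha \in \mathbb{Z}_{\geq 0}^k$ to the monomial $X^\alpha$, and then verify it is an isomorphism by evaluation at each affinoid perfectoid $\spd(A,A^+) \to Y$. By Corollary \ref{Cor:CohomologyAndBaseChange} together with \S\ref{subsub:HuberFiberProduct}, the base change $Z \times_Y \spd(A,A^+)$ is identified with $\spd(A\langle X_1,\ldots,X_k\rangle, \overline{A^+\langle X_1,\ldots,X_k\rangle})$, which is fiercely v-complete by combining the third bullet of Definition \ref{Def:Fierce} with the fact (Lemma \ref{Lem:Fierce}) that affinoid perfectoid Huber pairs are fiercely v-complete. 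Lemma \ref{Lem:CondensedRingStructure} then gives $\mathcal{O}_{Z/Y,\cond}(A,A^+) \cong \ul{A\langle X_1,\ldots,X_k\rangle}$, which matches the value $\hat{\oplus}_{\mathbb{Z}_{\geq 0}^k}\ul{A}$ of the left-hand side (by Lemma \ref{Lem:Sheaves}) via the topological identification $A\langle X_1,\ldots,X_k\rangle \cong \hat{\oplus}_{\mathbb{Z}_{\geq 0}^k} A$; compatibility with the condensed structure reduces, by passing modulo $p^n$, to the elementary fact that locally constant functions into a direct sum of discrete modules are locally constant coordinatewise.

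For step (ii), the hypothesis that $\phi$ is finite \'etale makes $P$ a finitely generated projective $B$-module, so there is an idempotent $e \in \mathrm{End}_B(B^n)$ with image $P$ for some $n$. The corresponding $\mathcal{O}_Z$-linear idempotent $\tilde{e}$ on $\mathcal{O}_Z^n$ yields a decomposition $\mathcal{O}_Z^n \cong \mathrm{Im}(\tilde{e}) \oplus \ker(\tilde{e})$, and the identification $\mathrm{Im}(\tilde{e}) \cong \mathcal{O}_{X/Z}$ can be checked on affinoid perfectoid $\spd(C,C^+) \to Z$: the base change is $\spd$ of the pair with ring $P \otimes_B C$ (no completion needed since $P$ is finite over $B$) together with the integral closure of the image of $P^+ \otimes_{B^+} C^+$, which is fiercely v-complete by the second bullet of Definition \ref{Def:Fierce}, so Lemma \ref{Lem:CondensedRingStructure} gives $\mathcal{O}_{X/Z,\cond}(C,C^+) \cong \ul{P \otimes_B C}$, agreeing with the image of $\tilde{e}$. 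The main technical obstacle I expect is step (i): not the value at individual points, but ensuring that the natural map is an isomorphism of sheaves of condensed $\mathcal{O}_Y$-modules, which requires verifying the identification interacts correctly with the profinite-set-parametrized evaluations used to define the condensed structure.
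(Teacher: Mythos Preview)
Your proof is correct and follows essentially the same route as the paper: factor via the Tate algebra $Z$, identify $\mathcal{O}_{Z/Y}\cong\hat{\oplus}_{\mathbb{Z}_{\ge 0}^k}\mathcal{O}_Y$ by evaluating on affinoid perfectoids (using Corollary~\ref{Cor:CohomologyAndBaseChange}, \S\ref{subsub:HuberFiberProduct}, Lemma~\ref{Lem:SmoothOverFierce}, and Lemma~\ref{Lem:Sheaves}), then use that $P$ is a direct summand of $B^n$ to conclude. The ``technical obstacle'' you flag is not one: the statement concerns v-sheaves, so checking the map on all affinoid perfectoid $(A,A^+)$ over $Y$ is already sufficient, and the condensed structure is automatic since $\spd(A,A^+)\times\ul{S}$ is again affinoid perfectoid.
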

\begin{proof}
Factor $(R,R^+) \to (P,P^+)$ as $(R,R^+) \to (R\langle X_1, \cdots, X_k \rangle, \overline{R^+\langle X_1, \cdots, X_k \rangle}) \to (P,P^+)$ with the second map finite \'etale as in the definition. This exhibits $(P,P^+) $ as a finite locally free module over $R\langle X_1, \cdots, X_k \rangle$, and thus an $R\langle X_1, \cdots, X_k \rangle$-linear direct summand of $R\langle X_1, \cdots, X_k \rangle^{\oplus N}$ for some $N$, see \cite[Tag 00NX]{stacks-project}. \smallskip 

Write $Z=\spd(R\langle X_1, \cdots, X_k \rangle, \overline{R^+\langle X_1, \cdots, X_k \rangle})$. Then we claim that
\begin{align}
    \mathcal{O}_{Z/Y} \simeq \hat{\oplus}_I \mathcal{O}_Y
\end{align}
with $I=\mathbb{Z}_{\ge 0}^{k}$. The claim follows from Corollary \ref{Cor:CohomologyAndBaseChange} together with the fact that the formation of Tate algebras commutes with fiber products of adic spaces (see Section \ref{subsub:HuberFiberProduct}) and Lemma \ref{Lem:SmoothOverFierce}, which gives the v-completeness of Tate algebras over affinoid perfectoid $(A,A^+)$. Since $X \to Z$ is finite \'etale and $(R,R^+)$ is fiercely v-complete, it follows in a similar way that (here we are taking the algebraic tensor product)
\begin{align}
    \mathcal{O}_{X/Y} \simeq \mathcal{O}_{Z/Y} \otimes_{H^0(Y, \mathcal{O}_{Z/Y})} P.
\end{align}
Now since $P$ is an $H^0(Y, \mathcal{O}_{Z/Y})=R\langle X_1, \cdots, X_k \rangle$-linear direct summand of $H^0(Y, \mathcal{O}_{Z/Y})^{\oplus N}$ for some $N$, it follows that $\mathcal{O}_{X/Y}$ is a $\mathcal{O}_{Z/Y}$-linear direct summand of $\mathcal{O}_{Z/Y}^{\oplus N}$ for some $N$, proving the lemma. 
\end{proof}

\begin{Lem} \label{Lem:SheafFrechet}
Let $Y=\spd(R,R^+)$ for $(R,R^+)$ a fiercely v-complete Huber pair over $(\qp,\zp)$. Let $f:X \to Y$ be a morphism of small v-sheaves such that $X=\varinjlim_n U_n$, where $U_n \to \spd(R,R^+)$ is given by $\spd(P_n,P_n^+) \to \spd(R,R^+)$ with $(R,R^+) \to (P_n,P_n^+)$ strongly smooth. If the transition maps $P_{n+1} \to P_{n}$ have dense image,  then $\mathcal{O}_{X/Y}$ is locally strongly countably Fr\'echet. If the transition maps $\ul{P_{n+1}} \to \ul{P_{n}}$ are moreover compact, then $\mathcal{O}_{X/Y}$ is locally strongly countably Fr\'echet of compact type. 
\end{Lem}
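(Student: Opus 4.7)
The plan is to express $\mathcal{O}_{X/Y}$ as a countable inverse limit of pushforwards, pull back to an affinoid perfectoid cover, and verify the conditions of Definition \ref{Def:LocallyFrechet} using the stability results from Section \ref{Sec:Condensed}. Since $\mathcal{O}$ is a sheaf and Yoneda converts colimits into limits of sections, the identification $X = \varinjlim_n U_n$ yields $\mathcal{O}_{X/Y} = \varprojlim_n F_n$ where $F_n := (f_n)_* \mathcal{O}_{U_n}$ and $f_n : U_n \to Y$ is the structure map. Choose a v-cover $Y' = \spd(A, A^+) \to Y$ with $(A, A^+)$ affinoid perfectoid, hence fiercely v-complete by Lemma \ref{Lem:Fierce}. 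Pullback along $Y' \to Y$ is restriction to the slice category, so commutes with the inverse limit, and by Corollary \ref{Cor:CohomologyAndBaseChange} also commutes with each pushforward. Writing $U'_n := U_n \times_Y Y' = \spd(P'_n, P'^+_n)$ and using that strongly smooth maps are preserved under base change, Lemma \ref{Lem:SmoothOverFierce} yields fierce v-completeness of $(P'_n, P'^+_n)$, and Lemma \ref{Lem:SheafBanach} applied over $Y'$ realises each $(f'_n)_* \mathcal{O}_{U'_n}$ as a direct summand of a countably orthonormalizable Banach sheaf. Hence $\mathcal{O}_{X/Y}|_{Y'} = \varprojlim_n (f'_n)_* \mathcal{O}_{U'_n}$ has the shape required by Definition \ref{Def:LocallyFrechet}.

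It then remains to verify the density and compactness conditions on the transition maps of global sections. By v-completeness of $(P'_n, P'^+_n)$ and Lemma \ref{Lem:CondensedRingStructure}, $H^0_{\cond}(Y', (f'_n)_* \mathcal{O}_{U'_n}) = \ul{P'_n}$; on the other hand, Lemma \ref{Lem:SheafBanach} over $Y$ together with Proposition \ref{Prop:StandardBCproperty} (and the exactness of idempotent projection) identifies $\ul{P'_n} \simeq \ul{P_n} \otimes^{\blacksquare}_{\ul{R}} \ul{A}$. Since $\ul{P_n}$ and $\ul{P_{n+1}}$ are direct summands of countably orthonormalizable Banach $\ul{R}$-modules, Lemma \ref{Lem:DenseImageTensor} transfers density of $P_{n+1} \to P_n$ to density of $P'_{n+1} \to P'_n$, and Lemma \ref{Lem:CompactTensor} transfers compactness of $\ul{P_{n+1}} \to \ul{P_n}$ to the base change. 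The underlying topological interpretation at $(\ast)_{\mathrm{top}}$ is unambiguous here since the relevant Tate-algebra-based $P_n$ are metrizable, hence compactly generated.

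The main subtle point is the interchange of pullback with both the inverse limit and the pushforward; once this is settled via the restriction-to-slice-category interpretation of pullback together with Corollary \ref{Cor:CohomologyAndBaseChange}, the density and compactness verifications reduce cleanly to the functional-analytic preservation statements already established in Section \ref{Sec:Condensed}, so no substantive obstacle arises.
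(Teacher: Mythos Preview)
Your proof is correct and follows the same overall strategy as the paper: identify $\mathcal{O}_{X/Y}$ with $\varprojlim_n \mathcal{O}_{U_n/Y}$ via the Yoneda argument, invoke Lemma~\ref{Lem:SheafBanach} for the Banach summand structure of each term, and read off density and compactness from the identification $H^0_{\cond}(-,\mathcal{O}_{U_n/Y}) \simeq \ul{P_n}$. The paper is terser in that it verifies the density and compactness conditions directly over $Y$ (using that $(R,R^+)$ is fiercely v-complete) rather than over an explicit perfectoid cover; your version is more faithful to the letter of Definition~\ref{Def:LocallyFrechet}, which asks for a cover by affinoid perfectoids, and your use of Lemma~\ref{Lem:DenseImageTensor} and Lemma~\ref{Lem:CompactTensor} to transfer these properties along the base change $\ul{P_n}\otimes^\blacksquare_{\ul{R}}\ul{A}\simeq\ul{P'_n}$ makes that passage explicit.
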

\begin{proof}
First of all $\mathcal{O}_{U_n/Y}$ is a direct summand of a countably orthonormalizable Banach sheaf of $\mathcal{O}_Y$-modules by Lemma \ref{Lem:SheafBanach}. It follows from Lemma \ref{Lem:SmoothOverFierce} and Lemma \ref{Lem:CondensedRingStructure} that $H^0_{\cond}(Y, \mathcal{O}_{U_n/Y}) \simeq P_n$, thus the transition maps $H^0_{\cond}(Y, \mathcal{O}_{U_n/Y})(\ast)_{\mathrm{top}} \to H^0_{\cond}(Y, \mathcal{O}_{U_n/Y})(\ast)_{\mathrm{top}}$ have dense image. Moreover, the transition maps $H^0_{\cond}(Y,\mathcal{O}_{U_n/Y})\to H^0_{\cond}(Y, \mathcal{O}_{U_n/Y})$ are compact if the maps $\ul{P_{n+1}} \to \ul{P_{n}}$ are compact. The lemma is thus proved if we show that the natural map 
\begin{align}
    \mathcal{O}_{X/Y} \to \varprojlim_n \mathcal{O}_{U_n/Y}
\end{align}
is an isomorphism. We can check this on $(A,A^+) \in \spd(R,R^+)_{\mv}$, and then
\begin{align}
    H^0(X_{\spd(A,A^+)}, \mathcal{O}) &= \hom\left(X_{\spd(A,A^+)}, \mathbb{A}^{1,\lozenge}_{\spd(A,A^+)}\right) \\
    &=\hom\left(\varinjlim_n U_{n,\spd(A,A^+)} , \mathbb{A}^{1,\lozenge}_{\spd(A,A^+)}\right) 
\end{align}
which identifies with $H^0\left(\spd(A,A^+), \varinjlim_n \mathcal{O}_{U_n/Y}\right) = \varprojlim_n H^0\left(\spd(A,A^+), \mathcal{O}_{U_n/Y}\right)$, proving the lemma. 
\end{proof}

\subsubsection{} Let $X \to Y, Z \to Y$ be morphisms of small v-stacks. Then there is a natural cup product morphism
\begin{align} \label{eq:TensorProductMap}
\mathcal{O}_{X/Y} \otimes^{\blacksquare}_{\mathcal{O}_Y}\mathcal{O}_{Z/Y} \to \mathcal{O}_{\left(X \times_{Y} Z\right)/ Y}.
\end{align}
given by the solidification (and then sheafification) of the usual cup product map (see e.g. \cite[Tag 0B6C]{stacks-project}). 
\begin{Lem} \label{Lem:Kunneth}
    Let $Y=\spd(R,R^+)$ for a fiercely v-complete Huber pair $(R,R^+)$ over $(\qp,\zp)$. Let $(R,R^+) \to (P,P^+)$ and $(R,R^+) \to (Q,Q^+)$ be strongly smooth morphisms of Huber pairs over $(\qp,\zp)$. If we write $X=\spd(P,P^+)$ and $Z=\spd(Q,Q^+)$, then the natural map
    \begin{align}
        H^0_{\cond}(Y,\mathcal{O}_{X/Y}) \otimes_{\ul{R}}^{\blacksquare} H^0_{\cond}(Y,\mathcal{O}_{Z/Y}) \to H^0_{\cond}(Y,\mathcal{O}_{(X \times_{Y} Z)/Y})
    \end{align}
    is an isomorphism. 
\end{Lem}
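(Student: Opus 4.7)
The plan is to reduce to the case where the two morphisms are themselves Tate algebras over $(R,R^+)$, where the statement will follow directly from Proposition \ref{Prop:StandardBCproperty}, and to then propagate the result across the finite \'etale step in Definition \ref{Def:StronglySmooth}. Concretely, I would factor the two strongly smooth morphisms as
\begin{equation}
(R,R^+) \to (A,A^+) \xrightarrow{f_P} (P,P^+), \qquad (R,R^+) \to (B,B^+) \xrightarrow{f_Q} (Q,Q^+),
\end{equation}
with $A = R\langle X_1,\ldots,X_k\rangle$, $B = R\langle Y_1,\ldots,Y_l\rangle$, and $f_P, f_Q$ finite \'etale. Setting $Z_1 = \spd(A,A^+)$ and $Z_2 = \spd(B,B^+)$, by Section \ref{subsub:HuberFiberProduct} the space $W := Z_1 \times_Y Z_2$ is the diamond of the Tate algebra $A \widehat{\otimes}_R B = R\langle X_1,\ldots,X_k,Y_1,\ldots,Y_l\rangle$ over $(R,R^+)$.

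For the Tate algebra case (that is, $P = A$ and $Q = B$), I will identify $\mathcal{O}_{Z_1/Y} \cong \hat{\oplus}_{I_1}\mathcal{O}_Y$, $\mathcal{O}_{Z_2/Y} \cong \hat{\oplus}_{I_2}\mathcal{O}_Y$, and $\mathcal{O}_{W/Y} \cong \hat{\oplus}_{I_1 \times I_2}\mathcal{O}_Y$ via the computation in the proof of Lemma \ref{Lem:SheafBanach}, where $I_1 = \mbb{Z}_{\geq 0}^k$ and $I_2 = \mbb{Z}_{\geq 0}^l$. Applying Lemma \ref{Lem:Sheaves} to compute global sections, the cup product map will reduce to the natural map $\hat{\oplus}_{I_1} \ul{R} \otimes^\blacksquare_{\ul{R}} \hat{\oplus}_{I_2} \ul{R} \to \hat{\oplus}_{I_1 \times I_2} \ul{R}$ of Proposition \ref{Prop:StandardBCproperty}, which is an isomorphism; the only thing to verify is that cup product sends the basis monomial $X^\alpha \otimes Y^\beta$ to $X^\alpha Y^\beta$, which is immediate from the multiplicative structure of $A \widehat{\otimes}_R B$.

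For the general case, my plan is to propagate the Tate algebra case across the finite \'etale step. The finite \'etale $A$-algebra $P$ is an $A$-linear direct summand of $A^{N_1}$ via an idempotent $e_P \in M_{N_1}(A)$, and similarly $Q$ is cut out of $B^{N_2}$ by an idempotent $e_Q \in M_{N_2}(B)$; by the proof of Lemma \ref{Lem:SheafBanach}, these realize $\mathcal{O}_{X/Y}$ and $\mathcal{O}_{Z/Y}$ as the images of idempotents acting on $\mathcal{O}_{Z_1/Y}^{N_1}$ and $\mathcal{O}_{Z_2/Y}^{N_2}$. Since finite \'etale morphisms are stable under base change, the fiber product $X \times_Y Z \to W$ is finite \'etale with corresponding algebra $P \otimes_A (A \widehat{\otimes}_R B) \otimes_B Q$, which is cut out of $(A \widehat{\otimes}_R B)^{N_1 N_2}$ by the idempotent $e_P \otimes e_Q$; hence $\mathcal{O}_{X \times_Y Z/Y}$ is the image of $e_P \otimes e_Q$ acting on $\mathcal{O}_{W/Y}^{N_1 N_2}$. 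Applying global sections and using that the solid tensor product is additive over finite direct sums and commutes with the splitting of idempotents (splittings being both limits and colimits), the map in the lemma for $P$ and $Q$ is obtained from the Tate algebra isomorphism by restriction to the image of $e_P \otimes e_Q$. The main obstacle, which is purely bookkeeping, will be to check that the cup product map on global sections is compatible with these idempotent identifications on both sides; this comes down to unwinding the description of a finite \'etale morphism through its ring of functions together with the associativity of the tensor product.
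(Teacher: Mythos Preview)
Your proposal is correct and follows essentially the same strategy as the paper: factor through Tate algebras, invoke Proposition~\ref{Prop:StandardBCproperty} for those, and then pass to the finite \'etale extensions via their realization as direct summands. The paper's presentation differs only cosmetically: rather than tracking idempotents on the sheaf level, it writes down the fiber product ring $E$ explicitly, observes that $(R,R^+)\to(E,E^+)$ is again strongly smooth (hence v-complete by Lemma~\ref{Lem:SmoothOverFierce}), and then appeals to Lemma~\ref{Lem:CondensedRingStructure} to identify $H^0_{\cond}(X\times_Y Z,\mathcal{O})$ with $\ul{E}$ directly; the identity $\ul{E}=\ul{P}\otimes^{\blacksquare}_{\ul{R}}\ul{Q}$ then follows from Proposition~\ref{Prop:StandardBCproperty} and finite-projectivity, which is exactly your idempotent argument in compressed form.
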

\begin{proof}
Factor $(R,R^+) \to (P,P^+)$ as  $$(R\langle X_1, \cdots, X_k \rangle, \overline{R^+\langle X_1, \cdots, X_k \rangle}) \xrightarrow{f} (P,P^+)$$ with $f$ finite \'etale and similarly factor $(R,R^+) \to (Q,Q^+)$ as $$(R\langle X_{k+1}, \cdots, X_{k+j} \rangle, \overline{R^+\langle X_{k+1}, \cdots, X_{k+j} \rangle}) \xrightarrow{g} (Q,Q^+)$$ with $g$ finite \'etale. It follows from the discussion in \S\ref{subsub:HuberFiberProduct} and the preservation of Tate rings under fiber products of Huber pairs over $(\qp,\zp)$ that we can describe the fiber product $\spa(Q,Q^+) \times_{\spa (R,R^+)} \spa(P,P^+)$ as $\spa(E,E^+)$ where 
\begin{align}
   E=Q \langle X_1, \cdots, X_k \rangle \otimes_{R\langle X_1, \cdots, X_{j+k} \rangle} P \langle X_{k+1}, \cdots, X_{k+j} \rangle,
\end{align}
In particular, the natural map $(R,R^+) \to (E,E^+)$ is strongly smooth, and so $(E,E^+)$ is v-complete by Lemma \ref{Lem:SmoothOverFierce}. Moreover, since
\begin{align}
    \ul{R\langle X_1, \cdots, X_k \rangle} \otimes^{\blacksquare}_{\ul{R}}\ul{R\langle X_{k+1}, \cdots, X_{k+j} \rangle} \simeq \ul{R\langle X_1, \cdots, X_{k+j} \rangle}
\end{align}
by Proposition \ref{Prop:StandardBCproperty}, it follows that $\ul{E}=\ul{P} \otimes^{\blacksquare}_{\ul{R}} \ul{Q}$. Thus by Lemma \ref{Lem:CondensedRingStructure} the natural maps
\begin{align}
     \ul{P} &\to H^0_{\mathrm{cond}}(X, \mathcal{O}) \\
     \ul{Q} &\to H^0_{\mathrm{cond}}(Z, \mathcal{O}) \\
     \ul{P} \otimes^{\blacksquare}_{\ul{R}} \ul{Q} &\to H^0_{\cond}(X \times_{Y} Z, \mathcal{O})
\end{align}
are isomorphisms, proving the lemma.   
\end{proof}
\begin{Lem} \label{Lem:Kunneth1}
Let $X \to Y, Z \to Y$ be morphisms of small v-stacks. If there is a v-cover $Y'=\coprod_{j \in J} \spd(A_j,A^+_{j}) \to Y$ with $(A_j,A_j^+) \in \perfd$ such that: For each $j \in J$ we have $Z_{\spd(A_j,A^+_{j})} \simeq \spd(R,R^+)$ and $X_{\spd(A_j,A^+_{j})} \simeq \spd(P,P^+)$ for $(A_j,A_j^{+}) \to (R,R^+)$ and $(A_j,A_j^{+}) \to (P,P^+)$ strongly smooth, then the natural map of \eqref{eq:TensorProductMap} is an isomorphism. 
\end{Lem}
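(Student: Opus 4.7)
The plan is to reduce to the situation of Lemma \ref{Lem:Kunneth} by v-descent, and then verify that the presheaf map underlying \eqref{eq:TensorProductMap} is already an isomorphism when evaluated on every object of $Y_v$. First, since being an isomorphism of sheaves on $Y_v$ is v-local on $Y$, I would pull back along the given cover $Y' \to Y$ and restrict to each component $\spd(A_j, A_j^+)$. Fixing $j$ and writing $(A,A^+) = (A_j, A_j^+)$, this reduces the problem to the case where $Y = \spd(A,A^+)$ with $(A,A^+)$ affinoid perfectoid, $X = \spd(P,P^+)$, $Z = \spd(R,R^+)$, and both structure maps $(A,A^+) \to (P,P^+)$ and $(A,A^+) \to (R,R^+)$ are strongly smooth.

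Next, I would show that the presheaf
$(B,B^+) \mapsto \bigl(H^0_{\cond}(\spd(B,B^+), \mathcal{O}_{X/Y}) \otimes^{\blacksquare}_{\ul{B}} H^0_{\cond}(\spd(B,B^+), \mathcal{O}_{Z/Y})\bigr)(\ast)$
already agrees with $H^0(\spd(B,B^+), \mathcal{O}_{(X \times_Y Z)/Y})$ for every $(B,B^+) \in \spd(A,A^+)_v$; since the target is already a sheaf, this forces the sheafification of the left hand side to be isomorphic to it. For this I would use Corollary \ref{Cor:CohomologyAndBaseChange} to identify the pullbacks of $\mathcal{O}_{X/Y}$, $\mathcal{O}_{Z/Y}$, and $\mathcal{O}_{(X \times_Y Z)/Y}$ to $\spd(B,B^+)_v$ with $\mathcal{O}_{X_{\spd(B,B^+)}/\spd(B,B^+)}$ and its analogues, and observe that, because both Tate algebras and finite \'etale morphisms are stable under base change, the base-changed structure maps $(B,B^+) \to (P \widehat{\otimes}_A B, \ldots)$ and $(B,B^+) \to (R \widehat{\otimes}_A B, \ldots)$ remain strongly smooth. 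Since $(B,B^+)$ is affinoid perfectoid it is fiercely v-complete by Lemma \ref{Lem:Fierce}, so Lemma \ref{Lem:Kunneth} applies over $\spd(B,B^+)$; evaluating the resulting condensed K\"unneth isomorphism at $\ast$ gives the required presheaf-level statement.

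The main obstacle I anticipate lies in carefully identifying the base change of $X \to Y$ along $\spd(B,B^+) \to Y$ with a genuinely strongly smooth morphism at the level of Huber pairs --- this is the interface between the v-sheaf world and the adic world discussed in \S\ref{subsub:HuberFiberProduct}, and is essentially the same issue that was addressed in the proof of Lemma \ref{Lem:Kunneth} when writing down the fiber product ring $E$ and verifying its strong smoothness. Once this identification is in place, applying Lemma \ref{Lem:Kunneth} pointwise and assembling the v-descent is purely formal.
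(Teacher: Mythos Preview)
Your proposal is correct and follows essentially the same approach as the paper: reduce by v-descent to $Y=\spd(A,A^+)$ affinoid perfectoid, then check the presheaf-level map on each $(B,B^+)\in\spd(A,A^+)_{\mathrm{v}}$ by invoking the stability of strong smoothness under base change and applying Lemma~\ref{Lem:Kunneth}. The paper phrases the last step as ``we may assume $(B,B^+)=(A,A^+)$'', which is exactly your observation that the hypotheses persist after base change.
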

\begin{proof}
It suffices to prove this after basechanging to $Y'$, so we will assume $Y=Y'=\spd(A,A^+)$. We can then check the statement of the proposition on $\spd(B,B^+)$ points for $(B,B^+) \in \spd(A,A^+)_{\mathrm{v}}$ and we may assume without loss of generality that $(B,B^+)=(A,A^+)$, noting that the assumptions of the lemma are stable under basechange. The lemma now follows from Lemma \ref{Lem:Kunneth}.
\end{proof}

\begin{Prop} \label{Prop:Kunneth3}
Let $Y=\spd(R,R^+)$ for a fiercely v-complete Huber pair $(R,R^+)$ over $(\qp,\zp)$. Let $X=\varinjlim_n U_n$, where $U_n \to \spd(R,R^+)$ is given by $\spd(P_n,P_n^{+}) \to \spd(R,R^+)$ for some strongly smooth $(R,R^+) \to (P_n,P_n^+)$. If the transition maps $P_{n+1} \to P_{n}$ have dense image, then the natural map
 \begin{align}
        H^0_{\cond}(Y,\mathcal{O}_{X/Y}) \otimes_{\ul{R}}^{\blacksquare} H^0_{\cond}(Y,\mathcal{O}_{X/Y}) \to H^0_{\cond}(Y,\mathcal{O}_{(X \times_{Y} X)/Y})
    \end{align}
    is an isomorphism.
\end{Prop}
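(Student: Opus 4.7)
The strategy is to identify both sides of the map with $\varprojlim_{n,m}\bigl(\ul{P_n} \otimes^{\blacksquare}_{\ul{R}} \ul{P_m}\bigr)$ and conclude. For the right-hand side, filtered colimits commute with finite limits in small v-sheaves, giving $X \times_Y X \simeq \varinjlim_{n,m}(U_n \times_Y U_m)$; repeating the colimit-to-limit argument at the end of the proof of Lemma~\ref{Lem:SheafFrechet} shows that $\mathcal{O}_{(X \times_Y X)/Y} \simeq \varprojlim_{n,m} \mathcal{O}_{(U_n \times_Y U_m)/Y}$. Taking condensed global sections (which preserves limits as a right adjoint) and applying Lemma~\ref{Lem:Kunneth} term-by-term identifies
\[
H^0_{\cond}\bigl(Y, \mathcal{O}_{(X \times_Y X)/Y}\bigr) \simeq \varprojlim_{n,m}\bigl(\ul{P_n} \otimes^{\blacksquare}_{\ul{R}} \ul{P_m}\bigr).
\]

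For the left-hand side, the same argument gives $\mathcal{O}_{X/Y} \simeq \varprojlim_n \mathcal{O}_{U_n/Y}$, and combining Lemma~\ref{Lem:SmoothOverFierce} with Lemma~\ref{Lem:CondensedRingStructure} yields $H^0_{\cond}(Y, \mathcal{O}_{X/Y}) \simeq \varprojlim_n \ul{P_n}$. To invoke Proposition~\ref{Prop:TensorInverseLimit} we verify that this module is strongly countably Fr\'echet in the sense of Definition~\ref{Def:Frechet}: by Lemma~\ref{Lem:SheafBanach}, each $\mathcal{O}_{U_n/Y}$ is a direct summand of some $\hat{\oplus}_I \mathcal{O}_Y$ for a countable set $I$, so by Lemma~\ref{Lem:Sheaves} (and the fact that global sections preserve direct summands) $\ul{P_n}$ is a direct summand of a countably orthonormalizable Banach $\ul{R}$-module. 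The density hypothesis on $P_{n+1} \to P_n$ supplies the dense-image condition, since $\ul{P_n}(\ast)_{\mathrm{top}}$ recovers $P_n$ with its Banach topology ($P_n$ being metrizable hence compactly generated). Proposition~\ref{Prop:TensorInverseLimit} then delivers
\[
\bigl(\varprojlim_n \ul{P_n}\bigr) \otimes^{\blacksquare}_{\ul{R}} \bigl(\varprojlim_m \ul{P_m}\bigr) \simeq \varprojlim_{n,m}\bigl(\ul{P_n} \otimes^{\blacksquare}_{\ul{R}} \ul{P_m}\bigr),
\]
which matches the right-hand side, so the tensor product map in the proposition becomes an isomorphism.

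The main obstacle is really the interchange of the solid tensor product with a double inverse limit, which is precisely the content of Proposition~\ref{Prop:TensorInverseLimit} and ultimately rests on the flatness of strongly Fr\'echet modules for the solid tensor product (Proposition~\ref{Prop:FrechetFlat}). The remaining ingredients—the finite-stage K\"unneth formula (Lemma~\ref{Lem:Kunneth}), the inverse-limit presentation of $\mathcal{O}_{X/Y}$ and $\mathcal{O}_{(X\times_Y X)/Y}$, and the direct-summand-of-Banach identification of the stages—are all obtained by straightforwardly repeating arguments from Lemmas~\ref{Lem:SheafBanach}–\ref{Lem:SheafFrechet}.
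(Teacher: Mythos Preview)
Your proof is correct and follows essentially the same approach as the paper: apply the finite-stage K\"unneth formula (Lemma~\ref{Lem:Kunneth}) to the $U_n$'s, identify $H^0_{\cond}(Y,\mathcal{O}_{X/Y})$ with the inverse limit $\varprojlim_n \ul{P_n}$ as in the proof of Lemma~\ref{Lem:SheafFrechet}, and then invoke Proposition~\ref{Prop:TensorInverseLimit} to commute the solid tensor product with the inverse limits. The paper's version is terser (it works along the diagonal $U_n\times_Y U_n$ rather than the full double system), but the ingredients and logic are the same.
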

\begin{proof}
It follows from Lemma \ref{Lem:Kunneth} that the natural map
\begin{align}
    H^0_{\cond}(Y,\mathcal{O}_{U_{n}/Y}) \otimes_{\ul{R}}^{\blacksquare} H^0_{\cond}(Y,\mathcal{O}_{U_{n}/Y}) \to H^0_{\cond}(Y,\mathcal{O}_{(U_{n} \times_{Y} U_{n})/Y})
\end{align}
is an isomorphism. The lemma now follows by observing that
\begin{align}
    H^0_{\cond}(Y,\mathcal{O}_{X/Y}) \simeq \varprojlim_n H^0_{\cond}(Y,\mathcal{O}_{U_{n}/Y}),
\end{align}
see the proof of Lemma \ref{Lem:SheafFrechet}, and applying Proposition \ref{Prop:TensorInverseLimit} to commute the solid tensor product with the inverse limit in both factors. 
\end{proof}
\begin{Prop} \label{Prop:Kunneth2}
    Let $f:X \to Y$ be a morphism of small v-sheaves. Assume that there is a v-cover $Y'=\coprod_{j \in J} \spd(A_j,A^+_{j}) \to Y$ with $(A_j,A_j^+) \in \perfd$ such that: For each $j \in J$ we have $X_{\spd(A_j,A^+_{j})}=\varinjlim_n U_n$ where $U_n \to \spd(A_j,A_j^+)$ is given by $\spd(A_j,A_j^+) \to \spd(P_{n,j},P_{n,j}^+)$ for some strongly smooth $(A_j,A_j^+) \to (P_{n,j},P_{n,j}^+)$. If the transition maps $P_{n,j} \to P_{n+1,j}$ have dense image, then the natural map of \eqref{eq:TensorProductMap} (with $Z=X$) is an isomorphism. 
\end{Prop}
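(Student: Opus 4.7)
The plan is to localize on $Y$ and reduce to the global-sections Künneth formula of Proposition \ref{Prop:Kunneth3}. Checking that the morphism of sheaves \eqref{eq:TensorProductMap} on $Y_{\mathrm{v}}$ is an isomorphism is a v-local condition; moreover, since pullback to a slice site commutes with sheafification and the values of the presheaf defining the source are computed by taking $(\ast)$-sections of a solid tensor product, it suffices to exhibit a v-covering family $\{\spd(C, C^+) \to Y\}$ such that, for every affinoid perfectoid $(D, D^+)$ with $\spd(D, D^+) \to Y$ factoring through some $\spd(C, C^+)$, the map is an isomorphism on $\spd(D, D^+)$-sections of the defining presheaf.

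Refining the hypothesized v-cover $Y' \to Y$ appropriately, we may take for our covering family those $\spd(C, C^+) \to Y$ whose structure maps factor through some $\spd(A_j, A_j^+)$. Then every $(D, D^+)$ as above also satisfies the property that $\spd(D, D^+) \to Y$ factors through some $\spd(A_j, A_j^+)$, and we have $X_{\spd(D, D^+)} = \varinjlim_n \spd(P_{n, j} \widehat{\otimes}_{A_j} D, (P_{n, j} \widehat{\otimes}_{A_j} D)^+)$. The morphism $(D, D^+) \to (P_{n, j} \widehat{\otimes}_{A_j} D, \ldots)$ is strongly smooth, since both Tate algebra constructions and finite \'etale morphisms are stable under base change, and the transition maps $P_{n+1, j} \widehat{\otimes}_{A_j} D \to P_{n, j} \widehat{\otimes}_{A_j} D$ retain dense image, because the image of $P_{n+1, j}$ tensored with $D$ is dense in $P_{n, j} \otimes_{A_j} D$ (using submultiplicativity of the projective tensor norm together with the density of the image of $P_{n+1, j}$ in $P_{n, j}$), and the algebraic tensor product is dense in its completion. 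Finally $(D, D^+)$ is fiercely v-complete by Lemma \ref{Lem:Fierce}.

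Proposition \ref{Prop:Kunneth3} thus applies to $X_{\spd(D, D^+)} \to \spd(D, D^+)$; together with Corollary \ref{Cor:CohomologyAndBaseChange}, which identifies the restrictions of $\mathcal{O}_{X/Y}$ and $\mathcal{O}_{(X \times_Y X)/Y}$ to $\spd(D, D^+)_{\mathrm{v}}$ with the corresponding relative sheaves, this yields an isomorphism of condensed $\ul{D}$-modules
$$H^0_{\cond}(\spd(D, D^+), \mathcal{O}_{X/Y}) \otimes^{\blacksquare}_{\ul{D}} H^0_{\cond}(\spd(D, D^+), \mathcal{O}_{X/Y}) \xrightarrow{\sim} H^0_{\cond}(\spd(D, D^+), \mathcal{O}_{(X \times_Y X)/Y})$$
compatible with the cup product defining \eqref{eq:TensorProductMap}. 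Taking $(\ast)$-sections gives the required presheaf isomorphism, completing the argument. The main technical obstacle is the preservation of the dense-image condition under base change of Banach algebras, which follows from standard arguments with projective tensor norms; the bookkeeping with sheafification in the slice sites is routine once this is in hand.
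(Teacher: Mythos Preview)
Your proof is correct and follows essentially the same approach as the paper: v-localize to reduce to an affinoid perfectoid base, verify that the hypotheses (strongly smooth presentation, dense transition maps) are stable under base change, and then invoke Proposition \ref{Prop:Kunneth3} on each test object. The paper's proof is terser and cites Lemma \ref{Lem:DenseImageTensor} for the preservation of dense image, whereas you argue directly with the projective tensor norm; both arguments amount to the same computation (cf.\ Lemma \ref{Lem:DenseImageTensorTopological}).
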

\begin{proof}
It suffices to prove this after basechanging to $Y'$, so we will assume $Y=Y'=\spd(A,A^+)$. We can then check the statement of the proposition on $\spd(B,B^+)$ points for $(B,B^+) \in \spd(A,A^+)_{\mathrm{v}}$, and we can as in the proof of Lemma \ref{Lem:Kunneth1} assume that $(B,B^+)=(A,A^+)$ (using Lemma \ref{Lem:DenseImageTensor}). The statement now follows from Proposition \ref{Prop:Kunneth3}. 
\end{proof}

\subsubsection{} Let $Y$ be a small v-stack and let $f:X \to Y$ be a morphism of small v-stacks. We will write $\mathcal{O}^{+}_{X/Y}=(f_\ast \mathcal{O}_X^+)$ and $\mathcal{O}^{\mathrm{bdd}}_{X/Y}=\mathcal{O}^+_{X/Y}[\tfrac{1}{p}]$.
\begin{Lem} \label{Lem:BoundedFunctions}
    Let $Y=\spd(R,R^+)$ for a fiercely v-complete Huber pair $(R,R^+)$ over $(\qp,\zp)$. Let $X=\varinjlim_n U_n$, where $U_n \to \spd(R,R^+)$ is given by $\spd(P_n,P_n^{+}) \to \spd(R,R^+)$ for some strongly smooth $(R,R^+) \to (P_n,P_n^+)$. For $X^\mathrm{rig}:=\varinjlim_n \spa(P_n, P_n^+)$, there are isomorphisms 
    \begin{align}
        H^0_{\cond}(Y,\mathcal{O}^{+}_{X/Y}) \xrightarrow{\sim} \ul{H^0(X^{\mathrm{rig}}, \mathcal{O}_X^+)} \\
        H^0_{\cond}(Y,\mathcal{O}^{\mathrm{bdd}}_{X/Y}) \xrightarrow{\sim} \ul{H^0(X^{\mathrm{rig}}, \mathcal{O}_X^+)[\tfrac{1}{p}]}.
    \end{align}
\end{Lem}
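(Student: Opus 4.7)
First I would observe that the second isomorphism follows from the first by inverting $p$, since $(-)[\tfrac{1}{p}]$ is a filtered colimit and hence commutes both with the functor $H^0_{\cond}(Y, -)$ and with the formation of the condensed module associated to a topological module. So the main task is the first isomorphism.

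Next I would unwind both sides. For the left-hand side, by the definition of the pushforward along $f : X \to Y$ and of the condensed structure, for a profinite set $S$ one has
\begin{align}
H^0_{\cond}(Y, \mathcal{O}^+_{X/Y})(S) = \mathcal{O}^+\bigl(X \times_{\spd \mathbb{Q}_p} \underline{S}\bigr),
\end{align}
using that $X \times_Y (Y \times \underline{S}) = X \times \underline{S}$. Since $X = \varinjlim_n U_n$ in v-sheaves and $\mathcal{O}^+$ represents the v-sheaf $\mathbb{A}^{1,\lozenge,+}$, morphisms out of the colimit yield
\begin{align}
\mathcal{O}^+\bigl(X \times \underline{S}\bigr) = \varprojlim_n \mathcal{O}^+\bigl(U_n \times \underline{S}\bigr).
\end{align}
By the discussion of fiber products with profinite sets in \S\ref{subsub:HuberFiberProduct} (compare \eqref{Eq:ProductProfiniteSet}), we identify $U_n \times \underline{S} = \spd(C^0(S,P_n), C^0(S,P_n^+))$. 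Since $(R,R^+) \to (P_n,P_n^+)$ is strongly smooth, Lemma \ref{Lem:SmoothOverFierce} tells us $(P_n,P_n^+)$ (and hence also $(C^0(S,P_n), C^0(S,P_n^+))$, by strong smoothness being preserved along profinite base change) is fiercely v-complete; in particular, by Lemma \ref{Lem:CondensedRingStructure},
\begin{align}
\mathcal{O}^+\bigl(\spd(C^0(S,P_n), C^0(S,P_n^+))\bigr) = C^0(S, P_n^+).
\end{align}
Combining these steps yields $H^0_{\cond}(Y, \mathcal{O}^+_{X/Y})(S) = \varprojlim_n C^0(S, P_n^+)$.

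For the right-hand side, since $H^0(X^{\mathrm{rig}}, \mathcal{O}^+_X) = \varprojlim_n H^0(\spa(P_n,P_n^+), \mathcal{O}^+) = \varprojlim_n P_n^+$ (using v-completeness of each $(P_n,P_n^+)$), we have
\begin{align}
\underline{H^0(X^{\mathrm{rig}}, \mathcal{O}^+_X)}(S) = C^0\bigl(S, \varprojlim_n P_n^+\bigr).
\end{align}
The identification $C^0(S, \varprojlim_n P_n^+) = \varprojlim_n C^0(S, P_n^+)$ then follows because $C^0(S,-)$ commutes with limits in the target (as $\mathrm{Hom}$ functors do, in any category of topological modules). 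This gives the desired natural isomorphism on each profinite $S$, hence as condensed $\underline{R^+}$-modules. I expect no serious obstacle here; the only point requiring a small check is that the inverse limit topology on $H^0(X^{\mathrm{rig}},\mathcal{O}^+)$ coming from the definition of the structure sheaf of the colimit of adic spaces is the same as the one implicit in $\underline{H^0(X^{\mathrm{rig}},\mathcal{O}^+_X)}$, which is true by definition of the associated condensed ring.
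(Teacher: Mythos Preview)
Your proposal is correct and follows the same approach as the paper: pass to the inverse limit over $n$, then identify each term using the v-completeness of $(P_n,P_n^+)$ from Lemma~\ref{Lem:SmoothOverFierce}. The paper phrases this at the level of condensed modules (via $H^0_{\cond}(Y,\mathcal{O}^{+}_{X/Y}) = \varprojlim_n H^0_{\cond}(Y,\mathcal{O}^{+}_{U_n/Y})$ and Lemma~\ref{Lem:CondensedRingStructure}), while you unwind on $S$-points, but this is the same argument.

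One minor point: your justification for $\mathcal{O}^+(U_n \times \underline{S}) = C^0(S,P_n^+)$ via ``strong smoothness preserved along profinite base change'' together with Lemma~\ref{Lem:SmoothOverFierce} would require $(C^0(S,R), C^0(S,R^+))$ to be fiercely v-complete, and closure of fierce v-completeness under $(C^0(S,-),C^0(S,-^+))$ is not among the defining properties in Definition~\ref{Def:Fierce} and is not established in the paper. The identification you want is exactly Lemma~\ref{Lem:CondensedRingStructure} (with $\mathcal{O}^+$ in place of $\mathcal{O}$; the proof is identical) applied to $(P_n,P_n^+)$ and evaluated on $S$, which only uses v-completeness of $(P_n,P_n^+)$ itself.
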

\begin{proof}
    This is an immediate consequence of the v-completeness of $(P_n, P_n^+)$, see Lemma \ref{Lem:SmoothOverFierce}, and the fact  (which follows as in the proof of Lemma \ref{Lem:SheafFrechet}) that
    \begin{align}
         H^0_{\cond}(Y,\mathcal{O}^{+}_{X/Y}) = \varprojlim_n  H^0_{\cond}(Y,\mathcal{O}^{+}_{U_n/Y}) \\
         H^0(X^{\mathrm{rig}}, \mathcal{O}_X^+) = \varprojlim_n H^0(\spa(P_n, P_n^+), \mathcal{O}_X^+).
    \end{align}
\end{proof}

\section{Families of \texorpdfstring{$p$}{p}-divisible rigid analytic groups} \label{Sec:FamiliesPDiv} In this section we discuss the theory of families of $p$-divisible rigid analytic groups. In \S \ref{Sub:RigidPDiv}, we discuss families of $p$-divisible rigid analytic groups over sousperfectoid adic spaces and rigid spaces over non-archimedean extensions of $\qp$. When the base is the adic spectrum of a non-archimedean extension of $\qp$, we discuss the classification of $p$-divisible rigid analytic groups in terms of Hodge--Tate triples, following Fargues \cite{FarguesI}. In \S \ref{Sub:RigidPDivII}, we define $p$-divisible v-groups over general small v-stacks and construct a functor from Hodge--Tate triples to $p$-divisible v-groups in this generality. In \S \ref{Sub:FiniteHeight}, we introduce the category of finite height $p$-divisible v-groups and relate them to locally analytic character data. 

\subsection{Families of $p$-divisible rigid analytic groups} \label{Sub:RigidPDiv}
In this section, we discuss families of $p$-divisible rigid analytic groups, following Fargues \cites{FarguesI,FarguesII}.

\subsubsection{} Let $Y$ be a sousperfectoid adic space over $\spa \qp$ or a rigid space over a non-archimedean extension of $\qp$. We will consider relative commutative groups $H \to Y$ in the category of smooth adic spaces over $Y$, see \cite[Section IV.4.1]{FarguesScholze}. These have a Lie algebra $\operatorname{Lie} H$, see \cite[p. 10]{HeuerXu}, which is functorial in $H$. To define a logarithm map, we need to restrict to groups which are $p$-divisible in a precise sense, see \cite[Definition 4.1]{FarguesII}. We will present this definition following \cite[Section 3.2]{HeuerXu}.

\subsubsection{} \label{subsub:HeuerXu}  Recall from \cite[Definition 3.2.3]{HeuerXu} that a subspace $T \subset H$ is called \emph{topologically $p$-torsion} if for any open subspace $U \subset H$ containing the image of the unit section, and any quasicompact open subspace $T_0 \subset T$ there is an $n \in \mathbb{Z}_{\ge 1}$ such that $[p^n](T_0) \subset U$, where $[p^n]:H \to H$ is the multiplication by $p^n$ map. 

In the rest of \S \ref{subsub:HeuerXu}, we explain the statement of  \cite[Proposition 3.2.4]{HeuerXu}: There is an open $Y$-subgroup $H \langle p^{\infty} \rangle \subset H$ which is topologically $p$-torsion and is maximal among topologically $p$-torsion open $Y$-subgroups of $H$.\footnote{Note that \cite[Proposition 3.2.4]{HeuerXu} writes $\widehat{H}$ for what we call $H \langle p^{\infty} \rangle$; we instead follow \cite[Definition 2.6]{Heuer}.} Moreover, there is a functorial map of $Y$-groups
\begin{align}
    \log_{H \langle p^{\infty} \rangle}:H \langle p^{\infty} \rangle \to \operatorname{Lie} H,
\end{align}
which induces the identity map on Lie algebras. This fits in an exact sequence
\begin{align}
    0 \to H[p^{\infty}] \to H \langle p^{\infty} \rangle \to \operatorname{Lie} H,
\end{align}
where $H[p^{\infty}]=\bigcup_{n} H[p^n]$. If $[p]:H \to H$ is \'etale, then so is $\log_{H \langle p^{\infty} \rangle}$, and if $[p]:H \to H$ is surjective, then so is $\log_{H \langle p^{\infty} \rangle}$. Finally, the morphism of v-sheaves in groups $H \langle p^{\infty} \rangle^{\lozenge} \to H^{\lozenge}$ over $Y^{\lozenge}$ can be identified with the evaluation at $1$ map $\Hom_{Y^{\lozenge}}(\ul{\zp}_{Y}, H^{\lozenge}) \to H^{\lozenge}$. In particular, this gives $H \langle p^{\infty} \rangle^{\lozenge}$ the structure of a $\ul{\zp}$-module. 

\subsubsection{} Let $Y$ be a sousperfectoid adic space over $\spa \qp$ or a rigid space over a non-archimedean extension of $\qp$ and let $H \to Y$ be a commutative smooth relative group in adic spaces. Recall that $H \to Y$ is a \emph{$p$-divisible rigid analytic group} if $H \langle p^{\infty} \rangle=H$ and if $[p]$ is finite \'etale surjective, see \cite[Definition 4.1]{FarguesII}. Note that \cite{Gerth} uses the terminology analytic $p$-divisible group. 

\begin{Eg} \label{Eg:RigidGM}
Let $Y=\spa \qp$ and consider the rigid analytic fiber $\gmhateta$ over $Y$ of the formal multiplicative group $\gmhat$. This is a $p$-divisible rigid analytic group (e.g. by Lemma \ref{Lem:RigidFiberFormalGroup}). The logarith map $\operatorname{log} \colon \gmhateta \to \mathbb{G}_{a}$ is defined by the usual power series expansion 
\[ \log(1+t)=\sum_{n \geq 1}(-1)^{n+1}\frac{t^n}{n}.\] 
For $\varpi=2p$, the map $\operatorname{log}^{\lozenge}$ is an isomorphism from $\mathbb{B}(1, |\varpi|)$, the ball of radius $|\varpi|$ around $1$ in $\widehat{\mathbb{G}}_{m, \eta}^\lozenge$, to $\mathbb{B}(0, |\varpi|)$, the ball of radius $|\varpi|$ around $0$ in $\mathbb{G}_{a}^\lozenge$, with inverse defined by the usual exponential series 
\[ 
\exp(t)=\sum_{n \geq 0} \frac{1}{n!} t^n.
\]
\end{Eg}
\subsubsection{} Recall that a $p$-divisible group over a $p$-adic formal scheme $X$ (in the sense of \cite[Section 2.2]{ScholzeWeinstein}) is a relative commutative group functor $\mathcal{H} \to X$  such that $[p]:\mathcal{H} \to \mathcal{H}$ is an epimorphism in the fpqc topology, such that $\varinjlim_n \mathcal{H}[p^n] \to \mathcal{H}$ is an isomorphism, and such that $\mathcal{H}[p^n] \to X$ is representable in finite locally free group schemes for all $n$. Note that $\mathcal{H}$ might not be representable by a formal scheme (although it often is, see \cite[Lemma 3.1.1]{ScholzeWeinstein}). We will write $T_p \mathcal{H}$ for the $p$-adic Tate module $\varprojlim_n \mathcal{H}[p^n]$, where the transition maps are induced by multiplication by $p$. The functor $T_p \mathcal{H} \to X$ is representable in flat affine morphisms over $R$, see \cite[Proposition 3.3.1]{ScholzeWeinstein}.
\begin{Lem} \label{Lem:RigidFiberFormalGroup}
Let $Y=\spa(R,R^+)$ be a sousperfectoid adic space over $\spa \qp$ or a rigid space over a non-archimedean extension of $\qp$, and let $\mathcal{H} \to \spf R^+$ be a $p$-divisible group. If $(R,R^+)$ is uniform, then the rigid generic fiber\footnote{Which is defined because $R^+$ is a ring of definition, since $R$ is uniform.} in the sense of \cite[Section 2.2]{ScholzeWeinstein}, $\mathcal{H}^{\mathrm{ad}}_{\eta}$, is a $p$-divisible rigid analytic group over $Y$. 
\end{Lem}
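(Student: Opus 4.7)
The plan is to verify the three defining properties of a $p$-divisible rigid analytic group for the adic space $\mathcal{H}^{\mathrm{ad}}_\eta$: that it is a smooth commutative relative group over $Y$, that $[p]$ is finite \'etale surjective, and that $\mathcal{H}^{\mathrm{ad}}_\eta\langle p^\infty\rangle = \mathcal{H}^{\mathrm{ad}}_\eta$. The existence of $\mathcal{H}^{\mathrm{ad}}_\eta$ as an adic space over $Y$ with a compatible commutative group structure is provided by the Scholze--Weinstein construction, using uniformity of $(R,R^+)$. The essential structural input we will use is the formal smoothness of $\mathcal{H}$ at the identity section: by standard theory of $p$-divisible groups, the formal completion $\hat{\mathcal{H}}$ along the unit section is formally smooth over $\spf R^+$ of dimension $d = \dim \mathcal{H}$, so its rigid generic fiber is an open polydisk neighborhood $\mathbb{D} \subset \mathcal{H}^{\mathrm{ad}}_\eta$ of the identity. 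On a suitable sub-polydisk, the formal group law admits a logarithm transporting $[p]$ to multiplication by $p$ on coordinates.

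Next, I would dispatch smoothness and finite \'etaleness together. The kernel $\mathcal{H}[p]^{\mathrm{ad}}_\eta$ is the generic fiber of the finite locally free group scheme $\mathcal{H}[p]$ over $\spf R^+$, which is finite locally free over $Y$; in characteristic zero such a group scheme is automatically \'etale. Since $\mathcal{H}^{\mathrm{ad}}_\eta$ is covered by the translates $\mathcal{H}[p^n]^{\mathrm{ad}}_\eta \cdot \mathbb{D}$ (smooth, since each translate is \'etale-locally isomorphic to $\mathbb{D}$), smoothness of $\mathcal{H}^{\mathrm{ad}}_\eta$ follows. The map $[p]$ on $\mathcal{H}^{\mathrm{ad}}_\eta$ then has finite \'etale kernel, and surjectivity follows from the fpqc surjectivity of $[p] \colon \mathcal{H} \to \mathcal{H}$ (part of the $p$-divisible group definition), which descends to the generic fiber; hence $[p]$ is finite \'etale surjective.

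Finally, for topological $p$-torsion, take any quasicompact open $T_0 \subset \mathcal{H}^{\mathrm{ad}}_\eta$ and any open subspace $U$ containing the identity section. After passing to a suitable \'etale cover of $Y$ (which is harmless, since the conclusion $[p^n](T_0) \subset U$ can be checked after such a cover), we may cover $T_0$ by finitely many translates $x_i \cdot \mathbb{D}'$ with $x_i \in \mathcal{H}[p^N]^{\mathrm{ad}}_\eta$ and $\mathbb{D}' \subset \mathbb{D}$ a small sub-polydisk. For $n \geq N$ the torsion translations are killed by $[p^n]$, and $[p^n]$ acts on $\mathbb{D}'$ as multiplication by $p^n$ on log-coordinates; hence $[p^n](T_0) \subset U$ for $n$ sufficiently large. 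The main obstacle will be making this ``finite cover by torsion translates of a formal disc'' precise in the relative setting; one reduces to $Y$ affinoid, uses quasicompactness of $T_0$ to extract a finite subcover \'etale-locally, and then invokes Heuer--Xu's characterization of $\mathcal{H}^{\mathrm{ad}}_\eta\langle p^\infty\rangle$ as the maximal open topologically $p$-torsion subgroup to conclude that it equals all of $\mathcal{H}^{\mathrm{ad}}_\eta$.
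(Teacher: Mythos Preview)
Your proposal is essentially correct and follows the same overall strategy as the paper, but the paper is considerably more economical: it invokes \cite[Proposition 3.4.2(ii)]{ScholzeWeinstein} and its proof directly to obtain, in one stroke, the representability of $H=\mathcal{H}^{\mathrm{ad}}_\eta$ as a relative adic group, the existence of the logarithm exact sequence $0\to H[p^\infty]\to H\to\operatorname{Lie}H$, an open neighborhood $U$ of the identity on which $\log_H$ is an isomorphism onto an open in $\operatorname{Lie}H$ (giving smoothness), the covering $H=\bigcup_n[p^n]^{-1}(U)$ (which handles topological $p$-torsion without your explicit translate argument), and that $[p]$ is finite locally free. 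So your steps (smoothness via formal completion, covering by torsion translates of a disc) are morally the same content, but the paper outsources them rather than redoing them.

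The one place where the paper's argument is genuinely sharper than yours is the finite \'etale step. You write ``$[p]$ has finite \'etale kernel, and surjectivity follows \ldots hence $[p]$ is finite \'etale surjective''; this implicitly uses that $[p]$ is an $H[p]$-torsor, which you should make explicit. The paper does this cleanly: starting from $[p]$ finite locally free (already known from Scholze--Weinstein), for any affinoid open $W\subset H$ with $V=[p]^{-1}W$, the quasi-torsor structure gives $V\times_W V\simeq V\times H[p]$; since $H[p]$ is finite \'etale (we are in characteristic zero), faithfully flat descent along $V\to W$ shows $V\to W$ is finite \'etale. This avoids any appeal to smoothness of $H$ or to differential criteria. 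Also note that your phrasing ``translates $\mathcal{H}[p^n]^{\mathrm{ad}}_\eta\cdot\mathbb{D}$'' is awkward relatively, since $\mathcal{H}[p^n]^{\mathrm{ad}}_\eta$ need not have sections over $Y$; the paper's formulation via $[p^n]^{-1}(U)$ sidesteps this.
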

\begin{proof}

This follows from \cite[Proposition 3.4.2.(ii)]{ScholzeWeinstein} and its proof, as we will now explain. Let $H:=\mathcal{H}^{\mathrm{ad}}_{\eta}$. First of all, it follows from that proof that $H \to Y$ is representable by a relative group in adic spaces. It is moreover established that $H$ comes equipped with an exact sequence (where $\operatorname{Lie} H = \operatorname{Lie} \mathcal{H} \otimes_{R^+} \mathbb{G}_a$)
\begin{align}
    0 \to H[p^\infty] \to H \xrightarrow{\log_H} \operatorname{Lie} H.
\end{align}
Furthermore, there is an open subset $U \subset H$ containing the identity section such that the restriction of $\log_{H}$ to $U$ induces an isomorphism between $U$ and $\log_{H}(U)$. On top of that, $\log_{H}(U)$ is an open subset of $\operatorname{Lie} H$ containing the identity section, and it follows that $U$ is smooth over $Y$. Finally it is shown in the proof of \cite[Proposition 3.4.2.(ii)]{ScholzeWeinstein} that $H$ is covered by the inverse images under $[p^n]$ of $U$, and that $[p]:H \to H$ is relatively representable and finite locally free. \smallskip 

It remains to show that $[p]:H \to H$ is finite \'etale and surjective. By the discussion above, for an affinoid open $W=\spa(R,R^+) \subset H$, the inverse image $V=[p]^{-1}W = \spa(B,B^+)$ for some finite locally free $R$-algebra $B$. Since $[p]$ is clearly a quasi-torsor for $H[p]$, we see that $V \times_{W} V \simeq V \times H[p]$ over $W$. But $V \times_{W} V$ is given by $\spa(D,D^+)$ where $D=B \otimes_{R} B$, and $H[p]$ is finite \'etale over $\spa(R,R^+)$ since $p$ is invertible in $R$. Thus it follows from faithfully flat descent that $B$ is a finite \'etale $R$-algebra, showing that $[p]$ is finite \'etale and surjective.
\end{proof}

\subsubsection{} Let $\pi:H \to Y$ be a $p$-divisible rigid analytic group as above. Then the $p^n$-torsion $H[p^n]$ of $H$ is a locally free of finite rank sheaf of $\mathbb{Z}/p^n \mathbb{Z}$-modules on $Y_{\et}$. The multiplication by $p$ map is surjective and thus defines a surjection $H[p^{n+1}] \to H[p^{n}]$ with kernel $H[p]$. Using this observation, we define a $\zp$-local system on $Y$ by $T_p H:=\varprojlim_n H[p^{n+1}]$, the inverse limit being taken in $Y_{\mv}^{\lozenge}$. 

\subsubsection{Classification I} \label{subsub:ClassificationConstruction} Let $Y=\spa(R,R^+)$ be a sousperfectoid adic space over $\spa \qp$ or a rigid space over a non-archimedean extension of $\qp$. 
\begin{Def} \label{Def:EtaleHodgeTateTriple}
We define the category $\LAE(Y)$ of \emph{\'etale Hodge--Tate triples} to be the category of triples $(T,W,\alpha)$, where $T$ is a $\zp$-local system on $Y^{\lozenge}_{\mv}$, where $W$ is an \'etale vector bundle on $Y$, and where $\alpha:W^\lozenge \to T(-1) \otimes_{\ul{\zp}} \mathcal{O}_{Y^\lozenge}$ is a morphism of locally free $\mathcal{O}_{Y^\lozenge}$-modules on $Y^{\lozenge}_{\mv}$. Associated to a triple $\mathcal{L}=(T, W, \alpha) \in \LAE(Y)$ we consider the v-sheaf $\mathcal{H}_{\mathcal{L}}$ of commutative groups over $Y^\lozenge$ given by the fiber product
\begin{equation} \label{Eq:FiberProduct}
    \begin{tikzcd}
        H_{\mathcal{L}} \arrow{d} \arrow{r} \arrow[dr, phantom, "\scalebox{1.5}{$\lrcorner$}" , very near start, color=black] &  T(-1) \otimes_{\ul{\zp}} \widehat{\mathbb{G}}_{m, \eta,Y}^{\lozenge} \arrow{d}{1 \otimes \log^{\lozenge}}\\
        W^{\lozenge} \arrow{r}{\alpha} & T(-1) \otimes_{\ul{\zp}} \mathbb{G}_{a,Y}^{\lozenge}.
    \end{tikzcd}
\end{equation}
\end{Def}
\begin{Prop} \label{Prop:Representable}
Let $\mathcal{L}=(T,W,\alpha)$ and $H_{\mathcal{L}}$ be as above. If $(R,R^+)$ is fiercely v-complete, then there is a unique $p$-divisible rigid analytic group $H_{\mathcal{L}}^{\mathrm{rig}} \to Y$ such that $H_{\mathcal{L}}^{\mathrm{rig},\lozenge}$ is isomorphic to $H_{\mathcal{L}}$. 
\end{Prop}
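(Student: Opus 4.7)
The plan is to establish uniqueness via the full faithfulness of diamondification on locally fiercely v-complete adic spaces (Lemma \ref{Lem:FiercelyFullyFaithful}), and then to construct $H_{\mathcal{L}}^{\mathrm{rig}}$ by an explicit fiber product in the free case, globalizing by descent. For uniqueness, I note that any $p$-divisible rigid analytic group $H^{\mathrm{rig}} \to Y$ is smooth, hence étale-locally factors through a Tate algebra over $Y$. Combining Definition \ref{Def:Fierce} and Lemma \ref{Lem:SmoothOverFierce}, $H^{\mathrm{rig}}$ is then locally fiercely v-complete whenever $Y$ is, and Lemma \ref{Lem:FiercelyFullyFaithful} identifies any two representing adic spaces with isomorphic diamonds.

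For existence in the free case, set $T \cong \ul{\zp}^r$ and $W \cong \mathcal{O}_Y^d$; after a local trivialization of $\zp(1)$, the morphism $1 \otimes \log^\lozenge$ becomes the diamondification of the coordinate-wise logarithm $\log^r\colon (\widehat{\mathbb{G}}_{m,\eta,Y})^r \to \mathbb{G}_{a,Y}^r$, while $\alpha$ becomes the diamondification of an $\mathcal{O}_Y$-linear matrix morphism $A\colon \mathbb{G}_{a,Y}^d \to \mathbb{G}_{a,Y}^r$. I then define
\[
H^{\mathrm{rig}} := (\widehat{\mathbb{G}}_{m,\eta,Y})^r \times_{\mathbb{G}_{a,Y}^r} \mathbb{G}_{a,Y}^d,
\]
realized as the closed analytic subspace of $(\widehat{\mathbb{G}}_{m,\eta,Y})^r \times_Y \mathbb{G}_{a,Y}^d$ cut out by $\log(x) = A \cdot y$. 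Since $\log$ is étale as a morphism of rigid analytic groups (its kernel $\mu_{p^\infty}^r$ is discrete), the projection $H^{\mathrm{rig}} \to \mathbb{G}_{a,Y}^d$ is étale, so $H^{\mathrm{rig}}$ is smooth over $Y$. It inherits a commutative group structure as the kernel of $\log^r - A$; $[p]$ is finite étale of degree $p^r$ and surjective (finite étale on the $\widehat{\mathbb{G}}_{m,\eta}$-factor, an isomorphism on $\mathbb{G}_a$); and both factors are topologically $p$-torsion, so $H^{\mathrm{rig}} = H^{\mathrm{rig}}\langle p^\infty\rangle$. Thus $H^{\mathrm{rig}}$ is a $p$-divisible rigid analytic group, and preservation of fiber products under $(-)^\lozenge$ gives $(H^{\mathrm{rig}})^\lozenge \cong H_{\mathcal{L}}$.

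For general $(T,W,\alpha)$, I will choose a finite étale cover $Y' \to Y$ (still fiercely v-complete by Definition \ref{Def:Fierce}) trivializing $W$; since $T$ need not become trivial on any finite étale cover, its contribution must be handled level-by-level via the finite étale $\mathbb{Z}/p^n$-local systems $T/p^n$, which in particular determine finite étale $p^n$-torsion group schemes $H^{\mathrm{rig}}_{\mathcal{L}}[p^n]$ over $Y$. Assembling the tower $\{H^{\mathrm{rig}}_{\mathcal{L}}[p^n]\}$ together with the vector bundle $W$ via $\alpha$ yields $H^{\mathrm{rig}}_{\mathcal{L}}|_{Y'}$, and canonical descent data from the uniqueness step produces $H^{\mathrm{rig}}_{\mathcal{L}}$ over $Y$. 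The main obstacle will be this final globalization: verifying that the tower together with $\alpha$ assembles into a smooth rigid analytic $p$-divisible group (rather than merely a v-sheaf or pro-étale object). This requires a careful étale-local description of $H^{\mathrm{rig}}_{\mathcal{L}}$ on $W$ as an étale cover of an open subspace of $\mathbb{G}_{a,Y}^d$, built from the $\mu_{p^n}^r$-torsor structure and the trivializations of $T/p^n$, after which Lemma \ref{Lem:FiercelyFullyFaithful} makes the local constructions glue canonically over $Y$.
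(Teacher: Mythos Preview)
Your uniqueness argument via Lemma \ref{Lem:FiercelyFullyFaithful} is correct and matches the paper. Your explicit fiber-product construction in the case where $T(-1)$ is trivial is also fine and close in spirit to the paper's local charts.

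The gap is in your globalization. Your ``free case'' requires not just $W$ free but $T(-1)$ trivial, and you acknowledge that handling nontrivial $T$ ``level-by-level'' via $T/p^n$ is the main obstacle; the sketch you give (assembling a tower of torsion schemes with $W$ into a smooth adic group) is not a proof and would essentially amount to reproving the classification.

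The paper sidesteps this obstacle completely: Lemma \ref{Lem:GeometryOfHI} works as soon as $W$ alone is free, with no hypothesis on $T$. The point is that the logarithm $T(-1)\otimes_{\ul{\zp}}\widehat{\mathbb{G}}_{m,\eta}^{\lozenge} \to T(-1)\otimes_{\ul{\zp}}\mathbb{G}_a^{\lozenge}$ is an isomorphism on small balls around the identity regardless of $T$, so a small open $H_0^{\lozenge}\subset H_{\mathcal{L}}$ is identified via $\log$ with an affinoid ball $U\subset W$. The larger opens $H_n^{\lozenge}$ are then \emph{finite \'etale} over $U^{\lozenge}$ (via $z\mapsto z^{p^n}$), and the equivalence of finite \'etale sites of $U$ and $U^{\lozenge}$ (\cite[Lemma 15.6]{EtCohDiam}) descends them to strongly smooth affinoids $\spa(A_n,A_n^+)$ over $Y$. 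No trivialization of $T$ or of $T/p^n$ is ever needed; the twist by $T(-1)$ is absorbed into the finite \'etale descent. After this, passing from ``$W$ locally free'' to general $W$ is just analytic-local gluing via Lemma \ref{Lem:FiercelyFullyFaithful}, which you already know how to do.

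So your outline is salvageable, but the right fix is not to build the tower from $T/p^n$; it is to observe that freeness of $W$ alone already makes the $H_n$ representable by finite \'etale descent over a ball in $W$.
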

In the proof of Proposition \ref{Prop:Representable}, we will reduce to the case that the vector bundle $W$ is trivial.
\begin{Lem} \label{Lem:GeometryOfHI}
Let $\mathcal{L}=(T,W,\alpha)$ and $H=H_{\mathcal{L}}$ be as above. If $(R,R^+)$ is fiercely v-complete and $W$ is free, then there are open subgroups $H_n \subset H$ indexed by $n \in \mathbb{Z}_{\ge 1}$ such that $H=\bigcup_n H_n$ and such that $H_n=\spd(A_n,A_n^+)$ with $(R,R^+) \to (A_n,A_n^+)$ strongly smooth.
\end{Lem}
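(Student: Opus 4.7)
I choose a basis of the free $\mathcal{O}_Y$-module $W$ to identify $W^\lozenge \simeq (\mathbb{A}^d_Y)^\lozenge$, and let $W_n \subset W^\lozenge$ denote the closed polydisc of polyradius $|p|^{-(n-1)}$, represented by the Tate algebra $P_n := R\langle p^{n-1}T_1, \ldots, p^{n-1}T_d\rangle$; the map $(R,R^+) \to (P_n,P_n^+)$ is strongly smooth by definition. Using Example \ref{Eg:RigidGM}, fix $\varpi = 2p$ and let $B_0 := \mathbb{B}(1, |\varpi|) \subset \widehat{\mathbb{G}}_{m,\eta}$ be the ball on which $\log$ restricts to an isomorphism onto $\mathbb{B}(0, |\varpi|) \subset \mathbb{G}_a$. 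For each $m \geq 1$, set $U_m := [p^{m-1}]^{-1}(B_0) \subset \widehat{\mathbb{G}}_{m,\eta}$, which is an affinoid open subgroup (a $\mu_{p^{m-1}}$-torsor over $B_0$), and $\log\colon U_m \to \mathbb{B}(0, |\varpi|\cdot|p|^{-(m-1)}) \subset \mathbb{G}_a$ is finite \'etale with kernel $\mu_{p^{m-1}}$. By the topologically $p$-torsion property of $\widehat{\mathbb{G}}_{m,\eta}^\lozenge$ recalled in \S\ref{subsub:HeuerXu}, the $U_m$ form an increasing open cover of $\widehat{\mathbb{G}}_{m,\eta}$.

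Since $\alpha$ is $\mathcal{O}_Y$-linear with finite operator norm, I fix a constant $C \geq 0$ so that $\alpha(W_n) \subset T(-1) \otimes_{\ul{\zp}} \log(U_{n+C})^\lozenge$ for every $n \geq 1$, and define
\[
H_n := W_n \times_{T(-1)\otimes_{\ul{\zp}} \mathbb{G}_{a,Y}^\lozenge}\bigl(T(-1)\otimes_{\ul{\zp}} U_{n+C}^\lozenge\bigr),
\]
the fiber product being taken along $\alpha$ on the left and $1\otimes\log$ on the right. Since $W_n \subset W^\lozenge$ and $U_{n+C} \subset \widehat{\mathbb{G}}_{m,\eta}$ are subgroups, $H_n \subset H$ is an open subgroup; the exhaustions $\bigcup_n W_n = W^\lozenge$ and $\bigcup_m U_m = \widehat{\mathbb{G}}_{m,\eta}$ (the second being an open cover at the rigid analytic level) imply $H = \bigcup_n H_n$ as v-sheaves, since any section of $H$ over an affinoid perfectoid test object can be v-covered to land in some $H_n$.

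To represent $H_n$, note that $1 \otimes \log\colon T(-1) \otimes U_{n+C}^\lozenge \to T(-1) \otimes \log(U_{n+C})^\lozenge$ is finite \'etale with kernel $T(-1)\otimes\mu_{p^{n+C-1}} \simeq T/p^{n+C-1}$; in particular the construction only depends on the finite \'etale quotient $T/p^{n+C-1}$ of $T$. This quotient trivializes on some finite \'etale cover $\widetilde{Y}\to Y$, which is again fiercely v-complete by Lemma \ref{Lem:Fierce}. Over $\widetilde{Y}$, the map $1\otimes\log$ becomes a finite product of copies of the rigid-analytic finite \'etale morphism $\log\colon U_{n+C}\to\log(U_{n+C})$, and the choice of $C$ ensures that $\alpha|_{W_n}$ lands in its target; base change along $\alpha|_{W_n}$ then exhibits $H_n|_{\widetilde{Y}}$ as a finite \'etale cover of $W_n|_{\widetilde{Y}}$. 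Finite \'etale descent along $\widetilde{Y}\to Y$ yields $(A_n,A_n^+)$ with $P_n \to A_n$ finite \'etale, so the composite $(R,R^+)\to (A_n,A_n^+)$ is strongly smooth as required.

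The principal obstacle is to realise each $H_n$ as a genuine rigid analytic object rather than a mere v-sheaf, despite the twist by the $\zp$-local system $T(-1)$. This is overcome by the observation that only the finite \'etale quotient $T/p^{n+C-1}$ is relevant at the $n$-th stage, combined with the permanence of fierce v-completeness under finite \'etale covers (Lemma \ref{Lem:Fierce}), which makes finite \'etale descent effective in the category of strongly smooth adic spaces.
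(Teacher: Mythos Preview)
Your overall strategy agrees with the paper's: take $H_n$ to be the fiber product of an affinoid polydisc in $W$ with an open subgroup $T(-1)\otimes_{\ul{\zp}} U_m^\lozenge$ of $T(-1)\otimes_{\ul{\zp}}\widehat{\mathbb{G}}_{m,\eta}^\lozenge$, then argue $H_n$ is finite \'etale over that polydisc. The paper's bookkeeping is slightly different (it fixes one small ball $U\subset W$ with $\alpha(U)\subset T(-1)\otimes\mathbb{B}(0,|\varpi|)$ and lets $H_n$ sit over $p^{-n}U$), but this is equivalent to your growing $W_n$ with the index shift by $C$.

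The gap is in your representability step. You assert that over a finite \'etale cover $\widetilde{Y}$ trivializing $T/p^{n+C-1}$, the map $1\otimes\log$ becomes a product of copies of $\log\colon U_{n+C}\to\log(U_{n+C})$. This is false: trivializing $T/p^{n+C-1}$ does not trivialize the $\zp$-local system $T$, and both source and target of $1\otimes\log$ depend on $T$ itself (neither $U_{n+C}$ nor $\log(U_{n+C})$ is $p^{n+C-1}$-torsion, so the tensor products do not factor through $T/p^{n+C-1}$). Only the \emph{kernel} $T(-1)\otimes\mu_{p^{n+C-1}}$ becomes constant over $\widetilde{Y}$, which does not by itself produce a rigid-analytic model of $H_n|_{\widetilde{Y}}$, so your finite \'etale descent cannot get started. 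The paper avoids this entirely: it observes that $H_n^\lozenge\to W_n^\lozenge$ is finite \'etale as a map of v-sheaves (this can be checked after trivializing $T$ on an arbitrary v-cover, where $1\otimes\log$ really does become a product of copies of $\log$), and then invokes the equivalence of the finite \'etale sites of the adic space $W_n$ and its diamond \cite[Lemma~15.6]{EtCohDiam} to conclude $H_n=\spd(A_n,A_n^+)$ with $A_n$ finite \'etale over $\mathcal{O}(W_n)$. Replacing your last paragraph by this direct appeal repairs the proof, and the passage to $\widetilde{Y}$ becomes unnecessary.
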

\begin{proof}
Let $\varpi=2p$ and for $n \ge 0$ let 
\begin{align}
    \mathbb{B}(1, |\varpi|^{1/p^n}) \subset \widehat{\mathbb{G}}_{m, \eta}
\end{align}
denote the open subset defined by the inequality $|T|^{p^i} \leq |\varpi|$, and let
\begin{align}
   \mathbb{B}(0, |\frac{\varpi}{p^n}|) \subset \mathbb{G}_{a}
\end{align}
be the closed unit disk of radius $|\frac{\varpi}{p^n}|$. Consider the commutative diagram 
\begin{equation}\label{eq.first-fiber-diagram-functions-structure-proof}\begin{tikzcd}
	{\mathbb{B}(1,|\varpi|^{1/p^n})^{\lozenge} \otimes_{\ul{\zp}} T(-1)} & {\mathbb{B}(1,|\varpi|)^{\lozenge} \otimes_{\ul{\zp}} T(-1)} \\
	{\mathbb{B}(0, |\frac{\varpi}{p^n}|)^{\lozenge} \otimes_{\ul{\zp}} T(-1) } & {\mathbb{B}(0,|\varpi|)^{\lozenge} \otimes_{\ul{\zp}} T(-1)}. \\
	{}
	\arrow["{z \mapsto z^{p^n}}", from=1-1, to=1-2]
	\arrow["{\log \otimes 1}"{description}, from=1-1, to=2-1]
	\arrow["{\mathrm{log}\otimes 1}"{description}, from=1-2, to=2-2]
	\arrow["{t \mapsto p^nt}"', from=2-1, to=2-2]
\end{tikzcd}\end{equation}
The right vertical arrow and bottom horizontal arrow are isomorphisms (see Example \ref{Eg:RigidGM}). Since the top horizontal arrow is finite \'{e}tale, the left vertical arrow is also finite \'etale. We now consider an affinoid open neighborhood of $U \subseteq W$ of $U$ such that $U^{\lozenge}$ is contained in $\mathbb{B}(0,|\varpi|)^{\lozenge} \otimes_{\ul{\zp}} T(-1)$. We define $H_0^\lozenge$ to be the preimage of $U$ in $\mathbb{B}(1,|\varpi|)^{\lozenge}$ (so that $H_0^\lozenge$ is isomorphic by $\log \otimes 1$ to $U$). We then define $H_n^\lozenge$ to be the preimage of $H_0^\lozenge$ in ${\mathbb{B}(1,|\varpi|^{1/p^n})^{\lozenge} \otimes_{\ul{\zp}} T(-1)}$. It is a finite \'{e}tale cover of $H_0^\lozenge$ sitting in the subdiagram of \eqref{eq.first-fiber-diagram-functions-structure-proof}
\begin{equation}
        \begin{tikzcd}
        H_n^{\lozenge} \arrow{r} \arrow{d}{\log} & H_0^{\lozenge} \arrow{d}{\log} \\
        (\frac{1}{p^n} U)^{\lozenge} \arrow{r}{t \mapsto p^n t} & U^{\lozenge}.
\end{tikzcd}
\end{equation}
We thus have $H^{\lozenge}=\bigcup_n H_n^{\lozenge}$, and each $H_n^{\lozenge}$ is finite \'{e}tale over $U^{\lozenge}$ via the map $(t \mapsto p^n t) \circ \log$. By the equivalence of the finite \'{e}tale sites of $U$ and $U^{\lozenge}$, see \cite[Lemma 15.6]{EtCohDiam}, each $H_n^{\lozenge}$ is of the form $\spd(A_n, A_n^+)$ where $A_n$ is a finite \'{e}tale $A=\mathcal{O}(U)$-algebra. Now, since $W$ is trivializable, we may choose $U$ to be an affinoid ball so that $(R,R^+) \to (\mathcal{O}(U), \mathcal{O}^+(U)) \to (A_n,A_n^+)$ exhibits $(R,R^+) \to (A_n,A_n^+)$ as strongly smooth. 
\end{proof}

\begin{proof}[Proof of Proposition \ref{Prop:Representable}]
It follows from Lemma \ref{Lem:GeometryOfHI} and Lemma \ref{Lem:SmoothOverFierce} that, analytically locally on $Y$, we can write $H_{\mathcal{L}} \to Y^{\lozenge}$ as $H^{\mathrm{rig}}_{\mathcal{L}} \to Y$ with $H^{\mathrm{rig}}_{\mathcal{L}} \to Y$ locally fiercely v-complete. By the fully faithfulness proved in Lemma \ref{Lem:FiercelyFullyFaithful}, this also holds globally on $Y$ since the gluing datum for the v-sheaf $H_{\mathcal{L}}$ induces a gluing datum of adic spaces. By the same fully faithfulness and the compatibility of $X \mapsto X^{\lozenge}$ with products, we see that $H^{\mathrm{rig}}_{\mathcal{L}} \to Y$ has the structure of a relative smooth commutative group in adic spaces, and it suffices to show that it is a $p$-divisible rigid analytic group. \smallskip 

For this, we note that $H_{\mathcal{L}}=\Hom_{Y^{\lozenge}}(\ul{\zp}_{Y^{\lozenge}}, H_{\mathcal{L}})$ since this is true for the other three terms in the fiber product diagram. Moreover, the natural map $[p]:H_{\mathcal{L}} \to H_{\mathcal{L}}$ is (representable in) surjective finite \'etale morphisms since this is true for the other three terms in the diagram. By Lemma \ref{Lem:FiercelyFullyFaithful} and \cite[Lemma 15.6]{EtCohDiam}, this implies that $[p]:H^{\mathrm{rig}}_{\mathcal{L}} \to H^{\mathrm{rig}}_{\mathcal{L}}$ is finite \'etale surjective. Thus $H^{\mathrm{rig}}_{\mathcal{L}} \to Y$ is a $p$-divisible rigid analytic group.
\end{proof}
The construction $\mathcal{L} \to H_{\mathcal{L}}^{\mathrm{rig}}$ defines a functor from $\LAE(Y)$ to the category of $p$-divisible rigid analytic groups over $Y$.

\subsubsection{Classification II} \label{subsub:Classification} A result of Fargues, see \cite[Théorème 0.1]{FarguesI}, tells us that the functor $\mathcal{L} \mapsto H_{\mathcal{L}}^{\mathrm{rig}}$ is an equivalence of categories when $Y=\spa(K, \mathcal{O}_K)$ with $K$ a field. In the forthcoming PhD thesis of Gerth \cite{GerthThesis}, this will be generalized to work over an arbitrary perfectoid $Y$. 

\subsection{Families over small v-stacks}\label{Sub:RigidPDivII}  Let $Y$ be a small v-stack (see \S \ref{Sub:Prelim}).
\begin{Def} \label{Def:PdivisibleGroup}
  We say that a v-sheaf of commutative groups $H \to Y$ is a \emph{$p$-divisible v-group} if for all affinoid perfectoid spaces $Y'=\spd(A,A^+) \to Y$, the base change $H_{Y'} \to Y'$ is (the diamond associated to) a $p$-divisible rigid analytic group over $\spa(A,A^+)$.
\end{Def}
\subsubsection{} Let $Y$ be a small v-stack and let $H \to Y$ be a $p$-divisible v-group. 
\begin{Lem} \label{Lem:LieLog}
There is a locally free sheaf of $\mathcal{O}_Y$-modules $\operatorname{Lie} H$ and a short exact sequence
\begin{align}
    0 \to H[p^{\infty}] \to H \xrightarrow{\operatorname{Log}_{H}} \operatorname{Lie} H \to 0
\end{align}
of v-sheaves in abelian groups on $Y_{\mv}$. The formation of $(\operatorname{Lie} H, \operatorname{Log}_{H})$ is functorial in morphisms of $p$-divisible v-groups over $Y$ and commutes with pullback along $Y' \to Y$.
\end{Lem}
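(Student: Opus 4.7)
The plan is to construct $(\operatorname{Lie} H, \operatorname{Log}_H)$ locally in the v-topology on $Y$ using the Heuer--Xu construction summarized in \S\ref{subsub:HeuerXu}, and then glue. Fix a v-cover of $Y$ by affinoid perfectoids $Y' = \spd(A,A^+) \to Y$; by Definition \ref{Def:PdivisibleGroup} the base change $H_{Y'}$ is the diamond of a $p$-divisible rigid analytic group $H^{\mathrm{rig}}_{Y'} \to \spa(A,A^+)$. Applying the Heuer--Xu theory to $H^{\mathrm{rig}}_{Y'}$ yields a locally free $\mathcal{O}_{\spa(A,A^+)}$-module $\operatorname{Lie} H^{\mathrm{rig}}_{Y'}$ and a logarithm map $\operatorname{Log}:H^{\mathrm{rig}}_{Y'} \to \operatorname{Lie} H^{\mathrm{rig}}_{Y'}$ fitting into an exact sequence with kernel $H^{\mathrm{rig}}_{Y'}[p^\infty]$. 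Surjectivity of $\operatorname{Log}$ as a morphism of adic spaces is part of \S\ref{subsub:HeuerXu}, given that $[p]$ is (finite \'etale) surjective on $H^{\mathrm{rig}}_{Y'}$.

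Passing to diamonds, we obtain a short exact sequence of v-sheaves on $Y'_{\mv}$
\[
0 \to H_{Y'}[p^\infty] \to H_{Y'} \xrightarrow{\operatorname{Log}} (\operatorname{Lie} H^{\mathrm{rig}}_{Y'})^{\lozenge} \to 0,
\]
where surjectivity on the right holds in the v-topology because $\operatorname{Log}$ is already surjective as a morphism of smooth adic spaces and v-surjectivity is detected pointwise. The diamond $(\operatorname{Lie} H^{\mathrm{rig}}_{Y'})^{\lozenge}$ carries a natural structure of locally free $\mathcal{O}_{Y'}$-module coming from the identification of $\mathbb{G}_{a}^{\lozenge}$ with (the additive v-sheaf underlying) $\mathcal{O}$, and the local trivializations of $\operatorname{Lie} H^{\mathrm{rig}}_{Y'}$ diamantify to local trivializations in the v-topology on $Y'$.

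For the gluing, the key input is that each step of the above construction --- the passage $\mathcal{H} \rightsquigarrow \mathcal{H}^{\mathrm{ad}}_\eta$ of Lemma \ref{Lem:RigidFiberFormalGroup}, the Heuer--Xu subgroup $H\langle p^{\infty}\rangle$, the Lie algebra as tangent space at the identity, and the power-series logarithm as in Example \ref{Eg:RigidGM} --- is functorial in the group and compatible with base change in the adic space, as recorded in \S\ref{subsub:HeuerXu}. In particular, for any morphism $Y'' \to Y'$ of affinoid perfectoids over $Y$, the data constructed from $H_{Y''}$ is canonically the pullback of the data for $H_{Y'}$. Combined with v-descent for $\mathcal{O}$-modules and for short exact sequences of v-sheaves, this produces a locally free $\mathcal{O}_Y$-module $\operatorname{Lie} H$ on $Y_{\mv}$ together with the desired short exact sequence; functoriality in morphisms $H_1 \to H_2$ of $p$-divisible v-groups and compatibility with pullback along $Y_0 \to Y$ follow from the corresponding statements locally.

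The main obstacle is ensuring that the base change compatibility of the Heuer--Xu construction, which is phrased for morphisms of rigid analytic / sousperfectoid adic spaces, suffices to produce descent data for the full v-topology (including, e.g., pro-\'etale covers by perfectoid spaces). For a v-cover $\spd(B,B^+) \to \spd(A,A^+)$ coming from a map of affinoid perfectoids, the base change of $H^{\mathrm{rig}}_{Y'}$ along $\spa(A,A^+) \to \spa(B,B^+)$ is again a $p$-divisible rigid analytic group, and the logarithm and Lie algebra are defined by manifestly base change-compatible recipes, so no additional input is needed here. The local freeness on $Y_{\mv}$ need only be checked after v-pullback to an affinoid perfectoid, where it reduces to the analogous assertion on the adic side.
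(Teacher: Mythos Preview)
Your proposal is correct and follows essentially the same approach as the paper: reduce by v-descent to the case where $Y$ is affinoid perfectoid, and then invoke the Heuer--Xu results recorded in \S\ref{subsub:HeuerXu}. The paper's proof is simply the one-line version of what you have written out in detail.
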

\begin{proof}
By v-descent, it suffices to prove this when $Y=\spd(A,A^+)$ with $(A,A^+)$ affinoid perfectoid. But then it is established in \S \ref{subsub:HeuerXu}.
\end{proof}
\subsubsection{} Let $Y=\spd(R,R^+)$, where $(R,R^+)$ is a fiercely v-complete Huber pair over $(\qp,\zp)$. 
\begin{Lem} \label{Lem:RepresentableFierce}
  If $H \to \spd(R,R^+)$ is a $p$-divisible v-group such that $\operatorname{Lie} H$ comes from an analytic vector bundle over $\spa(R,R^+)$, then there is a unique commutative group $H^{\mathrm{rig}} \to \spa(R,R^+)$ in smooth analytic adic spaces over $\spa(R,R^+)$ whose associated diamond is $H$. 
\end{Lem}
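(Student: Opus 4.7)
The plan is to mimic the strategy used to prove Proposition \ref{Prop:Representable}, replacing its explicit fiber-product construction by the intrinsic logarithm short exact sequence from Lemma \ref{Lem:LieLog}:
\[ 0 \to H[p^\infty] \to H \xrightarrow{\log_H} \operatorname{Lie} H \to 0. \]
Working analytically locally on $\spa(R,R^+)$, I will assume that $\operatorname{Lie} H$ is the diamond of a trivial vector bundle $\mathcal{V} = \mathbb{G}_{a,\spa(R,R^+)}^n$. Setting $\varpi = 2p$, let $U \subset \mathcal{V}$ be the closed polydisk of radius $|\varpi|$ around $0$; then $\spa(R,R^+) \to U$ is strongly smooth, so $U$ is fiercely v-complete by Lemma \ref{Lem:SmoothOverFierce}.

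Next, I define the v-subsheaf $H_0 := \log_H^{-1}(U^\lozenge) \subset H$ and claim that $\log_H$ restricts to an isomorphism $H_0 \xrightarrow{\sim} U^\lozenge$. This can be checked after any v-cover $\spd(A,A^+) \to \spd(R,R^+)$; after such a base change, $H$ is the diamond of a $p$-divisible rigid analytic group $\tilde H$ over $\spa(A,A^+)$ by Definition \ref{Def:PdivisibleGroup}, and the claim reduces to the standard fact that the rigid logarithm of $\tilde H$ restricts to an isomorphism on the preimage of the disk of radius $|\varpi|$ (as in Example \ref{Eg:RigidGM} and the proof of Lemma \ref{Lem:GeometryOfHI}). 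Then for $n \geq 0$, I set $H_n := [p^n]^{-1}(H_0) \subset H$. Each map $[p^n] \colon H_n \to H_0$ is a surjective finite \'etale morphism of v-sheaves; by the equivalence of the finite \'{e}tale sites of $U$ and $U^\lozenge$ (\cite[Lemma 15.6]{EtCohDiam}) applied to the target $H_0 \cong U^\lozenge$, each $H_n$ is canonically of the form $V_n^\lozenge$ for a unique finite \'{e}tale cover $V_n \to U$, which is again fiercely v-complete by Lemma \ref{Lem:SmoothOverFierce}.

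The inclusions $H_n \hookrightarrow H_{n+1}$, as well as the commutative group structure on $H$, are morphisms between diamonds of locally fiercely v-complete adic spaces, so by the fully faithfulness of Lemma \ref{Lem:FiercelyFullyFaithful} they are induced by unique open immersions $V_n \hookrightarrow V_{n+1}$ and a commutative group structure on $V := \varinjlim_n V_n$ over $\spa(R,R^+)$. Topological $p$-torsion for $H$ (see \S\ref{subsub:HeuerXu}) shows that every section of $H$, v-locally on its base, lands in $H_n$ for some $n$, so $V^\lozenge \cong H$. Setting $H^\mathrm{rig} := V$ yields a smooth relative commutative group over $\spa(R,R^+)$ whose associated diamond is $H$. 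Finally, the local pieces built on affinoid opens of $\spa(R,R^+)$ glue to a global rigid analytic group via Lemma \ref{Lem:FiercelyFullyFaithful} (which upgrades the diamond-valued gluing data for $H$ to gluing data of adic spaces), and the same lemma yields the uniqueness statement.

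The main obstacle will be the verification in the second paragraph that $\log_H$ becomes an isomorphism on $H_0$ after v-cover --- this is where the general $p$-divisibility hypothesis (as opposed to an a priori Hodge--Tate presentation as in Proposition \ref{Prop:Representable}) needs to be unpacked. Once that local-model step is in place, all the remaining work is descent along Lemma \ref{Lem:FiercelyFullyFaithful} and Lemma \ref{Lem:SmoothOverFierce}.
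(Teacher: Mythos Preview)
Your overall strategy---build $H^{\mathrm{rig}}$ locally as an increasing union of opens $H_n$ finite \'etale over a disk in $\operatorname{Lie} H$, then glue via Lemma \ref{Lem:FiercelyFullyFaithful}---is exactly what the paper's one-line proof intends. However, your definition of $H_0$ contains a genuine error: you set $H_0 := \log_H^{-1}(U^\lozenge)$ and claim $\log_H\colon H_0 \xrightarrow{\sim} U^\lozenge$, but the \emph{full} preimage of any neighborhood of $0$ contains the entire kernel $H[p^\infty]$, so this map is never injective unless $H$ is \'etale. The ``standard fact'' you cite from Example \ref{Eg:RigidGM} says $\log$ is an isomorphism \emph{from} $\mathbb{B}(1,|\varpi|)$ \emph{to} $\mathbb{B}(0,|\varpi|)$; it does not say the full preimage of $\mathbb{B}(0,|\varpi|)$ equals $\mathbb{B}(1,|\varpi|)$---indeed, already for $\widehat{\mathbb{G}}_m$ that preimage also contains every $p$-power root of unity. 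Correspondingly, in Lemma \ref{Lem:GeometryOfHI} the open $H_0$ is carved out inside the ambient $T(-1)\otimes\mathbb{B}(1,|\varpi|)^\lozenge$ via the Hodge--Tate embedding, not as a full log-preimage in $H_{\mathcal{L}}$.

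What is missing is an intrinsic construction, over the fiercely v-complete base, of the ``small'' open $H_0 \subset H$ through the identity section on which $\log_H$ is an isomorphism---equivalently, a section of $\log_H$ over a small disk $U \subset W$. After passing to a perfectoid cover such an open exists by the rigid theory (\S\ref{subsub:HeuerXu}: $\log$ is \'etale with the identity section lying over $0$), but you then need to argue that this open is canonical enough to be stable under the descent data and hence to descend to a sub-v-sheaf over $\spd(R,R^+)$. Once $H_0$ is correctly in hand, the rest of your argument (defining $H_n := [p^n]^{-1}(H_0)$, representing each $H_n$ via \cite[Lemma 15.6]{EtCohDiam}, and gluing via Lemma \ref{Lem:FiercelyFullyFaithful}) goes through as written.
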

\begin{proof}
This follows from Proposition \ref{Prop:Representable} and its proof, using Lemma \ref{Lem:LieLog}. 
\end{proof}
This gives us a theory of $p$-divisible rigid analytic groups over arbitrary fiercely v-complete Huber pairs over $(\qp,\zp)$.
\begin{Lem} \label{Lem:vDescentDiamantine}
Let $H \to Y$ be a v-sheaf of commutative groups. If there is a v-cover $Y' \to Y$ such that $H_{Y'} \to Y'$ is (the diamond associated to) a $p$-divisible rigid analytic group, then $H \to Y$ is a $p$-divisible v-group.
\end{Lem}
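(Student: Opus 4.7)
The condition to be verified --- that $H_{Y''}$ is (the diamond of) a $p$-divisible rigid analytic group for every affinoid perfectoid $Y'' = \spd(A, A^+) \to Y$ --- is one I would check by v-descent from the given cover $Y' \to Y$. Fix such a $Y''$; form the pullback $Y'' \times_Y Y'$ and choose an affinoid perfectoid v-cover $Y''' = \spd(B, B^+) \to Y'' \times_Y Y'$. Then $Y''' \to Y''$ is itself an affinoid perfectoid v-cover, and the composite $Y''' \to Y'$ exhibits $H_{Y'''}$ as the base change of the given $p$-divisible rigid analytic group $H_{Y'} \to Y'$ along a morphism of sousperfectoid adic spaces. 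Each of the defining properties of a $p$-divisible rigid analytic group (relative smoothness, commutative group structure, $[p]$ finite \'etale surjective, and the exhaustion $H = H\langle p^{\infty}\rangle$) is stable under base change along such morphisms, so $H_{Y'''}$ is the diamond of a $p$-divisible rigid analytic group over $\spa(B, B^+)$.

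It then remains to descend this structure along the affinoid perfectoid v-cover $Y''' \to Y''$. The route I would take is via the classification of \S\ref{subsub:Classification}: over perfectoid bases (extending Fargues' theorem \cite[Th\'eor\`eme 0.1]{FarguesI} via the forthcoming thesis \cite{GerthThesis}), the category of $p$-divisible rigid analytic groups is equivalent to the category $\LAE$ of \'etale Hodge--Tate triples $(T, W, \alpha)$. Each constituent of a triple --- a $\zp$-local system, an analytic vector bundle, and a morphism of associated $\mathcal{O}$-modules --- satisfies v-descent individually, so $\LAE$ is a v-stack of categories. The descent datum on $H_{Y''}$ inherited from the fact that it is a v-sheaf translates to a descent datum on the triple classifying $H_{Y'''}$, which descends to a triple on $Y''$; the resulting $p$-divisible rigid analytic group on $\spa(A, A^+)$ has diamond canonically isomorphic to $H_{Y''}$ by construction of the functor $\mathcal{L} \mapsto H_{\mathcal{L}}^{\mathrm{rig}}$ in \S\ref{subsub:ClassificationConstruction}.

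The main obstacle is this v-descent for $p$-divisible rigid analytic groups along affinoid perfectoid covers, which via the classification route depends on \cite{GerthThesis}. An alternative that stays within the tools already developed in this paper would be to descend the underlying smooth adic space directly: by Lemma \ref{Lem:GeometryOfHI} its geometry, locally on an affinoid in the base, is controlled by an exhausting system of affinoid opens $H_n$ built as finite \'etale covers of opens in the Lie algebra, and these can be descended along $Y''' \to Y''$ by combining the equivalence of finite \'etale sites \cite[Lemma 15.6]{EtCohDiam} with v-descent of vector bundles on affinoid perfectoid spaces; the commutative group structure and the identification with $H_{Y''}$ as a v-sheaf are then recovered from the descent datum on $H_{Y''}$ itself.
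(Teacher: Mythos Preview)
Your primary route goes through the equivalence between $p$-divisible rigid analytic groups and Hodge--Tate triples over affinoid perfectoid bases, which in this paper is Conjecture~\ref{Conj:Gerth} (deferred to \cite{GerthThesis}). Invoking it here would make Lemma~\ref{Lem:vDescentDiamantine} depend on a result the paper does not establish, so this is not a self-contained argument in the paper's framework.

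The paper instead argues directly and avoids classification entirely. It checks that each constituent of the definition is v-local as a property of the v-sheaf $H$: the identification $\Hom_Y(\ul{\zp}_Y,H)\to H$ is a morphism of v-sheaves, so being an isomorphism is v-local; $[p]$ being finite \'etale surjective is v-local by \cite[Proposition~10.11.(iii)]{EtCohDiam}; and the logarithm of Lemma~\ref{Lem:LieLog} (the locally free $\mathcal{O}_Y$-module $\Lie H$ together with the short exact sequence) descends by v-descent. With the logarithm in hand over any affinoid perfectoid $Y''\to Y$, representability of $H_{Y''}$ by a smooth rigid analytic group is then obtained ``as in the proof of Lemma~\ref{Lem:RepresentableFierce}'' (which unwinds to the construction in Proposition~\ref{Prop:Representable}): one rebuilds the exhausting affinoid subgroups $H_n$ from the logarithm rather than descending them.

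Your alternative sketch is close in spirit, but it proposes to descend the $H_n$'s of Lemma~\ref{Lem:GeometryOfHI} along $Y'''\to Y''$. That lemma is stated for groups of the form $H_{\mathcal{L}}$, so applying it over $Y'''$ already presupposes knowing $H_{Y'''}\simeq H_{\mathcal{L}}$, i.e.\ the classification again. The paper sidesteps this by first descending only the logarithm data and then reconstructing the $H_n$'s afresh over $Y''$; no descent of the adic-space pieces is needed.
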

\begin{proof}
The property that $\Hom_{Y}(\ul{\zp}_{Y}, H) \to H$ is an isomorphism can be checked v-locally, because it is a map of v-sheaves. By \cite[Proposition 10.11.(iii)]{EtCohDiam}, the same is true for the property that $[p]$ is finite \'etale surjective. The existence of the logarithm as in Lemma \ref{Lem:LieLog} can also be proved v-locally, and the representability over perfectoid spaces follows from the existence of the logarithm as in the proof of Lemma \ref{Lem:RepresentableFierce}.
\end{proof}
If $Y$ is a sousperfectoid adic space over $\spa \qp$ or a rigid space over a non-archimedean extension of $\qp$, and $H \to Y$ is a $p$-divisible rigid analytic group, then $H^{\lozenge} \to Y^{\lozenge}$ is a $p$-divisible v-group; this follows as in Lemma \ref{Lem:vDescentDiamantine}. We note however that $H \mapsto H^{\lozenge}$ is generally not essentially surjective.

\subsubsection{Classification} \label{subsub:ClassificationII} Let $Y$ be a small v-stack.
\begin{Def} \label{Def:HodgeTateTripleVstacks}
    Let $\LA(Y)$ be the category of Hodge--Tate triples consisting of triples $(T,W,\alpha)$, where $T$ is a $\zp$-local system on $Y_{v}$, where $W$ is a locally free sheaf of $\mathcal{O}_Y$-modules on $Y_{\mathrm{v}}$, and where $\alpha:W \to T(-1) \otimes_{\ul{\zp}} \mathcal{O}_Y$ is a morphism. For $\mathcal{L}=(T,W,\alpha) \in \LA(Y)$, we write $H_{\mathcal{L}}$ for the fiber product
\begin{equation} \label{Eq:FiberProductII}
    \begin{tikzcd}
        H_{\mathcal{L}} \arrow{d} \arrow{r} \arrow[dr, phantom, "\scalebox{1.5}{$\lrcorner$}" , very near start, color=black] &  T(-1) \otimes_{\ul{\zp}} \widehat{\mathbb{G}}_{m, \eta,Y}^{\lozenge} \arrow{d}{1 \otimes \log^{\lozenge}}\\
        W \arrow{r}{\alpha} & T(-1) \otimes_{\ul{\zp}} \mathbb{G}_{a,Y}^{\lozenge}.
    \end{tikzcd}
\end{equation}
\end{Def}
\begin{Lem} \label{Lem:CompatibleBaseChange}
    The functor $\mathcal{L} \mapsto H_{\mathcal{L}}$ commutes with basechange along $Y' \to Y$.
\end{Lem}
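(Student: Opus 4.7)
The plan is to argue that each ingredient of the defining fiber product diagram \eqref{Eq:FiberProductII} is formed in a way compatible with pullback along $f \colon Y' \to Y$, and then to invoke that inverse image commutes with finite limits. The whole lemma should reduce to a routine unwinding of definitions.

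First, I would note that for a morphism of small v-stacks $f \colon Y' \to Y$, the pullback functor on sheaves of sets commutes with finite limits, hence with fiber products; for $\mathcal{O}$-modules, $f^{-1}$ agrees with $f^*$ by Lemma \ref{Lem:PullBackO}, so there is no ambiguity. Thus it suffices to check that each of the four corners and two arrows of \eqref{Eq:FiberProductII} base-change to the corresponding data associated to $f^*\mathcal{L} = (f^{-1}T, f^*W, f^*\alpha)$.

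For the two bottom corners, $W$ pulls back to $f^*W$ by definition of $f^*\mathcal{L}$, and $T(-1) \otimes_{\ul{\zp}} \mathbb{G}_{a,Y}^\lozenge$ pulls back to $(f^{-1}T)(-1) \otimes_{\ul{\zp}} \mathbb{G}_{a,Y'}^\lozenge$ because $\mathbb{G}_{a,Y}^\lozenge$ is itself defined as the pullback of the absolute $\mathbb{G}_{a}^\lozenge$ from $\spd\qp$, and because the tensor product of a $\ul{\zp}$-local system with a $\ul{\zp}$-module commutes with inverse image (this can be checked v-locally on $Y$, where $T$ is free, and then it is trivial). The same argument applies to the top-right corner with $\widehat{\mathbb{G}}_{m,\eta,Y}^\lozenge$ in place of $\mathbb{G}_{a,Y}^\lozenge$. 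The two maps $1 \otimes \log^\lozenge$ and $\alpha$ are clearly compatible with pullback, the former because $\log^\lozenge$ is defined absolutely and the latter by definition of $f^*\alpha$.

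Putting this together, applying $f^*$ to the entire diagram \eqref{Eq:FiberProductII} yields the corresponding diagram for $f^*\mathcal{L}$, and the fiber product is preserved. I do not anticipate any real obstacle: the only point that even requires a moment's thought is the compatibility of $T(-1) \otimes_{\ul{\zp}} (-)$ with pullback, and this is handled by working v-locally where $T$ becomes trivial.
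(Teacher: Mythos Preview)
Your proposal is correct and follows essentially the same approach as the paper's proof, which is a one-line appeal to the fact that tensor products of sheaves of modules and fiber products of sheaves both commute with base change. You have simply spelled out in detail the verification that each corner and arrow of the defining diagram is compatible with pullback, which is exactly what the paper's terse statement encodes.
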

\begin{proof}
    This follows from the fact that tensor products of sheaves of modules and fiber products of sheaves both commute with base change. 
\end{proof}
\begin{Lem} \label{Lem:LocSpat}
Let $Y$ be a small v-stack. If $\mathcal{L} \in \LA(Y)$, then the v-sheaf in commutative groups $H_{\mathcal{L}} \to Y$ is a $p$-divisible v-group. 
\end{Lem}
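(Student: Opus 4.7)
The plan is to reduce the statement to the case already handled by Proposition \ref{Prop:Representable}. First, I would use Lemma \ref{Lem:CompatibleBaseChange} to observe that the formation of $H_{\mathcal{L}}$ commutes with base change in $Y$; combined with the fact that being a $p$-divisible v-group is checked after pulling back along affinoid perfectoid $\spd(A,A^+) \to Y$ (Definition \ref{Def:PdivisibleGroup}), this lets me replace $Y$ by $\spd(A,A^+)$ for an arbitrary affinoid perfectoid $(A,A^+)$. By Lemma \ref{Lem:Fierce}, such $(A,A^+)$ is fiercely v-complete, placing us in the setting where Proposition \ref{Prop:Representable} is available.

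Next, I would invoke Lemma \ref{Lem:vDescentDiamantine} to reduce to the case where the data is trivialized. Since $T$ is a $\zp$-local system and $W$ is a locally free $\mathcal{O}_Y$-module on $Y_{\mathrm{v}}$, I can pass to an affinoid perfectoid v-cover $\spd(B,B^+) \to \spd(A,A^+)$ over which both $T$ and $W$ become free. After this refinement, $W$ is a free $\mathcal{O}_{\spd(B,B^+)}$-module and in particular comes from a trivial analytic vector bundle on $\spa(B,B^+)$, so $\mathcal{L}=(T,W,\alpha)$ genuinely defines an object of $\LAE(\spa(B,B^+))$ as in Definition \ref{Def:EtaleHodgeTateTriple}. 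The fiber product diagram \eqref{Eq:FiberProductII} defining $H_{\mathcal{L}}$ coincides with the diagram \eqref{Eq:FiberProduct} used there, so Proposition \ref{Prop:Representable} exhibits $H_{\mathcal{L}}$ over $\spd(B,B^+)$ as the diamond of a $p$-divisible rigid analytic group. Descending via Lemma \ref{Lem:vDescentDiamantine} then gives the conclusion over $\spd(A,A^+)$, and hence over the original $Y$.

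Almost nothing is expected to be hard here; the main technical input has been packaged into Proposition \ref{Prop:Representable} (which produces the rigid analytic object and checks finite \'etaleness and surjectivity of $[p]$) and Lemma \ref{Lem:vDescentDiamantine} (which lets the $p$-divisibility condition descend). The only point requiring a moment of care is confirming that a locally free $\mathcal{O}_Y$-module on $Y_{\mathrm{v}}$ for $Y=\spd(A,A^+)$ can be trivialized by a further affinoid perfectoid v-cover, which is immediate from the definition of locally free together with the fact that arbitrary v-covers of affinoid perfectoids can be refined to affinoid perfectoid ones.
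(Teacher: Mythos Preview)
Your proof is correct and follows essentially the same strategy as the paper: reduce to affinoid perfectoid via Lemma~\ref{Lem:CompatibleBaseChange}, then invoke Proposition~\ref{Prop:Representable}. The only minor difference is in how you handle the fact that $W$ is a priori only a v-locally free $\mathcal{O}_Y$-module: the paper cites \cite[Theorem 3.5.8]{KedlayaLiuII} to deduce that over an affinoid perfectoid such a $W$ is automatically \'etale-locally free (hence an object of $\LAE$), so Proposition~\ref{Prop:Representable} applies directly over $\spd(A,A^+)$; you instead pass to a further affinoid perfectoid v-cover trivializing $W$ and then descend the $p$-divisibility via Lemma~\ref{Lem:vDescentDiamantine}. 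Both routes are valid; yours avoids the Kedlaya--Liu descent input at the cost of one extra step.
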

\begin{proof}
By Lemma \ref{Lem:CompatibleBaseChange}, it suffices to prove representability when $Y=\spd(A,A^+)$ with $(A,A^+) \in \perfd$. In this case, the sheaf $W$ of $\mathcal{O}_Y$-modules is free \'etale locally on $Y$ by \cite[Theorem 3.5.8]{KedlayaLiuII}, and the result is Proposition \ref{Prop:Representable}.
\end{proof}

\subsubsection{} If $Y$ is a locally spatial diamond in the sense of \cite[Definition 11.17]{EtCohDiam}, then we define the full subcategory $\LAE(Y) \subset \LA(Y)$ of \'etale Hodge--Tate triples to be those triples $(T,W,\alpha)$ with $W$ an \'etale vector bundle (using the \'etale site of \cite[Definition 14.1]{EtCohDiam}). If $X$ is a sousperfectoid adic space over $\spa \qp$ or a rigid space over a non-archimedean extension of $\mathbb{Q}_p$, then, by \cite[Lemma 15.6]{EtCohDiam}, $\LAE(X^{\lozenge})=\LAE(X)$, where the latter was defined in Definition \ref{Def:EtaleHodgeTateTriple}. 
\begin{Conj} \label{Conj:Gerth}
    Let $Y$ be a small v-stack. The functor $\mathcal{L}=(T,W,\alpha) \to H_{\mathcal{L}}$ is an equivalence of categories between $\LA(Y)$ and the category of $p$-divisible $v$-groups over $Y$.
\end{Conj}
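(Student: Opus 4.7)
The plan is to combine v-descent on $Y$ with the classification of $p$-divisible rigid analytic groups over an affinoid perfectoid base, namely the forthcoming result of Gerth \cite{GerthThesis} referenced in \S\ref{subsub:Classification}, which generalizes Fargues' classification \cite[Théorème 0.1]{FarguesI} from a field to a perfectoid base. Granted this, the conjecture over a general small v-stack should reduce to descent bookkeeping. The first step is to verify that both categories satisfy v-descent and that $\mathcal{L} \mapsto H_{\mathcal{L}}$ is compatible with pullback; the latter is already Lemma~\ref{Lem:CompatibleBaseChange}. Descent for $\LA(Y)$ follows from descent of $\zp$-local systems, of sheaves of $\mathcal{O}_Y$-modules, and of morphisms between them. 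Descent for morphisms of $p$-divisible v-groups is immediate, and for objects it suffices to note that being a $p$-divisible v-group is a v-local condition on the underlying v-sheaf of groups, compare Lemma~\ref{Lem:vDescentDiamantine}.

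Full faithfulness then reduces to the affinoid perfectoid case: for $\mathcal{L}_1, \mathcal{L}_2 \in \LA(Y)$, both presheaves $Y' \mapsto \hom_{\LA(Y')}(\mathcal{L}_{1,Y'}, \mathcal{L}_{2,Y'})$ and $Y' \mapsto \hom_{Y'}(H_{\mathcal{L}_1, Y'}, H_{\mathcal{L}_2, Y'})$ are v-sheaves on $Y$, and Gerth's theorem identifies them after pullback to any affinoid perfectoid cover. For essential surjectivity, I would construct a candidate triple $\mathcal{L}_H = (T_p H, \Lie H, \alpha)$ directly from a $p$-divisible v-group $H \to Y$: the Tate module $T_p H := \varprojlim_n H[p^n]$ is a $\zp$-local system (checked v-locally from the definition of a $p$-divisible rigid analytic group), $\Lie H$ is the locally free $\mathcal{O}_Y$-module of Lemma~\ref{Lem:LieLog}, and $\alpha \colon \Lie H \to T_p H(-1) \otimes \mathcal{O}_Y$ is the Hodge--Tate map obtained from the universal cover $\widetilde{H} := \varprojlim_{[p]} H$ together with the logarithm of Lemma~\ref{Lem:LieLog}. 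Gerth's theorem then yields $H \cong H_{\mathcal{L}_H}$ after pullback along any affinoid perfectoid cover, and by the full faithfulness just established this pointwise isomorphism descends to a global isomorphism on $Y$.

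The main obstacle is of course the perfectoid case itself, i.e., Gerth's forthcoming classification; everything above is descent bookkeeping conditional on this. The subsidiary technical point is the intrinsic construction of the Hodge--Tate map $\alpha$ purely in terms of the v-group $H$ (rather than for the special case of an $H$ coming from an integral model), together with the verification that the pullback of $\alpha$ to an affinoid perfectoid cover coincides with the $\alpha$ implicit in Gerth's description, so that the v-local isomorphisms $H \cong H_{\mathcal{L}_H}$ genuinely assemble to a global isomorphism and are not merely v-locally abstract.
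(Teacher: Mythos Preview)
Your proposal is correct and matches the paper's own discussion: the paper does not prove this statement (it is stated as a conjecture), but immediately after it observes that both $\LA(-)$ and the category of $p$-divisible v-groups are v-stacks (the latter by Lemma~\ref{Lem:vDescentDiamantine}), so the conjecture reduces to the case of a (strictly totally disconnected) perfectoid base, which is deferred to Gerth's forthcoming thesis \cite{GerthThesis}. Your write-up is more explicit about the descent bookkeeping and the construction of the candidate inverse $H \mapsto (T_p H, \Lie H, \alpha)$, but the strategy is identical.
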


Note that both categories are the evaluation of a v-stack on $Y$: For $\LA(Y)$ this is clear from the definition and for $p$-divisible v-groups this is Lemma \ref{Lem:vDescentDiamantine}. Thus it suffices to prove Conjecture \ref{Conj:Gerth} for $Y$ a (strictly totally disconnected) perfectoid space; this will be done in the forthcoming PhD thesis of Gerth \cite{GerthThesis}.

\subsection{\texorpdfstring{$p$}{p}-divisible v-groups of finite height} \label{Sub:FiniteHeight} Let $Y$ be a small v-stack. 
\begin{Def} \label{Def:FiniteHeight}
    We say that a triple $\mathcal{L}=(T,W,\alpha) \in \LA(Y)$ has \emph{finite height} if $\alpha$ is injective. Note that this condition is stable under pullback along $Y' \to Y$. We define $\LAD(Y) \subset \LA(Y)$ to be the full subcategory of triples $\mathcal{L}=(T,W,\alpha)$ of finite height. We say that a $p$-divisible v-group $H \to Y$ has \emph{finite height} if $H=H_{\mathcal{L}}$ with $\mathcal{L}$ of finite height. 
\end{Def}
If $H$ arises as the generic fiber of a $p$-divisible group, then it is of finite height, see Proposition \ref{Prop.TateModule}. As a special case, this includes the generic fibers of $p$-divisible (equivalently, finite height) formal groups; this explains our choice of terminology. 
\begin{Lem} \label{Lem:LocalDirectSummand}
Let $Y$ be a small v-stack and $\mathcal{L}=(T,W,\alpha) \in \LA(Y)$. The triple $\mathcal{L}$ has finite height if and only if $\alpha$ realizes $W$ as a local direct summand of $T(-1) \otimes_{\ul{\zp}} \mathcal{O}_Y$. In this case, $\mathrm{coker}(\alpha)$ is a locally free $\mathcal{O}_Y$-module. 
\end{Lem}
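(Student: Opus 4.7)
The ``if'' direction is immediate: if $W$ is v-locally a direct summand of $V := T(-1) \otimes_{\ul{\zp}} \mathcal{O}_Y$, then $\alpha$ is v-locally a split injection, hence injective as a morphism of v-sheaves, so $\mathcal{L}$ has finite height. Moreover, a direct summand of a locally free sheaf is itself locally free, which handles the final assertion about $\mathrm{coker}(\alpha)$. So the real content is the reverse direction, and my plan is to reduce to an affinoid perfectoid base where $T$ and $W$ are free, and then conclude via a pointwise-rank argument.

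For the ``only if'' direction, both being a local direct summand and the cokernel being locally free may be verified v-locally. I therefore reduce to $Y = \spd(A,A^+)$ with $(A,A^+)$ affinoid perfectoid. After a further v-cover I may assume $T \cong \ul{\mathbb{Z}_p}^m$ is free, and then using \cite[Theorem 3.5.8]{KedlayaLiuII} together with analytic localization on $\spa(A,A^+)$, I may further assume $W \cong \mathcal{O}_Y^n$ is free. Then $V \cong \mathcal{O}_Y^m$ and $\alpha$ corresponds to an $m \times n$ matrix $M$ with entries in $A$; the v-sheaf-theoretic injectivity of $\alpha$ translates to the statement that for every affinoid perfectoid $(B,B^+)$ over $(A,A^+)$, the induced map $M \colon B^n \to B^m$ is injective.

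Given this, for each classical point $y \in \spa(A,A^+)$ the completed residue field $k(y)$ is a perfectoid non-archimedean field, and the natural morphism $\spd(k(y), k(y)^+) \to Y$ allows me to apply the injectivity hypothesis to conclude that $M$ has rank $n$ at $y$. Hence some $n \times n$ minor $M'$ of $M$ has $\det(M')$ a unit on a rational neighborhood $U_y$ of $y$; on $U_y$ the composition of $(M')^{-1}$ with projection onto the corresponding $n$ coordinates is an explicit retraction of $\alpha|_{U_y}$, exhibiting $W|_{U_y}$ as a direct summand of $V|_{U_y}$. Since the $\{U_y\}$ form an analytic (hence v-) cover of $\spa(A,A^+)$, we conclude that $\alpha$ realizes $W$ as a local direct summand of $V$, and the cokernel, being v-locally a direct summand of the locally free $V$, is itself locally free.

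The main obstacle I anticipate is the translation from the sheaf-theoretic injectivity of $\alpha$ to the pointwise rank statement for the matrix $M$: this amounts to verifying that the kernel v-sheaf of $\alpha$ evaluates correctly on perfectoid test objects such as the completed residue field at a point, which follows because kernels in $Y_{\mv}$ commute with restriction to slice sites. Once this is in hand, the remainder is a standard minor-plus-rational-localization argument of the sort familiar from the theory of vector bundles on rigid analytic spaces.
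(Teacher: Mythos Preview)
Your proof is correct and takes essentially the same route as the paper: reduce to affinoid perfectoid with $T$ and $W$ free, use injectivity of $\alpha$ at points to see the matrix has full rank fiberwise, and conclude the local splitting. The paper tests at geometric points $\Spa(C,C^+)$ lying over each $x \in \Spa(A,A^+)$ (rather than asserting that completed residue fields are themselves perfectoid) and then invokes \cite[Proposition~2.8]{KedlayaLiu} to pass from constant fiber rank of $\coker(f)$ to projectivity, in place of your explicit minor argument.
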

\begin{proof}
The if direction is trivial. Thus, let us assume $\alpha$ is injective. We may assume that $Y=\Spd(A,A^+)$ is perfectoid and that $W$ has constant rank over $Y$. Via taking global sections, the category of locally free of finite rank $\mathcal{O}_Y$-modules is equivalent to the category of finite projective $A$ modules, see \cite[Theorem 3.5.8]{KedlayaLiuII}. Let $P=H^0(Y,W)$ and $Q=H^0(Y, T(-1) \otimes_{\ul{\zp}} \mathcal{O}_Y)$ be the associated finite projective $A$ modules, and let $f: P \hookrightarrow Q$ be the associated map. By testing on geometric points, we find that for any $x \in \Spa(A,A^+)$, the map $f \otimes_A \kappa(x): P\otimes_A \kappa(x) \rightarrow Q \otimes_A \kappa(x)$ is also injective. Thus $\coker(f) \otimes_A \kappa(x)$ has constant rank and, by \cite[Proposition 2.8]{KedlayaLiu}, $\coker(f)$ is finite projective. Thus the short exact sequence
\[ 0 \rightarrow P \rightarrow Q \rightarrow \coker f \rightarrow 0\]
is split, and any associated direct sum decomposition $Q=P \oplus \coker f$ induces the desired direct sum decomposition of sheaves
    \[ T(-1) \otimes_{\ul{\zp}} \mathcal{O}_Y = Q \otimes_A \mathcal{O}_Y = (P \otimes_A \mathcal{O}_Y) \oplus (\mathrm{coker}(f) \otimes_A \mathcal{O}_Y) = W \oplus W'. \] 
\end{proof}

\subsubsection{} The following proposition shows that the adic generic fiber of a (classical) $p$-divisible group is of finite height. Recall that given a $p$-divisible group $\mathcal{H}$ over a $p$-adic formal scheme $X$, there is a Serre-dual $p$-divisible group $\mathcal{H}^{\vee}$ and a Hodge--Tate map $T_p \mathcal{H}^{\vee} \to \omega_{\mathcal{H}}$, see for example Section \ref{subsub:HodgeTate} below.
\begin{Prop}\label{Prop.TateModule}
    Let $(R,R^+)/(\mathbb{Q}_p, \zp)$ be a fiercely v-complete Huber pair. If $\mathcal{H}$ is a $p$-divisible group over $\spf R^+$, then there is a canonical isomorphism 
    \[ \mathcal{H}_\eta^{\lozenge} \xrightarrow{\sim} H_{\mathcal{L}} \textrm{ for } \mathcal{L}={(T_p \mathcal{H}_\eta, \Lie \mathcal{H} \otimes_{R^+} \mathcal{O}_{Y},  \Lie \mathcal{H} \otimes_{R^+} \mathcal{O}_{Y} \hookrightarrow T_p \mathcal{H}_\eta(-1) \otimes_{\ul{\zp}} \mathcal{O}_{Y})}.\]
    Here $\alpha:\Lie \mathcal{H} \otimes_{R^+} \mathcal{O}_{Y} \hookrightarrow T_p \mathcal{H}_\eta(-1) \otimes_{\ul{\zp}} \mathcal{O}_{Y}$ is the dual Hodge--Tate map. 
\end{Prop}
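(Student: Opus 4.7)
Write $T = T_p \mathcal{H}_\eta$ and $W = \Lie \mathcal{H} \otimes_{R^+} \mathcal{O}_Y$, and denote by $\alpha: W \to T(-1) \otimes_{\ul{\zp}} \mathcal{O}_Y$ the dual Hodge--Tate map. The plan is to construct the comparison map into the fiber product $H_\mathcal{L}$ of \eqref{Eq:FiberProductII} directly from the two tautological pieces of data attached to $\mathcal{H}_\eta^\lozenge$ (its logarithm and the Cartier--Weil pairing), and then to verify that the comparison map is an isomorphism by comparing the logarithm short exact sequences of Lemma \ref{Lem:LieLog} on both sides.

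For the construction, I would use the universal property of the fiber product: giving a morphism $\Phi: \mathcal{H}_\eta^\lozenge \to H_\mathcal{L}$ is the same as giving a commutative square whose diagonal is $\alpha \circ \log_{\mathcal{H}_\eta}$. The left leg is the logarithm $\log_{\mathcal{H}_\eta}: \mathcal{H}_\eta^\lozenge \to W$ from Lemma \ref{Lem:LieLog}. For the top leg $\phi: \mathcal{H}_\eta^\lozenge \to T(-1)\otimes_{\ul{\zp}} \widehat{\mathbb{G}}_{m,\eta,Y}^\lozenge$, I would use Cartier duality $T_p\mathcal{H}^\vee \simeq T^*(1)$ to rewrite
\[ T(-1)\otimes_{\ul{\zp}} \widehat{\mathbb{G}}_{m,\eta,Y}^\lozenge \;\simeq\; \underline{\Hom}_{\ul{\zp}}(T_p\mathcal{H}^\vee, \widehat{\mathbb{G}}_{m,\eta,Y}^\lozenge), \]
and take $\phi$ to be adjoint to the natural pairing $\mathcal{H}_\eta^\lozenge \times T_p\mathcal{H}^\vee \to \widehat{\mathbb{G}}_{m,\eta,Y}^\lozenge$ built from the compatible system of finite-level Cartier pairings $\mathcal{H}[p^n] \times \mathcal{H}^\vee[p^n] \to \mu_{p^n}$ (using v-local surjectivity of $[p^n]$ on $\mathcal{H}_\eta^\lozenge$ to extend from torsion to arbitrary points). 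The required commutativity amounts to the classical identification of $\alpha$ with the derivative at the identity of the Cartier pairing, which is a standard compatibility.

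To prove $\Phi$ is an isomorphism, I would invoke Lemma \ref{Lem:LieLog} to obtain
\[ 0 \to T \otimes_{\ul{\zp}} \mathbb{Q}_p/\mathbb{Z}_p \to \mathcal{H}_\eta^\lozenge \xrightarrow{\log_{\mathcal{H}_\eta}} W \to 0, \]
and the analogous sequence
\[ 0 \to T(-1) \otimes_{\ul{\zp}} \mu_{p^\infty} \to H_\mathcal{L} \to W \to 0, \]
where right-exactness for $H_\mathcal{L}$ follows from the surjectivity of $\log^\lozenge$ on $\widehat{\mathbb{G}}_{m,\eta}^\lozenge$ (which holds because $[p]$ is surjective there, cf.~\S\ref{subsub:HeuerXu}). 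Under the canonical identification $T(-1)\otimes_{\ul{\zp}} \mu_{p^\infty} = T \otimes_{\ul{\zp}} \mathbb{Q}_p/\mathbb{Z}_p$, the map $\Phi$ induces the identity on both the kernel (by the finite-level Cartier duality on $\mathcal{H}[p^n] = T/p^n$) and on the cokernel $W$ (by construction), and the 5-lemma yields the result. Along the way one also obtains the injectivity of $\alpha$ (so that $\mathcal{L} \in \LAD(Y)$), via the classical surjectivity of the Hodge--Tate map for $\mathcal{H}^\vee$ together with Lemma \ref{Lem:LocalDirectSummand}, after v-descending to a perfectoid cover.

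The main obstacle is the precise construction of the pairing $\mathcal{H}_\eta^\lozenge \times T_p\mathcal{H}^\vee \to \widehat{\mathbb{G}}_{m,\eta}^\lozenge$ as a morphism of v-sheaves over a fiercely v-complete base: the tricky point is that non-torsion points of $\mathcal{H}_\eta^\lozenge$ do not pair directly with $\mathcal{H}^\vee[p^n]$, so one must either take coherent $p^n$-th roots v-locally (and check the resulting ambiguity matches a $\mu_{p^n}$-coset that is eliminated in the limit) or first v-descend to a perfectoid cover and invoke the Scholze--Weinstein classification, together with the forthcoming results of Gerth \cite{GerthThesis} referenced in \S\ref{subsub:Classification}, to conclude.
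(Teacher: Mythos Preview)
Your approach is correct and is essentially the same as the paper's: both construct the comparison map into the fiber product from the logarithm $\log_{\mathcal{H}_\eta}$ together with the natural map $\phi:\mathcal{H}_\eta^\lozenge \to T(-1)\otimes_{\ul{\zp}}\widehat{\mathbb{G}}_{m,\eta}^\lozenge$ arising from Cartier duality, and then verify the result by comparing $p$-power torsion and Lie algebras on the two sides. The paper is terser---it packages the construction of $\phi$ as ``there is a natural map $H\to H'$'' and then cites the proof of \cite[Theorem~5.2.1]{ScholzeWeinstein} for the Cartesian property---while you unpack this as a $5$-lemma on the logarithm short exact sequences; these amount to the same argument.

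Your stated obstacle, however, is overblown, and your suggestion that one might need to invoke Gerth's forthcoming classification is misplaced. The pairing $\mathcal{H}_\eta^\lozenge \times T_p\mathcal{H}^\vee_\eta \to \widehat{\mathbb{G}}_{m,\eta}^\lozenge$ already exists integrally and one simply takes its adic generic fiber. Indeed, since $\mathcal{H} = \varinjlim_n \mathcal{H}[p^n]$ as fpqc sheaves on $p$-nilpotent $R^+$-algebras, one has
\[
\Hom(\mathcal{H},\widehat{\mathbb{G}}_m) \;=\; \varprojlim_n \Hom(\mathcal{H}[p^n],\mu_{p^n}) \;=\; \varprojlim_n \mathcal{H}^\vee[p^n] \;=\; T_p\mathcal{H}^\vee,
\]
so the evaluation pairing $\mathcal{H}\times T_p\mathcal{H}^\vee \to \widehat{\mathbb{G}}_m$ over $\spf R^+$ is tautological. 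Functoriality of $(-)^{\mathrm{ad}}_\eta$ then produces $\phi$ on all of $\mathcal{H}_\eta^\lozenge$ at once, with no need to extract $p^n$-th roots of non-torsion points and no appeal to the general classification of $p$-divisible v-groups. The results of \cite{GerthThesis} referenced in \S\ref{subsub:Classification} concern the \emph{essential surjectivity} of $\mathcal{L}\mapsto H_\mathcal{L}$, which plays no role here since you are comparing two explicitly given objects.
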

\begin{proof}
    We follow the proof of \cite[Theorem 5.2.1]{ScholzeWeinstein}. Let $T_p(H)$ be the $p$-adic Tate module of $H=H_{\eta}$, which is a $\zp$-local system on $\spa(R,R^+)$ that we can identify with $\left(T_p \mathcal{H}^{\vee} \right)_{\eta}^{\mathrm{ad}}$. Let $H'= T_p(H) \otimes_{\ul{\zp}_{R}} \widehat{\mathbb{G}}_{m, \eta, R}$ and note that there is a natural map $H \to H'$ of $p$-divisible rigid analytic groups over $\spa(R,R^+)$ which induces an isomorphism $T_p(H) \xrightarrow{\sim} T_p(H')$ on $p$-adic Tate modules. The induced map $\Lie \mathcal{H}[\tfrac{1}{p}]=\operatorname{Lie} H \to \operatorname{Lie} H' = T_p(H)(-1) \otimes_{\ul{\zp}} \mathbb{A}^1_{\spa(R,R^+)}$ is moreover identified with the Hodge--Tate map. Thus we get a commutative diagram
    \begin{equation}
    \begin{tikzcd}
        H \arrow{r} \arrow{d} & H' \arrow{d} \\
        \operatorname{Lie} \mathcal{H} \otimes_{R^+} \mathbb{A}^1_{\spa(R,R^+)} \arrow{r} & T_p \mathcal{H}_{\eta}(-1) \otimes_{\ul{\zp}} \mathbb{A}^1_{\spa(R,R^+)}
       \end{tikzcd} 
    \end{equation}
    and the proposition comes down to showing that it is in fact Cartesian. This can be proved as in the proof of \cite[Theorem 5.2.1]{ScholzeWeinstein}. 
\end{proof}

\subsubsection{} \label{subsub:CharacterDatum} Let $Y$ be a small v-stack. 
\begin{Def} \label{Def:CharacterDatum}
    A locally analytic character datum over $Y$ is a triple $\mathcal{X}=(\Lambda,V,\gamma)$,  where $\Lambda$ is a $\zp$-local system over $Y$, where $V$ is a locally free of finite rank sheaf of $\mathcal{O}_Y$-modules and $\gamma$ is a surjection $\gamma:\Lambda \otimes_{\ul{\zp}} \mathcal{O}_Y \to V$. We will write $\CD(Y)$ for the category of locally analytic character data over $Y$. For $\mathcal{X}=(\Lambda,V,\gamma)$ we define the character v-group to be the fiber product
    \begin{equation}
        \begin{tikzcd}
           H_{\mathcal{X}} \arrow{r} \arrow{d} & \Hom_{Y}(\Lambda_{Y}, \widehat{\mathbb{G}}_{m,\eta,Y}^{\lozenge}) \arrow{d}{\operatorname{Log}} \\
            \Hom_{\mathcal{O}_{Y}}(V, \mathbb{G}_{a,Y}^{\lozenge}) \arrow{r}{\gamma^{\ast}} & \Hom_{Y}(\Lambda_{Y}, \mathbb{G}_{a,Y}^{\lozenge}).
        \end{tikzcd}
    \end{equation}
\end{Def}
There is a contravariant functor $\CD(Y) \to \LAD(Y)$ given by
\begin{align}
    \mathcal{X}=(\Lambda, V, \gamma) \mapsto \mathcal{X}^{\vee}=(\Lambda^{\ast}(1), V^{\ast}, \gamma^{\ast}), 
\end{align}
which is an equivalence of categories with inverse
\begin{align}
    \mathcal{L}=(T, W, \alpha) \mapsto \mathcal{L}^{\vee}=(T^{\ast}(1), W^{\ast}, \alpha^{\ast}).
\end{align}
\begin{Lem}
Let $Y$ be a small v-stack and let $\mathcal{X}$ be a locally analytic character datum over $Y$. There is a functorial isomorphism
    \begin{align}
        H_{\mathcal{X}} \to H_{\mathcal{X}^{\vee}}.
    \end{align}
    In particular, $H_{\mathcal{X}}$ is a $p$-divisible v-group, and every finite height $p$-divisible $v$-group is of the form $H_{\mathcal{X}}$ for some character datum $\mathcal{X}$. 
\end{Lem}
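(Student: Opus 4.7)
The plan is to show that the two fiber products defining $H_{\mathcal{X}}$ and $H_{\mathcal{X}^{\vee}}$ are canonically isomorphic via the standard identifications of $\Hom$-sheaves with tensor-product sheaves. The key identifications are:
\begin{align}
\Hom_{\mathcal{O}_Y}(V, \mathbb{G}_{a,Y}^{\lozenge}) &\xrightarrow{\sim} V^{\ast}, \\
\Hom_{Y}(\Lambda, \mathbb{G}_{a,Y}^{\lozenge}) &\xrightarrow{\sim} \Lambda^{\ast} \otimes_{\ul{\zp}} \mathcal{O}_Y = \Lambda^{\ast}(1)(-1) \otimes_{\ul{\zp}} \mathcal{O}_Y, \\
\Hom_{Y}(\Lambda, \widehat{\mathbb{G}}_{m,\eta,Y}^{\lozenge}) &\xrightarrow{\sim} \Lambda^{\ast} \otimes_{\ul{\zp}} \widehat{\mathbb{G}}_{m,\eta,Y}^{\lozenge} = \Lambda^{\ast}(1)(-1) \otimes_{\ul{\zp}} \widehat{\mathbb{G}}_{m,\eta,Y}^{\lozenge}.
\end{align}
The first holds because $V$ is locally free and $\mathbb{G}_{a,Y}^{\lozenge}$ is the structure sheaf. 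The second and third hold when $\Lambda$ is a free $\ul{\zp}$-module and are then extended to arbitrary $\ul{\zp}$-local systems by v-descent (using that $\widehat{\mathbb{G}}_{m,\eta,Y}^{\lozenge}$ carries a natural $\ul{\zp}$-module structure as recalled in \S\ref{subsub:HeuerXu}).

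Next I would check that these identifications intertwine the two pairs of structure maps in the fiber products of Definitions \ref{Def:HodgeTateTripleVstacks} and \ref{Def:CharacterDatum}. The map $\Hom_{\mathcal{O}_Y}(V,\mathbb{G}_{a,Y}^{\lozenge}) \to \Hom_Y(\Lambda, \mathbb{G}_{a,Y}^{\lozenge})$ given by precomposition with $\gamma$ corresponds under these identifications to $\gamma^{\ast} \otimes 1: V^{\ast} \to \Lambda^{\ast} \otimes_{\ul{\zp}} \mathcal{O}_Y$ by the very definition of the $\mathcal{O}_Y$-linear dual $\gamma^{\ast}$. Similarly, postcomposition with $\log : \widehat{\mathbb{G}}_{m,\eta}^{\lozenge} \to \mathbb{G}_a^{\lozenge}$ corresponds to $1 \otimes \log$ because both Hom-identifications are natural in the target. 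The resulting isomorphism of fiber products is manifestly functorial in $\mathcal{X}$ and compatible with base change in $Y$, since all of the constituent sheaves and maps are.

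For the consequences: once $H_{\mathcal{X}} \cong H_{\mathcal{X}^{\vee}}$ is established, the first assertion follows from Lemma \ref{Lem:LocSpat} applied to $\mathcal{X}^{\vee} \in \LAD(Y) \subset \LA(Y)$. For the second assertion, given a finite height $p$-divisible v-group $H$, by Definition \ref{Def:FiniteHeight} we have $H \cong H_{\mathcal{L}}$ for some $\mathcal{L} \in \LAD(Y)$; set $\mathcal{X} := \mathcal{L}^{\vee} \in \CD(Y)$. Because double-dualization is the identity on the finite locally free modules involved (using Lemma \ref{Lem:LocalDirectSummand} that $W$ sits as a local direct summand of $T(-1) \otimes_{\ul{\zp}} \mathcal{O}_Y$, so all relevant modules are reflexive), we have $\mathcal{X}^{\vee} = \mathcal{L}$, and thus $H \cong H_{\mathcal{L}} = H_{\mathcal{X}^{\vee}} \cong H_{\mathcal{X}}$.

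The proof is essentially a definition-unpacking, so there is no genuine obstacle; the most delicate bookkeeping is in the Tate-twist identifications and in verifying the v-descent statement for the Hom-sheaves with target $\widehat{\mathbb{G}}_{m,\eta,Y}^{\lozenge}$, but both are straightforward.
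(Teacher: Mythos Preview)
Your proposal is correct and follows essentially the same approach as the paper: both proofs identify the three $\Hom$-sheaves appearing in the fiber product for $H_{\mathcal{X}}$ with the tensor-product sheaves appearing in the fiber product for $H_{\mathcal{X}^{\vee}}$, and then read off the isomorphism and its consequences (invoking Lemma~\ref{Lem:LocSpat} for the $p$-divisible v-group claim). Your write-up is somewhat more detailed than the paper's (which simply records the three identifications and declares the rest a direct consequence of the definitions), but there is no substantive difference in strategy.
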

\begin{proof}
Observe that
\begin{align}
    \Hom_{Y}(\Lambda_{Y}, \widehat{\mathbb{G}}_{m,\eta,Y}^{\lozenge}) &= \Lambda^{\ast} \otimes_{\ul{\zp}} \widehat{\mathbb{G}}_{m,\eta,Y}^{\lozenge} \\
        \Hom_{Y}(\Lambda_{Y}, \mathbb{G}_{a,Y}^{\lozenge}) &= \Lambda^{\ast} \otimes_{\ul{\zp}} \mathbb{G}_{a,Y}^{\lozenge} \\
        \Hom_{Y}(V, \mathbb{G}_{a,Y}^{\lozenge})&= V^{\ast}. 
\end{align}
The first part of the lemma is now a direct consequence of the definition of $\mathcal{X}^{\vee}$, together with Definition \ref{Def:CharacterDatum} and Definition \ref{Def:PdivisibleGroup}. The second part of the lemma follows from Lemma \ref{Lem:LocSpat}.
\end{proof}

\subsubsection{Moduli} Fix an integer $h \ge 0$ and an integer $0 \le d \le h$ and let $Y$ be a small v-stack. Then there is a full subcategory $\mathcal{LA}_{h,d}(Y) \subset \LAD(Y)$ consisting of those triples $(T, W, \alpha)$ where $T$ is locally free of rank $h$ and $W$ is locally free of rank $d$. Note that the assignment $Y \mapsto \mathcal{LA}_{h,d}(Y) \subset \LAD(Y)$ is a substack for the v-topology. Consider the natural action of $\ul{\operatorname{GL}_h(\zp)}$ on $\operatorname{Gr}_{h,d}^{\lozenge}$, where $\operatorname{Gr}_{h,d}$ is the (schematic) Grassmannian of $d$-planes in $\qp^{\oplus h}$.
\begin{Lem} \label{Lem:Moduli}
    There is an equivalence of v-stacks on $\perf$
    \begin{align}
        \left[\operatorname{Gr}_{h,d}^{\lozenge}/ \ul{\operatorname{GL}_h(\zp)} \right] \to \mathcal{LA}_{h,d}.
    \end{align}
\end{Lem}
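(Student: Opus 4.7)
Both sides are v-stacks on $\perfd$, so the plan is to define the functor universally and then verify the equivalence after passing to suitable v-covers.

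First I would construct the morphism $\operatorname{Gr}_{h,d}^{\lozenge}\to \mathcal{LA}_{h,d}$ from the universal subbundle. On $\operatorname{Gr}_{h,d}^{\lozenge}$ there is a universal rank-$d$ locally direct summand $W^{\mathrm{univ}}\hookrightarrow \mathcal{O}^{h}$; twisting by $\mathcal{O}(-1)$ gives an embedding into $\ul{\zp}^{h}(-1)\otimes_{\ul{\zp}}\mathcal{O}$, and pairing with the trivial local system $T^{\mathrm{univ}}=\ul{\zp}^{h}$ produces a triple in $\mathcal{LA}_{h,d}(\operatorname{Gr}_{h,d}^{\lozenge})$, whose finite height property is automatic from the universal embedding via Lemma \ref{Lem:LocalDirectSummand}. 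The standard action of $\ul{\operatorname{GL}_{h}(\zp)}$ on $\operatorname{Gr}_{h,d}^{\lozenge}$ corresponds under this construction to relabeling the trivialization of $T^{\mathrm{univ}}$, hence acts by isomorphisms of triples, and thus the functor descends to the quotient $[\operatorname{Gr}_{h,d}^{\lozenge}/\ul{\operatorname{GL}_{h}(\zp)}]\to\mathcal{LA}_{h,d}$.

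Next I would check full faithfulness and essential surjectivity v-locally on the base $Y$. Since a rank-$h$ $\zp$-local system $T$ is a $\ul{\operatorname{GL}_{h}(\zp)}$-torsor and is therefore pro-\'etale (in particular v-)locally trivial, after passing to a v-cover any triple $(T,W,\alpha)\in\mathcal{LA}_{h,d}(Y)$ acquires a trivialization $T\cong\ul{\zp}^{h}$. Tensoring $\alpha$ with $\mathcal{O}(1)$ then realizes $W(1)$ as a rank-$d$ locally direct summand of $\mathcal{O}_{Y}^{h}$, which is classified by a unique map $Y\to \operatorname{Gr}_{h,d}^{\lozenge}$; an isomorphism between two such triples over the same $Y$ is precisely a change of trivialization, i.e.\ an element of $\ul{\operatorname{GL}_{h}(\zp)}(Y)$ carrying the corresponding point of $\operatorname{Gr}_{h,d}^{\lozenge}$ to the other, matching exactly the automorphism data in the stacky quotient.

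The main obstacle will be justifying that $\operatorname{Gr}_{h,d}^{\lozenge}$ represents the functor of rank-$d$ locally direct summands of the trivial rank-$h$ bundle on the v-site of an arbitrary small v-stack. For affinoid perfectoid $Y=\spd(A,A^{+})$ this follows from the classical moduli description of $\operatorname{Gr}_{h,d}$ over affine schemes together with the equivalence between finite projective $A$-modules and locally free $\mathcal{O}_{Y}$-modules (\cite[Theorem 3.5.8]{KedlayaLiuII}); the general case is then formal from v-descent applied to both sides.
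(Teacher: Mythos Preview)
Your proposal is correct and follows the same approach as the paper, which simply remarks that the equivalence is ``essentially a tautology'' on affinoid perfectoid test objects once one invokes \cite[Theorem 3.5.8]{KedlayaLiuII}; you have just spelled out the construction of the functor and the inverse more explicitly. One small notational point: what you call ``twisting by $\mathcal{O}(\pm 1)$'' is really tensoring with the Tate twist $\ul{\zp}(\pm 1)\otimes_{\ul{\zp}}\mathcal{O}$ (the twist in Definition \ref{Def:HodgeTateTripleVstacks} is on the local system $T$, not a line bundle on the Grassmannian), but this does not affect the argument.
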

\begin{proof}
It suffices to define the equivalence on affinoid perfectoid test objects $(A,A^+)$, where it is essentially a tautology once we use the fact that v-vector bundles on affinoid perfectoid spaces descend to analytic vector bundles, see \cite[Theorem 3.5.8]{KedlayaLiuII}. 
\end{proof}

\section{Sheaves of functions and distributions} \label{Sec:SheavesOfLaFunctions}

In \S \ref{Sub:Functions}, we show that sheaves of functions on $p$-divisible v-groups over small v-stacks have good functional analytic properties, see Proposition \ref{Prop:GeometricFrechet}. In \S \ref{Sub:FunctionsII}, we introduce spaces of locally analytic functions associated with locally analytic character data, and show that these have good functional analytic properties, see Proposition \ref{Prop:GeometricLB}. 

\subsection{Functions on \texorpdfstring{$p$}{p}-divisible v-groups.} \label{Sub:Functions} Let $Y$ be a small v-stack and let $\mathcal{L}=(T, W, \alpha) \in \LA(Y)$ be a Hodge--Tate triple. Consider the $p$-divisible v-group $H_{\mathcal{L}} \xrightarrow{f} Y$ constructed from $\mathcal{L}$ as in Definition \ref{Def:HodgeTateTripleVstacks} and Lemma \ref{Lem:LocSpat}. Write $\mathcal{O}_{H_{\mathcal{L}}/Y}=f_{\ast} \mathcal{O}_{H_{\mathcal{L}}}$ for the pushforward on v-sites, and let $\mathcal{D}_{H_{\mathcal{L}}/Y}=\Hom_{\mathcal{O}_Y}(\mathcal{O}_{H_{\mathcal{L}}/Y}, \mathcal{O}_Y)$ be its dual. 
\begin{Prop} \label{Prop:GeometricFrechet}
If $Y=\spd(R,R^+)$ for a fiercely v-complete Huber pair $(R,R^+)$ over $(\qp,\zp)$ and $W$ is free, then the following hold: 
\begin{enumerate}
    \item For all morphisms of fiercely v-complete Huber pairs $(R,R^+) \to (B,B^+)$, the natural map (where $Y'=\spd(B,B^+)$)
        \begin{align}
            H^0_{\cond}(Y,  \mathcal{O}_{H_{\mathcal{L}}/Y}) \otimes_{\ul{R}}^{\blacksquare} \ul{B} \to H^0_{\cond}(Y', \mathcal{O}_{H_{\mathcal{L}}/Y})
        \end{align}
        is an isomorphism.

      \item The natural map
    \begin{align}
        H^0_{\cond}(Y,\mathcal{O}_{H_{\mathcal{L}}/Y}) \otimes_{\ul{R}}^{\blacksquare} H^0_{\cond}(Y,\mathcal{O}_{H_{\mathcal{L}}/Y}) \to  H^0_{\cond}(Y,\mathcal{O}_{H_{\mathcal{L} \times \mathcal{L}}/Y})
    \end{align}
    is an isomorphism. 

    \item The natural map
    \begin{align}
        H^0_{\cond}(Y,\mathcal{D}_{H_{\mathcal{L}}/Y}) &\to \Hom_{\ul{R}}(H^0_{\cond}(Y,\mathcal{O}_{H_{\mathcal{L}}/Y}), \ul{R})
    \end{align}
    is an isomorphism.

    \item The natural map
    \begin{align}
        H^0_{\cond}(Y,\mathcal{D}_{H_{\mathcal{L}}/Y}) \otimes^{\blacksquare}_{\ul{R}} H^0_{\cond}(Y,\mathcal{D}_{H_{\mathcal{L}}/Y}) \to  H^0_{\cond}(Y,\mathcal{D}_{H_{\mathcal{L} \times \mathcal{L}}/Y})
    \end{align}
    is an isomorphism.
\end{enumerate}
For $Y$ an arbitrary small v-stack, the following hold: 
\begin{enumerate}
\setcounter{enumi}{4}

    \item The sheaf $\mathcal{O}_{H_{\mathcal{L}}/Y}$ is reflexive.

    \item The sheaf $\mathcal{O}_{H_{\mathcal{L}}/Y}$ is locally strongly countably Fr\'echet of compact type in the sense of Definition \ref{Def:LocallyFrechet}.
        
    \item The natural map
    \begin{align}
        \mathcal{O}_{H_{\mathcal{L}}/Y} \otimes^{\blacksquare}_{\mathcal{O}_Y}\mathcal{O}_{H_{\mathcal{L}}/Y} \to \mathcal{O}_{H_{\mathcal{L} \times \mathcal{L}} /Y}.
    \end{align}
    is an isomorphism.

    \item The natural map
    \begin{align}
        \mathcal{D}_{H_{\mathcal{L}}/Y} \otimes^{\blacksquare}_{\mathcal{O}_Y} \mathcal{D}_{H_{\mathcal{L}}/Y} \to \left(\mathcal{O}_{H_{\mathcal{L}}/Y} \otimes^{\blacksquare}_{\mathcal{O}_Y} \mathcal{O}_{H_{\mathcal{L}}/Y} \right)^{\ast}
    \end{align}
    is an isomorphism.

\end{enumerate}

\end{Prop}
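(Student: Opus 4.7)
The plan is to reduce the statement to part (4) by a v-descent argument, with the reformulation enabled by part (7). First I would apply (7) to identify $\mathcal{O}_{H_{\mathcal{L}}/Y} \otimes^{\blacksquare}_{\mathcal{O}_Y} \mathcal{O}_{H_{\mathcal{L}}/Y} \simeq \mathcal{O}_{H_{\mathcal{L}\times\mathcal{L}}/Y}$, and then apply $\Hom_{\mathcal{O}_Y}(-,\mathcal{O}_Y)$ to obtain $\bigl(\mathcal{O}_{H_{\mathcal{L}}/Y} \otimes^{\blacksquare}_{\mathcal{O}_Y} \mathcal{O}_{H_{\mathcal{L}}/Y}\bigr)^{\ast} \simeq \mathcal{D}_{H_{\mathcal{L}\times\mathcal{L}}/Y}$. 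Under this identification, the canonical map in the statement becomes the natural map $\mathcal{D}_{H_{\mathcal{L}}/Y} \otimes^{\blacksquare}_{\mathcal{O}_Y} \mathcal{D}_{H_{\mathcal{L}}/Y} \to \mathcal{D}_{H_{\mathcal{L}\times\mathcal{L}}/Y}$, which I aim to show is an isomorphism of sheaves.

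Since this is a v-local property, I may assume (after pulling back along a v-cover) that $Y = \spd(R,R^+)$ for a fiercely v-complete Huber pair over $(\mathbb{Q}_p,\mathbb{Z}_p)$ with $W$ free. Unwinding the definition of $\otimes^{\blacksquare}_{\mathcal{O}_Y}$ from \S\ref{Subsubb:SolidTensor}, the left-hand side is the sheafification on $Y_\mv$ of the presheaf $P$ sending $\spd(A,A^+) \to Y$ to $\bigl(H^0_{\cond}(\spd(A,A^+), \mathcal{D}_{H_{\mathcal{L}}/Y}) \otimes^{\blacksquare}_{\ul{A}} H^0_{\cond}(\spd(A,A^+), \mathcal{D}_{H_{\mathcal{L}}/Y})\bigr)(\ast)$. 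Since the right-hand side is already a sheaf, it suffices to show the natural map $P \to \mathcal{D}_{H_{\mathcal{L}\times\mathcal{L}}/Y}$ is an isomorphism of presheaves on $Y_\mv$; this will force $P$ to be a sheaf, identifying its sheafification with the right-hand side.

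For each test object $\spd(A,A^+) \to Y$ in $Y_\mv$, the pair $(A,A^+)$ is affinoid perfectoid, hence fiercely v-complete by Lemma \ref{Lem:Fierce}, and $W$ restricts to a free module on $\spd(A,A^+)$. By Lemma \ref{Lem:CompatibleBaseChange} (for $\mathcal{L} \mapsto H_\mathcal{L}$), Corollary \ref{Cor:CohomologyAndBaseChange} (for the pushforward), and the fact that the internal hom against $\mathcal{O}_Y$ commutes with restriction to slice v-sites, the restrictions of $\mathcal{O}_{H_\mathcal{L}/Y}$ and $\mathcal{D}_{H_\mathcal{L}/Y}$ to the v-site of $\spd(A,A^+)$ coincide with the sheaves attached to the pulled-back triple $\mathcal{L}_{\spd(A,A^+)}$. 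I would then apply part (4) over the base $\spd(A,A^+)$ to $\mathcal{L}_{\spd(A,A^+)}$: the resulting isomorphism of condensed $\ul{A}$-modules, after passing to $(\ast)$-sections, is exactly the required map $P(\spd(A,A^+)) \xrightarrow{\sim} \mathcal{D}_{H_{\mathcal{L}\times\mathcal{L}}/Y}(\spd(A,A^+))$. The main obstacle is not analytic but organizational: one must verify that the hypotheses of (4) are inherited by every test object in $Y_\mv$ and that the sheaves under consideration behave predictably under restriction to slice v-sites. All of the genuine functional-analytic content, namely the Fr\'echet flatness and the dual-tensor commutation of Corollary \ref{Cor:DualTensorCommute}, has already been absorbed into (4).
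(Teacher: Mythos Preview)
Your argument for part (8) is correct and matches the paper's approach. The paper's one-line proof reads ``part (8) follows by applying (3) and (4) to affinoid perfectoid $(A,A^+)$,'' after first reducing to $Y=\spd(A,A^+)$ affinoid perfectoid with $T$ and $W$ trivial; you have simply unpacked this by invoking (7) to identify the target with $\mathcal{D}_{H_{\mathcal{L}\times\mathcal{L}}/Y}$ and then checking the presheaf-level map is an isomorphism on every test object via (4). The only cosmetic difference is that you cite (7) where the paper cites (3): both serve to identify sections of the dual sheaf, and (7) is in any case the sheafified form of (2), so the content is the same. Your intermediate v-localization step (passing to $Y=\spd(R,R^+)$ fiercely v-complete with $W$ free) is slightly redundant, since you then immediately check on all affinoid perfectoid test objects anyway, but it does no harm.
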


We start by proving a geometric lemma.
\begin{Lem} \label{Lem:GeometryofH}
Let $Y=\spa(R,R^+)$ for a fiercely v-complete Huber pair $(R,R^+)$ over $(\qp,\zp)$. Let $\mathcal{L}=(T, W, \alpha) \in \LAE(Y^{\lozenge})$ and let $H_{\mathcal{L}}^{\mathrm{rig}}=H \to Y$ be the corresponding commutative smooth group in adic spaces (see Lemma \ref{Lem:RepresentableFierce}). If $W$ is free, then there is an increasing union of open affinoid $Y$-subgroups $H_n \subset H$ indexed by $n \in \mathbb{Z}_{\ge 1}$ such that $H=\bigcup_n H_n$ and such that:
    \begin{itemize}
        \item For each $n$, there is a finite \'{e}tale map $H_n \rightarrow U$, where $U$ is a closed ball in $W$. In particular, $\mathcal{O}(H_n)$ is a (topological) direct summand of the countably orthonormalizable Banach $R$-module $\mathcal{O}(U)^{\oplus m_n}$ for some $m_{n}$.

        \item For $n < n'$ the restriction map $\mathcal{O}(H_{n'}) \rightarrow \mathcal{O}(H_n)$ has dense image and the map $\ul{\mathcal{O}(H_{n'})} \rightarrow \ul{\mathcal{O}(H_n)}$ is compact in the sense of Section \ref{subsub:compact}.
   \end{itemize}
\end{Lem}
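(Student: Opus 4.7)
My plan is to bootstrap directly from the construction already carried out in Lemma~\ref{Lem:GeometryOfHI}. Since $W$ is free, I choose the affinoid neighborhood $U$ of the identity section in $W$ to be a closed ball of some small radius $r$; then $U_n := (1/p^n) U$ is a closed ball of radius $r|p|^{-n}$, and the diagram~\eqref{eq.first-fiber-diagram-functions-structure-proof} exhibits $H_n^{\lozenge}$ as finite \'etale over $U_n^{\lozenge}$ via $(t\mapsto p^n t)\circ\log$. Because $(R,R^+)\to (\mathcal{O}(U_n), \mathcal{O}^+(U_n))$ is strongly smooth and everything lifts from diamonds to adic spaces by Lemma~\ref{Lem:FiercelyFullyFaithful} (using Lemma~\ref{Lem:SmoothOverFierce}), I get affinoid open subgroups $H_n \subset H$ with finite \'etale structure map $H_n\to U_n$. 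The union $H = \bigcup_n H_n$ then follows because $\bigcup_n \mathbb{B}(1,|\varpi|^{1/p^n}) \otimes T(-1)$ exhausts the full open ball around the identity, and pushing out by $[p^n]$-iterates covers $H$.

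For the direct-summand statement, I use that a finite \'etale map of Huber pairs gives a finite projective module over the base, so $\mathcal{O}(H_n)$ is a topological $\mathcal{O}(U_n)$-linear direct summand of some $\mathcal{O}(U_n)^{\oplus m_n}$. The ring $\mathcal{O}(U_n)$ is a Tate algebra $R\langle X_1/\varpi_n, \ldots, X_d/\varpi_n\rangle$ in $d=\mathrm{rk}\,W$ variables, and therefore is countably orthonormalizable with ON basis $\{X^I/\varpi_n^{|I|}\}_{I\in\mathbb{Z}_{\geq 0}^d}$. Denseness of $\mathcal{O}(U_{n'})\to\mathcal{O}(U_n)$ for $n<n'$ is then immediate from the density of $R[X_1,\ldots,X_d]$ in both Tate algebras. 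To lift density to $\mathcal{O}(H_{n'}) \to \mathcal{O}(H_n)$, I note that $H_n = H_{n'}\times_{U_{n'}} U_n$ (as both sides are the preimage of $U_n^\lozenge$ under the common composition $H_{n'}\to H\to\widehat{\mathbb{G}}_{m,\eta}^\lozenge\otimes T(-1)\xrightarrow{\log}\mathbb{G}_a^\lozenge\otimes T(-1)$), so $\mathcal{O}(H_n) \cong \mathcal{O}(H_{n'}) \otimes_{\mathcal{O}(U_{n'})} \mathcal{O}(U_n)$; density then transfers through the finite projective module structure.

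The compactness assertion is the real work, and this is where I expect the main obstacle. Classically the restriction map $\mathcal{O}(U_{n'}) \to \mathcal{O}(U_n)$ between Tate algebras of strictly different radii is given, in the ON bases above, by the diagonal operator with entries $(\varpi_{n'}/\varpi_n)^{|I|}$, which are bounded in absolute value by powers of $|p|^{n-n'}<1$ going to zero. To match the condensed definition of Section~\ref{subsub:compact}, I need to exhibit the dual map $\mathcal{O}(U_n)^{\ast} \to \mathcal{O}(U_{n'})^{\ast}$, which by Proposition~\ref{Prop:DualityCondensed} becomes a map of free Smith $\ul{R}$-modules $\prod_I \ul{R} \to \prod_I \ul{R}$, as trace class; concretely I will factor it as $\ul{R}\to \left(\prod_I \ul{R}\right)^{\ast} \otimes_{\ul{R}}^{\blacksquare}\prod_I \ul{R}$ using the zero-convergent diagonal, which lies in the image from $\hat{\oplus}_I \ul{R} \otimes_{\ul{R}}^{\blacksquare} \prod_I \ul{R}$ via Proposition~\ref{Prop:TensorDualSmith}. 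Once compactness of $\mathcal{O}(U_{n'})\to\mathcal{O}(U_n)$ is established, I propagate it to $\mathcal{O}(H_{n'})\to\mathcal{O}(H_n)$ using the tensor-product identification above together with Lemma~\ref{Lem:CompactTensor}-(1), which applies because $\mathcal{O}(H_{n})$ and $\mathcal{O}(H_{n'})$ are direct summands of countably orthonormalizable Banach modules by the first part of the proof.
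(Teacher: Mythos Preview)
Your overall strategy matches the paper's, but there is a genuine error in your claim that $H_n = H_{n'} \times_{U_{n'}} U_n$. The fiber product on the right is $\{h \in H_{n'} : \log h \in U_n\}$, whereas $H_n$ consists of those $h$ with $\log h \in U_n$ \emph{and} $h$ lying in the smaller ambient ball $\mathbb{B}(1,|\varpi|^{1/p^n}) \otimes_{\ul{\zp}} T(-1)$. Since the logarithm on $H_{n'}$ has nontrivial kernel (the $p$-power torsion $H_{n'}[p^{\infty}]$), the fiber product acquires extra components: translates of $H_n$ by torsion elements not contained in the smaller ball. Your parenthetical justification (``both sides are the preimage of $U_n^\lozenge$'') overlooks this ambient-ball constraint. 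The paper handles exactly this point by observing that $H_n$ is only a \emph{clopen subset} of $H_{n+1} \times_U pU$, not equal to it.

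With that correction your argument can be repaired: the map $\mathcal{O}(H_{n'}) \to \mathcal{O}(H_n)$ factors as
\[
\mathcal{O}(H_{n'}) \longrightarrow \mathcal{O}(H_{n'} \times_{U_{n'}} U_n) \twoheadrightarrow \mathcal{O}(H_n),
\]
where the second arrow is projection onto a direct summand, and both density of image and compactness survive post-composition with such a projection. Two smaller points: your invocation of Lemma~\ref{Lem:CompactTensor}-(1) is awkward here, since you are not base-changing the ground ring but rather tensoring a module; the paper instead embeds $\mathcal{O}(H_{n'}) \hookrightarrow \mathcal{O}(U)^{\oplus m}$, notes that $\mathcal{O}(U)^{\oplus m} \to \mathcal{O}(pU)^{\oplus m}$ is a finite sum of compact maps, and sandwiches with summand inclusions/projections. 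And for compactness of $\mathcal{O}(U_{n'}) \to \mathcal{O}(U_n)$ itself, the paper reduces to $Y=\spd\qp$ via Lemma~\ref{Lem:CompactTensor} and cites \cite[Remark~4.9]{RJRC}; your direct approach via the zero-convergent diagonal is plausible but you have not verified that the element actually lands in $\hat{\oplus}_I \ul{R} \otimes^{\blacksquare}_{\ul{R}} (\prod_I \ul{R^+})[\tfrac{1}{p}]$.
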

\begin{proof}
Following the proof and notation of Lemma \ref{Lem:GeometryOfHI}, we will write $H_n=\bigcup_n H_n$ where each $H_n$ is finite \'{e}tale over a ball $U \subset W$. Then $H_n=\spa(A_n,A_n^+)$, where $A_n$ is a finite \'{e}tale $A=\mathcal{O}(U)$-algebra. In particular, $A_n$ is a finite projective $A$-module, so a direct summand of $A^{\oplus m_n}$ for some $m_n$ (see the proof of Lemma \ref{Lem:SheafFrechet}). Now, since $W$ is trivializable, we may choose $U$ to be an affinoid ball, so that $A$ is orthonormalizable and we obtain the claim (2) on the structure of $A_n$. 

For the claims about dense image and compactness, note that the diagram
\[\begin{tikzcd}
	{H_n} & {H_{n+1}} \\
	U & U
	\arrow[hook, from=1-1, to=1-2]
	\arrow["{p^n \cdot \log}"', from=1-1, to=2-1]
	\arrow["{p^{n+1}\cdot \log}", from=1-2, to=2-2]
	\arrow["p"', from=2-1, to=2-2]
\end{tikzcd}\]
realizes $H_n$ as a clopen subset of $H_{n+1} \times_U {pU}$. Thus $\mathcal{O}(H_n)$ is a direct summand of $\mathcal{O}(H_{n+1} \times_U {pU})$ as a topological $R$-module. We find that $\mathcal{O}(H_{n+1}) \rightarrow \mathcal{O}(H_n)$ can be factored as 
\[\begin{tikzcd}
	{\mathcal{O}(H_{n+1})} & {\mathcal{O}(H_{n+1} \times_U pU)} & {\mathcal{O}(H_n)} \\
	{\mathcal{O}(U)^{m}} & {\mathcal{O}(pU)^{m}}
	\arrow[from=1-1, to=1-2]
	\arrow["\oplus", hook, from=1-1, to=2-1]
	\arrow[two heads, from=1-2, to=1-3]
	\arrow["\oplus", hook, from=1-2, to=2-2]
	\arrow[from=2-1, to=2-2]
	\arrow[dashed, printersafe, two heads, from=2-2, to=1-3]
\end{tikzcd}\]
where the middle vertical arrow is obtained by tensoring the left vertical arrow over $\mathcal{O}(U)$ along the restriction map $\mathcal{O}(U) \rightarrow \mathcal{O}(pU)$. 

In particular, to see that $\mathcal{O}(H_{n+1}) \rightarrow \mathcal{O}(H_n)$ has dense image, we claim it suffices to see that $\mathcal{O}(U) \rightarrow \mathcal{O}(pU)$ has dense image. Indeed, choosing an idempotent $e$ projecting from $\mathcal{O}(U)^m$ onto $\mathcal{O}(H_{n+1})$, we see that image of $\mathcal{O}(H_{n+1}) \rightarrow \mathcal{O}(H_n)$ is dense because $\mathcal{O}(H_{n+1} \times_U pU) \rightarrow \mathcal{O}(H_n)$ is a surjection and $\mathcal{O}(U)^m \rightarrow \mathcal{O}(pU)^m$ has dense image thus $\mathcal{O}(H_{n+1}) \rightarrow \mathcal{O}(H_{n+1} \times_U pU)$ also has dense image as it can be identified with $e \mathcal{O}(U)^m \rightarrow  e\mathcal{O}(pU)^m$. Now, we obtain the desired density because $\mathcal{O}(U) \rightarrow \mathcal{O}(pU)$ evidently has dense image: Choosing coordinates, it is the natural inclusion $R\langle T_1, \ldots T_d\rangle \rightarrow R \langle \frac{T_1}{p} \ldots, \frac{T_d}{p} \rangle$, whose image contain the polynomials $R[T_1, \ldots, T_d]$, which are a dense subset. 

Similarly, to see $\mathcal{O}(H_{n+1}) \rightarrow \mathcal{O}(H_n)$ is compact, since it is obtained from $\mathcal{O}(U)^m \rightarrow \mathcal{O}(pU)^m$ by pre-composition with the inclusion of a direct summand and post-composition with projection onto a direct summand, it suffices to show that $\mathcal{O}(U)^m \rightarrow \mathcal{O}(pU)^m$ is compact. Since a finite direct sum of compact maps is compact, it thus suffices to show $\mathcal{O}(U) \rightarrow \mathcal{O}(pU)$ is compact. To prove this, we can moreover reduce to the case that $Y=\spd(\qp,\zp)$ by Lemma \ref{Lem:CompactTensor}, and then the result is well known in classical functional analysis; with the present condensed definitions see, e.g., \cite[Remark 4.9]{RJRC}.
\end{proof}
\begin{Cor} \label{Cor:IsFrechet}
Let $Y=\spa(R,R^+)$ for a fiercely v-complete Huber pair $(R,R^+)$ over $(\qp,\zp)$. Let $\mathcal{L}=(T, W, \alpha) \in \LAE(Y^{\lozenge})$ and let $H=H_{\mathcal{L}}^{\mathrm{rig}} \to Y$ be the corresponding commutative smooth group in adic spaces (see Lemma \ref{Lem:RepresentableFierce}). If $W$ is free, then $\ul{\mathcal{O}(H)}$ is a strongly countably Fr\'echet $\ul{R}$-module of compact type.
\end{Cor}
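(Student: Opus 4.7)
The plan is to deduce the corollary directly from Lemma~\ref{Lem:GeometryofH} by transcribing its conclusions into the language of Definition~\ref{Def:Frechet}. First, I would write $H=\bigcup_n H_n$ using Lemma~\ref{Lem:GeometryofH}. Because $H$ is the increasing union of the open affinoid subspaces $H_n$, taking functions converts this to a countable inverse limit of $R$-Banach algebras $\mathcal{O}(H)=\varprojlim_n \mathcal{O}(H_n)$, where the transition maps are the restriction maps. Since $M\mapsto\ul{M}$ is a right adjoint and hence commutes with limits, we get $\ul{\mathcal{O}(H)}=\varprojlim_n \ul{\mathcal{O}(H_n)}$ in condensed $\ul{R}$-modules.

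It then remains to verify the three bullet-point properties required of a strongly countably Fr\'echet module of compact type. For the summand condition, Lemma~\ref{Lem:GeometryofH} expresses $\mathcal{O}(H_n)$ as a topological direct summand of $\mathcal{O}(U)^{\oplus m_n}$ for a closed ball $U\subset W$; choosing coordinates on $W$ (which is possible as $W$ is free), we may identify $\mathcal{O}(U)$ with $R\langle T_1,\dotsc,T_d\rangle=\hat{\oplus}_{\mathbb{Z}_{\geq 0}^d} R$, so $\ul{\mathcal{O}(U)}\simeq \hat{\oplus}_{\mathbb{Z}_{\geq 0}^d}\ul{R}$ by the identification in \S\ref{Subsub:classicalStdModules}, and hence $\ul{\mathcal{O}(H_n)}$ is a direct summand of the countably orthonormalizable Banach $\ul{R}$-module $\hat{\oplus}_{\mathbb{Z}_{\geq 0}^d}\ul{R}^{\oplus m_n}$. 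For the density condition, the underlying topological spaces $\ul{\mathcal{O}(H_n)}(\ast)_{\mathrm{top}}$ coincide with $\mathcal{O}(H_n)$ since the latter are metrizable (as closed subspaces of Banach spaces) and hence already compactly generated; the density of the transition maps is thus exactly the second bullet of Lemma~\ref{Lem:GeometryofH}. Finally, the compactness of the transition maps $\ul{\mathcal{O}(H_{n+1})}\to\ul{\mathcal{O}(H_n)}$ is also the second bullet of Lemma~\ref{Lem:GeometryofH}. Together these verify all of the conditions in Definition~\ref{Def:Frechet}, proving the corollary. There is no substantive obstacle; the only point requiring minor care is the commutation of the functor $M\mapsto\ul{M}$ with the inverse limit and the identification of $\mathcal{O}(U)$ as an orthonormalizable Banach $R$-module, both of which are available from the preceding material.
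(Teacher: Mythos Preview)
Your proposal is correct and is exactly the intended argument: the paper gives no explicit proof of this corollary, leaving it as an immediate consequence of Lemma~\ref{Lem:GeometryofH}, and your proof simply unwinds that implication by checking the three conditions of Definition~\ref{Def:Frechet} against the two bullet points of the lemma.
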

\begin{proof}[Proof of Proposition \ref{Prop:GeometricFrechet}]
Part (1) is a direct consequence of Lemma \ref{Lem:EvaluationFrechet} and Lemma \ref{Lem:GeometryofH}. Part (2) follows from Proposition \ref{Prop:Kunneth3} and Lemma \ref{Lem:GeometryofH}. Part (3) follows from Lemma \ref{Lem:EvaluationAbstract} and part (1). Part (4) follows from parts (2) and (3), in combination with Corollary \ref{Cor:IsFrechet} and Corollary \ref{Cor:DualTensorCommute}.

For the rest of the proposition, we can reduce to the case that $T$ and $W$ are trivial and that $Y=\spd(A,A^+)$ is affinoid perfectoid. Part (6) is then a direct consequence of Lemma \ref{Lem:SheafFrechet} and Lemma \ref{Lem:GeometryofH}. Part (5) follows from Corollary \ref{Cor:FrechetReflexive} and part (7) from Proposition \ref{Prop:Kunneth2}. Finally, part (8) follows by applying (3) and (4) to affinoid perfectoid $(A,A^+) \to (B,B^+)$.
\end{proof}

\subsubsection{} \label{subsub:Hopf} Let $Y$ be a small v-stack, let $\mathcal{L}=(T, W, \alpha) \in \LA(Y)$ and let $H=H_{\mathcal{L}}$ be the corresponding $p$-divisible v-group. Using part (7) of Proposition \ref{Prop:GeometricFrechet}, we see that the K\"unneth map
\begin{align}
    \mathcal{O}_{H/Y} \otimes^{\blacksquare}_{\mathcal{O}_Y}\mathcal{O}_{H/Y} \to \mathcal{O}_{\left(H \times_{Y} H\right)/ Y}
\end{align}
is an isomorphism. This means that we can use the group structure (given by the multiplication map and the identity section) to equip $\mathcal{O}_{H/Y}$ with the structure of a commutative solid Hopf $\mathcal{O}_Y$-algebra.\footnote{We define a commutative solid Hopf $\mathcal{O}_Y$-algebra to be a co-commutative co-group object in the category of commutative solid $\mathcal{O}_Y$-algebras with the solid tensor product. It follows in the usual way that for a commutative group $H \to Y$ satisfying the K\"unneth formula for the solid tensor product, the sheaf $\mathcal{O}_{H/Y}$ is a solid Hopf $\mathcal{O}_Y$-algebra.} \smallskip

Let $\mathcal{D}_{H/Y}$ be $\Hom_{\mathcal{O}_Y}(\mathcal{O}_{H/Y}, \mathcal{O}_Y)$. Then by part (8) of Proposition \ref{Prop:GeometricFrechet}, we see that the natural map
\begin{align}
    \mathcal{D}_{H/Y} \otimes^{\blacksquare}_{\mathcal{O}_Y} \mathcal{D}_{H/Y} \to \Hom_{\mathcal{O}_Y}(\mathcal{O}_{H/Y} \otimes^{\blacksquare}_{\mathcal{O}_Y} \mathcal{O}_{H/Y}, \mathcal{O}_Y)
\end{align}
is an isomorphism. This means that the commutative solid Hopf $\mathcal{O}_Y$-algebra structure on $\mathcal{O}_{H/Y}$ induces a commutative solid Hopf $\mathcal{O}_Y$-algebra structure on $\mathcal{D}_{H/Y}$.

\subsection{Locally analytic functions} \label{Sub:FunctionsII}

Let $Y$ be a small v-stack and let $\mathcal{X}=(\Lambda, V, \gamma)$ be a locally analytic character datum. We are going to define a sheaf $\mathcal{O}^{\gamma-\locan}_{\Lambda/Y} \subseteq \mathcal{O}_{\Lambda/Y}$ of functions that extend locally along $\gamma$ to an open neighborhood in $V$, and show that it is a subsheaf of $\mathcal{O}_{\Lambda/Y}$. 

\begin{Def}\label{def.loc-an-vectors}
Let $Y$ be a small v-stack.
    \begin{enumerate}
    \item Let $\mathcal{X}=(\Lambda, V, \gamma)$ be a locally analytic character datum over $Y$. We define
    \[ \mathcal{O}_{\Lambda/Y}^{\gamma-\locan}: = \varinjlim_{(n, U)} \mathcal{O}_{U/Y}, \]
    where the colimit is over pairs $(n,U)$ where $n \in \mathbb{Z}_{\ge 0}$ and $U$ is an open neighborhood of the image of $\Lambda$ in $\Lambda/p^n \Lambda \times V$ (under the product of $\Lambda \rightarrow \Lambda/p^n$ and $\gamma|_{\Lambda}$). We let
    \[ \mathcal{D}_{\Lambda/Y}^{\gamma-\locan}:= \Hom_{\mathcal{O}_Y}(\mathcal{O}_{\Lambda/Y}^{\gamma-\locan}, \mathcal{O}_Y). \]
    \item Let $\Lambda$ be a $\zp$-local system over $Y$. We write $\mathcal{O}_{\Lambda/Y}^{\locan} := \mathcal{O}_{\Lambda/Y}^{\mathrm{Id}_{\Lambda \otimes_{\ul{\mbb{Z}_p}} \mathcal{O}_Y}-\locan}$, i.e., the sheaf of locally analytic functions attached to the character datum $(\Lambda, \Lambda \otimes_{\ul{\zp}} \mathcal{O}_Y, \mathrm{Id}_{\Lambda \otimes_{\ul{\mbb{Z}_p}} \mathcal{O}_Y}).$ 
    \end{enumerate}
\end{Def}

\subsubsection{} The assignment $(\Lambda, V, \gamma) \mapsto \mathcal{O}_{\Lambda/Y}^{\gamma-\locan}$ is contravariantly functorial in $(\Lambda, V, \gamma)$. 
\begin{Lem}\label{lem.cofinal-system}
    In the notation of Definition \ref{def.loc-an-vectors}-(1), suppose $U_0$ is an open subsheaf of $V$ containing $\gamma(\Lambda \otimes_{\ul{\mbb{Z}_p}} \mathcal{O}_Y^+)$ and contained in $p^{-N} \gamma(\Lambda \otimes_{\ul{\mbb{Z}_p}} \mathcal{O}_Y^+)$ for some $N>0$. Then, the subsheaf $U_{0,n}$ of $\Lambda/p^n \times_Y V$ consisting of pairs $(\overline{\lambda}, v)$ where $\gamma(\lambda) - v \in p^n U_0$ for one, equivalently any, local lift $\lambda$ of $\overline{\lambda}$, is open. Moreover, there is a natural isomorphism
    \[ \mathcal{O}_{\Lambda/Y}^{\gamma-\locan} \to \varinjlim_{n} \mathcal{O}_{U_{0,n}/Y}. \]
\end{Lem}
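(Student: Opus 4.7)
The plan has two parts: first, verifying that each $U_{0,n}$ is a well-defined open subsheaf of $\Lambda/p^n \times_Y V$, and second, showing that the system $\{U_{0,n}\}_{n \geq 0}$ is cofinal in the filtered system of pairs $(n, U)$ indexing the colimit defining $\mathcal{O}^{\gamma-\locan}_{\Lambda/Y}$.

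For well-definedness and openness, I would observe that two local lifts $\lambda, \lambda'$ of $\overline{\lambda}$ differ by some $p^n \mu$ with $\mu$ a local section of $\Lambda$, so $\gamma(\lambda) - \gamma(\lambda') = p^n \gamma(\mu)$ lies in $p^n \gamma(\Lambda \otimes_{\ul{\mbb{Z}_p}} \mathcal{O}_Y^+) \subseteq p^n U_0$ by the lower containment hypothesis. This means $U_{0,n}$ can be realised as the preimage of $p^n U_0 / p^n \gamma(\Lambda \otimes_{\ul{\mbb{Z}_p}} \mathcal{O}_Y^+)$ under the well-defined map $\Lambda/p^n \times_Y V \to V/p^n \gamma(\Lambda \otimes_{\ul{\mbb{Z}_p}} \mathcal{O}_Y^+)$ sending $(\overline{\lambda}, v)$ to the reduction of $\gamma(\lambda) - v$; working v-locally where $\Lambda/p^n$ trivialises to a finite discrete sheaf, this exhibits $U_{0,n}$ as an open subsheaf.

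For cofinality, I work v-locally on $Y$ so that $\Lambda$ is free (hence $\Lambda/p^{n_0}$ is a finite constant sheaf of sets) and $V$ is trivialised. Given $(n_0, U)$ with $U$ an open neighborhood of the image of $\Lambda$ in $\Lambda/p^{n_0} \times_Y V$, I need to produce $n \geq n_0$ such that $U_{0,n}$ maps into $U$ under the projection $\Lambda/p^n \times_Y V \to \Lambda/p^{n_0} \times_Y V$. For each $\overline{\lambda} \in \Lambda/p^{n_0}$, the slice $U_{\overline{\lambda}} \subseteq V$ is an open neighborhood of $\gamma(\tilde{\lambda})$ for any lift $\tilde{\lambda}$, and the sets $\{\gamma(\tilde{\lambda}) + p^n U_0\}_{n \geq 0}$ form a neighborhood basis at $\gamma(\tilde{\lambda})$, since $U_0 \subseteq p^{-N} \gamma(\Lambda \otimes_{\ul{\mbb{Z}_p}} \mathcal{O}_Y^+)$ is bounded and $p$ is topologically nilpotent on $V$. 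Finiteness of $\Lambda/p^{n_0}$ lets me choose $n$ uniformly, and combined with Step~1 this produces an isomorphism $\varinjlim_n \mathcal{O}_{U_{0,n}/Y} \xrightarrow{\sim} \mathcal{O}^{\gamma-\locan}_{\Lambda/Y}$.

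The main obstacle is the independence-of-lift check in Step~1: strictly, one needs $p^n U_0$ to absorb translations by elements of $p^n \gamma(\Lambda \otimes_{\ul{\mbb{Z}_p}} \mathcal{O}_Y^+)$, which is not automatic from the two given containments unless $U_0$ carries extra additive structure. I would resolve this by replacing $U_0$ with $U_0 + \gamma(\Lambda \otimes_{\ul{\mbb{Z}_p}} \mathcal{O}_Y^+)$, which still satisfies the sandwich hypotheses (with the same $N$) and has the required absorption property; the passage to a cofinal subsystem of the colimit is unaffected, since $U_0$ and its enlargement generate the same neighborhood basis of $0$ in $V$.
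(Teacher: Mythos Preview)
Your approach---localise to where $\Lambda$ and $V$ trivialise, then show the $U_{0,n}$ are cofinal in the system defining $\mathcal{O}_{\Lambda/Y}^{\gamma-\locan}$---is the paper's. You have also put your finger on something the paper's proof does not address: the ``for one, equivalently any, local lift'' clause in the statement genuinely requires $U_0$ to be stable under translation by $\gamma(\Lambda)$, and the sandwich hypotheses alone do not force this. Your enlargement $U_0\rightsquigarrow U_0+\gamma(\Lambda\otimes_{\ul{\mathbb{Z}_p}}\mathcal{O}_Y^+)$ is a valid fix that leaves the colimit unchanged. Alternatively, observe that in every instance where the paper invokes this lemma (see Remark~\ref{Rem:CanonicalUNaught} and Lemma~\ref{Lem:OgammalaIsLB}) the chosen $U_0$ is already an $\mathcal{O}_Y^+$-submodule of $V$, so the issue does not arise in practice.

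Your cofinality step has one residual gap. After fixing $\overline{\lambda}\in\Lambda/p^{n_0}$, the fibre over $\overline{\lambda}$ of the \emph{projected} image of $U_{0,n}$ is not $\gamma(\tilde{\lambda})+p^nU_0$ for a single lift $\tilde{\lambda}$, but rather
\[
\bigcup_{\tilde{\lambda}\mapsto\overline{\lambda}}\bigl(\gamma(\tilde{\lambda})+p^nU_0\bigr)
=\gamma(\tilde{\lambda}_0)+p^{n_0}\gamma(\Lambda)+p^nU_0,
\]
the union running over all lifts of $\overline{\lambda}$ to $\Lambda$. Finiteness of $\Lambda/p^{n_0}$ therefore does not by itself give the required uniform $n$; you also need that $\Lambda$ is locally $\ul{\mathbb{Z}_p}^h$, hence profinite, so that the open $U_{\overline{\lambda}}$ containing the quasi-compact set $\gamma(\tilde{\lambda}_0)+p^{n_0}\gamma(\Lambda)$ must contain some $p^nU_0$-thickening of it. The paper subsumes this into the sentence ``these clearly form a basis of neighborhoods of the image of $\Lambda$'', so you are in good company, but the point deserves a word.
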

\begin{proof}
To verify that $U_{0,n}$ is an open subsheaf, it suffices to show that $U_{0,n}$ is open after restriction to any affinoid perfectoid test object $Y'=\Spa(A,A^+)$ where $\Lambda\simeq \ul{\mbb{Z}}_p^{h}$ is trivial and $V\simeq\mathcal{O}_{Y'}^d$. We thus assume $Y=Y'$ and fix such trivializations. For any $n \geq 0$, we may fix elements $\lambda_1, \ldots, \lambda_{p^{nh}}$ of $\mbb{Z}_p^{h}$ that map bijectively onto $\mbb{Z}_p^{h}/p^n \mbb{Z}_p^{h}$. Then $U_{0,n}$ is the disjoint union of the open subsheaves
   \[ \{\overline{\lambda_i}\} \times (\gamma(\lambda_i) + p^n U_0) \subseteq \Lambda/p^n \Lambda \times_{Y} V. \]
To show the second statement, we observe that $U_{0,n}$ is an open neighborhood of the image of $\Lambda$ in $\Lambda/p^{n}\Lambda \times V$. Thus there is a restriction map
\begin{align}
    \varinjlim_n \mathcal{O}_{U_{0,n}/Y} \to \varinjlim_{(n,U)} \mathcal{O}_{U/Y}.
\end{align}
We can show it is an isomorphism after restricting to any affinoid perfectoid $Y'=\spa(A,A^+)$ where $\Lambda$ and $V$ are trivial. We thus assume $Y=Y'$ and fix such trivializations. Then, it suffices to show that for each $(U,n)$ as in Definition \ref{def.loc-an-vectors}-(1), there exists $m \geq n$ such that $U_{0,m}$ is contained in the pre-image of $U$ under $\Lambda/p^{m}\Lambda \times V \to \Lambda/p^{n}\Lambda \times V$. Equivalently, that the image of $U_{0,m}$ in $\Lambda/p^{n}\Lambda \times V$ is contained in $U$ for some $m$. \smallskip 

For this, we note that $U_{0,m}$ is the image in $\Lambda/p^{m}\Lambda \times V$ of $\Lambda \times p^{m} U_0$ (under the map given by the product of $\Lambda \rightarrow \Lambda/p^n$ and $\gamma|_{\Lambda}$ in the first factor).Thus the image of $U_{0,m}$ in $\Lambda/p^{n}\Lambda \times V$ is also the image of $\Lambda \times p^{m} U_0$. But these clearly form a basis of neighborhoods of the image of $\Lambda$ as $m$ goes to infinity. 
\end{proof}

\begin{Rem} \label{Rem:CanonicalUNaught}
In Lemma \ref{lem.cofinal-system}, one can always take $U_0 = \gamma (\Lambda \otimes_{\ul{\mbb{Z}_p}} \mathcal{O}_Y^+)$, and this choice is moreover functorial so that we could have simply defined $\mathcal{O}_{\Lambda/Y}^{\locan}$ as the colimit in Lemma \ref{lem.cofinal-system} for this fixed choice of $U_0$. However, to establish good functional analytic properties of $\mathcal{O}_{\Lambda/Y}^{\gamma-\locan}$, it is convenient to use the more general definition; in particular, it will be convenient to take $U_0$ to be an affinoid ball containing $\gamma (\Lambda \otimes_{\ul{\mbb{Z}_p}} \mathcal{O}_Y^+)$ so that one obtains a colimit of Banach modules under injective transition maps.
\end{Rem}

\begin{Lem}\label{lem.pullback-of-locan}
For any map of small v-stacks $f: Y' \rightarrow Y$, the natural map 
\[ f^*\mathcal{O}_{\Lambda/Y}^{\gamma-\locan} \rightarrow \mathcal{O}_{f^*\Lambda/Y'}^{f^*\gamma-\locan} \]
is an isomorphism. 
\end{Lem}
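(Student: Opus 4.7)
The plan is to reduce to a comparison of cofinal systems of opens, which can be arranged functorially under $f$. Unpacking, since $f^*$ commutes with colimits (as a left adjoint), and pullback of the pushforward is computed by base change via Corollary \ref{Cor:CohomologyAndBaseChange}, for each open $U \subseteq \Lambda/p^n\Lambda \times_Y V$ containing the image of $\Lambda$ we have $f^*\mathcal{O}_{U/Y} \simeq \mathcal{O}_{f^*U/Y'}$, and consequently
\[ f^*\mathcal{O}_{\Lambda/Y}^{\gamma-\locan} \simeq \varinjlim_{(n,U)} \mathcal{O}_{f^*U/Y'}. \]
The natural map to $\mathcal{O}_{f^*\Lambda/Y'}^{f^*\gamma-\locan}$ is induced by the inclusion of indexing categories sending $U \mapsto f^*U$, so the goal becomes showing that opens arising as pullbacks from $Y$ are cofinal among all opens on $Y'$ containing the image of $f^*\Lambda$.

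Because both sides are v-sheaves whose formation commutes with further base change, I may argue v-locally and assume that $Y$ and $Y'$ are affinoid perfectoid and that $\Lambda$, $V$, $f^*\Lambda$, $f^*V$ are all free (pro-\'etale local triviality of $\zp$-local systems together with \cite{KedlayaLiuII} for vector bundles). In this setting I choose an affinoid open ball $U_0 \subseteq V$ satisfying the hypotheses of Lemma \ref{lem.cofinal-system}, namely $U_0 \supseteq \gamma(\Lambda\otimes_{\ul{\mbb{Z}_p}}\mathcal{O}_Y^+)$ and $U_0 \subseteq p^{-N}\gamma(\Lambda\otimes_{\ul{\mbb{Z}_p}}\mathcal{O}_Y^+)$ for some $N>0$. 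Setting $U_0' := f^*U_0 \subseteq f^*V$, the analogous conditions hold for $(f^*\Lambda, f^*V, f^*\gamma)$ by functoriality, using $f^*\mathcal{O}_Y^+ = \mathcal{O}_{Y'}^+$ from Lemma \ref{Lem:PullBackO} (so that the pullback of $\gamma(\Lambda\otimes_{\ul{\mbb{Z}_p}}\mathcal{O}_Y^+)$ is exactly $f^*\gamma(f^*\Lambda\otimes_{\ul{\mbb{Z}_p}}\mathcal{O}_{Y'}^+)$).

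Applying Lemma \ref{lem.cofinal-system} on both sides yields
\[ \mathcal{O}_{\Lambda/Y}^{\gamma-\locan} \simeq \varinjlim_n \mathcal{O}_{U_{0,n}/Y} \quad\text{and}\quad \mathcal{O}_{f^*\Lambda/Y'}^{f^*\gamma-\locan} \simeq \varinjlim_n \mathcal{O}_{U_{0,n}'/Y'}, \]
and it remains only to identify $f^*U_{0,n}$ with $U_{0,n}'$. This follows from the explicit description in the proof of Lemma \ref{lem.cofinal-system} of $U_{0,n}$ as the image of $\Lambda \times p^n U_0$ under the product of $\Lambda \to \Lambda/p^n\Lambda$ and $\gamma|_{\Lambda}$, all three of which commute with $f^*$. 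Combining this identification with the commutation of $f^*$ with colimits and with Corollary \ref{Cor:CohomologyAndBaseChange} gives the required isomorphism. The only mild technical point is checking that the formation of $U_{0,n}$ is compatible with pullback; this is a direct verification, and I anticipate no substantive obstacle, as the entire argument is essentially a functoriality statement combined with base change.
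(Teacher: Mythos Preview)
Your proposal is correct and follows essentially the same approach as the paper: use the cofinal presentation of Lemma~\ref{lem.cofinal-system}, verify it is compatible with pullback, and invoke Corollary~\ref{Cor:CohomologyAndBaseChange}. The paper's proof is a single sentence to this effect; your version spells out the details, with the minor difference that you pass to a v-cover to choose an affinoid ball $U_0$, whereas the paper implicitly uses the canonical functorial choice $U_0=\gamma(\Lambda\otimes_{\ul{\mbb{Z}_p}}\mathcal{O}_Y^+)$ noted in Remark~\ref{Rem:CanonicalUNaught}, which avoids the v-local reduction altogether.
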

\begin{proof}
    The presentation of Lemma \ref{lem.cofinal-system} is compatible with pullback, and then we can use Corollary \ref{Cor:CohomologyAndBaseChange}.
\end{proof}

\subsubsection{} \label{subsub:derivationaction} Let $Y$ be a small v-stack and $\mathcal{X}=(\Lambda,V, \gamma)$ be a locally analytic character datum. Note that for an affinoid perfectoid $Y'=\spa(A,A^+) \to Y$, the basechange $U_{0,m,Y'}$ is the diamond associated with a smooth commutative group in adic spaces, with Lie algebra $V_{Y'}$. Then $V_{Y'}$ acts by invariant derivations on each term of the direct system, and thus on the colimit $\mathcal{O}_{\Lambda/Y}^{\gamma-\locan}$. This action is moreover compatible with pullback, and thus gives an action of $V$ on $\mathcal{O}_{\Lambda/Y}^{\gamma-\locan}$. In other words, we have constructed  a functorial $\mathcal{O}_Y$-linear map $$V \rightarrow \End_{\mathcal{O}_Y}(\mathcal{O}_{\Lambda/Y}^{\gamma-\locan}).$$
\begin{Lem} \label{Lem:LaAndCauchyRiemann}
 Let $(\Lambda, V, \gamma)$ be as before and let $W=\ker \gamma$. Then the natural map
 \begin{align}
    \mathcal{O}_{\Lambda/Y}^{\gamma-\locan} \to \mathcal{O}_{\Lambda/Y}^{\locan}
 \end{align}
induced by $(\Lambda, \Lambda \otimes_{\ul{\zp}} \mathcal{O}_Y, \operatorname{id}_{\Lambda \otimes_{\ul{\zp}} \mathcal{O}_Y}) \to(\Lambda, V, \gamma)$ is a monomorphism and identifies $\mathcal{O}_{\Lambda/Y}^{\gamma-\locan}$ with the $W$-fixed points of $\mathcal{O}_{\Lambda/Y}^{\locan}$ under the action of \S \ref{subsub:derivationaction}.
\end{Lem}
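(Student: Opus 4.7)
The plan is to reduce, via a v-local argument, to an explicit coordinate computation. Both sheaves are compatible with v-pullback by Lemma \ref{lem.pullback-of-locan}, the $V$- and $W$-actions of \S\ref{subsub:derivationaction} are manifestly functorial, and both injectivity and the $W$-invariance characterization are properties that can be checked v-locally. We may therefore assume $Y = \Spd(A,A^+)$ is affinoid perfectoid with $\Lambda \cong \ul{\mbb{Z}_p^h}_Y$ trivial. Since $V$ corresponds to a finite projective $A$-module $P$ via \cite[Theorem 3.5.8]{KedlayaLiuII}, the surjection $\gamma$ corresponds to a surjection $A^h \twoheadrightarrow P$ of $A$-modules, which splits since $P$ is projective. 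This yields a splitting $s \colon V \hookrightarrow \Lambda \otimes_{\ul{\zp}} \mathcal{O}_Y$ of $\gamma$ and a direct sum decomposition $\Lambda \otimes_{\ul{\zp}} \mathcal{O}_Y = s(V) \oplus W$, with associated projections $\pi_V$ and $\pi_W$.

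Next, I would set up compatible cofinal direct limits via Lemma \ref{lem.cofinal-system}. Fix a bounded affinoid open ball $U_0 \subseteq V$ containing $\gamma(\Lambda \otimes_{\ul{\zp}} \mathcal{O}_Y^+)$, giving $\mathcal{O}_{\Lambda/Y}^{\gamma-\locan} = \varinjlim_n \mathcal{O}_{U_{0,n}/Y}$. The projection $\pi_W$ sends the profinite set $\Lambda$ to a bounded subset of $W$, so there exists a bounded affinoid open ball $W_0 \subseteq W$ containing $\pi_W(\Lambda)$; set $\tilde{U}_0 := s(U_0) \oplus W_0 \subseteq \Lambda \otimes_{\ul{\zp}} \mathcal{O}_Y$. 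Using $\lambda = s(\gamma(\lambda)) + \pi_W(\lambda)$, this is a bounded affinoid open neighborhood of $\Lambda$, and so $\mathcal{O}_{\Lambda/Y}^{\locan} = \varinjlim_n \mathcal{O}_{\tilde{U}_{0,n}/Y}$. A direct calculation using $\gamma \circ s = \operatorname{id}_V$ shows that the character-datum morphism $\operatorname{id}_{\Lambda/p^n} \times \gamma$ restricts to a surjection $\tilde{U}_{0,n} \twoheadrightarrow U_{0,n}$ which, on each residue class $\bar{a} \in \Lambda/p^n$, is identified with the projection of polydiscs $B(\gamma(a), p^n U_0) \times B(\pi_W(a), p^n W_0) \to B(\gamma(a), p^n U_0)$ onto the first factor.

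The remaining claims then reduce to elementary power series calculations. On each residue class, a section of $\mathcal{O}_{U_{0,n}/Y}$ is a convergent power series $\sum_I c_I v^I$ in $V$-coordinates, and its pullback is the same power series viewed as an element of $\mathcal{O}_{\tilde{U}_{0,n}/Y}$, i.e., a power series in $(V \oplus W)$-coordinates that is independent of the $W$-coordinates. Injectivity is then immediate from the coefficient map $c_I \mapsto c_{I,0}$ being injective. The identification with $W$-invariants follows once one checks that the action of \S\ref{subsub:derivationaction} is, under the decomposition $\Lambda \otimes \mathcal{O}_Y = s(V) \oplus W$, the naive partial-derivative action $\partial/\partial w_j$ in the $W$-coordinates; then the $W$-invariants are precisely the power series with $c_{I,J} = 0$ for $J \neq 0$, matching the image.

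The main obstacle is the bookkeeping needed to match the two cofinal systems correctly and to verify that the invariant-derivation action of \S\ref{subsub:derivationaction} on the Banach pieces $\mathcal{O}_{\tilde{U}_{0,n}/Y}$ genuinely coincides with the naive partial-derivative action in our chosen coordinates. Both checks are routine but require care. Once established, the geometric content is simply the \emph{constant along fibers} characterization of pullbacks under the projection of a product of polydiscs onto one factor.
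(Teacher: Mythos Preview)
Your approach is essentially the same as the paper's: reduce v-locally to the trivialized case, split $\gamma$, present both sheaves as colimits via Lemma~\ref{lem.cofinal-system}, and verify the statement levelwise by the explicit ``constant along $W$-fibers'' computation. The paper uses the canonical choices $U_0'=\Lambda\otimes\mathcal{O}_Y^+$ and $U_0=\gamma(\Lambda\otimes\mathcal{O}_Y^+)$ rather than your product-of-balls $\tilde{U}_0=s(U_0)\oplus W_0$, but this is cosmetic.

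Two small points to tighten. First, to invoke Lemma~\ref{lem.cofinal-system} for $\mathcal{O}_{\Lambda/Y}^{\locan}$ you need $\tilde{U}_0\supseteq\Lambda\otimes\mathcal{O}_Y^+$, so $W_0$ must contain $\pi_W(\Lambda\otimes\mathcal{O}_Y^+)$, not just $\pi_W(\Lambda)$; this is automatic if you take $W_0$ to be a sufficiently large ball. Second, you establish the $W$-invariance characterization levelwise, but to conclude you must pass from $\varinjlim_n(\mathcal{O}_{\tilde{U}_{0,n}/Y})^W$ to $(\varinjlim_n\mathcal{O}_{\tilde{U}_{0,n}/Y})^W$. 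The paper handles this by observing that $W$ is a finitely generated free $\mathcal{O}_Y$-module, so taking $W$-fixed points is a finite limit and hence commutes with the filtered colimit; you should make this step explicit.
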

\begin{proof}
By the compatibility with pullback of Lemma \ref{lem.pullback-of-locan} we may assume $Y$ is an affinoid perfectoid $\Spa(A,A^+)$ and $\Lambda$, $V$, and $W$ are all trivial. Let $U'_0=\Lambda \otimes_{\ul{\zp}} \mathcal{O}_{Y}^+$ and $U_0=\gamma(\Lambda \otimes_{\ul{\zp}} \mathcal{O}_{Y}^+)$. Using the presentation of Lemma \ref{lem.cofinal-system} twice, we can identify the natural map of the lemma with
\begin{align}
    \varinjlim_{m} \mathcal{O}_{U_{0,m}/Y} \to \varinjlim_{m} \mathcal{O}_{U_{0,m}'/Y}.
\end{align}
By the discussion in \S \ref{subsub:derivationaction}, this natural map is $\Lambda \otimes_{\zp} \mathcal{O}_Y$-equivariant for the action on the left hand side given by $\gamma:\Lambda \otimes_{\zp} \mathcal{O}_Y \to V$. In particular, it induces a natural map
\begin{align}
    \varinjlim_{m} \mathcal{O}_{U_{0,m}/Y} \to \left(\varinjlim_{m} \mathcal{O}_{U_{0,m}'/Y}\right)^{W},
\end{align}
which we want to show is an isomorphism. Taking fixed points for the action of $W$ is a finite limit (since $W$ is a finitely generated free $\mathcal{O}_Y$-module) and thus passes through the filtered colimit. Thus it suffices to show that the natural map
\begin{align}
    \mathcal{O}_{U_{0,m}/Y}\to (\mathcal{O}_{U_{0,m}'/Y})^{W} 
\end{align}
is an isomorphism. In other words, we are trying to show that functions on a ball inside $\Lambda \otimes_{\ul{\zp}} \mathcal{O}_Y$ that are killed by the derivations coming from $W=\ker \gamma$,
must factor through $\gamma:\Lambda \otimes_{\ul{\zp}} \mathcal{O}_Y \to V$. Writing $\Lambda \otimes_{\ul{\zp}} \mathcal{O}_Y$ as $W \oplus V$, see Lemma \ref{Lem:LocalDirectSummand}, this can be checked in coordinates where it is straightforward.
\end{proof}

It is immediate from Definition \ref{def.loc-an-vectors} that there is a natural restriction map
\begin{equation}\label{eq.restriction-map-locally-analytic}\mathcal{O}_{\Lambda/Y}^{\gamma-\locan} \rightarrow \mathcal{O}_{\Lambda/Y}.\end{equation}

\begin{Lem}\label{lemma.restriction-map-injective}
The restriction map \eqref{eq.restriction-map-locally-analytic} is injective. 
\end{Lem}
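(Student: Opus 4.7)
The plan is to reduce the injectivity to a classical identity theorem for $p$-adic analytic functions: a convergent power series on an open polydisk containing $\zp^h$ that vanishes on all of $\zp^h$ must be zero.

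First, by Lemma~\ref{Lem:LaAndCauchyRiemann}, the natural map $\mathcal{O}_{\Lambda/Y}^{\gamma-\locan} \to \mathcal{O}_{\Lambda/Y}^{\locan}$ induced by the character datum morphism $(\Lambda, \Lambda \otimes_{\ul{\zp}} \mathcal{O}_Y, \mathrm{id}) \to (\Lambda, V, \gamma)$ is a monomorphism, so factoring the restriction as $\mathcal{O}_{\Lambda/Y}^{\gamma-\locan} \hookrightarrow \mathcal{O}_{\Lambda/Y}^{\locan} \to \mathcal{O}_{\Lambda/Y}$ reduces us to the case $\gamma = \mathrm{id}$. Since injectivity of a morphism of sheaves can be checked after a v-cover, Lemma~\ref{lem.pullback-of-locan} further reduces to the case where $Y = \spd(A,A^+)$ is affinoid perfectoid and $\Lambda \simeq \ul{\zp^h}_Y$ is trivial; in particular $V = \Lambda \otimes_{\ul{\zp}} \mathcal{O}_Y$ is then trivialized as $\mathbb{A}^{h,\lozenge}_Y$. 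Apply Lemma~\ref{lem.cofinal-system} with $U_0 \subset V$ an open polydisk of radius slightly greater than one, containing $\Lambda \otimes_{\ul{\zp}} \mathcal{O}_Y^+$; this gives $\mathcal{O}_{\Lambda/Y}^{\locan} = \varinjlim_n \mathcal{O}_{U_{0,n}/Y}$. Since filtered colimits of $\mathcal{O}_Y$-modules are exact, it suffices to show each map $\mathcal{O}_{U_{0,n}/Y} \to \mathcal{O}_{\Lambda/Y}$ is injective.

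In the disjoint decomposition $U_{0,n} = \bigsqcup_{\overline{\lambda} \in \Lambda/p^n\Lambda} \{\overline{\lambda}\} \times (\lambda + p^n U_0)$, the image of $\Lambda$ in the component indexed by $\overline{\lambda}$ is exactly $\lambda + p^n \zp^h$. A section of $\mathcal{O}_{U_{0,n}/Y}$ over $Y' = \spd(A', (A')^+) \to Y$ thus decomposes as a tuple $(f_{\overline{\lambda}})$ of convergent power series with $A'$-coefficients, and vanishing along $\Lambda \times_Y Y'$ means each $f_{\overline{\lambda}}$ vanishes at all the $\zp^h$-integer points of its ball. After translating to the origin, we must show that a convergent power series in $h$ variables with $A'$-coefficients on an open polydisk of radius $>1$ which vanishes on $\zp^h$ is zero. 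This is the classical consequence of the Mahler expansion: the change of basis from the monomial basis $\{z^I\}$ to Mahler's basis $\{\binom{z}{I}\}_{I \in \mathbb{Z}_{\ge 0}^h}$ is upper triangular with unit diagonal (and has integer entries, so holds over any coefficient ring), so evaluation at $\zp^h$ gives an injection from power series coefficients into continuous $A'$-valued functions on $\zp^h$. The main point requiring care is this last Mahler-type input over a general perfectoid base; however, it follows formally from the classical $\qp$-case by $A'$-linearity of the change-of-basis relation, and the rest of the argument is essentially formal, relying on the reductions and presentations from the preceding lemmas.
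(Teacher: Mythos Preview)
Your proof is correct and follows the same strategy as the paper: reduce via Lemma~\ref{Lem:LaAndCauchyRiemann} to the case $\gamma=\mathrm{id}$, pass to an affinoid perfectoid base where $\Lambda\simeq\ul{\zp^h}$ is trivial, and identify the restriction map with the classical injection $C^{\locan}(\zp^h,A)\hookrightarrow \Cont(\zp^h,A)$. The paper simply asserts this last injection as known, whereas you unpack it explicitly via the colimit presentation and the Mahler change-of-basis; one minor quibble is that ``open polydisk of radius slightly greater than one'' is imprecise over a general perfectoid base, but taking $U_0=(p^{-1}\mathcal{O}_Y^+)^h$ (or even the closed unit polydisk) works just as well and satisfies the hypotheses of Lemma~\ref{lem.cofinal-system}.
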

\begin{proof}
    By Lemma \ref{Lem:LaAndCauchyRiemann}, it suffices to prove that the restriction map $\mathcal{O}_{\Lambda/Y}^\locan \rightarrow \mathcal{O}_{\Lambda/Y}$ is injective. Evaluating on an affinoid perfectoid $\Spa(A,A^+)/Y$ where $\Lambda\simeq \ul{\zp^h}$ is trivial, the restriction map is the natural injection $C^{\locan}(\zp^h, A) \hookrightarrow \Cont(\zp^h, A).$ 
    We conclude as such affinoid perfectoids form a basis for $Y_{\mathrm{v}}$. 
\end{proof}
Let $Y$ be a small v-stack and let $\mathcal{X}=(\Lambda,V,\gamma)$ be a locally analytic character datum over $Y$. 
\begin{Prop} \label{Prop:GeometricLB}
Assume that $Y=\spd(R,R^+)$ for a fiercely v-complete Huber pair $(R,R^+)$ over $(\qp,\zp)$ and that $V$ is free. The following hold: 
    \begin{enumerate}
        \item For all morphisms of fiercely v-complete Huber pairs $(R,R^+) \to (B,B^+)$, the natural map ($Y'=\spd(B,B^+)$)
        \begin{align}
            H^0_{\cond}(Y,  \mathcal{O}_{\Lambda/Y}^{\gamma-\locan}) \otimes_{\ul{A}}^{\blacksquare} \ul{B} \to H^0_{\cond}(Y', \mathcal{O}_{\Lambda/Y}^{\gamma-\locan})
        \end{align}
        is an isomorphism.
    
        \item The natural map
        \begin{align}
        H^0_{\cond}(Y,\mathcal{O}_{\Lambda/Y}^{\gamma-\locan}) \otimes_{\ul{R}}^{\blacksquare} H^0_{\cond}(Y,\mathcal{O}_{\Lambda/Y}^{\gamma-\locan}) \to  H^0_{\cond}(Y,\mathcal{O}_{(\Lambda \oplus \Lambda)/Y}^{(\gamma \oplus \gamma)-\locan})
    \end{align}
    is an isomorphism. 

    \item The natural map
    \begin{align}
        H^0_{\cond}(Y,\mathcal{D}_{\Lambda/Y}^{\gamma-\locan}) &\to \Hom_{\ul{A}}(H^0_{\cond}(Y,\mathcal{O}_{\Lambda/Y}^{\gamma-\locan}), \ul{R})
    \end{align}
    is an isomorphism. 

    \item The natural map
    \begin{align}
        H^0_{\cond}(Y,\mathcal{D}_{\Lambda/Y}^{\gamma-\locan}) \otimes_{\ul{R}}^{\blacksquare} H^0_{\cond}(Y,\mathcal{D}_{\Lambda/Y}^{\gamma-\locan}) \to  H^0_{\cond}(Y,\mathcal{D}_{(\Lambda \oplus \Lambda)/Y}^{(\gamma \oplus \gamma)-\locan})
    \end{align}
    is an isomorphism.

    \item The module $H^0_{\cond}(Y,\mathcal{D}_{\Lambda/Y}^{\gamma-\locan})$ is strongly countably Fr\'echet of compact type. 
     \end{enumerate}
For an arbitrary small v-stack $Y$, the following hold:
\begin{enumerate}
\setcounter{enumi}{5}
         \item The sheaf $\mathcal{O}_{\Lambda/Y}^{\gamma-\locan}$ is reflexive.

        \item The sheaf $\mathcal{D}_{\Lambda/Y}^{\gamma-\locan}$ is locally strongly countably Fr\'echet of compact type. 

        \item The natural map
        \begin{align}
            \mathcal{O}_{\Lambda/Y}^{\gamma-\locan} \otimes_{\mathcal{O}_Y}^{\blacksquare} \mathcal{O}_{\Lambda/Y}^{\gamma-\locan} \to \mathcal{O}_{(\Lambda \oplus \Lambda)/Y}^{(\gamma \oplus \gamma)-\locan}
        \end{align}
        is an isomorphism.

        \item The natural map
        \begin{align}
            \mathcal{D}_{\Lambda/Y}^{\gamma-\locan} \otimes_{\mathcal{O}_Y}^{\blacksquare} \mathcal{D}_{\Lambda/Y}^{\gamma-\locan} \to \left( \mathcal{O}_{\Lambda/Y}^{\gamma-\locan} \otimes_{\mathcal{O}_Y}^{\blacksquare} \mathcal{O}_{\Lambda/Y}^{\gamma-\locan} \right)^{\ast}
        \end{align}
        is an isomorphism.
\end{enumerate}
\end{Prop}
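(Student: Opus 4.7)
The plan is to combine the explicit presentation of $\mathcal{O}^{\gamma-\locan}_{\Lambda/Y}$ from Lemma \ref{lem.cofinal-system} with the functional-analytic machinery of Sections \ref{Sec:Condensed}--\ref{Sec:VSheaves}. First, by v-descent and the functoriality of all the constructions in $(\Lambda, V, \gamma)$, I will reduce to the case where $\Lambda \simeq \ul{\zp}^h$ is trivial. I will then choose $U_0 \subset V$ to be an open affinoid polydisc containing $\gamma(\Lambda \otimes_{\ul{\zp}} \mathcal{O}_Y^+)$, so that Lemma \ref{lem.cofinal-system} gives $\mathcal{O}^{\gamma-\locan}_{\Lambda/Y} = \varinjlim_m \mathcal{O}_{U_{0,m}/Y}$ and each $U_{0,m}$ is the \emph{finite} disjoint union (indexed by the finite set $\Lambda/p^m\Lambda$) of open polydiscs of the form $\gamma(\lambda) + p^m U_0$ in $V$. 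Each such open polydisc is exhausted by a countable nested sequence of closed affinoid polydiscs, which, over a fiercely v-complete base, are strongly smooth in the sense of Definition \ref{Def:StronglySmooth}. Combining Lemma \ref{Lem:SheafFrechet} with the classical compactness of restriction between nested closed polydiscs of distinct radii (transferred to the present setting via Lemma \ref{Lem:CompactTensor}), I will conclude that each $\mathcal{O}_{U_{0,m}/Y}$ is locally strongly countably Fr\'echet of compact type and that the transition maps $\mathcal{O}_{U_{0,m+1}/Y} \to \mathcal{O}_{U_{0,m}/Y}$ are compact with dense image.

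With this presentation in hand, items (1)--(5) will follow formally. For item (1), since filtered colimits commute with the solid tensor product (using AB5 from Section \ref{Sub:Frechet}) and with condensed global sections on qcqs objects, it suffices to base change each $M_m := H^0_{\cond}(Y, \mathcal{O}_{U_{0,m}/Y})$, which is Lemma \ref{Lem:EvaluationFrechet}. For item (2), the K\"unneth map factors through a pair of filtered colimits in $m$; termwise, each pair of open-polydisc components reduces, after writing each open polydisc as a filtered colimit of closed polydiscs, to Propositions \ref{Prop:Kunneth2}--\ref{Prop:Kunneth3}, and the colimits commute out using the flatness (Proposition \ref{Prop:FrechetFlat}) of Fr\'echet modules. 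Item (3) will follow from (1) via Lemma \ref{Lem:EvaluationAbstract} combined with Lemma \ref{Lem:DualOfFrechet} to commute the dual with the colimit. Item (4) will follow from items (2)--(3) together with Corollary \ref{Cor:DualTensorCommute} applied to the compact-type Fr\'echet pieces $M_m$. Item (5) will then be immediate from the identification $H^0_{\cond}(Y, \mathcal{D}^{\gamma-\locan}_{\Lambda/Y}) \simeq \varprojlim_m M_m^{\ast}$, since each $M_m^{\ast}$ is a direct summand of a countably orthonormalizable Banach module and the inverse system has compact, dense-image transitions (by dualizing the compactness of the direct system $M_m$).

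For the general items (6)--(9), I will argue v-locally: over an affinoid perfectoid cover trivializing $V$, the local situation reduces to the already-treated fiercely v-complete case. Specifically, item (7) then follows directly from item (5) by v-descent, and items (8) and (9) follow by combining items (2) and (4), respectively, with the v-descent of the sheaves and of solid tensor products. Item (6), the reflexivity of $\mathcal{O}^{\gamma-\locan}_{\Lambda/Y}$, will be obtained by applying Lemma \ref{Lem:FrechetDualSheaf} to the Fr\'echet presentation $\mathcal{D}^{\gamma-\locan}_{\Lambda/Y} \simeq \varprojlim_m (\mathcal{O}_{U_{0,m}/Y})^{\ast}$ to identify the double dual with $\varinjlim_m (\mathcal{O}_{U_{0,m}/Y})^{\ast\ast}$, and then invoking Corollaries \ref{Cor:SummandReflexive} and \ref{Cor:FrechetReflexive} for the reflexivity of each term $\mathcal{O}_{U_{0,m}/Y}$.

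The main obstacle, I expect, is rigorously establishing the locally strongly countably Fr\'echet of compact type structure of the sheaves $\mathcal{O}_{U_{0,m}/Y}$ attached to open polydiscs over arbitrary fiercely v-complete bases, together with the needed compatibilities (compactness of transitions, flatness in the solid sense, and K\"unneth) in the relative setting. The decomposition of $U_{0,m}$ into finitely many components indexed by $\Lambda/p^m\Lambda$ is what keeps the product manageable --- without finiteness here, one would need the stability of Fr\'echet-of-compact-type modules under countable products, which is more delicate. Once the structural picture for $\mathcal{O}_{U_{0,m}/Y}$ is firmly established, the remaining arguments are essentially formal manipulations with filtered (co)limits, solid tensor products and duals, using only the tools already developed in Sections \ref{Sec:Condensed} and \ref{Sec:VSheaves}.
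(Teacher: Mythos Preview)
Your overall strategy---present $\mathcal{O}^{\gamma-\locan}_{\Lambda/Y}$ as a filtered colimit via Lemma~\ref{lem.cofinal-system} and push the lemmas of \S\S\ref{Sec:Condensed}--\ref{Sec:VSheaves} through---matches the paper's, and your arguments for (1)--(3) and (8) are essentially correct. However, there is a genuine gap in your argument for (5), and it propagates to (4), (6), and (7).

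The problem is your claim that each $M_m^* := H^0_{\cond}(Y,\mathcal{O}_{U_{0,m}/Y})^*$ is a direct summand of a countably orthonormalizable Banach module. If, as your phrase ``exhausted by a countable nested sequence of closed affinoid polydiscs'' indicates, the components of $U_{0,m}$ are \emph{open} polydiscs, then $M_m$ is strongly countably Fr\'echet, and by Lemma~\ref{Lem:DualOfFrechet} its dual $M_m^*$ is a filtered \emph{colimit} of Smith modules---not a Banach module. If instead you take closed polydiscs, $M_m$ is Banach and $M_m^*$ is Smith---again not Banach. Either way, the inverse system $\{M_m^*\}$ does not satisfy Definition~\ref{Def:Frechet}, so your proof of (5) fails. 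This in turn breaks your use of Lemma~\ref{Lem:FrechetDualSheaf} for (6) (which requires Banach terms in the inverse limit), and your plan to combine Corollary~\ref{Cor:DualTensorCommute} termwise with Proposition~\ref{Prop:TensorInverseLimit} for (4) (the latter again needs the $M_m^*$ to be Banach). Note also that ``dualizing compactness'' does not immediately yield compactness: by Definition (\S\ref{subsub:compact}), $f$ compact means $f^*$ is trace-class, which is not the same as $f^*$ compact.

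The paper resolves this by introducing a \emph{second} presentation (Lemma~\ref{Lem:OgammaisLSspace}): one writes $\mathcal{O}^{\gamma-\locan}_{\Lambda/Y} = \varinjlim_n \mathcal{O}^{\bdd}_{U_n^\circ/Y}$ using \emph{bounded} functions on open balls. Each term is then a direct summand of a countably free \emph{Smith} module, whose dual is genuinely Banach (Proposition~\ref{Prop:DualityCondensed}). Hence $\mathcal{D}^{\gamma-\locan}_{\Lambda/Y} \simeq \varprojlim_n (\mathcal{O}^{\bdd}_{U_n^\circ/Y})^*$ is visibly an inverse limit of Banach modules, and the only substantive remaining work---done in the Claim inside the paper's proof---is to check that the dual transition maps $\mathcal{O}^{\bdd}(U_n^\circ)^* \to \mathcal{O}^{\bdd}(U_{n+1}^\circ)^*$ are compact with dense image (this requires passing to a finite \'etale cover trivializing $\Lambda/p^n\Lambda$). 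With (5) in hand, (4) follows from Corollary~\ref{Cor:DualTensorCommute}, and (6) from the reflexivity of Smith modules (Proposition~\ref{Prop:DualityvSheaves}) combined with the Fr\'echet dual computation. The closed-ball presentation you describe (Lemma~\ref{Lem:OgammalaIsLB} in the paper) is still the right tool for (1)--(3); the Smith presentation is needed precisely for the duality-sensitive items.
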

We start by proving a lemma.

\begin{Lem} \label{Lem:OgammalaIsLB}
Let $Y=\Spd (R,R^+)$ for a fiercely v-complete Huber pair $(R,R^+)$ over $(\qp,\zp)$, and let $\mathcal{X}=(\Lambda, \mathcal{O}_{Y}^{\oplus d}, \gamma)$ be a locally analytic character datum such that $\restr{\gamma}{\Lambda}$ factors through the unit ball $(\mathcal{O}^{+}_{Y})^{d}$. For any $n \geq 0$, let $U_n \subseteq \Lambda/p^n \Lambda \times \mathcal{O}_Y^{\oplus d}$ be the open subset of pairs $(\lambda, w)$ where $\gamma(\lambda)-w \in p^n (\mathcal{O}_Y^{+})^d$. Then   
    \[ \mathcal{O}_{\Lambda/Y}^{\gamma-\locan}(Y) = \varinjlim_n \mathcal{O}(U_n)=\varinjlim_n H^0(Y, \mathcal{O}_{U_n/Y}), \]
and $H^0_\cond(Y, \mathcal{O}_{U_n/Y})$ is a direct summand of an orthonormalizable Banach $R$-module. 

Moreover, suppose $(R,R^+) \to (B,B^+)$ is a map of fiercely v-complete Huber pairs, and let $Y'=\Spd(B,B^+)$. Then the natural map
    \begin{equation} \label{Eqn:BCforSummandsOfBalls}
    H^0_{\cond}(Y, \mathcal{O}_{U_{n}/Y}) \otimes_{\ul{R}}^{\blacksquare} \ul{B} \to H^0_{\cond}(Y', \mathcal{O}_{U_{n}/Y})
    \end{equation}
    is an isomorphism.
\end{Lem}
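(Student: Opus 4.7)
The plan is to give a concrete description of $U_n$ as $\spd(P_n, P_n^+)$ for a strongly smooth extension of $(R, R^+)$, after which all three assertions follow from the functional-analytic machinery developed in Sections~\ref{Sec:Condensed} and \ref{Sec:VSheaves}.

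First, I would establish the colimit presentation by applying Lemma~\ref{lem.cofinal-system} with $U_0 = (\mathcal{O}_Y^+)^d$; unwinding definitions, the resulting $U_{0,n}$ coincides with the $U_n$ in the statement. The containment $\gamma(\Lambda \otimes_{\ul{\zp}} \mathcal{O}_Y^+) \subseteq U_0$ is immediate from the hypothesis on $\restr{\gamma}{\Lambda}$. For the other containment $U_0 \subseteq p^{-N} \gamma(\Lambda \otimes_{\ul{\zp}} \mathcal{O}_Y^+)$, which can be checked v-locally, we may trivialize $\Lambda$ and choose an $\mathcal{O}_Y$-linear section $s \colon V \to \Lambda \otimes_{\ul{\zp}} \mathcal{O}_Y$ of $\gamma$ (which exists because $V$ is free); since $s(U_0)$ is bounded, $p^N s(U_0) \subseteq \Lambda \otimes_{\ul{\zp}} \mathcal{O}_Y^+$ for some $N$, and applying $\gamma$ yields $p^N U_0 \subseteq \gamma(\Lambda \otimes_{\ul{\zp}} \mathcal{O}_Y^+)$.

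Second, I would identify $U_n$ geometrically. Since $\Lambda/p^n\Lambda$ is a finite étale sheaf over $Y$, it is represented by $\spd(R_n, R_n^+)$ for a finite étale $R$-algebra $R_n$. Over $\spd(R_n, R_n^+)$, the pullback of $U_n$ is a closed $p^{-n}$-ball shifted by $\gamma$ applied to a lift of the tautological section of $\Lambda/p^n \Lambda$; after the change of variables absorbing this shift, it becomes the Tate algebra in $d$ variables over $R_n$. Descending, we find $U_n = \spd(P_n, P_n^+)$ with $P_n = R_n\langle Y_1, \ldots, Y_d\rangle$, and the map $(R,R^+) \to (P_n, P_n^+)$ factors as the Tate algebra $R \to R\langle Y_1, \ldots, Y_d\rangle$ followed by a finite étale base change, so it is strongly smooth in the sense of Definition~\ref{Def:StronglySmooth}. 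Lemmas~\ref{Lem:SmoothOverFierce} and \ref{Lem:CondensedRingStructure} then give $H^0_\cond(Y, \mathcal{O}_{U_n/Y}) = \ul{P_n}$, while Lemma~\ref{Lem:SheafBanach} exhibits $\mathcal{O}_{U_n/Y}$ as a direct summand of a countably orthonormalizable Banach sheaf of $\mathcal{O}_Y$-modules; taking global sections (which preserves idempotents) and invoking Lemma~\ref{Lem:Sheaves} gives the direct summand statement for $\ul{P_n}$.

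Finally, for the base change, I would decompose $\ul{P_n} \simeq \ul{R_n} \otimes^{\blacksquare}_{\ul{R}} \ul{R\langle Y_1, \ldots, Y_d\rangle}$ using that $\ul{R_n}$ is a direct summand of $\ul{R}^{\oplus k}$ (since $R_n$ is finite projective over $R$) and that the solid tensor product is compatible with idempotents. Applying $\otimes^{\blacksquare}_{\ul{R}} \ul{B}$ then reduces to two cases: Proposition~\ref{Prop:StandardBCproperty} handles the Tate algebra factor (since $R\langle Y_i\rangle$ is an orthonormalizable Banach $R$-module), and the direct summand argument handles the finite étale factor. The result is $\ul{P_n} \otimes^{\blacksquare}_{\ul{R}} \ul{B} \simeq \ul{(R_n \otimes_R B)\langle Y_1, \ldots, Y_d\rangle} = H^0_\cond(Y', \mathcal{O}_{U_n/Y})$, where the right-hand side is computed by applying the same geometric analysis to $U_n \times_Y Y'$. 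The main obstacle will be making the geometric identification of $U_n$ precise, in particular the descent of the coordinate change on $U_n \times_Y \spd(R_n, R_n^+)$ to the global $Y$-structure and the verification that the resulting algebra $P_n$ is well-defined independently of auxiliary choices of lift.
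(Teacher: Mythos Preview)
Your first step (the colimit presentation via Lemma~\ref{lem.cofinal-system}) is correct and matches the paper. The gap is in your second step. You assert that $U_n$, viewed over $Y_n := \spd(R_n, R_n^+)$, is a closed ball whose center is $\gamma(\lambda)$ for $\lambda$ a lift of the tautological section of $\Lambda/p^n\Lambda$. But $\Lambda$ is a $\mathbb{Z}_p$-local system trivialized only after a \emph{pro}-finite-\'etale cover, so over the finite level $Y_n$ there is in general no section of $\Lambda$ lifting the tautological $\bar\lambda$. Without such a lift, the would-be center exists only as a section of the sheaf quotient $(\mathcal{O}^+/p^n)^d$ over $Y_n$, and lifting it to an honest element of $(R_n^+)^d$ is obstructed by $H^1(Y_n,\mathcal{O}^+)$, which is not known to vanish for a merely fiercely v-complete base. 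Thus $U_n \to Y_n$ is a priori only a \emph{twisted form} of a ball, and the identification $P_n \cong R_n\langle Y_1,\ldots,Y_d\rangle$ is unjustified. Your closing paragraph correctly flags the descent as the delicate point but underestimates it: it is not a matter of making choices consistent, but of producing a center that may simply not live at the finite level you have chosen.

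The paper supplies exactly the missing idea. It does not attempt to trivialize $U_n$ over $Y_n$; instead it passes to a perfectoid cover $Y_\infty = \spa(R_\infty,R_\infty^+)$, built as the completion of a tower of faithfully flat finite \'etale covers $Y_i/Y$, arranged so that $\Lambda$ itself becomes trivial over $Y_\infty$. There genuine lifts exist and the centers $\gamma(\lambda)\in R_\infty^d$ are computed directly. The crucial step is then \emph{density}: since $\varinjlim_i R_i$ is dense in $R_\infty$, each such center can be approximated within $p^n(\mathcal{O}^+)^d$ by an element of $R_i^d$ for some sufficiently large finite level $i$, so that $U_n\times_Y Y_i$ is an honest disjoint union of balls over $Y_i$ (in general strictly larger than any cover trivializing only $\Lambda/p^n\Lambda$). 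Finite \'etale descent along $R \to R_i$, using that $R_i$ is finite projective over $R$, then yields the direct-summand statement over $R$, and applying the same projector after base change to $B$ reduces the isomorphism~\eqref{Eqn:BCforSummandsOfBalls} to the Tate-algebra case handled by Proposition~\ref{Prop:StandardBCproperty}.
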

\begin{proof}
The first equality follows from Lemma \ref{lem.cofinal-system}. 

For the rest, we first choose a perfectoid cover $Y_\infty=\Spa(R_\infty, R^+_\infty)$ with $R_\infty = (\varinjlim_i R_i)^\wedge$ where each $R \rightarrow R_i$ is faithfully flat finite \'{e}tale and such that $\Lambda$ is trivial after pullback to $Y_\infty$ --- this is possible by first taking a cover as in \cite[Lemma 3.6.26]{KedlayaLiu} and then taking fiber products with the trivializing covers for $\Lambda/p^{i} \Lambda$. We write $R_i^+$ for the integral closure of $R^+$ in $R_i$, and $Y_i=\Spa(R_i, R_i^+)$. 

We are going to first show that, for some index $i$, $U_n \times_Y {Y_i}$ is a finite disjoint union of balls over $Y_i$. First we fix a section $\lambda: Y_\infty \rightarrow \Lambda_{Y_\infty}$. We write $\overline{\lambda}: Y_\infty \rightarrow (\Lambda/p^n \Lambda)_{Y_\infty}$ for the induced map. Then the map $p^n \mathcal{O}^+_{Y_\infty} \rightarrow \overline{\lambda}^* U_{n,Y_\infty}$, $v \mapsto \gamma(\lambda)+v$ is an isomorphism, i.e., $\overline{\lambda}^* U_{n,Y_\infty}$ is a closed ball of radius $|p^n|$ in $\mathbb{A}^d_{Y_\infty}$ centered at $\gamma(\lambda)$. Writing $\gamma(\lambda)=(a_1, \ldots, a_d) \in R_\infty^d$, we find that, by the density of $\varinjlim_i R_i$ in $R_\infty$, for $i$ sufficiently large there is a center $(b_1, \ldots, b_d) \in R_i^d$ for the same ball. 

In particular, if we first pass to $i$ large enough to fix an isomorphism $\Lambda/p^n \Lambda = (\mathbb{Z}_p/p^n\mathbb{Z}_p)^h$, then we may pass to a further $i$ so that for each $\alpha \in (\mathbb{Z}_p/p^n \mathbb{Z}_p)^h$, we have $U_n|_{\alpha \times Y_i}$ is a ball of radius $p^n$. Thus $U_n \times_Y {Y_i}$ is a disjoint union of $p^{nh}$ $d$-dimensional balls of radius $|p^n|$ over $Y_i$. It follows, in particular, that $\mathcal{O}(U_n \times_Y Y_i)$ is an orthonormalizable Banach $R_i$-module. 

Now, since $R \rightarrow R_i$ is faithfully flat finite \'{e}tale, descent is effective along $R \rightarrow R_i$, and thus we find
    \[ \mathcal{O}(U_n) \otimes_R R_i = \mathcal{O}(U_n \times_Y Y_i). \]
(in making the descent argument, note that $R_i/R$ is finite so the completed tensor product with $R_i$ agrees with the regular tensor product). Note that $R_i$ is projective over $R$, so $R$ is a direct summand of $R_i$ as an $R$-module and $R_i$ is a direct summand of $R^m$ as an $R$-module (see the proof of Lemma \ref{Lem:SheafFrechet}). It follows from Lemma \ref{Lem:CondensedRingStructure} that $H^0_\cond(Y, \mathcal{O}_{U_n/Y})$ is a direct summand of an orthonormalizable Banach $R$-module. 

Now suppose given a map $(R,R^+) \rightarrow (B,B^+)$ of fiercely $v$-complete Huber pairs, let $Y'=\Spa(B,B^+)$, and, for $i$ as above, let $Y_i' = Y_i \times_Y Y'=\spa(B_i, B_i^+)$. Then, since $U_n \times_Y {Y_i}$ is a union of $p^{nh}$ $d$-dimensional balls of radius $|p^n|$, we see that the natural map 
    \begin{equation} \label{Eqn:BCinproofforballs}
    H^0_{\cond}(U_n \times_Y Y_i, \mathcal{O}) \otimes^{\blacksquare}_{\ul{R_i}} \ul{B_i} \to H^0_{\cond}(U_n \times_Y Y_i', \mathcal{O})
    \end{equation}
    is an isomorphism. Indeed, this amounts to checking $\ul{R_i\langle T_1, \ldots, T_d} \rangle \otimes^{\blacksquare}_{\ul{R_i}} \ul{B_i} \simeq \ul{B_i\langle T_1, \ldots T_d \rangle}$, which
    is Proposition \ref{Prop:StandardBCproperty}. But now applying the projectors for the direct summands as above, we conclude 
    \[ H^0_{\cond}(U_n \times_Y Y', \mathcal{O})=H^0_{\cond}(U_n, \mathcal{O}) \otimes_{\ul{R}}^{\blacksquare} \ul{B}, \]
    giving the claimed result. 
\end{proof}

In the proof of Proposition \ref{Prop:GeometricLB}, we will need the following lemma.
\begin{Lem} \label{Lem:OgammaisLSspace}
With the notation as in Lemma \ref{Lem:OgammalaIsLB}, let $U_n^{\circ} \subset \Lambda/p^{n+1} \Lambda \times \mathcal{O}_Y^{\oplus d}$ denote the open subset of pairs $(\lambda, w)$ where $\gamma(\lambda) - w \in p^{n + \varepsilon}(\mathcal{O}_Y^+)^d$ for some $\varepsilon > 0$. Then we have 
    \begin{align} \label{Eqn:OgammalaPresentation}
    \mathcal{O}_{\Lambda/Y}^{\gamma-\locan}(Y) = \varinjlim_n H^0(Y, \mathcal{O}^{\mathrm{bdd}}_{U^{\circ}_n/Y}),
    \end{align}
    where $\mathcal{O}^{\mathrm{bdd}}_{U^{\circ}_n/Y} := \mathcal{O}^{+}_{U^{\circ}_n/Y}[\tfrac{1}{p}]$, and each term in the colimit is a direct summand of a countably free Smith $R$-algebra.
\end{Lem}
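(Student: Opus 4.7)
My plan has two components: first I would identify the direct limit in \eqref{Eqn:OgammalaPresentation} with that of Lemma \ref{Lem:OgammalaIsLB} via a cofinality argument, and then I would explicitly compute $H^0_\cond(Y, \mathcal{O}^{\mathrm{bdd}}_{U_n^\circ/Y})$ by the perfectoid cover strategy from that same lemma.

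For cofinality, observe that one has a chain of subsheaf containments $U_{n+1} \hookrightarrow U_n^\circ \to U_n$. The first containment holds because $U_{n+1}$ and $U_n^\circ$ share the index set $\Lambda/p^{n+1}\Lambda$ and, for each common label $\lambda_0$, a closed polydisk of radius $|p^{n+1}|$ about $\gamma(\lambda_0)$ sits inside the concentric open polydisk of radius $|p^n|$. The second is induced by the projection $\Lambda/p^{n+1}\Lambda \twoheadrightarrow \Lambda/p^n\Lambda$: for any lift $\lambda_0$ of $\overline{\lambda_0}$, the open polydisk of radius $|p^n|$ about $\gamma(\lambda_0)$ lies in the closed polydisk of radius $|p^n|$ about $\gamma(\overline{\lambda_0})$, because any two lifts of $\overline{\lambda_0}$ have $\gamma$-values differing by an element of $p^n (\mathcal{O}_Y^+)^d$. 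Pulling back along these maps produces a zig-zag
\begin{align}
\mathcal{O}(U_n) \longrightarrow H^0(Y, \mathcal{O}^{\mathrm{bdd}}_{U_n^\circ/Y}) \longrightarrow \mathcal{O}(U_{n+1}),
\end{align}
compatible with the original transition maps in each system. The resulting interleaving forces the two colimits to coincide, so Lemma \ref{Lem:OgammalaIsLB} yields \eqref{Eqn:OgammalaPresentation}.

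For the Smith structure on each term, I would choose a perfectoid cover $Y_\infty = \Spa(R_\infty, R_\infty^+) \to Y$ with $R_\infty = (\varinjlim_i R_i)^\wedge$ and each $R \to R_i$ faithfully flat finite \'etale, picking $i$ large enough so that $\Lambda/p^{n+1}\Lambda$ becomes trivial on $Y_i$ and so that the $\gamma$-images of a fixed set of lifts may be approximated to within $|p^{n+1}|$ by elements of $R_i^d$. Over $Y_i$, the subsheaf $U_n^\circ \times_Y Y_i$ is then a disjoint union of $p^{(n+1)h}$ open polydisks of radius $|p^n|$ centered at points of $R_i^d$. The ring of bounded analytic functions on each such open polydisk is isomorphic to $R_i^+\llbracket S_1,\ldots, S_d\rrbracket[\tfrac{1}{p}] \simeq (\textstyle\prod_{\mathbb{Z}_{\ge 0}^d} R_i^+)[\tfrac{1}{p}]$, so that $H^0_\cond(Y_i, \mathcal{O}^{\mathrm{bdd}}_{U_n^\circ/Y})$ is a countably free Smith $R_i$-module.

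Finally, I would descend from $R_i$ back to $R$. Faithful flatness of $R \to R_i$ combined with the finite projectivity of $R_i$ over $R$ (in particular, $R$ is an $R$-direct summand of $R_i$ and $R_i$ is an $R$-direct summand of $R^{\oplus m}$ for some $m$) exhibits $H^0_\cond(Y, \mathcal{O}^{\mathrm{bdd}}_{U_n^\circ/Y})$ as a direct summand, under an idempotent coming from the $R$-linear splitting of $R \hookrightarrow R_i$, of a countably free Smith $R$-module obtained by viewing $H^0_\cond(Y_i, \mathcal{O}^{\mathrm{bdd}}_{U_n^\circ/Y})$ as a direct summand of the Smith $R$-module $(\textstyle\prod_{\mathbb{Z}_{\ge 0}^d} R^+)^{\oplus m p^{(n+1)h}}[\tfrac{1}{p}]$. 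I expect the main subtlety here to be verifying that base-change along $R \to R_i$ respects the condensed Smith structure --- this fails in general for arbitrary base change of Smith modules, as warned in the remark after Proposition \ref{Prop:TensorDualSmith} --- but it holds in the present setting precisely because $R_i$ is finite projective over $R$, so that tensor product with $R_i$ commutes with the arbitrary products defining Smith modules, mirroring the role played by \eqref{Eqn:BCforSummandsOfBalls} in the Banach case of Lemma \ref{Lem:OgammalaIsLB}.
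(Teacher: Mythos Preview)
Your proposal is correct and follows essentially the same approach as the paper: both arguments establish \eqref{Eqn:OgammalaPresentation} by the cofinality interleaving $\mathcal{O}(U_n) \to H^0(Y,\mathcal{O}^{\mathrm{bdd}}_{U_n^\circ/Y}) \to \mathcal{O}(U_{n+1})$, then pass to a finite \'etale cover $Y_i$ trivializing $\Lambda/p^{n+1}\Lambda$ (with centers approximable in $R_i^d$) to identify $U_n^\circ \times_Y Y_i$ as $p^{(n+1)h}$ open polydisks whose bounded functions are $R_i^+[\![T_1,\dots,T_d]\!][\tfrac{1}{p}]$, and descend via the direct-summand argument from finite projectivity of $R_i/R$. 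The paper's proof is terser and invokes Lemma~\ref{Lem:BoundedFunctions} for the bounded-functions computation, but the content is the same; your explicit remark on why the Smith base-change subtlety is harmless here (finite projectivity commutes with products) is a point the paper leaves implicit.
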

\begin{proof}
This follows as in the proof of Lemma \ref{Lem:OgammalaIsLB}, noting that $U_{n, Y_i}^{\circ}$ is union of $p^{(n+1)h}$ ``open balls'' of radius $|p^n|$, and that the restriction maps $\mathcal{O}_{U_n/Y} \to \mathcal{O}_{U_{n+1}/Y}$ factor through $\mathcal{O}_{U^{\circ}_{n}}^{\mathrm{bdd}}$. We now observe that each term in the colimit is a direct summand of a countably free Smith $R$-algebra because $\mathcal{O}^{\mathrm{bdd}}_{U^{\circ}_{n,Y_i}}$ is countably free Smith; indeed, one has $\mathcal{O}^{\mathrm{bdd}}_{U^{\circ}_{n,Y_i}}(R, R^+) \simeq \bigoplus_{\left(\mbb{Z}/p^{n+1}\mbb{Z}\right)^h} R^+[\![T_1, \dots, T_d]\!][\tfrac{1}{p}]$ for any affinoid open $\operatorname{Spd}(R, R^+) \to Y_i$, see Lemma \ref{Lem:BoundedFunctions}.
\end{proof}
\begin{proof}[Proof of Proposition \ref{Prop:GeometricLB}]
Part (1) follows from Lemma \ref{Lem:OgammalaIsLB} by commuting the colimit with the solid tensor product. Part (3) follows directly from the first property and Lemma \ref{Lem:EvaluationAbstract}. Part (2) follows directly from Lemma \ref{Lem:OgammalaIsLB} and Lemma \ref{Lem:Kunneth}, together with the fact that the solid tensor product commutes with direct limits. \smallskip

We now continue with the notation of Lemma \ref{Lem:OgammaisLSspace}. 

\begin{Claim}
The dual transition maps
\begin{align}
	\mathcal{O}^{\mathrm{bdd}}(U_n^{\circ})^{\ast} \to \mathcal{O}^{\mathrm{bdd}}(U_{n+1}^{\circ})^{\ast}
\end{align} 
are compact in the sense of Section \ref{subsub:compact} and have dense image.    
\end{Claim}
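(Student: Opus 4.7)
My plan is to first reduce to a setting where everything can be computed explicitly. Following the proof of Lemma \ref{Lem:OgammaisLSspace}, after passing to a suitable finite \'etale faithfully flat cover $Y_i \to Y$ over which $\Lambda/p^{n+2}\Lambda$ is trivialized, each $\mathcal{O}^{\mathrm{bdd}}(U_n^{\circ})$ becomes a finite direct sum of countably free Smith $R_i$-modules of the form $R_i^+[\![T_1, \ldots, T_d]\!][\tfrac{1}{p}]$, and the transition maps can be made completely explicit in these coordinates (essentially a rescaling by powers of $p$ combined with the reindexing coming from the refinement $\Lambda/p^{n+2}\Lambda \twoheadrightarrow \Lambda/p^{n+1}\Lambda$). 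Since faithfully flat descent along $R \to R_i$, the splitting of idempotents, and both trace-class and compactness conditions are compatible with direct summands and base change (Lemma \ref{Lem:TensorTraceClass} and Lemma \ref{Lem:CompactTensor}), it suffices to establish the claim in this standard coordinate picture.

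For the trace-class / compactness statement, I plan to use the factorization
\[
\mathcal{O}^{\mathrm{bdd}}(U_n^{\circ}) \xrightarrow{\alpha} \mathcal{O}(U_{n+1}) \xrightarrow{\beta} \mathcal{O}^{\mathrm{bdd}}(U_{n+1}^{\circ})
\]
coming from the inclusions $U_{n+1}^{\circ} \subset U_{n+1} \subset U_n^{\circ}$. The map $\alpha$ is the restriction from the (union of) open polydiscs of radius $|p^n|$ to a union of strictly-smaller closed polydiscs of radius $|p^{n+1}|$; this is the paradigmatic trace-class map in non-archimedean functional analysis, exhibited explicitly as the convergent element $\sum_k p^{|k|} e_k^{\ast} \otimes e_k$ in $\Hom_{\ul{R}}(\mathcal{O}^{\mathrm{bdd}}(U_n^{\circ}), \ul{R}) \otimes^{\blacksquare}_{\ul{R}} \mathcal{O}(U_{n+1})$ (using Proposition \ref{Prop:DualityCondensed} and Proposition \ref{Prop:StandardBCproperty} to identify the relevant tensor products, and the fact that $p^{|k|} \to 0$ $p$-adically). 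By Lemma \ref{Lem:TensorTraceClass}, composition with $\beta$ preserves trace-classness, so the transition maps themselves are trace-class. Compactness of the dual maps is then immediate from the definition of compactness in \S\ref{subsub:compact}, together with the reflexivity of $\mathcal{O}^{\mathrm{bdd}}(U_n^{\circ})$ (which follows from Corollary \ref{Cor:SummandReflexive}, since it is a direct summand of a free Smith module).

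For the density statement, the main idea is that polynomials $R[T_1, \ldots, T_d]$ sit inside every $\mathcal{O}^{\mathrm{bdd}}(U_n^{\circ})$ (via the direct summand description), and they are dense in the topological module $\mathcal{O}^{\mathrm{bdd}}(U_n^{\circ})(\ast)_{\mathrm{top}}$; since polynomials are preserved by the transition maps, the transition maps $\mathcal{O}^{\mathrm{bdd}}(U_n^{\circ}) \to \mathcal{O}^{\mathrm{bdd}}(U_{n+1}^{\circ})$ have dense image. To deduce density on the dual side, I plan to combine this with the observation that our transition maps are injective (as restriction of functions on a dense rigid subdomain), and then invoke a non-archimedean Hahn--Banach style argument in the condensed setting: any functional on $\mathcal{O}^{\mathrm{bdd}}(U_n^{\circ})$ is determined by its values on the dense polynomial subalgebra, and such polynomial-evaluation functionals extend to $\mathcal{O}^{\mathrm{bdd}}(U_{n+1}^{\circ})$, showing the dual transition map has dense image. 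The main subtlety I expect is packaging this last extension argument cleanly in the solid/condensed language rather than classical Banach-space terms; I would do this by reducing via base change (Proposition \ref{Prop:StandardBCproperty}) to the universal case over $\qp$, where the classical theory applies directly, and then using Lemma \ref{Lem:DenseImageTensor} to propagate density back.
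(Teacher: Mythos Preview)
Your reduction to the finite \'etale cover $Y_i$ and the compactness argument are both sound, though the latter differs from the paper's approach. You show the \emph{forward} restriction map $\mathcal{O}^{\mathrm{bdd}}(U_n^\circ) \to \mathcal{O}^{\mathrm{bdd}}(U_{n+1}^\circ)$ is trace-class via the explicit element $\sum_k p^{|k|} e_k^* \otimes e_k$ in the factorization through $\mathcal{O}(U_{n+1})$, then invoke Lemma~\ref{Lem:TensorTraceClass}(3) and reflexivity to conclude the dual is compact. The paper instead computes the dual map explicitly: after passing to $Y_i$ and unwinding, the dual transition map becomes a direct summand of the rescaling map $R\langle S_1,\dots,S_d\rangle \xrightarrow{S_i \mapsto pS_i} R\langle S_1,\dots,S_d\rangle$ between Tate algebras, whose compactness was already established at the end of the proof of Lemma~\ref{Lem:GeometryofH}. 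Your route is a legitimate alternative.

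Your density argument, however, has a genuine gap. The sentence ``any functional on $\mathcal{O}^{\mathrm{bdd}}(U_n^\circ)$ is determined by its values on the dense polynomial subalgebra, and such polynomial-evaluation functionals extend to $\mathcal{O}^{\mathrm{bdd}}(U_{n+1}^\circ)$'' does not establish that the dual map has dense image: being determined by values on polynomials shows at most that restriction of functionals is injective, and ``polynomial-evaluation functionals extend'' is not a well-defined statement about the image of $f^*$. Your fallback plan (injectivity of $f$ over $\qp$ via the identity principle, Hahn--Banach over $\qp$ using reflexivity of Smith spaces, then propagating via Lemma~\ref{Lem:DenseImageTensor} and the compatibility of duals with base change from Proposition~\ref{Prop:TensorDualSmith}) can be made rigorous, but it is considerably more involved than necessary. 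The paper avoids all of this by working directly on the dual side: once the dual map is identified with $S_i \mapsto pS_i$ on Tate algebras, density is immediate because every polynomial $\sum_{k} b_k S^k$ is the image of $\sum_k p^{-|k|} b_k S^k$, and polynomials are dense in $R\langle S_1,\dots,S_d\rangle$. This works uniformly over $R$ with no appeal to Hahn--Banach.
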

\begin{proof}[Proof of Claim]
We can choose an $i$ sufficiently large as in the proof of Lemma \ref{Lem:OgammaisLSspace} so that $U_{n+1}^\circ\times Y_{Y_i}$ is a union of $p^{(n+2)h}$ open balls of radius $|p^{(n+1)}|$, $U_{n}^\circ \times_Y {Y_i}$ is a union of $p^{(n+1)h}$ open balls of radius $|p^{n}|$, and the map $U_n^\circ \times_Y {Y_i} \rightarrow U_{n+1}^\circ \times_Y {Y_i}$ is described on each open ball of radius $|p^{n}|$ as the sum of the restriction maps to $p^h$ (not necessarily distinct) open sub-balls of radius $|p^{n+1}|$. We will reduce to analyzing these simple restriction maps.

To that end, we first note that for $e$ an $R=\mathcal{O}(Y)$-linear projector on $\mathcal{O}(Y_i)$ such that $e \mathcal{O}(Y_i)=\mathcal{O}(Y)$, we have 
\[ e \mathcal{O}^\bdd(U_\bullet^\circ \times_Y Y_i) = \mathcal{O}^\bdd(U_\bullet^\circ) \textrm{ for } \bullet=n,n+1.\]
This implies 
\[ e \mathcal{O}^\bdd(U_\bullet^\circ \times_Y Y_i)^* = \mathcal{O}^\bdd(U_\bullet^\circ)^* \textrm{ for } \bullet=n,n+1.\]
so it suffices to prove that 
\begin{equation}\label{eq.balls-dual-restriction-map}\mathcal{O}^\bdd(U_{n+1}^\circ \times_Y Y_i)^* \rightarrow \mathcal{O}^\bdd(U_n^\circ \times_Y  Y_i)^*\end{equation} has dense image and is compact, since these properties are preserved after applying $e$. 

To that end, we first note that for any of the single restriction maps from a ball of radius $|p^n|$ to $|p^{n+1}|$, we are free to choose coordinates such that both balls are centered at zero and rescale to obtain, at the level of functions, the map
\[ R^+_i[[T_1, \ldots, T_d]][\tfrac{1}{p}] \xrightarrow{ T_i \mapsto p T_i} R^+_i[[T_1, \ldots, T_d]][\tfrac{1}{p}].\]
The dual of this map is 
\begin{equation}\label{eq.dual-of-simple-map} R_i^* \langle S_1, \ldots, S_d\rangle \xrightarrow{ S_i \mapsto p S_i} R_i^*\langle S_1, \ldots, S_d\rangle \end{equation}
where $R_i^*$ denotes the $R$-linear dual. Since $R_i$ is projective, so is $R_i^*$, thus for some $N$ this is a direct summand of the map
\[ (R\langle S_1, \ldots, S_d \rangle \xrightarrow{ S_i \mapsto p S_i} R\langle S_1, \ldots, S_d\rangle)^{\oplus N}. \]
Thus, to see \eqref{eq.dual-of-simple-map} has dense image and is compact, it suffices to observe that 
\begin{equation}\label{eq.tate-mult-by-p-map} R\langle S_1, \ldots, S_d \rangle \xrightarrow{ S_i \mapsto p S_i} R\langle S_1, \ldots, S_d\rangle.\end{equation}
has dense image and is compact (since these properties are preserved under finite direct sums and, by applying a projector as earlier in the proof, direct summands). The image of \eqref{eq.tate-mult-by-p-map} is dense because it contains the polynomials $R[S_1, \ldots, S_d]$, and we have already argued at the end of the proof of Lemma \ref{Lem:GeometryofH} that \eqref{eq.tate-mult-by-p-map} is compact. 

Since \eqref{eq.dual-of-simple-map} has dense image, so does \eqref{eq.balls-dual-restriction-map}: this follows already by considering a single ball in $U_{n+1}^\circ \times_Y Y_i$ contained in each ball in $U_n^\circ \times_Y  Y_i$. Similarly, by grouping the restriction maps according to their source ball in $U_n^\circ \times_Y  Y_i$, we find that \eqref{eq.dual-of-simple-map} is a direct sum of $p^{nh}$ maps, each of which is a sum of $p^h$ compact maps, and thus it is compact. 
\end{proof}

The claim proves part (5). Now part (4) follows from (2) together with part (3) and Corollary \ref{Cor:DualTensorCommute}. \smallskip 

Parts (6)-(9) are v-local on the base and so we may assume that $Y=\spd(A,A^+)$ with $(A,A^+)$ perfectoid and with $\Lambda$ and $V$ free. We prove (6) directly, by computing
    \begin{align}
        &\Hom_{\mathcal{O}_Y}(\Hom_{\mathcal{O}_Y}(\varinjlim_n \mathcal{O}^{\mathrm{bdd}}_{U^{\circ}_n}, \mathcal{O}_Y), \mathcal{O}_Y) \\
        &=\Hom_{\mathcal{O}_Y}(\varprojlim_n \Hom_{\mathcal{O}_Y}( \mathcal{O}^{\mathrm{bdd}}_{U^{\circ}_n}, \mathcal{O}_Y), \mathcal{O}_Y) \\
        &=\varinjlim_n \Hom_{\mathcal{O}_Y}(\Hom_{\mathcal{O}_Y}( \mathcal{O}^{\mathrm{bdd}}_{U^{\circ}_n}, \mathcal{O}_Y), \mathcal{O}_Y) 
    \end{align}
    as in the proof of Corollary \ref{Cor:FrechetReflexive}, using (5). We conclude by the reflexivity of (direct summands of) countably free smith $\mathcal{O}_Y$-modules, see Proposition \ref{Prop:DualityvSheaves}. Part (7) follows by applying part (5) to affinoid perfectoid $(A,A^+)$ over $Y$ and similarly part (8) follows from part (2) and part (9) follows from parts (2)-(4). 
\end{proof}

\subsubsection{} Let $Y$ be a small v-stack and let $\mathcal{X}=(\Lambda, V, \gamma)$ be a locally analytic character datum over $Y$.

\begin{Lem} \label{Lem:GammaLaSheavesHopf}
    Both $\mathcal{O}_{\Lambda/Y}^{\gamma\mathrm{-la}}$ and $\mathcal{D}_{\Lambda/Y}^{\gamma\mathrm{-la}}$ are naturally solid Hopf $\mathcal{O}_Y$-algebras.
\end{Lem}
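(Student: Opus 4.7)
The plan is to first establish the Hopf algebra structure on $\mathcal{O}_{\Lambda/Y}^{\gamma\mathrm{-la}}$ by transporting the group structure on $\Lambda$ along the functor $(\Lambda,V,\gamma) \mapsto \mathcal{O}_{\Lambda/Y}^{\gamma\mathrm{-la}}$, and then to dualize to obtain the Hopf algebra structure on $\mathcal{D}_{\Lambda/Y}^{\gamma\mathrm{-la}}$. Throughout, the key inputs will be the K\"unneth isomorphisms and reflexivity statements established in Proposition \ref{Prop:GeometricLB}.

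For the first sheaf, I would begin by observing that $\mathcal{O}_{\Lambda/Y}^{\gamma\mathrm{-la}}$ carries a natural structure of commutative $\mathcal{O}_Y$-algebra by pointwise multiplication of functions, since the property of being $\gamma$-locally analytic is preserved under taking products of functions (this can be checked on each term $\mathcal{O}_{U/Y}$ of the colimit defining $\mathcal{O}_{\Lambda/Y}^{\gamma\mathrm{-la}}$). The addition map $+ : \Lambda \oplus \Lambda \to \Lambda$, together with the addition map $V \oplus V \to V$, defines a morphism of locally analytic character data $(\Lambda \oplus \Lambda, V \oplus V, \gamma \oplus \gamma) \to (\Lambda, V, \gamma)$; by the contravariant functoriality of $\mathcal{O}_{(-)/Y}^{(-)-\locan}$, this induces a map $\mathcal{O}_{\Lambda/Y}^{\gamma\mathrm{-la}} \to \mathcal{O}_{(\Lambda \oplus \Lambda)/Y}^{(\gamma \oplus \gamma)\mathrm{-la}}$ of solid $\mathcal{O}_Y$-algebras. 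Composing with the inverse of the K\"unneth isomorphism of Proposition \ref{Prop:GeometricLB}-(8) yields the comultiplication
\[
\Delta \colon \mathcal{O}_{\Lambda/Y}^{\gamma\mathrm{-la}} \longrightarrow \mathcal{O}_{\Lambda/Y}^{\gamma\mathrm{-la}} \otimes_{\mathcal{O}_Y}^{\blacksquare} \mathcal{O}_{\Lambda/Y}^{\gamma\mathrm{-la}}.
\]
Similarly, the zero section $0 \colon Y \to \Lambda$ (as a morphism of character data $(\Lambda, V, \gamma) \to (0, 0, 0)$, viewing $Y \to Y$ as the trivial datum) yields the counit $\mathcal{O}_{\Lambda/Y}^{\gamma\mathrm{-la}} \to \mathcal{O}_Y$, and the inversion $[-1] \colon \Lambda \to \Lambda$ (together with $-1 \colon V \to V$) yields the antipode. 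The unit map $\mathcal{O}_Y \to \mathcal{O}_{\Lambda/Y}^{\gamma\mathrm{-la}}$ is the structural map. The Hopf algebra axioms (coassociativity, cocommutativity, compatibility of antipode, etc.) follow formally from the corresponding group axioms for $\Lambda$ together with the functoriality of the K\"unneth isomorphism in the two factors, since any such axiom is encoded in a commutative diagram of morphisms of character data which we then apply $\mathcal{O}_{(-)/Y}^{(-)-\locan}$ to.

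To obtain the Hopf algebra structure on $\mathcal{D}_{\Lambda/Y}^{\gamma\mathrm{-la}}$, I would dualize the structure on $\mathcal{O}_{\Lambda/Y}^{\gamma\mathrm{-la}}$. Applying $\Hom_{\mathcal{O}_Y}(-, \mathcal{O}_Y)$ to the comultiplication $\Delta$ and using Proposition \ref{Prop:GeometricLB}-(9) to identify
\[
\Hom_{\mathcal{O}_Y}\!\left(\mathcal{O}_{\Lambda/Y}^{\gamma\mathrm{-la}} \otimes_{\mathcal{O}_Y}^{\blacksquare} \mathcal{O}_{\Lambda/Y}^{\gamma\mathrm{-la}},\, \mathcal{O}_Y\right) \;\cong\; \mathcal{D}_{\Lambda/Y}^{\gamma\mathrm{-la}} \otimes_{\mathcal{O}_Y}^{\blacksquare} \mathcal{D}_{\Lambda/Y}^{\gamma\mathrm{-la}}
\]
produces the multiplication (convolution) on $\mathcal{D}_{\Lambda/Y}^{\gamma\mathrm{-la}}$. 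Dualizing the multiplication on $\mathcal{O}_{\Lambda/Y}^{\gamma\mathrm{-la}}$ in the same way yields the comultiplication on $\mathcal{D}_{\Lambda/Y}^{\gamma\mathrm{-la}}$; the unit and counit are dual to the counit and unit of $\mathcal{O}_{\Lambda/Y}^{\gamma\mathrm{-la}}$, and the antipode is dual to the antipode. The Hopf algebra axioms transport through dualization using Proposition \ref{Prop:GeometricLB}-(6) (reflexivity of $\mathcal{O}_{\Lambda/Y}^{\gamma\mathrm{-la}}$) and the naturality of the K\"unneth isomorphism of part (9), which together guarantee that the dualized diagrams commute.

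The main obstacle, such as it is, is bookkeeping: one must check that each of the group-theoretic morphisms (addition, negation, zero, diagonal) really does give a morphism of character data to which the functoriality of $\mathcal{O}_{(-)/Y}^{(-)-\locan}$ applies, and that the K\"unneth isomorphism is sufficiently natural for the associativity, coassociativity, and antipode compatibility diagrams to commute. Both points are essentially formal given the setup, but the latter in particular requires that the K\"unneth isomorphism of Proposition \ref{Prop:GeometricLB}-(8) be natural in the character datum --- which follows from the construction of the cup product map as in \S \ref{Subsubb:SolidTensor} and the definition in \eqref{eq:TensorProductMap}. Once the Hopf algebra structure on $\mathcal{O}_{\Lambda/Y}^{\gamma\mathrm{-la}}$ is in place, the passage to $\mathcal{D}_{\Lambda/Y}^{\gamma\mathrm{-la}}$ is genuinely automatic given reflexivity and the K\"unneth formula for the dual.
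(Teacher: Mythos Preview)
Your proposal is correct and follows essentially the same approach as the paper: use functoriality of $\mathcal{O}_{(-)/Y}^{(-)\text{-la}}$ in the character datum together with the K\"unneth isomorphism of Proposition~\ref{Prop:GeometricLB}-(8) to equip $\mathcal{O}_{\Lambda/Y}^{\gamma\text{-la}}$ with its Hopf structure, and then dualize via Proposition~\ref{Prop:GeometricLB}-(9) to obtain the Hopf structure on $\mathcal{D}_{\Lambda/Y}^{\gamma\text{-la}}$. One small slip: the zero section is a morphism of character data $(0,0,0) \to (\Lambda,V,\gamma)$, not the other way around (the functor is contravariant, so this still yields the counit $\mathcal{O}_{\Lambda/Y}^{\gamma\text{-la}} \to \mathcal{O}_Y$ as you intend).
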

\begin{proof}
    For $\mathcal{O}_{\Lambda/Y}^{\gamma\mathrm{-la}}$, the algebra structure is the natural one (and the multiplication structure factors through the solid tensor product), induced from the individual algebra structures on the terms in the colimit defining $\mathcal{O}_{\Lambda/Y}^{\gamma\mathrm{-la}}$. For the co-algebra structure, consider the morphism 
    \[
    \Lambda \oplus \Lambda \xrightarrow{\gamma \oplus \gamma} V \oplus V .
    \]
The co-algebra structure is now induced from pulling back along the addition map $\Lambda \oplus \Lambda \to \Lambda$ and using Proposition \ref{Prop:GeometricLB}-(8). The co-unit is induced from the natural map $0 \to \Lambda$. \smallskip 

This solid Hopf $\mathcal{O}_Y$-algebra structure on $\mathcal{O}_{\Lambda/Y}^{\gamma\mathrm{-la}}$ induces a solid Hopf $\mathcal{O}_Y$-algebra structure on $\mathcal{D}_{\Lambda/Y}^{\gamma\mathrm{-la}}$ once we show that the natural map 
    \begin{equation} \label{Eqn:DualOfSolidEqualsSolidII}
    \left( \mathcal{O}_{\Lambda/Y}^{\gamma\mathrm{-la}} \otimes^{\blacksquare}_{\mathcal{O}_Y} \mathcal{O}_{\Lambda/Y}^{\gamma\mathrm{-la}} \right)^* \leftarrow \mathcal{D}_{\Lambda/Y}^{\gamma\mathrm{-la}} \otimes^{\blacksquare}_{\mathcal{O}_Y} \mathcal{D}_{\Lambda/Y}^{\gamma\mathrm{-la}}
    \end{equation}
    is an isomorphism. But this is part (9) of Proposition \ref{Prop:GeometricLB}.
\end{proof}

\section{The Fourier transform} \label{Sec:Fourier}

In this section we construct the Fourier transform for $p$-divisible v-groups over a small v-stack and establish the main results of this article. In \S \ref{Sub:ExponentialPairing}, we construct the universal character $\kappa$ and establish that it is analytic (Lemma \ref{Lem:UniversalKappaAsSection}). In \S \ref{Sub:FourierDef}, we use this to define the Fourier transform. In \S \ref{Sub:MainTheorem}, we prove Theorem \ref{Thm:IntroFourierVsheaves} (see Theorem \ref{Thm:Main}), and in \S \ref{Sub:Equivariance}, we prove Proposition \ref{Prop:Equivariance}. Finally, we prove Theorem \ref{thm.rational-fourier-theory} in \S \ref{Sub:Field}, and in \S \ref{Sub:STComparison}, we compare our results with those of Schneider--Teitelbaum \cite{SchneiderTeitelbaumFourier}. 

\subsection{The universal character} \label{Sub:ExponentialPairing}
Let $Y$ be a small v-stack and let $\Lambda$ be a locally free sheaf of $\ul{\mathbb{Z}_p}$-modules on $Y_{\mathrm{v}}$. Let $\Lambda^{\ast} := \Hom_{\ul{\mathbb{Z}_p}}(\Lambda, \ul{\mathbb{Z}_p})$ denote its linear dual. Recall that:
\begin{itemize}
    \item $\mathbb{G}_{a, Y}^{\lozenge}$ denotes the group over $Y$ satisfying $\mathbb{G}_{a, Y}^{\lozenge}(R, R^+) = R$ for any $(R, R^+) \in Y_{\mathrm{v}}$;
    \item $\widehat{\mathbb{G}}_{m, \eta, Y}^{\lozenge}$ denotes the group over $Y$ satisfying $\widehat{\mathbb{G}}_{m, \eta, Y}^{\lozenge}(R, R^+) = 1 + R^{00}$ for any $(R, R^+) \in Y_{\mathrm{v}}$, where $R^{00}$ denotes the ideal of topologically nilpotent elements.
\end{itemize}
Note that all of these sheaves of abelian groups come equipped with a natural action of $\ul{\zp}$, see \S \ref{subsub:HeuerXu}. 

\subsubsection{} Recall from Example \ref{Eg:RigidGM} that there is a logarithm map $\operatorname{log}^{\lozenge} \colon \widehat{\mathbb{G}}_{m, \eta, Y}^{\lozenge} \to \mathbb{G}_{a, Y}^{\lozenge}$. Recall moreover that, for $\varpi=2p$, the map $\operatorname{log}^{\lozenge}$ is an isomorphism from $\mathbb{B}(1, |\varpi|)$ to $\mathbb{B}(0, |\varpi|)$, with inverse defined by the usual exponential series.

\subsubsection{} We let $H_{\Lambda}=\Lambda^* \otimes_{\ul{\mathbb{Z}_p}} \widehat{\mathbb{G}}_{m,\eta,Y}^\lozenge$ be the full character group of $\Lambda$ (in terms of Definition \ref{Def:CharacterDatum}, this is the group $H_{\mathcal{X}}$ corresponding to the locally analytic character datum $\mathcal{X}=(\Lambda, \Lambda \otimes_{\ul{\zp}} \mathcal{O}_Y, \mathrm{id}_{\Lambda \otimes_{\ul{\zp}} \mathcal{O}_Y})$).  

\begin{Def}\label{def.H-lambda-charts}
    Let $i \geq 0$ and set $\varpi = 2p$. We define $H^{(i)}_{\Lambda} \subset H_{\Lambda}$ to be the open\footnote{To see that this is open, we can check this v-locally -- in particular, we may assume that $\Lambda$ is free, and then the result is clear.} subgroup defined by
    \[
    H^{(i)}_{\Lambda} = \Lambda^{\ast} \otimes_{\ul{\mathbb{Z}_p}} \mathbb{B}(1, |\varpi|^{1/p^i})_Y^{\lozenge}
    \]
    where $\mathbb{B}(1, |\varpi|^{1/p^i}) \subset \widehat{\mathbb{G}}_{m, \eta}$ denotes the subset defined by the inequality $|T|^{p^i} \leq |\varpi|$. 
\end{Def}

Note that $H_{\Lambda} = \bigcup_{i \geq 0} H^{(i)}_{\Lambda}$, and $\mathcal{O}_{H_{\Lambda}/Y}$ is locally strongly countably Fr\'{e}chet, with a presentation given by $\mathcal{O}_{H_{\Lambda}/Y} = \varprojlim_i \mathcal{O}_{H^{(i)}_{\Lambda}/Y}$, see Proposition \ref{Prop:GeometricFrechet}-(3).

\subsubsection{}

Let $Z_{\Lambda} = \Lambda \otimes_{\ul{\mathbb{Z}_p}} \mathbb{G}_{a, Y}^{\lozenge}$. We will also consider certain open subgroups of this space thickening $\Lambda$ as in Lemma \ref{Lem:OgammalaIsLB}. Explicitly:

\begin{Def}
For $i \geq 0$, consider the open affinoid subgroup $\mathbb{G}^{(n)}_a \subset \mathbb{G}_a$ given by $\mathbb{G}^{(n)}_a(R, R^+) = \ul{\mathbb{Z}_p}(R) + p^i R^+$. We let
\[
Z_{\Lambda,i} := \Lambda \otimes_{\ul{\mathbb{Z}_p}} \mathbb{G}_{a,Y}^{(i), \lozenge} \subset Z_{\Lambda} 
\]
denote the corresponding open subgroup. 
\end{Def}

\begin{Lem}
$\mathcal{O}_{Z_{\Lambda,i}/Y}$ is locally countably orthonormalizable Banach. 
\end{Lem}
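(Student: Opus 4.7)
The plan is to work v-locally on $Y$ and trivialize $\Lambda$, reducing to the case $\Lambda \simeq \ul{\zp}^h$; it then suffices to show that $\mathcal{O}_{Z_{\Lambda, i}/Y}$ is countably orthonormalizable Banach after such a trivialization.

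The key geometric observation is that $\mathbb{G}_a^{(i), \lozenge}$ decomposes as a v-sheaf (though not as a group) as
\[
\mathbb{G}_a^{(i), \lozenge} \simeq \ul{\zp/p^i\zp} \times \mathbb{B}(0, |p^i|)^{\lozenge},
\]
where $\mathbb{B}(0, |p^i|) \subset \mathbb{G}_a$ is the closed disk of radius $|p^i|$ around the origin. Indeed, the projection to $\ul{\zp/p^i\zp}$ sends $f + p^i r \in \ul{\zp}(R) + p^i R^+$ to the reduction of $f$ modulo $p^i$; this is well-defined because $\ul{\zp}(R) \cap p^i R^+ = p^i \ul{\zp}(R)$, and a set-theoretic splitting is obtained by choosing any lift $\zp/p^i\zp \to \zp$. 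Taking $h$-th powers after trivializing $\Lambda$, I obtain
\[
Z_{\Lambda, i}|_{Y'} \simeq \ul{(\zp/p^i\zp)^h} \times \mathbb{B}(0, |p^i|)^{h, \lozenge},
\]
a finite disjoint union of $p^{ih}$ copies of an $h$-dimensional closed ball.

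Pushing forward along the structure map then gives
\[
\mathcal{O}_{Z_{\Lambda, i}/Y'} \simeq \bigoplus_{(\zp/p^i\zp)^h} \mathcal{O}_{\mathbb{B}(0, |p^i|)^{h}/Y'}.
\]
Each direct summand is countably orthonormalizable Banach: after the coordinate change $S_j = p^{-i} T_j$, the ring of functions on the ball becomes the Tate algebra $\mathcal{O}_{Y'}\langle S_1, \ldots, S_h\rangle$, which, by the argument in the proof of Lemma \ref{Lem:SheafBanach}, is isomorphic to $\hat\oplus_{\mathbb{Z}_{\geq 0}^h} \mathcal{O}_{Y'}$. A finite direct sum of countably orthonormalizable Banach sheaves is itself countably orthonormalizable Banach (finite direct sums commute with the inverse limit defining $\hat\oplus$), completing the proof.

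The only mildly subtle point I anticipate is the identity $\ul{\zp}(R) \cap p^i R^+ = p^i \ul{\zp}(R)$, which underpins the well-definedness of the projection. For $(R, R^+)$ affinoid perfectoid, this follows from the identification $\ul{\zp}(R) = \Cont(|\Spa(R, R^+)|, \zp)$ and the fact that the natural map to $R^+/p^i R^+$ factors through the injection $\Cont(|\Spa(R, R^+)|, \zp/p^i\zp) \hookrightarrow R^+/p^i R^+$ coming from the idempotent/clopen decomposition refining any locally constant function with values in the finite set $\zp/p^i\zp$.
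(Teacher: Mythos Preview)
Your proposal is correct and follows essentially the same approach as the paper: trivialize $\Lambda$ v-locally, observe that $Z_{\Lambda,i}$ decomposes as a disjoint union of closed balls indexed by $(\zp/p^i\zp)^h$, and invoke (the proof of) Lemma \ref{Lem:SheafBanach}. You spell out the ball decomposition and the well-definedness of the projection $\mathbb{G}_a^{(i),\lozenge} \to \ul{\zp/p^i\zp}$ in more detail than the paper does, but the substance is the same.
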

\begin{proof}
For $(R, R^+) \in Y_{\mathrm{v}}$ such that $\Lambda|_{\operatorname{Spa}(R, R^+)}$ is free of rank $h$, if we fix an isomorphism $\Lambda|_{\Spa(R,R^+)}\cong \ul{\mathbb{Z}_p^h}$, then $Z_{\Lambda,n, \spd(R,R^+)}$ is a disjoint union of balls indexed by $(\mathbb{Z}_p/p^n\mathbb{Z}_p)^h$. The lemma now follows from Lemma \ref{Lem:SheafBanach}.
\end{proof}

For $i \geq 0$, also consider the subgroup $Z_{\Lambda,i}^{\circ} \subset Z_{\Lambda,i}$ given by the sheafification of 
    \[
    (R, R^+) \mapsto p^i\left( \Lambda(R) \otimes_{\ul{\mathbb{Z}_p}(R)} R^+ \right).
    \]

\begin{Lem}\label{lemma.lambda-thickening-pushout}
The natural map $\Lambda \oplus Z_{\Lambda,i}^\circ \rightarrow Z_{\Lambda,i},\; (\lambda, z) \mapsto \lambda + z$ is a surjection with kernel $p^i \Lambda$ embedded antidiagonally by $\lambda \mapsto (\lambda,-\lambda)$. 
\end{Lem}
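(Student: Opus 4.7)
The plan is to verify both the surjectivity (in the v-topology) and the description of the kernel by reducing to the case where $\Lambda$ is trivial, and then computing sections explicitly on a basis of connected affinoid perfectoids.

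First, since both source and target are v-sheaves of abelian groups on $Y$, and the formation of all three sheaves $\Lambda$, $Z_{\Lambda,i}$, $Z_{\Lambda,i}^\circ$ commutes with pullback along a v-cover, we may pass to a v-cover of $Y$ that trivializes $\Lambda$ and assume $\Lambda \simeq \ul{\mathbb{Z}_p}^h$. It then suffices to check surjectivity and the kernel description on sections over connected affinoid perfectoids $(R,R^+) \in \perfd$ over $Y$, which form a basis for the v-topology (surjectivity of v-sheaf maps and identification of v-sheaf kernels can be tested on any generating family). On such an $(R,R^+)$ we have $\ul{\mathbb{Z}_p}(R) = \mathbb{Z}_p$, so the sections compute as
\[ Z_{\Lambda,i}(R,R^+) = (\mathbb{Z}_p + p^i R^+)^h, \qquad \Lambda(R,R^+) = \mathbb{Z}_p^h, \qquad Z_{\Lambda,i}^\circ(R,R^+) = (p^i R^+)^h, \]
the middle equality for $Z_{\Lambda,i}^\circ$ following from the fact that the presheaf $(R,R^+) \mapsto p^i(\Lambda(R)\otimes_{\ul{\mathbb{Z}_p}(R)} R^+)$ is already a v-sheaf on the connected affinoid perfectoid site because $\Lambda(R) = \mathbb{Z}_p^h$ is free, so the tensor product is simply $(R^+)^h$, which is a v-sheaf.

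With these identifications in hand, the map in question becomes the addition map
\[ \mathbb{Z}_p^h \oplus (p^i R^+)^h \longrightarrow (\mathbb{Z}_p + p^i R^+)^h,\qquad (\lambda,z)\mapsto \lambda+z, \]
which is surjective on sections, since every component $\mathbb{Z}_p + p^i R^+$ is by definition the sum of $\mathbb{Z}_p$ and $p^i R^+$ inside $R$. For the kernel, $(\lambda,z) \mapsto 0$ forces $\lambda = -z \in \mathbb{Z}_p^h \cap (p^i R^+)^h$. Since $(R,R^+)$ is a uniform Huber pair over $(\mathbb{Q}_p, \mathbb{Z}_p)$, we have $\mathbb{Z}_p \subseteq R^+$ while $p^{-1}$ is not power-bounded in $R$ and hence not in $R^+$; this gives $\mathbb{Z}_p \cap p^i R^+ = p^i \mathbb{Z}_p$, and consequently $\lambda \in (p^i \mathbb{Z}_p)^h = p^i \Lambda(R,R^+)$, which is the antidiagonal image of $p^i \Lambda$. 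Both $\lambda$ and $-\lambda = z$ then automatically lie in $p^i\Lambda \subseteq Z_{\Lambda,i}^\circ$, so the antidiagonal embedding is well defined.

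The main obstacle is the initial justification that $Z_{\Lambda,i}^\circ(R,R^+) = (p^i R^+)^h$ after trivializing $\Lambda$; once this is settled, the rest is purely algebraic. Since $\Lambda(R) = \mathbb{Z}_p^h$ is free as a $\ul{\mathbb{Z}_p}(R)$-module on our chosen basis of connected affinoid perfectoids, the presheaf defining $Z_{\Lambda,i}^\circ$ is simply $p^i (R^+)^h$, which satisfies v-descent (it is the $h$-fold product of $\mathcal{O}^+$), so no nontrivial sheafification occurs on this basis. All other steps are routine, and the resulting pushout description is global because it has been verified on a generating family of the v-topology.
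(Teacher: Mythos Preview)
Your proof is correct and simply spells out in detail what the paper declares ``Immediate.'' The reduction to trivial $\Lambda$, the explicit section computation, and the verification that $\mathbb{Z}_p \cap p^i R^+ = p^i \mathbb{Z}_p$ are all sound; the only minor remark is that once $\Lambda \simeq \ul{\mathbb{Z}_p}^h$, the presheaf $(R,R^+) \mapsto p^i(\Lambda(R) \otimes_{\ul{\mathbb{Z}_p}(R)} R^+)$ equals $(p^i R^+)^h$ on \emph{all} affinoid perfectoids (since $\ul{\mathbb{Z}_p}(R)^h \otimes_{\ul{\mathbb{Z}_p}(R)} R^+ = (R^+)^h$ regardless of connectedness), so the restriction to connected test objects is not strictly necessary.
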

\begin{proof}
    Immediate. 
\end{proof}

Consider the natural pairing
\begin{align*} \kappa_{\Lambda}: \Lambda \times H_{\Lambda} &\rightarrow \mathbb{G}_{m, \eta, Y}^{\lozenge} \\
 (\lambda, \lambda' \otimes z) & \mapsto z^{\lambda'(\lambda)}.
\end{align*} 
It is bilinear: 
\begin{equation}\label{eq.univ-char-bilinearity} \kappa(x_1+x_2, y)=\kappa(x_1, y)\kappa(x_2, y) \textrm{ and } \kappa(x, y_1y_2) = \kappa(x, y_1)\kappa(x, y_2).\end{equation}

Since $\mathbb{G}_{m, \eta, Y}^{\lozenge} \subseteq \mathbb{A}^{1,\lozenge}_Y$, we may view $\kappa_{\Lambda}$ as an element of $\mathcal{O}(\Lambda \times_Y H_{\Lambda})=\mathcal{O}_{\Lambda/Y}(H_{\Lambda}).$ 

\begin{Lem}
    For each $i\geq 0$, the restriction $\kappa|_{\Lambda \times H^{(i)}_{\Lambda}}$ extends naturally to a pairing
\[ \kappa_{\Lambda,i}: Z_{\Lambda,i} \times H_{\Lambda}^{(i)} \rightarrow \mathbb{G}_{m, \eta, Y}^{\lozenge}. \]
In particular, 
    $\kappa_{\Lambda} \in \mathcal{O}_{\Lambda/Y}^{\locan}(H_{\Lambda}) \subseteq \mathcal{O}_{\Lambda/Y}(H_{\Lambda})$. 
\end{Lem}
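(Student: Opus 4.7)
The strategy is to explicitly construct $\kappa_{\Lambda,i}$ via a universal extended exponentiation pairing, and then identify the resulting section with a locally analytic function. The statement is v-local on $Y$, so I would first reduce to the case where $\Lambda \simeq \ul{\mathbb{Z}_p}^h$ is free, in which case $H_\Lambda \simeq (\widehat{\mathbb{G}}_{m, \eta, Y}^\lozenge)^h$ and $Z_{\Lambda,i} \simeq (\mathbb{G}_{a,Y}^{(i),\lozenge})^h$. The core of the construction is then the case $h=1$.

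The key observation is that for $z \in \mathbb{B}(1, |\varpi|^{1/p^i})^\lozenge$ one has $z^{p^i} \in \mathbb{B}(1,|\varpi|)^\lozenge$, where $\log$ is an isomorphism onto $\mathbb{B}(0, |\varpi|)^\lozenge$ by Example~\ref{Eg:RigidGM}. This allows me to define a pairing
\[
\mathcal{E}_i: \mathbb{G}_{a,Y}^{(i),\lozenge} \times \mathbb{B}(1, |\varpi|^{1/p^i})_{Y}^\lozenge \to \mathbb{G}_{m,\eta,Y}^\lozenge, \qquad \mathcal{E}_i(t,z) := z^n \cdot \exp(r \log z^{p^i}),
\]
for any local decomposition $t = n + p^i r$ with $n \in \ul{\mathbb{Z}_p}$ and $r \in \mathcal{O}^+$. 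Since $|r| \leq 1$ and $\log z^{p^i} \in \mathbb{B}(0,|\varpi|)^\lozenge$, the exponential is well-defined. I would verify the following:
(a) Well-definedness: two decompositions differ by $(n,r) \mapsto (n + p^i m, r-m)$ for $m \in \ul{\mathbb{Z}_p}$, and the resulting discrepancy $z^{p^i m} \cdot \exp(-m \log z^{p^i}) = z^{p^i m}(z^{p^i})^{-m}$ is trivial.
(b) Additivity in $t$, multiplicativity in $z$, and $\ul{\mathbb{Z}_p}$-linearity $\mathcal{E}_i(nt,z) = \mathcal{E}_i(t, z^n)$ --- all follow directly from the corresponding properties of $\exp$ and $\log$ together with $(z_1 z_2)^{p^i} = z_1^{p^i}z_2^{p^i}$.

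Given $\mathcal{E}_i$, I define $\kappa_{\Lambda,i}$ by bilinearly extending the formula $\kappa_{\Lambda,i}(\lambda \otimes t, \lambda' \otimes z) := \mathcal{E}_i(\lambda'(\lambda)\, t, z)$ along the evaluation pairing $\Lambda \otimes_{\ul{\mathbb{Z}_p}} \Lambda^\ast \to \ul{\mathbb{Z}_p}$. The $\ul{\mathbb{Z}_p}$-bilinearity of $\mathcal{E}_i$ established above ensures this is well-defined on $Z_{\Lambda,i} = \Lambda \otimes_{\ul{\mathbb{Z}_p}} \mathbb{G}_{a,Y}^{(i),\lozenge}$ and $H_\Lambda^{(i)} = \Lambda^\ast \otimes_{\ul{\mathbb{Z}_p}} \mathbb{B}(1,|\varpi|^{1/p^i})_Y^\lozenge$. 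When $t = \lambda'' \in \ul{\mathbb{Z}_p}$, the formula reduces to $z^{\lambda'(\lambda)\lambda''}$, matching $\kappa_\Lambda|_{\Lambda \times H_\Lambda^{(i)}}$.

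For the final assertion, I identify $Z_{\Lambda,i}$ with the neighborhood $U_i \subseteq \Lambda/p^i\Lambda \times V$ from Lemma~\ref{Lem:OgammalaIsLB} (applied with $\gamma = \mathrm{id}$ and $U_0 = \Lambda \otimes \mathcal{O}_Y^+$) via the isomorphism $\Lambda/p^i\Lambda \times Z^\circ_{\Lambda,i} \xrightarrow{\sim} Z_{\Lambda,i}$ furnished by Lemma~\ref{lemma.lambda-thickening-pushout}. Thus $\kappa_{\Lambda,i}$ defines a section of $\mathcal{O}_{U_i/Y}(H_\Lambda^{(i)})$, hence of $\mathcal{O}_{\Lambda/Y}^\locan(H_\Lambda^{(i)})$. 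Compatibility as $i$ varies amounts to checking $\mathcal{E}_{i+1}(t,z) = \mathcal{E}_i(t,z)$ for $(t,z) \in \mathbb{G}_a^{(i+1)} \times \mathbb{B}(1,|\varpi|^{1/p^i})$, which follows from the identity $\log z^{p^{i+1}} = p \log z^{p^i}$ applied to the decomposition $t = n + p^i(pr) = n + p^{i+1}r$. Since $\mathcal{O}_{\Lambda/Y}^\locan$ is a sheaf on $Y_\mathrm{v}$ and $H_\Lambda = \bigcup_i H_\Lambda^{(i)}$ is an open cover, these local sections glue to a global section over $H_\Lambda$, and by construction this section maps to $\kappa_\Lambda$ under the inclusion $\mathcal{O}_{\Lambda/Y}^\locan \hookrightarrow \mathcal{O}_{\Lambda/Y}$ of Lemma~\ref{lemma.restriction-map-injective}. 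The main technical obstacle is the verification of bilinearity in (b), which must be tracked carefully through the tensor product structure to ensure compatibility with the group laws on $Z_{\Lambda,i}$ and $H_\Lambda^{(i)}$; everything else is essentially formal bookkeeping.
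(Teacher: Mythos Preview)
Your proof is correct and follows essentially the same approach as the paper: extend the character to the thickening $Z_{\Lambda,i}$ by using $\exp(\cdot \log \cdot)$ on the ``small'' part of the decomposition $t = n + p^i r$, check that this agrees with $z^n$ on the overlap, and then deduce local analyticity from the presentation of $\mathcal{O}_{\Lambda/Y}^{\locan}$ as a colimit over the neighborhoods $Z_{\Lambda,j}$. The only stylistic difference is that the paper defines the pairing first on $Z_{\Lambda,i}^\circ \times H_\Lambda^{(i)}$ via $(\lambda \otimes r, \lambda' \otimes z) \mapsto \exp(\lambda'(\lambda)\, r \log z)$ and then invokes Lemma~\ref{lemma.lambda-thickening-pushout} once to glue with $\kappa_\Lambda|_{\Lambda}$, whereas you work in coordinates (reducing to $h=1$), define $\mathcal{E}_i$ on all of $\mathbb{G}_a^{(i)}$ at once, and verify independence of the decomposition by hand---which is exactly the content of that pushout lemma unwound in this case.
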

\begin{proof}
We can define a pairing on $Z_{\Lambda,i}^\circ \times H_{\Lambda}^{(i)}$
by 
\[ 
(\lambda \otimes r, \lambda' \otimes z) \mapsto \mathrm{exp}(\lambda'(\lambda) r \log(z)),
\]
where the exponential converges because $\lambda'(\lambda) \in \underline{\mathbb{Z}_p} \subseteq \mathcal{O}^+$, $r \in p^i \mathcal{O}^+$, and $\log(z) \in p^{-i}\varpi \mathcal{O}^+$, so that the product lies in $\varpi \mathcal{O}^+$. This pairing agrees with $\kappa_\Lambda$ on $p^i \Lambda \times H^{(i)}_{\Lambda}$, thus we obtain the desired pairing $\kappa_{\Lambda, i}$ by invoking Lemma \ref{lemma.lambda-thickening-pushout}. Since 
\[ 
\mathcal{O}_{\Lambda/Y}^\locan(H_{\Lambda})=\varprojlim_i \mathcal{O}_{\Lambda/Y}^\locan(H_{\Lambda}^{(i)})=\varprojlim_i \varinjlim_j \mathcal{O}(Z_{\Lambda,j} \times H_{\Lambda}^{(i)})
\]
we conclude $\kappa_{\Lambda} \in \mathcal{O}_{\Lambda/Y}^\locan(H_{\Lambda})$ (note that for $j \geq i$, we have $\kappa_{\Lambda,j}|_{Z_{\Lambda, j} \times_Y H_{\Lambda}^{(i)}} = \kappa_{\Lambda,i}|_{Z_{\lambda,j} \times_Y H_{\Lambda}^{(i)}}$).
\end{proof}

\begin{Eg} \label{Eg:kappaForTrivialLambda}
    Suppose that $Y = \operatorname{Spa}(R, R^+)$ for $(R,R^+)$ a fiercely v-complete Huber pair over $(\qp,\zp)$, and let $\Lambda = \underline{\mathbb{Z}_p}$ be the trivial local system. Then $H_{\Lambda}(Y) = \mathbb{G}_{m, \eta, Y}^{\lozenge}(Y) = 1 + R^{00}$ and $Z_{\Lambda, i}(Y) = \underline{\mathbb{Z}_p}(R) + p^i R^+$. The pairing $\kappa$ is given by
    \[
    \kappa(z, 1+t) = (1+t)^z = \sum_{k=0}^{\infty} { z \choose k} t^k \; \in \; 1 + R^{00}.
    \]
    For $z \in 1+p^i R^+$, we find this series converges for $t \in \varpi^{1/p^i} R^+$. This can also be deduced from Amice's description of an orthonormal basis for $\mathcal{O}(Z_{\ul{\mathbb{Z}_p},n})$ (see \S \ref{sub:TheAmiceTransform}).
\end{Eg}

\begin{Lem}\label{Lem.DerivationActionLocAn}\hfill
\begin{enumerate} 
\item For the derivation action $\cdot$ of $\Lambda_{H_{\Lambda}} \otimes_{\ul{\zp}} \mathcal{O}_{H_\Lambda}$ on $\mathcal{O}_{\Lambda_{H_\Lambda}/H_{\Lambda}}^\locan$,
\[
\lambda \cdot \kappa_{\Lambda} = \langle \lambda, \log_{H_{\Lambda}}\rangle \kappa_{\Lambda},
\]
where we view $\log_{H_{\Lambda}}$ as a section of $\Lambda_{H_\Lambda}^* \otimes_{\ul{\zp}} \mathcal{O}_{H_\Lambda}$ over $H_\Lambda$. 
\item For the derivation action $\cdot$ of $\Lambda_{H_\Lambda}^* \otimes_{\ul{\zp}} \mathcal{O}_{H_\Lambda}$ on $\mathcal{O}^{\locan}_{\Lambda_{H_\Lambda}/H_{\Lambda}}$, 
\[
\lambda' \cdot \kappa_{\lambda} = \lambda' \kappa_\Lambda,
\]
where on the right-hand side $\lambda'$ is viewed as a function on $\Lambda_{H_\Lambda}$. Here we are identifying $\Lambda_{H_\Lambda}^* \otimes_{\ul{\zp}} \mathcal{O}_{H_\Lambda} = \operatorname{Lie}H_{\Lambda} \otimes_{\mathcal{O}_Y} \mathcal{O}_{H_{\Lambda}}$.
\end{enumerate}
\end{Lem}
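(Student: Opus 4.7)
The plan is to reduce to a local, explicit situation and then verify both identities by direct computation using the formula $\kappa_{\Lambda}(\lambda, \mu' \otimes z) = \exp(\mu'(\lambda) \log z)$ which, by the construction of $\kappa_{\Lambda,i}$, is valid on $Z_{\Lambda,j} \times_Y H_{\Lambda}^{(i)}$ for $j \geq i$.

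First, both identities are natural with respect to pullback in $Y$, and by Lemma \ref{lem.pullback-of-locan} and the v-local nature of the derivation action from \S\ref{subsub:derivationaction}, one may check them v-locally on $Y$. Thus I would reduce to the situation where $Y = \Spd(R,R^+)$ for a fiercely v-complete Huber pair and $\Lambda \simeq \ul{\zp}^h$ is trivialized, with basis $\{e_i\}$ and dual basis $\{e_i^*\}$ of $\Lambda^*$. Since both sides of each identity are $\mathcal{O}_{H_{\Lambda}}$-linear in $\lambda$ (respectively, $\lambda'$), I may further assume $\lambda = e_i$ or $\lambda' = e_i^*$.

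For part (1), recall that the derivation action of $V = \Lambda_{H_{\Lambda}} \otimes_{\ul{\zp}} \mathcal{O}_{H_{\Lambda}}$ from \S\ref{subsub:derivationaction} acts on each chart $\mathcal{O}_{U/H_\Lambda}$ by the invariant vector fields on the smooth group whose diamond is $U_{0,m,H_\Lambda}$, and the section $\kappa_{\Lambda}$ lies in $\varinjlim_j \mathcal{O}(Z_{\Lambda,j} \times_Y H_\Lambda^{(i)})$ with explicit formula as above. Restricting to $Z_{\Lambda,j} \times_Y H_{\Lambda}^{(i)}$ and writing $\mu' \otimes z \in H_{\Lambda}^{(i)}$, the invariant derivation $e_i$ acts on a locally analytic function $f(\lambda_1,\dots,\lambda_h)$ by $\partial/\partial \lambda_i$. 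Applying the chain rule to $\exp\bigl(\sum_j \mu'(e_j)\lambda_j \log z\bigr)$ yields $\mu'(e_i)\log z \cdot \kappa_\Lambda$, and since $\langle e_i, \log_{H_\Lambda}\rangle$ evaluated at $\mu' \otimes z$ equals precisely $\mu'(e_i)\log z$, this gives $e_i \cdot \kappa_\Lambda = \langle e_i, \log_{H_\Lambda}\rangle \kappa_\Lambda$.

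For part (2), I first would make precise the action of $\operatorname{Lie} H_\Lambda \otimes_{\mathcal{O}_Y} \mathcal{O}_{H_\Lambda}$ on $\mathcal{O}^{\locan}_{\Lambda_{H_\Lambda}/H_\Lambda}$ by invariant derivations on the $H_\Lambda$-factor: since $\mathcal{O}^{\locan}_{\Lambda_{H_\Lambda}/H_\Lambda}$ is a filtered colimit of pushforwards of $\mathcal{O}_{U \times_Y H_\Lambda/H_\Lambda}$, the invariant vector fields on $H_\Lambda$ lift canonically to horizontal derivations on each such pushforward. Under the identification $\Lambda^* = \operatorname{Lie} H_\Lambda$, the basis element $e_i^* \in \Lambda^*$ corresponds to the invariant derivation $z_i \partial_{z_i}$ on the $i$-th coordinate of $H_\Lambda \simeq (\gmhateta^\lozenge)^h$ (as verified by the one-dimensional identification $\operatorname{Lie}\widehat{\mathbb{G}}_m = \mathbb{G}_a$ with invariant derivation $z\partial_z$). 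Computing $(z_i\partial_{z_i})\prod_j z_j^{\lambda_j} = \lambda_i \prod_j z_j^{\lambda_j} = \lambda_i \cdot \kappa_\Lambda$ and observing that the function $\lambda \mapsto \lambda_i$ on $\Lambda$ is precisely $e_i^*$ yields $e_i^* \cdot \kappa_\Lambda = e_i^* \kappa_\Lambda$.

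The main obstacle is not the calculation itself but the careful bookkeeping of the three distinct identifications at play: the derivation action of $V$ from \S\ref{subsub:derivationaction} in part (1), the canonical lift of invariant derivations on $H_\Lambda$ to $\mathcal{O}^{\locan}_{\Lambda_{H_\Lambda}/H_\Lambda}$ in part (2), and the isomorphism $\Lambda^* \otimes_{\ul{\zp}} \mathcal{O}_{H_\Lambda} \simeq \operatorname{Lie} H_\Lambda \otimes_{\mathcal{O}_Y} \mathcal{O}_{H_\Lambda}$ given by the canonical identification $\operatorname{Lie}(\Lambda^* \otimes_{\ul{\zp}} \gmhateta) = \Lambda^* \otimes_{\ul{\zp}} \operatorname{Lie}\gmhateta$. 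Once all three are matched to the concrete coordinate-based formulas above, both identities reduce to the elementary chain-rule computation.
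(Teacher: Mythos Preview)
Your proposal is correct and follows essentially the same approach as the paper: reduce to a local situation where $\Lambda$ is trivial, then verify both identities by direct computation using the exponential formula for $\kappa_\Lambda$. The paper's proof differs only in presentation: rather than choosing coordinates $(\lambda_1,\dots,\lambda_h)$ and $(z_1,\dots,z_h)$ and applying the chain rule, it computes each derivation via dual numbers $A[\varepsilon]$, e.g.\ $(\lambda\cdot\kappa_\Lambda)(x)=\varepsilon^{-1}(\kappa_\Lambda(x+\varepsilon\lambda)-\kappa_\Lambda(x))$ for part~(1) and the analogous translation on the $H_\Lambda$-side for part~(2). This is a cosmetic difference; your coordinate computation and the paper's dual-number computation encode the same chain-rule identity.
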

\begin{proof}
For both statements, it suffices to work over an affinoid perfectoid $Y'=\Spa (A,A^+) \rightarrow H_\Lambda$ such that $\Lambda|_{Y'} \cong \mathbb{Z}_p^h$ is trivial. Then, fixing such a trivialization, we may view $\kappa_\Lambda$ as an element of $C^\locan(\mathbb{Z}_p^h, A)$ and view $\log_{H_{\Lambda}}$ as an element of $(\mathbb{Z}_p^h)^* \otimes_{\mathbb{Z}_p} A$. By construction, for $x$ in a small enough neighborhood of $0 \in \mathbb{Z}_p^h$, $\kappa(x)=\exp( \langle x, \log_{H_{\Lambda}}\rangle).$

Let $A[\varepsilon]$ denote the ring of dual numbers over $A$ (so $\varepsilon^2 = 0$). For $\lambda \in (\Lambda_{H_{\Lambda}} \otimes_{\ul{\zp}} \mathcal{O}_{H_\Lambda})(Y') = A^h$, we can then compute
\begin{align*} (\lambda \cdot \kappa_\Lambda)(x) & =  \varepsilon^{-1} \cdot (\kappa_\Lambda(x+\varepsilon \lambda) - \kappa_\Lambda(x)) \\
&= \kappa_\Lambda(x) \varepsilon^{-1}(\kappa_\Lambda(\varepsilon \lambda)- 1) \\
&= \kappa_\Lambda(x) \varepsilon^{-1}\exp( \langle \varepsilon \lambda, \log_{H_{\Lambda}} \rangle - 1)\\
&= \kappa_\Lambda(x) \langle \lambda, \log_{H_{\Lambda}} \rangle. \end{align*}
This establishes (1). Similarly, we obtain (2) by computing 
\begin{align*} (\lambda' \cdot \kappa_{\Lambda})(x) & =  \varepsilon^{-1}(\exp(\lambda' \varepsilon) \kappa_\Lambda(x)) - \kappa_\Lambda(x)) \\
&= \varepsilon^{-1}((1+ \lambda'\varepsilon)\kappa_{\Lambda}(x)-\kappa_{\Lambda}(x))\\
&=\lambda' \kappa_\Lambda(x). 
\end{align*}

\end{proof}

\subsubsection{General locally analytic character groups} \label{SubSub:GeneralPDivGps}

Let $Y$ be a small v-stack, and let $\mathcal{X}=(\Lambda, V, \gamma)$ be a locally analytic character datum as in Definition \ref{Def:CharacterDatum} with associated character group $H_\mathcal{X} \subseteq H_{\Lambda}$. We let $\kappa_{\mathcal{X}}$ be the restriction of $\kappa_{\Lambda}$ to $\Lambda \times_Y H_{\mathcal{X}}$.

\begin{Lem} \label{Lem:UniversalKappaAsSection}
One has $\kappa_{\mathcal{X}} \in \mathcal{O}_{\Lambda/Y}^{\gamma-\locan}(H_{\mathcal{X}})$. Furthermore, this section is contravariantly functorial in the character datum $\mathcal{X}$ in the following sense: If $\mathcal{X}'=(\Lambda', V', \gamma')$ is another locally analytic character datum and we have a morphism $\mathcal{X} \to \mathcal{X}'$, then the images of $\kappa_{\mathcal{X}}$ and $\kappa_{\mathcal{X}'}$ agree under the natural maps $\mathcal{O}_{\Lambda/Y}^{\gamma-\locan}(H_{\mathcal{X}}) \to \mathcal{O}_{\Lambda/Y}^{\gamma-\locan}(H_{\mathcal{X}'})$ and $\mathcal{O}_{\Lambda/Y}^{\gamma'-\locan}(H_{\mathcal{X}'}) \to \mathcal{O}_{\Lambda'/Y}^{\gamma-\locan}(H_{\mathcal{X}'})$ respectively.
\end{Lem}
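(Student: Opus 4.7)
The plan is to reduce the first assertion, via a Cauchy--Riemann-type criterion, to a computation using the derivation action of Lemma \ref{Lem.DerivationActionLocAn}. By Lemma \ref{Lem:LaAndCauchyRiemann}, the subsheaf $\mathcal{O}_{\Lambda/Y}^{\gamma-\locan} \hookrightarrow \mathcal{O}_{\Lambda/Y}^{\locan}$ is cut out as the $W$-fixed points under the derivation action, where $W := \ker \gamma$. The preceding lemma gives $\kappa_\Lambda \in \mathcal{O}_{\Lambda/Y}^{\locan}(H_\Lambda)$, and restriction along $H_\mathcal{X} \hookrightarrow H_\Lambda$ yields $\kappa_\mathcal{X} \in \mathcal{O}_{\Lambda/Y}^{\locan}(H_\mathcal{X})$, so it suffices to show that $W$ annihilates $\kappa_\mathcal{X}$.

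For this, apply Lemma \ref{Lem.DerivationActionLocAn}(1): for a section $\lambda$ of $\Lambda \otimes_{\ul{\zp}} \mathcal{O}$ over $H_\mathcal{X}$, the action is $\lambda \cdot \kappa_\mathcal{X} = \langle \lambda, \log_{H_\Lambda}|_{H_\mathcal{X}}\rangle \kappa_\mathcal{X}$. The definition of $H_\mathcal{X}$ as a fiber product in Definition \ref{Def:CharacterDatum} is precisely the statement that $\log_{H_\Lambda}|_{H_\mathcal{X}}$ factors as $\gamma^* \circ \log_{H_\mathcal{X}}$, where $\log_{H_\mathcal{X}}: H_\mathcal{X} \to V^* = \Lie H_\mathcal{X}$ and $\gamma^*: V^* \hookrightarrow \Lambda^* \otimes_{\ul{\zp}} \mathcal{O}_Y$ is the dual of $\gamma$. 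By adjointness, $\langle \lambda, \gamma^*(\log_{H_\mathcal{X}})\rangle = \langle \gamma(\lambda), \log_{H_\mathcal{X}}\rangle$, which vanishes for $\lambda \in W$; thus the first assertion follows from Lemma \ref{Lem:LaAndCauchyRiemann}.

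The functoriality is essentially tautological. A morphism $\mathcal{X} \to \mathcal{X}'$ consists of compatible $\phi: \Lambda \to \Lambda'$ and $\psi: V \to V'$ (with $\psi \circ \gamma = \gamma' \circ (\phi \otimes 1)$), and induces $H_{\mathcal{X}'} \to H_\mathcal{X}$ through the universal property of the fiber product. Since both $\kappa_\mathcal{X}$ and $\kappa_{\mathcal{X}'}$ are restrictions of $\kappa_\Lambda$ and $\kappa_{\Lambda'}$ respectively, the claimed equality of images in $\mathcal{O}_{\Lambda/Y}^{\gamma-\locan}(H_{\mathcal{X}'})$ reduces to the tautological identity $\chi'(\phi(\lambda)) = (\phi^*\chi')(\lambda)$ expressing the contravariance of $\kappa_\Lambda$ in $\Lambda$.

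No step presents a serious obstacle; the only point requiring some care is the identification $\log_{H_\Lambda}|_{H_\mathcal{X}} = \gamma^* \circ \log_{H_\mathcal{X}}$, but this is built directly into the fiber-product definition of $H_\mathcal{X}$.
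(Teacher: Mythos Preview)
Your proof is correct and follows essentially the same approach as the paper: both reduce to showing $W$-annihilation via Lemma~\ref{Lem:LaAndCauchyRiemann}, compute the derivation action using Lemma~\ref{Lem.DerivationActionLocAn}(1), and conclude from the fact that $\log_{H_\Lambda}|_{H_\mathcal{X}}$ lies in the image of $\gamma^*$ (hence in the annihilator of $W$) by the fiber-product definition of $H_\mathcal{X}$. The functoriality is likewise dismissed as immediate in both.
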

\begin{proof}
The functoriality is immediate from the definitions, so we will just verify that $\kappa_{\mathcal{X}} \in \mathcal{O}_{\Lambda/Y}^{\gamma-\locan}(H_{\mathcal{X}})$. 
It is equivalent to verify that $\kappa_{\mathcal{X}}$, viewed as a global section of $\mathcal{O}_{\Lambda_{H_{\mathcal{X}}}/H_{\mathcal{X}}}^\locan$ over $H_{\mathcal{X}}$, lies in $\mathcal{O}_{\Lambda_{H_{\mathcal{X}}}/H_{\mathcal{X}}}^{\gamma-\locan}$. By Lemma \ref{Lem:LaAndCauchyRiemann}, for $W:=\ker \gamma$, it is then equivalent to verify that $\kappa_{\mathcal{X}}$ is annihilated by the invariant derivation action of $W \subseteq \Lambda_{H_{\Lambda}} \otimes_{\ul{\zp}} \mathcal{O}_{H_{\Lambda}}.$ But by Lemma \ref{Lem.DerivationActionLocAn}, a section $w$ of $W$ acts as multiplication by the function $\langle w, \log_{H_\Lambda}|_{H_{\mathcal{X}}} \rangle$. Since, by the definition of $H_{\mathcal{X}}$, we have that $\log_{H_{\Lambda}}|_{H_{\mathcal{X}}}=\log_{H_{\mathcal{X}}}$ lies in the annihilator of $W$, we conclude. 
\end{proof}

\subsection{Constructing the Fourier transforms} \label{Sub:FourierDef}

Let $Y$ be a small v-stack and let $\mathcal{X}=(\Lambda, V, \gamma)$ be a locally analytic character datum over $Y$ as in Definition \ref{Def:CharacterDatum}. Recall that 
\[ \mathcal{D}_{\Lambda/Y}^{\gamma-\locan} := \Hom_{\mathcal{O}_Y}(\mathcal{O}_{\Lambda/Y}^{\gamma-\locan}, \mathcal{O}_Y) \textrm{ and } \mathcal{D}_{H_{\mathcal{X}}}= \Hom_{\mathcal{O}_Y}(\mathcal{O}_{H_{\mathcal{X}}/Y}, \mathcal{O}_Y). \]

We now use the universal character $\kappa_{\mathcal{X}}$ to define our Fourier transforms. 
\subsubsection{} For $\pi: \mathcal{H}_{\mathcal{X}} \rightarrow Y$, we define $\mathbb{F}_{\mathcal{X}}: \mathcal{D}_{\Lambda/Y}^{\gamma-\locan}  \rightarrow \mathcal{O}_{H_{\mathcal{X}}/Y}$ to be the composition of the natural maps
\[ \mathcal{D}_{\Lambda/Y}^{\gamma-\locan} \rightarrow \pi_*\pi^*\mathcal{D}_{\Lambda/Y}^{\gamma-\locan}=\pi_* \mathcal{D}_{\Lambda_{H_{\mathcal{X}}}/H_\mathcal{X}}^{\gamma-\locan} \xrightarrow{\mu \mapsto \mu(\kappa_{\mathcal{X}})} \pi_*\mathcal{O}_{H_{\mathcal{X}}}=\mathcal{O}_{H_{\mathcal{X}}/Y}. \]
From the functoriality properties of Lemma \ref{Lem:UniversalKappaAsSection}, it follows that $\mathbb{F}_{\mathcal{X}}$ is a natural transformation between covariant functors of the character datum $\mathcal{X}$. 

\subsubsection{} For $\rho: \Lambda \rightarrow Y$, we define $\mathbb{F}^{\mathcal{X}}_0$ to be the composition of the natural maps (where now we consider $\kappa$ as an element of $\mathcal{O}_{H \times_{Y} \Lambda / \Lambda}(\Lambda)$)
\[ \mathcal{D}_{H_{\mathcal{X}}/Y} \rightarrow \rho_*\rho^* \mathcal{D}_{H_{\mathcal{X}}/Y} = \rho_* \mathcal{D}_{H_{\mathcal{X}} \times_Y \Lambda/\Lambda} \xrightarrow{ \mu \mapsto  \mu(\kappa_{\mathcal{X}})} \rho_*\mathcal{O}_{\Lambda}=\mathcal{O}_{\Lambda/Y}.\]

\begin{Lem}\label{lem.fourier-factorization} 
With notation as above, $\mathbb{F}^\mathcal{X}_0$ factors through a unique map
\[
\mathbb{F}^{\mathcal{X}}: \mathcal{D}_{H_{\mathcal{X}}/Y} \to \mathcal{O}_{\Lambda/Y}^{\gamma-\locan}
\]
via the inclusion $\mathcal{O}_{\Lambda/Y}^{\gamma-\locan} \subseteq \mathcal{O}_{\Lambda/Y}$. 
\end{Lem}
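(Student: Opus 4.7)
The uniqueness of the factorization is immediate from Lemma \ref{lemma.restriction-map-injective}: the restriction map $\mathcal{O}_{\Lambda/Y}^{\gamma-\locan} \hookrightarrow \mathcal{O}_{\Lambda/Y}$ is a monomorphism, so any lift is unique.

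For existence, the question is v-local on $Y$ (both on the source $\mathcal{D}_{H_\mathcal{X}/Y}$ and on the target $\mathcal{O}_{\Lambda/Y}^{\gamma-\locan}$, using the base-change compatibility of Lemma \ref{lem.pullback-of-locan} and the analogous one for $H_\mathcal{X}$), so I may assume $Y = \Spd(R,R^+)$ is affinoid perfectoid and that $\Lambda$ and $V$ are free. By a similar reduction, it suffices to show that for every global section $\mu$ of $\mathcal{D}_{H_\mathcal{X}/Y}$ on $Y$, the element $\mathbb{F}^{\mathcal{X}}_{0}(\mu) \in \mathcal{O}_{\Lambda/Y}(Y)$ lies in $\mathcal{O}_{\Lambda/Y}^{\gamma-\locan}(Y)$. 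Applying Lemma \ref{Lem:LaAndCauchyRiemann}, this amounts to proving (a) that $\mathbb{F}^{\mathcal{X}}_0(\mu)$ is locally analytic, and (b) that it is annihilated by the invariant derivation action of $W := \ker\gamma \subseteq \Lambda \otimes_{\ul{\zp}} \mathcal{O}_Y$.

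I expect both properties to be inherited from the corresponding properties of the universal character $\kappa_\mathcal{X}$ that were verified in Lemma \ref{Lem:UniversalKappaAsSection}. For (b), the key point is that the $W$-action on the $\Lambda$-variable commutes with the $\mathcal{O}_Y$-linear pairing $\mu$ applied in the $H_\mathcal{X}$-variable, so that $W_{H_\mathcal{X}} \cdot \kappa_\mathcal{X} = 0$ implies $W \cdot \mathbb{F}^\mathcal{X}_0(\mu) = 0$ (the vanishing of $W_{H_\mathcal{X}} \cdot \kappa_\mathcal{X}$ follows from Lemma \ref{Lem.DerivationActionLocAn} together with the identification $\log_{H_\mathcal{X}} = \log_{H_\Lambda}|_{H_\mathcal{X}}$ factoring through $V^*$). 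For (a), I would use the explicit presentation of Lemma \ref{Lem:OgammalaIsLB} applied to the fully analytic character datum $(\Lambda, \Lambda \otimes_{\ul{\zp}} \mathcal{O}_Y, \mathrm{id})$ on the base $H_\mathcal{X}$: writing $\mathcal{O}_{\Lambda/Y}^{\locan}(H_\mathcal{X}) = \varinjlim_n \mathcal{O}(U_n \times_Y H_\mathcal{X})$ for Banach thickenings $U_n$ of $\Lambda$ inside $\Lambda \otimes_{\ul{\zp}} \mathcal{O}_Y$, the section $\kappa_\mathcal{X}$ is represented at some stage $n_0$; then, applying the K\"unneth identification $\mathcal{O}(U_{n_0} \times_Y H_\mathcal{X}) \simeq \mathcal{O}(U_{n_0}) \otimes^{\blacksquare}_{\ul{R}} \mathcal{O}_{H_\mathcal{X}/Y}(H_\mathcal{X})$ and contracting against $\mu$ produces an element of $\mathcal{O}(U_{n_0}) \subseteq \mathcal{O}_{\Lambda/Y}^{\locan}(Y)$ which restricts to $\mathbb{F}^\mathcal{X}_0(\mu)$ on $\Lambda$.

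The main technical obstacle is establishing the required mixed K\"unneth isomorphism $\mathcal{O}(U_n \times_Y H_\mathcal{X}) \simeq \mathcal{O}(U_n) \otimes^{\blacksquare}_{\ul{R}} \mathcal{O}_{H_\mathcal{X}/Y}(H_\mathcal{X})$ between a Banach factor and a strongly countably Fr\'echet factor, together with the fact that elements of the colimit $\varinjlim_n \mathcal{O}(U_n \times_Y H_\mathcal{X})$ can be represented at a finite stage in a way compatible with the pairing against $\mu$. Both of these should follow by combining the K\"unneth results of \S\ref{Sub:GeometricOrigin} (notably Lemma \ref{Lem:Kunneth} and Proposition \ref{Prop:Kunneth3}) with the flatness of Fr\'echet modules from Proposition \ref{Prop:FrechetFlat}, which allows the solid tensor product to commute with the relevant filtered colimits and inverse limits.
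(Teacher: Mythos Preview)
Your overall plan is close in spirit to the paper's, but there is a genuine gap in step (a). You write $\mathcal{O}_{\Lambda/Y}^{\locan}(H_\mathcal{X}) = \varinjlim_n \mathcal{O}(U_n \times_Y H_\mathcal{X})$ and then claim that $\kappa_\mathcal{X}$ is represented at a single stage $n_0$. But $H_\mathcal{X}$ is not quasi-compact: it is an increasing union of the affinoid subgroups $H_m$ of Lemma~\ref{Lem:GeometryofH}, so the correct expression is
\[
\mathcal{O}_{\Lambda/Y}^{\locan}(H_\mathcal{X}) \;=\; \varprojlim_m \varinjlim_n \mathcal{O}(U_n \times_Y H_m),
\]
and these limits do \emph{not} commute. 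Concretely, the construction of $\kappa_{\Lambda,i}$ in \S\ref{Sub:ExponentialPairing} shows that over $H_\Lambda^{(i)}$ the character extends to the thickening $Z_{\Lambda,i}$, and the required radius of analyticity shrinks as $i$ grows. So there is no single $n_0$ with $\kappa_\mathcal{X} \in \mathcal{O}(U_{n_0} \times_Y H_\mathcal{X})$, and your contraction argument as written does not go through.

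The paper's fix is to decompose on the other side: by Lemma~\ref{Lem:FrechetDualSheaf} one has $\mathcal{D}_{H_\mathcal{X}/Y} = \varinjlim_n \mathcal{D}_{H_n/Y}$, so any given $\mu$ factors through some $\mathcal{D}_{H_n/Y}$. Restricted to the quasi-compact $H_n$, the universal character $\kappa_\mathcal{X}|_{\Lambda \times_Y H_n}$ \emph{does} live in $C^{\locan}(\mathbb{Z}_p^h, \mathcal{O}(H_n))^{W=0}$, and applying $\mu$ on coefficients lands in $C^{\locan}(\mathbb{Z}_p^h, \mathcal{O}(Y'))^{W=0} = \mathcal{O}_{\Lambda/Y}^{\gamma-\locan}(Y')$ by Lemma~\ref{Lem:LaAndCauchyRiemann}. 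This also sidesteps your mixed Banach--Fr\'echet K\"unneth issue entirely, since $\mathcal{O}(H_n)$ is Banach. Your treatment of uniqueness and of the $W$-annihilation in (b) is fine and matches the paper.
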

\begin{proof}
Uniqueness follows from the injectivity of $\mathcal{O}_{\Lambda/Y}^{\gamma-\locan} \rightarrow \mathcal{O}_{\Lambda/Y}$ (Lemma \ref{lemma.restriction-map-injective}). Thus, to see that the factorization exists, it suffices to verify it assuming $Y$ is affinoid perfectoid and $V$ is free. We then take subgroups $H_n$ as in Lemma \ref{Lem:GeometryofH}. Applying Lemma \ref{Lem:FrechetDualSheaf}, we find 
\[
 \mathcal{D}_{H_{\mathcal{X}}/Y} = \varinjlim_n \mathcal{D}_{H_{n}/Y}, \]
so it suffices to treat the analogous maps 
\[ \mathcal{D}_{H_n/Y} \rightarrow \rho_*\rho^{*} \mathcal{D}_{H_n/Y} = \rho_* \mathcal{D}_{H_n \times_Y \Lambda/ \Lambda }\xrightarrow{\mu \mapsto \mu(\kappa_{\mathcal{X}}|_{H_n \times_Y \Lambda})} \rho_*\mathcal{O}_{\Lambda}=\mathcal{O}_{\Lambda/Y}.\]
Restricting to any affinoid perfectoid $Y'/Y$ where $\Lambda$ is free of finite rank and $W=\ker \gamma$ is free, Lemma \ref{Lem:UniversalKappaAsSection} implies
\[ \restr{\kappa_{\mathcal{X}}}{Y'} \in C^\locan(\mathbb{Z}_p^h, \mathcal{O}_{H_n/Y}(Y'))^{W=0} \subseteq \Cont(\mathbb{Z}_p^h, \mathcal{O}_{H_n/Y}(Y'))=\mathcal{O}_{H_{n} \times_Y \Lambda/\Lambda}(Y'_\Lambda).\]
Applying a section $\mu$ of $\mathcal{D}_{H_n/Y}(Y')$ is accomplished on the coefficients, so we find that the image of such a $\mu$ lies in 
$C^\locan(\mathbb{Z}_p^h, \mathcal{O}(Y'))^{W=0}$
and thus is $\gamma$-locally analytic by Lemma \ref{Lem:LaAndCauchyRiemann}.
\end{proof}

From the functoriality properties of Lemma \ref{Lem:UniversalKappaAsSection}, it follows that $\mathbb{F}^{\mathcal{X}}$ is a natural transformation of contravariant functors on character data $\mathcal{X}$.

\subsubsection{}
Recall from \S \ref{subsub:Hopf} and Lemma \ref{Lem:GammaLaSheavesHopf} that all of the solid sheaves of $\mathcal{O}_Y$-modules $\mathcal{O}_{\Lambda/Y}^{\gamma\mathrm{-la}}$, $\mathcal{D}_{\Lambda/Y}^{\gamma\mathrm{-la}}$, $\mathcal{O}_{H_{\mathcal{X}}/Y}$, $\mathcal{D}_{H_{\mathcal{X}}/Y}$, are naturally solid Hopf $\mathcal{O}_Y$-algebras.

\begin{Lem} \label{Lem:MorphismOfHopfAlgebras}
    Let $\mathcal{X} = (\Lambda, V, \gamma)$ be a locally analytic character datum. 
    \begin{enumerate}
        \item The Fourier transforms $\mbb{F}_{\mathcal{X}}$ and $\mbb{F}^{\mathcal{X}}$ are morphisms of solid Hopf $\mathcal{O}_Y$-algebras.
        \item The Fourier transforms $\mbb{F}_{\mathcal{X}}$ and $\mbb{F}^{\mathcal{X}}$ are mutually dual under the reflexivity of $\mathcal{O}_{H_\mathcal{X}}$ and $\mathcal{O}_{\Lambda/Y}^{\gamma\mathrm{-la}}$ proved in Proposition \ref{Prop:GeometricFrechet}-(2) and Proposition \ref{Prop:GeometricLB}-(2).
    \end{enumerate}
\end{Lem}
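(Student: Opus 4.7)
The plan is to derive both assertions from the two bilinearity relations \eqref{eq.univ-char-bilinearity} of the universal character, together with the reflexivity and K\"unneth properties established in Propositions \ref{Prop:GeometricFrechet} and \ref{Prop:GeometricLB} and the Hopf algebra structures recorded in \S\ref{subsub:Hopf} and Lemma \ref{Lem:GammaLaSheavesHopf}. I will write out the argument for $\mathbb{F}_{\mathcal{X}}$ in part (1); the analogous statement for $\mathbb{F}^{\mathcal{X}}$ follows by symmetry, and in any case is a consequence of part (2).

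For (1), the algebra structure on $\mathcal{D}_{\Lambda/Y}^{\gamma-\locan}$ is the convolution dual to the comultiplication on $\mathcal{O}_{\Lambda/Y}^{\gamma-\locan}$, which itself is pulled back along the addition $\Lambda \oplus \Lambda \to \Lambda$. Using the first bilinearity of $\kappa_{\mathcal{X}}$,
\[
\mathbb{F}_{\mathcal{X}}(\mu_1 \star \mu_2)(h) = (\mu_1 \otimes \mu_2)\bigl(\kappa_{\mathcal{X}}(x_1+x_2, h)\bigr) = (\mu_1 \otimes \mu_2)\bigl(\kappa_{\mathcal{X}}(x_1,h)\,\kappa_{\mathcal{X}}(x_2,h)\bigr) = \mathbb{F}_{\mathcal{X}}(\mu_1)(h)\cdot \mathbb{F}_{\mathcal{X}}(\mu_2)(h),
\]
so convolution is sent to pointwise multiplication. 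Symmetrically, the comultiplication on $\mathcal{O}_{H_{\mathcal{X}}/Y}$ is pullback along multiplication $H_{\mathcal{X}} \times_Y H_{\mathcal{X}} \to H_{\mathcal{X}}$ and the coalgebra structure on $\mathcal{D}_{\Lambda/Y}^{\gamma-\locan}$ is dual to the pointwise multiplication on $\mathcal{O}_{\Lambda/Y}^{\gamma-\locan}$; the second bilinearity then shows that both $(\mathbb{F}_{\mathcal{X}} \otimes \mathbb{F}_{\mathcal{X}})(\Delta \mu)$ and $\Delta(\mathbb{F}_{\mathcal{X}}(\mu))$, evaluated at $(h_1, h_2)$, reduce to $\mu\bigl(\kappa_{\mathcal{X}}(x, h_1)\,\kappa_{\mathcal{X}}(x, h_2)\bigr)$. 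Compatibility with unit and counit is immediate from $\kappa_{\mathcal{X}}(0, h) = \kappa_{\mathcal{X}}(x, e_{H_{\mathcal{X}}}) = 1$, and the antipode compatibility follows from $\kappa_{\mathcal{X}}(-x, h) = \kappa_{\mathcal{X}}(x, h^{-1}) = \kappa_{\mathcal{X}}(x, h)^{-1}$, itself a consequence of bilinearity.

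For (2), under the reflexivity isomorphisms of Proposition \ref{Prop:GeometricFrechet}-(5) and Proposition \ref{Prop:GeometricLB}-(6), the dual of $\mathbb{F}_{\mathcal{X}}\colon \mathcal{D}_{\Lambda/Y}^{\gamma-\locan} \to \mathcal{O}_{H_{\mathcal{X}}/Y}$ is a map $\mathcal{D}_{H_{\mathcal{X}}/Y} \to \mathcal{O}_{\Lambda/Y}^{\gamma-\locan}$ sending $\nu$ to the functional $\mu \mapsto \nu(\mathbb{F}_{\mathcal{X}}(\mu)) = \nu(\mu(\kappa_{\mathcal{X}}))$. Meanwhile $\mathbb{F}^{\mathcal{X}}(\nu) = \nu(\kappa_{\mathcal{X}})$ evaluated at $\mu$ gives $\mu(\nu(\kappa_{\mathcal{X}}))$. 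The identity $\nu(\mu(\kappa_{\mathcal{X}})) = \mu(\nu(\kappa_{\mathcal{X}}))$ is a Fubini-type statement reflecting that both are the evaluation of $\mu \otimes \nu$ on the common section $\kappa_{\mathcal{X}}$, viewed as an element of $\mathcal{O}_{\Lambda/Y}^{\gamma-\locan} \otimes^{\blacksquare}_{\mathcal{O}_Y} \mathcal{O}_{H_{\mathcal{X}}/Y}$ via the tensor-hom and K\"unneth formalism.

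The main obstacle will be the bookkeeping with solid completed tensor products: one must verify that $\kappa_{\mathcal{X}}$ really defines an element of the relevant solid tensor product, and that the interchange of pairings in (2) is legitimate at the sheaf level. The hard part will be reducing v-locally to the situation where $Y = \operatorname{Spd}(R,R^+)$ is affinoid perfectoid (and fiercely v-complete) and $V$ is free, where parts (1)-(4) of Propositions \ref{Prop:GeometricFrechet} and \ref{Prop:GeometricLB} allow one to replace the solid tensor products in question by their explicit global sections, making the displayed manipulations above into honest equalities of condensed $\ul{R}$-modules; the sheaf statements then follow by descent.
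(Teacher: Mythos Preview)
Your proposal is correct and close in spirit to the paper's proof, but the packaging differs in a way worth noting. For part (1), the paper does not argue via the explicit bilinearity identities \eqref{eq.univ-char-bilinearity} pointwise; instead it observes that the Hopf operations on both sides arise from morphisms of character data (e.g., the diagonal $\Lambda \to \Lambda \oplus \Lambda$ induces $\mathcal{X} \to \mathcal{X} \oplus \mathcal{X}$, whose dual gives multiplication on $H_{\mathcal{X}}$), and then invokes the functoriality of $\kappa_{\mathcal{X}}$ and hence of $\mathbb{F}_{\mathcal{X}}, \mathbb{F}^{\mathcal{X}}$ established in Lemma \ref{Lem:UniversalKappaAsSection}. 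This sidesteps the bookkeeping you flag at the end: no reduction to affinoid perfectoid or explicit solid-tensor identifications is needed for (1), since functoriality is already a sheaf-level statement. Your bilinearity computation is of course what underlies that functoriality, so the content is the same; the paper's formulation just avoids having to justify the ``pointwise'' manipulations.

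For part (2), your argument matches the paper's exactly: the paper establishes the Fubini identity $\delta((\pi^*\mu)(\kappa_{\mathcal{X}})) = \mu((\rho^*\delta)(\kappa_{\mathcal{X}}))$ by reducing to $Y$ affinoid perfectoid with $V$ free, replacing $H_{\mathcal{X}}$ by one of the $H_n$ from Lemma \ref{Lem:GeometryofH}, and then computing $\pi_*\mathcal{O}^{\gamma-\locan}_{\Lambda \times_Y H_n/H_n} = \mathcal{O}^{\gamma-\locan}_{\Lambda/Y} \otimes^{\blacksquare}_{\mathcal{O}_Y} \mathcal{O}_{H_n/Y}$ via Lemma \ref{Lem:Kunneth1}, so that $\kappa_{\mathcal{X}}|_{\Lambda \times_Y H_n}$ lives in an honest solid tensor product and can be contracted with $\mu$ and $\delta$ in either order. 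This is precisely the reduction you outline.
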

\begin{proof}
    The first part follows from functoriality in the character datum. For example, consider the diagonal morphism
    \[
    \Delta \colon \Lambda \to \Lambda \oplus \Lambda 
    \]
    which induces a morphism of character data  $\mathcal{X} \to \mathcal{X} \oplus \mathcal{X} = (\Lambda \oplus \Lambda, V \oplus V, \gamma \oplus \gamma)$. The linear dual of this map $\Delta^{\ast} \colon \Lambda^{\ast} \oplus \Lambda^{\ast} \to \Lambda^{\ast}$ is the addition map, and induces the multiplication map $H_{\mathcal{X}} \times H_{\mathcal{X}} \to H_{\mathcal{X}}$ via the identification $H_{\mathcal{X}} \times_Y H_{\mathcal{X}} = H_{\mathcal{X} \oplus \mathcal{X}}$. On the other hand, the diagonal map induces the diagonal $Z_{\Lambda,n} \to Z_{\Lambda \oplus \Lambda,n} = Z_{\Lambda,n} \times Z_{\Lambda,n}$, which in turn induces the algebra structure on $\mathcal{O}_{Z_{\Lambda, n}/Y}$ (and hence $\mathcal{O}_{\Lambda/Y}^{\gamma\operatorname{-la}}$). The functoriality of $\mathbb{F}^{\bullet}$ with respect to the diagonal map $\Delta$ now implies that $\mbb{F}^{\mathcal{X}}$ is a morphism of $\mathcal{O}_Y$-algebras. The other assertions about $\mbb{F}_{\mathcal{X}}$ and the comultiplication structures can be proved similarly, and we leave the details to the reader. 

We now establish the second part. Let $\rho: \Lambda \rightarrow Y$ and $\pi: H_{\mathcal{X}} \rightarrow Y$ denote the structure maps. We claim that for $Y'/Y$, $\mu\in \mathcal{D}_{\Lambda}^{\gamma-\locan}(Y')$, and $\delta \in \mathcal{D}_{H_{\mathcal{X}}}(Y')$,  
\begin{equation}\label{eq.order-doesnt-matter-concat} \delta((\pi^*\mu)(\kappa_{\mathcal{X}})) = \mu( (\rho^*\delta)(\kappa_{\mathcal{X}})).  \end{equation}
Indeed, as in the proof of Lemma \ref{lem.fourier-factorization}, it suffices to assume $Y=Y'=\Spd(R,R^+)$ is affinoid perfectoid with $V$ trivial, and to replace $H_{\mathcal{X}}$ with one of the $H_n$'s as in Lemma~\ref{Lem:GeometryofH}. Then, for $U_i$ as in Lemma \ref{Lem:OgammalaIsLB},
\begin{align*} \pi_*\mathcal{O}^{\gamma-\locan}_{\Lambda \times_Y H_n / H_n} &=  \pi_* \varinjlim_i \mathcal{O}_{U_i \times_Y H_n / H_n}	 \\
&=  \varinjlim_i  \left(\pi_* \mathcal{O}_{U_i \times_Y H_n/H_n}\right) \\
&=  \varinjlim_i \left(\mathcal{O}_{U_i \times_Y H_n / Y} \right)\\
&=  \varinjlim_i \left(\mathcal{O}_{U_i / Y} \otimes_{\mathcal{O}_Y}^{\blacksquare} \mathcal{O}_{H_n / Y} \right)\\
&= \left( \varinjlim_i \mathcal{O}_{U_i / Y} \right)\otimes_{\mathcal{O}_Y}^{\blacksquare} \mathcal{O}_{H_n / Y}\\
&=  \mathcal{O}_{\Lambda/Y}^{\gamma\mathrm{-la}} \otimes_{\mathcal{O}_Y}^{\blacksquare} \mathcal{O}_{H_n / Y} 
\end{align*}
where the second equality is because $\pi:H_n \rightarrow Y$ is quasi-compact, the fourth equality is by Lemma \ref{Lem:Kunneth1}, and the fifth equality is because colimits and solid tensor products commute. 
In this presentation, \eqref{eq.order-doesnt-matter-concat} can be unwound to the statement that we can concatenate $\kappa_{\mathcal{X}}|_{\Lambda \times_Y H_n}$, viewed as a section of this sheaf, with $\mu$ and $\delta$ in either order. 

Given \eqref{eq.order-doesnt-matter-concat}, the verification that $\mathbb{F}^{\mathcal{X}}$ and $\mathbb{F}_{\mathcal{X}}$ are mutually dual is essentially formal: for example, we have 
\[ 
(\mathbb{F}_{\mathcal{X}}^*\delta)(\mu) = \delta(\mathbb{F}_{\mathcal{X}}(\mu))=\delta(\pi^*\mu(\kappa_{\mathcal{X}}))=\mu(\rho^*\delta(\kappa_{\mathcal{X}}))=\mu(\mathbb{F}^{\mathcal{X}}(\delta)).
\]
\end{proof}

\begin{Lem} \label{Lem:TheAmiceCase}
If $Y=\spa \qp$ and $\mathcal{X}=(\ul{\zp}, \mathcal{O}_Y, \ul{\zp} \xrightarrow{} \mathcal{O}_Y)$, then $\mathbb{F}_{\mathcal{X}}$ is an isomorphism.
\end{Lem}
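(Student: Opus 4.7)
The plan is to reduce this to Amice's classical theorem on $p$-adic Fourier analysis for $\mathbb{Z}_p$. In the setting of the lemma, $\Lambda = \underline{\mathbb{Z}_p}$, $V = \mathcal{O}_Y$, $\gamma$ is the identity, and the fiber product in Definition \ref{Def:CharacterDatum} defining $H_{\mathcal{X}}$ collapses to give $H_{\mathcal{X}} = \widehat{\mathbb{G}}_{m,\eta}^{\lozenge}$. The universal character constructed in \S\ref{Sub:ExponentialPairing} specializes to $\kappa(x, 1+t) = (1+t)^x = \sum_{n \geq 0} \binom{x}{n} t^n$, as in Example \ref{Eg:kappaForTrivialLambda}.

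First, I would reduce to showing that $\mathbb{F}_{\mathcal{X}}$ induces an isomorphism of solid $\mathbb{Q}_p$-modules after taking global sections over $\operatorname{Spd} \mathbb{Q}_p$. This relies on the base change compatibilities of Proposition \ref{Prop:GeometricFrechet}-(1) and Proposition \ref{Prop:GeometricLB}-(1): once an isomorphism is established on global sections, base change along $(\mathbb{Q}_p, \mathbb{Z}_p) \to (B, B^+)$ gives an isomorphism for every fiercely v-complete Huber pair, and by Lemma \ref{Lem:Fierce} these include all affinoid perfectoid Huber pairs, which are enough to detect an isomorphism of v-sheaves.

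Next, I would identify both sides with the classical objects. By Lemma \ref{Lem:OgammalaIsLB} (applied with $(R, R^+) = (\mathbb{Q}_p, \mathbb{Z}_p)$), one has
\[ H^0(\operatorname{Spd} \mathbb{Q}_p, \mathcal{O}^{\mathrm{la}}_{\underline{\mathbb{Z}_p}/\mathbb{Q}_p}) = \varinjlim_n \mathcal{O}(U_n), \]
where $U_n$ is the disjoint union, over $a \in \mathbb{Z}_p/p^n$, of closed discs of radius $|p^n|$ centered at the lifts of $a$; this is the standard LB-presentation of locally analytic functions on $\mathbb{Z}_p$, so its dual is the classical Fréchet space of locally analytic distributions. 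On the other side, Lemma \ref{Lem:BoundedFunctions} applied to the exhaustion $H_{\Lambda}^{(i)}$ of Definition \ref{def.H-lambda-charts} identifies $H^0(\operatorname{Spd} \mathbb{Q}_p, \mathcal{O}_{\widehat{\mathbb{G}}_{m,\eta}^{\lozenge}/\mathbb{Q}_p})$ with the Fréchet algebra of power series convergent on the rigid open unit disc. Under these identifications $\mathbb{F}_{\mathcal{X}}(\mu) = \mu(\kappa) = \sum_{n \geq 0} \mu\bigl(\binom{x}{n}\bigr) t^n$ is precisely the classical Amice transform.

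Finally, the isomorphism is the content of Amice's theorem (\cite{AmiceInterpolation}), whose proof proceeds by showing that the scaled binomials $\{p^{\lfloor n/(p-1)\rfloor} \binom{x}{n}\}_{n \geq 0}$ form an orthonormal basis of each Banach space $\mathcal{O}(U_n)$; this growth rate exactly matches that of power series convergent on the open unit disc. A recasting of this argument in condensed language is carried out in Appendix \ref{App:A}, which is where I would cite for the precise statement. The principal obstacle is not mathematical depth but rather carefully matching the condensed formalism used here with the classical Banach-space formulation of Amice's theorem; given Lemma \ref{Lem:OgammalaIsLB}, Lemma \ref{Lem:BoundedFunctions}, and Appendix \ref{App:A}, this reduces to a bookkeeping check.
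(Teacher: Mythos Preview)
Your approach is essentially the same as the paper's: reduce to condensed global sections over $\operatorname{Spd}\mathbb{Q}_p$ via base change, identify both sides with the classical objects, and invoke Amice's theorem together with the condensed--classical comparison in Appendix~\ref{App:A}. Two small points to tighten. First, Proposition~\ref{Prop:GeometricLB}-(1) gives base change for $\mathcal{O}^{\mathrm{la}}_{\underline{\mathbb{Z}_p}/Y}$, not for its dual $\mathcal{D}^{\mathrm{la}}_{\underline{\mathbb{Z}_p}/Y}$; you also need Proposition~\ref{Prop:GeometricLB}-(3) together with Corollary~\ref{Cor:DualTensorFrechetBaseChange} (base change for duals of compact-type Fr\'echet modules) to handle the source of $\mathbb{F}_{\mathcal{X}}$. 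Second, Corollary~\ref{cor.amice-bijections} only asserts that the Amice transform is a \emph{bijection}; the paper upgrades this to a topological isomorphism by the open mapping theorem for $\mathbb{Q}_p$-Fr\'echet spaces before invoking Proposition~\ref{Prop:FrechetVSClassicalFrechet} to return to the condensed world. Your sketch implicitly absorbs this into the ``bookkeeping check'', but it is worth making explicit.
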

\begin{proof}
For $(A,A^+)$ affinoid perfectoid over $(\qp,\zp)$, the natural map
\begin{align}
    H^0_{\cond}(Y, \mathcal{O}_{\gmhateta/Y}) \otimes_{\ul{\qp}}^{\blacksquare} \ul{A} \to H^0_{\cond}(\spd(A,A^+), \mathcal{O}_{\gmhateta/Y}) 
\end{align}
is an isomorphism by Proposition \ref{Prop:GeometricFrechet}-(1). Furthermore, the natural map
\begin{align}
    H^0_{\cond}(Y, \mathcal{D}_{\ul{\zp}/Y}^{\mathrm{la}}) \otimes_{\ul{\qp}}^{\blacksquare}\ul{A} \to H^0_{\cond}(\spd(A,A^+), \mathcal{D}_{\ul{\zp}/Y}^{\mathrm{la}}) 
\end{align}
is an isomorphism by Proposition \ref{Prop:GeometricLB}-(1),(3) and Corollary \ref{Cor:DualTensorFrechetBaseChange}. Thus it suffices to show that $\mathbb{F}_{\mathcal{X}}$ induces an isomorphism on condensed global sections. \smallskip

It follows from Proposition \ref{Prop:GeometricLB}-(3) that 
\begin{align}
    H^0_{\cond}(Y, \mathcal{D}_{\ul{\zp}/Y}^{\mathrm{la}}) = \Hom_{\ul{\qp}}(H^0_{\cond}(Y, \mathcal{O}^{\la}_{\ul{\zp}/Y}), \ul{\qp}),
\end{align}
and from Example \ref{Eg:kappaForTrivialLambda} that we may identify $(\mathbb{F}_{\mathcal{X}})(\ast)$ with the bijection $\mathbb{F}_{\gmhateta}$ of Corollary \ref{cor.amice-bijections}. We conclude\footnote{They are Fr\'echet spaces by Proposition \ref{Prop:FrechetVSClassicalFrechet} together with Corollary \ref{Cor:IsFrechet} and Proposition \ref{Prop:GeometricLB}-(5).} that $F_{\mathcal{X}}(\ast)_{\mathrm{top}}$ is a continuous bijection between two Fr\'echet spaces over $\qp$ and thus an isomorphism by the open mapping theorem for $\mathbb{Q}_p$-Frechet spaces (see \cite[Proposition 8.6]{SchneiderFA}). It follows from Proposition \ref{Prop:FrechetVSClassicalFrechet} that ${\mathbb{F}}_{\mathcal{X}}$ itself is an isomorphism. 
\end{proof}

\subsection{The Fourier transforms are isomorphisms} \label{Sub:MainTheorem}

The main theorem of this article is the following (it is Theorem \ref{Thm:IntroFourierVsheaves} from the introduction). Let $Y$ be a small v-stack and let $\mathcal{X}=(\Lambda,V,\gamma)$ be a locally analytic character datum over $Y$. 
\begin{Thm} \label{Thm:Main}
    The Fourier transforms $\mbb{F}^{\mathcal{X}}$ and 
    $\mbb{F}_{\mathcal{X}}$ are isomorphisms of solid Hopf $\mathcal{O}_Y$-algebras. 
\end{Thm}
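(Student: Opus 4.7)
The plan is to follow the strategy outlined in \S\ref{ss.sketch-of-proof}: first establish the theorem for the ``full'' locally analytic character datum by reduction to the one-dimensional Amice case already handled in Lemma~\ref{Lem:TheAmiceCase}, and then cut out $\gamma$-locally analytic data as the kernel of a Cauchy--Riemann system of invariant derivations.

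\emph{Step 1 (reduction to one transform).} By Lemma~\ref{Lem:MorphismOfHopfAlgebras}-(2) the transforms $\mathbb{F}^{\mathcal{X}}$ and $\mathbb{F}_{\mathcal{X}}$ are mutually dual, and both $\mathcal{O}_{H_{\mathcal{X}}/Y}$ and $\mathcal{O}_{\Lambda/Y}^{\gamma-\locan}$ are reflexive (Proposition~\ref{Prop:GeometricFrechet}-(5) and Proposition~\ref{Prop:GeometricLB}-(6)). Combined with Lemma~\ref{Lem:MorphismOfHopfAlgebras}-(1), which already guarantees the Hopf structure, it suffices to prove that $\mathbb{F}^{\mathcal{X}}$ is an isomorphism of solid $\mathcal{O}_Y$-modules.

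\emph{Step 2 (full locally analytic case).} The statement is v-local on $Y$, so we may assume $Y=\Spd(A,A^+)$ is affinoid perfectoid with $\Lambda$ and $V$ free. Consider first the full character datum $\mathcal{X}_\Lambda:=(\Lambda,\Lambda\otimes_{\ul{\zp}}\mathcal{O}_Y,\mathrm{id})$, for which $H_{\mathcal{X}_\Lambda}=H_\Lambda$ and $\mathcal{O}_{\Lambda/Y}^{\mathrm{id}-\locan}=\mathcal{O}_{\Lambda/Y}^{\locan}$. Using the K\"unneth isomorphisms (Proposition~\ref{Prop:GeometricFrechet}-(2), Proposition~\ref{Prop:GeometricLB}-(2)) together with the multiplicativity of $\mathbb{F}^{\mathcal{X}_\Lambda}$, the statement factors over the rank-one case $\Lambda=\ul{\zp}$. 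Base change compatibility (Proposition~\ref{Prop:GeometricFrechet}-(1), Proposition~\ref{Prop:GeometricLB}-(1),(3) and Corollary~\ref{Cor:DualTensorFrechetBaseChange}) then reduces to the case $Y=\Spd\qp$ with $\Lambda=\ul{\zp}$, which is Lemma~\ref{Lem:TheAmiceCase}.

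\emph{Step 3 (general $\gamma$ via Cauchy--Riemann).} Set $W:=\ker\gamma$, a local direct summand of $\Lambda\otimes_{\ul{\zp}}\mathcal{O}_Y$ by Lemma~\ref{Lem:LocalDirectSummand}. By Lemma~\ref{Lem:LaAndCauchyRiemann}, $\mathcal{O}_{\Lambda/Y}^{\gamma-\locan}\subseteq\mathcal{O}_{\Lambda/Y}^{\locan}$ is the subsheaf annihilated by the $W$-derivation action. On the character-group side, the fiber product defining $H_\mathcal{X}$ (Definition~\ref{Def:CharacterDatum}) exhibits $H_\mathcal{X}\hookrightarrow H_\Lambda$ as the closed subgroup cut out by the vanishing of the linear functions $\langle w,\log_{H_\Lambda}\rangle$, $w\in W$. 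By Lemma~\ref{Lem.DerivationActionLocAn}-(1), under $\mathbb{F}^{\mathcal{X}_\Lambda}$ from Step~2, multiplication by $\langle w,\log_{H_\Lambda}\rangle$ on the function side matches derivation by $w$ on the target $\mathcal{O}_{\Lambda/Y}^{\locan}$. Dualizing this identification of ideals and using reflexivity, one obtains that $\mathcal{D}_{H_\mathcal{X}/Y}\hookrightarrow \mathcal{D}_{H_\Lambda/Y}$ (distributions supported on $H_\mathcal{X}$) is sent isomorphically onto $\mathcal{O}_{\Lambda/Y}^{\gamma-\locan}\hookrightarrow\mathcal{O}_{\Lambda/Y}^{\locan}$. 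By the functoriality of the universal character in $\mathcal{X}$ (Lemma~\ref{Lem:UniversalKappaAsSection}), the resulting map is precisely $\mathbb{F}^{\mathcal{X}}$.

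\emph{Main obstacle.} The most delicate step is the sheaf-theoretic bookkeeping in Step~3: one must verify that the identification of $W$-invariants with the quotient of $\mathcal{O}_{H_\Lambda/Y}$ by the ideal generated by $\langle w,\log_{H_\Lambda}\rangle$, $w\in W$, remains an isomorphism after passage to solid duals. This amounts to controlling the exactness of the short exact sequences involved under $(-)^\ast=\Hom_{\mathcal{O}_Y}(-,\mathcal{O}_Y)$ and under the solid tensor product. The key tools are the flatness of (locally strongly countably) Fr\'echet sheaves (Proposition~\ref{Prop:FrechetFlat}) and the reflexivity results of Propositions~\ref{Prop:GeometricFrechet} and~\ref{Prop:GeometricLB}, which together ensure that the kernel/cokernel sequences cutting out $H_\mathcal{X}$ inside $H_\Lambda$ can be freely dualized.
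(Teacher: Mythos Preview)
Your proposal is correct and follows essentially the same route as the paper's proof: reduce to $\mathbb{F}^{\mathcal{X}}$ by duality, localize to affinoid perfectoid with $\Lambda$ and $W$ trivial, handle the full datum $\mathcal{X}_\Lambda$ by K\"unneth plus reduction to the Amice case (Lemma~\ref{Lem:TheAmiceCase}), and then pass to $W$-invariants using Lemma~\ref{Lem:LaAndCauchyRiemann} on the locally analytic side.

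The point you flag as the main obstacle is exactly the content of Proposition~\ref{Prop:InvariantsCoinvariants}, which the paper isolates and proves just before the theorem: it shows that $\mathcal{O}_{H_\Lambda/Y}\to\mathcal{O}_{H_{\mathcal{X}}/Y}$ identifies the target with the quotient by the ideal generated by the functions $\langle w,\log_{H_\Lambda}\rangle$, after which the dual statement for $\mathcal{D}$ is formal. Your suggested tools (flatness of Fr\'echet sheaves and reflexivity) are not quite the crux, though; the substantive input is the \emph{surjectivity} of $\mathcal{O}(H_\Lambda^{\mathrm{rig}})\to\mathcal{O}(H_{\mathcal{X}}^{\mathrm{rig}})$, a Theorem~B type statement for the quasi-Stein space $H_\Lambda^{\mathrm{rig}}$. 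The paper establishes this either by invoking \cite[Theorem~2.6.5(iii)]{KedlayaLiuII} or by an explicit Mittag-Leffler argument, filtering $W$ by line bundles and using that the transition maps in the affinoid exhaustion of Lemma~\ref{Lem:GeometryofH} have dense image.
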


By Lemma \ref{Lem:MorphismOfHopfAlgebras}-(1), it suffices to prove that $\mathbb{F}^{\mathcal{X}}$ is an isomorphism, and this is the main goal of the rest of \S\ref{Sub:MainTheorem}. 

\begin{Prop} \label{Prop:InvariantsCoinvariants}
Let $Y$ be a small v-stack and let $\mathcal{X}=(\Lambda,V,\gamma)$ be a locally analytic character datum over $Y$. Let $W=\ker \gamma$. 
\begin{enumerate}
    \item The natural map $\mathcal{O}_{H_{\Lambda}/Y} \to \mathcal{O}_{H_{\mathcal{X}}/Y}$ identifies the target with the quotient of the source by the action of $W$ given by precomposing with multiplication by $\langle w, \operatorname{log}_{H_{\Lambda}} \rangle$.
    \item The natural map $\mathcal{D}_{H_{\mathcal{X}}/Y} \to \mathcal{D}_{H_{\Lambda}/Y}$
identifies the source with the kernel of the dual $W$-action on the target. 
\end{enumerate}
\end{Prop}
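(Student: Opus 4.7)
The plan is to establish (1) directly and then deduce (2) by dualizing. Since both statements are v-local on $Y$, I would first reduce to the case where $Y = \Spd(R,R^+)$ is fiercely v-complete, $\Lambda$ and $V$ are free, and (after further v-localization, using that $V$ is locally free so $W$ is locally a direct summand) the short exact sequence $0 \to W \to \Lambda \otimes_{\ul{\zp}} \mathcal{O}_Y \to V \to 0$ is split as $\mathcal{O}_Y$-modules. This fixes a basis $w_1, \dots, w_{h-d}$ of $W$ together with a compatible decomposition $\Lambda^* \otimes_{\ul{\zp}} \mathcal{O}_Y = W^* \oplus V^*$.

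For part (1), I would apply Lemma \ref{Lem:GeometryOfHI} to the full character group $H_\Lambda$ (viewed via its trivially split Hodge--Tate triple $(\Lambda^*(1), \Lambda^* \otimes \mathcal{O}_Y, \mathrm{id})$) to obtain open subgroups $H_{\Lambda, n}$, each finite \'etale over a closed ball $U_n \subset \Lambda^* \otimes \mathbb{G}_a$ via the map $p^n \log$, chosen compatibly with the decomposition so that $U_n = U_n^W \times U_n^V$. The intersection $H_{\mathcal{X}, n} := H_{\Lambda, n} \cap H_{\mathcal{X}}$ is then the Cartesian pullback of the closed sub-ball $\{0\} \times U_n^V$ along $p^n \log$, hence is finite \'etale over a ball in $V^*$ and gives a cofinal exhaustion of $H_{\mathcal{X}}$. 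At the level of balls, $\mathcal{O}(U_n) = \mathcal{O}(U_n^V)\langle S_1, \dots, S_{h-d}\rangle$ is a Tate polynomial algebra with coordinates $S_j$ dual to $w_j$, and the kernel of restriction to $\{0\} \times U_n^V$ is the ideal $(S_1, \dots, S_{h-d})$. Since the $S_j$ pull back under $p^n \log$ to $p^n \langle w_j, \log_{H_\Lambda}\rangle$, faithfully flat descent along the finite \'etale cover $H_{\Lambda, n} \to U_n$ identifies the kernel of $\mathcal{O}(H_{\Lambda, n}) \twoheadrightarrow \mathcal{O}(H_{\mathcal{X}, n})$ with the ideal generated by the $\langle w_j, \log_{H_\Lambda}\rangle$. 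The final step is to pass to the limit $n \to \infty$: using the Fr\'echet presentations $\mathcal{O}_{H_\Lambda/Y} = \varprojlim_n \mathcal{O}_{H_{\Lambda,n}/Y}$ and $\mathcal{O}_{H_{\mathcal{X}}/Y} = \varprojlim_n \mathcal{O}_{H_{\mathcal{X},n}/Y}$ from Proposition \ref{Prop:GeometricFrechet}, one must show that the quotient presentation is preserved. This reduces to $\varprojlim^1$-vanishing on the ideal system, which would follow from the density of transition maps (Lemma \ref{Lem:GeometryofH}) combined with the flatness of Fr\'echet modules (Proposition \ref{Prop:FrechetFlat}).

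For part (2), I would apply $\Hom_{\mathcal{O}_Y}(-, \mathcal{O}_Y)$ to the right exact sequence from (1),
\begin{align*}
\bigoplus_{j=1}^{h-d} \mathcal{O}_{H_\Lambda/Y} \xrightarrow{(f_j) \mapsto \sum_j \langle w_j, \log_{H_\Lambda}\rangle f_j} \mathcal{O}_{H_\Lambda/Y} \twoheadrightarrow \mathcal{O}_{H_{\mathcal{X}}/Y} \to 0.
\end{align*}
Using Proposition \ref{Prop:GeometricFrechet}-(3) to identify the resulting duals with distribution sheaves, and observing that the dual of multiplication by $\langle w_j, \log_{H_\Lambda}\rangle$ is precisely the dual action of $w_j$, this yields the desired left exact sequence
\begin{align*}
0 \to \mathcal{D}_{H_{\mathcal{X}}/Y} \to \mathcal{D}_{H_\Lambda/Y} \to \bigoplus_{j=1}^{h-d} \mathcal{D}_{H_\Lambda/Y},
\end{align*}
identifying $\mathcal{D}_{H_{\mathcal{X}}/Y}$ as the kernel of the dual $W$-action. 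The hard part will be the passage to the inverse limit in (1), where one must carefully combine the Mittag--Leffler density of transition maps with the flatness of Fr\'echet modules to preserve the Koszul-type quotient presentation; once that is in hand, (2) is essentially a formal consequence via reflexivity.
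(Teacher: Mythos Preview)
Your overall strategy matches the paper's: reduce v-locally to a trivialized setup, verify on each affinoid $H_{\Lambda,n}$ that the kernel of restriction to $H_{\mathcal{X},n}$ is the ideal generated by the $\langle w_j,\log_{H_\Lambda}\rangle$, and then pass to the inverse limit; part (2) then follows by dualizing, exactly as in the paper. Your finite-level computation via the finite \'etale cover $H_{\Lambda,n}\to U_n=U_n^W\times U_n^V$ is more hands-on than the paper's appeal to the pseudo-coherent ideal sheaf theory of \cite[Proposition~IV.4.19]{FarguesScholze}, but it achieves the same conclusion and is perfectly fine.

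The gap is in the limit step, and it is precisely the point you flag as ``the hard part'' but do not actually resolve. Knowing $\varprojlim^1 I_n=0$ only yields the short exact sequence $0\to\varprojlim_n I_n\to\mathcal{O}(H_\Lambda)\to\mathcal{O}(H_{\mathcal{X}})\to 0$; it does \emph{not} show that the image of the $W$-action map $W\otimes\mathcal{O}(H_\Lambda)\to\mathcal{O}(H_\Lambda)$ (the ideal generated in the Fr\'echet algebra) equals $\varprojlim_n I_n$. Coinvariants do not commute with inverse limits in general, and Fr\'echet flatness (Proposition~\ref{Prop:FrechetFlat}) does not supply this. To close the gap one must either run a full Koszul induction---checking that every syzygy module $K_n^{(j)}$ in the Koszul resolution of $B_n$ by free $A_n$-modules is a $\qp$-Banach space with dense transition maps, so that the entire resolution survives the limit---or do what the paper does in its alternative argument: filter $W$ by rank-one subquotients $W=W_0\supsetneq\cdots\supsetneq W_k=0$, so that at each step the ideal sheaf $\mathcal{J}_i$ cutting out $H_{i-1}\subset H_i$ is a \emph{line bundle}, globally trivialized by the single section $\langle w_i,\log\rangle$. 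Then $\varprojlim_n\mathcal{J}_i^{(n)}$ coincides on the nose with the principal ideal $(\langle w_i,\log\rangle)\subset\mathcal{O}(H_i)$, and a snake-lemma argument using density of transition maps (as in the proof of Proposition~\ref{Prop:FrechetFlat}) gives both surjectivity and the correct kernel in one inductive step. The paper also notes that bare surjectivity of $\mathcal{O}(H_\Lambda)\to\mathcal{O}(H_{\mathcal{X}})$ follows from the quasi-Stein theorem \cite[Theorem~2.6.5(iii)]{KedlayaLiuII}, but the kernel identification still requires one of the inductive arguments above.
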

\begin{proof}[Proof of Proposition \ref{Prop:InvariantsCoinvariants}]
The second claim follows from the first since $\Hom_{\mathcal{O}_Y}(-,\mathcal{O}_Y)$ turns colimits into limits. For the first claim, we may assume that $Y=\spa (A,A^+)$ such that $V$, $\Lambda$, and $W$ are all trivial of constant rank.  Consider the Cartesian diagram
\begin{equation}
    \begin{tikzcd}
        H_{\mathcal{X}} \arrow{r} \arrow{d}{\log} & H_{\Lambda} \arrow{d}{\log} \\
        V^{\ast} \arrow{r}{\iota} & \Lambda^{\ast} \otimes_{\ul{\zp}} \mathbb{G}_{a,Y}^{\lozenge}.
    \end{tikzcd}
\end{equation}
The ideal sheaf $\mathcal{J} \subset \mathcal{O}_{H_{\Lambda}^{\mathrm{rig}}}$ cutting out $H_{\mathcal{X}}^{\mathrm{rig}}$ is a pseudo-coherent sheaf in the sense of \cite[Section 2.5]{KedlayaLiuII} by\footnote{Note that $H_{\Lambda}^{\mathrm{rig}}$ and $H_{\Lambda}^{\mathrm{rig}}$ are smooth sousperfectoid adic spaces over $Y$, this follows from Lemma \ref{Lem:GeometryofH} together with \cite[Definition 7.1, Lemma 7.5, and Lemma 7.3]{HansenKedlaya}.} \cite[Proposition IV.4.19]{FarguesScholze} and is generated by functionals of the form $\langle w , - \rangle$ for $w \in W(\Spa(A,A^+))$ by \cite[Proposition IV.4.19]{FarguesScholze}. This shows that the natural map $\mathcal{O}(H_{\Lambda}^{\mathrm{rig}}) \to \mathcal{O}(H_{\mathcal{X}}^{\mathrm{rig}})$ factors through the natural map $\mathcal{O}(H_{\Lambda}^{\mathrm{rig}}) \to  \mathcal{O}(H_{\Lambda}^{\mathrm{rig}})_{W}$ and an injection $\mathcal{O}(H_{\Lambda}^{\mathrm{rig}})_{W} \hookrightarrow \mathcal{O}(H_{\mathcal{X}}^{\mathrm{rig}})$; it now suffices to show that $\mathcal{O}(H_{\Lambda}^{\mathrm{rig}}) \to \mathcal{O}(H_{\mathcal{X}}^{\mathrm{rig}})$ is surjective. \smallskip 

The natural map $\mathcal{O}_{H_{\Lambda}^{\mathrm{rig}}} \to \mathcal{O}_{H_{\mathcal{X}}^{\mathrm{rig}}}$ is surjective by \cite[Proposition IV.4.19]{FarguesI}. The surjectivity on global sections follows directly from 
\cite[Theorem 2.6.5.(iii)]{KedlayaLiuII} since $H_{\Lambda}$ is a quasi-stein space in the sense of \cite[Definition 2.6.2]{KedlayaLiuII} by Lemma \ref{Lem:GeometryofH}. We give an alternative argument below for the surjectivity on global sections. \smallskip

Let $k$ be the rank of $W$, and choose a filtration $W=W_0 \supsetneq W_1 \supsetneq \cdots \supsetneq W_k=0$ with free graded pieces, using the freeness of $W$. Set $V_i=\Lambda \otimes_{\zp} \mathcal{O}_Y / W_i$, let $\gamma_i$ be the natural map $\Lambda \otimes_{\zp} \mathcal{O}_Y \to V_i$ and let $\mathcal{X}_i=(\Lambda, V_i, \gamma_i)$. There is a filtration
\begin{align}
    H_{\mathcal{X}}^{\mathrm{rig}}&=H_0 \subset H_1 \subset \cdots \subset H_{k}=H_{\Lambda}^{\mathrm{rig}},
\end{align}
where $H_i=H_{\mathcal{X}_i}^{\mathrm{rig}}$. It remains to show that for all $i$ the map
\begin{align}
    \mathcal{O}(H_i) \to \mathcal{O}(H_{i-1})
\end{align}
is surjective. Write $\mathcal{J}_i \subset \mathcal{O}_{H_i}$ for the ideal sheaf cutting out $H_{i-1}$, which is a line bundle because $H_{i-1}$ has codimension one, see \cite[Proposition IV.4.19]{FarguesScholze} and its proof. Write $H_i = \cup_{n \in \mathbb{Z}_{\ge 0}} H_i^{(n)}$ as in the proof of Lemma \ref{Lem:GeometryofH}. Evaluating the short exact sequence
\begin{align}
    0 \to \mathcal{J}_i \to  \mathcal{O}_{H_i} \to \mathcal{O}_{H_{i-1}} \to 0
\end{align}
on $H_{i}^{(n)}$ gives the short exact sequence (again by \cite[Proposition IV.4.19]{FarguesScholze})
\begin{align}
    0 \to \mathcal{J}_i^{(n)} \to \mathcal{O}(H_i^{(n)}) \to  \mathcal{O}(H_{i-1}^{(n)}) \to 0. 
\end{align}
We now consider the commutative diagram
\begin{equation}
    \begin{tikzcd}
        & 0 \arrow{d} & 0 \arrow{d} & 0 \arrow{d} \\
        0 \arrow{r} & \varprojlim_{n} \mathcal{J}_i^{(n)} \arrow{r} \arrow{d} & \prod_n \mathcal{J}_i^{(n)} \arrow{r} \arrow{d} & \prod_n \mathcal{J}_i^{(n)} \arrow{r} \arrow{d} & 0 \\
        0 \arrow{r} & \varprojlim_n \mathcal{O}(H_i^{(n)}) \arrow{d} \arrow{r} & \prod_{n} \mathcal{O}(H_i^{(n)}) \arrow{r} \arrow{d} & \prod_{n}\mathcal{O}(H_i^{(n)}) \arrow{d} \arrow{r} & 0 \\
        0 \arrow{r} & \varprojlim_n \mathcal{O}(H_{i-1}^{(n)}) \arrow{r} \arrow{d} & \prod_{n} \mathcal{O}(H_{i-1}^{(n)}) \arrow{r} \arrow{d} & \prod_{n} \mathcal{O}(H_{i-1}^{(n)})  \arrow{r} \arrow{d} & 0 \\
         & 0  & 0 & 0. 
    \end{tikzcd}
\end{equation}
Since the transition maps in all three inverse system have dense image by Lemma \ref{Lem:GeometryofH} and the fact that $\mathcal{J}_i$ is a line bundle, the rows are exact by the proof of Proposition \ref{Prop:FrechetFlat}. The middle and right columns are exact by the exactness of countable products, and the exactness of the left column now follows from the snake lemma. Since taking global sections of a sheaf on $H_i$ is the same as taking the inverse limit of the sections on $H_i^{(n)}$, we see that 
\begin{align}
   \mathcal{O}(H_i) \to \mathcal{O}(H_{i-1})
\end{align}
is surjective, concluding the proof. 
\end{proof}

\begin{proof}[Proof of Theorem \ref{Thm:Main}]
By Lemma \ref{Lem:MorphismOfHopfAlgebras}, the Fourier transforms are dual morphisms of solid Hopf $\mathcal{O}_Y$-algebras, so it suffices to show that $\mbb{F}^{\mathcal{X}}$ is an isomorphism of abelian sheaves. The main idea is to reduce to a statement about the classical Amice transform. More precisely, the claim is local on $Y$, so we may assume that $Y = \operatorname{Spa}(R, R^+)$ is affinoid perfectoid, $\Lambda \cong \underline{\mbb{Z}}_p^{\oplus h}$ is trivial, and $W$ is free. In this setting, consider the locally analytic character datum $\mathcal{X}'=(\Lambda, \Lambda \otimes_{\ul{\zp}} \mathcal{O}_Y, \mathrm{id}_{\Lambda \otimes_{\ul{\zp}} \mathcal{O}_Y})$. We of course have a morphism $\mathcal{X} \to \mathcal{X}'$, which induces a morphism $H_{\mathcal{X}} \to H_{\mathcal{X}'} = H_{\Lambda} = \widehat{\mathbb{G}}_{m,\eta, Y}^{\lozenge, \oplus h}$. By Proposition \ref{Prop:InvariantsCoinvariants} and Lemma \ref{Lem:LaAndCauchyRiemann}, we see that one can obtain $\mbb{F}^{\mathcal{X}}$ by passing to the $W$-invariants of 
    \[
    \mbb{F}^{\mathcal{X}'} \colon \mathcal{D}_{H_{\Lambda}/Y} \to \mathcal{O}_{\Lambda/Y}^{\mathrm{la}} .
    \]
    It therefore suffices to show that $\mbb{F}^{\mathcal{X}'}$ is an isomorphism. Using the K\"unneth formulas for $\mathcal{O}^{\operatorname{la}}_{\Lambda/Y}$, see Proposition \ref{Prop:GeometricLB}-(4), and for $\mathcal{D}_{H_{\Lambda}/Y}$, see Proposition \ref{Prop:GeometricFrechet}-(8), we deduce that it is enough to prove the theorem for $\Lambda=\ul{\zp}$. Since $\Lambda=\ul{\zp}$ is base-changed from $\spd(\qp,\zp)$, we may assume that $Y=\spd(\qp,\zp)$. Using Lemma \ref{Lem:MorphismOfHopfAlgebras}, we can equivalently show that the dual map
\begin{align}
    \mbb{F}_{\mathcal{X}'} \colon \mathcal{D}_{\ul{\zp}/Y}^{\mathrm{la}} \to \mathcal{O}_{\gmhateta/Y}
\end{align}
is an isomorphism. and then the result is Lemma \ref{Lem:TheAmiceCase}. This concludes the proof.
\end{proof}

\subsubsection{}\label{sss.solid-hopf-and-duality-discussion-fiercely} We now deduce a version of Theorem \ref{thm.rational-fourier-theory} over a fiercely v-complete affinoid. 

Let $(R,R^+)$ be a fiercely v-complete Huber pair over $(\qp,\zp)$, and let $Y=\spd(R,R^+)$. Let $\mathcal{X}=(\Lambda, V, \gamma)$ be a locally analytic character datum over $Y$ such that $V$ is free. In this case, the solid Hopf $\mathcal{O}_Y$-algebra structures on $\mathcal{O}_{H_{\mathcal{X}}/Y}$, $\mathcal{D}_{H_{\mathcal{X}}/Y},$ $\mathcal{O}_{\Lambda/Y}^{\gamma\mathrm{-la}}$, and $\mathcal{D}_{\Lambda/Y}^{\gamma\mathrm{-la}}$ induce solid Hopf $\ul{R}$-algebra structures upon taking global sections:
\begin{itemize}
    \item For $H^0_{\cond}(Y,\mathcal{O}_{H_{\mathcal{X}}/Y})$, this follows from Proposition \ref{Prop:GeometricFrechet}-(2),(7).

    \item For $H^0_{\cond}(Y,\mathcal{D}_{H_{\mathcal{X}}/Y})$, this follows from Proposition \ref{Prop:GeometricFrechet}-(4),(7),(8). 

    \item For $H^0_{\cond}(Y,\mathcal{O}_{\Lambda/Y}^{\gamma\mathrm{-la}})$, this follows from Proposition \ref{Prop:GeometricLB}-(2),(8).

    \item For $H^0_{\cond}(Y,\mathcal{D}_{\Lambda/Y}^{\gamma\mathrm{-la}})$, this follows from Proposition \ref{Prop:GeometricLB}-(4),(8),(9). 
\end{itemize}    
Moreover, we find the solid Hopf $\ul{R}$-algebras $H^0_{\cond}(Y,\mathcal{O}_{H_{\mathcal{X}}/Y})$ and $H^0_{\cond}(Y,\mathcal{D}_{H_{\mathcal{X}}/Y})$ are mutually dual Hopf $\ul{R}$-algebras by further invoking Proposition \ref{Prop:GeometricFrechet}-(3), and similarly $H^0_{\cond}(Y,\mathcal{O}_{\Lambda/Y}^{\gamma\mathrm{-la}})$ and $H^0_{\cond}(Y,\mathcal{D}_{\Lambda/Y}^{\gamma\mathrm{-la}})$ are mutually dual Hopf $\ul{R}$-algebras by Proposition \ref{Prop:GeometricLB}-(3).

\begin{Cor} \label{Cor:Main} 
Let $(R,R^+)$ be a fiercely v-complete Huber pair over $(\qp,\zp)$, let $Y=\spd(R,R^+)$ and let $\mathcal{X}=(\Lambda, V, \gamma)$ be a locally analytic character datum over $Y$. If $V$ is free, then the Fourier transforms
 \begin{align*}
        \mbb{F}^{\mathcal{X}} &\colon H^0_{\cond}(Y,\mathcal{D}_{H_{\mathcal{X}}/Y}) \to H^0_{\cond}(Y,\mathcal{O}_{\Lambda/Y}^{\gamma\mathrm{-la}})  \\
        \mbb{F}_{\mathcal{X}} &\colon H^0_{\cond}(Y,\mathcal{D}_{\Lambda/Y}^{\gamma\mathrm{-la}}) \to H^0_{\cond}(Y,\mathcal{O}_{H_{\mathcal{X}}/Y})
    \end{align*}
    are dual isomorphisms of solid $\ul{R}$-Hopf algebras. 
\end{Cor}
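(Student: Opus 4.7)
The plan is to deduce the corollary directly from Theorem \ref{Thm:Main} by applying the functor $H^0_{\cond}(Y,-)$. Theorem \ref{Thm:Main} asserts that the sheaf-theoretic Fourier transforms $\mbb{F}^{\mathcal{X}}$ and $\mbb{F}_{\mathcal{X}}$ are isomorphisms of solid Hopf $\mathcal{O}_Y$-algebras, and since $H^0_{\cond}(Y,-)$ is a functor, it automatically produces isomorphisms of condensed $\ul{R}$-modules on global sections. The substance of the corollary is therefore the promotion of these isomorphisms of modules to isomorphisms of solid Hopf $\ul{R}$-algebras and the verification that the pair is dual in the stated sense.

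For the Hopf algebra structures, first I would invoke the discussion of \S\ref{sss.solid-hopf-and-duality-discussion-fiercely} which records that each of $H^0_{\cond}(Y,\mathcal{O}_{H_{\mathcal{X}}/Y})$, $H^0_{\cond}(Y,\mathcal{D}_{H_{\mathcal{X}}/Y})$, $H^0_{\cond}(Y,\mathcal{O}_{\Lambda/Y}^{\gamma\mathrm{-la}})$ and $H^0_{\cond}(Y,\mathcal{D}_{\Lambda/Y}^{\gamma\mathrm{-la}})$ acquires a natural solid Hopf $\ul{R}$-algebra structure, via the compatibility of global sections with K\"unneth formulas for the solid tensor product (Proposition \ref{Prop:GeometricFrechet}-(2),(4),(7),(8) and Proposition \ref{Prop:GeometricLB}-(2),(4),(8),(9)). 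Because these K\"unneth isomorphisms are exactly what is used to transport the sheaf-level Hopf structures through $H^0_{\cond}(Y,-)$, the fact that $\mbb{F}_{\mathcal{X}}$ and $\mbb{F}^{\mathcal{X}}$ are Hopf at the level of sheaves (Lemma \ref{Lem:MorphismOfHopfAlgebras}-(1)) immediately upgrades to the statement that their images under global sections are Hopf algebra maps.

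Second, the duality of the two Fourier transforms on global sections follows from Lemma \ref{Lem:MorphismOfHopfAlgebras}-(2) combined with the observation, again from the preamble in \S\ref{sss.solid-hopf-and-duality-discussion-fiercely}, that $H^0_{\cond}(Y,\mathcal{O}_{H_{\mathcal{X}}/Y})$ and $H^0_{\cond}(Y,\mathcal{D}_{H_{\mathcal{X}}/Y})$ (respectively $H^0_{\cond}(Y,\mathcal{O}_{\Lambda/Y}^{\gamma\mathrm{-la}})$ and $H^0_{\cond}(Y,\mathcal{D}_{\Lambda/Y}^{\gamma\mathrm{-la}})$) are mutually dual as $\ul{R}$-modules. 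Concretely, the identifications furnished by Proposition \ref{Prop:GeometricFrechet}-(3) and Proposition \ref{Prop:GeometricLB}-(3) show that taking condensed global sections commutes with the passage to $\Hom_{\mathcal{O}_Y}(-,\mathcal{O}_Y)$ for the sheaves in question, so that the sheaf-level duality of $\mbb{F}^{\mathcal{X}}$ and $\mbb{F}_{\mathcal{X}}$ specializes to duality on global sections.

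Since each of these ingredients is already in place, there is no real obstacle: the only thing to verify, beyond invoking the cited results, is that the reflexivity and K\"unneth statements from Propositions \ref{Prop:GeometricFrechet} and \ref{Prop:GeometricLB} exactly match the compatibilities needed to promote the sheaf-level Hopf and duality structures to the level of global condensed sections, which is precisely what is spelled out bullet by bullet in \S\ref{sss.solid-hopf-and-duality-discussion-fiercely}. The proof is thus a straightforward compilation of Theorem \ref{Thm:Main}, Lemma \ref{Lem:MorphismOfHopfAlgebras}, and the preceding structural remarks.
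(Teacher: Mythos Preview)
The proposal is correct and takes essentially the same approach as the paper: the paper's proof simply says that, in light of the discussion in \S\ref{sss.solid-hopf-and-duality-discussion-fiercely}, the corollary is immediate from Theorem \ref{Thm:Main} by taking global sections, and your argument unpacks exactly that discussion (the K\"unneth and duality compatibilities from Propositions \ref{Prop:GeometricFrechet} and \ref{Prop:GeometricLB}, together with Lemma \ref{Lem:MorphismOfHopfAlgebras}).
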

\begin{proof}
In light of the above discussion, this is immediate from Theorem \ref{Thm:Main} by taking global sections. 
\end{proof}

Specializing Corollary \ref{Cor:Main} to $(K, \mathcal{O}_K)$ for $K/\mathbb{Q}_p$ a $p$-adic non-archimedean extension gives Theorem~\ref{thm.rational-fourier-theory}. 

\begin{Rem}\label{remark.etale-expectations}
    We expect the computations of \S\ref{sss.solid-hopf-and-duality-discussion-fiercely} and thus also Corollary \ref{Cor:Main} to hold whenever $V$ is an \'{e}tale (equivalently, analytic) vector bundle. 
\end{Rem}

\subsection{Equivariance under invariant derivations} \label{Sub:Equivariance} We now establish the equivariance properties for the Fourier transforms $\mbb{F}^{\mathcal{X}}$ and $\mbb{F}_{\mathcal{X}}$ for invariant derivations generalizing those appearing in Proposition \ref{Prop:Equivariance}. 

\subsubsection{}
Recall from \S\ref{subsub:derivationaction} that there is a natural action of $V$ on $\mathcal{O}^{\gamma-\locan}_{\Lambda/Y}$ by invariant derivations. It extends to an action of $\Sym^\bullet V$ (by invariant differential operators; note the Lie bracket is trivial). We equip $\mathcal{D}_{\Lambda/Y}^{\gamma\mathrm{-la}}$ with the dual action of $\operatorname{Sym}^{\bullet}V$. These actions are evidently functorial in $\mathcal{X}$. 

There is a natural map $\log_{H_\mathcal{X}}: H_{\mathcal{X}} \rightarrow V^*$, so $\mathcal{O}_{H_{\mathcal{X}}/Y}$ is naturally a $\Sym^\bullet V$-algebra by pullback along $\log_{H_\mathcal{X}}$ of the polynomial functions $\Sym^\bullet V \subseteq \mathcal{O}_{V^*/Y}$. We equip $\mathcal{D}_{H_{\mathcal{X}}/Y}$ with the dual action of $\Sym^\bullet V$. These actions are evidently functorial in $\mathcal{X}$.

\begin{Prop} \label{Prop:FTequivSymV}
    The Fourier transforms $\mbb{F}^{\mathcal{X}}$ and $\mbb{F}_{\mathcal{X}}$ are equivariant for the above actions of $\operatorname{Sym}^{\bullet}V$.
\end{Prop}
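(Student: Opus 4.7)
The plan is to reduce the equivariance for $\operatorname{Sym}^\bullet V$ to the case of linear elements $v \in V$, and then to deduce it from Lemma \ref{Lem.DerivationActionLocAn}-(1) applied to the universal character $\kappa_\mathcal{X}$.

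First I observe that on each of the four solid Hopf algebras in question, the $\operatorname{Sym}^\bullet V$-action is by a commutative family of operators: iterated invariant derivations on $\mathcal{O}^{\gamma\mathrm{-la}}_{\Lambda/Y}$ (and dually on $\mathcal{D}^{\gamma\mathrm{-la}}_{\Lambda/Y}$), and multiplication by polynomial functions on $V^*$ pulled back via $\log_{H_\mathcal{X}}$ on $\mathcal{O}_{H_\mathcal{X}/Y}$ (and dually on $\mathcal{D}_{H_\mathcal{X}/Y}$). Any additive map that intertwines these actions for all $v \in V$ therefore automatically intertwines the full $\operatorname{Sym}^\bullet V$-action, so it suffices to verify the statement for linear elements.

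For a section $v$ of $V$, I would locally lift $v$ to $\tilde\lambda \in \Lambda \otimes_{\ul{\zp}} \mathcal{O}_Y$ with $\gamma(\tilde\lambda) = v$. By Lemma \ref{Lem:LaAndCauchyRiemann}, the action of $v$ on $\mathcal{O}^{\gamma\mathrm{-la}}_{\Lambda/Y}$ is the restriction of the invariant derivation by $\tilde\lambda$ to the $W$-invariant subspace, and is therefore well-defined independently of the lift. The Cartesian square defining $H_\mathcal{X}$ implies that $\log_{H_\Lambda}|_{H_\mathcal{X}} = \gamma^* \circ \log_{H_\mathcal{X}}$, so restricting Lemma \ref{Lem.DerivationActionLocAn}-(1) to $H_\mathcal{X}$ and pairing with $\tilde\lambda$ yields the key identity
\[
v \cdot \kappa_\mathcal{X} \;=\; \langle \tilde\lambda, \log_{H_\Lambda}\rangle|_{H_\mathcal{X}}\, \kappa_\mathcal{X} \;=\; \langle \gamma(\tilde\lambda), \log_{H_\mathcal{X}}\rangle\, \kappa_\mathcal{X} \;=\; \langle v, \log_{H_\mathcal{X}}\rangle\, \kappa_\mathcal{X}
\]
inside $\mathcal{O}^{\gamma\mathrm{-la}}_{\Lambda_{H_\mathcal{X}}/H_\mathcal{X}}(H_\mathcal{X})$, where the derivation on the left acts in the $\Lambda$-argument.

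Given this identity, equivariance unwinds directly from the definitions of $\mbb{F}^\mathcal{X}$ and $\mbb{F}_\mathcal{X}$. For instance, for $\delta \in \mathcal{D}_{H_\mathcal{X}/Y}$ and $\lambda$ a local section of $\Lambda$, the chain of equalities
\[
(v \cdot \mbb{F}^\mathcal{X}(\delta))(\lambda) = \delta\bigl(v \cdot \kappa_\mathcal{X}(\lambda, -)\bigr) = \delta\bigl(\langle v, \log_{H_\mathcal{X}}\rangle\, \kappa_\mathcal{X}(\lambda, -)\bigr) = (v \cdot \delta)(\kappa_\mathcal{X}(\lambda, -)) = \mbb{F}^\mathcal{X}(v \cdot \delta)(\lambda)
\]
uses $\mathcal{O}_Y$-linearity of $\delta$, the key identity, and the definition of the dual action of $v$ on $\mathcal{D}_{H_\mathcal{X}/Y}$, in that order. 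The verification for $\mbb{F}_\mathcal{X}$ is identical with the two arguments of $\kappa_\mathcal{X}$ swapped. The only substantive input is the eigenfunction identity for $\kappa_\mathcal{X}$, which is essentially a direct restriction of Lemma \ref{Lem.DerivationActionLocAn}-(1); so I do not anticipate any real obstacle beyond carefully matching the conventions for the invariant derivations on $\mathcal{O}^{\gamma\mathrm{-la}}_{\Lambda/Y}$ (as the $W$-invariants inside $\mathcal{O}^{\locan}_{\Lambda/Y}$) on the two sides.
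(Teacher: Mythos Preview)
Your proposal is correct and follows essentially the same route as the paper: both derive the eigenfunction identity $v\cdot\kappa_{\mathcal{X}}=\langle v,\log_{H_{\mathcal{X}}}\rangle\,\kappa_{\mathcal{X}}$ from Lemma~\ref{Lem.DerivationActionLocAn}-(1) (restricted to $H_{\mathcal{X}}$) and then unwind the definitions. The paper invokes the duality of Lemma~\ref{Lem:MorphismOfHopfAlgebras}-(2) to treat only $\mbb{F}_{\mathcal{X}}$, where the chain $\mbb{F}_{\mathcal{X}}(v\cdot\mu)=(v\cdot\mu)(\kappa_{\mathcal{X}})=\mu(v\cdot\kappa_{\mathcal{X}})$ is immediate from the definition of the dual action; your first equality for $\mbb{F}^{\mathcal{X}}$ instead uses that the $\Lambda$-direction derivation commutes with applying $\delta$ in the $H_{\mathcal{X}}$-direction (they act on different variables), which is correct but is not quite ``$\mathcal{O}_Y$-linearity of $\delta$'' as you wrote.
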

\begin{proof}
By the duality of Lemma \ref{Lem:MorphismOfHopfAlgebras}, it suffices to treat $\mbb{F}_{\mathcal{X}}$. For $v$ a section of $V$ and $\mu$ a section of $\mathcal{D}^{\gamma-\locan}_{\Lambda/Y}$, 
\[ \mbb{F}_{\mathcal{X}}(v \cdot \mu)= (v\cdot \mu)(\kappa_{\mathcal{X}})=\mu(v \cdot \kappa_{\mathcal{X}}) = \mu(\langle v, \log_{H_{\mathcal{X}}}\rangle \kappa_\mathcal{X}) \]
where the final equality is deduced from Lemma \ref{Lem.DerivationActionLocAn}-(1) and functoriality in $\mathcal{X}$. This is equal to 
$\langle v, \log_{H_{\mathcal{X}}}\rangle\mu(\kappa_\mathcal{X})$
because, by the construction of $\mathbb{F}_{\mathcal{X}}$, we are applying $\mu$ after pulling it back to an element of $D^{\gamma-\locan}_{\Lambda_{H_{\mathcal{X}}}/H_{\mathcal{X}}}$, so that it is $\mathcal{O}_{H_{\mathcal{X}}}$-linear.  
\end{proof}

\subsubsection{}
Since $\operatorname{Lie}H_{\mathcal{X}}$ is identified with $V^{\ast}$, we obtain an action of $V^{\ast}$ on $\mathcal{O}_{H_{\mathcal{X}}/Y}$ through invariant derivations. It extends to an  action of $\operatorname{Sym}^{\bullet}V^{\ast}$ (by invariant differential operators; note the Lie bracket is trivial). We equip $\mathcal{D}_{H_{\mathcal{X}}/Y}$ with the dual action of $\operatorname{Sym}^{\bullet}V^{\ast}$. These actions are evidently functorial in $\mathcal{X}$. 

There is a natural map $\gamma|_{\Lambda}: \Lambda \rightarrow V$, which induces by pullback an algebra map $\mathcal{O}_{V/Y} \rightarrow \mathcal{O}_{\Lambda/Y}$. By definition, it factors through $\mathcal{O}_{\Lambda/Y}^{\gamma-\locan}$. In particular, restricting to the polynomial functions $\Sym^{\bullet} V^{\ast} \subseteq  \mathcal{O}_{V/Y}$, we obtain a $\Sym^\bullet V^{\ast}$-module structure on $\mathcal{O}^{\gamma-\locan}_{\Lambda/Y}$. We equip $\mathcal{D}^{\gamma-\locan}_{\Lambda/Y}$ with the dual action of $\Sym^\bullet V^{\ast}$. These actions are evidently functorial in $\mathcal{X}$.

\begin{Prop} \label{Prop:FTSymVstarEquiv}
    The Fourier transforms $\mbb{F}^{\mathcal{X}}$ and $\mbb{F}_{\mathcal{X}}$ are equivariant for the actions of $\operatorname{Sym}^{\bullet}V^{\ast}$ described above.
\end{Prop}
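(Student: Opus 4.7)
The plan is to parallel the proof of Proposition \ref{Prop:FTequivSymV}. By the duality of $\mathbb{F}^{\mathcal{X}}$ and $\mathbb{F}_{\mathcal{X}}$ established in Lemma \ref{Lem:MorphismOfHopfAlgebras}-(2), it will suffice to prove the equivariance of $\mathbb{F}^{\mathcal{X}}$. The key geometric identity I would aim to establish is
\[
v^{\ast} \cdot \kappa_{\mathcal{X}} = (v^{\ast} \circ \gamma|_{\Lambda}) \cdot \kappa_{\mathcal{X}}
\]
as sections of $\mathcal{O}^{\gamma-\locan}_{\Lambda_{H_{\mathcal{X}}}/H_{\mathcal{X}}}(H_{\mathcal{X}})$, for any local section $v^{\ast}$ of $V^{\ast} = \operatorname{Lie} H_{\mathcal{X}}$. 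Here the left-hand side is the invariant derivation action of $\operatorname{Lie} H_{\mathcal{X}}$ on $\mathcal{O}_{H_{\mathcal{X}}/Y}$ pulled back to $\mathcal{O}^{\gamma-\locan}_{\Lambda_{H_{\mathcal{X}}}/H_{\mathcal{X}}}$, while the right-hand side is multiplication by the linear function $v^{\ast} \circ \gamma|_{\Lambda} \colon \Lambda \to \mathcal{O}_Y$, pulled back to $H_{\mathcal{X}}$.

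To establish this identity, I would invoke the contravariant functoriality of $\kappa$ (Lemma \ref{Lem:UniversalKappaAsSection}) applied to the natural morphism from $\mathcal{X}$ to the character datum $\mathcal{X}' = (\Lambda, \Lambda \otimes_{\ul{\zp}} \mathcal{O}_Y, \mathrm{id})$, which realizes $\kappa_{\mathcal{X}}$ as the restriction of $\kappa_{\Lambda}$ along the inclusion $H_{\mathcal{X}} \hookrightarrow H_{\Lambda}$. Taking Lie algebras of the defining fiber product square identifies $V^{\ast} = \operatorname{Lie} H_{\mathcal{X}}$ with the subsheaf $\gamma^{\ast}(V^{\ast}) \subseteq \Lambda^{\ast} \otimes_{\ul{\zp}} \mathcal{O}_Y = \operatorname{Lie} H_{\Lambda}$, and invariant derivations restrict accordingly. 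The identity then follows from Lemma \ref{Lem.DerivationActionLocAn}-(2) applied with $\lambda' = \gamma^{\ast}(v^{\ast})$, once one notes that $\gamma^{\ast}(v^{\ast}) \colon \Lambda \to \mathcal{O}_Y$ is tautologically the composite $v^{\ast} \circ \gamma|_{\Lambda}$.

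Granted this identity, the equivariance of $\mathbb{F}^{\mathcal{X}}$ becomes a formal computation. Writing $\rho \colon \Lambda \to Y$ for the structure map and noting that the $V^{\ast}$-action is induced by invariant derivations on the group $H_{\mathcal{X}}/Y$ and therefore commutes with pullback along $\rho$, we find for any local section $\delta$ of $\mathcal{D}_{H_{\mathcal{X}}/Y}$:
\[
\mathbb{F}^{\mathcal{X}}(v^{\ast} \cdot \delta) = (\rho^{\ast}(v^{\ast} \cdot \delta))(\kappa_{\mathcal{X}}) = (\rho^{\ast}\delta)(v^{\ast} \cdot \kappa_{\mathcal{X}}) = (\rho^{\ast}\delta)((v^{\ast} \circ \gamma|_{\Lambda}) \kappa_{\mathcal{X}}) = (v^{\ast} \circ \gamma|_{\Lambda}) \cdot \mathbb{F}^{\mathcal{X}}(\delta),
\]
where the third equality invokes the key identity and the fourth uses the $\mathcal{O}_{\Lambda}$-linearity of $\rho^{\ast}\delta$. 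By the definition of the $V^{\ast}$-action on $\mathcal{O}^{\gamma-\locan}_{\Lambda/Y}$, the right-hand side equals $v^{\ast} \cdot \mathbb{F}^{\mathcal{X}}(\delta)$, completing the argument.

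I expect no substantive obstacle; the argument is structurally identical to that of Proposition \ref{Prop:FTequivSymV}, swapping the two parts of Lemma \ref{Lem.DerivationActionLocAn}. The only mild point to keep in mind is the correct bookkeeping between the two incarnations of $V^{\ast}$ (as $\operatorname{Lie} H_{\mathcal{X}}$ acting by invariant derivations on $H_{\mathcal{X}}$ on one side, and as linear functions on $V$ acting by multiplication via pullback along $\gamma$ on the other), but these are interchanged precisely by the Lie algebra map $\gamma^{\ast}$ induced by the inclusion $H_{\mathcal{X}} \hookrightarrow H_{\Lambda}$.
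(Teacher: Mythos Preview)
Your proposal is correct and follows essentially the same approach as the paper: reduce by duality to one of the two transforms, then use the key identity coming from Lemma~\ref{Lem.DerivationActionLocAn}-(2) together with functoriality in $\mathcal{X}$ to compare the invariant derivation action on the $H_{\mathcal{X}}$-side with multiplication by $v^{\ast}\circ\gamma|_{\Lambda}$ on the $\Lambda$-side. The only cosmetic difference is that the paper carries out the computation for $\mathbb{F}_{\mathcal{X}}$ rather than $\mathbb{F}^{\mathcal{X}}$; the underlying identity and the logic are identical.
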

\begin{proof}
By the duality of Lemma \ref{Lem:MorphismOfHopfAlgebras}, it suffices to treat $\mathbb{F}_\mathcal{X}$. For $v'$ a section of $V^*$ and $\mu$ a section of $\mathcal{D}^{\gamma-\locan}_{\Lambda/Y}$, 
\[ \mbb{F}_{\mathcal{X}}( v' \cdot \mu)= (v' \cdot \mu)(\kappa_{\mathcal{X}})=\mu( (\gamma^*v') \kappa_{\mathcal{X}}). \]
On the other hand, 
\[ v' \cdot \mathbb{F}_\mathcal{X}(\mu)  = v' \cdot \mu(\kappa_{\mathcal{X}})=\mu(v' \cdot \kappa_{\mathcal{X}}) \]
where the final equality holds because the differentiation is happening in the coefficients direction. By functoriality, the result then follows from Lemma \ref{Lem.DerivationActionLocAn}-(2). 
\end{proof}

Specialising Propositions \ref{Prop:FTequivSymV} and \ref{Prop:FTSymVstarEquiv} to $Y = \operatorname{Spd}(K, \mathcal{O}_K)$ completes the proof of Proposition \ref{Prop:Equivariance} in the introduction.

\subsection{Classical interpretation over a field} \label{Sub:Field} Let $K$ be a non-archimedean field with completed algebraic closure $C$. We will now explain how to interpret Corollary \ref{Cor:Main} for $Y=\spa(K,\mathcal{O}_K)$ in terms of locally convex topological vector spaces over $K$. 

\subsubsection{} Let $\mathcal{X}=(\Lambda,V,\gamma)$ be a locally analytic character datum and suppose that $V$ is trivial (equivalently, an \'etale vector bundle). Then concretely, we can think of $\Lambda$ as a continuous representation of $\mathrm{Gal}(\overline{K}/K)$ on a finite free $\mathbb{Z}_p$-module $\Lambda$, of $V$ as a finite dimensional $K$-vector space, and of $\gamma: \Lambda_C \twoheadrightarrow V_{C}$ as a $\mathrm{Gal}(\overline{K}/K)$-equivariant surjection. Let $\mathcal{O}^{\gamma-\locan}(\Lambda)$ be the locally convex vector space of locally analytic functions from $\Lambda$ to $C$ that are $\Gal(\overline{K}/K)$-equivariant and $\gamma$-locally analytic, i.e., locally on $\Lambda$, factor as the composition of $\gamma$ with an analytic function on an open ball in $V_C$. We let $\mathcal{D}^{\gamma-\locan}(\Lambda)$ be its dual equipped with the strong topology, see Section \ref{subsub:weakdual}.

Let $H_{\mathcal{X}}$ be the $p$-divisible v-group over $\spd(K, \mathcal{O}_K)$ associated with $\mathcal{X}$. Since $V$ is an \'etale vector bundle, this is the diamond associated to a $p$-divisible rigid analytic group over $\spa(K, \mathcal{O}_K)$ by Proposition \ref{Prop:Representable}, which we will denote by $H_{\mathcal{X}}^{\mathrm{rig}}$. Let us write $\mathcal{O}(H_{\mathcal{X}}^{\mathrm{rig}})$ for the locally convex $K$-Fr\'echet space of global sections. 
\begin{Lem} \label{Lem:LocallyConvexVSCondensed}
    There are natural isomorphisms
    \begin{align}
        H^0_{\cond}(Y,\mathcal{O}_{H_{\mathcal{X}}/Y}) &\xrightarrow{\sim} \ul{\mathcal{O}(H_{\mathcal{X}}^{\mathrm{rig}})} \\
         H^0_{\cond}(Y,\mathcal{D}_{\Lambda/Y}^{\gamma-\locan}) &\xrightarrow{\sim}\ul{\mathcal{D}^{\gamma\mathrm{-la}}(\Lambda)}.
    \end{align}
\end{Lem}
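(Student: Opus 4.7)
Both isomorphisms will follow by translating the sheaf-theoretic presentations of Propositions \ref{Prop:GeometricFrechet} and \ref{Prop:GeometricLB} into their classical locally-convex counterparts and checking that the condensation functor $M \mapsto \ul{M}$ (which is a right adjoint, fully faithful on compactly generated Hausdorff spaces by Lemma \ref{Lem:Fullyfaithful}) interchanges these two viewpoints. The hypothesis that $V$ is trivial (equivalently, étale locally free) is what ensures that the relevant geometric objects over $Y=\spa(K,\mathcal{O}_K)$ are representable by analytic adic spaces (Proposition \ref{Prop:Representable}) rather than just v-sheaves.

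For the first isomorphism, the plan is as follows. Because $V$ is trivial, Lemma \ref{Lem:GeometryofH} (applied via the duality between character data and Hodge--Tate triples from \S\ref{subsub:CharacterDatum}) produces an exhausting sequence of open affinoid $K$-subgroups $H_n=\spa(A_n,A_n^+) \subset H_{\mathcal{X}}^{\mathrm{rig}}$, with each $(K,\mathcal{O}_K)\to(A_n,A_n^+)$ strongly smooth and with dense-image transition maps. Classically this gives $\mathcal{O}(H_{\mathcal{X}}^{\mathrm{rig}})=\varprojlim_n A_n$ as a $K$-Fréchet space. On the condensed side, Lemma \ref{Lem:SheafFrechet} gives $\mathcal{O}_{H_{\mathcal{X}}/Y}=\varprojlim_n \mathcal{O}_{H_n/Y}$, and since each $(A_n,A_n^+)$ is fiercely v-complete (Lemma \ref{Lem:SmoothOverFierce}), Lemma \ref{Lem:CondensedRingStructure} identifies $H^0_{\cond}(Y,\mathcal{O}_{H_n/Y})=\ul{A_n}$. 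Taking the inverse limit and invoking that $M \mapsto \ul{M}$ commutes with countable inverse limits of Fréchet spaces (these are metrizable, hence compactly generated, and the limit topology is itself Fréchet) yields the desired isomorphism
\[ H^0_{\cond}(Y,\mathcal{O}_{H_{\mathcal{X}}/Y})=\varprojlim_n \ul{A_n}=\ul{\varprojlim_n A_n}=\ul{\mathcal{O}(H_{\mathcal{X}}^{\mathrm{rig}})}. \]

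For the second isomorphism, I will first use Proposition \ref{Prop:GeometricLB}-(3) to identify $H^0_{\cond}(Y,\mathcal{D}^{\gamma-\locan}_{\Lambda/Y})$ with the condensed internal hom $\Hom_{\ul{K}}(H^0_{\cond}(Y,\mathcal{O}^{\gamma-\locan}_{\Lambda/Y}),\ul{K})$. Lemma \ref{Lem:OgammalaIsLB} combined with Lemma \ref{Lem:CondensedRingStructure} presents $H^0_{\cond}(Y,\mathcal{O}^{\gamma-\locan}_{\Lambda/Y})$ as the colimit $\varinjlim_n \ul{\mathcal{O}(U_n)}$ of a strict inductive system of direct summands of orthonormalizable $K$-Banach spaces. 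This matches the standard LF-presentation of the classical space $\mathcal{O}^{\gamma-\locan}(\Lambda)$, so the two agree as condensed $\ul{K}$-modules. One then concludes by invoking the comparison between the condensed internal hom and the underline of the strong topological dual for LF-spaces of this type; this is exactly the comparison carried out in Appendix \ref{App:A}, and is an analog (over $K$) of the results of \cite{RJRC}.

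The main technical obstacle is the last identification: showing that $\Hom_{\ul{K}}(\ul{M},\ul{K})=\ul{M'_b}$ where $M$ is a countable LF-space of the above form and $M'_b$ is its strong dual. Everything else is dictionary-work. In particular, once this comparison is granted, the result follows by combining the geometric input (Lemmas \ref{Lem:GeometryofH}, \ref{Lem:OgammalaIsLB}) with the sheaf-theoretic outputs of \S\ref{Sec:SheavesOfLaFunctions} and the formal fact that $M\mapsto \ul{M}$ respects countable limits of Fréchet spaces and countable colimits of Banach spaces with dense/injective transitions.
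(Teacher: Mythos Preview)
Your approach to the first isomorphism is correct and essentially identical to the paper's.

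For the second isomorphism, your overall strategy is right, but there is a genuine issue in the presentation you choose. You propose using Lemma \ref{Lem:OgammalaIsLB} to write $\mathcal{O}^{\gamma-\locan}_{\Lambda/Y}$ as a colimit of \emph{Banach} modules $\mathcal{O}(U_n)$ (closed balls), and then invoking a comparison $\Hom_{\ul{K}}(\ul{M},\ul{K})=\ul{M'_b}$ for the resulting LB-space $M$, claiming this is ``exactly the comparison carried out in Appendix \ref{App:A}.'' It is not: Lemma \ref{Lem:TopologicalDuals} only says that, for a direct summand of an orthonormalizable Banach module $V$, one has $\Hom_{\ul{K}}(\ul{V},\ul{K})=\ul{V'_w}$, the \emph{weak} dual. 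Passing to the inverse limit would give $\varprojlim_n \ul{\mathcal{O}(U_n)'_w}$, which is not obviously the underline of the Fr\'echet space $\mathcal{D}^{\gamma-\locan}(\Lambda)$ with its strong topology. Appendix \ref{App:A} contains no direct comparison for duals of LB-spaces.

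The paper avoids this by instead using the presentation of Lemma \ref{Lem:OgammaisLSspace} via the \emph{open} balls $U_n^\circ$: each $\mathcal{O}^{\mathrm{bdd}}(U_n^\circ)$ is a direct summand of a countably free \emph{Smith} $K$-module, and for those Lemma \ref{Lem:TopologicalDuals} gives the identification with the \emph{strong} dual $\ul{N'_b}$. One then computes
\[
H^0_{\cond}(Y,\mathcal{D}_{\Lambda/Y}^{\gamma-\locan})
=\varprojlim_n \Hom_{\ul{K}}\bigl(\ul{\mathcal{O}^{\mathrm{bdd}}(U_n^\circ)},\ul{K}\bigr)
=\varprojlim_n \ul{(\mathcal{O}^{\mathrm{bdd}}(U_n^\circ))'_b}
=\ul{\varprojlim_n (\mathcal{O}^{\mathrm{bdd}}(U_n^\circ))'_b}
=\ul{\mathcal{D}^{\gamma-\locan}(\Lambda)},
\]
using that the underline functor commutes with limits. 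Your argument is easily repaired by switching from the closed-ball to the open-ball presentation.
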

\begin{proof}
For the second isomorphism, we first recall from \S\ref{sss.solid-hopf-and-duality-discussion-fiercely}
 that
\begin{align}
     H^0_{\cond}(Y,\mathcal{D}_{\Lambda/Y}^{\gamma-\locan}) &= \Hom_{\ul{K}}(H^0_{\cond}(Y,\mathcal{O}_{\Lambda/Y}^{\gamma-\locan}), \ul{K}).
\end{align}
We now continue in the notation of Lemma \ref{Lem:OgammaisLSspace}
\begin{align}
     \Hom_{\ul{K}}(H^0_{\cond}(Y,\mathcal{O}_{\Lambda/Y}^{\gamma-\locan}), \ul{K})&=\Hom_{\ul{K}}(H^0_{\cond}(Y, \varinjlim_{n} \mathcal{O}^{\mathrm{bdd}}_{U_{n}^{\circ}/Y}), \ul{K}) \\
     &=\Hom_{\ul{K}}(\varinjlim_{n}  H^0_{\cond}(Y, \mathcal{O}^{\mathrm{bdd}}_{U_{n}^{\circ}/Y}), \ul{K}) \\
     &=\varprojlim_{n} \Hom_{\ul{K}}(H^0_{\cond}(Y, \mathcal{O}^{\mathrm{bdd}}_{U_{n}^{\circ}/Y}), \ul{K}) \\
     &=\varprojlim_{n} \Hom_{\ul{K}}(\ul{\mathcal{O}^{\mathrm{bdd}}(U_n^{\circ})}, \ul{K}),
\end{align}
where the last equality is Lemma \ref{Lem:BoundedFunctions}. We can similarly write ($\mathrm{lcv}$ stands for locally convex) 
\begin{align}
    \mathcal{D}^{\gamma\mathrm{-la}}(\Lambda) &= \left(\mathcal{O}^{\gamma\mathrm{-la}}(\Lambda)\right)'_{b} \\
    &=\left(\varinjlim_n^{\mathrm{lcv}} \mathcal{O}^{\mathrm{bdd}}(U_n^{\circ})\right)'_{b} \\
    &=\varprojlim_n \left(\mathcal{O}^{\mathrm{bdd}}(U_n^{\circ})\right)'_{b}. 
\end{align}
It is a consequence of Lemma \ref{Lem:TopologicalDuals} that 
\begin{align}
   \ul{\left(\mathcal{O}^{\mathrm{bdd}}(U_n^{\circ})\right)'_b} \simeq \Hom_{\ul{K}}(\ul{\mathcal{O}^{\mathrm{bdd}}(U_n^{\circ})}, \ul{K}).
\end{align}
This, together with the commutation of $W \mapsto \ul{W}$ with limits, concludes the proof of the second isomorphism. 

For the first isomorphism, we write (in the notation of Lemma \ref{Lem:GeometryofH})
\begin{align}
    H^0_{\cond}(Y,\mathcal{O}_{H_{\mathcal{X}}/Y}) = \varprojlim_n H^0_{\cond}(Y,\mathcal{O}_{H_{n}/Y}) = \varprojlim_n \ul{\mathcal{O}(H_n)},
 \end{align}
using Lemma \ref{Lem:CondensedRingStructure}. We similarly have
\begin{align}
    \mathcal{O}(H_{\mathcal{X}}^{\mathrm{rig}})= \varprojlim_n \mathcal{O}(H_n),
\end{align}
and we conclude as before. 
\end{proof}

\subsubsection{} We have the following corollary of Theorem \ref{thm.rational-fourier-theory}.
\begin{Cor} \label{Cor:LocallyConvexMain}
    There is an isomorphism of locally convex $K$-vector spaces
    \begin{align}
        \mathcal{D}^{\gamma\mathrm{-la}}(\Lambda) &\xrightarrow{\sim} \mathcal{O}(H_{\mathcal{X}}^{\mathrm{rig}}) \\
    \end{align}
    functorial in $(\Lambda, V, \gamma)$.
\end{Cor}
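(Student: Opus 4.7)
The plan is to deduce the corollary from Corollary~\ref{Cor:Main} applied to $Y=\spd(K,\mathcal{O}_K)$, combined with the translation Lemma~\ref{Lem:LocallyConvexVSCondensed} and the fully faithfulness (Lemma~\ref{Lem:Fullyfaithful}) of the functor $V \mapsto \ul{V}$ from compactly generated Hausdorff topological $K$-vector spaces to condensed $\ul{K}$-modules.

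First I would check the hypotheses of Corollary~\ref{Cor:Main}: the Huber pair $(K,\mathcal{O}_K)$ is fiercely v-complete by the third bullet of Lemma~\ref{Lem:Fierce} (a nonarchimedean field is trivially a seminormal affinoid algebra over itself), and the locally free $\mathcal{O}_Y$-module $V$ coming from a finite-dimensional $K$-vector space is free since $Y$ is a point. Corollary~\ref{Cor:Main} then produces an isomorphism of solid $\ul{K}$-modules
\[
\mbb{F}_{\mathcal{X}}: H^0_{\cond}(Y,\mathcal{D}_{\Lambda/Y}^{\gamma-\locan}) \xrightarrow{\sim} H^0_{\cond}(Y,\mathcal{O}_{H_{\mathcal{X}}/Y}),
\]
functorial in $\mathcal{X}$. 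By Lemma~\ref{Lem:LocallyConvexVSCondensed}, the two sides are naturally identified with $\ul{\mathcal{D}^{\gamma-\locan}(\Lambda)}$ and $\ul{\mathcal{O}(H_{\mathcal{X}}^{\mathrm{rig}})}$ respectively.

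Next, I would verify that both $\mathcal{D}^{\gamma-\locan}(\Lambda)$ and $\mathcal{O}(H_{\mathcal{X}}^{\mathrm{rig}})$ are $K$-Fr\'echet spaces, hence metrizable and compactly generated Hausdorff. For the latter, this is immediate from the presentation $\mathcal{O}(H_{\mathcal{X}}^{\mathrm{rig}}) = \varprojlim_n \mathcal{O}(H_n)$ via the affinoid exhaustion of Lemma~\ref{Lem:GeometryofH}. For the former, one uses the LB presentation $\mathcal{O}^{\gamma-\locan}(\Lambda) = \varinjlim_n \mathcal{O}^{\bdd}(U_n^{\circ})$ from Lemma~\ref{Lem:OgammaisLSspace} together with the compactness of the transition maps, so that the strong dual is computed as the countable inverse limit $\varprojlim_n \mathcal{O}^{\bdd}(U_n^{\circ})'_b$ of the Banach duals (this is exactly what is used in the proof of Lemma~\ref{Lem:LocallyConvexVSCondensed}).

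Finally, applying Lemma~\ref{Lem:Fullyfaithful} to the two Fr\'echet $K$-vector spaces in question, the isomorphism $\mbb{F}_{\mathcal{X}}$ of condensed $\ul{K}$-modules is induced by a unique continuous $K$-linear map $\mathcal{D}^{\gamma-\locan}(\Lambda) \to \mathcal{O}(H_{\mathcal{X}}^{\mathrm{rig}})$, whose inverse is likewise induced by a continuous $K$-linear map. This yields the desired isomorphism of locally convex $K$-vector spaces, and functoriality in $(\Lambda, V, \gamma)$ transfers from Corollary~\ref{Cor:Main} by the same fully faithfulness. I do not expect any serious obstacle in this argument: the substantive content is already contained in Corollary~\ref{Cor:Main} and Lemma~\ref{Lem:LocallyConvexVSCondensed}, and the only remaining point is the essentially bookkeeping verification that the Fr\'echet structures used in Lemma~\ref{Lem:LocallyConvexVSCondensed} agree with the classical locally convex structures on $\mathcal{O}(H_{\mathcal{X}}^{\mathrm{rig}})$ and $\mathcal{D}^{\gamma-\locan}(\Lambda)$.
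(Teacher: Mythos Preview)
Your proposal is correct and follows essentially the same route as the paper: combine Lemma~\ref{Lem:LocallyConvexVSCondensed} with the Fourier isomorphism (Corollary~\ref{Cor:Main}, equivalently Theorem~\ref{thm.rational-fourier-theory} over a field) to get an isomorphism of condensed $\ul{K}$-modules, then descend to locally convex spaces. The only cosmetic difference is that the paper invokes the packaged equivalence of Proposition~\ref{Prop:FrechetVSClassicalFrechet} for strongly countably Fr\'echet modules, whereas you unwind this to Lemma~\ref{Lem:Fullyfaithful} plus the direct verification that both sides are Fr\'echet; since Proposition~\ref{Prop:FrechetVSClassicalFrechet} is itself proved via Lemma~\ref{Lem:Fullyfaithful}, the two arguments are the same in substance.
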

\begin{proof}
Lemma \ref{Lem:LocallyConvexVSCondensed} and Theorem \ref{thm.rational-fourier-theory} give an isomorphism
\[ \underline{\mathcal{D}^{\gamma\mathrm{-la}}(\Lambda)} \xrightarrow{\sim} \underline{\mathcal{O}(H_{\mathcal{X}}^{\mathrm{rig}})} \]
and then we conclude by Proposition \ref{Prop:FrechetVSClassicalFrechet}. 
\end{proof}

\begin{Rem} \label{Rem:WhyCondense}
It is possible to upgrade Corollary \ref{Cor:LocallyConvexMain} to include Hopf algebra structures (with respect to the completed projective tensor product of locally convex $K$-vector spaces). Indeed, since $V \mapsto \ul{V}$ commutes with products, we see that there are natural commutative algebra structures on the two locally convex $K$-vector spaces in the statement of Corollary \ref{Cor:LocallyConvexMain}. Moreover, the solid Hopf $K$-algebra structures on the corresponding condensed $K$-vector spaces, see Lemma \ref{Lem:LocallyConvexVSCondensed}, induce locally convex Hopf $K$-algebra structures on the two locally convex $K$-vector spaces in the statement of Corollary \ref{Cor:LocallyConvexMain}. Indeed, this follows from \cite[Proposition A.68]{Bosco}. 
\end{Rem}

\subsection{Comparison with Schneider--Teitelbaum} \label{Sub:STComparison} 
Let us explain how Corollary \ref{Cor:LocallyConvexMain} compares with the work of Schneider--Teitelbaum \cite{SchneiderTeitelbaumFourier}. Suppose that we have embeddings $\mbb{Q}_p \subset L \subset K \subset \mathbb{C}_p$, where $K$ is complete and $L/\mbb{Q}_p$ is finite, and take $Y =\operatorname{Spa}(K, \mathcal{O}_K)$. Let $\iota_0 \colon L \hookrightarrow \mathbb{C}_p$ denote this fixed embedding. We will consider the locally analytic character datum $\mathcal{X}=(\Lambda, V, \gamma)$ where $\Lambda=\mathcal{O}_L$ has the trivial Galois action, $V=K$, and, for $\iota_0: L \hookrightarrow \mathbb{C}_p$ the given embedding, $\gamma$ is the map
\begin{align}
    \gamma=\iota_0|_{\mathcal{O}_L} \otimes \mathrm{id}_{\mathbb{C}_p}:\mathcal{O}_L \otimes_{\zp} \mathbb{C}_p \rightarrow  \mathbb{C}_p= K \otimes_K \mathbb{C}_p.
\end{align}
In particular, $H_{\mathcal{X}}^{\mathrm{rig}}$ is a 1-dimensional $p$-divisible rigid analytic group.

\subsubsection{} We recall the notation of \cite[Section 1]{SchneiderTeitelbaumFourier}. Let $G=\mathcal{O}_L$ considered as an $L$-analytic manifold, and $G_0=\mathcal{O}_L$ considered as a $\qp$-analytic manifold. Schneider and Teitelbaum consider a quotient map
\begin{align}
    D(G_0,K) \to D(G,K)
\end{align}
of $K$-Fr\'echet spaces, where $D(-,K)$ denotes the strong dual to the locally convex $K$-vector space of locally analytic $K$-valued functions on $-$. 
\begin{Lem} \label{Lem:DistributionST}
    There is a commutative diagram of topological $K$-modules
\begin{equation} 
    \begin{tikzcd}
             D(G_0, K) \arrow[r] \arrow{d} & D(G, K) \arrow{d} \\
             \mathcal{D}^{\locan}(\Lambda) \arrow[r] & \mathcal{D}^{\gamma-\locan}(\Lambda)
    \end{tikzcd}
\end{equation}
where the vertical maps are isomorphisms.
\end{Lem}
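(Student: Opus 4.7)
The plan is to identify both squares term-by-term and then reduce commutativity to the functoriality of the restriction construction. First, I will match the function spaces (rather than distributions), and then dualize.

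For the function side: with trivial Galois action, Ax--Sen--Tate gives $C^{\Gal(\overline{K}/K)} = K$, so $\mathcal{O}^{\locan}(\Lambda)$ is tautologically the space of $K$-valued functions on $\mathcal{O}_L$ that are locally on $\mathcal{O}_L$ restrictions of rigid analytic functions on open balls in $\mathcal{O}_L \otimes_{\zp} K = V'$. Viewing $V'$ as a $[L:\qp]$-dimensional $K$-vector space, this is precisely the space $C^{\an}(G_0,K)$ of $\qp$-locally analytic $K$-valued functions on $G_0 = \mathcal{O}_L$ in the sense of Schneider--Teitelbaum. For $\mathcal{O}^{\gamma\mathrm{-la}}(\Lambda)$: the surjection $\gamma: \mathcal{O}_L \otimes_{\zp} C \twoheadrightarrow C$ is obtained by extending scalars along $\iota_0|_{\mathcal{O}_L} \otimes \mathrm{id}$, so $\gamma$-local analyticity of a function $f: \mathcal{O}_L \to K$ means that $f$ factors locally through $\iota_0$ followed by a rigid analytic function on an open disk in $K$, which is exactly the condition that $f$ be $L$-locally analytic, i.e., lies in $C^{\an}(G,K)$. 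Under these identifications, the horizontal inclusion $C^{\an}(G,K) \hookrightarrow C^{\an}(G_0,K)$ of Schneider--Teitelbaum matches the inclusion $\mathcal{O}^{\gamma\mathrm{-la}}(\Lambda) \hookrightarrow \mathcal{O}^{\locan}(\Lambda)$ induced by the morphism of character data $(\mathcal{O}_L, \mathcal{O}_L \otimes_{\zp} K, \mathrm{id}) \to (\mathcal{O}_L, K, \gamma)$.

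Next I will check that the identifications are topological, not just set-theoretic. Via the presentation of Lemma \ref{Lem:OgammaisLSspace} (and its analogue for the identity character datum), both $\mathcal{O}^{\locan}(\Lambda)$ and $\mathcal{O}^{\gamma\mathrm{-la}}(\Lambda)$ are colimits of Banach spaces of bounded analytic functions on the standard thickenings of $\mathcal{O}_L$ inside $V'$ respectively $V = K$; these colimits agree with the usual LF topologies used to define $C^{\an}(G_0,K)$ and $C^{\an}(G,K)$ in \cite{SchneiderTeitelbaumFourier}. This comparison can be read off at the level of condensed $K$-vector spaces from Lemma \ref{Lem:LocallyConvexVSCondensed} and its analogue for the full locally analytic character datum, combined with the fully faithfulness of $W \mapsto \underline{W}$ on Hausdorff compactly generated topological spaces (Proposition \ref{Prop:FrechetVSClassicalFrechet} and Lemma \ref{Lem:TopologicalDuals}).

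Dualizing with the strong topology now yields the two vertical isomorphisms in the diagram: by definition $D(G_0,K) = C^{\an}(G_0,K)'_b$ and $D(G,K) = C^{\an}(G,K)'_b$, while $\mathcal{D}^{\locan}(\Lambda)$ and $\mathcal{D}^{\gamma\mathrm{-la}}(\Lambda)$ are also the strong duals of the corresponding function spaces. Commutativity of the square is then automatic from the preceding paragraph: both horizontal arrows are obtained by transposing the same inclusion of function spaces.

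The only real obstacle is the comparison of topologies, since Schneider--Teitelbaum work with classical locally convex vector spaces while our spaces are defined via condensed mathematics and a colimit-of-Banach presentation whose indexing system is a priori different from theirs. The cleanest way to handle this is to first compare condensed objects through Lemma \ref{Lem:LocallyConvexVSCondensed} and then invoke the full faithfulness results of Appendix \ref{App:A} (in particular Proposition \ref{Prop:FrechetVSClassicalFrechet} for the function side and Lemma \ref{Lem:TopologicalDuals} for the distribution side) to promote the condensed isomorphisms to homeomorphisms of locally convex $K$-vector spaces.
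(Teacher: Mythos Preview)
Your proof is correct and follows the same outline as the paper: identify the function spaces, then pass to strong duals. The difference lies in how you match $\mathcal{O}^{\gamma-\locan}(\Lambda)$ with $C^{\an}(G,K)$. You unwind the definition directly, arguing that $\gamma$-local analyticity (locally factoring through $\iota_0$) is precisely $L$-local analyticity. The paper instead uses the derivation characterization: both $\mathcal{O}^{\gamma-\locan}(\Lambda)\subset\mathcal{O}^{\locan}(\Lambda)$ and $C^{\an}(G,K)\subset C^{\an}(G_0,K)$ are cut out as the subspaces annihilated by certain invariant derivations, and one checks that $W=\ker\gamma$ is generated by the elements $\lambda\otimes x-1\otimes\iota_0(\lambda)x$, which is exactly the $L$-linearity condition of \cite[Lemma~1.1]{SchneiderTeitelbaumFourier}. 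Your route is more elementary and self-contained; the paper's route via $W$-invariants connects directly to the formal definition via thickenings (Definition~\ref{def.loc-an-vectors}) and to Lemma~\ref{Lem:LaAndCauchyRiemann}, so no separate unwinding is needed. Your explicit treatment of the topology comparison through the condensed machinery is a welcome addition---the paper simply declares the left vertical an isomorphism ``by definition'' and does not dwell on topologies.
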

\begin{proof}
The left vertical arrow is an isomorphism by definition. To show that this induces an isomorphism on the right vertical arrows, we need to check that our $\gamma$-locally analytic condition agrees with the ``$L$-linearity'' condition of Schneider--Teitelbaum. Using Proposition \ref{Prop:InvariantsCoinvariants}, this follows from the observation that $W=\ker \gamma$ is generated by elements of the form $\lambda \otimes x - 1 \otimes \iota_0(\lambda) x$ for $\lambda \in \Lambda=\mathcal{O}_K$ and $x \in \mathbb{C}_p$; this matches the ``$L$-linearity'' condition in \cite[Lemma 1.1]{SchneiderTeitelbaumFourier}. 
\end{proof}

\subsubsection{} Let $\widehat{G}_0$ be the smooth rigid analytic group over $K$
whose $\mathbb{C}_p$-points are the continuous characters $\chi:\mathcal{O}_L \to \mathcal{O}_{\mathbb{C}_p}^{\times}$, and write $\widehat{G} \subset \widehat{G}_0$ for the closed subgroup whose $\mathbb{C}_p$ points are the $L$-analytic characters $\chi$, see \cite[Corollary 1.5]{SchneiderTeitelbaumFourier}. 
\begin{Lem} \label{Lem:CharacterVarieties}
    There is a commutative diagram 
    \begin{equation}
    \begin{tikzcd}
         \widehat{G} \arrow[r, hook] \arrow{d}& \widehat{G}_0 \arrow{d} \\
        H_{\mathcal{X}}^{\mathrm{rig}} \arrow[r, hook] & \Hom(\Lambda, \gmhateta),
    \end{tikzcd}
    \end{equation}
    where the vertical arrows are isomorphisms.
\end{Lem}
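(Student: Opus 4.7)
The plan is to interpret both rows as classifying certain characters of $\Lambda = \mathcal{O}_L$, and then exhibit the vertical arrows as comparisons of moduli problems.

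First I would construct the right vertical arrow. By \cite[Section 1]{SchneiderTeitelbaumFourier}, $\widehat{G}_0$ is the rigid analytic group over $K$ whose $R$-points (for an affinoid $K$-algebra $R$) are the continuous group homomorphisms $\mathcal{O}_L \to 1 + R^{\circ\circ}$. On the other hand, $\Hom(\Lambda, \widehat{\mathbb{G}}_{m,\eta,K})$ means the $p$-divisible rigid analytic group $H_\Lambda^{\mathrm{rig}}$ of \S\ref{subsub:ClassificationConstruction} attached to the trivial character datum $(\Lambda, \Lambda \otimes_{\mathbb{Z}_p} K, \mathrm{id})$, which by construction and the description of $\widehat{\mathbb{G}}_{m,\eta,K}$ in terms of $1 + R^{\circ\circ}$ represents the same functor. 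The right vertical arrow is thus the tautological isomorphism of representing objects, which exists by Lemma \ref{Lem:FiercelyFullyFaithful} applied to fiercely v-complete objects over $K$.

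Next I would construct the left vertical arrow by matching two Zariski closed conditions inside this common character variety. On the Schneider--Teitelbaum side, $\widehat{G} \subset \widehat{G}_0$ is the closed subgroup of $L$-analytic characters, which on $R$-points consists of those $\chi$ such that the associated $\mathbb{Z}_p$-linear map $\log\chi : \mathcal{O}_L \to R$ is $\mathcal{O}_L$-linear via $\iota_0$, i.e., $\log\chi(\lambda) = c \cdot \iota_0(\lambda)$ for some $c \in R$. On our side, the fiber product defining $H_{\mathcal{X}}$ (Definition \ref{Def:CharacterDatum}) with $V = K$ and $\gamma$ the map induced by $\iota_0$ cuts out exactly those $R$-valued characters whose logarithm factors through the dual of $\gamma$, which is literally the same condition. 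Equivalently, this matches the description of the ideal sheaf of $H_\mathcal{X}^{\mathrm{rig}}$ in $H_\Lambda^{\mathrm{rig}}$ via the generators $\lambda \otimes 1 - 1 \otimes \iota_0(\lambda)$ used in the proof of Lemma \ref{Lem:DistributionST} (via Proposition \ref{Prop:InvariantsCoinvariants}). Therefore the right vertical isomorphism restricts to an isomorphism $\widehat{G} \xrightarrow{\sim} H_{\mathcal{X}}^{\mathrm{rig}}$, and the resulting square commutes by construction.

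The main technical point is that the ``$L$-analytic'' condition of Schneider--Teitelbaum really coincides scheme-theoretically with the condition cut out by $\gamma$; once this is unwound via the logarithm as above, the remainder is formal. The comparison on $\mathbb{C}_p$-points (which is what appears in \cite[Corollary 1.5]{SchneiderTeitelbaumFourier}) then upgrades to an isomorphism of rigid analytic groups because both sides are fiercely v-complete analytic adic spaces over $K$, so Lemma \ref{Lem:FiercelyFullyFaithful} promotes the identification of the underlying v-sheaves (where it is immediate from the functorial descriptions) to an identification of the rigid spaces themselves.
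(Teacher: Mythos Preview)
Your proposal is correct and follows essentially the same approach as the paper. The paper's proof is terser: it says the right vertical isomorphism is ``straightforward'' and that the identification of $\widehat{G}$ with $H_{\mathcal{X}}^{\mathrm{rig}}$ ``can be checked on Lie algebras, where it follows from \cite[Corollary 1.5]{SchneiderTeitelbaumFourier}.'' Your unwinding of the $L$-analyticity condition on $\log\chi$ is exactly this Lie algebra check, since both closed subgroups are defined as fiber products over the logarithm map with a sub-vector-space of $\Lambda^*\otimes_{\mathbb{Z}_p}\mathbb{G}_a$, so they coincide as soon as the two subspaces (equivalently, the two Lie algebras inside $\operatorname{Lie}H_\Lambda$) coincide.
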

\begin{proof}
The existence of the right vertical isomorphism is straightforward. That it identifies $\widehat{G}$ and $H_{\mathcal{X}}^{\mathrm{rig}}$ can be checked on Lie algebras, where it follows from \cite[Corollary 1.5]{SchneiderTeitelbaumFourier}. 
\end{proof}

\subsubsection{} Schneider and Teitelbaum prove, see \cite[Theorem 2.2, Theorem 2.3]{SchneiderTeitelbaumFourier}, that there is a commutative diagram of locally convex $K$-vector spaces (they write $\mathcal{F}$ for what we call $\mathcal{F}_0$)
\begin{equation} \label{Eq:STFourier}
    \begin{tikzcd}
        D(G_0, K) \arrow{r} \arrow{d}{\mathcal{F}_0} & D(G,K) \arrow{d}{\mathcal{F}} \\
        \mathcal{O}(\widehat{G_0}) \arrow{r} & \mathcal{O}(\widehat{G}), 
    \end{tikzcd}
\end{equation}
where the vertical maps are isomorphisms and where the left vertical map is the (multidimensional) Amice transform.

\begin{Prop} \label{Prop:Comparison}
    Under the isomorphisms of Lemma \ref{Lem:CharacterVarieties} and Lemma \ref{Lem:DistributionST}, the isomorphism $\mathcal{F}$ agrees with the map $\mathbb{F}_{\mathcal{X}}$ of Corollary \ref{Cor:LocallyConvexMain}.
\end{Prop}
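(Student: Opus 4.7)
\emph{Proof proposal.} The plan is to reduce the proposition to the statement that, under the identifications of Lemma \ref{Lem:DistributionST} and Lemma \ref{Lem:CharacterVarieties}, the universal character of Schneider--Teitelbaum corresponds to our $\kappa_{\mathcal{X}}$; once this is done, the agreement of the two Fourier transforms is essentially formal, since both are defined as ``integration against the universal character''. More precisely, $\mathcal{F}$ is characterized by $\mathcal{F}(\mu)(\chi) = \mu(\chi)$ for $\chi \in \widehat{G}(\mathbb{C}_p)$ viewed as a locally analytic function on $G=\mathcal{O}_L$, which is nothing more than applying $\mu$ to the tautological evaluation pairing $\kappa_{\mathrm{ST}} \colon G \times \widehat{G} \to \gmhateta^{\lozenge}$, $(g,\chi) \mapsto \chi(g)$. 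By construction, $\mathbb{F}_{\mathcal{X}}(\mu) = \mu(\kappa_{\mathcal{X}})$ for our universal character $\kappa_{\mathcal{X}} \in \mathcal{O}^{\gamma-\locan}_{\Lambda/Y}(H_{\mathcal{X}})$ of Lemma \ref{Lem:UniversalKappaAsSection}.

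To identify the universal characters, I would first work over the ambient groups, comparing $\kappa_{\Lambda}$ with the evaluation pairing $G_0 \times \widehat{G_0} \to \gmhateta^{\lozenge}$. By definition, $H_{\Lambda} = \Lambda^{\ast} \otimes_{\ul{\zp}} \gmhateta^{\lozenge}$, and the identification $H_\Lambda \xrightarrow{\sim} \widehat{G_0}$ of Lemma \ref{Lem:CharacterVarieties} sends a section $\lambda' \otimes z$ to the continuous character $\lambda \mapsto z^{\lambda'(\lambda)}$. This is precisely the formula for $\kappa_{\Lambda}(\lambda, \lambda' \otimes z)$ given in \S\ref{Sub:ExponentialPairing}, so the universal characters agree on the ambient groups. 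Since Lemma \ref{Lem:CharacterVarieties} identifies $\widehat{G} \hookrightarrow \widehat{G_0}$ with $H_{\mathcal{X}}^{\mathrm{rig}} \hookrightarrow H_{\Lambda}^{\mathrm{rig}}$ (both identifications come from $L$-analyticity, which, via Proposition \ref{Prop:InvariantsCoinvariants} and the ``$L$-linearity'' criterion of \cite[Lemma 1.1]{SchneiderTeitelbaumFourier} recalled in the proof of Lemma \ref{Lem:DistributionST}, amounts to the same cutting-out condition by the kernel $W = \ker \gamma$), the universal characters agree also after restriction to these smaller groups.

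With the universal characters identified, I would then conclude by observing that both $\mathcal{F}$ and $\mathbb{F}_{\mathcal{X}}$ are, under Lemmas \ref{Lem:DistributionST} and \ref{Lem:CharacterVarieties}, given by the same construction, namely, $\mu \mapsto \mu(\kappa)$ where $\kappa$ is the common universal character viewed as an element of the appropriate space of functions with coefficients in $\mathcal{O}(H_{\mathcal{X}}^{\mathrm{rig}}) = \mathcal{O}(\widehat{G})$. There is no serious obstacle here: the argument is essentially a bookkeeping exercise in tracking the various identifications. The only nontrivial point is that the identification $H_\Lambda^\mathrm{rig} \xrightarrow{\sim} \widehat{G_0}$ of Lemma \ref{Lem:CharacterVarieties} should be the one induced by the standard exponential formula (both on the Lie algebra level, where it is the identification coming from \cite[Corollary 1.5]{SchneiderTeitelbaumFourier}, and globally via the logarithm), since this is the normalization implicitly used in the Schneider--Teitelbaum construction.
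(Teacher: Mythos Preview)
Your proposal is correct and follows essentially the same strategy as the paper: both arguments first identify the universal characters on the ambient groups $G_0 \times \widehat{G_0} \cong \Lambda \times H_\Lambda$ (the paper does this via Example~\ref{Eg:kappaForTrivialLambda}, you do it by unwinding the formula for $\kappa_\Lambda$), which amounts to recognizing the multidimensional Amice kernel $(1+t)^z$.

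The one genuine difference is in how the two arguments descend from the ambient case to $\mathcal{X}$. You restrict the universal character directly to $\Lambda \times H_{\mathcal{X}}$ and argue that the cutting-out conditions on both sides match via $W = \ker\gamma$. The paper instead uses the commutativity of diagram~\eqref{Eq:STFourier} together with the \emph{surjectivity} of $D(G_0,K) \to D(G,K)$: once $\mathcal{F}_0$ agrees with $\mathbb{F}_{\mathcal{X}'}$ on the ambient distributions, and both squares commute by functoriality, the induced maps on the quotient $D(G,K)$ must coincide. Your route is slightly more self-contained (it does not need the surjectivity statement), while the paper's is marginally shorter since it avoids re-examining the restricted universal character. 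Both are valid and the underlying content is the same.
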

\begin{proof}
By Example \ref{Eg:kappaForTrivialLambda}, this is true for $\mathcal{F}_0$ because our Fourier transform agrees with the Amice transform for $H=\gmhateta$ and thus with the multidimensional Amice transform for $(\gmhateta)^h$. The result follows for $\mathcal{F}$, because of the functoriality of our Fourier transform, the commutativity of \eqref{Eq:STFourier} and the surjectivity of $D(G_0, K) \to D(G, K)$.
\end{proof}

\section{Integral Fourier theory}\label{s.integral-fourier}

\newcommand{\oh}{\mathcal{O}(\mathcal{H})}
Let $R$ be a $p$-adically complete and separated ring. In this section, we are going to prove an integral analog of Theorem \ref{thm.rational-fourier-theory} for $p$-divisible groups over $R$. In nice situations, we will also compare our integral theory with the rational theory on the generic fiber. 

\subsubsection{} We give the statement of our main result in this context. Recall that given a $p$-divisible group $\mathcal{H}$ over $\spf R$, there is a Serre-dual $p$-divisible group $\mathcal{H}^{\vee}$ and a Hodge--Tate map $T_p \mathcal{H}^{\vee} \to \omega_{\mathcal{H}}$, see for example Section \ref{subsub:HodgeTate} below. For $\mathcal{H}$ a $p$-divisible group over $\spf R$, we show that $\mathcal{O}(\mathcal{H})$ and $\mathcal{O}(T_p \mathcal{H}^\vee)$, along with their continuous $R$-linear duals (equipped with the weak topology), are commutative topological Hopf $R$-algebras\footnote{By which we mean a co-commutative co-group object in the category of complete linearly topologized $R$-algebras with the completed tensor product of linearly topologized $R$-modules as in \cite[Tag 0AMU]{stacks-project}.}, and there is a universal character\footnote{See Section \ref{subsub:StructureSheaf} for our conventions about evaluating $\mathcal{O}$ on ind-schemes.} 
\[ \kappa \in \mathcal{O}(T_p \mathcal{H}^\vee) \; \widehat{\otimes}_R \; \mathcal{O}(\mathcal{H}). \]
Building up from Cartier duality for the finite flat group schemes $\mathcal{H}[p^n]_{R/p^k}$, we prove

\begin{Thm}\label{thm.integral-fourier-theory}
    Integration against the universal character $\kappa$ defines functorial and naturally dual isomorphisms of complete linearly topologized Hopf $R$-algebras
    \[ \mathbb{F}^{\mathcal{H}}: \mathcal{O}(\mathcal{H})^{\ast} \xrightarrow{\sim} \mathcal{O}(T_p \mathcal{H}^\vee) \textrm{ and } \mathbb{F}_{\mathcal{H}}: \mathcal{O}(T_p \mathcal{H}^\vee)^{\ast} \xrightarrow{\sim} \mathcal{O}(\mathcal{H}). \]
\end{Thm}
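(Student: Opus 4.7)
The plan is to bootstrap the theorem from Cartier duality for the finite flat group schemes $\mathcal{H}[p^n]_{R/p^k}$, which is the only input the introduction promises. Since $\mathcal{H}$ is $p$-divisible and $R$ is $p$-adically complete, we have presentations
$$\mathcal{O}(\mathcal{H}) = \varprojlim_n \mathcal{O}(\mathcal{H}[p^n]), \qquad \mathcal{O}(T_p \mathcal{H}^\vee) = \varinjlim_n \mathcal{O}(\mathcal{H}^\vee[p^n]),$$
in which each $\mathcal{O}(\mathcal{H}[p^n])$ and $\mathcal{O}(\mathcal{H}^\vee[p^n])$ is finite locally free over $R$ after $p$-adic completion, and Hopf in the natural way. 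Crucially, the transition maps on the two sides are Cartier-dual to one another: the inclusion $\mathcal{H}[p^n] \hookrightarrow \mathcal{H}[p^{n+1}]$ corresponds under Cartier duality to the multiplication-by-$p$ map $\mathcal{H}^\vee[p^{n+1}] \twoheadrightarrow \mathcal{H}^\vee[p^n]$, which is exactly the inverse system defining $T_p\mathcal{H}^\vee$.

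At each finite level, Cartier duality for $\mathcal{H}[p^n]_{R/p^k}$ yields a canonical Hopf algebra isomorphism
$$c_{n,k} \colon \mathcal{O}(\mathcal{H}[p^n]_{R/p^k})^\ast \xrightarrow{\sim} \mathcal{O}(\mathcal{H}^\vee[p^n]_{R/p^k}),$$
encoded by a perfect Weil pairing $\mathcal{H}[p^n]_{R/p^k} \times \mathcal{H}^\vee[p^n]_{R/p^k} \to \mu_{p^n}$. These pairings are automatically compatible in $k$, and compatible in $n$ because, for $m \leq n$, the restriction of the level-$n$ Weil pairing along $\mathcal{H}[p^m] \hookrightarrow \mathcal{H}[p^n]$ factors through $\mu_{p^m}$ and recovers the level-$m$ pairing when precomposed with the projection $\mathcal{H}^\vee[p^n] \xrightarrow{[p^{n-m}]} \mathcal{H}^\vee[p^m]$. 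Packaging the $c_{n,k}$ by first taking the limit in $k$ and then the colimit in $n$ produces the universal character $\kappa \in \mathcal{O}(T_p\mathcal{H}^\vee) \,\widehat{\otimes}_R\, \mathcal{O}(\mathcal{H})$, and the Fourier transforms $\mathbb{F}^{\mathcal{H}}$ and $\mathbb{F}_{\mathcal{H}}$ are obtained by contraction against $\kappa$. Since $\mathcal{O}(\mathcal{H})^\ast = \varinjlim_n \mathcal{O}(\mathcal{H}[p^n])^\ast$ and dually $\mathcal{O}(T_p\mathcal{H}^\vee)^\ast = \varprojlim_n \mathcal{O}(\mathcal{H}^\vee[p^n])^\ast$, these maps coincide levelwise with $c_n := \varprojlim_k c_{n,k}$ and $c_n^{-1}$ respectively, and so are bijections of topological $R$-modules; they respect the Hopf algebra structures because each $c_{n,k}$ does. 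Functoriality in $\mathcal{H}$ and the duality $\mathbb{F}_{\mathcal{H}} = (\mathbb{F}^{\mathcal{H}})^\ast$ are inherited directly from the symmetric formulation of Cartier duality.

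The one point requiring some care will be the topological bookkeeping: one must verify that the continuous $R$-linear dual of $\mathcal{O}(\mathcal{H})$ (with its profinite $\varprojlim$-topology) is indeed the discrete colimit $\varinjlim_n \mathcal{O}(\mathcal{H}[p^n])^\ast$ --- and symmetrically for $T_p\mathcal{H}^\vee$ --- and that the completed tensor product $\mathcal{O}(T_p\mathcal{H}^\vee)\,\widehat{\otimes}_R\,\mathcal{O}(\mathcal{H})$ is genuinely the target into which the $c_{n,k}$'s assemble $\kappa$. Because every $\mathcal{O}(\mathcal{H}[p^n]_{R/p^k})$ is finite locally free, these are routine compatibilities between duals, limits, and $\widehat{\otimes}_R$ in the category of complete linearly topologized $R$-modules, and constitute essentially all of the work beyond invoking Cartier duality level by level.
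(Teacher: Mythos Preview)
Your overall strategy matches the paper's: bootstrap from Cartier duality for the finite flat group schemes $\mathcal{H}[p^n]_{R/p^k}$ and pass to limits. However, there is a genuine gap in the order you propose for the limits.

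You assert (and flag for verification) that $\mathcal{O}(\mathcal{H})^\ast = \varinjlim_n \mathcal{O}(\mathcal{H}[p^n])^\ast$ and $\mathcal{O}(T_p\mathcal{H}^\vee) = \varinjlim_n \mathcal{O}(\mathcal{H}^\vee[p^n])$. Both of these fail when $R$ is $p$-adically complete but not discrete. Take $\mathcal{H}=\widehat{\mathbb{G}}_m$: then $\mathcal{O}(\mathcal{H})^\ast \cong \Cont(\mathbb{Z}_p,R)$ via Mahler expansion, whereas $\varinjlim_n \mathcal{O}(\mathcal{H}[p^n])^\ast$ consists only of the \emph{locally constant} functions $\mathbb{Z}_p\to R$ (those factoring through some $\mathbb{Z}/p^n$). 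The point is that a continuous $R$-linear functional on $\varprojlim_n \mathcal{O}(\mathcal{H}[p^n])$ need not factor through a finite stage, because the target $R$ is not discrete. Your colimit is only dense in the correct object; it is missing a $p$-adic completion.

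The paper avoids this by reversing the order of the two limit operations. It first establishes the discrete case (Proposition~\ref{Prop:IntegralFourierDiscrete}): over $R/p^k$, the target of a continuous functional \emph{is} discrete, so continuous duals genuinely commute with the inverse limit in $n$, and the maps $\mathbb{F}^{\mathcal{H}_{R/p^k}}$, $\mathbb{F}_{\mathcal{H}_{R/p^k}}$ are isomorphisms by taking $\varinjlim_n$ and $\varprojlim_n$ of the Cartier duality isomorphisms. Only then does one take $\varprojlim_k$ to obtain the result over $R$. Your approach becomes correct if you either reverse the limits in this way, or equivalently $p$-complete your colimits; but as written, the ``routine compatibility'' you plan to check is in fact false.
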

We equip $\mathcal{O}(\mathcal{H})$ with the action of $\Sym^\bullet \Lie \mathcal{H}$ induced by the action of $\Lie \mathcal{H}$ by differentiation along invariant vector fields, and on $\mathcal{O}(\mathcal{H})^{\ast}$ we take the dual action. We equip $\mathcal{O}(T_p \mathcal{H}^\vee)$, with the action of $\mathrm{Sym}^\bullet \Lie \mathcal{H}=\mathcal{O}(\omega_{\mathcal{H}})$ through multiplication by functions pulled back along the Hodge--Tate map, and on $\mathcal{O}(T_p \mathcal{H}^\vee)^{\ast}$ we take the dual action. With these actions, we obtain an analogue of Proposition \ref{Prop:Equivariance}.

\begin{Prop}\label{Prop:IntegralEquivariance}
The Fourier transforms of Theorem \ref{thm.integral-fourier-theory} are equivariant for the actions of $\Sym^\bullet \operatorname{Lie} \mathcal{H}$  described above. 
\end{Prop}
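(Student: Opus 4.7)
The proof should mirror that of Propositions~\ref{Prop:FTequivSymV} and~\ref{Prop:FTSymVstarEquiv} in the rational setting, with the universal character $\kappa_{\mathcal{X}}$ replaced by the integral universal character $\kappa \in \mathcal{O}(T_p \mathcal{H}^\vee) \widehat{\otimes}_R \mathcal{O}(\mathcal{H})$ and with the logarithm $\log_{H_{\mathcal{X}}}$ replaced by the Hodge--Tate map $\mathrm{HT}\colon T_p \mathcal{H}^\vee \to \omega_{\mathcal{H}}$. Since Theorem~\ref{thm.integral-fourier-theory} asserts that $\mathbb{F}^{\mathcal{H}}$ and $\mathbb{F}_{\mathcal{H}}$ are mutually dual, it suffices to establish the equivariance of one, say $\mathbb{F}^{\mathcal{H}}$.

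The essential input is an integral analog of Lemma~\ref{Lem.DerivationActionLocAn}: for $v \in \Lie \mathcal{H}$ acting by invariant differentiation on the $\mathcal{O}(\mathcal{H})$-factor,
\[
v \cdot \kappa \;=\; (\mathrm{HT}^* v) \cdot \kappa \quad \text{in } \mathcal{O}(T_p \mathcal{H}^\vee) \widehat{\otimes}_R \mathcal{O}(\mathcal{H}),
\]
where $\mathrm{HT}^* v \in \mathcal{O}(T_p \mathcal{H}^\vee)$ denotes the pullback along $\mathrm{HT}$ of the linear function on $\omega_{\mathcal{H}}$ determined by $v \in \Lie \mathcal{H} = \omega_{\mathcal{H}}^*$. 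Because $\kappa$ is constructed from the Cartier pairings $\mathcal{H}[p^n] \times \mathcal{H}^\vee[p^n] \to \mu_{p^n}$, restricted along the projection $T_p \mathcal{H}^\vee \twoheadrightarrow \mathcal{H}^\vee[p^n]$, this identity can be checked after reduction modulo $p^k$ and restriction to $\mathcal{H}[p^n]$ for $n$ large enough relative to $k$. In that setting it becomes the standard description of the Hodge--Tate map as the derivative at the identity of the Cartier pairing, followed by $\mathrm{dlog}\colon \mu_{p^n} \to \mathbb{G}_a$.

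Granted this, the equivariance of $\mathbb{F}^{\mathcal{H}}$ reduces to the formal calculation
\[
\mathbb{F}^{\mathcal{H}}(v \cdot \mu) \;=\; (v \cdot \mu)(\kappa) \;=\; \mu(v \cdot \kappa) \;=\; \mu\bigl((\mathrm{HT}^* v) \cdot \kappa\bigr) \;=\; (\mathrm{HT}^* v) \cdot \mu(\kappa) \;=\; v \cdot \mathbb{F}^{\mathcal{H}}(\mu)
\]
for $\mu \in \mathcal{O}(\mathcal{H})^*$ and $v \in \Lie \mathcal{H}$; the penultimate equality uses that $\mu$ is applied only to the $\mathcal{O}(\mathcal{H})$-factor and therefore commutes with multiplication by the scalar $\mathrm{HT}^* v \in \mathcal{O}(T_p \mathcal{H}^\vee)$. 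The extension from $\Lie \mathcal{H}$ to $\Sym^\bullet \Lie \mathcal{H}$ is then automatic, as the two actions are both constructed multiplicatively from their restrictions to $\Lie \mathcal{H}$.

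The main obstacle is the rigorous verification of the key identity: conceptually it expresses that the Hodge--Tate map is the derivative of the Cartier pairing, but some care is needed in tracking the invariant derivation on the $\mathcal{O}(\mathcal{H})$-factor through the $\widehat{\otimes}_R$-structure of $\kappa$, in matching the sign conventions used in the construction of $\mathrm{HT}$, and in verifying that the ring-theoretic differentiation of the Cartier pairings at level $p^n$ assembles into the claimed identity in the $p$-adic limit.
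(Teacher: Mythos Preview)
Your proposal is correct and follows essentially the same approach as the paper: both reduce to the key identity $\partial \kappa = (\HT^*\partial)\kappa$ for $\partial \in \Lie \mathcal{H}$, and then conclude by the same formal manipulation as in Proposition~\ref{Prop:FTSymVstarEquiv}. The only difference is in how the key identity is verified: rather than passing to finite levels $\mathcal{H}[p^n]$ modulo $p^k$, the paper observes directly that $\kappa$, viewed as a character $\mathcal{H}_{T_p\mathcal{H}^\vee} \to \widehat{\mathbb{G}}_{m,T_p\mathcal{H}^\vee}$ over the base $T_p\mathcal{H}^\vee$, satisfies $\partial\kappa = \langle \partial, \kappa^*\tfrac{dt}{1+t}\rangle\kappa$ (as in Lemma~\ref{Lem.DerivationActionLocAn}), and then recognizes $\kappa^*\tfrac{dt}{1+t}$ as the Hodge--Tate map by its very definition in \S\ref{subsub:HodgeTate}---this sidesteps the limit-tracking and sign-matching you flag as obstacles.
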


\begin{Rem} \label{remark.integral-artier-duality}
If $\mathcal{H}$ is represented by a formal group, then the natural map
\begin{align}
    \mathrm{Sym}^\bullet \Lie \mathcal{H} \to \mathcal{O}(T_p \mathcal{H}^\vee)
\end{align}
induced by pulling back along the Hodge--Tate map extends to an isomorphism $\operatorname{Dif}(\mathcal{H}) \xrightarrow{\sim} \mathcal{O}(T_p \mathcal{H}^\vee)$, where $\operatorname{Dif}(\mathcal{H})$ is the $p$-adic completion of the ring of invariant differential operators on $\mathcal{H}$. This is due to Cartier, see \cite[Theorem 1.56]{ZinkCartier}.
\end{Rem}

\begin{Eg}\label{example.mahler}
    For $\mathcal{H}=\widehat{\mbb{G}}_m = \varinjlim_n \mu_{p^n}$, we have $\mathcal{H}^\vee=\mbb{Q}_p/\mbb{Z}_p= \varinjlim \frac{1}{p^n}\mbb{Z}/\mbb{Z}$ and $T_p \mathcal{H}^\vee=\ul{\mathbb{Z}_p}=\spf \Cont(\mbb{Z}_p, R)$. The functions on $\mathcal{H} \times T_p \mathcal{H}^\vee$ are naturally identified with
    \[ \Cont(\mbb{Z}_p, R) \widehat{\otimes}_{R} R[[t]] = \Cont(\mbb{Z}_p, R)[[t]] \]
    and the universal character $\kappa$ is the element
    \[ z \mapsto (1+t)^z = \sum_{k \geq 0} \binom{z}{k} t^k \textrm{ where } \binom{z}{k}:=\frac{z(z-1)\ldots(z-k+1)}{k!}.\]
    For $\mu \in \Cont(\mathbb{Z}_p, R)^{\ast} \cong \hom_{\mathbb{Z}_p}(\Cont(\mathbb{Z}_p, \mathbb{Z}_p), R)$, i.e., an $R$-valued measure on $\mathbb{Z}_p$, the Fourier transform $\mathbb{F}_{\mathcal{H}}$ sends $\mu$ to 
    \begin{equation}\label{eq.mahler-character-expansion} \mu(z \mapsto (1+t)^z) = \sum_{k \geq 0} \mu\left(\binom{z}{k}\right) t^k \in R[[t]]. \end{equation}
    That this is an isomorphism is equivalent to the claim that the binomial coefficients $\left(\binom{z}{k}\right)_{k \geq 0}$ are a Hilbert basis for $\Cont(\mathbb{Z}_p, \mathbb{Z}_p)$, as first established in \cite{Mahler}. Under this isomorphism, multiplication by the coordinate function $z$ on $\mathbb{Z}_p$ (i.e., the algebra map $\mathbb{Z}_p \hookrightarrow R$) is matched with precomposition with the invariant derivation $(1+t) \partial_t$, as one readily verifies in the formula by applying the chain rule to compute 
    \[ (1+t)\partial_t \left((1+t)^z\right) = (1+t) (z (1+t)^{z-1})=z(1+t)^z. \]
\end{Eg}

\begin{Eg}
In Example \ref{example.mahler}, $\Cont(\mathbb{Z}_p, R)$ has the $p$-adic topology so the linear dual is equal to the continuous dual. When one writes down the dual Fourier transform $\mathbb{F}_{\mathcal{H}}$ in the same setting, it is important to keep track of the correct topology already mod $p$ when computing the dual. Indeed, $R[[t]]$ is equipped with the $(p,t)$-adic topology, so that the elements of $R[[t]]^{\ast}$ are the $R$-linear functionals $\mu$ satisfying $\mu(t^k) \rightarrow 0$ as $k \rightarrow \infty$; this is necessary to have that $\sum_{k \geq 0} \binom{z}{k} \mu(t^k)$ converges in $\Cont(\mathbb{Z}_p, R)$ so the Fourier transform is well-defined.
\end{Eg}

\begin{Rem}
As in Remark \ref{remark.locally-compact-analog}, it is natural to look for a more general category of commutative group functors on nilpotent $R$-algebras that is preserved under Cartier duality and contains both $p$-divisible groups and their duals, and to extend Theorem \ref{thm.integral-fourier-theory} to this setting. Any reasonable such category should also be closed under extensions; for example, just as the category of locally compact abelian groups contains the extension $\mathbb{R}$ of the compact group $\mathbb{R}/\mathbb{Z}=S^1$ by the discrete group $\mbb{Z}$, this category should also contain the universal cover $\widetilde{\mathcal{H}}=\varprojlim_{[p]} \mathcal{H}$ of any $p$-divisible group $\mathcal{H}$, which is an extension of $\mathcal{H}$ by $T_p \mathcal{H}$. In the integral case, one can deduce Fourier isomorphisms for universal covers from the Fourier isomorphisms for $p$-divisible groups; this will be explained in \cite{GHHBC}. From this more general perspective, the compatibility for the action of $\mathrm{Sym}^\bullet \Lie \mathcal{H}$ is closely related to the functoriality of Cartier duality for the inclusion into $\mathcal{H}$ of the divided powers neighborhood of the identity (which is naturally identified with the divided powers neighborhood of the identity in $\Lie \mathcal{H}$). 
\end{Rem}

Theorem \ref{thm.integral-fourier-theory} and Proposition \ref{Prop:IntegralEquivariance} are proved in \S \ref{ss.integral-fourier-proof}. In \S \ref{ss.integral-rational-comparison} we establish, for $(R,R^+)$ a fiercely v-complete Huber pair, a compatibility between the integral Fourier theory over $R^+$ and the rigid analytic Fourier theory on the generic fiber (Theorem \ref{thm.integral-rational-compatibility}). 

\begin{Eg}
Suppose $R=\mathcal{O}_K$, for $K/\mathbb{Q}_p$ a non-archimedean extension. By a result of Amice \cite{AmiceInterpolation}, after inverting $p$, the formula \eqref{eq.mahler-character-expansion} for the integral Fourier transform $\mathbb{F}^{\mathcal{H}}$ of Example \ref{example.mahler} restricts to a bijection between the continuous dual of the space of locally analytic $K$-valued functions on $\mathbb{Z}_p$ and the subring of $K[[t]]$ consisting of power series converging for $|t|<1$, i.e., the space of global functions on the rigid analytic generic fiber $\widehat{\mathbb{G}}_{m,\eta}$. This compatibility between the integral and rigid analytic Fourier transforms is a special case of the general compatibility in Theorem \ref{thm.integral-rational-compatibility}. The general case is more subtle because, for a general $p$-divisible group, we do not know whether the integral functions on the Tate module and the Hodge--Tate locally analytic functions on its generic fiber live naturally inside a common space (see Remark \ref{remark.non-injectivity}). 
\end{Eg}

\subsection{Proofs for integral Fourier theory}\label{ss.integral-fourier-proof}

\subsubsection{} 
We first explain how Cartier duality can be interpreted as Fourier theory for finite flat group schemes. Theorem \ref{thm.integral-fourier-theory} will be proved by bootstrapping from this case. 

Let $R$ be any commutative ring and let $\mathcal{H}$ be a finite locally free commutative group scheme over $\spec R$. For $\Hom$ the sheaf hom of groups on the fpqc site of $R$, let
\begin{align}
    \mathcal{H}^D=\Hom(\mathcal{H}, \gm)
\end{align}
be the Cartier dual of $\mathcal{H}$, which is itself representable by a finite locally free commutative group scheme, see, e.g, \cite[\S1.2]{Tate}. There is a canonical pairing
\begin{align}
    \kappa:\mathcal{H}\times \mathcal{H}^{D} \to \gm
\end{align}
and its composition with $\gm \to \mathbb{A}^1$ gives us an element
\begin{align}
    \kappa \in \mathcal{O}(\mathcal{H}\times \mathcal{H}^D) \simeq \oh \otimes_{R} \mathcal{O}(\mathcal{H}^{D}).
\end{align}
Contracting with this element defines $R$-linear maps (where the superscript $\ast$-denotes $R$-linear dual)
\begin{align}
    \mathbb{F}^{\mathcal{H}}:\oh^{\ast} &\to \mathcal{O}(\mathcal{H}^{D}) \\
    \mathbb{F}_{\mathcal{H}}:\mathcal{O}(\mathcal{H}^{D})^{\ast} &\to \oh.
\end{align}

We equip $\mathcal{O}(\mathcal{H})^{\ast}$ and $\mathcal{O}(\mathcal{H}^D)^{\ast}$ with the dual Hopf $R$-algebra structures. 

\begin{Lem} \label{Lem:CartierDualityFiniteFlat}
    The maps $ \mathbb{F}_{\mathcal{H}}$ and $\mathbb{F}^{\mathcal{H}}$ are dual isomorphisms of Hopf $R$-algebras.
\end{Lem}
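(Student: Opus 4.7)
The plan is to reduce the entire statement to Cartier duality, which already supplies an isomorphism of Hopf algebras $\mathcal{O}(\mathcal{H}^D) \cong \mathcal{O}(\mathcal{H})^{\ast}$ (with the dual Hopf structure on the right) whenever $\mathcal{H}$ is finite locally free. The content of the lemma is then to identify $\mathbb{F}^{\mathcal{H}}$ and $\mathbb{F}_{\mathcal{H}}$ with this Cartier duality isomorphism (and its dual), after which the Hopf-algebra and mutual duality assertions become automatic.

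The first step is to unravel $\kappa$. Working Zariski-locally on $\Spec R$, I may assume $A := \mathcal{O}(\mathcal{H})$ is free of finite rank with basis $e_1, \dots, e_n$ and dual basis $e_1^{\ast}, \dots, e_n^{\ast}$. Under Cartier duality $\mathcal{O}(\mathcal{H}^D) = A^{\ast}$, so $\mathcal{O}(\mathcal{H} \times \mathcal{H}^D) = A \otimes_R A^{\ast} = \End_R(A)$. On $S$-points, $\mathcal{H}^D(S)$ consists of group-like elements of $A \otimes_R S$, and the universal such element (corresponding to the identity of $\mathcal{O}(\mathcal{H}^D)$) is precisely $\mathrm{id}_A = \sum_i e_i \otimes e_i^{\ast}$. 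The pairing $\kappa(x,g) = g(x)$ is represented, as a function on $\mathcal{H} \times \mathcal{H}^D$, by this canonical element: for any $(x,g) \in \mathcal{H}(S) \times \mathcal{H}^D(S)$ one verifies directly that $(x \otimes g)(\sum_i e_i \otimes e_i^{\ast}) = \sum_i x(e_i) g(e_i^{\ast})$ agrees with evaluating $x$ on the group-like element in $A \otimes_R S$ attached to $g$.

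With $\kappa$ identified with the canonical element, the next step is an immediate computation: for $\mu \in A^{\ast}$,
\[
\mathbb{F}^{\mathcal{H}}(\mu) = (\mu \otimes \mathrm{id})\!\left(\sum_i e_i \otimes e_i^{\ast}\right) = \sum_i \mu(e_i)\, e_i^{\ast} = \mu,
\]
so $\mathbb{F}^{\mathcal{H}}$ is the identity $A^{\ast} \to A^{\ast}$ under the Cartier duality identification $\mathcal{O}(\mathcal{H}^D) = A^{\ast}$. A symmetric computation shows $\mathbb{F}_{\mathcal{H}}\colon A^{\ast\ast} \to A$ is the canonical reflexivity isomorphism composed with Cartier duality. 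Both are therefore isomorphisms of Hopf algebras, the Hopf structures matching precisely because that is the content of Cartier duality (the algebra/coalgebra structures on $A$ and $A^{\ast}$ swap roles). Finally, the mutual duality $(\mathbb{F}^{\mathcal{H}})^{\ast} = \mathbb{F}_{\mathcal{H}}$ is immediate from the symmetric definition of both maps via contraction with the same $\kappa \in A \otimes_R A^{\ast}$.

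The only genuine issue to watch is that the identifications are compatible across all the structures simultaneously, but since the Zariski-local reduction is harmless and the computation above is a one-liner, there is no real obstacle; the lemma is essentially a repackaging of Cartier duality into the language of Fourier theory. This clean finite flat case is exactly what will be bootstrapped (via passage to the limit over $\mathcal{H}[p^n]$, after $p$-adic completion, and duality) to establish Theorem~\ref{thm.integral-fourier-theory}.
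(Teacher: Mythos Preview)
Your proposal is correct and is essentially a direct unpacking of the Cartier duality statement that the paper simply cites (Demazure--Gabriel, II.1.2.10) as its entire proof. You have made explicit the identification of $\kappa$ with the canonical element of $A\otimes_R A^{\ast}$ and the resulting identification of $\mathbb{F}^{\mathcal{H}}$ with the Cartier duality isomorphism, which is exactly what that reference contains; so your approach and the paper's coincide, with yours supplying the details in place of the citation.
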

\begin{proof}
    See \cite[II.1.2.10]{DemazureGabriel}.
\end{proof}

\subsubsection{} \label{subsub:StructureSheaf} Let $X=\varinjlim_n X_n$ be an ind-scheme written as a (countable) direct limit of qcqs schemes with transition maps being closed immersions (for example an affine formal scheme). We set $\mathcal{O}(X)=\varprojlim_n \mathcal{O}(X_n)$ with its inverse limit topology; this is a linearly topologized ring in the sense of \cite[Tag 07E8]{stacks-project}. Note that the topological ring $\mathcal{O}(X)$ does not depend on the choice of presentation: If $X=\varinjlim_m Y_m$ is another presentation, then the natural map $Y_m \to X$ factors through $X_n$ for some $n \gg 0$ since $Y_m$ is qcqs.  \smallskip

\textbf{Warning:} In the rest of this section we will consider all objects as ind-schemes, including $p$-divisible groups like $\qp/\zp$ which happen to be (non quasi-compact) schemes. In this example our definition gives $\mathcal{O}(\qp/\zp)= \textstyle \prod_{\qp/\zp} R$ with the product topology. 

\begin{Lem} \label{Lem:CompletedKunneth}
Let $R$ be a commutative ring and let $X= \varinjlim_n X_n$ and $Y=\varinjlim_m Y_m$ be ind-$R$-schemes. Then there is a natural isomorphism
    \begin{align}
        \mathcal{O}(X) \widehat{\otimes}_{R} \mathcal{O}(Y) \to \mathcal{O}(X \times_{\spec R} Y),
    \end{align}
    where the completed tensor product is defined as in \cite[Tag 0AMU]{stacks-project}.
\end{Lem}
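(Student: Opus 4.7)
The plan is to unwind both sides as explicit double inverse limits and match them term by term. First, since the transition maps in $X=\varinjlim_n X_n$ and $Y=\varinjlim_m Y_m$ are closed immersions and all $X_n$, $Y_m$ are qcqs, the fiber product is again an ind-scheme written in the same format:
\[
X\times_{\Spec R} Y \;=\; \varinjlim_{(n,m)} X_n\times_{\Spec R} Y_m,
\]
with closed immersion transitions. By the definition in \S\ref{subsub:StructureSheaf}, this gives
\[
\mathcal{O}(X\times_{\Spec R} Y) \;=\; \varprojlim_{(n,m)} \mathcal{O}(X_n\times_{\Spec R} Y_m).
\]

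Next, I would unwind the definition of the completed tensor product in \cite[Tag 0AMU]{stacks-project}. The inverse limit topology on $\mathcal{O}(X)=\varprojlim_n \mathcal{O}(X_n)$ is generated by the open submodules $I_n:=\ker(\mathcal{O}(X)\to\mathcal{O}(X_n))$, so $\mathcal{O}(X)/I_n=\mathcal{O}(X_n)$, and similarly for $\mathcal{O}(Y)$ with $J_m:=\ker(\mathcal{O}(Y)\to\mathcal{O}(Y_m))$. Applying the Stacks Project definition then yields
\[
\mathcal{O}(X)\,\widehat{\otimes}_{R}\,\mathcal{O}(Y) \;=\; \varprojlim_{(n,m)} \bigl(\mathcal{O}(X)/I_n\bigr)\otimes_{R}\bigl(\mathcal{O}(Y)/J_m\bigr) \;=\; \varprojlim_{(n,m)} \mathcal{O}(X_n)\otimes_{R}\mathcal{O}(Y_m).
\]

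The remaining step is to identify $\mathcal{O}(X_n\times_{\Spec R} Y_m)$ with $\mathcal{O}(X_n)\otimes_R\mathcal{O}(Y_m)$ compatibly with the transition maps. In all applications in the paper (namely to $p$-divisible groups and their Tate modules), each $X_n$ and $Y_m$ is affine, so this is the standard Kunneth isomorphism $\mathcal{O}(\Spec A\times_{\Spec R}\Spec B)=A\otimes_R B$; its naturality in $A$ and $B$ delivers the required compatibility with the transition maps and hence a natural isomorphism of the two inverse limits. The main point to check carefully — and really the only nontrivial bookkeeping — is that this identification is compatible not just as $R$-modules but as linearly topologized $R$-algebras, so that the resulting map $\mathcal{O}(X)\,\widehat{\otimes}_R\,\mathcal{O}(Y)\to\mathcal{O}(X\times_{\Spec R}Y)$ is a map of complete linearly topologized $R$-algebras; this is immediate from the functoriality of the Kunneth map at each finite stage.
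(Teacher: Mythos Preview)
Your proof is correct and follows essentially the same approach as the paper: both unwind $\mathcal{O}(X\times_{\Spec R}Y)$ and $\mathcal{O}(X)\,\widehat{\otimes}_R\,\mathcal{O}(Y)$ as the double inverse limit $\varprojlim_{n,m}\mathcal{O}(X_n)\otimes_R\mathcal{O}(Y_m)$ and identify them. Your explicit caveat that the identification $\mathcal{O}(X_n\times_{\Spec R}Y_m)\cong\mathcal{O}(X_n)\otimes_R\mathcal{O}(Y_m)$ requires the $X_n,Y_m$ to be affine is well taken---the paper's proof silently uses this, and indeed all applications in \S\ref{s.integral-fourier} are to affine schemes (finite flat group schemes and Tate modules).
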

\begin{proof}
    It follows from the definition that
    \begin{align}
         \mathcal{O}(X \times_{\spec R} Y) = \varprojlim_{n,m} \mathcal{O}(X_n \times_{\spec R} Y_m) = \varprojlim_{n,m} \mathcal{O}(X_n) \otimes_{R} \mathcal{O}(Y_m),
    \end{align}
    which equals $\mathcal{O}({X}) \widehat{\otimes}_{R} \mathcal{O}(Y)$ per definition. 
\end{proof}

\subsubsection{} \label{subsub:HodgeTate} Let $R$ be a commutative ring and let $\mathcal{H}$ be a $p$-divisible group over $\spec R$ (in the sense of \cite{Tate}). Let $\mathcal{H}^{\vee}$ be the Serre-dual of $\mathcal{H}$, which is defined as the $p$-divisible group with $\mathcal{H}^{\vee}[p^n]=\mathcal{H}[p^n]^{D}$, where the inclusions $\mathcal{H}[p^n]^{D} \to \mathcal{H}[p^{n+1}]^{D}$ are induced by the multiplication by $[p]$ map $\mathcal{H}[p^{n+1}] \to \mathcal{H}[p^n]$. Since
\begin{align}
    \mathcal{H}=\varinjlim_n \mathcal{H}[p^n],
\end{align}
it follows that
\begin{align}
    \Hom(\mathcal{H}, \gm) = \varprojlim_n \mathcal{H}^{\vee}[p^n]=T_p \mathcal{H}^{\vee}.
\end{align}
The map $T_p \mathcal{H}^{\vee}=\Hom(\mathcal{H}, \gm) \to \omega_{H}$ sending $f \mapsto f^{\ast} \tfrac{\mathrm{d}t}{1+t}$ is called the \emph{Hodge--Tate map}. 

\subsubsection{} Recall that $\oh=\varprojlim_n \mathcal{O}(\mathcal{H}[p^n])$ equipped with the inverse limit topology.  By Lemma \ref{Lem:CompletedKunneth}, the commutative group structure on $\mathcal{H}$ over $R$ induces on $\oh$ the structure of a topological Hopf algebra over $R$. 

\subsubsection{} Write $\oh^{\ast}$ for the continuous $R$-linear dual of $\oh$, so that $\oh^{\ast}=\varinjlim_n \mathcal{O}(\mathcal{H}[p^n])^{\ast}$, and equip $\oh^\ast$ with the weak (equivalently, discrete) topology. Here we are using the fact that a continuous $R$-linear morphism from an inverse limit to a discrete $R$-module factors through one of the terms in the inverse limit. Then the natural map
\begin{align}
    \oh^{\ast} \otimes_{R} \oh^{\ast} \to ( \oh \widehat{\otimes}_{R} \oh)^{\ast}
\end{align}
is an isomorphism, showing that $\oh^{\ast}$ is naturally an Hopf $R$-algebra. We also have a canonical identification
\[ (\oh^{\ast})^\ast = \varprojlim_n (\mathcal{O}(\mathcal{H}[p^n])^{\ast})^\ast = \varprojlim_n \mathcal{O}(\mathcal{H}[p^n]) = \oh. \]

Using the group structure on $T_p \mathcal{H}$, we find $\mathcal{O}(T_p \mathcal{H})=\varinjlim_n \mathcal{O}(\mathcal{H}[p^n])$ is naturally an Hopf $R$-algebra. We equip $\mathcal{O}(T_p \mathcal{H})^{\ast} = \varprojlim_n \mathcal{O}(\mathcal{H}[p^n])^{\ast}$ with the inverse limit topology (equivalently, the weak topology). The natural map 
\[ \mathcal{O}(T_p \mathcal{H})^{\ast} \widehat{\otimes}_R \mathcal{O}(T_p \mathcal{H})^{\ast} \rightarrow \left(\mathcal{O}(T_p \mathcal{H}) \otimes_R \mathcal{O}(T_p \mathcal{H}) \right)^{\ast} \]
is an isomorphism, showing that $\mathcal{O}(T_p \mathcal{H})^{\ast}$ is a topological Hopf algebra over $R$. We also have a canonical identification
\begin{multline*}(\mathcal{O}(T_p \mathcal{H})^{\ast})^\ast =  (( \varinjlim_n \mathcal{O}(\mathcal{H}[p^n]))^{\ast})^\ast = (\varprojlim_n \mathcal{O}(\mathcal{H}[p^n])^{\ast})^{\ast}\\=\varinjlim_n (\mathcal{O}(\mathcal{H}[p^n])^{\ast})^{\ast} = \varinjlim_n \mathcal{O}(\mathcal{H}[p^n])=\mathcal{O}(T_p \mathcal{H}). \end{multline*}

\subsubsection{} There is a canonical pairing
\begin{align}
    \kappa: \mathcal{H} \times T_p \mathcal{H}^{\vee} \to \gm
\end{align}
giving us an element (see Lemma \ref{Lem:CompletedKunneth})
\begin{align}
    \kappa \in \oh \widehat{\otimes}_{R} \mathcal{O}(T_p \mathcal{H}^{\vee}). 
\end{align}
This element induces continuous $R$-linear maps
\begin{align}
    \mathbb{F}^{\mathcal{H}}&:\oh^{\ast}
    \to \mathcal{O}(T_p \mathcal{H}^{\vee}) \\
    \mathbb{F}_{\mathcal{H}}&:\mathcal{O}(T_p \mathcal{H}^{\vee})^{\ast} \to \oh.
\end{align}
\begin{Prop} \label{Prop:IntegralFourierDiscrete}
The morphisms $ \mathbb{F}_{\mathcal{H}}$ and $\mathbb{F}^{\mathcal{H}}$ are dual isomorphisms of topological Hopf $R$-algebras.
\end{Prop}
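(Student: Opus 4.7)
The plan is to reduce the proposition to Cartier duality for each finite flat group scheme $\mathcal{H}[p^n]$ (Lemma \ref{Lem:CartierDualityFiniteFlat}) by exploiting the ind/pro-system structure $\mathcal{H}=\varinjlim_n \mathcal{H}[p^n]$ and $T_p\mathcal{H}^\vee = \varprojlim_n \mathcal{H}[p^n]^D$, and then pass to the (co)limit.

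First, I would unpack the relationship between $\kappa$ and the Cartier-duality pairings $\kappa_n \colon \mathcal{H}[p^n]\times \mathcal{H}[p^n]^D \to \mathbb{G}_m$. By construction, the universal character restricts on $\mathcal{H}[p^n]\times \mathcal{H}[p^n]^D$ to $\kappa_n$, and the induced element of $\mathcal{O}(\mathcal{H}[p^n])\otimes_R \mathcal{O}(\mathcal{H}[p^n]^D)$ is precisely the element whose contraction realizes $\mathbb{F}^{\mathcal{H}[p^n]}$ and $\mathbb{F}_{\mathcal{H}[p^n]}$ of Lemma \ref{Lem:CartierDualityFiniteFlat}. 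Using the identifications
\[
\mathcal{O}(\mathcal{H}) = \varprojlim_n \mathcal{O}(\mathcal{H}[p^n]),\qquad \mathcal{O}(T_p\mathcal{H}^\vee) = \varinjlim_n \mathcal{O}(\mathcal{H}[p^n]^D),
\]
and Lemma \ref{Lem:CompletedKunneth} to identify $\mathcal{O}(\mathcal{H})\,\widehat\otimes_R\, \mathcal{O}(T_p\mathcal{H}^\vee)$ with $\varprojlim_n \mathcal{O}(\mathcal{H}[p^n])\otimes_R \mathcal{O}(\mathcal{H}[p^n]^D)$, the element $\kappa$ is precisely the compatible system $(\kappa_n)_n$.

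Second, I would deduce the two Fourier isomorphisms at the level of the limits. Since the topology on $\mathcal{O}(\mathcal{H})$ is the inverse limit of discrete topologies, continuous linearity gives $\mathcal{O}(\mathcal{H})^{\ast} = \varinjlim_n \mathcal{O}(\mathcal{H}[p^n])^{\ast}$; similarly the weak topology on $\mathcal{O}(T_p\mathcal{H}^\vee)^{\ast}$ identifies it with $\varprojlim_n \mathcal{O}(\mathcal{H}[p^n]^D)^{\ast}$. Contracting with $\kappa = (\kappa_n)_n$ is compatible with these (co)limits, so $\mathbb{F}^{\mathcal{H}} = \varinjlim_n \mathbb{F}^{\mathcal{H}[p^n]}$ and $\mathbb{F}_{\mathcal{H}} = \varprojlim_n \mathbb{F}_{\mathcal{H}[p^n]}$. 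Each $\mathbb{F}^{\mathcal{H}[p^n]}$ and $\mathbb{F}_{\mathcal{H}[p^n]}$ is an isomorphism of (finite flat) Hopf $R$-algebras by Lemma \ref{Lem:CartierDualityFiniteFlat}, and since filtered colimits and countable inverse limits of isomorphisms are isomorphisms, so are $\mathbb{F}^{\mathcal{H}}$ and $\mathbb{F}_{\mathcal{H}}$. Continuity of $\mathbb{F}_{\mathcal{H}}$ is automatic from the fact that it is a limit of maps between discrete modules.

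Third, I would verify compatibility with the (topological) Hopf algebra structures and the mutual duality. The comultiplications on $\mathcal{O}(\mathcal{H})$ and $\mathcal{O}(T_p\mathcal{H}^\vee)$ come from the group laws on $\mathcal{H}$ and $T_p\mathcal{H}^\vee$ via Lemma \ref{Lem:CompletedKunneth}, which is in turn the limit of the corresponding finite-level statements; the induced Hopf structures on the duals (using that the natural maps $\mathcal{O}(\mathcal{H})^{\ast}\otimes_R \mathcal{O}(\mathcal{H})^{\ast}\to (\mathcal{O}(\mathcal{H})\,\widehat\otimes_R\,\mathcal{O}(\mathcal{H}))^{\ast}$ and $\mathcal{O}(T_p\mathcal{H}^\vee)^{\ast}\,\widehat\otimes_R\, \mathcal{O}(T_p\mathcal{H}^\vee)^{\ast}\to (\mathcal{O}(T_p\mathcal{H}^\vee)\otimes_R\mathcal{O}(T_p\mathcal{H}^\vee))^{\ast}$ are isomorphisms, already noted in the setup) are therefore also obtained by passing to (co)limits of the finite-level structures. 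Thus the Hopf compatibility of $\mathbb{F}^{\mathcal{H}}$ and $\mathbb{F}_{\mathcal{H}}$ follows immediately from the Hopf compatibility of the $\mathbb{F}^{\mathcal{H}[p^n]}$ and $\mathbb{F}_{\mathcal{H}[p^n]}$. The mutual duality $\mathbb{F}_{\mathcal{H}} = (\mathbb{F}^{\mathcal{H}})^{\ast}$ is inherited from the mutual duality at each finite level via the canonical identifications $(\mathcal{O}(\mathcal{H})^{\ast})^{\ast} = \mathcal{O}(\mathcal{H})$ and $(\mathcal{O}(T_p\mathcal{H}^\vee)^{\ast})^{\ast} = \mathcal{O}(T_p\mathcal{H}^\vee)$ recorded above.

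The argument is essentially bookkeeping, and no step poses a serious obstacle; the main care needed is to confirm that the inverse and direct limits commute with the relevant duality and tensor operations on the specified topologies — precisely what Lemma \ref{Lem:CompletedKunneth} and the continuity observation about maps out of inverse limits of discrete modules ensure.
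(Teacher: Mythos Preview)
Your proposal is correct and follows essentially the same approach as the paper: reduce to Cartier duality for each $\mathcal{H}[p^n]$ via Lemma \ref{Lem:CartierDualityFiniteFlat} and pass to the (co)limit. Your write-up is more detailed than the paper's two-line proof, and in fact you have the directions of the limits right ($\mathbb{F}^{\mathcal{H}}$ as a direct limit, $\mathbb{F}_{\mathcal{H}}$ as an inverse limit), whereas the paper's proof appears to have these swapped.
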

\begin{proof}
The map $\mathbb{F}_{\mathcal{H}}$ is a direct limit over $n$ of the maps $\mathbb{F}_{\mathcal{H}[p^n]}$ from Lemma \ref{Lem:CartierDualityFiniteFlat}, and similarly $\mathbb{F}^{\mathcal{H}}$ is an inverse limit over $n$ of the maps $\mathbb{F}^{\mathcal{H}[p^n]}$ from Lemma \ref{Lem:CartierDualityFiniteFlat}. The result thus follows directly from Lemma \ref{Lem:CartierDualityFiniteFlat}.
\end{proof}

\begin{Rem}
    Note that the topologies on $\oh^{\ast}$ and $ \mathcal{O}(T_p \mathcal{H}^{\vee})$ are discrete, so that $\mathbb{F}_{\mathcal{H}}$ is an isomorphism between discrete Hopf $R$-algebras. 
\end{Rem}

\subsubsection{} Let $R$ be a $p$-adically complete and separated ring, and $\mathcal{H}$ a $p$-divisible group over $\spf R$. Then $\mathcal{H}$ is an ind-scheme with presentation $\mathcal{H}=\varinjlim_{n,k} \mathcal{H}[p^n]_{R/p^k}$ and thus 
 \[ \mathcal{O}(\mathcal{H})=\varprojlim_k \varprojlim_n \mathcal{O} (\mathcal{H}[p^n]_{R/p^k}) \]
with the inverse limit topology. Equivalently, this is 
\[ \mathcal{O}(\mathcal{H})=\varprojlim_k \mathcal{O}(\mathcal{H}_{R/p^k}) \]
 with the inverse limit of the topologies considered above, or, swapping the order, the inverse limit of the $p$-adic topologies on $\mathcal{O} (\mathcal{H}[p^n])$ in
 \[ \mathcal{O}(\mathcal{H}) = \varprojlim_n \mathcal{O} (\mathcal{H}[p^n]). \]
It similarly follows that
\[ \mathcal{O}(T_p \mathcal{H}^{\vee})=\varprojlim_k \mathcal{O}(T_p \mathcal{H}^{\vee}_{R/p^k}) \]
acquires the inverse limit topology (i.e., the $p$-adic topology, since the terms in the limit have the discrete topology). Once again we find that $\oh$, $\oh^{\ast}$, $\mathcal{O}(T_p \mathcal{H}^{\vee})$ and, $\mathcal{O}(T_p \mathcal{H}^{\vee})^{\ast}$ are topological Hopf $R$-algebras, where a superscript $\ast$ denotes the continuous $R$-linear dual equipped with the weak topology.
There is a pairing
\begin{align}
    \kappa: \mathcal{H} \times T_p \mathcal{H}^{\vee} \to \mathbb{G}_{m, \spf R}
\end{align}
giving us an element (see Lemma \ref{Lem:CompletedKunneth})
\begin{align}
    \kappa \in \mathcal{O}( \mathcal{H} \times T_p \mathcal{H}^{\vee}) =\oh \widehat{\otimes}_{R} \mathcal{O}(T_p \mathcal{H}^{\vee}). 
\end{align}
This element induces continuous $R$-linear maps
\begin{align}
    \mathbb{F}^{\mathcal{H}}&:\oh^{\ast}
    \to \mathcal{O}(T_p \mathcal{H}^{\vee}) \\
    \mathbb{F}_{\mathcal{H}}&:\mathcal{O}(T_p \mathcal{H}^{\vee})^{\ast} \to \oh.
\end{align}
The following proposition is Theorem \ref{thm.integral-fourier-theory}. 
\begin{Prop} \label{Prop:IntegralFourier}
The morphisms $ \mathbb{F}_{\mathcal{H}}$ and $\mathbb{F}^{\mathcal{H}}$ are isomorphisms of complete linearly topologized Hopf $R$-algebras. 
\end{Prop}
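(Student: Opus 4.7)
The plan is to reduce Proposition \ref{Prop:IntegralFourier} to Proposition \ref{Prop:IntegralFourierDiscrete} by passing to the inverse limit over $k$ of the statements for the reductions $\mathcal{H}_{R/p^k}$, each of which is a $p$-divisible group over $\spec R/p^k$ in the sense of Proposition \ref{Prop:IntegralFourierDiscrete}. Since $R$ is $p$-adically complete and separated, the definition of $\mathcal{O}$ on ind-schemes gives canonical identifications
\[
\mathcal{O}(\mathcal{H}) = \varprojlim_k \mathcal{O}(\mathcal{H}_{R/p^k}), \qquad \mathcal{O}(T_p \mathcal{H}^{\vee}) = \varprojlim_k \mathcal{O}(T_p(\mathcal{H}^\vee)_{R/p^k}),
\]
each equipped with the inverse limit topology. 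The first key step is then to identify the continuous duals as the corresponding inverse limits:
\[
\mathcal{O}(\mathcal{H})^\ast \simeq \varprojlim_k \mathcal{O}(\mathcal{H}_{R/p^k})^\ast, \qquad \mathcal{O}(T_p \mathcal{H}^\vee)^\ast \simeq \varprojlim_k \mathcal{O}(T_p(\mathcal{H}^\vee)_{R/p^k})^\ast,
\]
where $\ast$ on the right refers to the (continuous) $R/p^k$-linear duals of Proposition \ref{Prop:IntegralFourierDiscrete}. Indeed, a continuous $R$-linear map from either of the left-hand spaces to $R$ reduces mod $p^k$ to an $R/p^k$-linear map of the corresponding reduction which is continuous in the sense of Proposition \ref{Prop:IntegralFourierDiscrete} (i.e., factors through $\mathcal{O}$ of some finite layer of the ind-structure); conversely, any compatible system of such maps reassembles uniquely to a continuous $R$-linear map.

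By Lemma \ref{Lem:CompletedKunneth}, $\mathcal{O}(\mathcal{H}) \widehat{\otimes}_R \mathcal{O}(T_p \mathcal{H}^\vee) = \varprojlim_k \mathcal{O}(\mathcal{H}_{R/p^k} \times T_p(\mathcal{H}^\vee)_{R/p^k})$, so the universal character $\kappa$ over $R$ is the inverse limit of the universal characters $\kappa_k$ over each $R/p^k$. Consequently, under the identifications above, the Fourier transforms $\mathbb{F}_{\mathcal{H}}$ and $\mathbb{F}^{\mathcal{H}}$ over $R$ are the inverse limits of the Fourier transforms $\mathbb{F}_{\mathcal{H}_{R/p^k}}$ and $\mathbb{F}^{\mathcal{H}_{R/p^k}}$ over $R/p^k$. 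Since the latter are isomorphisms of topological Hopf $R/p^k$-algebras by Proposition \ref{Prop:IntegralFourierDiscrete}, the desired isomorphisms over $R$ follow by passing to the inverse limit: the underlying bijections of $R$-modules and the compatibility of topologies are immediate, and the Hopf algebra structures on all four $R$-algebras arise as inverse limits of the corresponding $R/p^k$-level structures (again using Lemma \ref{Lem:CompletedKunneth} to commute the completed tensor product with the inverse limit). Mutual duality of $\mathbb{F}_{\mathcal{H}}$ and $\mathbb{F}^{\mathcal{H}}$ follows similarly from the level-wise dualities.

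The main technical obstacle will be the identification of continuous duals with inverse limits of the $R/p^k$-linear duals appearing in Proposition \ref{Prop:IntegralFourierDiscrete}. This relies on the description of the topology on $\mathcal{O}(\mathcal{H})$ (and on $\mathcal{O}(T_p \mathcal{H}^\vee)$) as the inverse limit of the topologies on the reductions, combined with the $p$-adic completeness of $R$, in order to lift a compatible system of mod $p^k$ functionals to a single continuous one. Once this step is in place, the remainder is essentially formal because every structure in sight — the universal character, the completed tensor products, and the Hopf algebra operations — is manifestly compatible with reduction mod $p^k$.
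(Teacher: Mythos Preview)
Your proposal is correct and follows essentially the same approach as the paper: reduce modulo $p^k$, apply Proposition \ref{Prop:IntegralFourierDiscrete} over each $R/p^k$, and pass to the inverse limit. The paper's proof is a single sentence to this effect, while you have usefully spelled out the identifications of duals and Hopf structures with their inverse limits.
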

\begin{proof}
We apply Proposition \ref{Prop:IntegralFourierDiscrete} mod $p^k$ for all $k \geq 1$. 
\end{proof}

Finally, we prove the equivariance for the actions of $\Sym^\bullet \Lie \mathcal{H}$. 

\begin{proof}[Proof of Proposition \ref{Prop:IntegralEquivariance}]
The dual Hodge-Tate map can be constructed relatively by viewing $\kappa$ as a character of $\mathcal{H}$ based changed to $T_p \mathcal{H}^\vee$ (noting that $T_p \mathcal{H}^\vee$ is itself an affine $p$-adic formal scheme),
\[ \mathcal{H}_{T_p \mathcal{H}^\vee} \xrightarrow{\kappa} \widehat{\mathbb{G}}_{m, T_p \mathcal{H}^\vee}, \]
and then pulling back $\frac{dt}{1+t}$ to an element of $\omega_{(\mathcal{H}^\vee \times_{\Spf R} T_p \mathcal{H}^\vee )/ T_p \mathcal{H}^\vee}$.

In particular, for $\partial \in \Lie \mathcal{H}$ an invariant derivation, if we view $\kappa$ as a function by the natural inclusion $\gmhat \subseteq \mathbb{A}^1$ we have (cf. the proof of Lemma \ref{Lem.DerivationActionLocAn})
\[ \partial\kappa= \langle \partial, \kappa^* \frac{dt}{1+t}\rangle \kappa = \HT^*(\partial)\kappa.\] 
The result then follows as in the proof of Proposition \ref{Prop:FTSymVstarEquiv}.
\end{proof}

\subsection{Comparison with Fourier theory on the generic fiber}\label{ss.integral-rational-comparison}
\subsubsection{} \label{subsub:IntegralNotation} Let $(R,R^+)/(\mathbb{Q}_p,\mathbb{Z}_p)$ be fiercely v-complete and let $\mathcal{H}$ be a $p$-divisible group over $\spf R^+$ with rigid generic fiber $\mathcal{H}_\eta$ over $Y=\Spa(R,R^+)$.
We write $\HT: T_p \mathcal{H}_\eta^\vee \otimes_{\ul{\mathbb{Z}_p} } \mathcal{O}_{Y_v} \rightarrow \omega_{\mathcal{H}} \otimes_{R^+} \mathcal{O}_{Y_v}$ for the rational Hodge--Tate map, and we view $T_p \mathcal{H}^\vee_{\eta}$ as a v-sheaf. To simplify our notation, below we will write
    $\mathcal{O}^{\HT-\locan}(T_p\mathcal{H}^\vee_\eta) :=\mathcal{O}^{\HT-\locan}_{T_p \mathcal{H}^\vee_\eta / Y}(Y)$, where $Y=\spd(R,R^+)$.

As in \eqref{eq.restriction-map-locally-analytic}, we have a natural $R$-linear restriction map
\[  r^\locan: \mathcal{O}^{\HT-\locan}(T_p \mathcal{H}_\eta^\vee) \hookrightarrow \mathcal{O}(T_p \mathcal{H}_\eta^\vee)\]
that is injective by Lemma \ref{lemma.restriction-map-injective}. 
We also have natural $R$-linear restriction maps
\[ r_{\mathcal{H}}: \mathcal{O}(\mathcal{H})[\tfrac{1}{p}] \rightarrow \mathcal{O}(\mathcal{H}_{\eta}) \textrm{ and }  r^\mathrm{int}: \mathcal{O}(T_p \mathcal{H}^\vee)[\tfrac{1}{p}] \rightarrow \mathcal{O}(T_p \mathcal{H}_\eta^\vee). \]

\begin{Rem}\label{remark.injectivity-r-H}
If $\mathcal{H}$ is an extension of an \'{e}tale $p$-divisible group by a formal group, then $\mathcal{H}$ is representable by a formal scheme by \cite[Lemma 3.1.1]{ScholzeWeinstein} and $r_{\mathcal{H}}$ is injective (as can be verified using the local presentation of $\widehat{\mathcal{H}}$ in \cite[Lemma 3.1.2]{ScholzeWeinstein}). We expect this injectivity to hold in general.  
\end{Rem}

\begin{Eg}
    If $R=\mathcal{O}_K$ for $K/\mathbb{Q}_p$ a non-archimedean extension, then 
    \[ \mathcal{O}(T_p\mathcal{H}^\vee_\eta)= \Cont(T_p \mathcal{H}^\vee(\overline{K}), \overline{K}^\wedge)^{\Gal(\overline{K}/K)}, \]
    where $\overline{K}^{\wedge}$ denotes a completed algebraic closure of $K$.
\end{Eg}

\begin{Rem}\label{remark.non-injectivity}
    The map $r^{\mathrm{int}}$ is an isomorphism if $\mathcal{H}=\mu_{p^\infty}$ so that $T_p \mathcal{H}^\vee=\mathbb{Z}_p$. However, in all other cases we expect the behavior of the map $r^\mathrm{int}$ to be quite subtle. For example, if $\mathcal{H}=\mathbb{Q}_p/\mathbb{Z}_p$ so that $T_p \mathcal{H}^\vee=\mathbb{Z}_p(1)$ and $R=\mathcal{O}_{E^\wedge}$ for $E$ an algebraic extension of $\mathbb{Q}_p$, then the results of \cite{FresneldeMathan.AlgebresL1padiques} imply that:
    \begin{itemize}
        \item If $E$ is deeply ramified (equivalently, $E^\wedge$ is perfectoid), then $r^\mathrm{int}$ is surjective but not injective (its kernel even contains a dense set of nilpotents!).
        \item If $E$ is not deeply ramified (equivalently, $E^\wedge$ is not perfectoid) then $r^{\mathrm{int}}$ is injective with dense image but not surjective. 
    \end{itemize}
We note that the surjectivity in the deeply ramified case is a special case of \cite[Theorem 7.4]{BhattScholzePrisms}. In fact, it is the specific case invalidating \cite[Remark II.2.4]{ScholzeTorsion}. 

Katz \cite[top of p.60]{Katz.FormalGroupsAndPAdicInterpolation} (see also \cite{BergerGaloisMeasures} and \cite{ArdakovBerger}) has also noted that $(r^{\mathrm{int}})^{\ast}$ is injective whenever $R=\mathcal{O}_{E^\wedge}$ for $E$ a discretely valued algebraic extension of $\mathbb{Q}_p$ and $\mathcal{H}$ a one-dimensional $p$-divisible formal group over $R$. When furthermore $E/\mathbb{Q}_p$ is unramified and $\mathcal{H}$ is of height $1$ or $2$, he has given an explicit description of the image of $\mathbb{F}^{\mathcal{H}}[\tfrac{1}{p}] \circ (r^\mathrm{int})^{\ast}$ in \cite[Theorem on p.60]{Katz.FormalGroupsAndPAdicInterpolation}. 

It would be interesting to have a more systematic general understanding of these results!

\end{Rem}
\subsubsection{} We continue with the notation from Section \ref{subsub:IntegralNotation}. 
Below we will also write 
\[ \mathcal{D}(\mathcal{H}_\eta):=\mathcal{D}_{\mathcal{H}_\eta/Y}(Y) \textrm{ and } \mathcal{D}^{\HT-\locan}(T_p \mathcal{H}^\vee_\eta):=\mathcal{D}_{T_p \mathcal{H}^\vee_\eta/Y}^{\HT-\locan}(Y).\]
We note that, when $\Lie \mathcal{H}$ is free, the discussion of \S\ref{sss.solid-hopf-and-duality-discussion-fiercely} implies that
\[ \ul{\mathcal{D}(\mathcal{H}_\eta)}=\ul{\mathcal{O}(\mathcal{H}_\eta)}^* \textrm{ and } \ul{\mathcal{D}^{\HT-\locan}(T_p \mathcal{H}^\vee)}=\ul{\mathcal{O}^{\HT-\locan}(T_p \mathcal{H}^\vee_\eta)}^*,\]
where the superscript $\ast$ denotes the condensed $\ul{R}$-linear dual. Following Remark \ref{remark.etale-expectations}, we expect this equality to hold always. To state our compatibility without establishing this expectation or imposing the condition that $\Lie \mathcal{H}$ be free, we observe that $r_{\mathcal{H}}^*$ and $r_\mathrm{int}^*$ can be glued, by working Zariski locally on $\Spf R^+$ where $\Lie \mathcal{H}$ is free, in order to obtain maps
\[ r_{\mathcal{H}}^*: \mathcal{D}(\mathcal{H}_\eta) \to \mathcal{O}(\mathcal{H})^*[\tfrac{1}{p}] \textrm{ and } r_\mathrm{int}^*: \mathcal{D}^{\HT-\locan}(T_p \mathcal{H}^\vee_\eta) \to \mathcal{O}(T_p \mathcal{H}^\vee)^*[\tfrac{1}{p}] \]
where here the superscript $*$ is the continuous $R^+$-linear dual. 

By Proposition \ref{Prop.TateModule}, we have $\mathcal{H}_\eta = H_{\mathcal{L}}$ for $$\mathcal{L}={(T_p \mathcal{H}_\eta, \Lie \mathcal{H} \otimes_{R^+} \mathcal{O}_{Y_v},  \Lie \mathcal{H} \otimes_{R^+} \mathcal{O}_{Y_v} \hookrightarrow T_p \mathcal{H}_\eta(-1) \otimes_{\ul{\mathbb{Z}_p}} \mathcal{O}_{Y_v})}.$$ 
In particular, taking $\mathcal{L}^{\vee}$ to be the locally analytic character datum associated to $\mathcal{L}$ in \S \ref{subsub:CharacterDatum} the Fourier transforms of $\mathbb{F}^{\mathcal{L}^\vee}$ and $\mathbb{F}_{\mathcal{L}^\vee}$ of \S\ref{Sub:FourierDef} induce the following maps on global sections:
\begin{align*} \mathbb{F}^{\mathcal{H}_\eta} :& \mathcal{D}(\mathcal{H}_\eta)  \rightarrow \mathcal{O}^{\HT-\locan}(T_p \mathcal{H}^\vee_\eta) \\
\mathbb{F}_{\mathcal{H}_\eta} :&  \mathcal{D}^{\HT-\locan}(T_p \mathcal{H}^\vee_\eta)   \rightarrow \mathcal{O}(\mathcal{H}_\eta). 
\end{align*}

\begin{Thm} \label{thm.integral-rational-compatibility}
Suppose $(R,R^+)/(\mathbb{Q}_p,\mathbb{Z}_p)$ is fiercely v-complete. If $\mathcal{H}$ is a $p$-divisible group over $\spf R^+$ with rigid analytic generic fiber $\mathcal{H}_\eta$ over $\Spa(R,R^+)$, then the following diagrams commute

\[\begin{tikzcd}
	{\mathcal{O}(\mathcal{H})^{\ast}[\tfrac{1}{p}]} & {\mathcal{O}(T_p\mathcal{H}^\vee)[\tfrac{1}{p}]} & {\mathcal{O}(\mathcal{H})[\tfrac{1}{p}]} & {\mathcal{O}(T_p \mathcal{H}^\vee)^{\ast}[\tfrac{1}{p}]} \\
	& {\mathcal{O}(T_p \mathcal{H}^\vee_\eta)} && {\mathcal{O}(T_p \mathcal{H}^\vee_\eta)^{\ast}} \\
	{\mathcal{D}(\mathcal{H}_\eta)} & {\mathcal{O}^{\HT-\locan}(T_p \mathcal{H}^\vee_\eta)} & {\mathcal{O}(\mathcal{H}_\eta)} & {\mathcal{D}^{\HT-\locan}(T_p\mathcal{H}^\vee_\eta)}.
	\arrow["{\mathbb{F}^{\mathcal{H}}[\tfrac{1}{p}]}", from=1-1, to=1-2]
	\arrow["{r^{\mathrm{int}}}", from=1-2, to=2-2]
	\arrow["{r_{\mathcal{H}}}", from=1-3, to=3-3]
	\arrow["{\mathbb{F}_{\mathcal{H}}[\tfrac{1}{p}]}"', from=1-4, to=1-3]
	\arrow["{(r^{\mathrm{int}})^{\ast}}"', from=2-4, to=1-4]
	\arrow["{(r^\locan)^{\ast}}", from=2-4, to=3-4]
	\arrow["{r_{\mathcal{H}}^{\ast}}", from=3-1, to=1-1]
	\arrow["{\mathbb{F}^{\mathcal{H}_\eta}}", from=3-1, to=3-2]
	\arrow["{r^{\locan}}"', from=3-2, to=2-2]
	\arrow["{\mathbb{F}_{\mathcal{H}_\eta}}"', from=3-4, to=3-3]
\end{tikzcd}\]
\end{Thm}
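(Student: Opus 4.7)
The plan is to reduce both diagrams to a single compatibility statement about universal characters, and then to deduce the commutativities by unwinding the definitions of the Fourier transforms as contractions against those characters.

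The central compatibility I would establish first is: under the natural map
\[ \rho : \mathcal{O}(\mathcal{H})[\tfrac{1}{p}] \widehat{\otimes}_R \mathcal{O}(T_p \mathcal{H}^\vee)[\tfrac{1}{p}] \longrightarrow \mathcal{O}(\mathcal{H}_\eta \times_Y T_p \mathcal{H}^\vee_\eta) \]
induced by $r_{\mathcal{H}}$ in the first factor and $r^{\mathrm{int}}$ in the second, the integral universal character $\kappa$ goes to the image of the rigid universal character $\kappa_{\mathcal{L}^\vee}$ under the inclusion $r^\locan$, extended $\mathcal{O}(\mathcal{H}_\eta)$-linearly. This is essentially tautological given Proposition \ref{Prop.TateModule}: both sides are defined via the evaluation pairing $(h,\tau) \mapsto \tau(h)$, and the isomorphism $\mathcal{H}_\eta^\lozenge \simeq H_\mathcal{L}$ of \emph{loc.\ cit.}\ is by construction the one that matches the Hodge--Tate map with this pairing, so that the restriction of $\kappa$ to the generic fiber coincides with $\kappa_{\mathcal{L}^\vee}$.

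Given this, the first diagram follows directly. For $\delta \in \mathcal{D}(\mathcal{H}_\eta)$, the top-right composition sends $\delta$ to the function on $T_p \mathcal{H}^\vee$ obtained by contracting $r_{\mathcal{H}}^*(\delta)$ against the first factor of $\kappa$ and restricting along $r^{\mathrm{int}}$; by the defining property $r_{\mathcal{H}}^*(\delta)(f) = \delta(r_{\mathcal{H}}(f))$ and the compatibility of characters above, this equals $r^\locan$ applied to the function $\tau \mapsto \delta(\kappa_{\mathcal{L}^\vee}(-,\tau))$, which is the bottom-left composition. The argument for the second diagram is formally dual: both paths associate to $\nu \in \mathcal{O}(T_p\mathcal{H}^\vee_\eta)^*$ the function $h \mapsto \nu\bigl(\tau \mapsto \kappa_{\mathcal{L}^\vee}(h,\tau)\bigr)$, interpreted appropriately in $\mathcal{O}(\mathcal{H}_\eta)$, with the two paths differing only in whether one first passes through $\mathcal{O}(T_p\mathcal{H}^\vee)^*[\tfrac{1}{p}]$ or through $\mathcal{D}^{\HT-\locan}(T_p \mathcal{H}^\vee_\eta)$ before contracting.

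The main obstacle will be making these formal contractions rigorous, since the integral side lives in a completed tensor product of ind-schemes while the rigid side is built from condensed sheaves on the v-site, so the manipulations pass through several layers of duality and completion. I would handle this by first working with compatible presentations: $\mathcal{H} = \varinjlim_n \mathcal{H}[p^n]$ on the integral side, and $\mathcal{H}_\eta = \bigcup_n H_n$ via Lemma \ref{Lem:GeometryOfHI} and $\Lambda = T_p\mathcal{H}_\eta^\vee$ together with the cofinal thickenings of Lemma \ref{lem.cofinal-system} on the rigid side. Over each finite stage one reduces the compatibility to a tautology about the evaluation pairing of $\mathcal{H}[p^n]$ with its Cartier dual; the identifications then pass through the completions and colimits by continuity, the universal properties of the solid tensor product, and the injectivity of $r^\locan$ established in Lemma \ref{lemma.restriction-map-injective}.
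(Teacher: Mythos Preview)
Your proposal is correct and shares the paper's core idea: both diagrams follow once you know that the integral universal character $\kappa$ restricts to the rigid universal character $\kappa_{\mathcal{L}^\vee}$ under the natural maps. The difference is in how this compatibility is verified. You propose working directly over $(R,R^+)$ via compatible presentations at finite stages (the $\mathcal{H}[p^n]$, the $H_n$ of Lemma~\ref{Lem:GeometryOfHI}, the thickenings of Lemma~\ref{lem.cofinal-system}), which forces you to track the contractions through several layers of completion and duality. The paper instead observes that the entire diagram is functorial for base change and that the generic fiber terms are v-sheaves, so one may reduce to the case where $(R,R^+)$ is affinoid perfectoid and $T_p\mathcal{H}^\vee_\eta \cong \ul{\mathbb{Z}_p}^{\oplus h}$ is trivial. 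In that situation $\mathcal{O}(T_p\mathcal{H}^\vee_\eta \times \mathcal{H}_\eta) = \Cont(\mathbb{Z}_p^h, \mathcal{O}(\mathcal{H}_\eta))$ and the assertion that $\kappa$ maps to $\kappa_{\mathcal{L}^\vee}$ becomes transparent from the construction, bypassing the presentation gymnastics entirely. Your route would work, but the v-descent reduction is considerably cleaner.
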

\begin{proof}
The diagram is functorial for change of base. Since the generic fiber terms are v-sheaves, it suffices to observe that, when $(R,R^+)$ is perfectoid and $T_p \mathcal{H}^\vee_\eta\cong \ul{\mathbb{Z}_p}^{\oplus h}$ is trivial,  
\[ \kappa \in \mathcal{O}(T_p \mathcal{H}^\vee \times \mathcal{H}) \]
maps to the universal character of Lemma \ref{Lem:UniversalKappaAsSection} inside 
\[ \mathcal{O}(T_p \mathcal{H}^\vee_\eta \times \mathcal{H}_\eta)=\Cont(\mathbb{Z}_p^h, \mathcal{O}(\mathcal{H}_\eta)). \]
But this is clear from the construction.
\end{proof}

As a consequence we obtain a partial section of $r^\mathrm{int}$ (which is very interesting as Remark \ref{remark.non-injectivity} shows that $r^\mathrm{int}$ need not be surjective nor injective!):
\begin{Cor}\label{cor.la-section}
    There is a canonical section $\mathcal{O}^{\HT-\locan}(T_p \mathcal{H}_\eta^\vee) \rightarrow \mathcal{O}(T_p \mathcal{H}^\vee)[\tfrac{1}{p}]$ of $r^\mathrm{int}$. 
\end{Cor}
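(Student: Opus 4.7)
The plan is to construct the section $s$ as the composition
\[ s : \mathcal{O}^{\HT-\locan}(T_p \mathcal{H}_\eta^\vee) \xrightarrow{(\mathbb{F}^{\mathcal{H}_\eta})^{-1}} \mathcal{D}(\mathcal{H}_\eta) \xrightarrow{r_{\mathcal{H}}^{\ast}} \mathcal{O}(\mathcal{H})^{\ast}[\tfrac{1}{p}] \xrightarrow{\mathbb{F}^{\mathcal{H}}[\tfrac{1}{p}]} \mathcal{O}(T_p \mathcal{H}^\vee)[\tfrac{1}{p}]. \]
The first map makes sense because the rational Fourier transform $\mathbb{F}^{\mathcal{H}_\eta}$ is an isomorphism by Corollary \ref{Cor:Main} (which applies since $(R,R^+)$ is fiercely v-complete), specialized to the locally analytic character datum $\mathcal{L}^\vee$ associated to $\mathcal{H}_\eta$ via Proposition \ref{Prop.TateModule}. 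The last map makes sense and is an isomorphism after inverting $p$ in Theorem \ref{thm.integral-fourier-theory}. The middle map $r_{\mathcal{H}}^{\ast}$ is the dual of the generic-fiber restriction map, constructed just before Theorem \ref{thm.integral-rational-compatibility} (locally on $\Spf R^+$ where $\Lie \mathcal{H}$ is free).

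To verify the section property, I would invoke the commutativity of the left-hand diagram in Theorem \ref{thm.integral-rational-compatibility}, which gives the identity of maps $\mathcal{D}(\mathcal{H}_\eta) \to \mathcal{O}(T_p \mathcal{H}^\vee_\eta)$:
\[ r^{\mathrm{int}} \circ \mathbb{F}^{\mathcal{H}}[\tfrac{1}{p}] \circ r_{\mathcal{H}}^{\ast} = r^{\locan} \circ \mathbb{F}^{\mathcal{H}_\eta}. \]
Postcomposing both sides with $(\mathbb{F}^{\mathcal{H}_\eta})^{-1}$ yields $r^{\mathrm{int}} \circ s = r^{\locan}$. That is, $s$ is a section of $r^{\mathrm{int}}$ along the natural inclusion $r^{\locan} : \mathcal{O}^{\HT-\locan}(T_p \mathcal{H}_\eta^\vee) \hookrightarrow \mathcal{O}(T_p \mathcal{H}^\vee_\eta)$, which is the sense in which ``section of $r^{\mathrm{int}}$'' should be interpreted in light of Remark \ref{remark.non-injectivity}. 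Canonicity is immediate, as each of the three maps entering the composition is canonical.

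There is essentially no obstacle here beyond correctly interpreting ``section'': all the substantive work has been done in assembling the rational and integral Fourier transforms and establishing their compatibility. The only subtlety worth flagging is that both $r^{\mathrm{int}}$ and $r^{\locan}$ may fail to be injective or surjective (Remark \ref{remark.non-injectivity}), so $s$ need not be injective and its image need not be large; nevertheless, the identity $r^{\mathrm{int}} \circ s = r^{\locan}$ encodes precisely the canonical integral lift of a Hodge--Tate locally analytic function, and reconciles with the subtle comparison of norms on completed tensor products discussed in that remark.
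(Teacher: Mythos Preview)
Your proposal is correct and takes essentially the same approach as the paper: the paper defines the section as $\mathbb{F}^{\mathcal{H}}[\tfrac{1}{p}] \circ r_{\mathcal{H}}^{\ast} \circ (\mathbb{F}^{\mathcal{H}_\eta})^{-1}$ and leaves the verification via the left square of Theorem~\ref{thm.integral-rational-compatibility} implicit, whereas you spell it out. One small slip in your closing remarks: $r^{\locan}$ is always injective (Lemma~\ref{lemma.restriction-map-injective}), so only $r^{\mathrm{int}}$ can fail injectivity or surjectivity.
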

\begin{proof}
    Since $\mathbb{F}^{\mathcal{H}_\eta}$ is an isomorphism by Theorem \ref{Thm:Main}, we obtain the section as $\mathbb{F}^{\mathcal{H}}[\tfrac{1}{p}] \circ r_{\mathcal{H}}^{\ast} \circ (\mathbb{F}^{\mathcal{H}_\eta})^{-1}$.
\end{proof}

\begin{Eg}
    When $\mathcal{H}=\mathbb{Q}_p/\mathbb{Z}_p$, so $T_p\mathcal{H}^\vee = \mathbb{Z}_p(1)$, the Hodge--Tate map is identically zero (its target is $\omega_{\mathbb{Q}_p/\mathbb{Z}_p}=0$). In particular, when $R=\mathcal{O}_{\mathbb{C}_p}$, the space 
    $\mathcal{O}^{\HT-\locan}(T_p \mathcal{H}_\eta^\vee)$ is given by the locally constant $\mathbb{C}_p$-valued functions on $\mathbb{Z}_p(1)$. Each locally constant function has a canonical expression as a finite linear combination of characters, and this provides the canonical section to $\mathcal{O}(\mathbb{Z}_p(1))[\tfrac{1}{p}]$. In other situations, there are many more locally analytic functions, expressible locally as power series using the Hodge--Tate map, and the existence of the canonical section expresses a non-trivial convergence condition on these series. When $\mathcal{H}$ is, e.g., a Lubin--Tate formal group, the canonical section should be closely related to the description of locally analytic functions in the associated Lubin--Tate extension given in \cite{BergerColmez.Sen}.
\end{Eg}

\section{The global Eisenstein measure} \label{Sec:Eisenstein}

\subsection{Background} \label{Sub:IntroEisenstein}

For $\Lambda \subseteq \mathbb{C}$ a lattice and $k\geq 3$ an integer, consider the series 
\[ A_k(\Lambda)=\sum_{\lambda \in \Lambda \backslash \{0\}}\frac{1}{\lambda^k}. \]
Certain values of $A_k$ can be related to special values of $L$-functions. For example, if $\Lambda=\mathbb{Z}[i]$, these are the Bernoulli--Hurwitz numbers -- in terms of $L$-functions, if we write $\chi$ for the Grossencharacter sending an ideal $I$ of $\mathbb{Z}[i]$ to $z$, for $z$ the unique generator of $I$ such that $z \equiv 1 \mod (1+i)^3$, then 
\[ A_k(\mathbb{Z}[i])=\begin{cases}
    4 L(\chi^{-k}, 0) & \textrm{ if $k \equiv 0 \mod 4$} \\
    0 & \textrm{otherwise}. 
\end{cases}\]
On the other hand, the function $A_k$ defines a modular form, the level one Eisenstein series of weight $k$ (which is identically zero if $k$ is odd) --- viewed as a function on the upper half-plane, it is given by $\tau \mapsto A_k(\mathbb{Z} + \mathbb{Z}\tau)$.

\subsubsection{} In \cites{Katz.p-adic-L-via-moduli, Katz.TheEisensteinMeasureAndpAdicInterpolation}, Katz explained how to construct, for primes $p \equiv 1 \mod 4$, a $p$-adic $L$-function interpolating the values $L(\chi^{-k}, 0)$ into a measure on $\mathbb{Z}_p$ by first interpolating the renormalized Eisenstein series 
\[ G_k := \frac{(-1)^k (k-1)!}{2} A_k \]
into a measure on $\mathbb{Z}_p$ valued in $p$-adic modular forms. One can then specialize at the lemniscate elliptic curve $E_0$ defined by the equation $y^2=4x^3-4x$, whose complex points are isomorphic to $\mathbb{C}/\mathbb{Z}[i]$, in order to obtain a measure whose values on the monomials $z \mapsto z^k$, $k\geq 3$, are renormalized Bernoulli--Hurwitz numbers. The $p$-adic $L$-function is obtained by restricting this measure to $\mathbb{Z}_p^\times$, which modifies an Euler factor at $p$ and allows to evaluate at any character $\kappa$ of $\mathbb{Z}_p^\times$ -- in particular, since these characters satisfy many $p$-adic congruences, so do the renormalized Bernoulli--Hurwitz numbers. 

The basis of this approach is an observation of Deligne that, in order to analyze congruences between the Eisenstein series, one can use the geometry of the modular curve to reduce to interpolating the \emph{positive} Fourier coefficients of the renormalized Eisenstein series. These Fourier coefficients are expressed in terms of divisor power sums, whose congruences can be analyzed trivially using Fermat's little theorem. The Kubota--Leopoldt $p$-adic zeta function is also obtained in this way by evaluating at a cusp instead of a CM point. 

\subsubsection{} This interpolation of Eisenstein series explained above occurs over the ordinary locus, thus yields a $p$-adic $L$-function interpolating Bernoulli--Hurwitz numbers only for primes $p \equiv 1 \mod 4$ (i.e., splitting in $\mathbb{Z}[i]$ so that $E_0$ has ordinary reduction).
In \cites{Katz.FormalGroupsAndPAdicInterpolation, Katz.DivisibilitiesCongruencesAndCartierDuality}, Katz explained a different construction of the $p$-adic $L$-function interpolating the Bernoulli--Hurwitz numbers that is valid also at the inert primes $p\equiv 3 \mod 4$ where $E_0$ has supersingular reduction. This construction exploits Cartier duality and the appearance of the Eisenstein series as coefficients of the Weierstrass $\wp$-function. In our language, the key observation is that, after a small modification to remove the pole at the origin, $\wp$ restricts to a regular function on the $p$-divisible group $E_0[p^\infty]$ and thus by Cartier duality yields a measure on $T_p E_0^\vee$. Katz shows moreover that, in this specific case, $(r^\mathrm{int})^*$ is injective and the measure is in its image (cf. Remark \ref{remark.non-injectivity}), i.e., the measure comes from an element of 
\[ \mathcal{O}(T_p (E_{0,\eta}^\vee))^* = \left(\Cont(T_p E_{0}^{\vee}(\mathbb{C}_p), \mathbb{C}_p)^{\Gal(\overline{\mathbb{Q}}_p/\mathbb{Q}_p)} \right)^*. \]
In the language of \cite{Katz.FormalGroupsAndPAdicInterpolation}, one thus obtains a Galois-equivariant measure; its restriction to $T_p E_{0}^{\vee}(\mathbb{C}_p) \setminus p T_p E_0^{\vee}(\mathbb{C}_p) \cong \mathbb{Z}_{p^2}^\times$ (with the Lubin--Tate Galois action) is then a natural $p$-adic $L$-function interpolating the Bernoulli--Hurwitz numbers.  

In fact, rather than viewing $\mathcal{O}(E_0[p^\infty])$ as  $\mathcal{O}(T_p E_0^\vee)^*$, in \cite{Katz.FormalGroupsAndPAdicInterpolation} Katz exploits that $E_0[p^\infty]$ is connected to view the dual of $\mathcal{O}(E_0[p^\infty])$ as the ring of invariant differential operators, see Remark \ref{remark.integral-artier-duality}; he then argues from this perspective to obtain an element of $\mathcal{O}(T_p (E_{0,\eta}^\vee))^*$. The interpretation via invariant differential operators applies only to formal groups, so cannot be used over the entire modular curve. However, by instead using our Fourier transform to identify $\mathcal{O}(E_0[p^\infty])=\mathcal{O}(T_p E_0^\vee)^*$, we can extend this approach to obtain a global Eisenstein measure. This construction and its consequences are explained in \S\ref{ss.global-eisenstein-measure}. 

\subsubsection{} As observed already by Schneider and Teitelbaum \cite{SchneiderTeitelbaumFourier}, the exotic interpretation of Katz's Bernoulli--Hurwitz $p$-adic $L$-function when $p\equiv 3 \mod 4$ as a Galois equivariant measure becomes much cleaner if one passes to the generic fiber: the $p$-adic $L$-function becomes a $\mathbb{Q}_{p^2}$-analytic distribution on $\mathbb{Z}_{p^2}^\times$. In our global setting, we may also pass to the generic fiber to obtain a Hodge--Tate analytic distribution on the Tate module of the dual of the universal elliptic curve. This distribution specializes at the lemniscate curve to recover the construction of Schneider--Teitelbaum, but also has other applications. Using this distribution we recover the usual overconvergence of Katz's Eisenstein family of $p$-adic modular forms (Example \ref{eg.overconvergent-eisenstein-series}) and construct new overconvergent versions of the Schneider--Teitelbaum $p$-adic $L$-functions that exist on open loci in the supersingular locus (Example \ref{eg.quaternionic-eisenstein-series}). These results are explained in \S\ref{ss.global-eisenstein-distribution}.

\subsection{The global Eisenstein measure}\label{ss.global-eisenstein-measure}
To state our result, we first introduce some terminology. Let $N \geq 3$ coprime to $p$. Let $\mathfrak{Y}$ be the modular curve over $\spf \zp$ of full level $N$. Let $\mathcal{E}$ be the universal elliptic curve over $\mathfrak{Y}$ and let $\omega=\omega_{\mathcal{E}}$ be the sheaf of invariant differentials relative to $Y$. We denote by $\mathfrak{Y}_{\mathrm{ord}} \subset \mathfrak{Y}$ the open locus where $\mathcal{E}$ is ordinary.

We write $\Igkatz \to \mathfrak{Y}_{\mathrm{ord}}$ for the Katz--Igusa formal scheme parameterizing trivializations $\varphi:\widehat{\mathcal{E}} \xrightarrow{\sim}\gmhat$, or equivalently $(T_p \mathcal{E}^\vee)^\et \xrightarrow{\sim} \ul{\mathbb{Z}_p}$, which is
a profinite \'{e}tale $\mathbb{Z}_p^\times$-torsor over $\mathfrak{Y}_\mathrm{ord}$. There is a canonical section $\omega_\mathrm{can}=\varphi^*\frac{dt}{1+t}$ of $\omega$ over $\Igkatz$, and $\mathbb{V}=\mathcal{O}(\Igkatz)$ is the ring of Katz--Serre $p$-adic modular forms of prime-to-$p$ level $N$. Let $\operatorname{HT} \colon T_p \mathcal{E}^{\vee} \to \omega$ denote the Hodge--Tate map and, for an integer $m \in \mbb{Z}$, we let $\operatorname{HT}^m \colon T_p \mathcal{E}^{\vee} \to \omega^{\otimes m}$ be the $m$-th fold tensor product of $\operatorname{HT}$. We can view $\operatorname{HT}^m \in \mathcal{O}(T_p\mathcal{E}^{\vee}) \otimes_{H^0(\mathfrak{Y}, \mathcal{O})} H^0(\mathfrak{Y}, \omega^m)$. 

\begin{Thm} \label{thm.integral-eisenstein}
Let $p>3$ be an integer prime. For any integer $n > 1$  coprime to $p$, there is an element 
\[ \Eis^{(n)} \in H^0(\mathfrak{Y}, \omega^2) \otimes_{H^0(\mathfrak{Y}, \mathcal{O})} \mathcal{O}(T_p \mathcal{E}^\vee)^*, \]
i.e., a weight $2$ modular form $\Eis^{(n)}$ over $\Spf \mathbb{Z}_p$ valued in measures on $T_p \mathcal{E}^\vee$, such that: For $k \geq 3$, we have
\[ 
\int \HT^{k-2} d\Eis^{(n)} = 2(1-n^k)G_{k}
\]
and $\int 1 d\Eis^{(n)} = 0$ (here we are considering $G_k$ as an element of $H^0(\mathfrak{Y}, \omega^k)$ in the usual way).
\end{Thm}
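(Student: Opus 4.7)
The plan is to define $\Eis^{(n)}$ by applying the inverse of the integral Fourier transform $\mathbb{F}_{\mathcal{E}[p^\infty]}$ of Theorem~\ref{thm.integral-fourier-theory} to a suitable regularization of the Weierstrass $\wp$-function on $\mathcal{E}/\mathfrak{Y}$. Concretely, I would view the relative Weierstrass $\wp$-function as a section
\[ \wp \in H^0(\mathfrak{Y}, \omega^2) \otimes_{\mathcal{O}_{\mathfrak{Y}}} H^0(\mathcal{E}, \mathcal{O}_{\mathcal{E}}(2[0])) \]
having a double pole only along the identity section, and set ${}_n\wp := \wp - n^2\,[n]^*\wp$, where $[n]\colon \mathcal{E}\to\mathcal{E}$ is multiplication by $n$. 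In a formal parameter $z$ at the identity one has $n^2\wp(nz) = z^{-2} + O(z^2)$, so the two double poles at $[0]$ cancel; the remaining polar divisor of ${}_n\wp$ is supported on $[n]^{-1}([0])\setminus [0] = \mathcal{E}[n]\setminus\{0\}$. Since $(n,p)=1$, this locus is disjoint from $\mathcal{E}[p^\infty]$, so restriction yields ${}_n\wp|_{\mathcal{E}[p^\infty]} \in H^0(\mathfrak{Y}, \omega^2)\otimes_{H^0(\mathfrak{Y},\mathcal{O})} \mathcal{O}(\mathcal{E}[p^\infty])$, and I define
\[ \Eis^{(n)} := (\mathrm{id}_{\omega^2}\otimes\mathbb{F}_{\mathcal{E}[p^\infty]}^{-1})\bigl({}_n\wp|_{\mathcal{E}[p^\infty]}\bigr). \]

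For the moment formula I would combine the duality between $\mathbb{F}^{\mathcal{E}[p^\infty]}$ and $\mathbb{F}_{\mathcal{E}[p^\infty]}$ with the equivariance in Proposition~\ref{Prop:IntegralEquivariance}. Let $\partial$ be an invariant derivation on $\mathcal{E}[p^\infty]$, viewed as a section of $\Lie\mathcal{E} = \omega^{-1}$. The identity $\partial\kappa = \HT(\partial)\cdot\kappa$ recorded in the proof of Proposition~\ref{Prop:IntegralEquivariance}, together with $\kappa|_0 = 1$, gives $\mathbb{F}^{\mathcal{E}[p^\infty]}(\delta_0\circ\partial^m) = \HT^m$, where $\delta_0$ denotes evaluation at the identity section. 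Dualising,
\[ \int \HT^{k-2}\,d\Eis^{(n)} \;=\; \partial^{k-2}({}_n\wp)\big|_{0} \qquad \text{in } H^0(\mathfrak{Y}, \omega^k). \]
From the classical Laurent expansion $\wp(z) = z^{-2} + \sum_{m\geq 1}(2m+1)A_{2m+2}z^{2m}$, one computes
\[ {}_n\wp(z) = \sum_{m\geq 1}(2m+1)(1-n^{2m+2})A_{2m+2}\,z^{2m}, \]
so for even $k \geq 4$, $\partial^{k-2}({}_n\wp)|_0 = (k-1)!(1-n^k)A_k = 2(1-n^k)G_k$ using $G_k = \tfrac{(k-1)!}{2}A_k$, while both sides vanish for odd $k\geq 3$. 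The case $k=2$ is immediate from ${}_n\wp|_0 = 0$.

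Beyond bookkeeping with the $\omega$-weights (which work out because $\partial$ differentiates only in the fiber direction of $\mathcal{E}\to\mathfrak{Y}$, so $\partial^{k-2}({}_n\wp)|_0$ naturally sits in $\omega^2\otimes\omega^{k-2}=\omega^k$), the argument is essentially formal once the integral Fourier theory and its equivariance are in hand: the only classical inputs are the cancellation of the double poles of $\wp$ and $n^2[n]^*\wp$ and the identification of Eisenstein series with the Laurent coefficients of $\wp$. The main point to be careful about will be the precise dictionary between differentiation along $\partial$ on $\mathcal{E}[p^\infty]$ and multiplication by $\HT$ on $T_p\mathcal{E}^\vee$ under $\mathbb{F}_{\mathcal{E}[p^\infty]}$, which is exactly what Proposition~\ref{Prop:IntegralEquivariance} supplies.
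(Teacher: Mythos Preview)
Your proposal is correct and follows essentially the same approach as the paper: define ${}_n\wp = \wp - n^2[n]^*\wp$, observe the double poles at the origin cancel so that the restriction to $\mathcal{E}[p^\infty]$ is regular, apply the inverse integral Fourier transform, and compute the moments via the Laurent expansion of $\wp$ over $\mathbb{C}$. Your identification $\int \HT^{k-2}\,d\Eis^{(n)} = \partial^{k-2}({}_n\wp)|_0$ via the duality of $\mathbb{F}^{\mathcal{E}[p^\infty]}$ and $\mathbb{F}_{\mathcal{E}[p^\infty]}$ together with Proposition~\ref{Prop:IntegralEquivariance} is exactly what the paper summarizes as ``unwinding the definitions,'' and your use of $\mathbb{F}_{\mathcal{E}[p^\infty]}^{-1}$ (subscript) is in fact the one that type-checks.
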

\begin{proof}
Consider the weight $2$ modular form which sends a non-vanishing differential $\eta$ to the meromorphic function $x$ in the unique Weierstrass equation $y^2=4x^3-g_2 x-g_3$ with $\eta=\frac{dx}{y}$, see \cite[Section 2.2]{KatzMazur}. The function $x^{(n)}:=x - n^2[n]^*x$, where $[n]:\mathcal{E} \to \mathcal{E}$ is the multiplication by $n$ map, has poles at the non-trivial $n$-torsion points (one can see that the poles at the identity of $\mathcal{E}$ cancel, e.g., from the transcendental computation given below). In particular, $x^{(n)}|_{\mathcal{E}[p^\infty]} \in \mathcal{O}(\mathcal{E}[p^\infty])$, and we obtain the weight $2$ modular form $\Eis^{(n)}$ as $(\mathbb{F}^{\mathcal{E}[p^\infty]})^{-1}(x^{(n)}|_{\mathcal{E}[p^\infty]})$, for $\mathbb{F}^{\mathcal{E}[p^\infty]}$ the integral Fourier transform as in Theorem \ref{thm.integral-fourier-theory}. 

We explain the computation for Theorem \ref{thm.integral-eisenstein}. First, unwinding the definitions, one finds that $\int \HT^{j}d\Eis^{(n)}$ is the weight $j+2$ modular form that sends a non-vanishing differential $\eta$ with dual $\partial$ to $\partial^{j} x^{(n)}|_{x=1}$, where $x^{(n)}$ is as above. This is a section of $\omega^{j+2}$ defined over $\mathbb{Z}[1/6N]$, and thus we can verify the claimed identity over $\mathbb{C}$. Then, taking $x^{(n)}$ corresponding to $(\mathbb{C}/\Lambda, dz)$ and using the expansion (see \cite[Theorem 3.5]{Silverman})
\[ 
x(z)=\wp_{\Lambda}(z)= \frac{1}{z^2} + \sum_{k\geq 3} (k-1)A_{k}(\Lambda) z^{k-2}, 
\] 
we find 
\[ x^{(n)}(z) = \wp_{\Lambda}(z)-n^2\wp_{\Lambda}(nz) = \sum_{k \geq 3} (1-n^{k})(k-1)A_k(\Lambda)z^{k-2}. \]
In particular, the constant term is zero, and for $k \geq 3$, differentiating $k-2$ times with respect to $z$ we obtain the constant term $(1-n^{k})(k-1)!A_k=2(1-n^k)G_k$, as desired.  
\end{proof}
\subsubsection{} Given a map $\spf R \to \mathfrak{Y}$, which corresponds to an elliptic curve $E \to \spf R$ (with full level $N$ structure), and a non-vanishing differential $\eta \in \omega_E$, we let $\Eis^{(n)}(E, \eta)$ denote the element of $\mathcal{O}(T_p \mathcal{E}^{\vee})^*$ given by pullback along $\spf R \to \mathfrak{Y}$ and using the trivialization $\omega_E \cong R$ given by $\eta$. Let $\mu^{(n)}:C^0(\zp,\zp) \to \mathbb{V}^{\mathrm{GL}_2(\mathbb{Z}/N\mathbb{Z})}$ be the Eisenstein measure of \cite{Katz.p-adic-L-via-moduli}. 
\begin{Thm} \label{thm.integral-eisensteinII}
Let $\pi: T_p \mathcal{E}^\vee|_{\Igkatz} \rightarrow \ul{\mathbb{Z}_p}$ be the natural map induced by $\varphi$. Then for $f \in C^0(\mathbb{Z}_p, \mathbb{Z}_p)$ and $z$ the identity function $\mathbb{Z}_p \rightarrow \mathbb{Z}_p$, we have
\[ \int_{\zp} f \; \mathrm{d}\left(\pi_* \Eis^{(n)}(\mathcal{E}|_{\Igkatz}, \omega_\mathrm{can})\right) = \int_{\zp} z \cdot \left(f(z)-f(0)\right) \;  \mathrm{d}\mu^{(n)}. \]
\end{Thm}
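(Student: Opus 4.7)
The plan is to verify the equality by comparing Amice transforms in $\mathbb{V}[[t]]$, which determine a measure in $\mathcal{O}(\ul{\mbb{Z}_p})^\ast \otimes \mathbb{V}$ (cf.\ Example \ref{example.mahler}). I would compute the Amice transforms of both sides separately and match them using Katz's characterization of $\mu^{(n)}$.

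For the LHS, I would invoke the functoriality of the integral Fourier transform. Over $\Igkatz$, the trivialization $\varphi^{-1}$ gives a closed immersion $\psi \colon \widehat{\mathbb{G}}_m \xrightarrow{\sim} \widehat{\mathcal{E}} \hookrightarrow \mathcal{E}[p^\infty]$, and its Serre-dual quotient $\mathcal{E}^\vee[p^\infty] \twoheadrightarrow \widehat{\mathbb{G}}_m^\vee = \mbb{Q}_p/\mbb{Z}_p$ induces on Tate modules exactly the map $\pi$ of the statement. The pushforward $\pi_\ast$ of distributions corresponds, under the Fourier transforms built by inverse limits from Cartier duality in Theorem \ref{thm.integral-fourier-theory}, to the pullback $\psi^\ast$ of functions. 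Combining this with the identity $\mathbb{F}_{\mathcal{E}[p^\infty]}(\Eis^{(n)}(\mathcal{E}|_{\Igkatz}, \omega_{\mathrm{can}})) = x^{(n)}|_{\mathcal{E}[p^\infty]}$ from the proof of Theorem \ref{thm.integral-eisenstein}, I obtain
\[
\mathbb{F}_{\widehat{\mathbb{G}}_m}\bigl(\pi_\ast \Eis^{(n)}(\mathcal{E}|_{\Igkatz}, \omega_{\mathrm{can}})\bigr)(t) = x^{(n)}|_{\widehat{\mathcal{E}}}(t) \in \mathbb{V}[[t]],
\]
where I implicitly identify $\mathcal{O}(\widehat{\mathcal{E}}) \cong \mathbb{V}[[t]]$ via $\varphi^\ast$.

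For the RHS, I would decompose the measure $M' \colon f \mapsto \int_{\mbb{Z}_p} z(f(z) - f(0))\, d\mu^{(n)}$ as $z \cdot \mu^{(n)} - \bigl(\int z\, d\mu^{(n)}\bigr) \delta_0$. By Example \ref{example.mahler}, multiplication by $z$ on measures corresponds via Amice to the invariant derivation $(1+t)\partial_t$ on $\mathbb{V}[[t]]$ (via the Mahler identity $z\binom{z}{k} = (k+1)\binom{z}{k+1} + k\binom{z}{k}$), and $\mathbb{F}_{\widehat{\mathbb{G}}_m}(\delta_0) = 1$. Setting $F := \mathbb{F}_{\widehat{\mathbb{G}}_m}(\mu^{(n)})$, this yields
\[
\mathbb{F}_{\widehat{\mathbb{G}}_m}(M')(t) = (1+t)\partial_t F(t) - \partial_t F(t)\big|_{t=0}.
\]

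Finally, I would match the two Amice transforms by invoking Katz's construction: his $\mu^{(n)}$ is characterized by $F \in t\, \mathbb{V}[[t]]$ being the unique normalized primitive of $x^{(n)}|_{\widehat{\mathcal{E}}}(t)$ with respect to $(1+t)\partial_t$, an equation which is consistent because $x^{(n)}|_{\widehat{\mathcal{E}}}$ has vanishing constant term (as in the proof of Theorem \ref{thm.integral-eisenstein}). This is precisely the identity $x^{(n)}|_{\widehat{\mathcal{E}}}(t) = (1+t)\partial_t F(t) - \partial_t F(t)|_{t=0}$, so the two Amice transforms agree. The main obstacle is reconciling Katz's normalization with ours; a convenient cross-check is the moment identity $\int z^{k-1}\, d\mu^{(n)} = 2(1-n^k) G_k(\mathcal{E}|_{\Igkatz}, \omega_{\mathrm{can}})$ for $k \geq 2$, which matches Theorem \ref{thm.integral-eisenstein}-(1) combined with the identification $\HT^{k-2} / \omega_{\mathrm{can}}^{k-2} = \pi^{k-2}$ over $\Igkatz$ (a consequence of $\varphi^*(dt/(1+t)) = \omega_{\mathrm{can}}$ and the definition of $\pi$ via restriction to $\widehat{\mathcal{E}}$) against Katz's moment characterization of $\mu^{(n)}$.
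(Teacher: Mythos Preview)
Your approach is correct but takes a longer route than the paper. The paper argues directly: since $\mathbb{V}$ is $p$-torsion free, it suffices to test the identity on $f=z^j$ for $j\geq 0$ (because $k!\binom{z}{k}$ lies in the $\mathbb{Z}$-span of monomials, and $p$-torsion freeness lets one cancel the $k!$). For $j=0$ both sides vanish; for $j\geq 1$ the left side is $2(1-n^{j+2})G_{j+2}(\omega_{\mathrm{can}})$ by Theorem~\ref{thm.integral-eisenstein}, and the right side is $\int z^{j+1}\,d\mu^{(n)}$, which equals the same thing by Katz's moment formula.

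Your argument instead passes through Amice transforms. The functoriality step (pushforward along $\pi$ corresponds to restriction along $\psi$) and the dictionary $z\leftrightarrow (1+t)\partial_t$ are both correct and give a pleasant structural explanation of why the identity should hold. However, the step where you assert that Katz's $\mu^{(n)}$ is \emph{characterized} by $F$ being the $(1+t)\partial_t$-primitive of $x^{(n)}|_{\widehat{\mathcal{E}}}$ is not Katz's actual construction (he builds $\mu^{(n)}$ from $q$-expansions of Eisenstein series, not from the $\wp$-function), so this is not a citable fact. Verifying that ODE for $F$ is exactly the moment comparison $\int z^{k-1}\,d\mu^{(n)}=2(1-n^k)G_k(\omega_{\mathrm{can}})$ you list as a ``cross-check''---but this cross-check is the entire content, and it is precisely the paper's proof. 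So what you gain is a conceptual packaging (the two measures have the same Amice transform because one is the $(1+t)\partial_t$-antiderivative of the other's Fourier image); what you lose is brevity, since you still need the moment identity from Katz to close the loop.
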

\begin{proof}
Since $\mathbb{V}$ is $p$-torsion free, it suffices to verify the statement on $f=z^j$, $j \geq 0$. For $j=0$, both sides give zero. For $j\geq 1$, the left-hand side gives $2(1-n^{j+2})G_{j+2}(\omega_\mathrm{can})$ by Theorem \ref{thm.integral-eisenstein}. For $j \geq 1$, the right-hand side is $\int z^{j+1}d\mu^{(n)}$, which agrees by \cite[Theorem on p.501]{Katz.p-adic-L-via-moduli}. 
\end{proof}

\begin{Rem}
    We have introduced level $N$ structure just to have a moduli scheme instead of a moduli stack, but are only interpolating the level $1$ Eisenstein series. It is also possible to construct more general global Eisenstein measures interpolating all of the level $N$ Eisenstein series and specializing to the measures of \cite{Katz.TheEisensteinMeasureAndpAdicInterpolation}; a natural way to accomplish this is to restrict the Weierstrass function to $(\mathcal{E}[p^\infty] \times \mathcal{E}[N]) \backslash (\mathcal{E}[p^\infty] \times 0)$ to obtain a measure on $T_p \mathcal{E}^\vee \times \mathcal{E}^\vee[N]$ supported on the complement of $T_p \mathcal{E}^\vee \times 0$. We leave the details to the interested reader. 
\end{Rem}

\begin{Eg}
In the setting of Theorem \ref{thm.integral-eisensteinII}, the usual weight families of $p$-depleted Eisenstein series parameterized by characters of $\mathbb{Z}_p^\times$ are obtained by restricting the measure $\mu^{(n)}$ from $\mathbb{Z}_p$ to $\mathbb{Z}_p^\times$ 
(i.e., taking the measure on $\mathbb{Z}_p^\times$ given by evaluating $\mu^{(n)}$ on elements of $\Cont(\mathbb{Z}_p^\times, \mathbb{Z}_p)$ by extending by zero). In particular, the constant term discrepancy and multiplication by $z$ in (2) do not change these resulting families: In other words, the usual weight family can be recovered from $\Eis^{(n)}$ by
\[ \kappa \mapsto \int_{\mathbb{Z}_p^\times} z^{-1}\kappa \; \mathrm{d} \left(\pi_* \Eis^{(n)}(\mathcal{E}|_{\Igkatz}, \omega_\mathrm{can})\right).\]
\end{Eg}

\begin{Eg}\label{e.g.p-equiv-3-mod-4}
For $p \equiv 3 \mod 4$, the measure $\Eis^{(n)}(E_0, \frac{dx}{y})$ is, by a direct comparison of constructions, the measure obtained by applying $(r^\mathrm{int})^*$ (as in Theorem \ref{thm.integral-rational-compatibility}) to the Galois-equivariant measure on $T_p E_0^{\vee}(\mathbb{C}_p)$ constructed in \cite{Katz.FormalGroupsAndPAdicInterpolation}.
\end{Eg}

\subsection{The global Eisenstein distribution}\label{ss.global-eisenstein-distribution}
Passing to the generic fiber $\mathfrak{Y}_\eta$ (the good reduction locus inside the rigid analytic modular curve of level $N$), we obtain a Hodge--Tate analytic distribution $\Eis^{(n)}_\eta$ on $T_p \mathcal{E}^\vee_\eta$, the Tate module of the universal elliptic curve over $\mathfrak{Y}_\eta$. Applying Theorem \ref{thm.integral-rational-compatibility}, we find it could also be described directly by applying the inverse Fourier transform on the generic fiber to the same restriction of the modified Weierstrass function. We explain now how to use $\Eis^{(n)}_\eta$ to construct interesting weight families. 

\begin{Eg}\label{eg.overconvergent-eisenstein-series} For $\epsilon=p^{-n}$, consider the $\epsilon$ neighborhood $\mathbb{Z}_{p,\epsilon}$ of $\ul{\mathbb{Z}_p}$ in $\mathbb{A}^1$ representing the functor $(R,R^+) \mapsto \ul{\zp}(R,R^+) + p^{n}R^{+}$. We also consider the subfunctor $\mathbb{Z}_{p,\epsilon}^{\times} \subset \mathbb{Z}_{p,\epsilon}$ representing $(R,R^+) \mapsto \ul{\zp^{\times}}\cdot (1+p^n R^+)$; this is an abelian group under multiplication.

For $\Spa (R,R^+)/ \Spa (\mathbb{Q}_p, \mathbb{Z}_p)$ affinoid perfectoid and a pair $(E, \eta)$ consisting of an elliptic curve $E/R^{+}$ and a non-vanishing differential $\eta$ on $E$, we say $(E,\eta)$ is $\epsilon$-ordinary if $\mathbb{Z}_{p,\epsilon} \cdot \HT(T_p E^\vee) = \mathbb{Z}_{p,\epsilon} \cdot \eta$. For a character $\kappa:\mathbb{Z}_{p,\epsilon}^{\times} \to \mathbb{G}_{m}^{\mathrm{an}}$, a $\mathbb{Z}_{p,\epsilon}$-overconvergent modular form $F$ of weight $\kappa$ is a rule assigning to any $\epsilon$-ordinary pair $(E, \eta)$ over $(R,R^+)$ an element $F(E,\eta) \in R$ such that $F(E,c^{-1}\eta)=\kappa(c)F(E,\eta)$ for $c \in \mathbb{Z}_{p,\epsilon}^\times(R)$. We obtain for any $\kappa$ as above a $\mathbb{Z}_{p,\epsilon}$-overconvergent $p$-adic modular form of weight $\kappa \cdot z^2$ by 
\[ (E, \eta) \mapsto \int_{T_p E^\vee} \kappa_! \circ \frac{\HT}{\eta} \cdot d\Eis^{(n)}(E,\eta),\]
where $\kappa_!$ is the extension by zero of $\kappa$ from $\mathbb{Z}_{p,\epsilon}^\times$ to $\mathbb{Z}_{p,\epsilon}$. This overconverges the Eisenstein series of weight $\kappa \cdot z^2$ over the closure of the ordinary locus ($\epsilon=0$) as obtained from the integral theory above.
\end{Eg}

\begin{Eg}\label{eg.quaternionic-eisenstein-series}
For $A$ an order in a quadratic extension $L/\mathbb{Q}_p$ and for $\epsilon=p^{-n}$, consider the $\epsilon$-neighborhood $A_\epsilon$ of $\ul{A}$ in $\mathbb{A}^1_L$ as above, and similarly consider $A_{\epsilon}^{\times}$. We say a pair $(E,\eta)$ over an affinoid perfectoid $\Spa(R,R^+)/\Spa (L, \mathcal{O}_L)$ is $A_\epsilon$-adapted if $A_\epsilon \cdot \HT(T_p E^\vee)= A_\epsilon \cdot \eta$. For $\kappa:A_\epsilon^\times \to \mathbb{G}_m^{\mathrm{an}}$ a character, an $A_\epsilon$-overconvergent modular form of weight $\kappa$ is a rule assigning to any $A_\epsilon$-adapted $(E,\eta)$ over $(R,R^+)$ an element $F(E,\eta) \in R$ such that $F(E, c^{-1}\eta)=\kappa(c)F(E,\eta)$ for $c \in A_{\epsilon}^\times(R)$. We obtain for any $\kappa$ an $A_\epsilon$-overconvergent $p$-adic modular form of weight $\kappa \cdot z^2$ by 
\[ (E, \eta) \mapsto \int_{T_p E^\vee} \kappa_! \circ \frac{\operatorname{HT}}{\eta}\cdot d\Eis^{(n)}{(E,\eta)},\]
where $\kappa_!$ is the extension by zero of $\kappa$ from $A_{\epsilon}^\times$ to $A_\epsilon$. This overconverges a modular form on the $A$-formal CM locus ($\epsilon=0$), a (twisted) profinite subset of the supersingular locus of the rigid analytic $p$-adic modular curve over $L$ isomorphic\footnote{This isomorphism follows, e.g., from \cite[Corollary 3.4.7, proof of Lemma 5.6.1]{HoweThesis}.} after an extension of scalars to
\[ D^\times(\mathbb{Q}) \backslash D^\times(\mathbb{A}_f) / (A^\times \times K^p),\]
where $D$ is a quaternion algebra over $\mathbb{Q}$ ramified at $p$ and $\infty$. Note that, by construction, the characters of $A^\times$ that extend to some $A_\epsilon^\times$ are precisely the $L$-analytic characters of $A^\times$. Note also that the existence of these analytic Eisenstein series on the quaternionic double quotient is already new. Specializing to genuine CM points inside the formal CM locus, one recovers the Schneider--Teitelbaum $p$-adic $L$-functions.
\end{Eg}

\begin{Q}
    Are the quaternionic Eisenstein series of Example \ref{eg.quaternionic-eisenstein-series} Hecke eigenvectors? If so, what are their Hecke eigenvalues, and what is the corresponding Galois representation (which will exist by \cite[Corollary B]{Howe.p-adic-JL})? 
\end{Q}

\appendix 
\section{Comparison with classical functional analysis}
\label{App:A}
In this appendix, we compare the constructions of Section \ref{Sec:Condensed} with those appearing in classical functional analysis. In particular, we extend (parts of) \cite{RJRC} to the setting where the base is a general $\mbb{Q}_p$-Banach algebra (not just a finite extension of $\qp$). These comparison results are used throughout the main body in the text (e.g. in the proof of Lemma \ref{Lem:TheAmiceCase}), and in \S \ref{Sub:Field},\ref{Sub:STComparison}. In \S \ref{sub:TheAmiceTransform}. we discuss the classical Amice transform as it is used in the proof of Theorem \ref{Thm:Main}.

\subsection{Comparison with classical functional analysis}
\label{subsec:ComparisonWithClassicalFA}

\subsubsection{} \label{subsub:ClassicalObjectDefs}
We first recall some objects which appear in classical functional analysis. Let $(R,R^+)$ be a uniform Huber pair over $(\qp,\zp)$. A topological $R$-module $M$ is \emph{Banach} if $M=M^+[\tfrac{1}{p}]$ where $M^+$ is a $p$-torsion free $p$-adically complete and separated $R^+$-module with the $p$-adic topology. A topological $R$-module $M$ is \emph{orthonomalizable Banach} (resp. \emph{free Smith}) if $M$ is isomorphic, as a topological $R$-module, to $\widehat{\bigoplus}_I R$ (resp. $\left( \prod_{I} R^+ \right)[\tfrac{1}{p}]$). Here the former module is equipped with the $p$-adic topology, and the topology on the latter module is induced from the product topology. We say such a module is \emph{countably orthonormalizable Banach} (resp. \emph{countably free Smith}) if the index set $I$ can be taken to be a countable set. 

\begin{Lem} \label{Lem:BanachSmithAEquiv}
    The functor $\operatorname{Mod}_R^{\operatorname{cond}} \to R\mathrm{-Mod}$ given by $M \mapsto M(*)_{\operatorname{top}}$ induces an equivalence (with inverse $V \mapsto \underline{V}$) between solid and classical $?$ modules, where $?$ can be:
    \begin{itemize}
        \item direct summands of orthonormalizable Banach;
        \item direct summands of countably free Smith.
    \end{itemize}
\end{Lem}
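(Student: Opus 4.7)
The plan is to derive the equivalence from Lemma \ref{Lem:Fullyfaithful} together with a standard idempotent-splitting argument.

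First I would verify that the topological modules appearing in the statement are compactly generated Hausdorff. For the standard modules $\widehat{\oplus}_I R$ and, when $I$ is countable, $(\prod_I R^+)[\tfrac{1}{p}]$, this is Lemma \ref{Lem:CompactlyGenerated}. For their topological direct summands the key point-set observation is that the image of a continuous idempotent on a Hausdorff space is closed (as the equalizer of two continuous maps into a Hausdorff space), so on this image the subspace and quotient topologies agree and compact generation is inherited from the ambient space. Lemma \ref{Lem:Fullyfaithful} then applies to give fully faithfulness of $V \mapsto \underline{V}$ on both classes; combined with the identifications $\underline{\widehat{\oplus}_I R} \cong \widehat{\oplus}_I \underline{R}$ and $\underline{(\prod_I R^+)[\tfrac{1}{p}]} \cong (\prod_I \underline{R^+})[\tfrac{1}{p}]$ from \S\ref{Subsub:classicalStdModules}, this shows that $V \mapsto \underline{V}$ lands in and is fully faithful onto the specified subcategories of solid $\underline{R}$-modules.

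The main step is essential surjectivity. Given a solid $M$ which is a direct summand of $\underline{W}$ (with $W = \widehat{\oplus}_I R$ or, for countable $I$, $W = (\prod_I R^+)[\tfrac{1}{p}]$), let $e_M \colon \underline{W} \to \underline{W}$ be the projection idempotent with image $M$. By fully faithfulness, $e_M = \underline{\tilde{e}}$ for a unique continuous $R$-linear idempotent $\tilde{e}$ on $W$, and $V := \tilde{e}(W) \subseteq W$ endowed with the subspace topology is a topological direct summand of $W$ in the sense of the lemma. Since additive functors preserve splittings of idempotents (these being absolute, i.e., both equalizers and coequalizers of $\tilde{e}$ with the identity), $\underline{V}$ is a splitting of $\underline{\tilde{e}} = e_M$ in solid $\underline{R}$-modules, whence $\underline{V} \cong M$ by the uniqueness of such splittings.

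To conclude that $M \mapsto M(*)_{\mathrm{top}}$ is a quasi-inverse, I would invoke the standard fact that the fully faithfulness of the right adjoint $V \mapsto \underline{V}$ on compactly generated Hausdorff topological modules is equivalent to the counit $\underline{V}(*)_{\mathrm{top}} \to V$ being an isomorphism for such $V$ (cf.\ \cite[Proposition 1.7]{CondensedNotes}); applied to the summands in question this finishes the proof. The mildly subtle point, and the main place where care is needed, is the verification in the first paragraph that direct summands of the standard topological modules really are compactly generated Hausdorff; once this is granted, everything else is essentially formal from the adjunction, Lemma \ref{Lem:Fullyfaithful}, and preservation of idempotent splittings.
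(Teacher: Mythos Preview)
Your proof is correct and follows essentially the same approach as the paper's: the paper's one-line reduction ``It is enough to establish the proposition for orthonormalizable/free Banach/Smith modules'' is precisely the idempotent-splitting argument you spell out, and the remaining steps (Lemma~\ref{Lem:CompactlyGenerated}, Lemma~\ref{Lem:Fullyfaithful}, and the counit isomorphism on compactly generated spaces) are the same. Your explicit verification that topological direct summands inherit compact generation as closed subspaces is a useful detail the paper leaves implicit.
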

\begin{proof}
    It is enough to establish the proposition for orthonormalizable/free Banach/Smith modules. In this case, it follows from Lemma \ref{Lem:CompactlyGenerated} and Lemma \ref{Lem:Fullyfaithful}, together with the fact that the counit of the adjunction $(-)(*)_{\operatorname{top}} \dashv \underline{(-)}$ is a natural isomorphism on compactly-generated topological spaces.  
\end{proof}

\subsubsection{} We call a topological $R$-module $M$ \emph{strongly (countably) Fr\'echet} if it is isomorphic, as a topological $R$-module, to an inverse limit $\varprojlim_{n \in \mathbb{Z}_{\ge 0}} M_n$ of topological direct summands of (countably) orthonomalizable Banach modules $M_n$ with dense transition maps.  We have the following analogue of Lemma \ref{Lem:BanachSmithAEquiv} for strongly countably Fr\'echet spaces. 
\begin{Prop} \label{Prop:FrechetVSClassicalFrechet}
    The functor $\operatorname{Mod}_R^{\operatorname{cond}} \to R\mathrm{-Mod}$ given by $M \mapsto M(*)_{\operatorname{top}}$ induces an equivalence (with inverse $V \mapsto \underline{V}$) between solid and classical strongly (countably) Fr\'echet modules. 
\end{Prop}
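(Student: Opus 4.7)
The plan is to reduce to the already-established Banach/Smith case of Lemma~\ref{Lem:BanachSmithAEquiv} by exploiting that countable inverse limits of metrizable topological $R$-modules are again metrizable, hence compactly generated, so that the fully faithful embedding $V \mapsto \underline{V}$ of Lemma~\ref{Lem:Fullyfaithful} and its left adjoint $M \mapsto M(*)_{\mathrm{top}}$ behave nicely with respect to the presentations appearing in Definition~\ref{Def:Frechet}.

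More concretely, suppose $M = \varprojlim_n M_n$ is a strongly (countably) Fr\'echet solid $\ul R$-module with presentation as in Definition~\ref{Def:Frechet}. By Lemma~\ref{Lem:BanachSmithAEquiv} each $M_n$ is of the form $\underline{V_n}$ for a unique topological direct summand $V_n := M_n(*)_{\mathrm{top}}$ of a (countably) orthonormalizable Banach $R$-module, and the transition maps $M_{n+1} \to M_n$ come uniquely from continuous $R$-linear maps $V_{n+1} \to V_n$ with dense image. Each $V_n$ is $p$-adically metrizable (being a direct summand of a Banach), so $V := \varprojlim_n V_n$, taken in topological $R$-modules, is a classical strongly (countably) Fr\'echet module; in particular $V$ is metrizable and hence compactly generated Hausdorff. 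The quasi-inverse sends $M$ to $V$, and I would show $\underline{V} \simeq M$ by using that $\underline{(-)}$ restricted to compactly generated topological $R$-modules is right adjoint to $(-)(*)_{\mathrm{top}}$ (Lemma~\ref{Lem:Fullyfaithful}), and therefore preserves limits, so that $\underline{V} = \underline{\varprojlim V_n} = \varprojlim \underline{V_n} = \varprojlim M_n = M$. Conversely, starting from a classical strongly Fr\'echet module $V = \varprojlim V_n$, the same argument shows that $\underline{V}$ is strongly (countably) Fr\'echet in the solid sense, with $\underline{V}(*)_{\mathrm{top}} \simeq V$ (since $V$ is compactly generated).

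Full faithfulness of $M \mapsto M(*)_{\mathrm{top}}$ then follows from Lemma~\ref{Lem:Fullyfaithful} applied to the compactly generated Hausdorff $R$-modules $V, W$ underlying two strongly Fr\'echet solid modules $\underline V$ and $\underline W$, since a continuous $R$-linear map $V \to W$ induces and is recovered from the morphism $\underline V \to \underline W$ of condensed $\ul R$-modules. The main technical point to verify is that the countable inverse limit of compactly generated topological $R$-modules used to build $V$ coincides with the limit in the category of compactly generated weakly Hausdorff spaces (so that $\underline{(-)}$ really does commute with this limit); this is ensured by the countability hypothesis, which makes $V$ metrizable. No additional obstacle should arise: the density condition on transition maps is built into both definitions in a compatible way, and the comparison with products of discrete or Smith-type factors is handled termwise by Lemma~\ref{Lem:BanachSmithAEquiv}.
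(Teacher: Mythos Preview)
Your proposal is correct and follows essentially the same approach as the paper: reduce to the Banach case via Lemma~\ref{Lem:BanachSmithAEquiv}, use that $\underline{(-)}$ commutes with inverse limits, and invoke Lemma~\ref{Lem:Fullyfaithful} together with the fact that a countable inverse limit of metrizable (hence compactly generated) modules is metrizable. The paper's proof is terser but uses exactly these ingredients; your version simply spells out the bidirectional essential-image check and the metrizability argument more explicitly.
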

\begin{proof}
    Since the index sets are countable, strongly countably Fr\'echet spaces are compactly-generated topological spaces. By Lemma \ref{Lem:Fullyfaithful}, it therefore suffices to prove the essential image of the functor $V \mapsto \underline{V}$ from (classical) strongly countably Fr\'echet spaces coincides with the category of condensed strongly countably Fr\'echet spaces. But this follows from Lemma \ref{Lem:BanachSmithAEquiv} and the fact that $\underline{(-)}$ commutes with inverse limits. 
\end{proof}

\begin{Rem}
    The reason we do not define LB spaces over a general $R$ is because of subtleties with direct limits of topological $R$-modules. Over a field, one works with the locally convex inductive limit, and developing this theory over an arbitrary Banach $\qp$-algebra $R$ is beyond the scope of this appendix.
\end{Rem}

\subsubsection{} If $M=M^+[\tfrac{1}{p}],N=N^+[\tfrac{1}{p}]$ are Banach $R$-modules, then we define their completed projective tensor product to be
\begin{align}
    M \widehat{\otimes}_{R} N := \left( M^+ \widehat{\otimes}_{R^+} N^+ \right)[\tfrac{1}{p}], 
\end{align}
where $\widehat{\otimes}_{R^+}$ denotes the $p$-adically completed tensor product. This does not depend on $M^+$ and $N^+$.

\begin{Lem} \label{Lem:DenseImageTensorTopological}
Let $M_1, M_2$ and $N$ be Banach $R$-modules. If $f:M_1\to M_2$ is a continuous $R$-linear morphism with dense image, then the map
    \begin{align}
       M_1 \widehat{\otimes}_{R} N \to M_2 \widehat{\otimes}_{R} N
    \end{align}
    has dense image.
\end{Lem}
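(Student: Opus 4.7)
The plan is to reduce to the integral setting and then exploit the fact that the algebraic tensor product $M_2^+ \otimes_{R^+} N^+$ is dense in $M_2^+ \widehat{\otimes}_{R^+} N^+$. Concretely, fix choices of $M_i^+$ and $N^+$ as in \S\ref{subsub:ClassicalObjectDefs} so that, by definition,
\[
M_i \widehat{\otimes}_R N = \bigl(M_i^+ \widehat{\otimes}_{R^+} N^+\bigr)[\tfrac{1}{p}] \qquad (i=1,2),
\]
with the $p$-adic topology on the integral part. A subset of $M_2 \widehat{\otimes}_R N$ has dense image if and only if, for each $k \ge 0$, every element of $M_2^+ \widehat{\otimes}_{R^+} N^+$ can be approximated modulo $p^k(M_2^+ \widehat{\otimes}_{R^+} N^+)$ by elements of the given subset (up to multiplying by a fixed power of $p$).

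Given $x \in M_2^+ \widehat{\otimes}_{R^+} N^+$ and $k \ge 0$, first approximate $x$ by a finite sum $\sum_{i=1}^r m_i \otimes n_i$ with $m_i \in M_2^+$ and $n_i \in N^+$ so that
\[
x - \sum_{i=1}^r m_i \otimes n_i \in p^k \bigl(M_2^+ \widehat{\otimes}_{R^+} N^+\bigr).
\]
Using the density of the image of $f \colon M_1 \to M_2$, choose for each $i$ an element $m_i' \in M_1$ with $f(m_i') - m_i \in p^k M_2^+$; in particular $f(m_i') \in M_2^+$. Then
\[
\sum_{i=1}^r f(m_i') \otimes n_i - \sum_{i=1}^r m_i \otimes n_i \in p^k \bigl(M_2^+ \widehat{\otimes}_{R^+} N^+\bigr),
\]
so $\sum_{i} f(m_i') \otimes n_i$ approximates $x$ to the required accuracy and lies in the image of $f \widehat{\otimes} \operatorname{id}_N$. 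This proves density on the integral part, and a straightforward rescaling by $p^{-\ell}$ gives density on the full module $M_2 \widehat{\otimes}_R N$.

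There is no real obstacle here: the only mildly subtle point is making sure that the approximating elements $f(m_i')$ remain in $M_2^+$ so that the finite sum lies in $M_2^+ \widehat{\otimes}_{R^+} N^+$, but this is automatic since $m_i \in M_2^+$ and $p^k M_2^+ \subseteq M_2^+$.
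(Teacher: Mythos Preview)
Your argument is correct and follows essentially the same approach as the paper: both reduce to the integral lattice and use that density of $f$ amounts to surjectivity of $M_1 \to M_2/p^k M_2^+$ for all $k$. The paper phrases this step via right exactness of $-\otimes_{R^+} N^+$ applied to the surjection $M_1 \twoheadrightarrow M_2/p^k M_2^+$, while you unwind it by hand with finite tensors; the content is the same.
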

\begin{proof}
Write $N=N^+[\tfrac{1}{p}]$ and $M_i=M_i^+[\tfrac{1}{p}]$. The density of $f$ is equivalent to the property that for all $k \in \mathbb{Z}$, the induced map
\[
g \colon M_1 \to M_2/p^k M_2^+
\]
is surjective. We therefore see that 
\begin{align*}
M_1 \otimes_{R^+} N^+ &\to \left( M_2/p^k M_2^+ \right) \otimes_{R^+} N^+ \\ 
&= (M_2 \otimes_{R^+} N^+)/p^k (M_2^+ \otimes_{R^+} N^+) \\ 
&= (M_2 \otimes_{R} N)/p^k (M_2^+ \otimes_{R^+} N^+) \\ 
&= (M_2 \widehat{\otimes}_R N)/p^k(M_2^+ \widehat{\otimes}_{R^+} N^+)
\end{align*}
is surjective, where the first equality uses that $M_2^+$ is $p$-torsion free (here, by abuse of notation $(M_2 \otimes_{R^+} N^+)/p^k (M_2^+ \otimes_{R^+} N^+)$ means quotient by the image of $p^k (M_2^+ \otimes_{R^+} N^+)$ in $M_2 \otimes_{R^+} N^+$), and the second equality uses that (the image of) $M_2^+ \widehat{\otimes}_{R^+} N^+$ is a lattice in $M_2 \widehat{\otimes}_R N$. This map is the natural one; in particular it factors through $M_1 \widehat{\otimes}_R N$ and the natural map
\[
M_1 \widehat{\otimes}_R N \to (M_2 \widehat{\otimes}_R N)/p^k(M_2^+ \widehat{\otimes}_{R^+} N^+)
\]
is surjective for all $k \in \mathbb{Z}$. This implies that $g \otimes 1 : M_1 \widehat{\otimes}_R N \to M_2 \widehat{\otimes}_R N$ has dense image. 
\end{proof}

\subsubsection{} \label{subsub:weakdual} Now assume that $R=K$ is a field and let $V$ denote a locally convex topological $K$-vector space in the sense of \cite[Section 4]{SchneiderFA}. We will write $V'_{w}$ for the $K$-vector space of continuous linear functions $M \to K$ equipped with the weak topology, and $V'_{b}$ for the same $K$-vector space equipped with the strong topology (see \cite[Section 6]{SchneiderFA}). 
\begin{Lem} \label{Lem:TopologicalDuals}
    If $M$ is a topological direct summand of a orthonormalizable Banach $K$-module, then there is a natural isomorphism
    \begin{align}
        \ul{M'_{w}} \simeq \Hom_{\ul{K}}(\ul{M}, \ul{K}).
    \end{align}
    If $N$ is a topological direct summand of a countably free Smith $K$-module, then there is a natural isomorphism
    \begin{align}
        \ul{N'_{b}} \simeq \Hom_{\ul{K}}(\ul{N}, \ul{K}).
    \end{align}
\end{Lem}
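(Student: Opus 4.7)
The plan is to reduce to the standard modules $M=\widehat{\oplus}_I K$ and $N=(\prod_I K^+)[\tfrac{1}{p}]$, then combine Proposition \ref{Prop:DualityCondensed} with a direct comparison of the classical weak (resp. strong) topology and the internal hom at the level of condensed modules. Both functors $M \mapsto \underline{M'_w}$ and $M \mapsto \Hom_{\underline{K}}(\underline{M},\underline{K})$ are additive and preserve splittings of idempotents; the analogous statement holds for the strong dual and for Smith modules. Thus it suffices to treat the standard modules.

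For the Banach case, Proposition \ref{Prop:DualityCondensed} identifies $\Hom_{\underline{K}}(\widehat{\oplus}_I\underline{K},\underline{K})$ with $(\prod_I\underline{K}^+)[\tfrac{1}{p}] \simeq \underline{(\prod_I K^+)[\tfrac{1}{p}]}$ (cf. \S\ref{Subsub:classicalStdModules}). Classically, the continuous $K$-linear dual of $\widehat{\oplus}_I K$ is the space of bounded sequences, namely $(\prod_I K^+)[\tfrac{1}{p}]$ as abstract $K$-vector space, via $f\mapsto(f(e_i))_{i\in I}$. The remaining content is that the weak topology gives the same condensed $K$-vector space as the filtered-colimit-of-products topology on the right-hand side. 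Using Lemma \ref{Lem:PreEvaluationCondensed} (with Lemma \ref{Lem:CompactlyGenerated} to verify hypotheses) and the description of the continuous dual applied now with coefficients in the $K$-Banach space $C^0(S,K)$, one finds
\[
\Hom_{\underline{K}}(\widehat{\oplus}_I\underline{K},\underline{K})(S) = \hom_{\mathrm{cts},K}(\widehat{\oplus}_I K, C^0(S,K)) = \{(b_i)_{i\in I}:\, b_i\in C^0(S,K),\ \sup_i\|b_i\|_\infty<\infty\}.
\]
On the other hand $\underline{(\widehat{\oplus}_I K)'_w}(S) = C^0(S,(\widehat{\oplus}_I K)'_w)$, and a continuous map $f:S\to(\widehat{\oplus}_I K)'_w$ gives rise to such a tuple via $b_i(s):=f(s)(e_i)$. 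The key point is that, by compactness of $S$, the image $f(S)$ is uniformly bounded in the weak dual, which forces $\sup_i\|b_i\|_\infty<\infty$; conversely any such bounded tuple defines a continuous weak-dual-valued map. This matches the two sides of the desired isomorphism.

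The Smith case proceeds symmetrically. Proposition \ref{Prop:DualityCondensed} gives $\Hom_{\underline{K}}((\prod_I\underline{K}^+)[\tfrac{1}{p}],\underline{K})=\widehat{\oplus}_I\underline{K}=\underline{\widehat{\oplus}_I K}$. Classically, the continuous $K$-linear dual of $(\prod_I K^+)[\tfrac{1}{p}]$ is $\widehat{\oplus}_I K$ (the computation in the proof of Proposition \ref{Prop:DualityCondensed} already gave this description set-theoretically), and the strong topology, i.e.\ uniform convergence on bounded subsets, coincides with the $p$-adic Banach topology on $\widehat{\oplus}_I K$, since the polar of $p^{-n}\prod_I K^+$ is $p^n\widehat{\oplus}_I K^+$. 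A parallel $S$-parameter computation via Lemma \ref{Lem:PreEvaluationCondensed} then identifies $\underline{N'_b}(S) = C^0(S,\widehat{\oplus}_I K)$ with $\Hom_{\underline{K}}(\underline{N},\underline{K})(S) = \hom_{\mathrm{cts},K}(N,C^0(S,K)) = \widehat{\oplus}_I C^0(S,K)$, the latter equality following from Lemma \ref{Lem:StandardTensorTopological} and the duality over $C^0(S,K)$ applied to $N$.

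The main obstacle is the identification of $C^0(S,M'_w)$ (resp.\ $C^0(S,N'_b)$) with the continuous $K$-linear maps into $C^0(S,K)$: once one knows that $S$-parameter families valued in the weak (resp.\ strong) dual are automatically uniformly bounded when $S$ is compact profinite, the rest is a direct swap of the two evaluations of the universal pairing. This boundedness input is the non-formal ingredient, and amounts to a non-archimedean Banach--Steinhaus-style argument.
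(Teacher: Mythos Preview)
Your approach is correct and follows the same outline as the paper's: reduce to the standard modules via idempotents, invoke Proposition~\ref{Prop:DualityCondensed}, and then match the classical weak/strong dual with the condensed internal hom. The paper is terser---for the Banach case it writes only ``direct computation'' (absorbing exactly the step you isolate), and for the Smith case it deduces $N'_b \cong \widehat{\oplus}_I K$ from the Banach case together with the reflexivity result \cite[Corollary~13.8]{SchneiderFA} rather than computing the strong topology directly via polars as you do. The ``Banach--Steinhaus-style'' ingredient you flag is nothing exotic: weak compactness of $f(S)$ gives pointwise boundedness, and then the classical uniform boundedness principle (Baire category on the complete metric space $M$) gives equiboundedness---no spherical completeness of $K$ is needed.
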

\begin{proof}
Using Proposition \ref{Prop:DualityCondensed} and the exactness of idempotent projection, it is enough to verify that the weak dual of $\widehat{\bigoplus}_I K$ (resp. the strong dual of $(\prod_I K^+)[1/p]$) coincides with $(\prod_I K^+)[1/p]$ (resp. $\widehat{\bigoplus}_I K$). The former is a direct computation, and the latter follows from the former and \cite[Corollary 13.8]{SchneiderFA}.
\end{proof}

\subsection{The Amice transform}  \label{sub:TheAmiceTransform}
For $V$ a $\mathbb{Q}_p$-Banach space, we write $\Cont(\zp, V)$ for the space of continuous functions from $\zp$ to $V$, equipped with the compact-open topology, so that $\Cont(\zp, V_0)$ is an open bounded lattice for any open bounded lattice in $V_0$, see Lemma \ref{Lem:CompactOpen} (if we put a norm on $V$, this is the topology induced by  the sup norm). For $h \geq 1$, we write $\mathrm{LA}_h(\zp, V)$ for the functions that are  analytic on each closed disk of radius $1/p^h$. We may write $\mathrm{LA}_h(\zp, V)=\bigoplus \mathrm{LA}_h(x + p^h\mathbb{Z}_p^h, V)$ as $x$ runs over residue classes for $\mathbb{Z}/p^h\mathbb{Z}$, and on each of these an open bounded lattice is given by formal sums with coefficients in $V_0$ going to zero of monomials in $(\frac{(z-x)}{p^h})$, for $V_0$ an open bounded lattice in $V$ (if we equip $V$ with the gauge norm associated to $V_0$, then this is the unit ball for the sup norm on coefficients).  

\begin{Lem}\label{lemma.completed-tensor-product-cont-and-la}
\[ \Cont(\zp, V)= \Cont(\zp, \mathbb{Q}_p) \widehat{\otimes}_{\mathbb{Q}_p} V \textrm { and } \mathrm{LA}_h(\zp, V) = \mathrm{LA}_h(\zp, \mathbb{Q}_p) \widehat{\otimes}_{\mathbb{Q}_p} V \]
\end{Lem}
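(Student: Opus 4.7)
The plan is to identify both sides with the same orthonormalizable Banach space over $V$ via the Mahler and Amice bases, reducing the statement to the elementary fact that the completed tensor product commutes with $p$-adically completed direct sums of $\mathbb{Q}_p$ (Lemma \ref{Lem:StandardTensorTopological}).

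First I would treat the continuous case. Recall Mahler's theorem: the binomial coefficient functions $\{\binom{z}{k}\}_{k \geq 0}$ form an orthonormal Banach basis of $\Cont(\zp, \mathbb{Q}_p)$, so this space is an orthonormalizable Banach $\mathbb{Q}_p$-module with integral lattice $\Cont(\zp, \mathbb{Z}_p)$. For any $\mathbb{Q}_p$-Banach space $V = V_0[\tfrac{1}{p}]$, the sup norm on continuous functions together with the Mahler expansion (which converges termwise for $V$-valued continuous functions) gives an isometric isomorphism
\[
\Cont(\zp, V) \;\xrightarrow{\sim}\; \widehat{\bigoplus}_{k \geq 0} V, \qquad f \mapsto \bigl(\Delta^k f(0)\bigr)_{k \geq 0}.
\]
On the other hand, by the classical analog of Lemma \ref{Lem:StandardTensorTopological} (applied to the pair $(\mathbb{Q}_p,\mathbb{Z}_p)\to(\mathbb{Q}_p,\mathbb{Z}_p)$ with $V_0$ in place of $B^+$), one has
\[
\Cont(\zp,\mathbb{Q}_p) \widehat{\otimes}_{\mathbb{Q}_p} V \;=\; \bigl(\widehat{\bigoplus}_{k \geq 0} \mathbb{Q}_p\bigr)\widehat{\otimes}_{\mathbb{Q}_p} V \;\xrightarrow{\sim}\; \widehat{\bigoplus}_{k \geq 0} V.
\]
Comparing the two identifications (both of which send a monomial $\binom{z}{k}\otimes v$ to the $k$-th coordinate $v$) yields the first claimed isomorphism.

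For the second claim, I would decompose $\mathrm{LA}_h(\zp, V) = \bigoplus_{x \in \mathbb{Z}_p/p^h\mathbb{Z}_p} \mathrm{LA}_h(x + p^h\mathbb{Z}_p, V)$, as in the statement preceding the lemma. On each residue class, the description of the unit ball given there as formal sums $\sum_{n \geq 0} v_n\bigl(\tfrac{z-x}{p^h}\bigr)^n$ with $v_n \in V_0$ and $v_n \to 0$ is exactly the statement that the Amice functions $\bigl\{\bigl(\tfrac{z-x}{p^h}\bigr)^n\bigr\}_{n\geq 0}$ form an orthonormal basis, so there is an isometric isomorphism
\[
\mathrm{LA}_h(x+p^h\mathbb{Z}_p, V) \;\xrightarrow{\sim}\; \widehat{\bigoplus}_{n \geq 0} V.
\]
Summing over the finitely many residues $x$ and invoking once more Lemma \ref{Lem:StandardTensorTopological}, applied to the analogous expression for $V=\mathbb{Q}_p$, gives the second isomorphism.

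The only potential obstacle is a bookkeeping one: ensuring that the coefficientwise identifications indeed intertwine the natural map $\Cont(\zp,\mathbb{Q}_p)\widehat{\otimes}_{\mathbb{Q}_p} V \to \Cont(\zp,V)$ (respectively, the locally analytic analog) rather than some twist of it. This is immediate once one observes that the natural map sends $\binom{z}{k}\otimes v$ to $z\mapsto \binom{z}{k}v$ and $\bigl(\tfrac{z-x}{p^h}\bigr)^n\otimes v$ to $z\mapsto\bigl(\tfrac{z-x}{p^h}\bigr)^n v$, so it agrees on each basis element with the isomorphisms constructed above. Since both sides are $p$-adically complete and separated, the maps in question are isomorphisms.
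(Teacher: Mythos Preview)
Your argument is correct, but it differs from the paper's in one notable respect. For the continuous case, you invoke Mahler's theorem to get the explicit orthonormal basis $\{\binom{z}{k}\}$ and the corresponding expansion for $V$-valued functions. The paper instead chooses an \emph{arbitrary} orthonormal basis $e_i$ for $\Cont(\zp,\mathbb{Q}_p)$ by lifting a basis of $\Cont(\zp,\mathbb{F}_p)$ to locally constant functions, and similarly lifts a basis $v_j$ of $V_0/p$ to $V$; it then observes directly that the products $e_i \cdot v_j$ lie in $\Cont(\zp,V_0)$ and reduce to a basis of $\Cont(\zp,V_0/p)$, hence form an orthonormal basis of $\Cont(\zp,V)$. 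This is more elementary in that it avoids Mahler's theorem entirely---which matters for the paper's internal logic, since the Mahler--Amice basis statement (Lemma~\ref{lemma.mahler-amice-bases}) is placed \emph{after} this lemma. Your route is perfectly valid mathematically and arguably more concrete, but in the paper's ordering it would create a forward reference. For the $\mathrm{LA}_h$ case the two proofs are essentially identical, both using the monomial basis $\bigl(\tfrac{z-x}{p^h}\bigr)^n$.
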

\begin{proof}
We fix an orthonormal basis $e_i$ for $\Cont(\zp, \mathbb{Q}_p)$ by lifting a basis for $\Cont(\zp, \mathbb{F}_p)$ to locally constant functions on $\mathbb{Z}_p$. We fix a bounded open lattice $V_0$ in $V$ and an orthonormal basis $v_j$ for $V$ by lifting an $\mathbb{F}_p$-basis of $V_0/p$. Then, the functions $e_i \cdot v_j: z \mapsto e_i(z)v_j$ are an orthonormal basis for $\Cont(\zp, V)$ since they lie in the bounded open lattice $\Cont(\zp, V_0)$ and reduce mod $p$ to a basis of $\Cont(\zp, V_0/p)$. Thus the natural map 
\[ \Cont(\zp, \mathbb{Q}_p) \widehat{\otimes}_{\mathbb{Q}_p} V \rightarrow \Cont(\zp, V) \]
induced by the pairing $(f,v) \mapsto f \cdot v: z \mapsto f(z) v$ is an isomorphism. 

For $\mathrm{LA}_h(\mathbb{Z}_p, \bullet)$, it suffices to treat each $\mathrm{LA}_h(x+p^h \mathbb{Z}_p, \bullet)$ separately, where we find immediately that the $\frac{(z-x)}{p^h}^m v_j$ are an orthonormal basis and we conclude as above. 
\end{proof}  

\begin{Lem}[Mahler--Amice]\label{lemma.mahler-amice-bases}
The binomial coefficients $c_n: \zp \rightarrow \zp$, $c_n(z)=\binom{z}{n}$ are an orthonormal basis for $\Cont(\zp, \qp)$. The multiples $c_{n,h}=[\frac{n}{p^h}]! c_n$ are an orthonormal basis for $\LA_h(\zp, \qp)$. 
\end{Lem}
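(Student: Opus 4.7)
The plan is to reduce to and recall the classical arguments of Mahler and Amice, which we now outline.

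For the first statement, we begin by observing that $c_n(z) = \binom{z}{n}$ is a polynomial with rational coefficients that takes integer values on $\mathbb{Z}$, so by continuity and density it defines a map $\zp \to \zp$, placing each $c_n$ in the closed unit ball of $\Cont(\zp, \qp)$. To prove the $c_n$ form an orthonormal basis, I would introduce the forward difference operator $\Delta: \Cont(\zp,\qp) \to \Cont(\zp,\qp)$ defined by $(\Delta f)(z) = f(z+1) - f(z)$, and use the relation $\Delta c_n = c_{n-1}$ (with $c_{-1}=0$) to verify that $(\Delta^k c_n)(0) = \delta_{k,n}$. This identifies the candidate expansion coefficients of any $f$ as $a_n(f) := (\Delta^n f)(0) = \sum_{k=0}^n (-1)^{n-k}\binom{n}{k} f(k)$, which immediately satisfies $|a_n(f)| \leq \|f\|_\infty$.

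The remaining step, which is the main content of Mahler's theorem, is to show that $a_n(f) \to 0$ and that $\sum_n a_n(f) c_n$ converges uniformly to $f$. For this I would argue as follows: fix $\epsilon > 0$ and, by uniform continuity, choose $N$ such that $|f(z+p^N) - f(z)| \leq \epsilon$ for all $z \in \zp$. Then one verifies, using Lucas' congruence $\binom{n}{k} \equiv \binom{n-p^N}{k} + \binom{n-p^N}{k-p^N} \pmod p$ (for $n \geq p^N$), that $|a_n(f) - (a_{n-p^N}(\Delta^{p^N} f))|$ satisfies strong bounds forcing $|a_n(f)| \to 0$. With $|a_n(f)| \to 0$ in hand, the partial sums $S_N(f) := \sum_{n \leq N} a_n(f) c_n$ lie in the closed unit ball scaled by $\|f\|_\infty$ and satisfy $a_k(S_N(f)) = a_k(f)$ for $k \leq N$, so $f - S_N(f)$ has vanishing low-order Mahler coefficients; combined with the orthonormality, this forces $\|f - S_N(f)\|_\infty \to 0$.

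For the Amice refinement, I would first use the decomposition $\LA_h(\zp, \qp) = \bigoplus_{x \in \zp/p^h\zp} \LA_h(x + p^h\zp, \qp)$ with its explicit orthonormal basis $\{((z-x)/p^h)^m\}_{m\geq 0}$ on each ball to reduce the claim to a computation of the sup norm of $c_{n,h} = [n/p^h]! \, c_n$ on $\zp$. The key numerical input is the identity $v_p([n/p^h]!) = \tfrac{[n/p^h] - s_p([n/p^h])}{p-1}$ coming from Legendre's formula, combined with the classical estimate that, viewed as a polynomial in $(z-x)/p^h$, the leading coefficient of $c_n$ has $p$-adic valuation exactly compensating this factor while all lower coefficients have smaller or equal valuation. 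I expect the main obstacle to be this last valuation estimate on the restrictions $c_n|_{x+p^h \zp}$ expanded in $(z-x)/p^h$: one must check both that $\|c_{n,h}\|_\infty = 1$ on $\LA_h$ and that the reductions mod $p$ of the leading terms of $c_{n,h}$ inside $\LA_h(x+p^h\zp,\mathbb{F}_p)$ span. Both reduce, after unwinding, to the identity for $v_p([n/p^h]!)$ and a careful bookkeeping of $p$-adic digits, which is exactly the calculation carried out by Amice in \cite{AmiceInterpolation}, so in the end I would simply reduce to and cite her result.
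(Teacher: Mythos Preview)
The paper does not actually prove this lemma: it is stated as a classical result attributed to Mahler \cite{Mahler} and Amice \cite{AmiceInterpolation} (hence the name in brackets), and the text moves directly to consequences without any \texttt{proof} environment. Your proposal therefore goes well beyond what the paper does, by sketching the standard arguments behind these references.

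Your outline is broadly correct and follows the classical route: the Mahler part via the finite difference operator $\Delta$ and the identification $a_n(f)=(\Delta^n f)(0)$, together with a uniform continuity argument to get $a_n(f)\to 0$, is exactly Mahler's original proof. For the Amice part, reducing to the per-ball expansion in $(z-x)/p^h$ and controlling valuations via Legendre's formula is also the standard approach, and you are right that the delicate step is the valuation bookkeeping showing $\|c_{n,h}\|_{\LA_h}=1$ and that the reductions span. Since you end by citing \cite{AmiceInterpolation} for this computation anyway, your proposal is effectively equivalent to what the paper does (namely, invoke the classical references), just with helpful context added.
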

It follows, in particular, that the natural map $\mathrm{LA}(\zp, \qp) \rightarrow \Cont(\zp, \qp)$ is injective, and any $f \in \mathrm{LA}(\zp, \qp)$ has a unique M\"{a}hler expansion 
\[ f = \sum_{n \in \mathbb{Z}_{\geq 0}} a_n(f) \cdot c_{n} \textrm{ where } c_{n}(z)=\binom{z}{n}.  \] 

\subsubsection{} Let $\widehat{\mathbb{G}}_m$ be the formal multiplicative group over $\spf \zp$ and let $\gmhateta$ be its rigid generic fiber. We note that any $f \in \mathcal{O}(\gmhateta)$ has a unique power series expansion
\[ f=\sum_{n \in \mathbb{Z}_{\geq 0}}  a_n(f) \cdot t^{n}. \]
We have the universal character 
\[ \kappa=(1+t)^{z}=1+\binom{z}{1} t + \binom{z}{2} t^2 + \ldots = \sum_{n \in \mathbb{Z}_{\geq 0}} c_n \cdot t^{n} \in \Cont(\zp, \zp[[t]]). \]

\subsubsection{} For $r<1$ a rational power of $p$, we write $D_{r} \subseteq \gmhateta$ for the affinoid disk $|t| \leq p^{1/r}$. By restriction, we obtain an element $\kappa^{\locan}$ of
\[ \varprojlim_r \varinjlim_h \mathrm{LA}_h(\zp, \mathcal{O}(D_{r})). \]
Using Lemma \ref{lemma.completed-tensor-product-cont-and-la}, we may view $\kappa^\locan$ as an element of 

\begin{align} \varprojlim_r \varinjlim_h \mathrm{LA}_h(\zp, \mathcal{O}(D_{r})) & = \varprojlim_r \varinjlim_h \mathrm{LA}_h(\zp, \qp)\widehat{\otimes}_{\qp} \mathcal{O}(D_{r}).
\end{align}

\subsubsection{} Write $\mathrm{LA}(\zp, \qp)'_{b}$ for the continuous $\qp$-linear dual of $\mathrm{LA}(\zp, \qp)$ equipped with the strong topology. We let 
\[ \mathbb{F}_{\gmhateta}: \mathrm{LA}(\zp, \qp)'_{b} \rightarrow \varprojlim_r \mathcal{O}(\mathbb{D}_r) \]
denote the map obtained by contracting $\kappa^\locan$ with $\mu \in \mathrm{LA}(\zp, \qp)'$ (which induces a compatible family of elements $\mu_h$ in  $\mathrm{LA}_h(\zp, \mathbb{Q}_p)$). 

\begin{Cor}\label{cor.amice-bijections}
The map $\mathbb{F}_{\gmhateta}$ is a bijection.
\end{Cor}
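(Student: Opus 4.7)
The plan is to identify both $\mathrm{LA}(\zp, \qp)'_{b}$ and $\varprojlim_{r} \mathcal{O}(D_{r}) = \mathcal{O}(\gmhateta)$ with the sequence space
\[ \mathcal{S} := \left\{ (a_{n})_{n \geq 0} \in \qp^{\mathbb{N}} : \limsup_{n} |a_{n}|^{1/n} \leq 1 \right\}, \]
and to verify that under these identifications $\mathbb{F}_{\gmhateta}$ becomes the identity map. This is essentially Amice's classical theorem from \cite{AmiceInterpolation}.

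On the target side, a function $f \in \varprojlim_{r} \mathcal{O}(D_{r})$ admits a unique power series expansion $f = \sum_{n \geq 0} a_{n} t^{n}$. The condition $f \in \mathcal{O}(D_{r})$ amounts to $|a_{n}| r^{n} \to 0$, and requiring this for a cofinal family of $r$ approaching $1$ from below is equivalent to $\limsup_{n} |a_{n}|^{1/n} \leq 1$, giving the identification of the target with $\mathcal{S}$.

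For the source, I would use that $\mathrm{LA}(\zp, \qp) = \varinjlim_{h} \mathrm{LA}_{h}(\zp, \qp)$, so $\mathrm{LA}(\zp, \qp)'_{b} = \varprojlim_{h} \mathrm{LA}_{h}(\zp, \qp)'_{b}$. Given $\mu$, set $a_{n} := \mu(c_{n})$. By Lemma \ref{lemma.mahler-amice-bases}, the restriction $\mu|_{\mathrm{LA}_{h}}$ is bounded on the orthonormal basis $c_{n,h} = [n/p^{h}]! \cdot c_{n}$, so $|\mu(c_{n,h})| = |[n/p^{h}]!| \cdot |a_{n}|$ is bounded in $n$ for each $h$. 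Using Legendre's formula $v_{p}(m!) = (m - s_{p}(m))/(p-1)$, this forces $|a_{n}| = O_{h}(p^{n/(p^{h}(p-1))})$; letting $h \to \infty$ yields $\limsup_{n} |a_{n}|^{1/n} \leq 1$. Conversely, any such sequence $(a_{n}) \in \mathcal{S}$ defines a compatible family of continuous functionals on each $\mathrm{LA}_{h}$ via $\sum b_{n} c_{n} \mapsto \sum a_{n} b_{n}$: writing $b_{n} = b_{n,h} \cdot [n/p^{h}]!$ in the orthonormal basis shows $\sum a_{n} b_{n} = \sum (a_{n}[n/p^{h}]!) b_{n,h}$ is absolutely convergent with norm bounded by $\sup_{n} |a_{n}[n/p^{h}]!| < \infty$. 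These assemble into an element of $\mathrm{LA}(\zp, \qp)'_{b}$, giving the identification of the source with $\mathcal{S}$.

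Under both identifications, the formula $\mathbb{F}_{\gmhateta}(\mu) = \mu(\kappa^{\locan}) = \sum_{n} \mu(c_{n}) t^{n}$ becomes the identity on $\mathcal{S}$, establishing the bijection. The main technical input is the matching of the two growth conditions, which reduces to the asymptotic $v_{p}([n/p^{h}]!) \sim n/(p^{h}(p-1))$ from Legendre combined with $p^{1/(p^{h}(p-1))} \to 1$ as $h \to \infty$; once this is in hand, everything else is formal.
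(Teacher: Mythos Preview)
Your proof is correct and follows essentially the same approach as the paper's: both identify $\mathbb{F}_{\gmhateta}(\mu)$ with the series $\sum_{n}\mu(c_n)t^n$ and then compare the growth conditions on the two sides using the Amice orthonormal bases $c_{n,h}=[n/p^h]!\,c_n$ of Lemma~\ref{lemma.mahler-amice-bases}. The paper's proof is a two-line sketch (``comparing the growth conditions allowed on each side'') with a reference to \cite[Theorem~1.8.4]{Colmeznotes}, whereas you have written out the computation via Legendre's formula explicitly; the content is the same.
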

\begin{proof}
The map can be described explicitly by 
\[ \mathbb{F}_{\gmhateta}(\mu)= \sum_{n \in \mathbb{Z}_{\geq 0}} \mu(c_n) t^n, \]
where we recall that $c_n$ is the binomial function. Comparing the growth conditions allowed on each side (using the Amice basis for locally analytic functions), one finds it is a bijection, cf. \cite[Theorem 1.8.4]{Colmeznotes}. 
\end{proof}

\begin{bibdiv}
\begin{biblist}

\bib{AmiceInterpolation}{article}{
      author={Amice, Yvette},
       title={Interpolation {$p$}-adique},
        date={1964},
        ISSN={0037-9484},
     journal={Bull. Soc. Math. France},
      volume={92},
       pages={117\ndash 180},
         url={http://www.numdam.org/item?id=BSMF_1964__92__117_0},
      review={\MR{188199}},
}

\bib{andreychev2021pseudocoherentperfectcomplexesvector}{article}{
      author={Andreychev, Grigory},
       title={Pseudocoherent and perfect complexes and vector bundles on analytic adic spaces},
        date={2021},
     journal={ar{X}iv:2105.12591},
}

\bib{ArdakovBerger}{article}{
      author={{Ardakov}, Konstantin},
      author={{Berger}, Laurent},
       title={{Bounded functions on the character variety}},
        date={2023-01},
     journal={arXiv e-prints},
      eprint={2301.13650},
}

\bib{BergerGaloisMeasures}{article}{
      author={{Berger}, Laurent},
       title={{Galois measures and the Katz map}},
        date={2025-04},
     journal={arXiv e-prints},
      eprint={2504.20558},
}

\bib{BergerColmez.Sen}{article}{
      author={Berger, Laurent},
      author={Colmez, Pierre},
       title={Th\'eorie de {S}en et vecteurs localement analytiques},
        date={2016},
        ISSN={0012-9593,1873-2151},
     journal={Ann. Sci. \'Ec. Norm. Sup\'er. (4)},
      volume={49},
      number={4},
       pages={947\ndash 970},
         url={https://doi.org/10.24033/asens.2300},
      review={\MR{3552018}},
}

\bib{BhattScholzePrisms}{article}{
      author={{Bhatt}, Bhargav},
      author={{Scholze}, Peter},
       title={{Prisms and Prismatic Cohomology}},
        date={2019-05},
     journal={arXiv e-prints},
      eprint={1905.08229},
}

\bib{Bosco}{article}{
      author={{Bosco}, Guido},
       title={{On the $p$-adic pro-{\'e}tale cohomology of Drinfeld symmetric spaces}},
        date={2021-10},
     journal={arXiv e-prints},
      eprint={2110.10683},
}

\bib{BoscoII}{article}{
      author={{Bosco}, Guido},
       title={{Rational $p$-adic Hodge theory for rigid-analytic varieties}},
        date={2023-06},
     journal={arXiv e-prints},
      eprint={2306.06100},
}

\bib{Topology}{book}{
      author={Bradley, Tai-Danae},
      author={Bryson, Tyler},
      author={Terilla, John},
       title={Topology---a categorical approach},
   publisher={MIT Press, Cambridge, MA},
        date={[2020] \copyright 2020},
        ISBN={978-0-262-53935-7},
      review={\MR{4232168}},
}

\bib{Colmeznotes}{misc}{
      author={Colmez, Pierre},
       title={Fontaine’s rings and $p$-adic {$L$}-functions},
        date={2004},
         url={\url{https://webusers.imj-prg.fr/~pierre.colmez/tsinghua.pdf}},
}

\bib{DemazureGabriel}{book}{
      author={Demazure, Michel},
      author={Gabriel, Pierre},
       title={Groupes alg\'ebriques. {T}ome {I}: {G}\'eom\'etrie alg\'ebrique, g\'en\'eralit\'es, groupes commutatifs},
   publisher={Masson \& Cie, \'Editeurs, Paris; North-Holland Publishing Co., Amsterdam},
        date={1970},
        note={Avec un appendice {\it Corps de classes local}\ par Michiel Hazewinkel},
      review={\MR{302656}},
}

\bib{EischenMantovan}{article}{
      author={Eischen, E.},
      author={Mantovan, E.},
       title={{$p$}-adic families of automorphic forms in the {$\mu$}-ordinary setting},
        date={2021},
        ISSN={0002-9327},
     journal={Amer. J. Math.},
      volume={143},
      number={1},
       pages={1\ndash 52},
         url={https://doi-org.stanford.idm.oclc.org/10.1353/ajm.2021.0006},
      review={\MR{4201778}},
}

\bib{FarguesI}{article}{
      author={Fargues, Laurent},
       title={Groupes analytiques rigides {$p$}-divisibles},
        date={2019},
        ISSN={0025-5831,1432-1807},
     journal={Math. Ann.},
      volume={374},
      number={1-2},
       pages={723\ndash 791},
         url={https://doi.org/10.1007/s00208-018-1782-9},
      review={\MR{3961325}},
}

\bib{FarguesII}{article}{
      author={Fargues, Laurent},
       title={Groupes analytiques rigides {$p$}-divisibles {II}},
        date={2023},
        ISSN={0025-5831,1432-1807},
     journal={Math. Ann.},
      volume={387},
      number={1-2},
       pages={245\ndash 264},
         url={https://doi.org/10.1007/s00208-022-02453-1},
      review={\MR{4631046}},
}

\bib{FarguesScholze}{article}{
      author={{Fargues}, Laurent},
      author={{Scholze}, Peter},
       title={{Geometrization of the local Langlands correspondence}},
        date={2021-02},
     journal={arXiv e-prints},
      eprint={2102.13459},
}

\bib{FresneldeMathan.AlgebresL1padiques}{article}{
      author={Fresnel, Jean},
      author={de~Mathan, Bernard},
       title={Alg\`ebres {$L\sp{1}$} {$p$}-adiques},
        date={1978},
        ISSN={0037-9484},
     journal={Bull. Soc. Math. France},
      volume={106},
      number={3},
       pages={225\ndash 260},
         url={http://www.numdam.org/item?id=BSMF_1978__106__225_0},
      review={\MR{515402}},
}

\bib{GaisinJacinto}{article}{
      author={Gaisin, Ildar},
      author={Rodrigues~Jacinto, Joaqu\'in},
       title={Arithmetic families of {$(\phi,\Gamma)$}-modules and locally analytic representations of {$GL_2(Q_p)$}},
        date={2018},
        ISSN={1431-0635,1431-0643},
     journal={Doc. Math.},
      volume={23},
       pages={1313\ndash 1404},
      review={\MR{3874944}},
}

\bib{Gerth}{article}{
      author={{Gerth}, Lucas},
       title={{A Hodge--Tate decomposition with rigid analytic coefficients}},
        date={2024-11},
     journal={arXiv e-prints},
      eprint={2411.07366},
}

\bib{GerthThesis}{misc}{
      author={Gerth, Lukas},
       title={{The relative p-adic Hodge theory of analytic p-divisible groups}},
        date={2025},
        note={Thesis (Ph.D.)--University of Frankfurt},
}

\bib{GHHBC}{misc}{
      author={Graham, Andrew},
      author={van Hoften, Pol},
      author={Howe, Sean},
       title={Towards a $p$-adic {F}ourier transform for {B}anach--{C}olmez spaces},
        note={In preparation},
}

\bib{HansenKedlaya}{misc}{
      author={Hansen, David},
      author={Kedlaya, Kiran},
       title={Sheafiness criteria for {H}uber rings},
        note={\url{https://kskedlaya.org/papers/criteria.pdf}(version of April 23 2025)},
}

\bib{HarrisSquareRoot}{article}{
      author={Harris, Michael},
       title={Square root {$p$}-adic {$L$}-functions {I}: {C}onstruction of a one-variable measure},
        date={2021},
        ISSN={2576-7658,2576-7666},
     journal={Tunis. J. Math.},
      volume={3},
      number={4},
       pages={657\ndash 688},
         url={https://doi.org/10.2140/tunis.2021.3.657},
      review={\MR{4331439}},
}

\bib{HerrlichTopology}{book}{
      author={Herrlich, Horst},
       title={Topologie {I}: {T}opologische {R}\"aume},
      series={Berliner Studienreihe zur Mathematik [Berlin Study Series on Mathematics]},
   publisher={Heldermann Verlag, Berlin},
        date={1986},
      volume={2},
        ISBN={3-88538-102-8},
        note={With the collaboration of H. Bargenda},
      review={\MR{880705}},
}

\bib{Heuer}{article}{
      author={Heuer, Ben},
       title={A geometric {$p$}-adic {S}impson correspondence in rank one},
        date={2024},
        ISSN={0010-437X,1570-5846},
     journal={Compos. Math.},
      volume={160},
      number={7},
       pages={1433\ndash 1466},
         url={https://doi.org/10.1112/S0010437X24007024},
      review={\MR{4747960}},
}

\bib{HeuerXu}{article}{
      author={{Heuer}, Ben},
      author={{Xu}, Daxin},
       title={{$p$-adic non-abelian Hodge theory for curves via moduli stacks}},
        date={2024-02},
     journal={arXiv e-prints},
      eprint={2402.01365},
}

\bib{HoweThesis}{book}{
      author={Howe, Sean},
       title={Overconvergent {M}odular {F}orms and the {P}-{A}dic {J}acquet-{L}anglands {C}orrespondence},
   publisher={ProQuest LLC, Ann Arbor, MI},
        date={2017},
        ISBN={978-0355-07519-9},
         url={http://gateway.proquest.com/openurl?url_ver=Z39.88-2004&rft_val_fmt=info:ofi/fmt:kev:mtx:dissertation&res_dat=xri:pqm&rft_dat=xri:pqdiss:10258387},
        note={Thesis (Ph.D.)--The University of Chicago},
      review={\MR{3698771}},
}

\bib{HoweUnipotent}{article}{
      author={Howe, Sean},
       title={A unipotent circle action on {$p$}-adic modular forms},
        date={2020},
     journal={Trans. Amer. Math. Soc. Ser. B},
      volume={7},
       pages={186\ndash 226},
         url={https://doi.org/10.1090/btran/52},
      review={\MR{4170572}},
}

\bib{Howe.p-adic-JL}{article}{
      author={Howe, Sean},
       title={The spectral {$p$}-adic {J}acquet-{L}anglands correspondence and a question of {S}erre},
        date={2022},
        ISSN={0010-437X,1570-5846},
     journal={Compos. Math.},
      volume={158},
      number={2},
       pages={245\ndash 286},
         url={https://doi.org/10.1112/s0010437x22007308},
      review={\MR{4406784}},
}

\bib{Katz.p-adic-L-via-moduli}{incollection}{
      author={Katz, Nicholas~M.},
       title={{$p$}-adic {$L$}-functions via moduli of elliptic curves},
        date={1975},
   booktitle={Algebraic geometry ({P}roc. {S}ympos. {P}ure {M}ath., {V}ol. 29, {H}umboldt {S}tate {U}niv., {A}rcata, {C}alif., 1974)},
      series={Proc. Sympos. Pure Math.},
      volume={Vol. 29},
   publisher={Amer. Math. Soc., Providence, RI},
       pages={479\ndash 506},
      review={\MR{432649}},
}

\bib{Katz.TheEisensteinMeasureAndpAdicInterpolation}{article}{
      author={Katz, Nicholas~M.},
       title={The {E}isenstein measure and {$p$}-adic interpolation},
        date={1977},
        ISSN={0002-9327},
     journal={Amer. J. Math.},
      volume={99},
      number={2},
       pages={238\ndash 311},
         url={https://doi-org.stanford.idm.oclc.org/10.2307/2373821},
      review={\MR{0485797}},
}

\bib{Katz.FormalGroupsAndPAdicInterpolation}{incollection}{
      author={Katz, Nicholas~M.},
       title={Formal groups and {$p$}-adic interpolation},
        date={1977},
   booktitle={Journ\'ees {A}rithm\'etiques de {C}aen ({U}niv. {C}aen, {C}aen, 1976)},
      series={Ast\'erisque},
      volume={No. 41--42},
   publisher={Soc. Math. France, Paris},
       pages={55\ndash 65},
      review={\MR{441928}},
}

\bib{Katz.DivisibilitiesCongruencesAndCartierDuality}{article}{
      author={Katz, Nicholas~M.},
       title={Divisibilities, congruences, and {C}artier duality},
        date={1981},
        ISSN={0040-8980},
     journal={J. Fac. Sci. Univ. Tokyo Sect. IA Math.},
      volume={28},
      number={3},
       pages={667\ndash 678 (1982)},
      review={\MR{656042}},
}

\bib{KatzMazur}{book}{
      author={Katz, Nicholas~M.},
      author={Mazur, Barry},
       title={Arithmetic moduli of elliptic curves},
      series={Annals of Mathematics Studies},
   publisher={Princeton University Press, Princeton, NJ},
        date={1985},
      volume={108},
        ISBN={0-691-08349-5; 0-691-08352-5},
         url={https://doi.org/10.1515/9781400881710},
      review={\MR{772569}},
}

\bib{KedlayaLiu}{article}{
      author={Kedlaya, Kiran~S.},
      author={Liu, Ruochuan},
       title={Relative {$p$}-adic {H}odge theory: foundations},
        date={2015},
        ISSN={0303-1179,2492-5926},
     journal={Ast\'erisque},
      number={371},
       pages={239},
      review={\MR{3379653}},
}

\bib{KedlayaLiuII}{article}{
      author={{Kedlaya}, Kiran~S.},
      author={{Liu}, Ruochuan},
       title={{Relative $p$-adic Hodge theory, II: Imperfect period rings}},
        date={2016-02},
     journal={arXiv e-prints},
      eprint={1602.06899},
}

\bib{Kim}{article}{
      author={{Kim}, Dongryul},
       title={{Uniqueness and functoriality of Igusa stacks}},
        date={2025-04},
     journal={arXiv e-prints},
      eprint={2504.15542},
}

\bib{KingsSprang}{article}{
      author={{Kings}, Guido},
      author={{Sprang}, Johannes},
       title={{Another look at $p$-adic Fourier-theory}},
        date={2024-09},
     journal={arXiv e-prints},
      eprint={2409.20322},
}

\bib{Mahler}{article}{
      author={Mahler, K.},
       title={An interpolation series for continuous functions of a {$p$}-adic variable},
        date={1958},
        ISSN={0075-4102,1435-5345},
     journal={J. Reine Angew. Math.},
      volume={199},
       pages={23\ndash 34},
         url={https://doi.org/10.1515/crll.1958.199.23},
      review={\MR{95821}},
}

\bib{PGS}{book}{
      author={Perez-Garcia, C.},
      author={Schikhof, W.~H.},
       title={Locally convex spaces over non-{A}rchimedean valued fields},
      series={Cambridge Studies in Advanced Mathematics},
   publisher={Cambridge University Press, Cambridge},
        date={2010},
      volume={119},
        ISBN={978-0-521-19243-9},
         url={https://doi.org/10.1017/CBO9780511729959},
      review={\MR{2598517}},
}

\bib{RJRC}{article}{
      author={Rodrigues~Jacinto, Joaqu\'in},
      author={Rodr\'iguez~Camargo, Juan~Esteban},
       title={Solid locally analytic representations of {$p$}-adic {L}ie groups},
        date={2022},
        ISSN={1088-4165},
     journal={Represent. Theory},
      volume={26},
       pages={962\ndash 1024},
         url={https://doi.org/10.1090/ert/615},
      review={\MR{4475468}},
}

\bib{SchneiderTeitelbaumFourier}{article}{
      author={Schneider, P.},
      author={Teitelbaum, J.},
       title={{$p$}-adic {F}ourier theory},
        date={2001},
        ISSN={1431-0635,1431-0643},
     journal={Doc. Math.},
      volume={6},
       pages={447\ndash 481},
      review={\MR{1871671}},
}

\bib{SchneiderFA}{book}{
      author={Schneider, Peter},
       title={Nonarchimedean functional analysis},
      series={Springer Monographs in Mathematics},
   publisher={Springer-Verlag, Berlin},
        date={2002},
        ISBN={3-540-42533-0},
         url={https://doi.org/10.1007/978-3-662-04728-6},
      review={\MR{1869547}},
}

\bib{ScholzeTorsion}{article}{
      author={Scholze, Peter},
       title={On torsion in the cohomology of locally symmetric varieties},
        date={2015},
        ISSN={0003-486X,1939-8980},
     journal={Ann. of Math. (2)},
      volume={182},
      number={3},
       pages={945\ndash 1066},
         url={https://doi.org/10.4007/annals.2015.182.3.3},
      review={\MR{3418533}},
}

\bib{EtCohDiam}{article}{
      author={{Scholze}, Peter},
       title={{Etale cohomology of diamonds}},
        date={2017-09},
     journal={arXiv e-prints},
      eprint={1709.07343},
}

\bib{CondensedNotes}{misc}{
      author={Scholze, Peter},
      author={Clausen, Dustin},
       title={Condensed mathematics},
        date={2019},
        note={\url{https://www.math.uni-bonn.de/people/scholze/Condensed.pdf}},
}

\bib{ScholzeWeinstein}{article}{
      author={Scholze, Peter},
      author={Weinstein, Jared},
       title={Moduli of {$p$}-divisible groups},
        date={2013},
        ISSN={2168-0930},
     journal={Camb. J. Math.},
      volume={1},
      number={2},
       pages={145\ndash 237},
         url={https://doi.org/10.4310/CJM.2013.v1.n2.a1},
      review={\MR{3272049}},
}

\bib{Silverman}{book}{
      author={Silverman, Joseph~H.},
       title={The arithmetic of elliptic curves},
     edition={Second},
      series={Graduate Texts in Mathematics},
   publisher={Springer, Dordrecht},
        date={2009},
      volume={106},
        ISBN={978-0-387-09493-9},
         url={https://doi.org/10.1007/978-0-387-09494-6},
      review={\MR{2514094}},
}

\bib{stacks-project}{misc}{
      author={{Stacks project authors}, The},
       title={The stacks project},
        date={2025},
        note={\url{https://stacks.math.columbia.edu}},
}

\bib{Tate}{incollection}{
      author={Tate, J.~T.},
       title={{$p$}-divisible groups},
        date={1967},
   booktitle={Proc. {C}onf. {L}ocal {F}ields ({D}riebergen, 1966)},
   publisher={Springer, Berlin-New York},
       pages={158\ndash 183},
      review={\MR{231827}},
}

\bib{ZinkCartier}{book}{
      author={Zink, Thomas},
       title={Cartiertheorie kommutativer formaler {G}ruppen},
      series={Teubner-Texte zur Mathematik [Teubner Texts in Mathematics]},
   publisher={BSB B. G. Teubner Verlagsgesellschaft, Leipzig},
        date={1984},
      volume={68},
        note={With English, French and Russian summaries},
      review={\MR{767090}},
}

\end{biblist}
\end{bibdiv}

\end{document}